\documentclass[english,reqno, 11pt]{amsart}
\usepackage{amssymb}
\usepackage{amsfonts}
\usepackage{amscd}
\usepackage[T1]{fontenc}
\usepackage{enumerate}%
\usepackage{amsthm,lipsum}
\usepackage[colorlinks, linkcolor=red]{hyperref}
\usepackage{color}
\usepackage[utf8]{inputenc}

\newcommand\red[1]{{\color{red}#1}}

\definecolor{immi}{rgb}{0,.6,.1}

\newbox\removebox
\newcommand\remove[2]{%
\setbox\removebox=\ifmmode\hbox{$#2$}\else\hbox{#2}\fi%
\leavevmode
\rlap{\textcolor{#1}{\vrule height0.8ex depth-0.6ex width\wd\removebox}}%
\box\removebox
}
\long\def\bigremove#1{%
\par\setbox\removebox=\vbox{#1}%
\vbox{%
\vbox to0pt{\hbox{\tikz\draw[color=blue,thick] (0,0) -- (\wd\removebox,-\ht\removebox)  (\wd\removebox,0) -- (0,-\ht\removebox);}}
\box\removebox
}
}

\usepackage{mathrsfs} 

\usepackage{geometry}
\def\moi{\operatorname{moi}}

\def\deg{\operatorname{deg}}
\def\ac{{\overline{\rm ac}}}
\def\acc{{{\rm ac}}}

\def\red{\operatorname{red}}
\def\SL{\operatorname{SL}}
\def\rk{\operatorname{rank}}

\def\SL{\operatorname{SL}}

\def\11{{\mathbf 1}}
\def\AA{{\mathbb A}}

\def\CC{{\mathbb C}}

\def\FF{{\mathbb F}}

\def\MM{{\mathbb M}}
\def\NN{{\mathbb N}}

\def\PP{{\mathbb P}}
\def\QQ{{\mathbb Q}}
\def\RR{{\mathbb R}}

\def\ZZ{{\mathbb Z}}

\def\cA{{\mathcal A}}
\def\cB{{\mathcal B}}
\def\cC{{\mathscr C}}
\def\cD{{\mathcal D}}
\def\cE{{\mathcal E}}

\def\cH{{\mathcal H}}
\def\cI{{\mathcal I}}
\def\cJ{{\mathcal J}}
\def\cK{{\mathcal K}}
\def\cL{{\mathcal L}}
\def\cM{{\mathcal M}}
\def\cN{{\mathcal N}}
\def\cO{{\mathcal O}}

\def\cQ{{\mathcal Q}}
\def\cR{{\mathcal R}}
\def\cS{{\mathcal S}}

\def\cU{{\mathcal U}}

\def\cZ{{\mathcal Z}}

\newtheorem{thm}[subsubsection]{Theorem}
\newtheorem{lem}[subsubsection]{Lemma}
\newtheorem{cor}[subsubsection]{Corollary}
\newtheorem{prop}[subsubsection]{Proposition}
\newtheorem{conj}
{Conjecture}

{Theorem}

\newtheorem{Qu}{Question}
\theoremstyle{definition}
\newtheorem{defn}[subsubsection]{Definition}

\newtheorem{example}[subsubsection]{Example}

\newtheorem{def-prop}[subsubsection]{Proposition-Definition}
\newtheorem{def-theorem}[subsubsection]{Theorem-Definition}
\newtheorem{def-lem}[subsubsection]{Lemma-Definition}

\theoremstyle{remark}
\newtheorem{remark}[subsubsection]{Remark}

\theoremstyle{plain}
\newtheorem*{namedthm}{\namedthmname}
\newcounter{namedthm}
\makeatletter
\newenvironment{named}[1]
  {\def\namedthmname{#1}%
   \refstepcounter{namedthm}%
   \namedthm\def\@currentlabel{#1}}
  {\endnamedthm}
\makeatother

\numberwithin{equation}{subsection}

  {\par\medskip\noindent #1\par\begingroup%
    \advance\leftskip by 1em\advance\rightskip by 1em}%
  {\par\endgroup}

\DeclareMathOperator*{\rank}{rank}

\DeclareMathOperator*{\Spec}{Spec}
\DeclareMathOperator*{\Specm}{Specmax}

\newcommand{\ord}{\operatorname{ord}}

\newcommand{\Sm}{\operatorname{Sym}}
\newcommand{\Sing}{\operatorname{Sing}}
\newcommand{\lct}{\operatorname{lct}}

\newcommand{\codim}{\operatorname{codim}}

\begin{document}
\author[K.~H.~Nguyen]
{Kien Huu Nguyen}

\address{KU Leuven, Department of Mathematics,
Celestijnenlaan 200B, B-3001 Leu\-ven, Bel\-gium}
\email{kien.nguyenhuu@kuleuven.be}

\address{Institute of Mathematics, Vietnam Academy of Science and Technology, 18 Hoang Quoc Viet, Nghia Do, Hanoi, Vietnam}
\email{nhkien@math.ac.vn}

\title[]{Uniform estimate of Lang-Weil type for counting points of schemes over finite rings}

\subjclass[2010]{Primary 11D79, 11L07, Secondary 11S40, 14E15, 11S80, 14E18, 11D88}
\keywords{Lang-Weil estimate, counting points of schemes over finite rings, bounds for exponential sums, Igusa's conjecture for exponential sums, Igusa local zeta functions, Hermit interpolation,  motivic oscillation index, rational and non-rational singularities, generalized log canonical threshold}

\begin{abstract}

In this paper, we study the number of points of $\ZZ$-schemes $X$ of finite type over the finite rings $\ZZ/p^m\ZZ$ for positive integers $m$ and prime numbers $p$. Motivated by Igusa's conjecture for exponential sums and the Lang-Weil estimate, we establish an estimate of $X(\ZZ/p^m\ZZ)$ in terms of $m$, $X(\ZZ/p^{m-1}\ZZ)$ and poles of the $p$-adic Igusa local zeta function of $X$ uniformly in $m\geq 2$ and $p$. Our estimate might be viewed as an inductive version for counting points of schemes over finite rings of the Lang-Weil estimate. Our estimate agrees with the expected bound of Igusa's conjecture for exponential sums if $X_\QQ$ is not a locally  complete intersection having only rational singularities. This can be regarded as an analogous version of the estimate of exponential sums modulo powers of primes of polynomials with non-rational singularities given by Cluckers, Musta\c{t}\v{a} and the author in 2019. In the case that $X_\QQ$ is a locally  complete intersection having only rational singularities, our estimate  is very close to the expected bound of Igusa's conjecture for exponential sums. To prove our estimate, beside using the usual tools such as resolutions of singularities and Denef's formula for Igusa local zeta functions, we also use the Hermit interpolation formula to study the numerators of Igusa local zeta functions. By the same method, we are able to approach Igusa's conjecture for exponential sums in the case of rational singularities by establishing a uniform bound of exponential sums modulo powers of primes of polynomials in terms of their motivic oscillation indexes. Lastly, to have further applications of our result in the future, we study heavily on $p$-adic analysis to give an upper bound of the real parts of non-trivial poles of Igusa local zeta functions in terms of suitable generalized log canonical thresholds. 
\end{abstract}

\setcounter{tocdepth}{1}

\maketitle 
{\tableofcontents}

\section{Introduction} 
Let $X$ be a scheme of finite type over $\ZZ$. Let $p$ be a prime and $l$ be a positive integer, we denote by $N_0(\FF_{p^l},X)$  the number of geometrically irreducible components of dimension $n_p(X)=\dim(X_{\FF_p})$ of $X_{\FF_{p^l}}$. Generic flatness implies that  $n_p(X)=\dim(X_\QQ)$ if $p$ is large enough. Moreover, it follows from \cite[Part 3, Theorem 9.7.7]{EGA4} that if $X_\QQ$ is geometrically irreducible then $N_0(\FF_{p^l},X)=1$ provided that $p$ is large enough. The usual Lang-Weil estimate in \cite{LWeil} asserted that  
\begin{equation}\label{LWeil}
\left|\#X(\FF_{p^l})-N_0(\FF_{p^l},X)p^{l n_p(X)}\right|\leq C_Xp^{l(n_p(X)-\frac{1}{2})}
\end{equation}
for a constant $C_X$ depending only on the complexity of $X$. Here, we say that the complexity of $X$ is bounded by $D$ if $X$ can be covered by $D$ affine open sets that can be embedded as closed subsets of $\AA_{\ZZ}^D$ defined by $D$ polynomials of degree at most $D$.   Essentially,  the Lang-Weil estimate can be explained by the Riemann hypothesis over finite fields (the Weil conjectures) (see, e.g., \cite{DeligneWI,DeligneWII}). More precisely, we consider the Hasse-Weil zeta function of $X$ over $\FF_p$:
$$\zeta_{p,X}(t):=\exp\left(\sum_{l\geq 1}\frac{\#X(\FF_{p^l})}{l}t^l\right).$$
Roughly speaking, the Weil conjectures say that if $X_{\FF_p}$ is smooth and projective then $\zeta_{p,X}(t)$ is a rational function of $t$, i.e., 
$$\zeta_{p,X}(t)=\frac{P_p(t)}{Q_p(t)}$$
for monic polynomials $P_p(t),Q_p(t)\in\ZZ[T]$, such that $(P_p(t),Q_p(t))=1$ and if $\alpha$ is a root of $Q_p(t)$ (resp. $P_p(t)$) then all conjugates of $\alpha$ in $\CC$ have absolute value of the form $p^{\frac{w}{2}}$ for a natural  number $w\leq 2n_p(X)$ (resp. $w\leq 2n_p(X)-1$). Deligne proved the part of the Weil conjectures  related to the absolute value of roots of $P_p(t)Q_p(t)$ in \cite{DeligneWI} and generalized his result by removing both the smoothness and the projectivity of $X_{\FF_p}$ in \cite{DeligneWII}. In fact, if $P_p(\alpha)Q_p(\alpha)=0$ then there is $0\leq i\leq 2n_p(X)$ such that $\alpha$ is an eigenvalue of the Frobenius action on the $i^{\textnormal{th}}$ $\ell$-adic cohomology group with compact support $H_c^i(X_{\overline{\FF}_p},\QQ_\ell)$. It follows from the Weil conjectures that
\begin{equation}\label{exproot}\#X(\FF_{p^l})=\sum_{\alpha, Q(\alpha)=0}\mathfrak{m}_\alpha\alpha^l-\sum_{\beta, P(\beta)=0}\mathfrak{m}_\beta\beta^l
\end{equation}
for all $l\geq 1$ and 
\begin{equation}\label{logaform}\mathfrak{L}_{p,X}(t):=\sum_{l\geq 1}\#X(\FF_{p^l})t^l=\sum_{\alpha, Q(\alpha)=0}\frac{\mathfrak{m}_\alpha\alpha t}{1-\alpha t}-\sum_{\beta, P(\beta)=0}\frac{\mathfrak{m}_\beta\beta t}{1-\beta t}\in\CC[[t]],
\end{equation} where $\mathfrak{m}_\lambda$ is the multiplicity of $t-\lambda$ in $P_p(t)Q_p(t)$.

Moreover, it is well known  (see, e.g., \cite{DeligneWII}) that $\deg_t(P_p(t)Q_p(t))$ is at most $\cB(X,p)$ which is the sum of the Betti numbers of $X_{\overline{\FF}_p}$, i.e., 
$$\cB(X,p)=\sum_{0\leq i\leq 2n_p(X)}\dim_{\QQ_\ell}H_c^i(X_{\overline{\FF}_p},\QQ_\ell).$$
Note that $\cB(X,p)$ can be bounded by the complexity of $X$ (see, e.g.,  \cite{Katz-Bet}), thus (\ref{LWeil}) follows from (\ref{exproot}) with $$N_0(\FF_{p^l},X)=\sum_{\alpha, Q(\alpha)=0, \alpha^l=p^{l n_p(X)}}\mathfrak{m}_\alpha.$$

Now, it is very natural to ask for an analogous version of (\ref{LWeil}) for uniform counting points of $X$ over finite rings $R$ if $R=\ZZ/p^m\ZZ$ or  $R=\FF_p[t]/(t^m)$  for primes $p$ and positive integers $m$.  More generally,  we also ask this question if $R=\cO_L/\cM_L^m$ for $m\geq 1$, where $L$ is a non-Archimedean local field (i.e., a  finite extension of $\QQ_p$ or $\FF_p((t))$), $\cO_L$ is  the ring of integers of $L$,  and  $\cM_L$ is the maximal ideal of $\cO_L$. 

In the first approach, we can use the method of Meuser in \cite{Meuser} related to Teichmüller representatives of such an $L$ to show that for each $m\geq 1$ there is a $k_L$-scheme $X^{m,L}$ of finite type with $k_L:=\cO_L/\cM_L$ such that 
$$\#X(\cO_{L^{(l)}}/\cM_{L^{(l)}}^m)=\#X^{m,L}(k_{L^{(l)}})$$
for all $l\geq 1$, where $L^{(l)}$ is the unramified extension of degree $l$ of $L$. If $L$ is of large enough residue field characteristic, one can take $X^{m,L}$ to be $X_{m-1}\otimes_{\ZZ} k_L$, where $X_{m-1}$ is the $(m-1)^{\textnormal{th}}$ jet scheme associated to $X$ (see \cite[Chapter 3, Section 2]{CNic}). When $X_\QQ$ is a locally complete intersection of pure dimension having only rational singularities (see its definition in Subsection \ref{JSL}), we can control well $\dim(X_m\otimes_\ZZ\QQ)$ and $N_0(\FF_{p^l},X_m)$ if $p$ is large enough (see \cite{Mustata1}). If this is the case,  Aizenbud and Avni  gave a version of (\ref{LWeil}) in \cite[Theorem A]{AizenAvni}. Namely, by putting $q_L:=\#k_L$, one has
\begin{equation}\label{LWmod0}\left|X(\cO_L/\cM_L^m)-q_L^{(m-1)\dim(X_\QQ)}\#X(k_L)\right|\leq Cq_L^{m\dim(X_\QQ)-1} 
\end{equation}
and 
\begin{equation}\label{LWmod}
\left|X(\cO_L/\cM_L^m)-q_L^{m\dim(X_\QQ)}\right|\leq Cq_L^{m\dim(X_\QQ)-\frac{1}{2}} 
\end{equation}
for all $m\geq 1$, all non-Archimedean local fields $L$ of large enough residue field characteristic and a constant $C$ independent of $L,m$ provided that $X_\QQ$ is geometrically irreducible and a locally complete intersection having only rational singularities. However, when $X_\QQ$ is not a locally complete intersection or has non-rational singularities then it is very difficult  to compute exactly the value of $\dim(X_m\otimes_\ZZ\QQ)$ and $N_0(\FF_{p^l},X_m)$ for all $m$. If this is the case, we just know exactly the value of $\dim(X_m\otimes_\ZZ\QQ)$ for some special integers $m$ (see \cite{MustJAMS}). 

In another approach, we can study the above question by using Igusa local zeta functions. The idea is that instead of considering the Hasse-Weil zeta function
$$\zeta_{L,X,m}(t):=\exp\left(\sum_{l\geq 1}\frac{\#X^{m,L}(k_{L^{(l)}})}{l}t^l\right)=\exp\left(\sum_{l\geq 1}\frac{\#X(\cO_{L^{(l)}}/\cM_{L^{(l)}}^m)}{l}t^l\right)$$
for each non-Archimedean local field $L$  and each positive integer $m$, we consider the Igusa local zeta function of $X$ associated to each non-Archimedean local field $L$:
\begin{equation}\label{Izeta} \cZ_{L,X}(t):=\sum_{m\geq 1}\#X(\cO_L/\cM_L^m)t^{m}.
\end{equation}
In Subsection \ref{exp-zetafunction}, we will redefine  $\cZ_{L,X}(t)$ by using $q_L^{-s}$ instead of $t$ to be more compatible with integrals over non-Archimedean local fields. Historically, $\cZ_{L,X}(t)$ was firstly studied by Borevich and Shafarevich in \cite{Bosha} when $X$ is defined by a non-constant polynomial in $\ZZ[x_1,...,x_n]$. They conjectured that $\cZ_{L,X}(t)$ is a rational function of $t$ if $L$ is of characteristic zero. Igusa proved this conjecture in \cite{Igusa1} and Meuser generalized Igusa's result to the case that $X$ is defined by more than one non-constant polynomial in $\ZZ[x_1,....,x_n]$ (see \cite{Meura}). Another proof for the rationality of $\cZ_{L,X}(t)$ was given by Denef in \cite{Denef}. The rationality of  $\cZ_{L,X}(t)$ if $L$ is of positive characteristic is still an open question due to the lack of resolution of singularities in this case. However, it is well known that there is an integer $M$ depending on $X$ such that if $L,\tilde{L}$ are a non-Archimedean local fields of residue field characteristic at least $M$ then  $\cZ_{L,X}(t)=\cZ_{\tilde{L},X}(t)$ provided that $k_L\simeq k_{\tilde{L}}$ (see Corollary \ref{transfer2}). Thus, we have the rationality of $\cZ_{L,X}(t)$ if $L$ is a non-Archimedean local field of large enough residue field characteristic.  Note that the function $\mathfrak{L}_{p,X}(t)$ in (\ref{logaform}) is of similar form to $\cZ_{L,X}(t)$. Thus, by the same phenomenon as with $\mathfrak{L}_{p,X}(t)$,  we see that poles of  $\cZ_{L,X}(t)$ will give us the most important information of $\#X(\cO_L/\cM_L^m)$ for all $m$. Now, we suppose that $L$ is a non-Archimedean local field of characteristic zero.  Let us write 
\begin{equation}\label{fraction}\cZ_{L,X}(t)=G_{L,X}(t)+\sum_{\alpha\in A_{L,X}}\frac{H_{L,\alpha}(t)}{(1-\alpha t)^{\mathfrak{e}_\alpha}}
\end{equation}
for a finite set $A_{L,X}\subset \CC$,  a polynomial $G_{L,X}(t)\in\CC[t]$ together with a positive integer $\mathfrak{e}_\alpha$ and a non-zero polynomial $H_{L,\alpha}(t)\in\CC[t]$ of degree at most $\mathfrak{e}_\alpha-1$ for each $\alpha\in A_{L,X}$. It is clear that (\ref{fraction}) is much more complicated than (\ref{logaform}). We have not yet known too much about $G_{L,X}(t)$ and $H_{L,\alpha}(t)$. We just know that $\deg_t(G_{L,X}(t))\leq 0$ if $L$ is of large enough residue field characteristic (see \cite{Denefdegree,VeysZ}). On the other hand,  we have a more conceptual way to understand the set $A_{L,X}$. More precisely,  the set $A_{L,X}$ depends on singularities of $X$. Let us fix $\alpha\in A_{L,X}$.  Then  one can show that there are positive integers $a,b$  such that $\alpha^{b}=q_L^{a}$. When $X$ is a closed subscheme of $\AA_\ZZ^n$ then such pair $(b,a)$ is a piece of numerical information come from any  strong log resolution of singularities of $X_\QQ$ in $\AA_\QQ^n$ (see \cite{Igusa3,VeysZ} or Subsection \ref{exp-zetafunction}).  More profoundly, if $X$ is defined by a non-constant polynomial $f\in\ZZ[x_1,...,x_n]$ then the monodromy conjecture of Igusa predicts that $\exp(\frac{2\pi ia}{b})$ is an eigenvalue of the monodromy action on some cohomology group of the Milnor fiber of $f$ around some point $x^{(0)}\in X(\CC)$ (see \cite{Igubf,DenefBour}). This conjecture is quite similar to the Weil conjectures, namely,  the Frobenius action on $\ell$-adic cohomology groups with compact support is replaced by the monodromy action on cohomology groups of Milnor fibers.  A stronger version of the monodromy conjecture (the strong monodromy conjecture) is stated that $\frac{a}{b}-n$ is a root of the Bernstein-Sato polynomial $b_f(s)$ of $f$ (see \cite{Igubf,DenefBour}). Where, $b_f(s)$ is the monic polynomial $b(s)$ of smallest degree such that there is $P\in\QQ[x_1,...,x_n,\frac{\partial f}{\partial x_1},...,\frac{\partial f}{\partial x_n},s]$ such that $Pf^{s+1}=b_f(s)f^s$ (see \cite{Bern}). Moreover, one also expects that $\frac{a}{b}-n$ is a root of multiplicity at most $\mathfrak{e}_\alpha$ of $b_f(s)$. If $X$ is an arbitrary $\ZZ$-scheme of finite type, we also have a version of the strong monodromy conjecture relating the poles of $\cZ_{L,X}(t)$ with the roots of the Bernstein-Sato polynomial of $X_\QQ$ defined in \cite{BMS}. When we already know about the set $A_{L,X}$,  by using (\ref{fraction}), one can show as in \cite{Loeasym,VeysZ} that
\begin{equation}\label{lims}
\limsup_m \#X(\cO_L/\cM_L^m)^{\frac{1}{m}}=\max_{\alpha\in A_{L,X}}|\alpha|.
\end{equation}
However, it is still far from a bound of Lang-Weil type for $\#X(\cO_L/\cM_L^m)$ uniformly in all $m$.

In this paper, based on \cite{MoiIde}, we give a new perspective on bounds of Lang-Weil type of $X(\cO_L/\cM_L^m)$ uniformly in positive integers $m\geq 2$ and all non-Archimedean local fields $L$ of large enough residue field characteristic. We will give more detail in Subsection \ref{exp-zetafunction}. Here, we sketch the main idea.  Suppose that $X$ is defined by an ideal of $\ZZ[x_1,...,x_n]$ generated by $r$ polynomials $f_1,...,f_r$. For each $m\geq 2$ and each non-Archimedean local field $L$, we can show that 
$$\#X(\cO_L/\cM_L^m)-q_L^{n-r}\#X(\cO_L/\cM_L^{m-1})=q_L^{mn}\int_{(x,y)\in\cO_L^n\times \left(\cO_L^{r}\setminus\cM_L\cO_L^r\right)}\psi(\sum_{1\leq i\leq r}y_if_i(x))|dx\wedge dy|,$$
where $\psi$ is any additive character of conductor $m$ of $L$ and $|dx\wedge dy|$ is the Haar measure on $L^{n+r}$ such that the volume of $\cO_L^{n+r}$ is $1$. Here, an additive character $\psi$ of conductor $m$ of $L$ is a continuous group homomorphism $\psi$ from $(L,+)$ to the multiplicative group $(\CC^{\times},\times)$  such that $\psi|_{\cM_L^m\cO_L}=1$ but $\psi|_{\cM_L^{m-1}\cO_L}\neq 1$. 
Recall that if $f$ is a non-constant polynomial in $\ZZ[x_1,...,x_l]$ and $Z$ be a $\ZZ$-subscheme of finite type of $\AA_\ZZ^n$ then for each non-Archimedean local field $L$ of  characteristic zero, there is  $\sigma_{L}(f,Z)\in\QQ_{>0}\cup\{+\infty\}$ which is the supremum of  all real numbers $\sigma$ satisfying
$$\left|E_{f,L,Z,\psi}\right|:=\left|\int_{\{x\in\cO_L^l\mid (x\mod \cM_L)\in Z(k_L)\}}\psi(f(x))|dx|\right|\leq q_L^{-m\sigma}$$
for all large enough integers $m$ and all additive characters $\psi$ of $L$ of conductor $m$. We call $\sigma_L(f,Z)$ the $L$-oscillation index of $f$ at $Z$. In fact, $-\sigma_L(f,Z)$ is  the real part of a non-trivial pole of some Igusa local zeta function associated to $f$ and $Z$ (see Subsection \ref{exp-zetafunction}). If the strong monodromy conjecture is true then $\sigma_L(f,Z)$ is larger than the minimal exponent $\tilde{\alpha}_f$ of $f$ (the minimal root of  $\frac{b_f(-s)}{1-s}$).  Igusa's conjecture for exponential sums (see Subsection \ref{exp-zetafunction}) predicts that for each real number $\sigma$ satisfying $\sigma\leq\sigma_{L}(f,Z)$ for all non-Archimedean local fields $L\supset \QQ$ of large enough residue field characteristic then 
$$\left|E_{f,L,Z,\psi}\right|\leq C(f,\sigma) m^{l-1}q_L^{-m\sigma}$$
for all $m\geq 2$, all non-Archimedean local fields of large enough residue field characteristic, all additive characters $\psi$ of $L$ of conductor $m$ and a constant $C(f,\sigma)$ depending only on $f,\sigma$. Moreover, one may also expect to replace the factor $m^{l-1}$ above by $m^{\max\{\mathfrak{m}_\sigma,1\}-1}$, where $\mathfrak{m}_\sigma$ is the maximum  of all natural numbers $e$ such that $e$ is the multiplicity of a non-trivial pole with real part $-\sigma$ of an Igusa local zeta function associated to $f,L,Z$ mentioned in Subsection \ref{exp-zetafunction}.  
When $f=\sum_{1\leq i\leq r}y_if_i(x)$ and $Z=\AA_\ZZ^n\times(\AA_\ZZ^r\setminus\Spec(\ZZ[x_1,...,x_r]/(x_1,...,x_r)))$ then we can show that 
\begin{equation}\label{polsig}q_L^{n-\sigma_L(f,Z)}=\max_{\alpha\in B_{L,X,n-r}}|\alpha|
\end{equation}
for all non-Archimedean local fields $L$ of characteristic zero, where 
$$B_{L,X,n-r}=\{\alpha\mid \alpha^{-1} \textnormal{ is a pole of } Z_{L,X}(t)(1-tq_L^{n-r})\}.$$
Thus, by Igusa's conjecture for exponential sums, we may expect that 
\begin{equation}\label{Iguexpt}\left|\#X(\cO_L/\cM_L^m)-q_L^{n-r}\#X(\cO_L/\cM_L^{m-1})\right|\leq C_{X,n-r}\max_{\alpha\in B_{L,X,n-r}}m^{n-1}|\alpha|^m
\end{equation}
for all $m\geq 2$, all non-Archimedean local fields $L$ of large enough residue field characteristic and a positive constant $C_{X,n-r}$ depending only on $X,n,r$.  In the general case, when $X$ is not an affine $\ZZ$-scheme, we may also expect that (\ref{Iguexpt}) holds if we replace  $n-r$ by an arbitrary real number $l$ and $m^{n-1}$ by $m^{\mathfrak{e}_{\alpha,l}-1}$, where $\mathfrak{e}_{\alpha,l}$ is the multiplicity of the pole $\alpha^{-1}$ of $Z_{L,X}(t)(1-tq_L^{l})$ if $$\alpha\in B_{L,X,l}:=\{\alpha\mid \alpha^{-1} \textnormal{ is a pole of } Z_{L,X}(t)(1-tq_L^{l})\}.$$  If this is true and $L$ is a finite extension of $\QQ_p$ for some large enough prime $p$  then we can relate $\zeta_{L,X,m}(t)$ and $\zeta_{L,X,m-1}(t)$ for all $m\geq 2$ and get a lot of information about $\#X(\cO_L/\cM_L^m)=\#X_m(k_L)$ inductively such as the numbers $N_0(k_L,X_m)$, $n_{p}(X_m)$ and $\cB(X_m,p)$. If this is the case, by using (\ref{LWeil}) and (\ref{Iguexpt}), we can also reprove (\ref{LWmod}) and (\ref{lims}). Thus, it suggests that an analogous version of the Lang-Weil estimate for counting points of $X$ over finite rings may be of the same form as  (\ref{Iguexpt}). Now, we state the first main result of this paper. 

\begin{named}{Theorem A}\label{uupp00} Let $X$ be a $\ZZ$-scheme of finite type. There is an integer $M$ depending only on $X$, a constant $C_X$ depending  only on the complexity of $X$, an integer $e_{X,l}$ for each real number $l$, a constant $C_{X,L}$ and an integer $e_{X,l,L}$ for each $l\in \RR$ and each non-Archimedean local field $L$ of characteristic zero such that:
\begin{itemize}
\item $C_{X,L}=C_X$ and $e_{X,l,L}=e_{X,l}$ if $L$ is of residue field characteristic at least $M$.
\item  For all $l\in\RR$, all $m\geq 2$ and all non-Archimedean local fields $L$ of  characteristic zero, we have 
\begin{equation}\label{boun000}\left|\#X(\cO_L/\cM_L^m)-q_L^{l}\#X(\cO_L/\cM_L^{m-1})\right|\leq C_{X,L}\max_{\alpha\in B_{L,X,l}}m^{\mathfrak{e}_{\alpha,l}-1}(1+q_L^{l}|\alpha|^{-1})|\alpha|^{m}.
\end{equation}
\item For each $l\in \RR$ and each non-Archimedean local field $L$ of characteristic zero, the inequality
$$\left|\#X(\cO_L/\cM_L^m)-q_L^{l}\#X(\cO_L/\cM_L^{m-1})\right|\geq q_L^{e_{X,l,L}}\max_{\alpha\in B_{L,X,l}}m^{\mathfrak{e}_{\alpha,l}-1}|\alpha|^{m}$$
holds for infinitely many integers $m\geq 2$. 
\end{itemize}
\end{named}
We will generalize \ref{uupp00} and prove it in Propositions \ref{lowe}, \ref{ulowbou} and Theorem \ref{uupp}. The ingredients to prove \ref{uupp00} are  Denef's formula for Igusa local zeta functions and the Hermit interpolation formula (see Subsections \ref{exp-zetafunction} and  \ref{Herf}). Suppose that $X_\QQ$ is equi-dimensional and a closed subscheme of the affine space $\AA_\QQ^n$. If either $X_\QQ$ is not a locally complete intersection  or $X_\QQ$ has non-rational singularities then we can know how to find $\max_{\alpha\in B_{L,X,l}}|\alpha|$ in any strong log resolution of $X_\QQ$ in $\AA_\QQ^n$ (see Subsection \ref{exp-zetafunction}), thus we can use a similar calculation to \cite[Section 4]{Saskia-Kien} related to Denef's formula to deduce \ref{uupp00}. Otherwise, the problem becomes much  harder.  Fortunately, we can overcome this problem by using Denef's formula and the Hermit interpolation formula to estimate the coefficients of the polynomials $H_{L,\alpha}(t)$ appearing in (\ref{fraction}) uniformly in $L$. The upper bound in \ref{uupp00} is as expected of Igusa's conjecture for exponential sums in almost all cases except for the case that $X_\QQ$ is locally a complete intersection of dimension $l$ having only rational singularities where we want to remove the factor $(1+q_L^{l}|\alpha|^{-1})$. This is quite similar to the fact that the Lang-Weil estimate (\ref{LWeil}) can be improved if $X_\QQ$ is a complete intersection in a projective space and the singularities of $X$ are good enough (see, e.g., \cite[Theorem 6.1]{GhLa}).  On the other hand, motivated by the work in \cite{CluckerNguyen,MoiIde}, if $X$ is defined by polynomials $f_1,...,f_r\in\ZZ[x_1,...,x_n]$ of degree $d>0$ then we have a weak version of (\ref{boun000}) in a similar form to (\ref{Iguexpt}) but with all $m\geq 1$ as follows.
\begin{cor}\label{nsdI}Let $X$ be the closed subscheme of $\AA_\ZZ^n$ defined by polynomials $f_1,...,f_r$ of degree $d>0$. Suppose that $\dim(X_\QQ)=n-r$. For each $1\leq i\leq r$, we denote by $f_{id}$ the homogeneous part of degree $d$ of $f_i$. We set $s(f_1,...,f_r)=\dim\{z\in\CC^n\mid \rk((\frac{\partial f_i}{\partial x_j}(z))_{1\leq i\leq r,1\leq j\leq n})<r\}.$ Then, there is an integer $M$ depending only on $X$ and a positive constant $C_{n,d}$ depending only on $n,d$ such that   
$$\left|\#X(\cO_L/\cM_L^m)-q_L^{n-r}\#X(\cO_L/\cM_L^{m-1})\right|\leq C_{n,d}m^{n-1}\max\Biggl\{q_L^{\frac{n-r}{d+2}}\left(\max_{\alpha\in B_{L,X,n-r}}|\alpha|\right)^{\frac{d+1}{d+2}}, q_L^{n-\frac{n-s(f_1,...,f_r)}{d}}\Biggr\}^{m}$$
for all $m\geq 1$ and all non-Archimedean local fields $L$ of residue field characteristic at least $M$.
\end{cor}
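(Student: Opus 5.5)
The plan is to reduce the statement to a bound for the exponential sum attached to $g(x,y)=\sum_{1\leq i\leq r}y_if_i(x)$. Recall the identity from the introduction: for $m\geq 2$,
$$\#X(\cO_L/\cM_L^m)-q_L^{n-r}\#X(\cO_L/\cM_L^{m-1})=q_L^{mn}\int_{\cO_L^n\times(\cO_L^r\setminus\cM_L\cO_L^r)}\psi(g(x,y))|dx\wedge dy|,$$
where $\psi$ has conductor $m$. For $m=1$ the left side is at most $\#X(k_L)+q_L^{n-r}$. This is $O_{n,d}(q_L^{n-r})$ by Lang--Weil, or trivially by Bezout-type bounds, and it is dominated by the right side since $\max_{\alpha\in B_{L,X,n-r}}|\alpha|\geq q_L^{n-r}$ (the first term is then at least $q_L^{n-r}$). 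So the content is the case $m\geq 2$. There we bound $|E_{g,L,Z,\psi}|$ with $Z=\AA^n\times(\AA^r\setminus 0)$ in two ways and interpolate.

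\textbf{First bound: the Igusa side.} By Theorem A (more precisely its generalization, Theorem \ref{uupp}, together with (\ref{polsig})), the left side is bounded by $C\,m^{n-1}\,(1+q_L^{n-r}|\alpha|^{-1})\,|\alpha|^m$ with $|\alpha|=\max_{\alpha\in B_{L,X,n-r}}|\alpha|$. However, that constant depends on $X$, not only on $n,d$, so it cannot be used directly.

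\textbf{Second bound: the uniform side.} Following \cite{CluckerNguyen,MoiIde}, we use a uniform bound on exponential sums of polynomials of degree $d+1$ in terms of the dimension of the critical locus of the top-degree part. Note that $g$ has degree $d+1$ and its top homogeneous part is $\sum y_if_{id}$. We fix $y$ with $\bar y\neq 0$ and do the $x$-integral first. For a polynomial in $x$ of degree $d$ whose singular locus has dimension $s$, the known estimate gives roughly $q_L^{-m(n-s)/d}$, with constant $C_{n,d}\,m^{n-1}$. The dimension $s$ is controlled by $s(f_1,\dots,f_r)$: for each direction $y$, the critical points of $\sum y_if_i$ lie in the rank-drop locus of the Jacobian. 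Multiplying by $q_L^{mn}$ and integrating over $y$ gives the bound $C_{n,d}\,m^{n-1}\,q_L^{m(n-(n-s)/d)}$. This holds uniformly for $L$ with large residue characteristic, with $M$ chosen via a transfer principle.

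\textbf{Combining.} The target mixes the two quantities with exponents $\frac{1}{d+2}$ and $\frac{d+1}{d+2}$. I would use the argument of \cite{MoiIde}. For the generic fibre, a uniform bound $|E|\le C_{n,d}\, m^{n-1}q_L^{-m\sigma/(d+2)\cdots}$ holds with $\sigma$ close to $\min$ of the oscillation index and of $\frac{n-s}{d}$. Here the oscillation index is expressed through $|\alpha|$ as $n-\sigma_L=\log_{q_L}\max|\alpha|$, and the exponent $\frac{d+1}{d+2}$ comes from a Hölder or van der Corput type averaging over $y$. Concretely, one splits according to $\ord$ of the linear part in $y$, i.e. the size of $f(x)$. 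Points with $\ord f(x)\geq m\frac{?}{d+2}$ are counted using the pole data, which gives the $|\alpha|^{m(d+1)/(d+2)}q_L^{m(n-r)/(d+2)}$ term. The remaining points are handled by the stationary phase estimate, which gives the second term.

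The main obstacle is this interpolation step, together with making all constants depend only on $n,d$: the pole data comes from a resolution that depends on $X$. I would try first to bound the point counts $\#\{x: \ord f(x)\geq k\}$ by $q_L^{k(n)-k\sigma}$ with constants uniform in $n,d$. I would attempt this through the uniform version of the first bound in \cite{CluckerNguyen}, valid for $k\geq m/(d+2)$, then optimize the cutoff.
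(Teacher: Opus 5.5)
There is a genuine gap: you rely on a bound that is not available, and you discard the one that does the work. Write $E(m):=q_L^{-mn}\bigl(\#X(\cO_L/\cM_L^m)-q_L^{n-r}\#X(\cO_L/\cM_L^{m-1})\bigr)$, $s=s(f_1,\dots,f_r)$ and $\sigma:=\sigma^{r,\textnormal{naive}}_{L,X}(\cI)$. Then $\max_{\alpha\in B_{L,X,n-r}}|\alpha|=q_L^{n-\sigma}$, and the first term of the target is $q_L^{n-\frac{d+1}{d+2}\sigma-\frac{r}{d+2}}$.

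Your fibrewise argument over $y$ is exactly the proof of Proposition \ref{uppallide}. However, the underlying estimate (Proposition \ref{uppallf}, from \cite{CluckerNguyen}) holds only for conductor $m\in[1,d+1]$, so it gives $|E(m)|\leq c_{n,d}\,q_L^{-m(n-s)/d}$ only in that range. A bound $C_{n,d}m^{n-1}q_L^{-m(n-s)/d}$ for all $m$ is essentially the open conjecture of \cite{CluckerNguyen} that the paper only approaches (Corollary \ref{Igcoexp}). If you had it, the corollary would follow from the second term alone.

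Your reason for dropping the Igusa-side bound is a misreading. The constant in Theorem \ref{uupp} depends only on the complexity of $X$, hence here only on $n,d$ (cf.\ Corollary \ref{AvIgnrati}). It gives $|E(m)|\leq C_{n,d}m^{n-1}(1+q_L^{\sigma-r})q_L^{-m\sigma}$ for all $m\geq 2$ and all $L$ of residue characteristic at least $M$.

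With these two inputs the combination is elementary; no H\"older or van der Corput averaging is involved. If $\sigma\geq r$, then $(1+q_L^{\sigma-r})q_L^{-m\sigma}\leq 2q_L^{-(m-1)\sigma-r}$, and
\[(m-1)\sigma+r-m\Bigl(\tfrac{d+1}{d+2}\sigma+\tfrac{r}{d+2}\Bigr)=(\sigma-r)\Bigl(\tfrac{m}{d+2}-1\Bigr)\geq 0\qquad\text{for } m\geq d+2.\]
So the Igusa-side bound gives the first term for $m\geq d+2$, and the small-conductor bound gives the second for $m\leq d+1$. The weights $\frac{1}{d+2},\frac{d+1}{d+2}$ are just this break-even point.

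If $\sigma<r$, you need an input you never mention: $\sigma\geq\lct(\cI_K)\geq\frac{n-s}{d}$, valid in this case (\cite{MoiIde}, \cite{CDMO}). It ensures $2q_L^{-m\sigma}$ is dominated by the second term; this is the proof of Theorem \ref{bounalph}.

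Finally, your $m=1$ step is wrong. The claim $\max_\alpha|\alpha|\geq q_L^{n-r}$ fails precisely in the interesting case: for a locally complete intersection with rational singularities, $\max_\alpha|\alpha|<q_L^{n-r}$ (Corollary \ref{coutp}). For instance, for $x_1^2+\dots+x_n^2=1$, Hensel gives $\#X(\cO_L/\cM_L^m)=q_L^{(m-1)(n-1)}\#X(k_L)$. Hence $B_{L,X,n-1}=\emptyset$, and the right-hand side at $m=1$ is $C q_L^{n/2}$, which your trivial bound of order $q_L^{n-1}$ does not meet for $n\geq 3$. You need the genuine cancellation $|\#X(k_L)-q_L^{n-r}|\leq c_{n,d}\,q_L^{n-(n-s)/d}$, which is again the case $m=1$ of Proposition \ref{uppallide}.
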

Moreover, the following corollary  of \ref{uupp00}  improves (\ref{LWmod0}):

\begin{cor}\label{coutp}Let $X$ be a $\ZZ$-scheme of finite type such that $X_\QQ$ is geometrically irreducible, locally a complete intersection of dimension $l$ having only rational singularities. Then there is an integer $M$ such that $\beta_{L,X,l}:=\max\{|\alpha|\mid \alpha\in B_{L,X,l}\}<q_L^l$ if $L$ is a non-Archimedean local field  of residue field characteristic at least $M$. Moreover, there is a constant $C_X$ depending only the complexity of $X$ such that 
$$\left|X(\cO_L/\cM_L^m)-q_L^{(m-1)l}\#X(k_L)\right|\leq C_Xq_L^{ml-\lceil l-\log_{q_L}(|\beta_{L,X,l}|)\rceil}$$
for all non-Archimedean local fields $L$ of residue field characteristic at least $M$, where if $a$ is a real number then $\lceil a\rceil=\min\{i\in \ZZ\mid i\geq a\}$.
\end{cor}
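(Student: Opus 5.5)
The plan is to apply Theorem A with the real number $l=\dim(X_\QQ)$ and sum the resulting inequalities telescopically in $m$. There are two parts to establish: first that $\beta_{L,X,l}<q_L^l$ for large residue characteristic, and then the counting estimate.

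\emph{Step 1: the bound on $\beta_{L,X,l}$.} Since $X_\QQ$ is a locally complete intersection with rational singularities, results of Mustaţă (\cite{Mustata1}) show that every jet scheme $X_m\otimes\QQ$ is irreducible of dimension $(m+1)l$. Through Denef's formula on a strong log resolution (Subsection \ref{exp-zetafunction}), this means every candidate pole $\alpha^{-1}$ of $\cZ_{L,X}(t)$ satisfies $|\alpha|\le q_L^l$. The only pole with $|\alpha|=q_L^l$ is $\alpha=q_L^l$ itself, and it is simple; it comes from the smooth locus. Concretely, each numerical datum $(N_i,\nu_i)$ of an exceptional or relevant divisor gives $\nu_i/N_i>\operatorname{codim}$, the lci-rational analogue of $\lct$ being larger than the codimension. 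Multiplying by $(1-tq_L^l)$ removes that pole. Hence $\beta_{L,X,l}=q_L^{\,l-\delta}$, where $\delta>0$ is taken from a finite list of rationals $\nu_i/N_i-\operatorname{codim}$ coming from the resolution, and this holds uniformly for residue characteristic at least $M$.

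\emph{Step 2: the counting estimate.} Set $a_m=\#X(\cO_L/\cM_L^m)$ and $D_m=a_m-q_L^l a_{m-1}$. Theorem A gives
\[
|D_k|\le C_X\,k^{e-1}(1+q_L^l\beta^{-1})\beta^k,\qquad \beta=\beta_{L,X,l}.
\]
Telescoping yields
\[
a_m-q_L^{(m-1)l}a_1=\sum_{k=2}^m q_L^{(m-k)l}D_k .
\]
With $\beta=q_L^{l-\delta}$ the sum is $q_L^{ml}\sum_{k\ge 2}k^{e-1}q_L^{-k\delta}(1+q_L^{\delta})$. This is dominated by the $k=2$ term and gives a bound of order $C\,q_L^{ml-2\delta+\delta}=C\,q_L^{ml-\delta}$, once $q_L^\delta$ is large enough that the series converges geometrically. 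The factor $k^{e-1}$ is absorbed because $q_L^{-\delta}\le p^{-\delta_0}$ with $\delta_0>0$ fixed.

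\emph{Step 3: the ceiling.} To replace $\delta$ by $\lceil\delta\rceil$, I would use integrality. $q_L^{-(m-1)l}$ times the left-hand side is a difference of point counts. More directly, Step 1 combined with the explicit Hensel/smooth-fibre structure should show that $a_m-q_L^{(m-1)l}a_1$ is a combination of counts over $k_L$-varieties, whose sizes are integer powers of $q_L$ up to Lang–Weil errors. The alternative I would try first is to bound the contribution of the singular locus directly: points reducing to the singular locus $S$ of codimension at least $1$ contribute at most $\#\{\text{lifts}\}$. Using Step 2 restricted to the tube over $S$ gives a bound $q_L^{ml-\delta'}$ with $\delta'$ attainable only at integer exponents, namely the dimensions of the fibres of the jet schemes over $S$. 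These are integers by Mustaţă's dimension count, giving $\lceil\delta\rceil$.

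This integrality improvement, from $\delta$ to $\lceil l-\log_{q_L}\beta\rceil$, is the main obstacle. Step 3 is where I expect the most work: one must match the non-integral pole exponent with integral jet-scheme codimensions, uniformly in $L$, via the $m=2$ case of Theorem A and the complexity bound from Lang–Weil (\ref{LWeil}) for the fibres.
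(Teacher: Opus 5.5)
\textbf{There is a gap at Step 3: the ceiling is not proved.}

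Your Steps 1 and 2 are sound and follow the paper's route. The inequality $\beta_{L,X,l}<q_L^l$ is the content of Proposition \ref{rati}, which you re-derive from Musta\c{t}\u{a}'s irreducibility of jet schemes. Telescoping Theorem A with $r=l$ is exactly the first half of Corollary \ref{coutn}. Since $\delta:=l-\log_{q_L}\beta_{L,X,l}$ is a positive rational whose denominator is bounded in terms of the complexity of $X$, your use of the factor $1+q_L^{\delta}$ even removes that corollary's $\epsilon$-loss.

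But this only gives $|\#X(\cO_L/\cM_L^m)-q_L^{(m-1)l}\#X(k_L)|\le C_Xq_L^{ml-\delta}$, while the statement asserts $q_L^{ml-\lceil\delta\rceil}$. No manipulation of Theorem A closes this: its right-hand side carries the rational exponent $\delta$ and is not sharp in $q_L$. Take $X=\{x^2+y^2+z^2=0\}$. Here $l=2$ and the blow-up of the origin has $(N,\nu)=(2,3)$, so $\beta_{L,X,2}=q_L^{3/2}$ and $\delta=\tfrac12$. A direct count in odd residue characteristic gives $\#X(\cO_L/\cM_L^2)-q_L^2\#X(k_L)=q_L^3-q_L^2$. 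Theorem A, and hence your Step 2, only bounds this by $O(q_L^{7/2})$. So everything beyond the first half of Corollary \ref{coutn} is an integrality statement. The paper does not get it from Theorem A either; it combines Proposition \ref{rati} with the argument of \cite[Theorem B]{MoiIde}.

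Your Step 3 does not supply this integrality, for three reasons.
\begin{enumerate}
\item Jet-scheme dimensions are integers for trivial reasons, which proves nothing by itself. What you would need is $\dim\pi_{m-1}^{-1}(X_{\mathrm{sing}})\le ml-\delta$, plus a matching bound for the subtracted term $q_L^{(m-1)l}\#X_{\mathrm{sing}}(k_L)$. Musta\c{t}\u{a}'s irreducibility of $X_{m-1}$ only gives codimension $\ge1$. That is the Aizenbud--Avni exponent $ml-1$, which equals $ml-\lceil\delta\rceil$ only when $\delta\le 1$.
\item ``Step 2 restricted to the tube over $S$'' again produces a rational index $\sigma^{\textnormal{abs}}_l(L,S,X)$, so it cannot create integrality.
\item Lang--Weil applied to $X_{m-1}$ has constants depending on the complexity of the jet scheme, hence on $m$, so it cannot give a constant uniform in $m$.
\end{enumerate}

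One workable source of integrality is Denef's formula (Proposition \ref{Defm}) itself. Work on an affine piece $X\subset\AA^n$ with $c=n-l$, and set $w_I(L)=q_L^{-n}(q_L-1)^{\#I}\cC_{I,L,X}=O(1)$. Then
\[
q_L^{-ml}\#X(\cO_L/\cM_L^m)-q_L^{-l}\#X(k_L)=\sum_{I}w_I(L)\sum_{\ell\in\NN_{>0}^{I}}q_L^{-\sum_{j}\ell_j\nu_j}\Bigl(q_L^{mc}\,\mathbf{1}\bigl[{\textstyle\sum_j}\ell_jN_j\ge m\bigr]-q_L^{c}\Bigr).
\]
\begin{itemize}
\item The term $I=\{j_0\}$, for the divisor with $(N,\nu)=(1,c)$, vanishes identically.
\item In every other term, after summing over $\ell_{j_0}$ when $j_0\in I$, each exponent is an integer $\le-\sum_{j\ne j_0}\ell_j(\nu_j-cN_j)$.
\item For $N_j\ge1$, the number $\nu_j-cN_j=N_j(\nu_j/N_j-c)$ is a positive integer, hence at least $\lceil\nu_j/N_j-c\rceil$.
\end{itemize}
You would still have to use the rational-singularity hypothesis (Proposition \ref{rati}) again to check two things. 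First, that $(1,c)$ is the only numerical datum with $\nu_j=cN_j$ meeting $X$. Second, that every $j$ with $\cC_{I,L,X}\ne0$ for some $I\ni j$ satisfies $\nu_j/N_j-c\ge\delta$. Some integer-exponent argument of this kind is indispensable, and your proposal does not contain one.
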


Notice that Igusa's conjecture for exponential sums in the case of non-rational singularities  was proved in \cite{CMN}. In Section \ref{flaw},  by using the above method related to the Hermit interpolation formula, we can approach Igusa's conjecture for exponential sums in the case of rational singularities as follows.
\begin{named}{Theorem B}\label{Iguapp}Let $f\in\ZZ[x_1,...,x_n]$ be a non-constant polynomial and $Z$ be an $\ZZ$-subscheme of finite type of $\AA_\ZZ^n$.  Suppose that the hypersurface defined by $f(x)-f(a)$ has only rational singularities in a neighbourhood of $a$ for all $a\in Z(\overline{\QQ})$. Then there is an integer $M$ (depending only on $f,Z$),  a constant $C$ (depending only on $n,\deg(f)$), and a constant $C_L$ for each non-Archimedean local field $L$ of characteristic zero such that $C_L=C$ if $L$ is of residue field characteristic at least $M$, and 
\begin{equation}\label{mabounex}\left|E_{f,L,Z,\psi}\right|\leq C_Lm^{n-1}q_L^{-m\sigma_L(f,Z)+\sigma_L(f,Z)-\frac{1}{2}}
\end{equation}
for all non-Archimedean local fields $L$ of characteristic zero and all additive characters $\psi$ of $L$ of conductor $m\geq 2$. 
\end{named} 
 \ref{Iguapp} will be generalized to Theorem \ref{main}. Similar to Corollary \ref{nsdI}, we give the following corollary of \ref{Iguapp} to approach the conjecture stated in \cite[Conjecture 1]{CluckerNguyen}. 
\begin{cor}\label{Igcoexp}
Let $f\in\ZZ[x_1,...,x_n]$ be a non-constant polynomial of degree $d\geq 1$ and $Z$ be a $\ZZ$-subscheme of finite type of $\AA_\ZZ^n$. Let $s(f)$ be as in Corollary \ref{nsdI}. For each non-Archimedean local field $L$ of characteristic zero, we set $$\beta_\QQ(f,Z):=\min\biggl\{\frac{d+1}{d+2}\moi_\QQ(f,Z)+\frac{1}{2(d+2)},\frac{n-s(f)}{d}\biggr\},$$
where $$\moi_\QQ(f,Z):=\lim_{\ell\to +\infty}\inf\{\sigma_L(f,Z)\mid \exists p>\ell, L \textnormal{ is a finite extension of } \QQ_p\}.$$ There is an integer $M$ depending only on $f,Z$ and a constant $C_{n,d}$ depending only on $n,d$ such that 
\[\left|E_{f,L,Z,\psi}\right|\leq C_{n,d}m^{n-1}q_L^{-m\beta_\QQ(f,Z)}\] 
for all non-Archimedean local fields $L$ of residue field characteristic at least $M$  and all additive characters $\psi$ of $L$ of conductor $m\geq 1$. In other words, Igusa's conjecture for exponential sums holds for $f,Z$ and all $\sigma\leq \beta_\QQ(f,Z)$. 
\end{cor}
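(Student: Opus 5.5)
The plan is to interpolate between two independent bounds for $E_{f,L,Z,\psi}$: the bound of Theorem B (or the non-rational case from \cite{CMN}), which is sharp in the exponent but loses a factor $q_L^{\sigma_L(f,Z)-\frac12}$; and a bound of Cluckers--Nguyen type in terms of $s(f)$ and $d$ (as in \cite{CluckerNguyen,MoiIde}), which holds for all $m\geq 1$ with constants depending only on $n,d$.

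First, I split into two cases according to whether the hypersurfaces $f-f(a)$, $a\in Z(\overline{\QQ})$, have only rational singularities near $a$. If some fibre has non-rational singularities, then $\moi_\QQ(f,Z)\leq 1$ and the bound of \cite{CMN} already gives $|E_{f,L,Z,\psi}|\leq C m^{n-1}q_L^{-m\moi_\QQ(f,Z)}$ for $m\geq 1$. Since $\beta_\QQ(f,Z)\leq \moi_\QQ(f,Z)$ once $\moi_\QQ\geq \frac12$ (and the small case follows from trivial bounds), this case is done. In the rational case, Theorem B gives
\[
|E_{f,L,Z,\psi}|\leq C m^{n-1}q_L^{-(m-1)\sigma-\frac12}, \qquad \sigma:=\sigma_L(f,Z)\geq \moi_\QQ(f,Z),
\]
for $m\geq 2$, with $C$ depending only on $n,d$ once $p\geq M$. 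The inequality $\sigma\geq\moi_\QQ$ for large $p$ follows because the candidate poles come from a fixed resolution, so $\sigma_L(f,Z)$ takes finitely many values.

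Second, I use the degree bound, valid for all $m\geq 1$ with a constant depending only on $n,d$:
\[
|E_{f,L,Z,\psi}|\leq C_{n,d}\,q_L^{-m\frac{n-s(f)}{d}}.
\]
This is obtained via the stationary-phase/Hessian argument of \cite{CluckerNguyen} (or its uniform version for large $p$). It handles $m=1$ directly and also covers all $m\leq d+2$, up to the stated constant, since there $q_L^{-m\beta_\QQ}$ dominates.

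Third, for $m\geq 2$ I take the geometric interpolation $\min(A,B)\leq A^{\theta}B^{1-\theta}$ with $\theta=\frac{d+1}{d+2}$, where $A$ is the bound from Theorem B and $B=q_L^{-m}$ or $B=q_L^{-m(n-s(f))/d}$, and I aim to show the resulting exponent is at most $-m\beta_\QQ(f,Z)$. For instance, $A^{\frac{d+1}{d+2}}$ has exponent $-\frac{d+1}{d+2}\bigl((m-1)\sigma+\frac12\bigr)$. The loss $\sigma-\frac12$ must be absorbed using $m\geq 1$ and the additive term $\frac{1}{2(d+2)}$. Concretely, I would compare with $-m\bigl(\frac{d+1}{d+2}\sigma+\frac{1}{2(d+2)}\bigr)$, which requires $\frac{d+1}{d+2}(\sigma-\frac12)\leq \frac{m}{2(d+2)}$. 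This holds when $m\geq (d+1)(2\sigma-1)$; since $\sigma\leq n/2$ in the rational case, this threshold is bounded in terms of $n,d$.

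For the remaining small $m$, I use the $s(f)$ bound when $\frac{n-s(f)}{d}$ is the minimum. Otherwise I use the trivial bound together with the constant $C_{n,d}$, since finitely many $m$ (bounded by $n,d$) only inflate the constant: $q_L^{m\beta}$ with $\beta\leq n$ cannot be bounded that way, so instead I combine with the $s(f)$-bound via interpolation with the same $\theta$.

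The main obstacle is this bookkeeping at small $m$. There one must verify that the interpolated exponent really beats $m\beta_\QQ$ without a $q_L$-dependent loss, and I would try to choose $\theta$ depending on $m$ to make the inequality hold uniformly.
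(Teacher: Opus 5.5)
\textbf{Verdict: genuine gap.} Your split (a Theorem~B type bound for large $m$, the $s(f)$-bound for small $m$) is the right architecture, but the execution fails in three places.

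\textbf{The $s(f)$-bound is overclaimed.} The estimate $|E_{f,L,Z,\psi}|\leq C_{n,d}m^{n-1}q_L^{-m\frac{n-s(f)}{d}}$ is only available for conductors $1\leq m\leq d+1$ (Proposition~\ref{uppallf}, with the slicing of Proposition~\ref{uppallide}). For all $m\geq 1$ it is exactly the open conjecture of \cite{CluckerNguyen} that this corollary approaches. If you had it, the corollary would be immediate, since $\beta_\QQ(f,Z)\leq\frac{n-s(f)}{d}$, and no interpolation would be needed. The other factor you interpolate with, $B=q_L^{-m}$, has no source at all. By (\ref{lowbL}) it fails whenever $\moi_\QQ(f,Z)<1$.

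\textbf{The case $\moi_\QQ(f,Z)<\frac12$ is not handled by trivial bounds.} You need $\frac{n-s(f)}{d}\leq\min_a\lct_{Z_{\overline{\QQ}}}(f-a)=\moi_\QQ(f,Z)$, which comes from \cite[Proposition 2.4]{CluckerNguyen} together with Theorem~\ref{rCMN}. Then $\beta_\QQ(f,Z)\leq\frac{n-s(f)}{d}\leq\moi_\QQ(f,Z)\leq\frac12$ and $(m-1)\moi_\QQ(f,Z)+\frac12\geq m\moi_\QQ(f,Z)$. Also, quoting \cite{CMN} directly gives a constant depending on $f,Z$ (cf.\ $C_1$ in Corollary~\ref{upbconja}). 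The paper instead uses $C_2m^{n-1}q_L^{-(m-1)\moi_\QQ(f,Z)-\frac12}$ from Corollary~\ref{upbconja}, which holds in the non-rational case as well, with $C_2$ depending only on $n,d$.

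\textbf{The interpolation creates your ``small $m$'' obstacle.} Compare the Theorem~B bound with $q_L^{-m\beta_\QQ(f,Z)}$ directly. Put $\sigma=\moi_\QQ(f,Z)$ and use $\sigma_L(f,Z)\geq\sigma$ for large residue characteristic. Then
\[
(m-1)\sigma+\tfrac12-m\Bigl(\tfrac{d+1}{d+2}\sigma+\tfrac{1}{2(d+2)}\Bigr)=\Bigl(\sigma-\tfrac12\Bigr)\Bigl(\tfrac{m}{d+2}-1\Bigr),
\]
which is $\geq 0$ for every $m\geq d+2$ once $\sigma\geq\frac12$. The threshold does not depend on $\sigma$. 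So Corollary~\ref{upbconja} covers exactly $m\geq d+2$, and the $s(f)$-bound covers exactly $m\leq d+1$. The constants $\frac{d+1}{d+2}$ and $\frac{1}{2(d+2)}$ in $\beta_\QQ$ are chosen precisely so that these two ranges meet; this is the paper's proof (Corollary~\ref{bounnsd}).

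Your threshold $(d+1)(2\sigma-1)$ is an artifact of the spurious factor $B$. It leaves the range $d+2\leq m<(d+1)(2\sigma-1)$ to the $s(f)$-bound, which is not available there. Choosing $\theta$ depending on $m$ cannot fix this. In that range the Theorem~B bound alone already suffices, by the identity above.
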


As in \cite{Loeasym}, in order to use \ref{uupp00} and \ref{Iguapp} together with  their corollaries, we need to estimate the $L$-oscillation indexes $\sigma_L(f,Z)$ for non-constant polynomials $f\in\ZZ[x_1,...,x_n]$, $\ZZ$-subschemes $Z$ of finite type of $\AA_\ZZ^n$ and  non-Archimedean local fields $L$ of characteristic zero. Let us fix such $f, Z$.   As mentioned above, if  $L$ is a non-Archimedean local field of characteristic zero then the strong monodromy conjecture implies that 
$\sigma_{L}(f,Z)$ is larger than the minimal exponent $\tilde{\alpha}_f$ of $f$. However, the strong monodromy conjecture  is still open in the general case. 
Thus, we may try to find another way to give lower bounds of $\sigma_{L}(f,Z)$. It follows from \cite{Igusa3} that 
\begin{equation}\label{lowlct}\sigma_{L}(f,Z)\geq \min_{a\in Z(\overline{\QQ})}\lct_a(f-f(a))
\end{equation}
 for all non-Archimedean local fields $L$ of characteristic zero (see Subsection \ref{exp-zetafunction}), where $\lct_a(f-f(a))$ is the log canonical threshold of $f-f(a)$ at $a$ (see Subsection \ref{JSL} for the definition of log canonical thresholds). Moreover, the equality in (\ref{lowlct}) holds for infinitely many such local fields $L$ if $f-f(a)$ defines a hypersurface in $\AA_{\overline{\QQ}}^n$  having non-rational singularities in any neighbourhood of $a$ for some $a\in Z(\overline{\QQ})$ (see \cite[Proposition 3.10]{CMN}). In \cite[Proposition 5.2]{CKMu}, this lower bound of $\sigma_{L}(f,Z)$ was improved  quite easily  to
\begin{equation}\label{lolctja}\sigma_{L}(f,Z)\geq \min_{a\in Z(\overline{\QQ})}\lct_a((f-f(a))+J_f^2),
\end{equation}
if $L\supset\QQ$ is a non-Archimedean local field of large enough residue field characteristic, where $J_f$ is the Jacobian ideal of $f$ generated by $\frac{\partial f}{\partial x_1},...,\frac{\partial f}{\partial x_n}$ and $\lct_a((f-f(a))+J_f^2)$ is the log canonical threshold of the ideal $(f-f(a))+J_f^2$. In Section \ref{lobmoti}, we give an improvement of \cite[Theorem 6.1 and Proposition 6.4]{CKMu}. We sketch the idea and result here. To simplify the argument, we will suppose that $f(Z(\overline{\QQ}))=0$. In \cite[Proposition 6.4]{CKMu}, for each $m\geq 2$ and each non-Archimedean local field $L$ of large enough residue field characteristic, the authors constructed a canonical cylinder $A_L(Z,f,m)\subset L^n$  depending on $L,Z,f,m$  such that  for all additive characters $\psi$ of $L$ of conductor $m$, the integral of the function $\psi(f(x))$ on $A_L(Z,f,m)$ is zero. Then we can calculate easily the volume of the complement $A'_L(Z,f,m)$ of $A_L(Z,f,m)$ in $\{x\in\cO_L^n\mid (x\mod \cM_L)\in Z(k_L)\}$ to give a common upper bound of $\left|E_{f,L,Z,\psi}\right|$ for all additive characters $\psi$ of $L$ of conductor $m$ and obtain (\ref{lolctja}). A natural idea is to replace the set $A_L(Z,f,m)$ by another set of larger volume. In this paper, for each  non-Archimedean local field $L$ of large enough residue field characteristic, each point $y\in\cO_L^n$ such that $(y\mod \cM_L)\in Z(k_L)$ and each $m\geq 2$, by using some special properties of matrices over local rings and the Hessian matrix $\cH_f(y)$ of $f$ at $y$,  we can construct a cylinder $\cA_L(y,Z,f,m)$  depending on $L,y,Z,f,m$ such that the integral of the function $\psi(f(x))$ on $\cA_L(y,Z,f,m)$ is zero for all additive characters $\psi$ of $L$ of conductor $m$. The main problem is that the set $\cA_L(y,Z,f,m)$ depends on $y$. Fortunately, we can show that there is $\mathsf{r}_m\in (0,1)$ depending on $m$ such that  $\cA_L(y,Z,f,m)$ is contained in the ball of radius $\mathsf{r}_m$ centered at $y$ for each $y\in \cO_L^n$. Moreover, the volume of $\cA_L(x,Z,f,m)$ viewed as a function in $x$  is constant on each $n$-dimensional closed  ball $B(y,\mathsf{r}_m)\subset \cO_L^n$. Our idea is to write  $\{x\in\cO_L^n\mid (x\mod \cM_L)\in Z(k_L)\}$ as a disjoint union of balls $B_{j,m}$ of radius $\mathsf{r}_m$ for $1\leq j\leq e(m)$ and choose a point $x^{(j)}\in B_{j,m}$ for each $j$. Then, the integral of the function $\psi(f(x))$ on $\cA_L(Z,f,m)=\cup_{1\leq j\leq e(m)}\cA_L(x^{(j)},Z,f,m)$ is zero for all additive characters $\psi$ of $L$ of conductor $m$. Thus,  for all additive characters $\psi$ of $L$ of conductor $m$, $\left|E_{f,L,Z,\psi}\right|$ is less than the volume of the set $$\cA'_L(Z,f,m):=\{x\in\cO_L^n\mid (x\mod \cM_L)\in Z(k_L)\}\setminus \cA_L(Z,f,m)=\sqcup_{1\leq j\leq e(m)}\cA'_L(x^{(j)},Z,f,m)$$ with $\cA'_L(x^{(j)},Z,f,m):=B_{j,m}\setminus \cA_L(x^{(j)},Z,f,m)$. To compute the volume of $\cA'_L(Z,f,m)$,  for each $j$, we write $\cA'_L(x^{(j)},Z,f,m)$ as a disjoint union of cylinders $C_{jl}$ of volume depending only on $m$ and the Hessian matrix of $f$ at $x^{(j)}$ for $1\leq l\leq d_j$. Moreover, for each $j$ and $l$, we give a version of Hensel's lemma for some suitable tuples of polynomials and use it to construct a cylinder $\tilde{C}_{jl}\subset B_{j,m}$ such that the volume of $\tilde{C}_{jl}$ differs from the volume of $C_{jl}$ by a factor depending only on $m$ and the Hessian matrix of $f$ at $x^{(j)}$. In addition, we can show that  $\tilde{C}_{jl}$ for $1\leq j\leq e(m)$ and $1\leq l\leq d_j$  are pairwise disjoint sets and  $\tilde{\cC}_L(Z,f,m):=\cup_{1\leq j\leq e(m), 1\leq l\leq d_j}\tilde{C}_{jl}$ is a canonical set depending only on $L, m, Z, f$ and the Jacobian ideal of $f$. Furthermore, we can show that the volumes of $C_{jl}$ and $\tilde{C}_{jl}$  are independent of the choice of $x^{(j)}$.   In summarize, we can calculate the volume of $\cA'_L(Z,f,m)$ by an integral of a function in terms of $m$ and the Hessian matrix of $f$ on $\tilde{\cC}_L(Z,f,m)$. It yields a lower bound of $\sigma_L(f,Z)$. Moreover, by using this lower bound of $\sigma_L(f,Z)$ and \ref{Iguapp}, we also obtain a uniform  upper bound of $\left|E_{f,L,Z,\psi}\right|$. The exact statement is as follows.

\begin{named}{Theorem C}\label{lobsig}Let $f\in\ZZ[x_1,...,x_n]$ be a non-constant polynomial. For each $c\in \overline{\QQ}$, we  denote by $\cI_{f-c}$ the ideal of $\QQ[c,x_1,...,x_n]$ generated by $(f-c)^2$ and $(\frac{\partial f}{\partial x_i})^3$ for $1\leq i\leq n$. 
For each $0\leq l\leq n$, we denote by $\tilde{\mathsf{D}}_{f,l}$ the ideal of $\QQ[x_1,...,x_n]$ generated by all principal minors of order $l$ of  the Hessian matrix $\cH_{f}(x)$ with the convention that $\tilde{\mathsf{D}}_{f,0}=(1)$. We set  $$\sigma_{f,Z}:=\min_{a\in Z(\overline{\QQ})}\min_{0\leq l\leq n}\left(2\lct_{a}\left(\cI_{f-f(a)},\tilde{\mathsf{D}}_{f,l},\frac{1}{2}\right)-\frac{l}{6}\right),$$
where $\lct_{a}\left(\cI_{f-f(a)},\tilde{\mathsf{D}}_{f,l},\frac{1}{2}\right)$ is the generalized log canonical threshold of the triple $(\cI_{f-f(a)},\tilde{\mathsf{D}}_{f,l},\frac{1}{2})$ defined in Subsection \ref{JSL}. There is an integer $M$ depending only on $f,Z$ such that $\sigma_{L}(f,Z)\geq \sigma_{f,Z}$ for all non-Archimedean local fields $L\supset\QQ$ of residue field characteristic at least $M$. Consequently, there is an integer $M$ depending only on $f,Z$ and a positive constant $C_{n,d}$ depending only on $n,d$ such that
$$|E_{f,L,Z,\psi}|\leq C_{n,d}m^{n-1}q_L^{-(m-1)\sigma_{f,Z}-\frac{1}{2}}$$
for all $m\geq 2$, all non-Archimedean local fields $L\supset\QQ$ of  residue field characteristic at least $M$ and all  additive characters $\psi$ of $L$ of conductor $m$. 
\end{named}
 
In fact, we can show that $\sigma_{f,Z}\geq \min_{a\in Z(\overline{\QQ})}\lct_a((f-f(a))+J_f^2)$ for all $f$ and $Z$ (see Proposition \ref{compaCKM}). Furthermore, by the definition of $\sigma_L(f,Z)$ in Subsection \ref{exp-zetafunction}, \ref{lobsig} will imply that the real part of any pole of any Igusa local zeta function associated to $f,Z,L$  is $-1$ or less than $-\sigma_{f,Z}$ if $L$ is a non-Archimedean  local field of large enough residue field characteristic. By (\ref{polsig}) and working with affine coverings, this also helps to give an upper bound of $\max_{\alpha\in B_{L,X,l}}|\alpha|$ in terms of geometric invariants of $X$ if $X$ is a scheme of finite type over $\ZZ$, $\ell\in\RR$ an $L$ is a non-Archimedean local field of large enough residue field characteristic. Thus, we can obtain a weaker version of (\ref{boun000}) (resp. (\ref{mabounex})) with the right-hand side depending on the geometry of $X$ (resp. $f$) provided that $L$ is a non-Archimedean local field of large enough residue field characteristic. We will present a stronger and more general statement of \ref{lobsig} in Section \ref{lobmoti}.

Note that Igusa proposed the conjecture for exponential sums as the first step to study the local-global principle for the existence of non-zero integral solutions  of homogeneous polynomials of degree $d\geq 3$ (see \cite{Igusa3}). In another approach to this local-global principle, the Hardy-Littlewood circle method splits the problem to the estimations for the minor arcs and major arcs (see \cite{Birch,Browning-HB,BrowPren}). Then Igusa's conjecture for exponential sums also helps to validate the estimation for the major arcs (see \cite{CluckerNguyen,Nguyennsd}). 
 In the general case, to achieve the local-global principle as Igusa's expectation, there is still a huge gap related to the estimation for the minor arcs in the Hardy-Littlewood circle method  or the next steps of Igusa's method.  However, when $d=3$, the delta method of Heath-Brown in \cite{Heath-Brown-cubic} focuses on the estimation of the major arcs. By using the delta method, Hooley showed that the local-global principle holds for a  homogeneous polynomial $f$ of degree $3$ in $n$ variables over the ring of integers if $f$ defines a smooth hypersurface in $\PP_\QQ^{n-1}$ (i.e., $s(f)=0$ with $s(f)$ is as in Corollary \ref{nsdI}) and either $n=8$ and a Riemann hypothesis is true for certain Hasse-Weil \textsf{L}-functions (see \cite{Hoo14}) or $n\geq 9$ (see \cite{Hoo13}). On the other hand, Igusa expected that the local-global principle holds for a homogeneous polynomial $f$ of degree $3$ in at least $7$ variables over the ring of integers if $s(f)=0$. Thus, there may be a possible room to improve Hooley's results. In order to do this, we may need to give as best as lower bound of $\sigma_L(f,Z)$ for polynomials $f$ of degree $3$. In \cite{CluckerNguyen}, one expects that
 $$\sigma_L(f,Z)\geq \frac{n-s(f)}{\deg(f)}$$ 
 for all non-Archimedean local fields $L$ of characteristic zero. As seen in Example \ref{exmpol}, we can not have 
 $$\sigma_{f,Z}\geq\frac{n-s(f)}{\deg(f)}$$
 in the general case. However, if $f\in\ZZ[x_1,...,x_n]$ is a homogeneous polynomial of degree $3$ such that $s(f)=0$ then $\sigma_{f,Z}=\frac{n}{3}$. Thus, it is natural to ask whether we have  $$\sigma_{f,Z}\geq\frac{n-s(f)}{3}$$
if $f\in \ZZ[x_1,...,x_n]$ is a non-constant polynomial of degree $3$. If it is true then we may hope to improve the result of Hooley in \cite{Hoo14}. We will generalize this question in Section \ref{EXQu}.

\section{Preliminaries}
In this section, we will introduce the notations an notions  used throughout this paper. We will recall some known facts about resolutions of singularities, log canonical thresholds, Igusa's local zeta functions and exponential sums modulo $p^m$. Moreover we also recall the Hermite interpolation formula and introduce its application  on the study of Igusa's local zeta functions.

\subsection{Non-Archimedean local fields.}\label{nuloc}Let $K$ be a number field and $\cO_K$ be its ring of integers. Throughout this paper, a local field $L$ over $\cO_K$ is a finite extension of $\QQ_p$ or $\FF_p((t))$ which is endowed with a ring homomorphism $\varphi:\cO_K\to L$. Let $L$ be a local field over $\cO_K$, we denote by $\cO_L$ the ring of integers in $L$ and $\cM_L$ the maximal ideal of $\cO_L$. Let $M$ be an integer, we denote by $\cL_{K,M}$ the set of all local fields over $\cO_K$ of characteristic zero and of residue field characteristic at least $M$. We also denote by $\cL'_{K,M}$ the set of all local fields over $\cO_K$ of characteristic at least $M$. We set $\tilde{\cL}_{K,M}:=\cL_{K,M}\cup\cL'_{K,M}$.

Let $L\in\tilde{\cL}_{\QQ,1}$. We denote by $k_L=\cO_L/\cM_L$ the residue field of $L$,  $p_L$ be the characteristic of $k_L$ and $q_L$ the cardinality of $k_L$. If $z\in\cO_L$, we denote by $\overline{z}$ the image of $z$ by the reduction map $\cO_L\to k_L$. Let $\ord_L: L\to \ZZ\cup\{+\infty\}$ be the valuation map of $L$. We fix a uniformizing element $\varpi_L$ of $\cO_L$ and denote by $\acc:L \to\cO_L^*\cup\{0\}$ the angular component map given by $\acc(0)=0$ and $\acc(z)=z\varpi_L^{-\ord_L(z)}$. Let $i\geq j$ be positive integers, we denote by $\pi_i:\cO_L\to\cO_L/(\varpi_L^i)$ and $
\pi_{ij}:\cO_L/(\varpi_L^i)\to \cO_L/(\varpi_L^j)$ the canonical projections. If $z\in\cO_L$, then  $\overline{z}=\pi_1(z)$. For each $\cO_L$-scheme $X$ and $i\geq j\geq 1$, $\pi_{ij}$ induces the canonical projection $\pi_{ij}: X(\cO_L/(\varpi_L^i))\to X(\cO_L/(\varpi_L^j))$. If $\cI$ is an ideal of $\cO_L[x_1,\dots,x_n]$ generated by $f_1,...,f_r$ and $x\in \cO_L^n$, then we set $\ord_L\left(\cI(x)\right):=\min_{f\in \cI}\ord_L(f(x))=\min_{1\leq i\leq r}\ord_L(f_i(x))$. We denote by $\left|dx_1\wedge...\wedge dx_n\right|$  the normalized Haar measure on $L^n$ such that the volume of $\cO_L^n$ is $1$.

Recall that an {\em additive character} $\psi$ of $L$ is a continuous group homomorphism $\psi: (L,+)\to (\CC^{\times},\times)$. Let $\psi$ be a non-trivial additive character of $L$, then the {\em conductor} $m_\psi$ of $\psi$ is the integer $m$ such that $\psi$ is trivial on $\varpi_L^m\cO_L$, but non-trivial on $\varpi_L^{m-1}\cO_L$. Let $\psi$  be an additive character of $L$ of conductor $0$ then every additive character of $L$ is of the form $\psi_z:(L,+)\to(\CC^{\times},\times), a\mapsto \psi(az)$ for a unique element $z\in L$, moreover $m_{\psi_z}=-\ord_L(z)$. From now on, if we take a local field $L$  over $\cO_K$ then we understand that we take a tripe $(L,\varpi_L,\psi)$ with a uniformizing parameter $\varpi_L$ of $\cO_L$ and a non-trivial character $\psi$ of $L$ of conductor $0$. We usually write $L\in \tilde{\cL}_{K,M}$ but also write $(L,\varpi_L,\psi)\in \tilde{\cL}_{K,M}$ in somewhere. Let $(L,\varpi_L,\psi)$ be a local field over $\cO_K$, then $\psi_{\varpi_L^{-1}}$ is of conductor $1$ and  induces a character $\overline{\psi}:k_L\to \CC^{\times}, (x\mod \cM_L)\mapsto \psi_{\varpi_L^{-1}}(x)$ for all $x\in\cO_L$.

A {\em multiplicative character} $\chi$ of $\cO_L^{\times}$ is a continuous homomorphism $\chi:(\cO_L^{\times},\times)\to (\CC^{\times},\times)$ with finite image. We also put $\chi(0)=0$.
If $\chi$ is a multiplicative character, the {\em conductor} $c(\chi)$ of $\chi$ is the smallest integer $e\geq 1$ such that $\chi$ is trivial on $1+\cM_L^e$.  If $c(\chi)=1$ then $\chi$ induces a character $\overline{\chi}:k_L^{\times}\to\CC^{\times}, (x \mod \cM_L)\mapsto\chi(x)$ for all $x\in \cO_L^{\times}$. We denote by $\chi_{\rm triv}$ the trivial character $\cO_L^{\times}\to \CC^{\times}, z\mapsto 1$. Let $\chi$ be a multiplicative character, we denote by $d(\chi)$ the order of $\chi$, i.e., the smallest positive integer $d$ such that $\chi^d=\chi_{\rm triv}$. 

Let $\mathfrak{p}\in\Specm(\cO_K)$, then we denote by $K_\mathfrak{p}$  the completion of $K$ with respect to the place $\mathfrak{p}$ and set $\cN_\mathfrak{p}:=\left|N_{K/\QQ}(\mathfrak{p})\right|=\#(\cO_K/\mathfrak{p})=q_{K_\mathfrak{p}}$.

\subsection{Resolution of singularities  and log canonical thresholds}\label{JSL}
Let $R$ be a commutative ring with $1\neq 0$. If $\cI$ is an ideal of $R[x_1,\dots,x_n]$ and $R'$ is an $R$-algebra, then we will write $\cI_{R'}$ instead of $\cI\otimes_R R'$. Similarly, if $X, Y$ are $R$-schemes, $\varphi:X\to Y$ is a $R$-morphism and $R'$ is a $R$-algebra then we write $\varphi_{R'}:X_{R'}=X\times_{\Spec(R)}\Spec(R')=X\otimes_R R'\to Y_{R'}=Y\times_{\Spec(R)}\Spec(R')=Y\otimes_R R'$ for the extension of scalars  to $R'$ of $\varphi$. In this paper, we will write $X_{R'}$ instead of $X\otimes_R R'$.  Suppose that $R$ is Noetherian and $X$ is an $R$-scheme of finite type. We write $X_{\red}$ to denote the reduced scheme associated to $X$. We say that the complexity of $X$ is bounded by $N$ if $X$ can be covered by at most $N$ open affine subschemes of finite type over $R$, each of them can be embedded into $\AA_{R}^N$ as a closed subscheme defined by at most $N$ polynomials of degree at most $N$. In this paper, a hypersurface in $\AA_R^n$ (resp. $\PP_R^n$) is the closed subscheme of $\AA_R^n$ (resp. $\PP_R^n$) defined by a non-constant polynomial (resp. homogeneous polynomial) $f\in R[x_1,...,x_n]$.

Let $k$ be a field. We denote by $\overline{k}$ a fixed algebraic closure of $k$.  A $k$-variety is a reduced and separated $k$-scheme of finite type. 
Let $X$ be a $k$-variety and $V$ be a subvariety of $X$, we denote by $\dim_V(X)$ the dimension of $X$ at $V$, i.e., $\dim_V(X)=\min_{V\subset U, U \textnormal{ is a open subset of } X}\dim (U)$. We say that a $k$-variety $X$ is equi-dimensional if $\dim_x(X)=\dim(X)$ for all $x\in X$. If this is the case, we also say that $X$ is of pure dimension $\dim(X)$. If $X$ is a $k$-variety of pure dimension, $Y$ is a $k$-subvariety of $X$ and $V$ is a $k$-subvariety of $Y$, the codimension of $Y$ in $X$ at $V$ is defined by $\codim_V(Y,X)=\dim(X)-\dim_V(Y)$.

Let $k$ be a field.  Let $\cI$ be a non-zero ideal of $k[x_1,\dots,x_n]$ such that $\cI\neq (1)$ and $X$ be the closed subscheme of $\AA_k^n$ associated to $\cI$. A strong log resolution of $\cI$ over $k$ is  a proper, birational $k$-morphism $\pi: Y\to \AA_{k}^n$, with a smooth $k$-variety $Y$ such that $\cI\cO_Y=\cO_Y(-\sum_{i\in I}N_iE_i)$  and $\cK_{Y/\AA_k^n}=\sum_{i\in T}(\nu_i-1)E_i$ for positive integers $\nu_i, N_i$ such that $\sum_{i\in T}E_i$ is a divisor with simple normal crossings, where the relative canonical divisor $\cK_{Y/\AA_{k}^n}$ is locally defined by the determinant of the Jacobian matrix of $\pi$. We call $((\nu_i, N_i))_{i\in T}$ the numerical data of $\pi$ and $(\nu_i,N_i)$ the numerical data of $E_i$. Suppose that $k$ is of characteristic zero. By the work of Hironaka in \cite{Hir:Res}, such a strong log resolution of $\cI$ exists. Let $Z$ be a subscheme of $\AA_k^n$. The log canonical threshold 
 of $\cI$ at $Z$ is defined by
$$\lct_Z(\cI):=\min_{i\in T, Z\cap\pi(E_i)\neq\emptyset}\frac{\nu_i}{N_i}=\min_{x\in Z(\overline{k})}\lct_x(\cI_{\overline{k}}).$$
We use the convention that $\lct_Z(\cI)=+\infty$ if $Z\cap\pi(E_i)=\emptyset$ for all $i\in T$. This definition does not depend on the choice of $\pi$ (see \cite{Mustata2}). We write simple $\lct(\cI)$ instead of $\lct_X(\cI)$.  If $\cI=(f)$ for a non-constant polynomial $f$, we also write $\lct_Z(f)$ instead of $\lct_Z((f))$.  It follows from \cite[Corollary 0.2]{MustJAMS} that $\lct_Z(\cI)\leq n-\dim_{Z\cap X}(X)$. By using the generic smoothness as in \cite[Property 1.23]{Mustata2}, if $\cI$ can be generated by at most $e$ polynomials of degree at most $e$ then we can choose $(Y,\pi)$ such that $\#I$, $\nu_i, N_i$ for $i\in I$ and the complexity of $(\sqcup_{i\in I}\tilde{E}_i)\sqcup Y$ are bounded by a constant $C_{e,n}$ depending only on $e,n$. If this is the case, we say that the complexity of $(Y,\pi)$ is bounded by $C_{e,n}$. 
More generally, let $\cJ$ be another ideal of $k[x_1,\dots,x_n]$ and $\beta$ be a positive rational number. By \cite{Hir:Res}, there exists a strong common log resolution $\tilde{\pi}:\tilde{Y}\to \AA_k^n$ of $\cI$ and $\cJ$, i.e., $\tilde{Y}$ is smooth, $\tilde{\pi}$ is a proper and birational morphism,   $\cI\cO_{\tilde{Y}}=\cO_{\tilde{Y}}(-\sum_{i\in T}\tilde{N}_i\tilde{E}_i), \cJ\cO_{\tilde{Y}}=\cO_{\tilde{Y}}(-\sum_{i\in T}\cN_i\tilde{E}_i)$ and $\cK_{\tilde{Y}/\AA_k^n}=\sum_{i\in T}(\tilde{\nu}_i-1)\tilde{E}_i$ for positive integers $\tilde{\nu}_i$ and non-negative integers $\cN_i,\tilde{N}_i$ such that $\sum_{i\in T}\tilde{E}_i$ is a divisor with simple normal crossings and $\tilde{N}_i+\cN_i>0$ for all $i$. It is clear that a strong log resolution of $\cI\cJ$ is a strong common log-resolution of $\cI$ and $\cJ$.  In \cite{ELM}, the generalized log canonical threshold $\lct_Z(\cI,\cJ,\beta)$  is defined by 
\begin{equation}\label{genlogdef}\lct_Z(\cI,\cJ,\beta)=\min_{i\in T, Z\cap\tilde{\pi}(\tilde{E}_i)\neq\emptyset}\frac{\tilde{\nu}_i+\beta\cN_i}{\tilde{N}_i}=\min_{x\in Z(\overline{k})}\lct_x(\cI,\cJ,\beta)
\end{equation}
with the convention that $\frac{a}{0}=+\infty$ if $a>0$. We also write $\lct(\cI,\cJ,\beta)$ instead of $\lct_{\AA_k^n}(\cI,\cJ,\beta)$. 
Note that if $k'$ is a field extension of $k$ then $\lct_Z(\cI)=\lct_{Z_{k'}}(\cI_{k'})$ and $\lct_Z(\cI,\cJ,\beta)=\lct_{Z_{k'}}(\cI_{k'},\cJ_{k'},\beta)$.

When $k=K$ for a number field $K$ then we can say about strong log resolutions over $K$ with good reduction modulo $\cM_L$ for $L\in\tilde{\cL}_{K,1}$. Let $\cI$ be a non-zero ideal of $\cO_K[x_1,...,x_n]$ such that $\cI_K\neq (1)$. Let $(Y,\pi)$ be a strong log resolution of $\cI_K$ over $K$. Let  $(E_{i})_{i\in T}$ and $((\nu_i,N_i))_{i\in T}$ be the prime divisors and the numerical associated to  $(Y,\pi)$. For each subset $I\subset T$, we consider the schemes
\[
E_{I}:=\cap_{i\in I} E_{i} \qquad \text{and} \qquad \overset{\circ}{E_{I}}:=E_{I}\backslash\cup_{j\in T\backslash I}E_{j}.
\]
In particular, when $I=\emptyset$ we have $E_{\emptyset}=Y$. If $L\in\cL_{K,1}$ then $(Y,\pi)$ induces an embedding resolution $(Y_L,\pi_L)$ of $\cI_L$ over  $L$. 
If $W$ is a closed subscheme of $Y$, we denote the reduction mod $\cM_L$ of $W_L$ by $\overline{W}_{k_L}$ (see \cite{Shim}). We say that the resolution $(Y,\pi)$  has \emph{good reduction modulo $\cM_L$} if $\overline{Y}_{k_L}$ and all $\overline{E}_{ik_L}$ are smooth, $\cup_{i\in T}\overline{E}_{ik_L}$ has normal crossings, and the schemes $\overline{E}_{ik_L}$ and $\overline{E}_{jk_L}$ have no common components whenever $i\neq j$. 
We remark that there exists an integer $M$ such that for all local fields $L\in\cL_{K,M}$, $(Y,\pi)$ has good reduction modulo $\cM_L$ (see \cite[Theorem 2.4]{Denefdegree}). 

Let  $L\in\cL_{K,1}$. Suppose that $(Y,\pi)$ has good reduction modulo $\cM_L$. Then $(Y,\pi)$ induces a strong log resolution $(\overline{Y}_{k_L},\overline{\pi}_{k_L})$  of $\cI_{k_L}$ over $k_L$. For $I\subset T$, one then verifies that $\overline{E}_{Ik_L}=\cap_{i\in I}\overline{E}_{ik_L}$, and we put analogously $\overset{\circ}{\overline{E}}_{Ik_L}:=\overline{E}_{Ik_L}\backslash\cup_{j\notin I}\overline{E}_{jk_L}$. Moreover, if $\tilde{L}\in\cL'_{K,1}$ such that $k_L\cong k_{\tilde{L}}$ then $(\overline{Y}_{k_L},\overline{\pi}_{k_L})$ also induces a strong log resolution $(\tilde{Y}_{\tilde{L}},\tilde{\pi}_{\tilde{L}})$ of $\cI_{\tilde{L}}$ over $\tilde{L}$ with good reduction modulo $\cM_{\tilde{L}}$ in the above sense. Thus, we also say that $(Y,\pi)$ has good reduction modulo $\cM_{\tilde{L}}$.

\begin{prop}\label{genlogc}Let $K$ be a number field.  Let $\cI, \cJ$ be ideals of $\cO_L[x_1,...,x_n]$ such that $\cI_K\neq (0), \cI_K\neq (1)$. Let $Z$ be a subscheme of $\AA_{\cO_K}^n$ and $\beta$ be a positive rational number. Suppose that $\cI_K$ and $\cJ_K$ can be generated by $e$ polynomials of degree at most $e$.  Then there is an integer $M$ (depending only on $\cI,\cJ,Z$) and a positive constant $C_{e,n}$ (depending only on $n,e$) such that 
$$\int_{\{x\in \cO_L^n\mid \overline{x}\in Z(k_L),\ord_L(\cI_{\cO_L}(x))\geq m\}}q_L^{-\beta\ord_L(\cJ_{\cO_L}(x))}|dx|\leq C_{e,n}m^{n-1}q_L^{-m\lct_{Z_K}(\cI_K,\cJ_K,\beta)}$$
for all $L\in\tilde{\cL}_{K,M}$ and all $m\geq 1$. In addition, there is a constant $d_{e,n,\beta}$ (depending only on $n,e,\beta$) and a finite extension $L_p$ of $\QQ_p$ for each $p>M$ such that the inequality
$$\int_{\{x\in \cO_{L_p}^n\mid \overline{x}\in Z(k_{L_p}),\ord_{L_p}(\cI_{\cO_{L_p}}(x))\geq m\}}q_{L_p}^{-\beta\ord_{L_p}(\cJ_{\cO_{L_p}}(x))}|dx|\geq q_{L_p}^{d_{e,n}}q_{L_p}^{-m\lct_{Z_K}(\cI_K,\cJ_K,\beta)}$$
holds for infinitely many $m\geq 1$. 
\end{prop}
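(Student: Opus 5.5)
We take a strong common log resolution of $\cI_K$ and $\cJ_K$ (for instance a strong log resolution of $\cI_K\cJ_K$), and pull the integral back to $Y$, so that everything becomes a computation on a simple normal crossings divisor. Concretely, let $\pi:Y\to\AA_K^n$ be such a resolution, defined over $\cO_K[1/N]$ for some $N$, with numerical data $(\nu_i,\tilde N_i,\cN_i)_{i\in T}$. By the remark after the definition of complexity, the size of $T$ and all of these data can be bounded in terms of $e,n$. Choose $M$ so that $(Y,\pi)$ has good reduction modulo $\cM_L$ for all $L\in\tilde{\cL}_{K,M}$ (by \cite[Theorem 2.4]{Denefdegree}, together with the transfer remark for positive characteristic). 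Enlarge $M$ if needed so that $\overline{Z}$ and the images $\pi(E_i)$ reduce compatibly: for each $I$, the condition $\overline{E}_{Ik_L}\cap\overline\pi^{-1}(\overline{Z})\neq\emptyset$ should hold exactly when $E_I\cap\pi^{-1}(Z_K)\neq\emptyset$.

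Next we apply the change of variables $x=\pi(y)$ and use Denef's standard decomposition of $Y(\cO_L)$ according to the reduction $\overline{y}\in\overset{\circ}{\overline{E}}_{Ik_L}(k_L)$. Above a point of $\overset{\circ}{\overline{E}}_{Ik_L}$, good reduction gives local coordinates $y_1,\dots,y_n$ with $y_i$ defining $E_i$ for $i\in I$. In these coordinates the integrand and Jacobian are monomial times units:
\[\ord\cI=\sum_{i\in I}\tilde N_i\ord y_i,\qquad \ord\cJ=\sum_{i\in I}\cN_i\ord y_i,\qquad \ord\operatorname{Jac}=\sum_{i\in I}(\nu_i-1)\ord y_i.\]
The integral is therefore bounded by
\[\sum_{I}q_L^{-n}\#\overset{\circ}{\overline{E}}_{Ik_L}(k_L)\,(q_L-1)^{|I|}\sum_{\substack{k\in\ZZ_{\ge1}^I\\ \sum\tilde N_ik_i\ge m}}q_L^{-\sum_{i\in I}(\nu_i+\beta\cN_i)k_i},\]
with only those $I$ meeting $\pi^{-1}(Z)$ contributing. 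Lang–Weil, or the trivial bound $\#\overset{\circ}{\overline{E}}_{Ik_L}(k_L)\le C q_L^{n-|I|}$ with $C$ controlled by the complexity, removes the prefactor.

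It remains to estimate the lattice sum. Set $\lambda=\min_{i\in I}(\nu_i+\beta\cN_i)/\tilde N_i\ge\lct_{Z_K}(\cI_K,\cJ_K,\beta)$, taken over indices with $\tilde N_i>0$; indices with $\tilde N_i=0$ contribute only a convergent geometric factor. Then
\[\sum_{i\in I}(\nu_i+\beta\cN_i)k_i\ge\lambda\sum_{i\in I}\tilde N_ik_i.\]
Grouping the $k$ by $s=\sum\tilde N_ik_i\ge m$, the number of $k$ with a given $s$ is $O(s^{|I|-1})$. This gives a bound of the form $\sum_{s\ge m}s^{n-1}q_L^{-\lambda s}\le C' m^{n-1}q_L^{-m\lambda}$, uniformly in $q_L\ge2$. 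A slicker way to get $m^{n-1}$ is to write $q_L^{-\sum(\nu_i+\beta\cN_i)k_i}\le q_L^{-\lambda m}\,q_L^{-\sum_i\epsilon_ik_i}$, where $\epsilon_i=\nu_i+\beta\cN_i-\lambda\tilde N_i\ge0$. The indices with $\epsilon_i=0$ are constrained to $k_i\le m$, which yields at most $m^{|I|-1}$ after using the constraint, while the others give bounded geometric series. Since $|I|\le n$, this gives the upper bound with $C_{e,n}$.

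For the lower bound, pick $i_0$ realizing $\lct_{Z_K}(\cI_K,\cJ_K,\beta)$ with $E_{i_0}$ meeting $\pi^{-1}(Z)$. Choose $L_p$ a finite extension of $\QQ_p$ large enough that $\overset{\circ}{\overline{E}}_{\{i_0\}}\cap\overline\pi^{-1}(\overline Z)$ has a $k_{L_p}$-point; this is possible for all $p>M$ because that locus is nonempty of finite type. Restricting the integral to the piece above that point, where the integrand is exactly $q^{-(\nu_{i_0}+\beta\cN_{i_0})k}$ with $\tilde N_{i_0}k\ge m$, one gets for $m$ divisible by $\tilde N_{i_0}$ a contribution $\ge q^{-n}(1-q^{-1})q^{-m\lct}$. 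This gives infinitely many $m$, with $d_{e,n}=-n-1$. The main obstacle is the uniformity bookkeeping. Specifically, we must ensure that the constant depends only on $(e,n)$, which needs the bounded-complexity resolution and uniform point counts. We must also ensure the correct handling of the $Z$-condition and of components with $\tilde N_i=0$ after reduction mod $\cM_L$.
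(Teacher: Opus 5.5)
Your upper bound is essentially the paper's argument: Denef's formula on a bounded-complexity strong common log resolution with good reduction, Lang--Weil to bound the coefficients, and a lattice-point count as in Lemma \ref{upnu}. Grouping by $s=\sum_i\tilde N_ik_i$ works. The ``slicker'' variant does not, because for an index with $\epsilon_i=0$ nothing bounds $k_i$, so $\sum_k q_L^{-\lambda m}q_L^{-\sum_i\epsilon_ik_i}$ diverges.

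The genuine gap is in the lower bound. From $\pi(E_{i_0})\cap Z_K\neq\emptyset$ it does not follow that $\overset{\circ}{E}_{\{i_0\}}$ meets $\pi^{-1}(Z_K)$, because $E_{i_0}\cap\pi^{-1}(Z_K)$ can lie entirely inside other components. For example, take $n=2$, $\cI=(x_1)$, $\cJ=(x_1,x_2)$, $Z$ the origin, and $\pi$ the blow-up of the origin.
\begin{itemize}
\item The strict transform $E_1$ of $\{x_1=0\}$ has data $(\nu_1,\tilde N_1,\cN_1)=(1,1,0)$, and the exceptional curve $E_2$ has $(2,1,1)$.
\item So $\lct_{Z_K}(\cI_K,\cJ_K,\beta)=1$ is attained only at $E_1$.
\item But $\pi^{-1}(0)=E_2$, so $\overset{\circ}{E}_{\{1\}}\cap\pi^{-1}(0)=\emptyset$.
\end{itemize}
Splitting according to $a=\ord_L(x_2)$, the integral equals $q_L^{-m}(1-q_L^{-1})\sum_{a=1}^{m-1}q_L^{-a(1+\beta)}$ plus a term of size at most $q_L^{-(2+\beta)m}$. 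Hence it is at most $2q_L^{-m-1-\beta}$ for all $m\geq 1$. For $\beta\geq 3$ and $q_L\geq 3$ this is smaller than your claimed $q_L^{-n-1}q_L^{-m}$ for every $m$. So no $\beta$-independent constant such as $d=-n-1$ can work, which is why the statement has $d_{e,n,\beta}$.

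The repair is what the paper does.
\begin{enumerate}
\item Pick $y\in E_{i_0}\cap\pi^{-1}(Z_K)$ and set $I=\{i\mid y\in E_i\}$. Then $i_0\in I$ and $\overset{\circ}{E}_I$ meets $\pi^{-1}(Z_K)$.
\item Choose $L_p$ so that the reduction of this locus has a $k_{L_p}$-point.
\item In the lattice sum for that stratum keep only the single term $k_i=1$ for $i\in I\setminus\{i_0\}$ and $k_{i_0}=(m-\sum_{i\in I\setminus\{i_0\}}\tilde N_i)/\tilde N_{i_0}$, for the infinitely many large $m$ in the appropriate residue class modulo $\tilde N_{i_0}$.
\end{enumerate}
With $\lambda=\lct_{Z_K}(\cI_K,\cJ_K,\beta)$ this gives the lower bound $q_L^{-n}q_L^{-\sum_{i\in I\setminus\{i_0\}}(\nu_i+\beta\cN_i-\tilde N_i\lambda)}q_L^{-m\lambda}$. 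The loss exponent is nonnegative, since every $i\in I$ with $\tilde N_i>0$ has $\pi(E_i)\cap Z_K\neq\emptyset$. It is bounded by $(1+\beta)$ times a constant depending only on $e,n$, through the complexity bound on the numerical data. This is exactly where the dependence of $d_{e,n,\beta}$ on $\beta$ comes from.
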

\begin{proof}
Let $\pi:Y\to X=\AA_K^n$ be a strong common log resolution of $\cI$ and $\cJ$  mentioned in Section \ref{JSL} such that the complexity of $(Y,\pi)$ is bounded by a constant $D_{e,n}$ depending only on $n,e$. We write $\cI\cO_Y=\cO_Y(-\sum_{i\in T}N_iE_i), \cJ\cO_Y=\cO_Y(-\sum_{i\in T}\cN_iE_i)$ and $\cK_{Y/\AA_K^n}=\sum_{i\in T}(\nu_i-1)E_i$ for positive integers $(\nu_i)_{i\in T}$, non-negative integers $(N_i,\cN_i)_{i\in T}$ and prime divisors $(E_i)_{i\in T}$ of $Y$ such that $\sum_{i\in T}E_i$ is a divisor with simple normal crossings. We set $T_1=\{i\in T\mid N_i>0\}$, $T_2=T\setminus T_1$. 

Let $M$ be an integer such that $(Y,\pi)$ has good reduction modulo $\cM_L$ for all $L\in\tilde{\cL}_{K,M}$.  Then $(Y,\pi)$ induces a strong common log resolution $(\overline{Y}_{k_L},\overline{\pi}_{k_L})$ of $\cI_{k_L}$ and $\cJ_{k_L}$ over $k_L$ for each $L\in\tilde{\cL}_{K,M}$. 

Let $L\in\tilde{\cL}_{K,M}$. We set $$\cA_{L,Z}=\{(I,J)\in T_1\times T_2\mid \overline{\pi}_{k_L}(\overset{\circ}{\overline{E}}_{(I\cup J)k_L})(k_L)\cap Z(k_L)\neq \emptyset\}.$$
By (\ref{genlogdef}) and enlarging $M$ if needed, we can suppose that 
\begin{equation}\label{upplogen}\lct_{Z_K}(\cI_K,\cJ_K,\beta)\leq \min_{(I,J)\in \cA_{L,Z}}\min_{i\in I}\frac{\nu_i+\beta\cN_i}{N_i}=\min_{(I,J)\in \cA_L}\min_{i\in I\cup J}\frac{\nu_i+\beta\cN_i}{N_i}
\end{equation}
 for all $L\in\tilde{\cL}_{K,M}$. Moreover, we can suppose that for each prime $p\geq M$, the inequality (\ref{upplogen}) holds for a finite extension $L_p$ of $\QQ_p$. 
 
  Now, we can use the method in the proof of \cite[Theorem 3.1]{Denefdegree} to have 
\begin{align}\label{equlogge}&\int_{\{x\in \cO_L^n\mid \overline{x}\in Z(k_L),\ord_L(\cI_{\cO_L}(x))\geq m\}}q_L^{-\beta\ord_L(\cJ_{\cO_L}(x))}|dx|\nonumber\\=&\sum_{(I,J)\in\cA_{L,Z}}C_{I,J,Z,L}\sum_{\ell=(\ell_i)_{i\in I\cup J}\in\NN_{\geq 1}^{\#I+\#J}, \sum_{i\in I}\ell_iN_i\geq m}q_L^{-\sum_{i\in I\cup J}\ell_i(\nu_i+\beta\cN_i)},
\end{align}
 where $$C_{I,J,Z,L}=\#\left(\left(\overset{\circ}{\overline{E}}_{(I\cup J)k_L}\cap\overline{\pi}_{k_L}^{-1}(Z_{k_L})\right)(k_L)\right)(q_L-1)^{\#I+\#J}q_L^{-n}$$
 for all $I,J\in \cA_{L,Z}$.
 By using the Lang-Weil estimate (\ref{LWeil}) together with the facts that the complexity of $(Y,\pi)$ is bounded by $D_{e,n}$ and $\dim\overset{\circ}{\overline{E}}_{(I\cup J)k_L}=n-\#I-\#J$, there is a constant $Q_{e,n}$ depending only on $n,e$ such that 
 \begin{equation}\label{upbougene}
 q_L^{-n}\leq C_{I,J,Z,L}\leq Q_{n,e}
 \end{equation}
 for all $L\in\tilde{\cL}_{K,M}$ and all $(I,J)\in\cA_{L,Z}$.  Thus, the first assertion follows from (\ref{upplogen}), (\ref{equlogge}), (\ref{upbougene}) and Lemma \ref{upnu} below. 
 
To prove the second assertion, for each prime $p>M$, we take $(I,J)\in\cA_{L_p,Z}$ such that 
 $$\lct_{Z_K}(\cI_K,\cJ_K,\beta)=\min_{i\in I}\frac{\nu_i+\beta\cN_i}{N_i}.$$
 Let $i_0\in I$ such that $\lct_{Z_K}(\cI_K,\cJ_K,\beta)=\frac{\nu_{i_0}+\beta\cN_{i_0}}{N_{i_0}}$. For each $m\in\NN$ such that $m\equiv\sum_{i\in I\setminus\{i_0\}}N_i\mod N_{i_0}$, let  $\ell^{(m)}\in \NN_{\geq 1}^{\#I+\#J}$ such that 
 $\ell^{(m)}_i=1$ if $i\in I\cup J\setminus \{i_0\}$ and $\ell^{(m)}_{i_0}=\frac{m-\sum_{i\in I\setminus\{i_0\}}N_i}{N_{i_0}}$.  Then we can use (\ref{equlogge}), (\ref{upbougene}) and the fact that the complexity of $(Y,\pi)$ is bounded by $D_{e,n}$ to have 
\begin{align*}&\int_{\{x\in \cO_{L_p}^n\mid \overline{x}\in Z(k_{L_p}),\ord_L(\cI_{\cO_{L_p}}(x))\geq m\}}q_{L_p}^{-\beta\ord_{L_p}(\cJ_{\cO_{L_p}}(x))}|dx|\\ \geq& q_{L_p}^{-n}q_{L_p}^{-\sum_{i\in I\cup J}\ell_i^{(m)}(\nu_i+\beta\cN_i)}\\=&q_{L_p}^{-n}q_{L_p}^{-\sum_{i\in I\cup J\setminus \{i_0\}}(\nu_i+\beta\cN_i-N_i\frac{\nu_{i_0}+\beta\cN_{i_0}}{N_{i_0}})}q_{L_p}^{-m\lct_{Z_K}(\cI_K,\cJ_K,\beta)}\\
\geq& q_{L_p}^{d_{e,n,\beta}}q_{L_p}^{-m\lct_{Z_K}(\cI_K,\cJ_K,\beta)}
\end{align*}
for all $m\in \left(\sum_{i\in I\setminus\{i_0\}}N_i\right)+ N_{i_0}\NN_{\geq 1}$, where 
$$d_{e,n,\beta}=-n-(1+\beta)D_{e,n}^2.$$
This completes our proof. 
\end{proof}
\begin{lem}\label{upnu}Let $I$ be a finite set and $\sigma$ be a positive real number. For each $i\in I$, we take a positive real number $a_i$ and a non-negative real number $b_i$ such that $a_i\geq\sigma b_i$.  Then there is a positive constant $C$ depending only on $((a_i,b_i))_{i\in I}$ such that 
$$\sum_{(m_i)_{i\in I}\in\NN^{\#I}, \sum_{i\in I}m_ib_i\geq m}q^{-\sum_{i\in I}m_ia_i}\leq Cm^{\#I-1}q^{-m\sigma}$$
for all $q\geq 2$ and all $m\geq 1$.
\end{lem}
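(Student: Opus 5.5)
The idea is to split off the part of each exponent that is exactly balanced against the constraint. That part produces the polynomial factor $m^{\#I-1}$; the remaining part is a convergent geometric series bounded independently of $q\geq 2$. Write $a_i=\sigma b_i+c_i$ with $c_i\geq 0$, and split $I=I_0\sqcup I_1$ with $I_0=\{i\mid c_i>0\}$ and $I_1=\{i\mid c_i=0\}$. For $i\in I_1$ we have $\sigma b_i=a_i>0$, hence $b_i>0$. The summand then factors as
$$q^{-\sum_{i\in I}m_ia_i}=q^{-\sigma\sum_{i\in I_0}m_ib_i}\,q^{-\sum_{i\in I_0}m_ic_i}\cdot q^{-\sigma\sum_{i\in I_1}m_ib_i}.$$

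First fix the $I_0$-coordinates, put $s=\sum_{i\in I_0}m_ib_i$ and $r=\#I_1$, and bound the inner sum over the $I_1$-coordinates subject to $T:=\sum_{i\in I_1}m_ib_i\geq m-s$. Since all $b_i>0$ for $i\in I_1$, the number of tuples $(m_i)_{i\in I_1}$ with $T\in[k,k+1)$ is at most $C_1(k+1)^{r-1}$, with $C_1$ depending only on $(b_i)_{i\in I_1}$.

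Let $K=\max\{m-s,0\}$. We use $q\geq 2$ and $T-K\geq 0$ to write
$$q^{-\sigma T}=q^{-\sigma K}q^{-\sigma(T-K)}\leq q^{-\sigma K}2^{-\sigma(T-K)}.$$
This is the key point: only the exponent $T-K$, which is nonnegative, is transferred to base $2$. Nothing is lost to a factor like $q^{\sigma}$, which could not be controlled uniformly in $q$. Summing over the shells $k\geq \lfloor K\rfloor$ gives
$$\sum_{T\geq m-s}q^{-\sigma T}\leq C_1 2^{\sigma}q^{-\sigma K}\sum_{k\geq \lfloor K\rfloor}(k+1)^{r-1}2^{-\sigma(k-\lfloor K\rfloor)}\leq C_2\,(K+1)^{r-1}q^{-\sigma K}.$$
The last step is the elementary estimate $\sum_{j\geq0}(K+1+j)^{r-1}2^{-\sigma j}\leq C(K+1)^{r-1}$. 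When $r=0$ the inner "sum" is the single value $1$ if $s\geq m$ and $0$ otherwise, and the bound still holds.

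Next multiply by the $I_0$-factor $q^{-\sigma s}$. If $s<m$, then $q^{-\sigma s}q^{-\sigma K}=q^{-\sigma m}$. If $s\geq m$, then $K=0$ and $q^{-\sigma s}\leq q^{-\sigma m}$. In both cases, since $K+1\leq m+1\leq 2m$, the summand restricted to fixed $I_0$-coordinates is bounded by $C_3\,m^{r-1}q^{-\sigma m}\,q^{-\sum_{i\in I_0}m_ic_i}$.

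Finally sum over $(m_i)_{i\in I_0}\in\NN^{\#I_0}$ using $q\geq 2$:
$$\sum q^{-\sum_{i\in I_0}m_ic_i}\leq\prod_{i\in I_0}\frac{1}{1-2^{-c_i}}.$$
This product is a constant depending only on the $c_i$. The result is $C\,m^{r-1}q^{-m\sigma}\leq Cm^{\#I-1}q^{-m\sigma}$, since $r\leq\#I$ and $m\geq1$ (for $r=0$ read $m^{r-1}$ as $1$). The main thing to get right is the uniformity in $q$, handled as above by comparing with base $2$ only on nonnegative exponents. The remaining steps are routine lattice-point counting with constants depending only on $((a_i,b_i))_{i\in I}$.
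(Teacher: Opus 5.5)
Your proof is correct, and it does more than the paper. The paper gives no argument of its own: it cites \cite[Lemma 4.0.1]{Saskia-Kien} for integer data and asserts that the general case needs no modification.

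Your splitting $a_i=\sigma b_i+c_i$ into balanced indices ($c_i=0$, hence $b_i>0$) and slack indices ($c_i>0$) is what the real-valued version needs. Indices with $b_i=0$, for which the sets $\{\sum_i m_ib_i=k\}$ are infinite, are absorbed into the convergent factor $\prod_{i\in I_0}(1-2^{-c_i})^{-1}$. Unit-width shells replace exact level sets when the $b_i$ are not integers. Passing to base $2$ only on the nonnegative excess $T-K$ is the right way to get uniformity in $q\geq 2$. As a bonus, your argument gives the sharper factor $m^{\#I_1-1}$, where $I_1=\{i\mid a_i=\sigma b_i\}$ is the set of indices attaining the critical ratio.

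One point needs adjusting. As written, your constants also depend on $\sigma$. The factor $\prod_{i\in I_0}(1-2^{-(a_i-\sigma b_i)})^{-1}$ blows up as $\sigma$ increases to $\sigma^*:=\min\{a_i/b_i\mid b_i>0\}$, and $\sum_{j\geq 0}(1+j)^{r-1}2^{-\sigma j}$ blows up as $\sigma\to 0$. So your closing claim that everything depends only on $((a_i,b_i))_{i\in I}$, as the statement requires, is not yet justified.

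The fix is one line. The left-hand side does not involve $\sigma$, the right-hand side is decreasing in $\sigma$, and every admissible $\sigma$ satisfies $\sigma\leq\sigma^*$. Hence it suffices to run your argument once with $\sigma=\sigma^*$, which is itself admissible and positive. With that choice all your constants are functions of $((a_i,b_i))_{i\in I}$ only. If all $b_i=0$, the sum is empty for $m\geq 1$ and there is nothing to prove.
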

\begin{proof}
It was proved in \cite[Lemma 4.0.1]{Saskia-Kien} for the case that $a_i,b_i,q\geq 2,m\geq 1$ are positive integers. The general case can be proved without any modifications.
\end{proof}

To finish this subsection, we recall the notion of rational singularities that will be used in this paper. Let $k$ be a field of characteristic zero and  $X$ be an $k$-variety. Let $h:Y\to X$ be a \index{resolution of singularities}resolution of singularities of $X$, i.e. $h$ is a proper birational morphism, $Y$ is a smooth variety, $h$ is an isomorphism outside the singular locus $\Sing(X)$ of $X$ and $(h^{-1}(\Sing(X)))_{red}$ is a divisor with simple normal crossings. The existence of such a resolution of singularities of $X$ also follows  from \cite{Hir:Res}. We say that $X$ has only {\em rational singularities} if the higher direct images $\cR^jh_*(\cO_Y)=0$ for all $j>0$. Otherwise, we say that $X$ has {\em non-rational singularities}. Let  $x_0\in X$.  Then we say that $X$ has only rational singularity at $x_0$ if there is an open neighbourhood $U$ of $x_0$ in $X$ such that $U$ has only rational singularities. If this is not the case, we say that $X$ has non-rational singularity at $x_0$. 

\subsection{Exponential sums modulo $p^m$ and Igusa local zeta functions}\label{exp-zetafunction}
In this subsection, we fix a number field $K$. For each  $\cO_K$-subscheme $Z$ of $\AA_{\cO_K}^n$ of finite type and each local field  $L$ over $\cO_K$, we denote by $\Phi_{L,Z}=\textbf{1}_{\{x\in \cO_L^n|\overline{x}\in Z(k_L)\}}$ the Schwartz-Bruhat function on $L^n$ associated to $Z$. We remark that $\Phi_{L,Z}(x)$ depends only on $\overline{x}= x \mod \cM_L$ hence $\Phi_{L,Z}(x)$ induces a function $\overline{\Phi}_{L,Z}$ on $k_L^n$.  
\subsubsection{The case of polynomials}Let $L\in\tilde{\cL}_{K,1}$.
 Let $\psi$ be an additive character of $L$ and  $\chi$ be a multiplicative character of $\cO_L^{\times}$. Let $f$ be a non-constant polynomial in $\cO_L[x_1,...,x_n]$ and $Z$ be an $\cO_K$-subscheme of $\AA_{\cO_K}^n$ of finite type. We associate our data with the \emph{exponential sum}
$$E_{f,L,Z,\psi}=\int_{\cO_L^n}\Phi_{L,Z}\psi(f(x))|dx|,$$
and the \emph{Igusa local zeta function}
$$\cZ(f,L,Z,\chi,s)=\int_{L^n}\Phi_{L,Z}\chi(ac(f(x)))|f(x)|^s|dx|.$$
where $s\in\CC$ with $\Re(s)>0$. If $L\in\cL_{K,1}$, then it is well known that $\cZ(f,L,Z,\chi,s)$ is holomorphic  in this region and has meromorphic continuity to $\CC$, moreover it is a rational function in $q_L^{-s}$ (see, e.g. \cite{Igusa3}). We say that $s$ is a non-trivial pole of $\cZ(f,L,Z,\chi,s)$ if $s$ is a pole of $\epsilon_{\chi}(s)\cZ(f,L,Z,\chi,s)$, where $\epsilon_{\chi}(s)=1$ if $\chi\neq\chi_{\textnormal{triv}}$ and $\epsilon_{\chi_{\textnormal{triv}}}(s)=(1-q_L^{s+1})$.

From now on, we suppose that  $f\in \cO_K[x_1,...,x_n]$.  The {\em motivic oscillation index} $\moi_K(f,Z)$ of $f$ at $Z$ over $K$ is the largest number $\alpha\leq +\infty$ with the property that there exists an integer $M$ and a real number $\beta$  such that 
\begin{equation}\label{inequality1}
|E_{f,L,Z,\psi}|\leq q_L^{\beta}m_\psi^{n-1}q_L^{-m_\psi\alpha}
\end{equation}
for all local fields $L\in\cL_{K,M}$ and all additive characters $\psi$ of $L$ of conductor $m_\psi\geq 2$ (see \cite{CMN,NguyenVeys}). 

We can give an alternative definition of $\moi_K(f,Z)$ as follows (see \cite{CMN} and \cite{NguyenVeys}). It follows from \cite{Igusa3} (see also \cite[Theorem 1.3.2]{DenefBour}) that the set 
\begin{equation}\label{pode}\textnormal{Pol}_{f}:=\cup_{W\subset \AA_{\cO_K}^n}\cup_{L\in\cL_{K,1}}\textnormal{Pol}_{f,L,W}
\end{equation} is a finite set. Here, the first union is taken over all $\cO_K$-subschemes $W$  of $\AA_{\cO_K}^n$ of finite type and  $\textnormal{Pol}_{f,L,W}$ is the set consisting of 
$\sigma\in \RR$ such that there exists  $c\in L$, a multiplicative  character $\chi$ of $\cO_L^{\times}$  and a non-trivial pole  $s_0$ of  $\cZ(f-c,L,W,\chi,s)$ with $\Re(s_0)=-\sigma$. For each $L\in\cL_{K,1}$, we set
\begin{equation}\label{npol}\sigma_L(f,Z):=\min \textnormal{Pol}_{f,L,Z}\leq+\infty
\end{equation}
with the convention that $\sigma_L(f,Z)=+\infty$ if $\textnormal{Pol}_{f,L,Z}=\emptyset$. Note that if we fix a strong log resolution $(Y_c,\pi_c)$ of $(f-c)$ over $\overline{K}$ associated to numerical data $((\nu_{ic},N_{ic}))_{i\in T_c}$ for each  $c$ in the set $V_f$ of critical values of $f$ over $\overline{K}$ then 
\begin{equation}\label{pode1}\textnormal{Pol}_f\subset \cup_{c\in V_f}\{\frac{\nu_{ic}}{N_{ic}}\mid i\in T_c\}.
\end{equation}
Thus, it follows from Subsection \ref{JSL} that 
$$\sigma_L(f,Z)\geq\min_{a\in Z(\overline{K})}\lct_a(f-f(a))$$
if $L\in\cL_{K,1}$.
 By \cite[Proposition 2.7]{DenefVeys}, for each $L\in\cL_{K,1}$, there is a constant $c_L>0$ and infinitely many positive integers $m$ such that 
\begin{equation}\label{lowbL}|E_{f,L,Z,\psi}|\geq c_Lq_L^{-m\sigma_L(f,Z)}
\end{equation}
for some additive character $\psi$ of $L$ of conductor $m$. Moreover, there is a constant $C_L$ such that
\begin{equation}\label{rboun}|E_{f,L,Z,\psi}|\leq C_Lm^{n-1}q_L^{-m_\psi\sigma_L(f,Z)}
\end{equation}
for all additive characters $\psi$ of $L$ of conductor $m_\psi\geq 1$. Then, we have 
\begin{align}\label{exmo}\moi_K(f,Z)&=\liminf_{L\in \cL_{K,M}, M\to+\infty}\sigma_L(f,Z)\nonumber\\&=\min\{\sigma\leq+\infty\mid \exists (L_i)_{i\geq 1}\subset \cL_{K,1}, p_{L_i}<p_{L_{i+1}}\hspace{0.1cm}\forall i\geq 1, \sigma=\sigma_{L_i}(f,Z)\}.
\end{align}
In practice, we use a more explicit description of $\moi_K(f,Z)$ by looking only at poles of the Igusa local zeta functions $\cZ(f-c,L,Z,\chi,s)$ for critical values $c$ of $f$. Let $c_1,...,c_r$ be the critical values of $f$ such that $c_i\in f(Z(\overline{K}))\subset\overline{K}$. We denote by $Z_{c_i}=(Z\otimes_{\cO_K} \cO_K[c_i])\cap \{f-c_i=0\}$. Note that there exists an integer $M$ such that for all local fields $L\in\cL_{K,M}$ with a structure map $\varphi:\cO_K\to \cO_L$, we can extend this structure map to a map $\varphi:\cO_K[c_1,...,c_r]\to \overline{\cO}_L$, where $\overline{\cO}_L$ is the integral closure of $\cO_L$ in $\overline{L}$ and
\begin{equation}\label{decomsum}
E_{f,L,Z,\psi_z}=\sum_{1\leq i\leq r, \varphi(c_i)\subset L} \psi(\varphi(c_i)) \cdot  E_{f-c_i,L,Z_{c_i},\psi}
\end{equation}
for all  $L\in\cL_{K,M}$ and all additive characters $\psi$ of $L$ of conductor at least $1$.  Moreover, it is easy to show that if $c_i\in f(Z(\overline{K}))$ is a critical value of $f$ and $L\in\cL_{K,1}$ such that $c_i\in L$ then $\cZ(f-c_i,L,Z,\chi,s)$ differs from $\cZ(f-c_i,L,Z_{c_i},\chi,s)$ by a polynomial in $q_L^{-s}$. This together with (\ref{npol}), (\ref{pode1}), (\ref{lowbL}), (\ref{rboun}), (\ref{exmo}) and (\ref{decomsum}) yield 
\begin{equation}\label{mimoi}
\displaystyle \sigma_L(f,Z)=\min_{\{i\mid c_i\in L\}}\sigma_L(f-c_i,Z_{c_i})
\end{equation} and 
\begin{equation}\label{moif}\moi_K(f,Z)=\liminf_{L\in\cL_{K,M}, M\to+\infty}\sigma_L(f,Z)=\min_{1\leq i\leq r}\moi_{K[c_i]}(f-c_i,Z_{c_i}).
\end{equation}
Here, we used  the convention that if $f$ has no critical point in $Z(\overline{K})$ then we put $\moi_K(f,Z)=+\infty$.

There is a relation between exponential sums and Igusa's local zeta functions of polynomials by combining \cite[Proposition 1.4.4]{DenefBour}, \cite[Theorem 2.1]{Denef91} and \cite[Proposition 5.4]{NguyenVeys} as follows.
\begin{prop}\label{zetaex}Let $f, Z$ be as above. Suppose moreover that $f(Z(\overline{K}))=0$. Then there exists an integer $M$ such that  $E_{f,L,Z,\psi_z}$ is equal to
\begin{align*}
\cZ(f,L,Z,\chi_\textnormal{triv},0)&+\mbox{\textnormal{Coeff}}_{t^{m-1}}\Big(\dfrac{(t-q_L)\cZ(f,L,Z,\chi_\textnormal{triv},s)}{(q_L-1)(1-t)}\Big)\\
&+\sum_{\chi\neq\chi_\textnormal{triv}, c(\chi)=1}g_{\chi^{-1}}\chi(\acc(z))\mbox{\textnormal{Coeff}}_{t^{m-1}}\big(\cZ(f,L,Z,\chi,s)\big)
\end{align*}
for all local fields $L\in \cL_{K,M}$,  all  $m\geq 1$ and all $z\in L$ with $\ord_L(z)=-m$, where $g_{\chi}$ is the Gaussian sum
\[
g_{\chi}=\frac{1}{q_L-1}\sum_{\overline{v}\in k_L^*}\chi(v)\psi\bigl(v/\varpi_L\bigr)
\]
with absolute value $\dfrac{|q_L|^{\frac{1}{2}}}{q_L-1}$.
\end{prop}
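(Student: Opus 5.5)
Write $u=\acc(z)$, so that $z=u\varpi_L^{-m}$ with $u\in\cO_L^{\times}$, and $E:=E_{f,L,Z,\psi_z}=\int\Phi_{L,Z}(x)\psi(zf(x))|dx|$. The plan is to split this integral according to $\ord_L f(x)$, evaluate the character integral on each fibre, and then rewrite the resulting sums as coefficients of the zeta functions in $t=q_L^{-s}$. The assumption $f(Z(\overline K))=0$ together with $M$ large ensures that $f(x)\in\cM_L$ whenever $\overline x\in Z(k_L)$. This makes $\Phi_{L,Z}$ compatible with the level sets of $f$ and allows the Denef-type formalism to apply; it is the usual reduction in \cite[Proposition 1.4.4]{DenefBour}.

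\textbf{Step 1: splitting the integral.} I decompose $\{x:\Phi_{L,Z}(x)=1\}$ into the set where $\ord_L f(x)\geq m$ and the sets where $\ord_L f(x)=j$ with $j<m$.
\begin{itemize}
\item On $\{\ord_L f\ge m\}$ the character $\psi(zf)$ is identically $1$. This piece contributes $\mathrm{vol}\{\Phi_{L,Z}=1,\ \ord f\geq m\}$.
\item On $\{\ord_L f=j\}$ the value $\psi(zf(x))=\psi(u\varpi_L^{j-m}\acc(f(x)))$ depends only on $\acc(f(x))$ modulo $\cM_L^{m-j}$.
\end{itemize}

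\textbf{Step 2: evaluating the shells.} I expand $\psi$ restricted to each shell in multiplicative characters of $\cO_L^\times$. For $j\le m-2$ the standard computation gives
\[
\int_{\cO_L^\times}\chi(v)\psi(\varpi^{j-m}uv)\,dv=0
\]
for every $\chi$ with $c(\chi)\ne m-j$; it also gives zero for $\chi$ trivial when $m-j\ge 2$. This is the content of \cite[Theorem 2.1]{Denef91}. Hence only the following terms survive:
\begin{itemize}
\item the layer $j=m-1$, where characters with $c(\chi)=1$ contribute through the Gauss sums $g_{\chi^{-1}}\chi(u)$, with $|g_\chi|=q_L^{1/2}/(q_L-1)$ (I take this Gauss sum estimate as known);
\item the trivial-character pieces, whose total is $\mathrm{vol}\{\ord f\ge m\}-\frac{1}{q_L-1}\mathrm{vol}\{\ord f=m-1\}$, since the sum of $\psi$ over $k_L^\times$ equals $-1$;
\item characters with $c(\chi)\ge2$ on the deeper layers.
\end{itemize}
The main obstacle is showing that these characters of conductor $\ge2$ give no contribution, i.e.\ that the twisted zeta functions $\cZ(f,L,Z,\chi,s)$ vanish for $c(\chi)\ge 2$ once $p_L$ is large. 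I would invoke \cite[Proposition 5.4]{NguyenVeys}, or argue directly via a resolution with good reduction as in Denef's formula: there the angular component of $f$ is locally a unit times a monomial, so integrating over the torus fibres kills all characters of conductor $\geq2$.

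\textbf{Step 3: translating to zeta coefficients.} Write $\cZ(f,L,Z,\chi,s)=\sum_j a_j(\chi)t^j$, where $a_j(\chi)=\int_{\ord f=j}\Phi_{L,Z}\chi(\acc f)$. Then $\mathrm{vol}\{\ord f\ge m\}=\cZ(\chi_{\rm triv},0)-\sum_{j<m}a_j$. Combining this with the term $-a_{m-1}/(q_L-1)$ gives
\[
\cZ(\chi_{\rm triv},0)-\sum_{j\le m-1}a_j-\frac{a_{m-1}}{q_L-1}.
\]
I then check that this equals $\cZ(\chi_{\rm triv},0)$ plus the coefficient of $t^{m-1}$ in $\frac{(t-q_L)\cZ}{(q_L-1)(1-t)}$, using $\frac{1}{1-t}=\sum t^k$ to produce the partial sums. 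Finally, the $c(\chi)=1$ terms from the layer $j=m-1$ are exactly $g_{\chi^{-1}}\chi(\acc z)a_{m-1}(\chi)=g_{\chi^{-1}}\chi(\acc z)\,\mathrm{Coeff}_{t^{m-1}}\cZ(f,L,Z,\chi,s)$, which completes the identity.
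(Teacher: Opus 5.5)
Your proposal is correct and follows the paper's route. The paper simply cites \cite[Proposition 1.4.4]{DenefBour}, which is exactly the shell-by-shell expansion of $\psi(zf)$ into multiplicative characters that you carry out: only $c(\chi)=m-j$, or $\chi$ trivial on the layer $j=m-1$, survive, giving the $-\frac{1}{q_L-1}$ and Gauss-sum terms. It then uses that $\cZ(f,L,Z,\chi,s)$ vanishes for $c(\chi)\geq 2$ when $p_L$ is large, which is also recorded in Proposition \ref{zeta}.

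Your outline of that vanishing is fine. Note, though, that it genuinely needs $f(Z(\overline{K}))=0$, so that every point of the support lies on some $\overline{E}_i$. It also needs $p_L\nmid N_i$, so that the non-constant unit factor can be absorbed into a coordinate by taking an $N_i$-th root; integrating over torus fibres alone does not handle it.
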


We recall a short description of Igusa local zeta functions in terms of strong log resolutions of singularities. 
\begin{prop}[\cite{CMN}, Proposition 3.4]
\label{zeta}
We keep the notations and assumptions of Proposition \ref{zetaex}.  
Let $(Y,\pi)$ is a strong log resolution of $(f)$ over $K$. Let $((\nu_i,T_i))_{i\in T(\pi)}$ be the numerical data associated to $(Y,\pi)$. Then there is an integer $M$ depending only on $f$ and a constant $C$ depending only on the complexity of $Y\sqcup (\sqcup_{i\in T(\pi)}E_i)$ such that the following formula
holds for all local fields $L\in\tilde{\cL}_{K,M}$ and all multiplicative characters $\chi$ on $\cO_{L}^{\times}$:
\begin{equation}\label{eq:ZPhichi}
\cZ(f,L,Z,\chi,s)=
\sum_{\substack{I\subseteq T(\pi)}}
c_{I,L,Z,\chi}\prod_{i\in I}\frac{q_L^{-N_{i}s-\nu_{i}}}{1-q_L^{-N_{i}s-\nu_{i}}},
\end{equation}
where the complex numbers $c_{I,L,Z,\chi}$ are independent of $s$ and satisfy
\begin{equation}\label{eq:cI1}
|c_{I,L,Z,\chi}|\leq C.
\end{equation}
Moreover, for such $L$ we further have
\begin{equation}\label{eq:cI2}
c_{I,L,Z,\chi} = 0
\end{equation}
if $c(\chi)>1$ or if the order of $\chi$ does not divide $N_i$ for some $i\in I$ or $\pi(E_I^{\circ}(\overline{K}))\cap Z(\overline{K})=\emptyset$.
\end{prop}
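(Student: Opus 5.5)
Following the proof of Proposition \ref{genlogc}, I will compute $\cZ(f,L,Z,\chi,s)=\int_{L^n}\Phi_{L,Z}\chi(\acc(f(x)))|f(x)|^s|dx|$ through the resolution $(Y,\pi)$ and Denef's method from \cite{Denefdegree}. First, I fix $M$ so that $(Y,\pi)$ has good reduction modulo $\cM_L$ for all $L\in\tilde{\cL}_{K,M}$; by \cite[Theorem 2.4]{Denefdegree} this $M$ depends only on $f$ and $Y$. Then I pull the integral back along $\pi_L$. The Jacobian contributes $|\prod_i y_i^{\nu_i-1}|$ locally, and $f\circ\pi=u\prod y_i^{N_i}$ with $u$ a unit.

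Next I partition $Y(\cO_L)$ according to the reduction point $\overline{y}\in\overset{\circ}{\overline{E}}_{Ik_L}(k_L)$. Over each such residue class I choose local coordinates $y_i$, $i\in I$, cutting out the $E_i$. The fibre is then a polydisc, and the integral over it factors as
\[
q_L^{-n}\,\chi(\overline{u}(\overline{y}))\prod_{i\in I}\int_{\cM_L}\chi(\acc(y_i))^{N_i}|y_i|^{N_is+\nu_i-1}|dy_i| .
\]
Each one-dimensional factor vanishes unless $\chi^{N_i}$ is trivial on $1+\cM_L$ and on the units. When it does not vanish, it equals $(1-q_L^{-1})\frac{q_L^{-N_is-\nu_i}}{1-q_L^{-N_is-\nu_i}}$ if $c(\chi)=1$ and $d(\chi)\mid N_i$, and it is $0$ otherwise. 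Writing $\chi\circ\acc=\chi(\acc(u))\prod\chi(\acc(y_i))^{N_i}$ requires $u$ to be constant modulo $\cM_L$ on the fibre. This holds because $u$ is a unit. If $c(\chi)>1$, I instead integrate over the unit part of one coordinate in shells, which gives zero. Summing over $\overline{y}$ gives (\ref{eq:ZPhichi}) with
\[
c_{I,L,Z,\chi}=q_L^{-n}(1-q_L^{-1})^{\#I}\sum_{\overline{y}\in(\overset{\circ}{\overline{E}}_{Ik_L}\cap\overline{\pi}^{-1}(Z))(k_L)}\overline{\chi}(\overline{u}(\overline{y})).
\]
The vanishing statements (\ref{eq:cI2}) then follow immediately. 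If $c(\chi)>1$ or $d(\chi)\nmid N_i$, the factor is zero. If $\pi(E_I^\circ)\cap Z=\emptyset$, then, after enlarging $M$ so that this emptiness persists after reduction (a constructibility/spreading-out argument), the sum is empty.

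The remaining bound (\ref{eq:cI1}) comes from the triangle inequality: $|c_{I,L,Z,\chi}|\le q_L^{-n}\#\overset{\circ}{\overline{E}}_{Ik_L}(k_L)$. Since $\dim\overset{\circ}{\overline{E}}_{Ik_L}=n-\#I\le n$, the Lang-Weil estimate (\ref{LWeil}) bounds this by a constant depending only on the complexity of $Y\sqcup(\sqcup_iE_i)$, exactly as in (\ref{upbougene}).

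The main obstacle is the local computation. Two points need care. First, $\overline{u}$ must be a well-defined function on each stratum, which requires using a fixed unit trivialisation over an affine cover. Second, the dependence on $\chi$ must enter only through $c_{I,L,Z,\chi}$. I would handle this by working on a finite affine cover of bounded complexity on which the $E_i$ are coordinate hyperplanes, refining the stratification by these charts. This costs only a bounded multiplicative factor in the constant $C$.
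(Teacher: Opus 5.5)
Your route (pull back along a resolution with good reduction, compute one residue class of $\overline{Y}(k_L)$ at a time in local coordinates, bound the coefficients by Lang--Weil) is Denef's computation, which is what underlies the cited \cite[Proposition 3.4]{CMN}; the paper gives no proof of its own. Two steps need repair, and the first is a real gap.

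\textbf{The case $c(\chi)>1$.} Your shell argument needs a coordinate $y_{i_0}$ with $i_0\in I$. Replacing $y_{i_0}$ by $vy_{i_0}$, with $v\in 1+\cM_L^{c(\chi)-1}$, changes $u$ only by a factor in $1+\cM_L^{c(\chi)}$. It multiplies the integrand by $\chi(v)^{N_{i_0}}$, which is a nontrivial character of $v$ because $p_L\nmid N_{i_0}$. On residue classes with $I=\emptyset$ there is no such coordinate: the integrand is just $\chi(u(y))$, and the integral need not vanish. For example, take $f=x^2+1$, $Z=\AA^1_{\cO_K}$, $q_L\equiv 3 \bmod 4$ and $c(\chi)=2$; then $\cZ(f,L,Z,\chi,s)=q_L^{-1}$, all of it coming from $x\in\cM_L$.

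What excludes this is the hypothesis $f(Z(\overline{K}))=0$ inherited from Proposition \ref{zetaex}, which you never use. After enlarging $M$ (spreading out the emptiness of $Z_K\cap\{f\neq 0\}$), $\overline{f}$ vanishes on $Z(k_L)$. Hence every $\overline{y}$ with $\overline{\pi}(\overline{y})\in Z(k_L)$ lies on some $\overline{E}_{ik_L}$. Equivalently, $\pi(E_\emptyset^\circ)\cap Z_K=\emptyset$, and your spreading-out step disposes of $I=\emptyset$.

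This enlargement, like the one you make for the third vanishing condition, makes $M$ depend on $Z$, and that is unavoidable. For $Z=\Spec(\cO_K[x_1,\dots,x_n]/(p,x_1,\dots,x_n))$ with a prime $p>M$, one has $Z(\overline{K})=\emptyset$. Yet $\cZ(f,L,Z,\chi_{\textnormal{triv}},s)=\int_{\cM_L^n}|f(x)|^s|dx|\neq 0$ whenever $p_L=p$; compare the remark after Conjecture \ref{avelocIgu}. So ``depending only on $f$'' has to be read with $Z$ fixed.

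\textbf{A normalization slip.} The fibre is identified with $\cM_L^n$, so the factor in front of $\prod_{i\in I}\int_{\cM_L}(\cdots)$ is $q_L^{-(n-\#I)}$, not $q_L^{-n}$. Hence
$c_{I,L,Z,\chi}=q_L^{-n}(q_L-1)^{\#I}\sum_{\overline{y}}\overline{\chi}(\overline{u}(\overline{y}))$, as in Proposition \ref{Defm}; test $n=1$, $f=x$, $Z=\{0\}$. Your bound for (\ref{eq:cI1}) must then use the exact dimension:
$|c_{I,L,Z,\chi}|\le q_L^{\#I-n}\#\overset{\circ}{\overline{E}}_{Ik_L}(k_L)\le C$ because $\dim\overset{\circ}{\overline{E}}_{Ik_L}=n-\#I$, exactly as in (\ref{upbougene}) and (\ref{ulw}). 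The weaker ``$\dim\le n$'' no longer suffices.

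\textbf{Two smaller points.} For $L\in\cL'_{K,M}$ there is no $\pi_L$, so run the computation on the $\cO_K[1/M]$-model base-changed along $\cO_K\to k_L\subset\cO_L$ (Subsection \ref{JSL}). Also, the global well-definedness of $\overline{u}$ that you flag as an obstacle is not needed. Any choice of coordinates at $\overline{y}$ computes that fibre integral correctly, and only $|\overline{\chi}(\overline{u}(\overline{y}))|=1$ enters the bound.

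With these repairs your argument proves the proposition.
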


We also recall Igusa's conjecture for exponential sums in \cite[Page 25]{CMN}.
\begin{conj}[Igusa's conjecture for exponential sums]\label{Iguconj}Let $f, Z$ be as above. 
 Then there are positive constants $c,M$ depending only on $f,Z$ such that
$$\left|E_{f,L,Z,\psi}\right|\leq cm_{\psi}^{n-1}q_L^{-m_\psi\moi_{K,Z}(f)}$$
for all local fields $L\in\tilde{\cL}_{K,M}$ and all additive characters $\psi$ of $L$ of conductor $m_{\psi}\geq 2$.
\end{conj} 
The fact is that we only need to study Conjecture \ref{Iguconj} with $L\in\cL_{K,M}$. This is a corollary of the following proposition.
\begin{prop}[Transfer principle,\cite{NguyenVeys}, Proposition 5.4]\label{transfer1}Let $f,Z$ be as above. 
Assume that there is at most one critical value of $f$ contained in $f(Z(\overline{K}))$.
Then there is an integer $M$ depending only on $f$, such that for all pairs of local fields $(L,\varpi_L,\psi)$ and $(L',\varpi_{L'},\psi')$ in  $\tilde{\cL}_{K,M}$  with $k_L=k_{L'}$ and $\overline{\psi}=\overline{\psi'}$, and for all $z\in L$ and $z'\in L'$ with $\ord_{L}(z)=\ord_{L'}(z')\leq -1$ and $\ac(z)=\ac(z')$, we have
 \begin{equation}
|E_{f,L,Z,\psi_z}|=|E_{f,L',Z,\psi'_{z'}}|.
\end{equation}
\end{prop}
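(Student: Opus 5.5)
The plan is to show that, for $L\in\tilde{\cL}_{K,M}$, the sum $E_{f,L,Z,\psi_z}$ can be written by a formula whose ingredients depend only on $(k_L,\overline{\psi},\ord_L(z),\ac(z))$. Two such fields with the same residue data then give the same value, up to a factor of modulus one.

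\textbf{Step 1 (reduction to $f(Z)=0$).} Let $c$ be the unique critical value of $f$ in $f(Z(\overline{K}))$. If there is none, then for $M$ large the sum $E_{f,L,Z,\psi_z}$ vanishes for all conductors $\geq 2$ in both characteristics, by Hensel/stationary phase on $\overline{Z}$. For conductor $1$ it is a character sum over $k_L$ and visibly depends only on $k_L$ and $\overline{\psi}$.
\begin{itemize}
\item For $M$ large, both $\cO_L$ and $\cO_{L'}$ contain an image of $c$ or neither does, since this is decided by the reduction of the minimal polynomial of $c$ over $k_L=k_{L'}$.
\item As in (\ref{decomsum}), the points of $Z$ not mapping near $c$ are non-critical and contribute zero for $M$ large.
\item This leaves $E_{f,L,Z,\psi_z}=\psi_z(c)E_{f-c,L,Z_c,\psi_z}$. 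Since $|\psi_z(c)|=1$, the absolute values are unchanged, and we may assume $f(Z(\overline{K}))=0$. Only absolute values are claimed because $\psi_z(c)$ genuinely depends on $L$.
\end{itemize}

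\textbf{Step 2 (exponential sum in terms of zeta functions).} I would invoke Proposition \ref{zetaex}, which expresses $E_{f,L,Z,\psi_z}$ through three kinds of data:
\begin{itemize}
\item coefficients of $\cZ(f,L,Z,\chi,s)$ for $\chi$ trivial or of conductor $1$;
\item the Gauss sums $g_{\chi^{-1}}$, which depend only on $k_L$, $\overline{\psi}$ and $\overline{\chi}$;
\item the values $\chi(\acc(z))=\overline{\chi}(\ac(z))$, which depend only on $\ac(z)$.
\end{itemize}
Proposition \ref{zetaex} is stated for $L\in\cL_{K,M}$, so I must check that it holds verbatim for $L\in\cL'_{K,M}$. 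I expect this to be the main obstacle. Its proof (Denef's argument, \cite[Proposition 1.4.4]{DenefBour}) has two parts:
\begin{itemize}
\item It decomposes $\cO_L^n$ by $\ord_L(f)$ and $\ac(f)$ and sums $\psi$ over fibres of $f$. This part is characteristic-free.
\item It uses that $f$ has no critical points on the relevant locus except over $0$, so that the fibres with $\ord_L(f)<m-1$ cancel by a Hensel-type lemma.
\end{itemize}
For the second part, I will take $M$ so large that the reduction of $\operatorname{Crit}(f)\cap\overline{Z}$ lies in $\{\overline{f}=0\}$ over $k_L$. I will then rerun the fibre argument directly over $\cO_L$ for any complete DVR with that residue field.

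\textbf{Step 3 (zeta functions depend only on the residue field).} I would fix a strong log resolution $(Y,\pi)$ of $(f)$ over $K$ with good reduction modulo $\cM_L$ for all $L\in\tilde{\cL}_{K,M}$. Applying Denef's formula (Proposition \ref{zeta}, which is stated for $\tilde{\cL}_{K,M}$) gives each coefficient $c_{I,L,Z,\chi}$ explicitly:
\begin{itemize}
\item it is a sum over $k_L$-points of $\overset{\circ}{\overline{E}}_{Ik_L}\cap\overline{\pi}_{k_L}^{-1}(Z_{k_L})$;
\item weighted by $\overline{\chi}$ evaluated on the residue of the unit part of $f\circ\pi$;
\item times powers of $(q_L-1)$ and $q_L^{-n}$.
\end{itemize}
All of these are data of the reduction $(\overline{Y}_{k_L},\overline{\pi}_{k_L})$ and of $\overline{\chi}$, so they coincide for $L$ and $L'$ once $k_L=k_{L'}$. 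Conductor-$1$ characters of $\cO_L^\times$ and $\cO_{L'}^\times$ correspond bijectively through $\overline{\chi}$, so $\cZ(f,L,Z,\chi,s)=\cZ(f,L',Z,\chi',s)$ as rational functions of $q_L^{-s}$.

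Substituting into the formula of Step 2, with $m=-\ord_L(z)=-\ord_{L'}(z')$, gives $E_{f,L,Z,\psi_z}=E_{f,L',Z,\psi'_{z'}}$. Step 1 then yields the stated equality of absolute values for general $f$.
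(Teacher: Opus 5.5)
The paper gives no proof of its own here (it quotes \cite{NguyenVeys}). Your overall architecture is the standard one: reduce to $f(Z)=0$, express $E$ through twisted zeta functions with $c(\chi)\le 1$, and read those off a resolution with good reduction. Step~3 is fine.

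\textbf{Step~2.} You attribute the restriction to conductor-one characters to the absence of critical points off $\{f=0\}$ plus a Hensel-type lemma. That is not what drives it. Proposition~\ref{zetaex} rests on two facts.
\begin{enumerate}
\item Denef's identity with \emph{all} multiplicative characters, with coefficients $\textnormal{Coeff}_{t^{m-c(\chi)}}$. On each shell $\{\ord_L f=k\}$ one Fourier-expands $w\mapsto\psi_z(\varpi_L^k w)$ on $\cO_L^\times$; for $k\le m-2$ only characters with $c(\chi)=m-k\ge 2$ occur. This is pure harmonic analysis on $\cO_L^\times$, so it holds verbatim in positive characteristic.
\item $\cZ(f,L,Z,\chi,s)=0$ whenever $c(\chi)\ge 2$. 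This is what kills the shells $k\le m-2$.
\end{enumerate}
The second fact is not about where the critical points lie. Those shells accumulate at the singular points of $\{\overline f=0\}$, which are critical points over $0$ and are allowed by your hypothesis. Near them a Hensel argument would need uniform control of all higher Taylor coefficients relative to $\ord_L f$, which is exactly what a resolution provides. The vanishing comes from the local form $u\prod y_i^{N_i}$ with $p_L\nmid N_i$, i.e.\ from the last assertion of Proposition~\ref{zeta}, which is already stated on all of $\tilde{\cL}_{K,M}$. Citing these two facts extends Proposition~\ref{zetaex} to $\tilde{\cL}_{K,M}$; no new fibre argument is needed.

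\textbf{The dependence of $M$.} The $M$ you produce in Step~1 depends on $Z$: you need the reduction of $Z\cap\textnormal{Crit}(f)$ to lie over $\overline c$, which is a spreading-out statement for the $\cO_K$-scheme $Z\cap\textnormal{Crit}(f)$. The statement asserts $M=M(f)$. Your version is the right one, and it is all that is used later (Corollary~\ref{upbconja}, Theorem~\ref{thmlowb}). Uniformity in $Z$ actually fails. Take
\begin{itemize}
\item $K=\QQ$ and $f=2x^3-3x^2$, whose critical values are $0$ and $-1$;
\item a prime $p\ge\max\{M,5\}$ and $Z=\Spec(\ZZ[x]/(p))$, so $Z(\overline{\QQ})=\emptyset$ and the hypothesis holds;
\item $L=\QQ_p$ with the standard character, and $L'=\FF_p((t))$ with $\psi'$ depending only on the coefficient of $t^{-1}$;
\item $z=p^{-2}$ and $z'=t^{-2}$.
\end{itemize}
Only the residue discs at $0$ and $1$ contribute. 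One gets $E_{f,L,Z,\psi_z}=p^{-1}\bigl(1+e^{-2\pi i/p^2}\bigr)$ but $E_{f,L',Z,\psi'_{z'}}=2p^{-1}$. The discrepancy is $\psi_z(-1)$ versus $\psi'_{z'}(-1)$, which is not residue-field data.

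\textbf{Two smaller points.} First, the identity $E_{f,L,Z,\psi_z}=\psi_z(c)E_{f-c,L,Z_c,\psi_z}$ holds only for conductor $\ge 2$. Conductor $1$ must be treated in every case by the explicit sum $q_L^{-n}\sum_{\bar x\in Z(k_L)}\overline{\psi}(\ac(z)\overline f(\bar x))$. Second, $V_f\cap f(Z(\overline K))$ is Galois-stable, so a unique $c$ lies in $K$. Your case distinction on whether $c$ has an image in $L$ is therefore unnecessary.
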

We also need the following result in \cite{CMN}.
\begin{thm}[\cite{CMN}, Theorem 1.5 and Proposition 3.10]\label{rCMN}
Conjecture \ref{Iguconj} holds 
if $f-f(x_0)$ defines a hypersurface having non-rational singularity at $x_0$ for some $x_0\in Z(\overline{K})$. If this is the case, we have $\moi_K(f,Z)=\min_{a\in\overline{K}}\lct_{Z_{\overline{K}}}(f-a)\leq 1$. 
On the other hand, $\moi_K(f,Z)>1$ if any only if $f-f(x_0)$ defines a hypersurface  having only rational singularity at $x_0$ for all $x_0\in Z(\overline{K})$.
\end{thm}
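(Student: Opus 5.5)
The proof splits into three steps: a geometric criterion for rational singularities in terms of the numerical data of a log resolution, a uniform upper bound for the non-rational case, and an identification of $\moi_K(f,Z)$ through poles. By (\ref{decomsum}), (\ref{mimoi}) and (\ref{moif}) I would first reduce to the case where $f(Z(\overline{K}))$ contains a single critical value, translated to $0$. Then $f(Z(\overline{K}))=0$, and we may work with $Z_{c}$ inside the hypersurface $\{f=0\}$, after a finite extension of $K$.

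\emph{Geometric input.} The hypersurface $X=\{f=0\}$ is Gorenstein. By Elkik's theorem it has only rational singularities near $x_0$ iff it has canonical singularities there. By inversion of adjunction, this holds iff the pair $(\AA^n,X)$ is plt near $x_0$. In terms of a strong log resolution $(Y,\pi)$ of $(f)$ with data $((\nu_i,N_i))_{i\in T}$, this should read as follows: near $x_0$ one has $\nu_i>N_i$ for every $E_i$ other than the strict transform of $X$, which has $(\nu,N)=(1,1)$. Equivalently, $\min\{\nu_i/N_i\}$ over divisors meeting $\pi^{-1}(x_0)$, apart from the strict transform, is $>1$. Two consequences follow. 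In the non-rational case some divisor $E_{i_0}$ with $\pi(E_{i_0})\ni x_0$ has $\nu_{i_0}/N_{i_0}\le 1$, so $\lct_{x_0}(f)\le 1$. In the rational case every candidate pole from (\ref{pode1}) coming from $Z$ has real part $-1$ (the trivial pole) or $<-1$.

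\emph{Upper bound in the non-rational case.} I would insert Denef's formula (Proposition \ref{zeta}) into Proposition \ref{zetaex}. For $\chi$ with $c(\chi)=1$, $\chi\ne\chi_{\rm triv}$, the coefficient of $t^{m-1}$ in $\prod_{i\in I}q_L^{-N_is-\nu_i}/(1-q_L^{-N_is-\nu_i})$ with $t=q_L^{-s}$ is a lattice-point sum of the shape in Lemma \ref{upnu}, with $a_i=\nu_i$ and $b_i=N_i$. By (\ref{eq:cI1}) and $|g_\chi|\le q_L^{-1/2}$, this part is $\le C m^{n-1}q_L^{-m\lct}$ uniformly in $L$. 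The trivial character term is the delicate one, because the factor $(t-q_L)/((q_L-1)(1-t))$ introduces a pole at $t=1$, i.e.\ at $\Re(s)=-1$, and a naive count loses a factor $q_L$. Here I would use that $\lct\le 1$: the coefficient of $t^{m-1}$ of $\prod_i(\cdots)/(1-t)$ is a tail sum of the same lattice type with one extra variable of weight $(a,b)=(1,1)$. Since $1\ge\sigma=\lct$, Lemma \ref{upnu} still applies, giving $m^{n-1}q_L^{-m\lct}$ times a constant depending only on the data. The cancellation between $\cZ(f,L,Z,\chi_{\rm triv},0)$ and the constant coefficient absorbs the $q_L/(q_L-1)$. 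Positive characteristic $L$ is handled by Proposition \ref{transfer1}.

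\emph{Identification of $\moi$.} For the lower bound, Proposition \ref{genlogc} (second part, with $\cJ=(1)$) gives for each large $p$ some $L_p$ where the relevant pole at $-\lct$ survives. Combined with (\ref{lowbL}) and (\ref{exmo}), this yields $\moi_K(f,Z)\le\min_a\lct_{Z}(f-a)$, and the reverse inequality is the bound above. The equivalence $\moi>1$ iff rational singularities everywhere then follows. In the rational case, all non-trivial poles have real part $<-1$ by the geometric input. In the non-rational case, $\moi=\lct\le1$.

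The main obstacle is the geometric input: making precise that non-rationality of $X$ at $x_0$ forces a divisor with $\nu_i\le N_i$ over $x_0$, and that this divisor survives as an actual pole for infinitely many $p$. Good reduction and uniformity of the constants in $L$ are routine by comparison.
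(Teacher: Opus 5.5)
\textbf{Main gap: the non-trivial characters.} Your bound for the terms with $\chi\neq\chi_{\rm triv}$ does not give what you claim, and this is the heart of the theorem. Write $\lambda=\min_a\lct_{Z_{\overline{K}}}(f-a)$.

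The coefficient of $t^{m-1}$ in $\prod_{i\in I}t^{N_i}q_L^{-\nu_i}/(1-t^{N_i}q_L^{-\nu_i})$ is $\sum_{k_i\geq 1,\ \sum_i k_iN_i=m-1}q_L^{-\sum_i k_i\nu_i}$. Lemma \ref{upnu} bounds it by $Cm^{n-1}q_L^{-(m-1)\sigma_I}$ with $\sigma_I=\min_{i\in I}\nu_i/N_i\geq\lambda$, so the exponent is $m-1$, not $m$. Using only $|c_{I,L,Z,\chi}|\leq C$ and $|g_{\chi^{-1}}|=q_L^{1/2}/(q_L-1)$, you therefore get
\[
Cm^{n-1}q_L^{-(m-1)\lambda-\frac12}=Cm^{n-1}q_L^{-m\lambda}\cdot q_L^{\lambda-\frac12}.
\]
This is the required bound only when $\lambda\leq\frac12$, which is the case treated in \cite{Saskia-Kien}. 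It is exactly the weaker estimate $q_L^{-(m_\psi-1)\sigma-\frac12}$ that the paper extracts from this same calculation (Theorem \ref{main}, Corollary \ref{upbconja}); the paper quotes the sharp non-rational statement from \cite{CMN} rather than reproving it.

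For $\frac12<\lambda\leq 1$, which lies inside the non-rational range, the factor $q_L^{\lambda-\frac12}$ is unbounded in $L$. Independence of the constant from $L$ is the whole content of Conjecture \ref{Iguconj} (with $C_L$ one already has (\ref{rboun})), so uniformity in $L$ is not the routine part.

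The numerical data cannot repair this. For the cusp $x_1^2+x_2^3$, $\lambda=5/6$ is computed by an exceptional divisor with $(\nu,N)=(5,6)$, and (\ref{eq:cI2}) does not kill characters of order $2,3,6$ on it. Your estimate therefore loses $q_L^{1/3}$, although the true sum is $O(q_L^{-5m/6})$. What is missing is an extra saving of $q_L^{-(\lambda-\frac12)}$ in the non-trivial character terms. It must come from cancellation invisible in the $(\nu_i,N_i)$, for example in the character sums over $\overset{\circ}{\overline{E}}_{Ik_L}$ that make up $c_{I,L,Z,\chi}$. This is where \cite{CMN} needs its main new input, an upper bound on the log canonical threshold of hypersurfaces satisfying a power condition.

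\textbf{Second gap: the geometric criterion and the case $\lambda=1$.} Plt of $(\AA^n,X)$ near $x_0$ also requires the components of the strict transform to be pairwise disjoint over $x_0$; this is the second property used in Lemma \ref{min}. For instance, $f=x_1x_2$ with $\pi=\mathrm{id}$ satisfies your criterion but is not rational.

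This matters when $\lambda=1$, which does occur for non-rational singularities (nodes, cones over plane cubics). In that case Proposition \ref{genlogc} only gives growth $q_L^{-m}$. A simple pole at $s=-1$ already produces that growth, and $\epsilon_{\chi_{\rm triv}}$ removes such a pole, so you get no inequality $\sigma_L(f,Z)\leq 1$.

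You need $-1$ to be a pole of order $\geq 2$ of $\cZ(f,L,Z,\chi_{\rm triv},s)$; positivity as in Lemma \ref{upc} then makes it survive. That requires showing that a divisor with $\nu_i=N_i$, or a second strict-transform component, meets the strict transform over $Z$, for instance by a connectedness argument. Conversely, in the rational direction you need the disjointness to know that the pole at $-1$ is simple, hence trivial. The inequality $\moi_K(f,Z)\geq\lambda$ itself is harmless: it follows directly from (\ref{pode1}) and does not need your upper bound.
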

\subsubsection{The case of ideals}
Let $\cI$ be an ideal of $\cO_K[x_1,...,x_n]$ such that $\cI_K\neq (1)$ and $X$ be the closed subscheme of $\AA_{\cO_K}^n$ associated to $\cI$. Let $Z$ be an $\cO_K$-subscheme of $\AA_{\cO_K}^n$ of finite type and $L\in\tilde{\cL}_{K,1}$. We associated  $L,Z,\cI$ to the \emph{Igusa local zeta function} 
\begin{equation}\label{defzeta}\cZ_{L,Z,\cI}(s):=\int_{\cO_L^n}\Phi_{L,Z}(x)q_L^{-s\ord_L\left(\cI_{\cO_L}(x)\right)}\left|dx\right|,
\end{equation}
for $\Re(s)>0$. Note that if $\cI$ is generated by $f$ then $\cZ_{L,Z,\cI}(s)=\cZ(f,L,Z,\chi_{\textnormal{triv}},s)$. By \cite[Theorem 2.4]{VeysZ},  $\cZ_{L,Z,\cI}(s)$ is also a rational function in $q_L^{-s}$ if $L\in\cL_{K,1}$.  Let $m\geq 1$ be an integer and $r\in\RR$.  We slightly generalize \cite[Definition 3.8]{MoiIde} by associating $\cI,r,m,L,Z$ to the \emph{exponential sum} 
of $\cI$ (or $X$) at $Z$ over $L$ defined by 
\begin{equation}\label{exsum1}
E_{L,Z,\cI}^{(r)}(m)=E_{L,Z,X}^{(r)}(m):=q_L^{-mn}\left(\#\pi_{m1}^{-1}(X\cap Z)(k_L)-q_L^{n-r}\#\pi_{(m-1)1}^{-1}(X\cap Z)(k_L)\right)
\end{equation}
if $m\geq 2$ and 
\begin{equation}\label{exsum2}
E_{L,Z,\cI}^{(r)}(m)=E_{L,Z,X}^{(r)}(m):=q_L^{-n}\left(\#(X\cap Z)(k_L)-q_L^{-r}\#Z(k_L)\right)
\end{equation}
if $m=1$,  where if $i\geq j\geq 0$ then the map $\pi_{ij}:X(\cO_L/(\varpi_L)^i)\to X(\cO_L/(\varpi_L)^j)$ is defined in Subsection \ref{nuloc}. 
By the definition, we have 
\begin{equation}\label{intersec}E_{L,Z,\cI}^{(r)}(m) =E_{L,Z\cap X,\cI}^{(r)}(m)
\end{equation}
for all $m\geq 2$.

We have a relation between exponential sums of polynomials and exponential sums of ideals as follows.
\begin{prop}[\cite{MoiIde}, Remark 3.9]\label{ridepol}Let $\cI,Z$ be as above. If $r\in\ZZ_{>0}$ and there exists an open neighborhood $U$ of $Z$ in $\AA_{\cO_K}^n$ together with polynomials $f_1,...,f_r\in\cO_K[x_1,...,x_n]$ such that $\cI\cO_U=(f_1,...,f_r)$ then 
\begin{equation}\label{idepoly}
E_{L,Z,\cI}^{(r)}(m)=E_{\sum_{i=1}^ry_if_i,L,Z\times_{\Spec(\cO_K)}(\AA_{\cO_K}^r\setminus\Spec(\cO_K[y_1,...,y_r]/(y_1,...,y_r))),\psi}
\end{equation}
for all $L\in \tilde{\cL}_{K,1}$ and all additive characters $\psi$ of $L$ of conductor $m\geq 1$. 
\end{prop}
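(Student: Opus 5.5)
The plan is to compute the exponential sum on the right-hand side of (\ref{idepoly}) directly and compare it with the point counts in (\ref{exsum1}) and (\ref{exsum2}). By (\ref{intersec}) and because $\Phi_{L,Z}$ depends only on reductions modulo $\cM_L$, both sides only involve points $x\in\cO_L^n$ with $\overline{x}\in Z(k_L)$. Shrinking $Z$ to $Z\cap U$, I may assume that on the set $\{x\in\cO_L^n\mid \overline{x}\in Z(k_L)\}$ one has $\ord_L(\cI_{\cO_L}(x))=\min_i\ord_L(f_i(x))$. This holds because the $f_i$ generate $\cI$ on $U$, and after localizing the generators of $\cI$ are $\cO_U$-linear combinations of the $f_i$ and conversely. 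Hence a point $x$ lies in $\pi_{m}^{-1}$ of $X(\cO_L/\cM_L^m)$ exactly when all $f_i(x)\in\cM_L^m$.

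Next, write $W=\cO_L^r\setminus\cM_L\cO_L^r$ and fix $\psi$ of conductor $m$. For fixed $x$, I compute $\int_{y\in W}\psi(\sum_i y_if_i(x))|dy|$ as $\int_{\cO_L^r}-\int_{\cM_L\cO_L^r}$. By the standard orthogonality of characters, $\int_{\cO_L^r}\psi(\sum y_if_i(x))|dy|=\mathbf 1[\min_i\ord f_i(x)\ge m]$. Substituting $y=\varpi_L y'$ gives $\int_{\cM_L\cO_L^r}=q_L^{-r}\mathbf 1[\min_i\ord f_i(x)\ge m-1]$. Integrating over $x$ with weight $\Phi_{L,Z}$ then yields
\[
E=\mathrm{vol}\{x:\overline{x}\in Z,\ \ord \cI(x)\ge m\}-q_L^{-r}\mathrm{vol}\{x:\overline{x}\in Z,\ \ord\cI(x)\ge m-1\}.
\]

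Finally, I convert volumes into point counts. The set $\{x:\ord\cI(x)\ge m,\ \overline{x}\in Z\}$ is a union of cosets of $\cM_L^m\cO_L^n$ indexed by $\pi_{m1}^{-1}(X\cap Z)(k_L)$, each of volume $q_L^{-mn}$. Its volume is therefore $q_L^{-mn}\#\pi_{m1}^{-1}(X\cap Z)(k_L)$. In the same way, the second volume equals $q_L^{-(m-1)n}\#\pi_{(m-1)1}^{-1}(X\cap Z)(k_L)$. Multiplying the latter by $q_L^{-r}$ gives $q_L^{-mn}q_L^{n-r}\#\pi_{(m-1)1}^{-1}(X\cap Z)(k_L)$, which is (\ref{exsum1}).

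For $m=1$, the set with $\ord\ge 0$ is all of $\{\overline{x}\in Z\}$, with volume $q_L^{-n}\#Z(k_L)$, and this gives (\ref{exsum2}). The convention $\pi_{01}^{-1}(X\cap Z)=Z$ is implicit here, which is why $m=1$ is stated separately.

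The main obstacle is the locality step. I must make sure that using the local generators $f_i$ on $U$ rather than global generators does not change $\ord_L\cI(x)$ for all $L\in\tilde{\cL}_{K,1}$. That requires $U$ to be an open subscheme over $\cO_K$ containing $Z$, so that every $x$ with $\overline{x}\in Z(k_L)$ has its $\cO_L$-point factoring through $U$. Writing the mutual expressions of generators with denominators that are units along $U$ then gives equality of orders. The character computation itself is routine.
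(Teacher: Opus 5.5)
Your proposal is correct and is essentially the standard argument. You split the $y$-integral over $\cO_L^r\setminus\cM_L\cO_L^r$ as $\int_{\cO_L^r}-\int_{\cM_L\cO_L^r}$, use orthogonality for a character of conductor $m$, and convert the resulting volumes into the point counts of (\ref{exsum1}) and (\ref{exsum2}); this is the identity the paper sketches in its introduction and cites from \cite{MoiIde}, and your locality step (every $\cO_L$-point with reduction in $Z(k_L)$ factors through $U$, so $\ord_L(\cI_{\cO_L}(x))=\min_i\ord_L(f_i(x))$ there) is the right justification.
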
 From now on, we write $Z^{(r)}$ instead of $Z\times_{\Spec(\cO_K)}(\AA_{\cO_K}^r\setminus\Spec(\cO_K[y_1,...,y_r]/(y_1,...,y_r))$.  We recall the following definition. 
\begin{defn}[Lemma-Definition 3.1 and Corollary 3.12, \cite{MoiIde}]\label{defmoi0}Let $\cI,Z$ be as above and  $r\in\ZZ_{>0}$. The {\em $r^{\textnormal{th}}$-motivic oscillation index}  of $\cI$ at $Z$ over $K$, denoted by $\moi_{K,Z}^{(r)}(\cI)$, is defined to be the motivic oscillation index $\moi_K(\sum_{i=1}^ry_if_i,Z^{(r)})$ if $\cI_K=(f_1,...,f_r)$. Note that this definition does not depend on the choice of $f_1,...,f_r$. We also put $\moi_{K,Z}^{(r)}(\cI)=0$ if $\cI_K$ cannot be generated by $r$ elements. 
\end{defn}
In addition, there is a relation between $E_{L,Z,\cI}^{(r)}(m)$ and $\cZ_{L,Z,\cI}(s)$ as follows.
\begin{prop}[\cite{MoiIde}, Lemma 3.11 and Corollary 3.12]\label{eqzeta} Let $\cI, X, Z$ be as above and $r\in\RR$. Suppose that $Z\subset X$. Then have 
 \begin{equation}\label{eqexzeta0}
 \left(1-q_L^{-(s+r)}\right)\cZ_{L,Z,\cI}(s)=q_L^{-(n+s)}\left(1-q_L^{-r}\right)\#Z(k_L)+\left(1-q_L^s\right)\sum_{m\geq 2}E_{L,Z,\cI}^{(r)}(m)q_L^{-ms}
 \end{equation}
 for all $L\in\tilde{\cL}_{K,1}$. Moreover, if $r\in\ZZ_{>0}$ and  $\cI_K$ can be generated by $r$ elements then $\moi_{K,Z}^{(r)}(\cI)=\moi_{K,Z}^{r,\textnormal{naive}}(\cI)$, where if $\ell\in\ZZ_{>0}$ then
 $$\moi_{K,Z}^{\ell,\textnormal{naive}}(\cI):=\liminf_{L\in \cL_{K,M}, M\to+\infty}\sigma_{L,Z}^{\ell,\textnormal{naive}}$$
 is the $\ell^{\textnormal{th}}$-naive motivic oscillation index of $\cI$ at $Z$ over $K$ and $\sigma_{L,Z}^{\ell,\textnormal{naive}}$ is the minimal number $\sigma$ such that there is a pole $s_0$ of $\left(1-q_L^{-(s+\ell)}\right)\cZ_{L,Z,\cI}(s)$ satisfying $\Re(s_0)=-\sigma$. 
 
\end{prop}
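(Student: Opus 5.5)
The identity \eqref{eqexzeta0} is a formal telescoping computation on volumes, and the equality of indices then comes from comparing growth rates of coefficients with poles.

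\textbf{Step 1: the identity.} Fix $L\in\tilde{\cL}_{K,1}$ and, for $m\geq 0$, set
$$A_m:=\mathrm{vol}\{x\in\cO_L^n\mid \overline{x}\in Z(k_L),\ \ord_L(\cI_{\cO_L}(x))\geq m\}.$$
A point $x$ with $\overline{x}\in Z(k_L)$ has $\ord_L(\cI_{\cO_L}(x))\geq m$ exactly when its class modulo $\varpi_L^m$ lies in $\pi_{m1}^{-1}(X\cap Z)(k_L)$. Hence, for $m\geq 1$,
$$A_m=q_L^{-mn}\#\pi_{m1}^{-1}(X\cap Z)(k_L).$$
Since $Z\subset X$, we have $A_0=A_1=q_L^{-n}\#Z(k_L)$. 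By \eqref{exsum1}, $E^{(r)}_{L,Z,\cI}(m)=A_m-q_L^{-r}A_{m-1}$ for $m\geq 2$.

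For $\Re(s)$ large, decompose the integral \eqref{defzeta} along the level sets of $\ord_L$ and apply Abel summation:
$$\cZ_{L,Z,\cI}(s)=\sum_{k\geq 1}(A_k-A_{k+1})q_L^{-ks}=A_1q_L^{-s}+(1-q_L^{s})\sum_{m\geq 2}A_mq_L^{-ms}.$$
Now expand
$$(1-q_L^{s})\sum_{m\geq 2}\bigl(A_m-q_L^{-r}A_{m-1}\bigr)q_L^{-ms}=(1-q_L^{s})\sum_{m\geq 2}A_mq_L^{-ms}-q_L^{-r-s}(1-q_L^{s})\sum_{m\geq 1}A_mq_L^{-ms}.$$
Rewrite both sums using the previous display. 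The only leftover boundary terms involve $A_1=q_L^{-n}\#Z(k_L)$, and collecting them gives $q_L^{-(n+s)}(1-q_L^{-r})\#Z(k_L)$. This proves \eqref{eqexzeta0} for $\Re(s)\gg 0$, and it then holds as an identity of rational functions in $q_L^{-s}$.

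\textbf{Step 2: reduction to exponential sums of $g$.} Let $r\in\ZZ_{>0}$ with $\cI_K=(f_1,\dots,f_r)$, and put $g=\sum_i y_if_i$. For $L\in\cL_{K,M}$ with $M$ large, Proposition \ref{ridepol} together with \eqref{intersec} shows that $E_{g,L,Z^{(r)},\psi}=E^{(r)}_{L,Z,\cI}(m)$ for every character $\psi$ of conductor $m\geq 2$. In particular this sum does not depend on the choice of $\psi$.

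By \eqref{lowbL} and \eqref{rboun}, the number $\sigma_L(g,Z^{(r)})$ is characterised as the exponent $\sigma$ for which
$$\limsup_m |E^{(r)}_{L,Z,\cI}(m)|^{1/m}=q_L^{-\sigma}.$$

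\textbf{Step 3: growth rate equals the naive pole.} By Step 1,
$$\sum_{m\geq 2}E^{(r)}_{L,Z,\cI}(m)t^m=\frac{(1-q_L^{-r}t)\cZ_{L,Z,\cI}(s)-q_L^{-n}(1-q_L^{-r})\#Z(k_L)\,t}{1-t^{-1}},\qquad t=q_L^{-s}.$$
This is a rational function of $t$. Its poles, apart from a possible one at $t=1$, are exactly the poles of $(1-q_L^{-(s+r)})\cZ_{L,Z,\cI}(s)$.

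The apparent pole at $t=1$ (that is, $s=0$) is harmless. The left side converges for $|t|<1$ because $|E^{(r)}(m)|\leq 2$, so the right side has no pole there.

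Write the coefficients $E^{(r)}(m)$, for $m$ large, as an exponential polynomial $\sum_\alpha P_\alpha(m)\alpha^m$ using partial fractions. The functions $m^k\alpha^m$ are linearly independent, so no cancellation can occur among them. Hence $\limsup_m|E^{(r)}(m)|^{1/m}=\max|\alpha|$, where the maximum runs over the inverse poles, as in \eqref{lims}. This gives $\sigma_L(g,Z^{(r)})=\sigma^{r,\textnormal{naive}}_{L,Z}$. Taking $\liminf$ over $L\in\cL_{K,M}$ as $M\to\infty$, and using \eqref{exmo} together with Definition \ref{defmoi0}, yields $\moi^{(r)}_{K,Z}(\cI)=\moi^{r,\textnormal{naive}}_{K,Z}(\cI)$.

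The step I expect to need the most care is this last one. One must check that $\sigma_L(g,Z^{(r)})$, which is defined through poles of the twisted zeta functions of $g$, really equals the exact growth exponent of $|E_{g,L,Z^{(r)},\psi}|$ when $\psi$ ranges over characters of conductor $m$. Combining \eqref{lowbL} with \eqref{rboun} gives matching lower and upper exponents, and independence from $\psi$ makes the lower bound usable for our particular sequence $E^{(r)}(m)$.
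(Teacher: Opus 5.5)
Your Step 1 is correct. With $A_0=A_1=q_L^{-n}\#Z(k_L)$ and $E^{(r)}_{L,Z,\cI}(m)=A_m-q_L^{-r}A_{m-1}$, the Abel summation gives exactly \eqref{eqexzeta0}, for every real $r$ and every $L\in\tilde{\cL}_{K,1}$. Step 2 is also fine, and you are right that independence of $\psi$ is what lets \eqref{lowbL} apply to the single sequence $E^{(r)}_{L,Z,\cI}(m)$. The paper gives no proof here and only cites \cite{MoiIde}, so your argument is a self-contained version of the natural route.

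The gap is at $t=1$ in Step 3. Convergence of $\sum_{m\geq 2}E^{(r)}_{L,Z,\cI}(m)t^m$ on $|t|<1$, together with bounded coefficients, says nothing about a pole at the boundary point $t=1$: the series $\sum_m t^m$ has bounded coefficients and a pole there. This point is not harmless. Every pole of $(1-q_L^{-(s+r)})\cZ_{L,Z,\cI}(s)$ has $\Re(s_0)<0$. A pole of the generating series at $t=1$ would therefore force $\limsup_m|E^{(r)}_{L,Z,\cI}(m)|^{1/m}=1$, i.e. $\sigma_L(g,Z^{(r)})=0$, which would break $\sigma_L(g,Z^{(r)})=\sigma^{r,\textnormal{naive}}_{L,Z}$.

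The fix uses only your own Step 1 data. The numerator is $(1-q_L^{-r}t)\cZ_{L,Z,\cI}(s)-q_L^{-n}(1-q_L^{-r})\#Z(k_L)\,t$, and it vanishes at $t=1$. The reason is that $\cZ_{L,Z,\cI}(0)=\int\Phi_{L,Z}|dx|=A_0=q_L^{-n}\#Z(k_L)$, since the zero locus of $\cI$ in $\cO_L^n$ is null. This zero cancels the simple pole of $1/(1-t^{-1})$, and it is exactly the content of Lemma \ref{znpo}. Equivalently, $A_m\to 0$ gives $E^{(r)}_{L,Z,\cI}(m)\to 0$, which excludes a pole at $t=1$ because all other poles lie in $|t|>1$.

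One smaller point: linear independence of the $m^k\alpha^m$ does not by itself give the $\limsup$. Cite Cauchy--Hadamard instead: the Taylor series at $0$ of a rational function has radius of convergence equal to the modulus of its nearest pole. With these two repairs, taking $\liminf$ over $L$ via \eqref{exmo} completes the argument.
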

\begin{remark}
Note that (\ref{exsum1}), (\ref{exsum2}) and Proposition \ref{eqzeta} were stated only for $r\in\ZZ_{>0}$ in \cite{MoiIde}. However, we can extend them to all real numbers $r$ without any problem.
\end{remark}
\begin{lem}\label{znpo}Let $\cI, X, Z$ be as above.  If $Z\subset X$ then we have $$\left(1-q_L^{-r}\right)\cZ_{L,Z,\cI}(0)=\left(1-q_L^{-r}\right)\int_{\cO_L^n}\Phi_{L,Z}(x)\left|dx\right|=q_L^{-n}\left(1-q_L^{-r}\right)\#Z(k_L)$$
for all $L\in\tilde{\cL}_{K,1}$ and all $r\in\RR$.

Consequently, if $q_L^{s_0}=1$ then $s_0$ is not a pole of the function $\sum_{m\geq 2}E_{L,Z,\cI}^{(r)}(m)q_L^{-ms}$
for all $L\in\cL_{K,1}$ and all $r\in\RR$.
\end{lem}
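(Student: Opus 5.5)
The first identity is a direct computation, and the second assertion follows from Proposition \ref{eqzeta} by comparing the behaviour of both sides at such an $s_0$.

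\textbf{The identity.} Since $Z\subset X$, every $x\in\cO_L^n$ with $\overline{x}\in Z(k_L)$ satisfies $\overline{x}\in X(k_L)$. The integrand $\Phi_{L,Z}(x)q_L^{-s\ord_L(\cI_{\cO_L}(x))}$ is therefore well defined for all $s$ with $\Re(s)\geq 0$ (with the convention that the factor is $0$ when $\ord_L=+\infty$, which happens only on a set of measure zero). At $s=0$ this factor equals $1$ almost everywhere, so $\cZ_{L,Z,\cI}(0)=\int_{\cO_L^n}\Phi_{L,Z}(x)|dx|$. Next, $\Phi_{L,Z}$ depends only on $\overline{x}$. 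Hence the integral equals $\#Z(k_L)$ times the volume $q_L^{-n}$ of a residue class $\overline{x}+\cM_L\cO_L^n$. Multiplying by $(1-q_L^{-r})$ gives the displayed equality.

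Two remarks make this compatible with the meromorphic continuation. The integral defining $\cZ_{L,Z,\cI}(s)$ converges absolutely for $\Re(s)\geq 0$, because the integrand is bounded by $1$. By dominated convergence the integral is continuous on $\Re(s)\geq 0$. So the rational function in $q_L^{-s}$ it defines (for $L\in\cL_{K,1}$) takes the value computed above at $s=0$, and more generally has no pole at any $s_0$ with $\Re(s_0)=0$.

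\textbf{The consequence.} Let $L\in\cL_{K,1}$ and suppose $q_L^{s_0}=1$, so that $q_L^{-s_0}=1$ and $\Re(s_0)=0$. Write $F(s)=\sum_{m\geq 2}E^{(r)}_{L,Z,\cI}(m)q_L^{-ms}$. By (\ref{eqexzeta0}),
\[(1-q_L^{s})F(s)=\left(1-q_L^{-(s+r)}\right)\cZ_{L,Z,\cI}(s)-q_L^{-(n+s)}(1-q_L^{-r})\#Z(k_L).\]
Both sides are rational functions of $t=q_L^{-s}$, and the point $s_0$ corresponds to $t=1$. By the remark above, the right-hand side is regular at $t=1$. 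Its value there is
\[(1-q_L^{-r})\cZ_{L,Z,\cI}(0)-q_L^{-n}(1-q_L^{-r})\#Z(k_L)=0\]
by the identity just proved. So the right-hand side has a zero at $t=1$. On the left, $1-q_L^{s}=1-t^{-1}$ has a simple zero at $t=1$. Consequently $F=(\text{RHS})/(1-t^{-1})$ is regular at $t=1$, that is, $s_0$ is not a pole of $F$.

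\textbf{Where care is needed.} The only delicate step is to justify that $\cZ_{L,Z,\cI}(s)$, as a rational function, is regular at $t=1$ and equals the integral there. A priori the rationality statement only describes $\Re(s)>0$. Continuity up to the boundary $\Re(s)=0$, from dominated convergence, settles this, since a rational function in $t$ that stays bounded as $t\to 1$ from $|t|<1$ has no pole at $t=1$.
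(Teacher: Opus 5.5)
Your proposal is correct and follows the paper's own argument. The identity comes directly from the definition (\ref{defzeta}), since $\Phi_{L,Z}$ is constant on residue classes modulo $\cM_L$, and the consequence follows from (\ref{eqexzeta0}) at $t=q_L^{-s}=1$; your dominated-convergence argument that $\cZ_{L,Z,\cI}$ is regular at $t=1$ with value $\int_{\cO_L^n}\Phi_{L,Z}(x)|dx|$ fills in a step the paper leaves implicit (it tacitly needs $\cI\neq(0)$, as does the paper).
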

\begin{proof} The first assertion follows immediately from the definition of $\Phi_{L,Z}(x)$ and (\ref{defzeta}). The second assertion follows from the first assertion and (\ref{eqexzeta0}).
\end{proof}

We also have an explicit formula of $\cZ_{L,Z,\cI}(s)$ as follows. 
\begin{prop}[\cite{VeysZ}, Theorems 2.7 and 2.10]\label{Defm}Let $\cI,Z$ be as above. Let $(Y,\pi)$ be a strong log resolution of $\cI_K$ over $K$ associated to prime divisors $(E_i)_{i\in T}$ and numerical data $((\nu_i,N_i))_{i\in T}$. Let $L\in\cL_{K,1}$ such that $(Y_L,\pi_L)$ has good reduction modulo $\cM_L$. Then, for all $\tilde{L}\in\tilde{\cL}_{K,1}$ with $k_L\simeq k_{\tilde{L}}$,  we have 
$$\cZ_{\tilde{L},Z,\cI}(s)=\cZ_{L,Z,\cI}(s)=q_L^{-n}\sum_{I\subset T}\cC_{I,L,Z}\prod_{i\in I}\frac{(q_L-1)q_L^{-N_{i}s-\nu_{i}}}{1-q_L^{-N_{i}s-\nu_{i}}},$$
where 
$$\cC_{I,L,Z}=\underset{a\in\overset{\circ}{\overline{E}}_{Ik_L}(k_{L})}{\sum}\overline{\Phi}_{L,Z}(\overline{\pi}(a)).$$
In particular, $s_{L0}:=-\min\{\frac{\nu_i}{N_i}\mid i\in T,\cC_{\{i\},L,Z}\neq 0\}$ is a pole of $\cZ_{L,Z,\cI}(s)$ of maximal real part.
\end{prop}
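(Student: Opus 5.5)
The formula is Denef's explicit formula, extended to ideals and to a Schwartz--Bruhat weight $\Phi_{L,Z}$. I would first prove it for $L\in\cL_{K,1}$ with good reduction, and then pass to $\tilde L$ by the transfer argument.

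\textbf{The case $L\in\cL_{K,1}$.} Since $\pi_L$ is proper and birational and an isomorphism off a measure-zero set, the change of variables formula gives
$$\cZ_{L,Z,\cI}(s)=\int_{Y(\cO_L)}\Phi_{L,Z}(\pi(y))\,q_L^{-s\ord_L(\cI\cO_Y(y))}\,|\pi^*dx|.$$
Here $Y(\cO_L)=Y(L)\cap\pi^{-1}(\cO_L^n)$, using properness, and $|\pi^*dx|=q_L^{-\ord_L(\mathrm{Jac}_\pi)}|dy|$. Next I partition $Y(\cO_L)$ according to the reduction $\overline{y}=a\in\overline{Y}_{k_L}(k_L)$. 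Each $a$ lies in a unique stratum $\overset{\circ}{\overline{E}}_{Ik_L}$. By good reduction, the fibre $\{y\mid\overline{y}=a\}$ admits local coordinates $z_1,\dots,z_n$ identifying it with $\cM_L^n$ (measure $q_L^{-n}$ after normalisation). In these coordinates the $E_i$, $i\in I$, are $z_i=0$. Moreover $\cI\cO_Y$ is generated by $u\prod_{i\in I}z_i^{N_i}$ and $\mathrm{Jac}_\pi=v\prod_{i\in I} z_i^{\nu_i-1}$, with $u,v$ units on the fibre. The units are exactly what good reduction and simple normal crossings of the reduction guarantee; this is the Hensel/implicit function step in Denef's proof.

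The factor $\Phi_{L,Z}(\pi(y))$ depends only on $\overline{\pi(y)}=\overline{\pi}(a)$, so it is constant on the fibre and equal to $\overline\Phi_{L,Z}(\overline\pi(a))$. The fibre integral then factorises. Each coordinate $z_j$ with $j\notin I$ contributes $q_L^{-1}$. Each $z_i$ with $i\in I$ contributes
$$\int_{\cM_L}q_L^{-N_is\,\ord z}q_L^{-(\nu_i-1)\ord z}|dz|=\sum_{k\ge1}(1-q_L^{-1})q_L^{-k}q_L^{-k(N_is+\nu_i-1)}=q_L^{-1}\frac{(q_L-1)q_L^{-N_is-\nu_i}}{1-q_L^{-N_is-\nu_i}}.$$
Summing over $a$ and grouping by $I$ yields exactly the stated formula with coefficient $\cC_{I,L,Z}$.

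\textbf{The case $\tilde L\in\tilde\cL_{K,1}$ with $k_{\tilde L}\simeq k_L$.} By the remark in Subsection \ref{JSL}, $(\overline{Y}_{k_L},\overline{\pi}_{k_L})$ lifts to a resolution $(\tilde Y_{\tilde L},\tilde\pi_{\tilde L})$ of $\cI_{\tilde L}$ with good reduction and the same numerical data. Running the same fibre computation, which only uses the reduction data, gives the identical expression. The right-hand side depends only on $k_L$ and on the reductions, so $\cZ_{\tilde L,Z,\cI}=\cZ_{L,Z,\cI}$.

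\textbf{The pole statement.} I would take $i$ realising the minimum of $\nu_i/N_i$ among those with $\cC_{\{i\},L,Z}\ne0$. Every term containing a factor for $i$ has a pole at $s=-\nu_i/N_i$. Each such term has nonnegative coefficient, because each $\cC_{I,L,Z}$ is a count of points. The other factors are positive at real $s>-\nu_i/N_i$ and near that point, by minimality and because all relevant $I$ satisfy $\cC_{I,L,Z}\ne0\Rightarrow\cC_{\{j\},L,Z}\ne0$ for $j\in I$ (a point of $\overset{\circ}{\overline E}_I$ lies in the closure of $\overset{\circ}{\overline E}_{\{j\}}$; one has to check that nonvanishing of $\overline\Phi$ propagates, which is fine since it is an open-closed condition on $Z$-fibres after a Lang--Weil/Hensel argument). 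Hence, as $s\downarrow-\nu_i/N_i$ along the reals, the sum tends to $+\infty$, so this is a genuine pole. No pole has larger real part, since every pole comes from a factor $j$ with $\cC_{\{j\},L,Z}\ne0$.

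The main obstacle is the propagation point just noted, namely making sure that terms with $\cC_{I,L,Z}\ne 0$ only involve indices $j$ with $\cC_{\{j\},L,Z}\neq 0$. I would handle it by noting that $\cC_{\{j\},L,Z}$ counts $k_L$-points of an open dense part of a smooth component of $\overline E_{j}$ meeting $\overline\pi^{-1}(Z)$, which is nonempty by Lang--Weil for large $q_L$; alternatively I would simply rely on positivity, which already shows that the minimum in the definition of $s_{L0}$ gives a pole.
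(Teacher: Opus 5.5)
Your derivation of the explicit formula is the standard Denef computation: change of variables, identification of each fibre of the reduction map with $\cM_L^n$, and factorisation of the fibre integral. That is what the cited results of Veys--Z\'u\~niga-Galindo rest on, and the transfer to $\tilde L$ through the reduced resolution is also fine.

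The gap is in the ``in particular'' part. The propagation property you need, $\cC_{I,L,Z}\neq 0\Rightarrow \cC_{\{j\},L,Z}\neq 0$ for all $j\in I$, is false. The function $\overline{\Phi}_{L,Z}\circ\overline{\pi}$ is the indicator of $\overline{\pi}^{-1}(Z_{k_L})(k_L)$, which is not an open condition. This locus can meet $\overline{E}_{jk_L}$ only inside $\bigcup_{l\neq j}\overline{E}_{lk_L}$. Lang--Weil cannot rule this out, and in any case the statement is for every $L$ with good reduction, not only for large $q_L$.

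Your fallback ``positivity alone'' fails for the same reason. A term with $\cC_{I,L,Z}\neq 0$ may contain a factor $j$ with $\nu_j/N_j<\nu_i/N_i$. That factor is negative near $s=-\nu_i/N_i$ and has its own pole further to the right. So the claim that every pole comes from some $j$ with $\cC_{\{j\},L,Z}\neq0$ is wrong.

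Concretely, take $K=\QQ$, $n=2$, $\cI=(x_1^2x_2)$, $Z=\{x_2=0\}$ (which lies in the zero locus of $\cI$) and $\pi=\mathrm{id}$. Put $E_1=\{x_1=0\}$ with $(\nu_1,N_1)=(1,2)$ and $E_2=\{x_2=0\}$ with $(\nu_2,N_2)=(1,1)$; this has good reduction for every $p$. Then $\cC_{\emptyset,L,Z}=\cC_{\{1\},L,Z}=0$, $\cC_{\{2\},L,Z}=q_L-1$ and $\cC_{\{1,2\},L,Z}=1$, so $s_{L0}=-1$. However,
\[\cZ_{L,Z,\cI}(s)=\int_{\cO_L}|x_1|^{2s}|dx_1|\int_{\cM_L}|x_2|^{s}|dx_2|=\frac{(1-q_L^{-1})^2q_L^{-1-s}}{(1-q_L^{-1-2s})(1-q_L^{-1-s})}\]
has a pole at $s=-1/2$. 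With $\cI=(x_1x_2)$ and $Z$ the origin, the index set defining $s_{L0}$ is even empty, although $-1$ is a double pole. So the assertion, as literally stated, cannot be proved.

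What is true is the version with index set $T_L:=\{i\in T\mid \cC_{I,L,Z}\neq 0 \text{ for some } I\ni i\}$. These are the $i$ for which $\overline{E}_{ik_L}$ contains a $k_L$-point of $\overline{\pi}^{-1}(Z_{k_L})$, and your positivity argument proves this version verbatim.

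The reason is that every $I$ with $\cC_{I,L,Z}\neq0$ is then automatically contained in $T_L$, since a point of $\overset{\circ}{\overline{E}}_{Ik_L}$ lies on $\overline{E}_{jk_L}$ for every $j\in I$. Hence for real $s>-\min_{i\in T_L}\nu_i/N_i$ every nonzero term is a product of positive factors. The terms containing a minimising index tend to $+\infty$ as $s$ decreases to $-\min_{i\in T_L}\nu_i/N_i$, and no pole lies further to the right.

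The paper's later uses rely on the same false propagation and need the same correction. In the example above, Lemma \ref{upc} would require $0<\bigl((1-q_L^{-s-1})\cZ_{L,Z,\cI}(s)\bigr)(-1)$, but this value is $(1-q_L^{-1})^2/(1-q_L)<0$. Likewise, the proof of Proposition \ref{equpol} asserts $\cC_{I,L,Z_i}=0$ for $I\not\subset T_{iL}$, which fails here for $I=\{1,2\}$.
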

\begin{lem}\label{upc}Keeping the notations and assumptions of Proposition \ref{Defm}. We have 
$$0<\left(\left(1-q_L^{-N_Ls-\nu_L}\right)^{\kappa_L}\cZ_{L,Z,\cI}(s)\right)(s_{L0})<+\infty,$$
where $s_{L0}=-\frac{\nu_L}{N_L}$ with $\gcd(\nu_L,N_L)=1$ and $$\kappa_L:=\max\{\#I\mid I\subset T,\cC_{I,L,Z}\neq 0,  \frac{\nu_i}{N_i}=-s_{L0}\hspace{0.1cm}\forall i\in I\}.$$ Consequently, $s_{L0}$ is a pole of $\cZ_{L,Z,\cI}(s)$ of multiplicity $\kappa_L$.

In addition, suppose that $\cI_K$ can be generated by $e$ polynomials of degree at most $e$.  Then there is a positive constant $C_{e,n}$ and a strong log resolution $(Y,\pi)$ of $\cI_K$ over $K$  whose complexity bounded by $C_{e,n}$ such that $C_{e,n}$ depends only on $n$ and $e$, and 
\begin{equation}\label{ulw}0\leq q_L^{-n}(q_L-1)^{\#I}\cC_{I,L,Z}\leq C_{e,n}
\end{equation}
for all $I\subset T$.

\end{lem}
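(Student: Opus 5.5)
The plan is to work directly from Denef's formula in Proposition \ref{Defm} and isolate the contribution of the pole $s_{L0}=-\nu_L/N_L$. Let
\[
T_0:=\{i\in T\mid \nu_i/N_i=\nu_L/N_L\}.
\]
For each $i\in T_0$ write $\nu_i=k_i\nu_L$ and $N_i=k_iN_L$ with $k_i\in\ZZ_{>0}$; this is possible because $\gcd(\nu_L,N_L)=1$. Then $1-q_L^{-N_is-\nu_i}=1-u^{k_i}$ with $u:=q_L^{-N_Ls-\nu_L}$, and $1-u^{k_i}$ has a simple zero at $u=1$. So for each $I\subset T$ the term $\prod_{i\in I}\frac{(q_L-1)q_L^{-N_is-\nu_i}}{1-q_L^{-N_is-\nu_i}}$ has at $s_{L0}$ a pole of order exactly $\#(I\cap T_0)$. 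The factors indexed by $i\in I\setminus T_0$ are holomorphic and nonzero at $s_{L0}$, since $q_L^{-N_is_{L0}-\nu_i}=q_L^{N_i\nu_L/N_L-\nu_i}\neq 1$ when $\nu_i/N_i\neq\nu_L/N_L$.

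Multiply $\cZ_{L,Z,\cI}(s)$ by $(1-u)^{\kappa_L}$ and evaluate at $s_{L0}$. Terms with $\cC_{I,L,Z}=0$ contribute nothing. Terms with $\cC_{I,L,Z}\neq0$ and $\#(I\cap T_0)<\kappa_L$ vanish. By the definition of $\kappa_L$, no $I$ with $\cC_{I,L,Z}\neq0$ has $\#(I\cap T_0)>\kappa_L$. Here I use that $\cC_{I,L,Z}\neq0$ implies $\cC_{I\cap T_0,L,Z}\neq 0$: the set $\overset{\circ}{\overline{E}}_{I}$ lies in the closure of $\overset{\circ}{\overline{E}}_{I\cap T_0}$, and $\overline{\Phi}_{L,Z}\circ\overline\pi$ is an indicator. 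The cleaner route is to note that $\kappa_L$ is naturally a maximum over all $I$ of $\#(I\cap T_0)$, and to check that this agrees with the stated definition using Hensel/good reduction: a $k_L$-point of $\overset{\circ}{\overline E}_I$ gives points on the smaller strata $\overset{\circ}{\overline E}_{J}$ for $J\subset I$, again by local coordinates at the normal crossing. What remains is
\[
\sum_{I:\ \#(I\cap T_0)=\kappa_L}q_L^{-n}\cC_{I,L,Z}\prod_{i\in I\cap T_0}\frac{(q_L-1)}{k_i}\prod_{i\in I\setminus T_0}\frac{(q_L-1)q_L^{-N_is_{L0}-\nu_i}}{1-q_L^{-N_is_{L0}-\nu_i}}.
\]
The limit of $(1-u)/(1-u^{k_i})$ as $u\to1$ is $1/k_i$, and $u^{k_i}=1$ at $s_{L0}$.

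The crucial point, and the main obstacle, is positivity. Every $\cC_{I,L,Z}\geq0$, and at least one term with $\#(I\cap T_0)=\kappa_L$ has $\cC_{I,L,Z}>0$. For $i\notin T_0$ I need $\frac{x}{1-x}>0$ with $x=q_L^{N_i(\nu_L/N_L-\nu_i/N_i)}$, i.e.\ $0<x<1$, i.e.\ $\nu_i/N_i>\nu_L/N_L$. This holds whenever $\cC_{I,L,Z}\neq0$ and $i\in I$, since then $\cC_{\{i\},L,Z}\neq0$ (same stratum-closure argument) and $s_{L0}$ was chosen as the minimum. Every remaining term is then a nonnegative real number, and at least one is strictly positive. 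This gives $0<(\cdots)(s_{L0})<\infty$. Finiteness is clear from the finitely many holomorphic terms, and the multiplicity statement follows immediately.

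For the second part, take $(Y,\pi)$ of complexity bounded by $C_{e,n}$, as in Subsection \ref{JSL}. Then $\cC_{I,L,Z}\leq\#\overset{\circ}{\overline E}_{Ik_L}(k_L)$, and this stratum has dimension $n-\#I$ with complexity bounded in terms of $e,n$. The Lang–Weil bound (\ref{LWeil}), with constants depending only on complexity, gives $\#\overset{\circ}{\overline E}_{Ik_L}(k_L)\leq C q_L^{n-\#I}$. Hence $q_L^{-n}(q_L-1)^{\#I}\cC_{I,L,Z}\leq C$, which is (\ref{ulw}). Nonnegativity is immediate.
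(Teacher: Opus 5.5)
Your reduction to a finite sum at $s_{L0}$ is the paper's computation, and your Lang--Weil argument for (\ref{ulw}) is the paper's argument. The gap is the step you flag yourself. Finiteness and positivity need two facts:

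(a) no $I$ with $\cC_{I,L,Z}\neq 0$ has $\#(I\cap T_0)>\kappa_L$;

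(b) every $i\in I$ with $\cC_{I,L,Z}\neq 0$ satisfies $\nu_i/N_i\geq -s_{L0}$.

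You derive both from the claim that $\cC_{I,L,Z}\neq 0$ implies $\cC_{J,L,Z}\neq 0$ for $J\subset I$. That claim is false once $Z$ is a proper subscheme. A point of $\overset{\circ}{\overline E}_{Ik_L}$ over $Z$ lies in the closure of $\overset{\circ}{\overline E}_{Jk_L}$, but $\overset{\circ}{\overline E}_{Jk_L}$ itself can miss $\overline\pi^{-1}(Z)$ entirely. This happens, for instance, when $Z$ is a point, $\overline\pi^{-1}(Z)$ is a union of exceptional divisors, and $E_J$ is a strict transform. That $\overline\Phi_{L,Z}\circ\overline\pi$ is an indicator does not help. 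Hensel's lemma only produces $\cO_L$-points reducing to the same $k_L$-point, so it gives no other $k_L$-points.

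Here is a concrete counterexample. Take $\cI=(x_1)\subset\ZZ[x_1,x_2]$, $Z$ the origin, and $\pi$ the blow-up of the origin. Let $E_1$ be the strict transform, with $(\nu_1,N_1)=(1,1)$, and $E_2$ the exceptional curve, with $(\nu_2,N_2)=(2,1)$. Then $\overline\pi^{-1}(Z)=\overline E_{2k_L}$, so $\cC_{\{1\},L,Z}=0$, $\cC_{\{2\},L,Z}=q_L$ and $\cC_{\{1,2\},L,Z}=1$. The recipe of Proposition \ref{Defm} gives $s_{L0}=-2$ and $\kappa_L=1$. Your final sum is then
\[
q_L^{-2}(q_L-1)\bigl(q_L+(q_L-1)\tfrac{q_L}{1-q_L}\bigr)=0.
\]
The term $I=\{1,2\}$ is negative because $\nu_1/N_1=1<2$. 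Indeed $\cZ_{L,Z,\cI}(s)=\int_{\cM_L^2}|x_1|^s|dx|=q_L^{-2-s}(1-q_L^{-1})/(1-q_L^{-1-s})$ is holomorphic at $s=-2$.

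Minimal resolutions fail the same way. For $\cI=(x_1^3(x_1-x_2^2))$ and $Z$ the origin, the strict transform of $\{x_1=0\}$ has data $(1,3)$ and meets $\overline\pi^{-1}(Z)$ only at a crossing point. The recipe gives $s_{L0}=-3/8$, while $\cZ_{L,Z,\cI}$ has poles with real part $-1/3$, and for $q_L\geq 3$ the quantity in the lemma is negative.

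So the inequality cannot be proved with the stated definitions of $s_{L0}$ and $\kappa_L$. The paper's one-line justification (``together with the definition of $s_{L0}$ and $\kappa_L$'') tacitly uses the same monotonicity.

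Your argument does prove, verbatim, the corrected statement with
\[
-s_{L0}:=\min\{\nu_i/N_i\mid i\in I\subset T,\ \cC_{I,L,Z}\neq 0\},\qquad \kappa_L:=\max\{\#(I\cap T_0)\mid \cC_{I,L,Z}\neq 0\}.
\]
With these definitions (a) and (b) hold by construction. Every surviving term is then $\geq 0$, and the term of an $I$ realizing $\kappa_L$ is $>0$.
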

\begin{proof}
We set $T_{L0}:=\{i\in T\mid \frac{\nu_{i}}{N_i}=-s_{L0}\}$, $\cA_{L0}:=\{I\subset T_{L0}\mid \#I=\kappa_L\}$,  $d_i:=\frac{N_i}{N_L}=\frac{\nu_i}{\nu_L}$  if $i\in T_{L0}$. By a simple calculation, we have 
\begin{equation*}\left(\left(1-q_L^{-N_Ls-\nu_L}\right)^{\kappa_L}\cZ_{L,Z,\cI}(s)\right)(s_{L0})=q_L^{-n}\sum_{I\in \cA_{L0}}\sum_{I\subset J\subset T}\cC_{J,L,Z}(q_L-1)^{\#J}\prod_{i\in I}\frac{1}{d_i}\prod_{i\in J\setminus I}\frac{q_L^{-N_{i}s_{L0}-\nu_{i}}}{1-q_L^{-N_{i}s_{L0}-\nu_{i}}}.
\end{equation*}
This together with the definition of $s_{L0}$ and $\kappa_L$ yield $$0<\left(\left(1-q_L^{-N_Ls-\nu_L}\right)^{\kappa_L}\cZ_{L,Z,\cI}(s)\right)(s_{L0})<+\infty.$$

On the other hand, as mentioned in Section \ref{JSL}, there is a constant $C_{e,n}$ depending only on $n,e$ and a choice of  $(Y,\pi)$ such that $1\leq \nu_i,N_i\leq C_{e,n}$ for all $i\in T$  and the complexity of $Y$ together with $E_i$ for $i\in T$ is bounded by $C_{e,n}$.  Note that $\dim(\overset{\circ}{\overline{E}}_{Ik_L})=n-\#I$ for all $I\subset T$ and all $L\in\cL_{K,M}$. Now we can enlarge $C_{e,n}$ if needed and use the Lang-Weil estimate (\ref{LWeil}) to count the number of $k_L$-points of $\overset{\circ}{\overline{E}}_{Ik_L}$  to obtain (\ref{ulw}).
\end{proof}
As explained in \cite[Remark 3.18]{MoiIde}, if $\cI$ is generated by $f_1,...,f_r\in\cO_K[x_1,...,x_r]$, then we can rewrite Conjecture \ref{Iguconj} for $f=\sum_{1\leq i\leq r}y_if_i(x)$ and $Z^{(r)}$ by using the exponential sums of $\cI$ at $Z$  as follows. 
\begin{conj}[Averaged Igusa conjecture for exponential sums]\label{avelocIgu}
Let $\cI,Z$ be as above. 
For each positive integer $r$, there is an integer $M$ depending on $\cI, Z$ and a positive constant $c_r$ depending on $\cI,r$ such that for all local fields $L\in\tilde{\cL}_{K,M}$ and all $m\geq 2$, we have 
$$\left|E_{L,Z,\cI}^{(r)}(m)\right|\leq c_rm^{n+r-1}q_L^{-m\moi_{K,Z}^{(r)}(\cI)}.$$
\end{conj}
\begin{remark}In \cite[Conjecture 3.17]{MoiIde}, Conjecture \ref{avelocIgu} was stated in a stronger form requiring that $M$ depends only on $\cI$. However, it is not true in the general case. Indeed, for each integer $N$, we set $Z_N=\Spec(\AA_{\cO_K}^n/(N,x_1,...,x_n))$. If  $\cI$ is generated by a homogeneous polynomial of degree at least 2, then it is easy to verify that $\moi_{K,Z_N}^{(r)}(\cI)=+\infty$. However, there is not any integer $M$ such that $E_{L,Z_N,\cI}^{(r)}(m)=0$ for all $N>M$ and all $m\geq 2$. Thus, we need to correct \cite[Conjecture 3.17]{MoiIde} by Conjecture \ref{avelocIgu}.
\end{remark}
More generally, if $X$ is any scheme  of finite type over $\cO_K$ ($X$ is no longer an affine scheme) and  $Z$ is an $\cO_K$-subscheme of $X$ of finite type, then we can still use (\ref{exsum1}) to define the absolute exponential sums $\cE_{r,L,Z,X}(m)$ of $X$ at $Z$ over $L$ for $m\geq 2$ as follows.
\begin{defn}\label{gzeta}Let $Z\subset X$ be $\cO_K$-schemes of finite type. Let $r$ be a real number and $L\in\tilde{\cL}_{K,1}$. For $m\geq 2$, the $r$-absolute exponential sums  of $X$ at $Z$ over $L$ is defined by 
$$\cE_{r,L,Z,X}(m)=\#\pi_{m1}^{-1}\left((X\cap Z)(k_L)\right)-q_L^{r}\#\pi_{(m-1)1}^{-1}((X\cap Z)(k_L)).$$  
We write $\cE_{r,X,L}(m)$ instead of $\cE_{r,L,Z,X}(m)$ if $Z=X$ and $m\geq 2$. We define
$$\cZ_{r,L,Z,X}(s):=\sum_{m\geq 2}\cE_{r,L,Z,X}(m)q_L^{-ms}.$$
\end{defn}
\begin{lem}\label{moexpo}Let $Z\subset X$ be $\cO_K$-schemes of finite type and $r\in\RR$. Then $\cZ_{r,L,Z,X}(s)$
is a rational function in $q_L^{-s}$ for all $L\in\cL_{K,1}$. Moreover, there is a finite subset $\cA(X)$ of $\NN_{>0}^3$ and a natural number, all of which depend only on the complexity of $X$ such that $$\cZ_{r,L,Z,X}(s+n)\prod_{(a,b,c)\in \cA(X)}(1-q_L^{-sa}q_L^{-b})^c$$ is a polynomial in $q_L^{-s}$ for all $L\in\cL_{K,1}$ and all $r\in \RR$. In particular, we have $$\Lambda_r(L,Z,X):=\{\lambda\in\RR\mid \exists s_0, \Re(s_0)=-\lambda, \cZ_{r,L,Z,X}(s_0)=\infty\}\subset \Lambda(X):=\{\frac{b}{a}-n\mid (a,b,c)\in\cA(X)\}$$ for all $L\in \cL_{K,1}$ and all $r\in\RR$.  
\end{lem}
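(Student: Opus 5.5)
Since $X$ need not be affine, the plan is to reduce to the affine case by inclusion–exclusion over a finite affine cover, and then use Proposition \ref{eqzeta} together with Proposition \ref{Defm}.

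\emph{Step 1: reduction to the affine case.} By the definition of complexity, cover $X$ by at most $D$ open affines $U_1,\dots,U_k$, each a closed subscheme of $\AA_{\cO_K}^{D}$ cut out by an ideal $\cI_j$. Every finite intersection $U_J=\cap_{j\in J}U_j$ is again quasi-affine of bounded complexity. After possibly refining the cover, we may take each $U_J$ to be affine, embedded in some $\AA^{n_J}$ with $n_J$ bounded.

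For each $m$, a point of $X(\cO_L/\cM_L^m)$ lying over a point $\bar x\in X(k_L)$ lies in $U_j$ exactly when $\bar x\in U_j(k_L)$, because $U_j$ is open and $\cO_L/\cM_L^m$ is local. Inclusion–exclusion on the residue point therefore gives
$$\cE_{r,L,Z,X}(m)=\sum_{\emptyset\neq J}(-1)^{\#J+1}\cE_{r,L,Z\cap U_J,U_J}(m).$$
Hence $\cZ_{r,L,Z,X}(s)=\sum_J\pm\cZ_{r,L,Z\cap U_J,U_J}(s)$, and it suffices to treat the affine case, tracking the complexity throughout.

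\emph{Step 2: the affine case.} Let $U\subset\AA_{\cO_K}^{n'}$ be defined by $\cI$ and let $W=Z\cap U\subset U$. From the definitions (\ref{exsum1}) and Definition \ref{gzeta}, applied with the parameter $n'-r$ in place of $r$, we get
$$\cE_{r,L,W,U}(m)=q_L^{mn'}E^{(n'-r)}_{L,W,\cI}(m).$$
Consequently $\cZ_{r,L,W,U}(s+n')=\sum_{m\ge2}E^{(n'-r)}_{L,W,\cI}(m)q_L^{-ms}$. Proposition \ref{eqzeta} expresses this series as
$$\frac{(1-q_L^{-(s+n'-r)})\cZ_{L,W,\cI}(s)-q_L^{-(n'+s)}(1-q_L^{r-n'})\#W(k_L)}{1-q_L^{s}}.$$

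By Proposition \ref{Defm}, applied to a strong log resolution of $\cI_K$ of complexity bounded in terms of that of $X$ (Subsection \ref{JSL}), the zeta function $\cZ_{L,W,\cI}(s)$ is a rational function in $q_L^{-s}$. Its denominator divides $\prod_i(1-q_L^{-N_is-\nu_i})$, whose data $(N_i,\nu_i)$ are bounded. This rationality holds for all $L\in\cL_{K,1}$, by \cite[Theorem 2.4]{VeysZ}, with denominators from the same numerical data.

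The factor $1-q_L^{s}$ in the denominator introduces no pole: by Lemma \ref{znpo}, the points with $q_L^{s_0}=1$ are not poles. The numerator of the displayed expression therefore vanishes there, and since $q_L^{s}=1$ forces $q_L^{-s}=1$, the quotient remains a polynomial divided by $\prod(1-q_L^{-N_is-\nu_i})$. One sanity check remains: the numerator is a polynomial in $q_L^{-s}$ times the product, after multiplying through, and $1-q_L^{s}=-q_L^{s}(1-q_L^{-s})$ divides it in $\CC[q_L^{-s}]$.

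\emph{Step 3: uniformizing over the cover.} Shifting from $n'$ to a single $n$ (say $n=\max n_J$) replaces $s$ by $s+(n-n')$. Each factor then becomes $(1-q_L^{-N_is-\nu_i-N_i(n-n')})$, so we take $b=\nu_i+N_i(n-n')$, possibly enlarged so that it is positive. A cleaner choice is $n:=\min n_J$, which keeps the shifts nonnegative.

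Let $\cA(X)$ be the union over $J$ of these triples, with $c$ equal to the number of occurrences. Multiplying the sum over $J$ by the full product clears every denominator, and nothing in the construction depends on $r$. The inclusion $\Lambda_r\subset\Lambda(X)$ follows: a pole $s_0$ of $\cZ_{r,L,Z,X}$ must satisfy $q_L^{-(s_0-n)a-b}=1$ for some $(a,b,c)\in\cA(X)$, so $\Re(s_0)=n-b/a$.

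The main obstacle is Step 1. The difficulty is making sure the intersections $U_J$ admit affine embeddings of bounded complexity, and that the positivity requirement $b>0$ holds. If $\nu_i+N_i(n-n')\le0$ occurred, the bookkeeping would need adjusting, and I would simply choose $n$ small enough.
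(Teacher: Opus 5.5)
Your Step 2 is the paper's argument. You write $\cZ_{r,L,W,U}(s+n')=\sum_{m\ge 2}E^{(n'-r)}_{L,W,\cI}(m)q_L^{-ms}$ and invert Proposition \ref{eqzeta}. You use the rationality of $\cZ_{L,W,\cI}(s)$ from \cite{VeysZ}, with denominator dividing $\prod_i(1-q_L^{-N_is-\nu_i})$. You then use Lemma \ref{znpo} to cancel $1-q_L^{s}=-q_L^{s}(1-q_L^{-s})$. All of this is correct.

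The one structural difference is the affine reduction. The paper stratifies $Z$ into disjoint pieces $Z_i\subset U_i$, with each $U_i$ an affine open of $X$. This gives $\cE_{r,L,Z,X}=\sum_i\cE_{r,L,Z_i,U_i}$, with no signs and no intersections of charts. You use inclusion--exclusion instead, which is fine in principle. However, two of your steps are wrong as written.

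\emph{Step 1.} The claim that after refining the cover every $U_J$ may be taken affine is false when $X$ is not separated, which the lemma allows. In the affine plane with doubled origin, two affine opens containing the two origins meet in $W\setminus\{0\}$ for some open neighbourhood $W$ of $0$ in $\AA^2$. Such a set is never affine, and no refinement changes this. You do not actually need $U_J$ to be affine. Since $U_J$ is open in $U_j$ for any $j\in J$, your own one-point argument gives
\[
\cE_{r,L,Z\cap U_J,U_J}(m)=\cE_{r,L,Z\cap U_J,U_j}(m).
\]
So you can apply Step 2 to the affine chart $U_j\subset\AA^{n_j}$ with the locally closed subscheme $W=Z\cap U_J$; that is all Proposition \ref{eqzeta} requires. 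Only resolutions of the ideals of the fixed charts $U_j$ then enter, so $\cA(X)$ is independent of $Z$ and $r$.

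\emph{Step 3.} You reversed the direction of the shift. With $n=\max_J n_J$ you have $n-n_J\ge 0$, so $b=\nu_i+N_i(n-n_J)\ge\nu_i\ge 1$ automatically and nothing needs adjusting. The choice $n=\min_J n_J$ gives $n-n_J\le 0$, and it is precisely this choice that can produce $b\le 0$. Choosing $n$ ``small enough'' therefore goes the wrong way. Nor may you ``enlarge $b$'': the factor $1-q_L^{-sa-b}$ must cancel an actual denominator. Keep $n=\max_J n_J$, or equivalently compose with the closed embeddings $\AA^{n_J}\hookrightarrow\AA^{n}$ as in Proposition \ref{prepol}. With these two corrections the rest of your argument, including the final inclusion $\Lambda_r(L,Z,X)\subset\Lambda(X)$, goes through.
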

\begin{proof}

We take a finite stratification of $Z$ whose strata  $Z_i$ for $i\in I$ are $\cO_K$-schemes of finite type such that for each $i\in I$ there is an open affine subscheme $U_i$ of $X$ of finite type over $\cO_K$ satisfying $Z_i\subset U_i$. Then it is easy to verify that
$$\cE_{r,L,Z,X}(m)=\sum_{i\in I}\cE_{r,L,Z_i,U_i}(m)$$
for all $L\in\tilde{\cL}_{K,1}$ and all $r\in\RR$. Thus we have 
\begin{equation}\label{sepsu}
\cZ_{r,L,Z,X}(s)=\sum_{i\in I}\cZ_{r,L,Z_i,U_i}(s)
\end{equation}
for all $L\in\tilde{\cL}_{K,1}$ and all $r\in\RR$. 

For each $i\in I$, let $\iota_i:U_i\to \AA_{\cO_K}^{n_i}$ be a closed embedding. We set $\tilde{Z}_i=\iota_i(Z_i)$, $\tilde{U}_i=\iota(U_i)$. Let $\cI_i$ be the ideal of $\cO_K[x_1,...,x_{n_i}]$ associated to $\tilde{U}_i$.  Then we have $\cZ_{r,L,Z_i,U_i}(s)=\cZ_{r,L,\tilde{Z}_i,\tilde{U}_i}(s)$ for all $L\in\tilde{\cL}_{K,1}$ and all $r\in \RR$. By (\ref{exsum1}) and Definition \ref{gzeta}, we have \begin{equation}\label{tras}\cZ_{r,L,Z_i,U_i}(s)=\cZ_{r,L,\tilde{Z}_i,\tilde{U}_i}(s)=\sum_{m\geq 2}E_{L,Z_i,\cI_i}^{(n_i-r)}(m)q_L^{-m(s-n_i)}
\end{equation}
for all $L\in\tilde{\cL}_{K,1}$ and all $i\in I$.
Therefore, the assertions follow from (\ref{sepsu}), (\ref{tras}), \cite[Theorem 2.4]{VeysZ}, Proposition \ref{eqzeta} and Lemma \ref{znpo}.
\end{proof}
\begin{defn}\label{defmoi}Let $Z\subset X$ be $\cO_K$-schemes of finite type and $r$ be a real number. We set $\sigma_r^{\textnormal{abs}}(L,Z,X)=-\max\Lambda_r(L,Z,X)$ and call it the {\em $r$-absolute $L$-oscillation index} of $X$ at $Z$. Here, we use the convention that $\sigma_r^{\textnormal{abs}}(L,Z,X)=+\infty$ if $\cZ_{r,L,Z,X}(s)$ is an entire function. The {\em $r$-absolute motivic oscillation} of $X$ at $Z$ over $K$ is defined by $$\moi_r^{\textnormal{abs}}(K,Z,X):=\liminf_{L\in\cL_{K,M},M\to+\infty}\sigma_r^{\textnormal{abs}}(L,Z,X)\leq +\infty.$$ 

\end{defn}
\begin{remark}\label{remoi}We use the notations of Proposition \ref{eqzeta} and  Definition \ref{defmoi}.  Suppose that $X=\Spec(\cO_K[x_1,...,x_n]/\cI)$, $r\in\ZZ_{>0}$ and $n\geq r$.  Then it follows easily from the definition that $\sigma_r^{\textnormal{abs}}(L,Z,X)=\sigma_{L,Z}^{n-r,\textnormal{naive}}(\cI)-n$ and $\moi_r^{\textnormal{abs}}(K,Z,X)=\moi_{K,Z}^{n-r,\textnormal{naive}}(\cI)-n$.
\end{remark}
\begin{prop}\label{prepol}Let $Z\subset X$ be $\cO_K$-schemes of finite type. There are  positive integers $n,\ell$,  non-zero ideals $\cI_i$ of $\cO_K[x_1,...,x_n]$  and  $\cO_K$-schemes $\tilde{Z}_i$ of finite type for $1\leq i\leq \ell$ such that $n,\ell,\cI_i$ are independent of $Z$, $\cI_i\otimes K\neq(1)$ and $\tilde{Z}_i\subset\Spec(\cO_K[x_1,...,x_n]/\cI_i)$  for all $1\leq i\leq \ell$, and the equalities
\begin{equation}\label{eq-1}\left(1-q_L^{-(s+n-r)}\right)\sum_{i\in I}\cZ_{L,\tilde{Z}_i,\cI_i}(s)=q_L^{-(n+s)}\left(1-q_L^{-n+r}\right)\#Z(k_L)+\left(1-q_L^s\right)\cZ_{r,L,Z,X}(s+n),
\end{equation}
$$\#Z(k_L)=\sum_{i\in I}\#Z_i(k_L)$$ hold for all $L\in\tilde{\cL}_{K,1}$ and all $r\in\RR$. In particular, we have 
\begin{align*}\min_{r\in \RR}\sigma_r^{\textnormal{abs}}(L,Z,X)=\min\{\sigma\leq+\infty\mid \exists 1\leq i\leq e, \exists s_0\in\CC, \Re(s_0)=-\sigma-n,  \cZ_{L,\tilde{Z}_i,\cI_i}(s_0)=\infty\}
\end{align*}
for all $L\in\cL_{K,1}$.
\end{prop}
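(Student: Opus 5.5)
We reduce to the affine case handled in Lemma \ref{moexpo} and Proposition \ref{eqzeta}, choosing the affine charts once for $X$ so that the data do not depend on $Z$. Cover $X$ by finitely many open affine subschemes $U_1,\dots,U_\ell$ of finite type over $\cO_K$. Fix closed embeddings $\iota_i:U_i\to\AA_{\cO_K}^{n_i}$, and pad coordinates so that all $n_i$ equal one $n$. This is harmless: replace $U_i$ by $U_i\times\{0\}\subset\AA^{n}$, which adds the linear generators $x_{n_i+1},\dots,x_n$ to the ideal. These extra coordinates lie in the ideal and are forced to be $0$ modulo every power of $\varpi_L$, so the point counts $\#\pi_{m1}^{-1}(\cdot)(k_L)$ are unchanged. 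Let $\cI_i$ be the ideal of $\iota_i(U_i)$. Since $U_i$ is nonempty, $\cI_i\otimes K\neq(1)$ after discarding charts with empty generic fibre (those contribute only for finitely many $p$, or we keep them via the convention). The data $n,\ell,\cI_i$ depend only on $X$.

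Given $Z$, we stratify it disjointly, as in the proof of Lemma \ref{moexpo}: set $Z_1=Z\cap U_1$ and $Z_i=Z\cap U_i\setminus\bigcup_{j<i}U_j$, which are locally closed $\cO_K$-subschemes of finite type. Put $\tilde Z_i=\iota_i(Z_i)\subset\Spec(\cO_K[x]/\cI_i)$. The disjoint decomposition immediately gives $\#Z(k_L)=\sum_i\#Z_i(k_L)=\sum_i\#\tilde Z_i(k_L)$. It also gives $\cE_{r,L,Z,X}(m)=\sum_i\cE_{r,L,Z_i,U_i}(m)$ for $m\ge2$, because a point of $X(\cO_L/\varpi_L^m)$ reducing into $Z_i(k_L)\subset U_i(k_L)$ lies in $U_i(\cO_L/\varpi_L^m)$, $U_i$ being open. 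Summing over $i$, we get $\cZ_{r,L,Z,X}(s)=\sum_i\cZ_{r,L,\tilde Z_i,\tilde U_i}(s)$.

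Next we identify each summand with an exponential sum of ideals. By Definition \ref{gzeta} and (\ref{exsum1}), with $\tilde r=n-r$, we have $\cE_{r,L,\tilde Z_i,\tilde U_i}(m)=q_L^{mn}E^{(n-r)}_{L,\tilde Z_i,\cI_i}(m)$. Hence $\cZ_{r,L,Z,X}(s+n)=\sum_i\sum_{m\ge2}E^{(n-r)}_{L,\tilde Z_i,\cI_i}(m)q_L^{-ms}$. Now apply Proposition \ref{eqzeta} with $r$ replaced by $n-r$, which is allowed for all real $r$ by the Remark, and with $\tilde Z_i\subset X_i$. This gives $(1-q_L^{-(s+n-r)})\cZ_{L,\tilde Z_i,\cI_i}(s)=q_L^{-(n+s)}(1-q_L^{-(n-r)})\#\tilde Z_i(k_L)+(1-q_L^s)\sum_{m\ge2}E^{(n-r)}_{L,\tilde Z_i,\cI_i}(m)q_L^{-ms}$. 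Summing over $i$ and using the point count identity yields (\ref{eq-1}).

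For the final claim, take $L\in\cL_{K,1}$. The function $s\mapsto\cZ_{r,L,Z,X}(s+n)$ has a pole at $s_0$ iff $\cZ_{r,L,Z,X}$ has a pole at $s_0+n$, with real part $-\sigma$ where $\sigma=-\Re(s_0)-n$. We compare poles on the two sides of (\ref{eq-1}).

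Suppose first that $s_0$ is a pole of some $\cZ_{L,\tilde Z_i,\cI_i}$. These local zeta functions are sums of positive measures, so by Proposition \ref{Defm} and Lemma \ref{upc} the leading-order contributions at a pole of maximal real part do not cancel. We would argue that for the minimal $\sigma$, that is, the maximal real part, the pole of $\sum_i\cZ_{L,\tilde Z_i,\cI_i}$ survives. Pick $r$ so that the factor $1-q_L^{-(s+n-r)}$ does not vanish there, which fails for only one value of $r$. The pole then persists on the right-hand side. The factor $1-q_L^s$ vanishes only at $\Re s=0$, which Lemma \ref{znpo} excludes, so the pole is a pole of $\cZ_{r,L,Z,X}(s+n)$.

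Conversely, any pole of $\cZ_{r,L,Z,X}(s+n)$ away from $q_L^s=1$, which Lemma \ref{znpo} rules out anyway, is a pole of the left-hand side, hence of some $\cZ_{L,\tilde Z_i,\cI_i}$.

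Taking the minimum over $r$ then gives the equality. The main obstacle is the non-cancellation in the direction just described: we need that the extremal pole of the sum over $i$ is genuinely a pole. For this we would use the positivity in Lemma \ref{upc} applied jointly, since all residue-type leading coefficients are positive. The freedom in $r$ removes the single possibly cancelling factor. For non-extremal poles, the equality concerns only the minimum, so nothing more is needed.
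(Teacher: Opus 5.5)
Your derivation of the identity follows the paper's proof exactly: affine charts with embeddings padded to a common $\AA^n_{\cO_K}$, the disjoint stratification $Z_i=Z\cap U_i\setminus\bigcup_{j<i}U_j$, and the identification $\cE_{r,L,\tilde Z_i,\tilde U_i}(m)=q_L^{mn}E^{(n-r)}_{L,\tilde Z_i,\cI_i}(m)$ (this is (\ref{tras})), followed by Proposition \ref{eqzeta} with $r$ replaced by $n-r$. Your reduction of the last claim is also the paper's: the leading pole of $\sum_i\cZ_{L,\tilde Z_i,\cI_i}(s)$ must not cancel. You use Lemma \ref{znpo} to handle the factor $1-q_L^{s}$ and the freedom in $r$ to handle the factor $1-q_L^{-(s+n-r)}$.

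The gap is in the non-cancellation step, which you only sketch. The tools you name do not cover the full claim. Proposition \ref{Defm} and Lemma \ref{upc} both require the chosen resolutions of the $\cI_{iK}$ to have good reduction modulo $\cM_L$. So applying Lemma \ref{upc} summand by summand (strictly positive leading coefficients at a common real point, hence no cancellation) proves the final equality only for $L$ of large residue characteristic. The proposition asserts it for all $L\in\cL_{K,1}$; the paper bridges this by appealing to \cite[Theorem 2.7]{VeysZ}.

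A resolution-free argument works for every $L\in\cL_{K,1}$. In $t=q_L^{-s}$, each $\cZ_{L,\tilde Z_i,\cI_i}$ is rational, and its Taylor coefficients are the volumes of the sets $\{\Phi_{L,\tilde Z_i}=1,\ \ord_L(\cI_i(x))=k\}$, hence non-negative. By Pringsheim's theorem, the radius of convergence $R_i$ is a pole on the positive real axis. The sum also has non-negative coefficients and radius $\min_iR_i$, so it has a pole at $t=\min_iR_i$. This is a real point $s^*$ of maximal real part, and any $r\neq s^*+n$ preserves it.

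Finally, you may not discard charts with empty generic fibre, because the identities are required for every $L\in\tilde\cL_{K,1}$. Keep them instead. This is harmless: (\ref{eqexzeta0}) is a purely combinatorial identity, and the corresponding zeta functions are polynomials in $q_L^{-s}$. The side condition $\cI_i\otimes K\neq(1)$ then fails for those charts, but that imprecision is already in the statement and the paper's proof ignores it too.
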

\begin{proof}
We proceed similarly to the proof of Lemma \ref{moexpo}. Let $\cU=\{U_i\mid 1\leq i\leq\ell\}$ be a finite covering of $X$ by open affine subschemes $U_i$ of $X$ of finite type over $\cO_K$. Then it is easy to construct a stratification of $Z$ whose strata $Z_i$ for $1\leq i\leq \ell$ are $\cO_K$-schemes of finite type such that $Z_i\subset U_i$ for all $i$.  For each $1\leq i\leq \ell$, there is a closed embedding $\iota_i:U_i\to \AA_{\cO_K}^{n_i}$ for some $n_i$. We set $n=\max_{i\in I}n_i$. By using the natural embeddings $\AA_{\cO_K}^{n_i}\to \AA_{\cO_K}^{n}$ for $1\leq i\leq\ell$, we can suppose that $n_i=n$ for all $i$. We set $\tilde{U}_{i}=\iota_{i}(U_i)$ and let $\cI_{i}$ be the ideal of $\cO_K[x_1,...,x_n]$ associated to $\tilde{U}_{i}$. We also put $\tilde{Z}_{i}=\iota_{i}(Z_i)$. Then we have 
$$\#Z(k_L)=\sum_{i\in I}\#Z_i(k_L)=\sum_{i\in I}\#\tilde{Z}_{i}(k_L)$$
for all $L\in \tilde{\cL}_{K,1}$. Thus, it follows from (\ref{tras}), Proposition \ref{eqzeta} and Lemma \ref{znpo} that 
\begin{equation*}\left(1-q_L^{-(s+n-r)}\right)\sum_{i\in I}\cZ_{L,\tilde{Z}_{i},\cI_i}(s)=q_L^{-(n+s)}\left(1-q_L^{-n+r}\right)\#Z(k_L)+\left(1-q_L^s\right)\cZ_{r,L,Z,X}(s+n)
\end{equation*}
for all $L\in\tilde{\cL}_{K,1}$ and all $r\in\RR$. Moreover, if $L\in\cL_{K,1}$ then $s_0$ is not a pole $\cZ_{r,L,Z,X}(s+n)$ if $q_L^{s_0}=1$. 
Therefore,
 $\min_{r \in \RR}\sigma_r^{\textnormal{abs}}(L,Z,X)$ is equal to 
 \begin{align*}\min\{\sigma\leq+\infty\mid \exists s_0\in\CC, \Re(s_0)=-\sigma-n,  \sum_{i\in I}\cZ_{L,\tilde{Z}_i,\cI_i}(s_0)=\infty\}
\end{align*}
for all $L\in \cL_{K,1}$. By the same reason as \cite[theorem 2.7]{VeysZ}, we have 

\begin{align*}&\min\{\sigma\leq+\infty\mid \exists s_0\in\CC, \Re(s_0)=-\sigma-n,  \sum_{i\in I}\cZ_{L,\tilde{Z}_i,\cI_i}(s_0)=\infty\}\\
=&\min\{\sigma\leq+\infty\mid \exists 1\leq i\leq e, \exists s_0\in\CC, \Re(s_0)=-\sigma-n,  \cZ_{L,\tilde{Z}_i,\cI_i}(s_0)=\infty\}
\end{align*} for all $L\in\cL_{K,1}$.
This completes the proof.
\end{proof}
\begin{cor}[Transfer principle]\label{transfer2}There exists an integer $M$ depending only on $X$ such that 
$$\cE_{r,L_1,Z,X}(m)=\cE_{r,L_2,Z,X}(m)$$
for all $L_1,L_2\in \tilde{\cL}_{K,M}$ with $k_{L_1}\simeq k_{L_2}$, all $\cO_K$-subschemes $Z$ of $X$ of finite type, all  $r\in \RR$ and all $m\geq 2$.
\end{cor}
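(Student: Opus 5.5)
We reduce to the affine case and then use the explicit formula of Proposition \ref{Defm}, via the relation (\ref{eq-1}) of Proposition \ref{prepol}.

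\emph{Reduction to the affine case.} Fix once and for all a finite affine open cover $U_1,\dots,U_\ell$ of $X$ and closed embeddings $\iota_i:U_i\to\AA_{\cO_K}^n$ with ideals $\cI_i$. For an arbitrary $\cO_K$-subscheme $Z\subset X$ of finite type, stratify $Z$ into locally closed pieces $Z_i\subset U_i$. As in the proof of Lemma \ref{moexpo}, this gives
$$\cE_{r,L,Z,X}(m)=\sum_i\cE_{r,L,Z_i,U_i}(m).$$
By (\ref{tras}), each summand equals $q_L^{mn}E^{(n-r)}_{L,\tilde Z_i,\cI_i}(m)$, where $\tilde Z_i=\iota_i(Z_i)$. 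It therefore suffices to prove that $E^{(n-r)}_{L_1,\tilde Z_i,\cI_i}(m)=E^{(n-r)}_{L_2,\tilde Z_i,\cI_i}(m)$ for $m\ge2$ when $k_{L_1}\simeq k_{L_2}$. We may replace $\tilde Z_i$ by $\tilde Z_i\cap\tilde U_i$ using (\ref{intersec}).

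\emph{Comparison of zeta functions.} Proposition \ref{eqzeta} holds for every $L\in\tilde{\cL}_{K,1}$. Because the factor $1-q_L^s$ is invertible as a formal Laurent series in $q_L^{-s}$, equation (\ref{eqexzeta0}) determines $\sum_{m\ge2}E^{(n-r)}_{L,\tilde Z_i,\cI_i}(m)q_L^{-ms}$ uniquely from two pieces of data: the power series $\cZ_{L,\tilde Z_i,\cI_i}(s)$ in $q_L^{-s}$, and the number $\#\tilde Z_i(k_L)$. The second depends only on $k_L$. For the first, choose strong log resolutions $(Y_i,\pi_i)$ of $\cI_{i,K}$ and take $M$ so large that all of them have good reduction modulo $\cM_L$ for every $L\in\tilde{\cL}_{K,M}$ (\cite[Theorem 2.4]{Denefdegree}). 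Proposition \ref{Defm} then expresses $\cZ_{L,\tilde Z_i,\cI_i}(s)$ through the numerical data $(\nu_j,N_j)$, through $q_L$, and through the counts $\cC_{I,L,\tilde Z_i}$. Each of these counts is a number of $k_L$-points of $\overset{\circ}{\overline E}_{Ik_L}$ satisfying a condition on $\overline\pi(a)$ modulo $\cM_L$, so it depends only on $k_L$. It also includes equal-characteristic fields $\tilde L$ with the same residue field. Hence $\cZ_{L_1,\cdot}=\cZ_{L_2,\cdot}$, and comparing coefficients of $q^{-ms}$ (with $q=q_{L_1}=q_{L_2}$) gives the claimed equality for every real $r$ simultaneously.

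\emph{Main obstacle.} The delicate point is making $M$ depend only on $X$ and not on $Z$ or $r$. The plan handles this by letting the resolutions depend only on the fixed ideals $\cI_i$, which depend only on $X$. The subscheme $Z$ enters only through the characteristic functions $\overline\Phi_{L,\tilde Z_i}$ on $k_L$-points. That requires no good reduction of $Z$ itself, since membership $\overline x\in\tilde Z_i(k_L)$ is a residue-field condition. One also has to check that Proposition \ref{Defm} and the identity (\ref{eqexzeta0}) hold as equalities of formal power series in $q_L^{-s}$ for $L$ of positive characteristic. This holds because both sides are defined by the same measure computations, which is the content of Proposition \ref{Defm} for $\tilde L$.
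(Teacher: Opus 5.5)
Your proposal is correct and is essentially the paper's proof. The paper likewise uses the fixed affine cover and ideals $\cI_i$ of Proposition \ref{prepol} (which depend only on $X$), fixes strong log resolutions of the $\cI_{iK}$ with good reduction for residue characteristic at least $M$, and concludes from Propositions \ref{eqzeta} and \ref{Defm}; your write-up just makes explicit why $1-q_L^{s}$ can be inverted and why $Z$ enters only through its $k_L$-points.

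As in the paper, the hypothesis $k_{L_1}\simeq k_{L_2}$ should be read as an isomorphism of $\cO_K$-algebras. This matters for your claim that the counts $\cC_{I,L,\tilde Z_i}$ depend only on $k_L$: the reductions $\overline{E}_{Ik_L}$ and the sets $\tilde Z_i(k_L)$ depend on the structure map $\cO_K\to k_L$.
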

\begin{proof}We use the notation of Proposition \ref{prepol}.  We fix a strong log resolution $(Y_i,\pi_i)$ of $\cI_{iK}$ over $K$ for each $i\in I$. Then, there is an integer $M$ depending on $(Y_i,\pi_i)$ for $i\in I$ such that $(Y_i,\pi_i)$ has good reduction modulo $\cM_L$ for all $L\in\cL_{K,M}$. Then, the assertion follows from Propositions \ref{eqzeta} and \ref{Defm}.
\end{proof}

\begin{cor}\label{po1}Let $Z\subset X$ be $\cO_K$-schemes of finite type. For each $L\in\cL_{K,1}$, we set $$\sigma^{\textnormal{abs}}(L,Z,X)=\min_{r\in \RR}\sigma_r^{\textnormal{abs}}(L,Z,X)$$
and  $$\tilde{\sigma}^{\textnormal{abs}}(L,Z,X)=\max_{r\in \RR}\sigma_r^{\textnormal{abs}}(L,Z,X)$$
then $\{\sigma_r^{\textnormal{abs}}(L,Z,X)\mid r\in\RR\}=\{\sigma^{\textnormal{abs}}(L,Z,X),\tilde{\sigma}^{\textnormal{abs}}(L,Z,X)\}$. Moreover, there is at most one number $r\in\RR$ such that $\sigma_r^{\textnormal{abs}}(L,Z,X)\neq \sigma^{\textnormal{abs}}(L,Z,X)$. 
\end{cor}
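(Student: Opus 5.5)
The plan is to read everything off the identity (\ref{eq-1}) in Proposition \ref{prepol}. The key point is that the left-hand side depends on $r$ only through an explicit elementary factor.

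Fix $L\in\cL_{K,1}$ and set $F_L(s):=\sum_{i}\cZ_{L,\tilde{Z}_i,\cI_i}(s)$. This is a rational function in $q_L^{-s}$ that does not depend on $r$. By Proposition \ref{Defm}, applied to each $\cI_i$, every pole $s_0$ of $F_L$ satisfies $\Re(s_0)=-\nu/N<0$ for some numerical datum $(\nu,N)$. Let $P_L\subset\RR_{<0}$ be the finite set of real parts of poles of $F_L$, and put $\rho_L:=\max P_L$ (with $\rho_L=-\infty$ if $P_L=\emptyset$).

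Rewriting (\ref{eq-1}) gives
\[
\cZ_{r,L,Z,X}(s+n)=\frac{\left(1-q_L^{-(s+n-r)}\right)F_L(s)-q_L^{-(n+s)}\left(1-q_L^{-n+r}\right)\#Z(k_L)}{1-q_L^{s}}.
\]
The term containing $\#Z(k_L)$ is entire. By Lemma \ref{znpo} (equivalently, the last assertion in the proof of Proposition \ref{prepol}), the points with $q_L^{s}=1$ are not poles of the left-hand side, so the denominator $1-q_L^{s}$ contributes nothing. Hence the poles of $s\mapsto\cZ_{r,L,Z,X}(s+n)$ are exactly the poles of $G_r(s):=\left(1-q_L^{-(s+n-r)}\right)F_L(s)$. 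Here one also uses that $F_L$ has no poles on $\Re(s)=0$, so no pole of $G_r$ is lost by this cancellation.

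Next compare the poles of $G_r$ with those of $F_L$. The factor $1-q_L^{-(s+n-r)}$ is entire and vanishes exactly on the vertical line $\Re(s)=r-n$, namely at the points $s=r-n+2\pi i k/\log q_L$. Therefore:
\begin{itemize}
\item every pole of $F_L$ off this line is a pole of $G_r$ with the same multiplicity;
\item poles on this line can only lose multiplicity, and can disappear only if they are simple;
\item $G_r$ has no new poles.
\end{itemize}
It follows that $\{\Re(s_0): G_r(s_0)=\infty\}$ equals either $P_L$ or $P_L\setminus\{r-n\}$. Translating back through Lemma \ref{moexpo} and Definition \ref{defmoi} (the shift by $n$ is the same for every $r$), $\sigma_r^{\textnormal{abs}}(L,Z,X)$ equals $-\rho_L$ minus the shift whenever $r\neq\rho_L+n$, since removing a real part different from the maximum does not change the maximum.

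So all $r$ except possibly the single value $r_0=\rho_L+n$ give one common value, and that value is necessarily $\sigma^{\textnormal{abs}}(L,Z,X)$. For $r=r_0$, the maximal real part can only drop: either to the next element of $P_L$ or to $-\infty$ if nothing is left. Hence $\sigma_{r_0}^{\textnormal{abs}}\ge\sigma^{\textnormal{abs}}$, and it is then $\tilde{\sigma}^{\textnormal{abs}}(L,Z,X)$. This proves both assertions: the set $\{\sigma_r^{\textnormal{abs}}(L,Z,X)\mid r\in\RR\}$ equals $\{\sigma^{\textnormal{abs}},\tilde{\sigma}^{\textnormal{abs}}\}$, and at most one $r$ yields a value different from $\sigma^{\textnormal{abs}}$.

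The main point to check carefully is the claim that poles of $\cZ_{r,L,Z,X}(s+n)$ coincide exactly with poles of $G_r$. This requires two things. First, $1-q_L^{s}$ must introduce no poles; this is Lemma \ref{znpo}. Second, $F_L$ must have no poles with $q_L^{s}=1$, which follows from the negativity of $-\nu_i/N_i$. Both are already available from the earlier results.
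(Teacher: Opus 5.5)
Your argument is correct and is the same as the paper's, which simply says the corollary follows immediately from the identity (\ref{eq-1}) of Proposition \ref{prepol} and Definition \ref{defmoi}. You have supplied the details the paper leaves out: the factor $1-q_L^{-(s+n-r)}$ can only remove poles lying on the single line $\Re(s)=r-n$, so it changes the maximal real part for at most one value of $r$, and then only by lowering it.

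One small remark: you do not need the negativity of the real parts of the poles of $F_L$, which would otherwise require more than Proposition \ref{Defm} as stated, since that proposition assumes good reduction. At the points where $q_L^{s}=1$, the identity together with Lemma \ref{znpo} already shows that both $G_r$ and $\cZ_{r,L,Z,X}(s+n)$ are holomorphic there.
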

\begin{proof}
The assertion follows immediately from the first assertion of Proposition \ref{prepol} and Definition \ref{defmoi}.
\end{proof}
The following proposition characterizes locally complete intersections having only rational singularities.
\begin{prop}\label{rati}
With the notation of Corollary \ref{po1}, suppose that $X_K$ is equi-dimensional then the following conditions are equivalent:
\begin{itemize}
\item[(i),]$X_{\overline{K}}$ is a locally complete intersection having only rational singularity at every point $P$ of $Z(\overline{K})$.
\item[(ii),]There exists an integer $M$ such that $-\dim(X_K)< \sigma_{\dim(X)}^{\textnormal{abs}}(L,Z,X)$ for all $L\in\cL_{K,M}$.
\end{itemize}
\end{prop}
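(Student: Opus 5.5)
Let $d=\dim(X_K)$. Since both conditions are local on $X$, I reduce to the affine situation via Proposition \ref{prepol}. Cover $X$ by affine opens $U_i\subset\AA_{\cO_K}^n$ with ideals $\cI_i$, and stratify $Z$ into pieces $\tilde Z_i\subset U_i$. By (\ref{eq-1}) with $r=d$, the poles of $\cZ_{d,L,Z,X}(s+n)$ are the poles of $\bigl(1-q_L^{-(s+n-d)}\bigr)\sum_i\cZ_{L,\tilde Z_i,\cI_i}(s)$, apart from those with $q_L^{s}=1$, which Lemma \ref{znpo} excludes. By Proposition \ref{Defm} and the positivity statement in Lemma \ref{upc}, the leading poles of the individual $\cZ_{L,\tilde Z_i,\cI_i}$ cannot cancel in the sum: the relevant coefficients $\cC_{I,L,Z}$ are nonnegative, so the residues at the maximal real part add with a common sign. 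Hence condition (ii) says the following. For every $i$ and every $L$ of large residue characteristic, every pole of $\bigl(1-q_L^{-(s+c)}\bigr)\cZ_{L,\tilde Z_i,\cI_i}(s)$ has real part $<-c$, where $c=n-d$ is the codimension. In other words, the candidate pole $-c$ has multiplicity at most one, and all other poles lie strictly to the left of $-c$.

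Next I translate this into resolution data. Take a strong log resolution $(Y,\pi)$ of $\cI_{iK}$ with good reduction for $L\in\cL_{K,M}$. Proposition \ref{Defm} and Lemma \ref{upc} show that the pole of maximal real part of $\cZ_{L,\tilde Z_i,\cI_i}$ is $-\min\{\nu_j/N_j\}$, the minimum taken over components meeting $\pi^{-1}(\tilde Z_i)$ with $\cC_{\{j\},L,Z}\ne0$. For infinitely many $L$ this set is all $j$ with $\pi(E_j)\cap Z\neq\emptyset$, by the Chebotarev/Lang–Weil argument of Proposition \ref{genlogc}. Its multiplicity is $\kappa_L$, the maximal number of such divisors with a common nonempty intersection. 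So (ii) becomes two statements near $Z$:
\[
\lct_{Z}(\cI_{iK})=c=\codim X_K,
\]
and the only divisors computing this value are the ones that never intersect each other near $Z$; this is multiplicity one. The inequality $\lct\le c$ always holds, by \cite[Corollary 0.2]{MustJAMS}.

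The remaining step is the geometric equivalence: a closed subscheme of codimension $c$ in smooth $\AA^n$ is a locally complete intersection with rational singularities near $Z$ exactly when this lct/multiplicity condition holds. I would get this from Mustaţă's jet scheme theorem \cite{Mustata1} and the contact-locus description of the lct \cite{MustJAMS}. The lci plus rational singularities condition is equivalent to all jet schemes $X_m$ being irreducible of dimension $(m+1)d$ over $Z$. By \cite{MustJAMS}, $\dim(X_{m-1})$ is governed by $\max_j\bigl(mn-\lceil m/N_j\rceil\nu_j\bigr)$-type expressions, and the number of top-dimensional components is governed by the multiplicity. Equivalently, and more concretely, I can count points. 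Condition (i) gives $\#X(\cO_L/\cM_L^m)\approx q_L^{md}$ via \cite{AizenAvni}, i.e. (\ref{LWmod0}). Conversely, (ii) yields via (\ref{eqexzeta0}) that $\#X(\cO_L/\cM_L^m)q_L^{-md}$ converges with exponentially decaying error. By Lang–Weil, this forces $\dim X_m=(m+1)d$ with a single top component over $Z$, which is Mustaţă's criterion. The first direction, (i)$\Rightarrow$(ii), should follow from the Aizenbud–Avni type bound giving the strict inequality.

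I expect the main obstacle to be the converse, (ii)$\Rightarrow$(i). Two points need care there: deducing that $X$ is lci from the point count, which needs Mustaţă's result in its non-lci form (jets of a non-lci variety have too many components or dimension), and making the passage from poles to jet dimensions uniform in $L$. For the latter I would control the error terms by Lemma \ref{upc}'s bound (\ref{ulw}).
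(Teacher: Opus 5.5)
The paper does not prove this from scratch. It deduces the proposition directly from \cite[Theorem F]{MoiIde}, after amending that theorem (and \cite[Proposition 4.3]{MoiIde}) so that it controls \emph{every} pole on the line $\Re(s)=\dim(X_K)-n$, both its location and its multiplicity. Your self-contained route has a gap in each direction. Your reduction to affine charts, your reformulation of (ii), and your use of the positivity in Lemma \ref{upc} for the real leading pole are fine.

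\textbf{(i)$\Rightarrow$(ii).} This does not follow from (\ref{LWmod0}).
\begin{itemize}
\item The Aizenbud--Avni estimate gives $|\cE_{d,L,Z,X}(m)|\le 2Cq_L^{md-1}$ uniformly in $m$. That only yields $\sigma_d^{\textnormal{abs}}(L,Z,X)\ge -d$, not the strict inequality.
\item It excludes poles with real part $>-c$ and poles of order $\ge 2$ on the line $\Re(s)=-c$. It does not exclude a simple pole $s_0$ on that line with $q_L^{-(s_0+c)}\neq 1$.
\item Such a pole only makes $\#X(\cO_L/\cM_L^m)q_L^{-md}$ oscillate, and (\ref{LWmod0}) merely forces the amplitude to be $O(q_L^{-1})$. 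Proposition \ref{ulowbou} does not help either, since $e_{X,d}$ may be very negative.
\item These poles come from divisors with $\nu_j=cN_j$ and $N_j>1$. Your translation of (ii) into ``$\lct_Z=c$ and multiplicity one'' does not see them, so it is not equivalent to (ii).
\item What you need is the resolution-theoretic fact that under (i) the only divisors over $Z$ with $\nu_j/N_j=c$ are one divisor over the generic point of each irreducible component. Each has $N_j=1$, and they are pairwise disjoint because the components are disjoint. This can be obtained from inversion of adjunction for lci varieties. It is exactly the extra control the paper builds into its amended version of \cite[Theorem F]{MoiIde}.
\end{itemize}

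\textbf{(ii)$\Rightarrow$(i).} As you acknowledge, this is not proved, and the sketched route cannot close as stated.
\begin{itemize}
\item Mustaţă's theorem \cite{Mustata1} characterizes rational singularities by irreducibility of jet schemes only for varieties already known to be locally complete intersections. So even $\dim X_m=(m+1)d$ with a single top-dimensional component over $Z$ would not give you lci.
\item $\lct_Z=c$ alone certainly does not force lci. Two planes in $\AA^4$ meeting at a point have $\lct=2=\codim$ but are not even Cohen--Macaulay. There the exceptional divisor of the origin, with data $(4,2)$, meets the divisors over the planes, with data $(2,1)$, so the pole at $-2$ has order $2$.
\item The lci property must therefore be extracted from the multiplicity-one condition by a genuine geometric theorem, and you do not supply one.
\item For the Lang--Weil step, uniformity in $L$ would come from Theorem \ref{uupp}, not from (\ref{ulw}). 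Bounded coefficients $\cC_{I,L,Z}$ do not by themselves control the partial-fraction coefficients.
\end{itemize}

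In both directions the essential content is the geometric characterization that the paper imports, in amended form, from \cite[Theorem F]{MoiIde}. Your proposal reduces to that characterization but does not establish it.
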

\begin{proof} The assertions follow  from \cite[Theorem F]{MoiIde}. We hereby amend the statement of \cite[Item (i), Theorem F]{MoiIde} and \cite[Item (ii), Proposition 4.3]{MoiIde}. More precisely, in \cite[Item (ii), Proposition 4.3]{MoiIde}, we need to state a stronger claim that if $L\in \cL_{K,1}$, $s_1$ is a pole of $\cZ_{L,\AA_{\cO_K}^n,\cI}(s)$ and $s_1\neq -r+2e\pi i$ for all $e\in\ZZ$ then $\Re(s_1)<-r$. Similarly, in \cite[Item (i), Theorem F]{MoiIde}, we need to state that if $L\in\cL_{K,M}$ and $s_0$ is a pole of $\cZ_{L,Z,\cI}(s)$ then either $s_0=\dim(X_K)-n+2e\pi i$ for some $e\in\ZZ$  and $\cZ_{L,Z,\cI}(s)$ has a pole of multiplicity at most $1$ at $s_0$ or $\Re(s_0)<\dim(X_K)-n$. 
\end{proof}
\begin{defn}\label{multi}Let $Z\subset X$ be $\cO_K$-schemes of finite type. Let $r\in\RR$ and $L\in\cL_{K,1}$. For each $L\in\cL_{K,1}$ and each pole $s_0$ of $
\cZ_{r,L,Z,X}(s)$, we denote by $\mathfrak{e}_{r,L,Z,X}(s_0)$ the \emph{multiplicity} of the pole $s_0$ of $\cZ_{r,L,Z,X}(s)$. We set 
$$\mathfrak{e}_r^{\textnormal{abs}}(L,Z,X)=\max\{m>0\mid \exists s_0, \mathfrak{e}_{r,L,Z,X}(s_0)=m,  \Re(s_0)=- \sigma_r^{\textnormal{abs}}(L,Z,X)\}$$
and call it the \emph{zeta multiplicity} of $X$ at $Z$ over $L$. We use the convention that $\mathfrak{e}_r^{\textnormal{abs}}(L,Z,X)=1$ if $\sigma_r^{\textnormal{abs}}(L,Z,X)=+\infty$.
\end{defn}
\begin{remark}\label{tild}Let $M$ be as in Corollary \ref{transfer2}. For $L\in\tilde{\cL}_{K,M}$ and $r\in\RR$, we can also define 
$$\sigma_r^{\textnormal{abs}}(L,Z,X):=\sigma_r^{\textnormal{abs}}(\tilde{L},Z,X),\sigma^{\textnormal{abs}}(L,Z,X):=\sigma^{\textnormal{abs}}(\tilde{L},Z,X),$$
 $$\tilde{\sigma}^{\textnormal{abs}}(L,Z,X):=\tilde{\sigma}^{\textnormal{abs}}(\tilde{L},Z,X) \textnormal{ and } \mathfrak{e}_r^{\textnormal{abs}}(L,Z,X):=\mathfrak{e}_r^{\textnormal{abs}}(\tilde{L},Z,X)$$ for any $\tilde{L}\in\cL_{K,M}$ such that $k_L\simeq k_{\tilde{L}}$.
\end{remark}

\subsection{Hermite interpolation formula}\label{Herf}
Let $\mathsf{F}$ be a field of characteristic zero and $h\in \mathsf{F}[x]$ be a polynomial of degree $d$. Let $a_1,...,a_e\in \mathsf{F}$ and $d_1,...,d_e\in\NN_{>0}$ such that $d_1+...+d_e=d+1$. Suppose that $h^{(j)}(a_i)=\alpha_{ij}$ for $1\leq i\leq e$ and $0\leq j\leq d_i-1$, where $h^{(j)}$ is the $j^{\textnormal{th}}$-derivative of $h$. Then, it follows from the Hermit interpolation formula (see \cite[Chapter 2]{BeShi}) that
\begin{equation}\label{Hinter}h(x)=\sum_{i=1}^e\sum_{j=0}^{d_i-1}\sum_{\ell=0}^{d_i-j-1}\frac{\alpha_{ij}}{j!\ell!}\left(\frac{(x-a_i)^{d_i}}{\Omega(x)}\right)^{(\ell)}(a_i)\frac{\Omega(x)}{(x-a_i)^{d_i-j-\ell}},
\end{equation}
where $\Omega(x)=\prod_{i=1}^e(x-a_i)^{d_i}$.

Let $N$ be a positive integer, we consider the field $\CC(y^{\frac{1}{N}})$ with the valuation $\deg_y:\CC(y^{\frac{1}{N}})\to\frac{1}{N}\ZZ\cup\{-\infty\}$ given by $y^{\frac{1}{N}}\mapsto \frac{1}{N}, \frac{r(y)}{s(y)}\mapsto\deg_y(r)-\deg_y(s)\in\frac{1}{N}\ZZ$ if $r(y),s(y)\in \CC[y^{\frac{1}{N}}]\setminus\{0\}$ and $0\mapsto -\infty$. We also consider the ring $\CC(y^{\frac{1}{N}})[t]$ of polynomials in $t$ with coefficients in $\CC(y^{\frac{1}{N}})$ and use the convention that $\deg_t(0)=-\infty$. Then $\deg_t$ also extends to a valuation on $\CC(y^{\frac{1}{N}})(t)$ in an obvious way. If $N\mid N'$ we indentify $\CC(y^{\frac{1}{N}})$ (resp. ring $\CC(y^{\frac{1}{N}})[t]$) as a subfield (resp. subring) of $\CC(y^{\frac{1}{N'}})$ (resp. $\CC(y^{\frac{1}{N'}})[t]$). We view $\QQ(y)[t]$ naturally as a subring of $\CC(y^{\frac{1}{N}})[t]$.

\begin{lem}\label{separate} Let $\{(\nu_i,N_i)\mid i\in \mathsf{I}_1\}, \{(\nu_i,N_i)\mid i\in \mathsf{I}_2\}$ be finite subsets  of $\NN_{\geq 1}^2$. We set $\mathfrak{E}_j=\{\frac{\nu_i}{N_i}\mid i\in \mathsf{I}_j\}$ and $l_j=\sum_{i\in \mathsf{I}_j} N_i$ for $j\in\{1,2\}$. Suppose that $\mathfrak{E}_1\cap \mathfrak{E}_2=\emptyset$ and $\mathfrak{E}_1\cup \mathfrak{E}_2\neq\emptyset$. 
There exist unique polynomials $g_{\mathsf{I}_1}(y,t)$ and $g_{\mathsf{I}_2}(y,t)$ in $\QQ(y)[t]$ such that $\deg_t(g_{\mathsf{I}_1}(y,t))<l_{1}, \deg_t(g_{\mathsf{I}_2}(y,t))<l_2$ and  the following properties hold:
\begin{itemize}
\item[(i),] We have 
\begin{equation}\label{decomposition}
\prod_{i\in \mathsf{I}_1\sqcup \mathsf{I}_2}\dfrac{1}{1-t^{N_i}y^{-\nu_i}}=\dfrac{g_{\mathsf{I}_1}(y,t)}{\prod_{i\in \mathsf{I}_1}(1-t^{N_i}y^{-\nu_i})}+\dfrac{g_{\mathsf{I}_2}(y,t)}{\prod_{i\in \mathsf{I}_2}(1-t^{N_i}y^{-\nu_i})}.
\end{equation}
\item[(ii),] For all $j\in\{1,2\}$, $l_j>0$ if and only if $g_{\mathsf{I}_j}(y,t)\neq 0$. Moreover, if $y_0$ is a non-zero complex number with $|y_0|\neq 1$ then $g_{\mathsf{I}_j}(y_0,t)\in\CC[t]$ for all $j$.
\item[(iii),]
Let $N$ be a positive integer such that $\displaystyle\big(\prod_{i\in \mathsf{I}_1\cup \mathsf{I}_2}N_i\big)\mid N$. Let $j\in \mathsf{I}_1$ and $\zeta$ be a $N_j^{\textnormal{th}}$-root of unity, then we denote by $m_{j,\zeta}$ the multiplicity of $\displaystyle\prod_{i\in \mathsf{I}_1}(1-t^{N_i}y^{-\nu_i})$ at  $t=\zeta y^{\nu_j/N_j}\in \CC(y^{\frac{1}{N}})$. If $l_1l_2>0$ and $\min\mathfrak{E}_1>\max\mathfrak{E}_2$ then $$\deg_y\left(g_{\mathsf{I}_1}^{(r)}\left(y,\zeta y^{\nu_j/N_j}\right)\right)\leq-r\frac{\nu_j}{N_j}-\sum_{i\in \mathsf{I}_2} \left(\frac{N_i\nu_j}{N_j}-\nu_i\right)$$ for all $0\leq r\leq m_{j,\zeta}-1$, where $g_{\mathsf{I}_1}^{(r)}(y,t)$ is the $r^{\textnormal{th}}$- derivative  in $t$ of $g_{\mathsf{I}_1}(y,t)$. 
\end{itemize}
\end{lem}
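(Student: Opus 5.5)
Let $P_j(y,t):=\prod_{i\in\mathsf{I}_j}(1-t^{N_i}y^{-\nu_i})$, a polynomial in $t$ over $\QQ(y)$ of degree $l_j$, and put $F:=\prod_{i\in \mathsf{I}_1\sqcup\mathsf{I}_2}(1-t^{N_i}y^{-\nu_i})^{-1}=1/(P_1P_2)$. The plan is to obtain (i) and (ii) from coprimality plus the Chinese remainder theorem in $\QQ(y)[t]$, and (iii) from the Hermite interpolation formula (\ref{Hinter}) applied over $\mathsf{F}=\CC(y^{1/N})$.

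For (i) and uniqueness, note that the roots of $P_j$ in $\overline{\QQ(y)}$ are $\zeta y^{\nu_i/N_i}$ with $\zeta^{N_i}=1$, $i\in\mathsf{I}_j$. Since $\mathfrak{E}_1\cap\mathfrak{E}_2=\emptyset$, these roots have different $\deg_y$-valuations in $\CC(y^{1/N})$. Hence $P_1$ and $P_2$ are coprime, and there are unique $g_1,g_2\in\QQ(y)[t]$ with $\deg_t g_1<l_1$, $\deg_t g_2<l_2$ and $g_1P_2+g_2P_1=1$. This is exactly (\ref{decomposition}). Uniqueness follows because any other pair differs by a multiple of $P_1P_2$, which is impossible under the degree bounds.

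For (ii):
\begin{itemize}
\item If $l_1=0$, then $\mathsf{I}_1=\emptyset$ and $P_1=1$, so uniqueness forces $g_1=0$ and $g_2=1$ (the case $l_2=0$ is symmetric).
\item If $l_1>0$ and $g_1=0$, then $g_2P_1=1$, which is impossible since $\deg_t P_1>0$.
\item For the specialization claim, the coefficients of $g_j$ are rational functions of $y$. They can be computed via a resultant or Cramer's rule, so their denominators divide $\operatorname{Res}_t(P_1,P_2)$. This resultant is a product of differences $\zeta y^{\nu_i/N_i}-\zeta' y^{\nu_k/N_k}$ with $\nu_i/N_i\neq\nu_k/N_k$. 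Such a difference vanishes at $y_0$ only if $|y_0|^{\nu_i/N_i-\nu_k/N_k}=1$, i.e.\ only if $|y_0|=1$. So for $|y_0|\neq1$ the specialization is well defined.
\end{itemize}

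For (iii), work in $\mathsf{F}=\CC(y^{1/N})$, over which $P_1$ factors as $\Omega$ times a unit constant. Here
\[
\Omega(t)=\prod_{(k,\xi)}(t-\xi y^{\nu_k/N_k})^{m_{k,\xi}}
\]
ranges over the distinct roots, and the constant is $\prod_{i\in\mathsf{I}_1}(-y^{-\nu_i})$. Since $g_1\equiv 1/P_2 \pmod{P_1}$ and $\deg_t g_1<l_1$, the polynomial $g_1$ is the Hermite interpolant of $1/P_2$ at the roots of $P_1$ with these multiplicities. In particular,
\[
g_1^{(r)}(y,a)=(1/P_2)^{(r)}(a)\quad\text{for } a=\zeta y^{\nu_j/N_j},\ 0\le r\le m_{j,\zeta}-1.
\]
The claim therefore reduces to estimating $\deg_y$ of $t$-derivatives of $1/P_2$ at $a$.

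Write $u_i=a^{N_i}y^{-\nu_i}$. Since $\nu_j/N_j>\nu_i/N_i$ for $i\in\mathsf{I}_2$, we have $\deg_y u_i=N_i\nu_j/N_j-\nu_i>0$. Hence $\deg_y(1-u_i)=\deg_y u_i$, and $\deg_y(1/P_2(a))=-\sum_{i\in\mathsf{I}_2}(N_i\nu_j/N_j-\nu_i)$. For the derivatives, I would differentiate $\log(1/P_2)$. Each term is $N_i t^{N_i-1}y^{-\nu_i}/(1-t^{N_i}y^{-\nu_i})$, and at $t=a$ it has $\deg_y=-\nu_j/N_j$. The $k$-th derivative of each such term has $\deg_y\le -(k+1)\nu_j/N_j$: it is a rational function homogeneous of $\deg_y$-weight $-(k+1)$ in $t$ once $t$ is assigned weight $\nu_j/N_j$, and the denominator keeps its leading term. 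By Faà di Bruno / Leibniz, $(1/P_2)^{(r)}(a)$ then has $\deg_y\le -r\nu_j/N_j+\deg_y(1/P_2(a))$. This is exactly the bound required.

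The main obstacle is the rigorous degree bookkeeping for these derivatives, i.e.\ checking that no cancellation raises the degree. Since $\deg_y$ is a valuation, sums only help: $\deg(a+b)\le\max$. So a clean induction on $r$ using $(1/P_2)'=(1/P_2)\cdot(\log(1/P_2))'$ should suffice.
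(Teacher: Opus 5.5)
Your proposal is correct and follows essentially the paper's route: B\'ezout in the Euclidean domain $\QQ(y)[t]$ for (i); for (ii), non-vanishing of the denominators of $g_{\mathsf{I}_j}$ when $|y_0|\neq 1$ (the paper clears denominators and specializes the B\'ezout identity, using that $P_1(y_0,t)$ and $P_2(y_0,t)$ remain coprime, rather than invoking the resultant, but this is the same fact); and for (iii), equality of the $t$-jets of $g_{\mathsf{I}_1}$ and $1/P_2$ at $a=\zeta y^{\nu_j/N_j}$ followed by a Leibniz induction. The paper is simpler at the step you flagged: it differentiates $g_{\mathsf{I}_1}P_2=1-g_{\mathsf{I}_2}P_1$ directly and so only needs $\deg_y\bigl(P_2^{(s)}(a)/P_2(a)\bigr)\le -s\nu_j/N_j$, which is immediate factor by factor; your bound on the derivatives of $N_it^{N_i-1}y^{-\nu_i}/(1-t^{N_i}y^{-\nu_i})$ is true but incompletely justified as phrased, since with $t$ of weight $\nu_j/N_j$ the quantity $t^{N_i}y^{-\nu_i}$ has positive weight, so you also need that the numerator of the $k$-th derivative has degree at most $k+1$ in $y^{-\nu_i}$ (which follows from the obvious recursion).
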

\begin{proof}
 The assertion is trivial if $l_1l_2=0$. So we can suppose that $l_1>0$ and $l_2>0$. It follows from $\mathfrak{E}_1\cap\mathfrak{E}_2=\emptyset$ that $$\displaystyle \bigg(\prod_{i\in \mathsf{I}_1}(1-t^{N_i}y^{-\nu_i}), \prod_{i\in \mathsf{I}_2}(1-t^{N_i}y^{-\nu_i})\bigg)=1$$ in the  ring $\QQ(y)[t]$. We observe that $\QQ(y)[t]$ is an Euclidean domain. So there exist unique polynomials $g_{\mathsf{I}_1}(y,t), g_{\mathsf{I}_2}(y,t)$ in $\QQ(y)[t]$ such that $\deg_t(g_{\mathsf{I}_1}(y,t))<l_{1}, \deg_t(g_{\mathsf{I}_2}(y,t))<l_2$ and 
\begin{equation}\label{coprime}
g_{\mathsf{I}_1}(y,t)\prod_{i\in \mathsf{I}_2}(1-t^{N_i}y^{-\nu_i})+g_{\mathsf{I}_2}(y,t)\prod_{i\in \mathsf{I}_1}(1-t^{N_i}y^{-\nu_i})=1.
\end{equation}
Hence, the item (i) follows from (\ref{coprime}). In particular, we have $g_{\mathsf{I}_1}(y,t)\neq 0$, $g_{\mathsf{I}_2}(y,t)\neq 0$. Moreover, there are polynomials $\tilde{g}_1(y,t),\tilde{g}_2(y,t)\in \QQ[y,t]$ and $h(y)\in \QQ[y]$ such that $g_{\mathsf{I}_j}(y,t)h(y)=\tilde{g}_j(y,t)$ for all $j\in\{1,2\}$ and $(\tilde{g}_1(a,t),\tilde{g}_2(a,t))\neq (0,0)$ if $h(a)=0$. In the ring $\QQ[y,y^{-1},t]$, (\ref{coprime}) becomes
\begin{equation}\label{coprime1}
\tilde{g}_{1}(y,t)\prod_{i\in \mathsf{I}_2}(1-t^{N_i}y^{-\nu_i})+\tilde{g}_{2}(y,t)\prod_{i\in \mathsf{I}_1}(1-t^{N_i}y^{-\nu_i})=h(y).
\end{equation}
Let $y_0\in\CC\setminus\{0\}$ with $|y_0|\neq 1$ then the equality $$\displaystyle \bigg(\prod_{i\in \mathsf{I}_1}(1-t^{N_i}y_0^{-\nu_i}), \prod_{i\in \mathsf{I}_2}(1-t^{N_i}y_0^{-\nu_i})\bigg)=1$$ holds in the  ring $\CC[t]$. Thus,  by substituting  $y_0$ for $y$ in (\ref{coprime1}) and observing that $\deg_t(\tilde{g}_{j}(y_0,t))\leq \deg_t(g_{\mathsf{I}_j}(y,t))< l_j$ for all $j\in\{1,2\}$, we must have $h(y_0)\neq 0$. Therefore $$g_{\mathsf{I}_j}(y_0,t)=\frac{\tilde{g}_j(y_0,t)}{h(y_0)}\in \CC[t]$$ for all $j\in\{1,2\}$. The item (ii) is proved.

If $j\in \mathsf{I}_1$ and $\zeta$ is a $N_j^{\textnormal{th}}$-root of unity, then we set $a_{j,\zeta}=\zeta y^{\nu_j/N_j}$.  By (\ref{coprime}), we have
$$g_{\mathsf{I}_1}(y,a_{j,\zeta})\prod_{i\in \mathsf{I}_2}(1-\zeta^{N_i}y^{\nu_jN_i/N_j-\nu_i})=1.$$
It implies that $$\displaystyle\deg_y(g_{\mathsf{I}_1}(y,a_{j,\zeta}))=-\sum_{i\in \mathsf{I}_2} \left(\frac{N_i\nu_j}{N_j}-\nu_i\right).$$

If $m_{j,\zeta}>1$, we take the $r^{\textnormal{th}}$-derivatives in $t$ for both sides of (\ref{coprime}) and $1\leq r\leq m_{j,\zeta}-1$. By using Leibniz's rule, we have 
$$\sum_{\ell=0}^{r}\binom{r}{\ell} g_{\mathsf{I}_1}^{(\ell)}(y,t)\left(\prod_{i\in \mathsf{I}_2}(1-t^{N_i}y^{-\nu_i})\right)^{(r-\ell)}+\left(g_{\mathsf{I}_2}(y,t)\prod_{i\in \mathsf{I}_1}(1-t^{N_i}y^{-\nu_i})\right)^{(r)}=0.$$
Since $m_{j,\zeta}$ is the multiplicity of $\displaystyle\prod_{i\in \mathsf{I}_1}(1-t^{N_i}y^{-\nu_i})$ at  $a_{j,\zeta}$, for each $1\leq r\leq m_{j,\zeta}-1$, we have 
$$\left(g_{\mathsf{I}_2}(y,t)\prod_{i\in \mathsf{I}_1}(1-t^{N_i}y^{-\nu_i})\right)^{(r)}(a_{j,\zeta})=0.$$
On the other hand, since $\min\mathfrak{E}_1>\max\mathfrak{E}_2$, it is easy to check that
$$\displaystyle\deg_y\left(\frac{\left(\prod_{i\in \mathsf{I}_2}(1-t^{N_i}y^{-\nu_i})\right)^{(r-\ell)}(a_{j,\zeta})}{\prod_{i\in \mathsf{I}_2}(1-t^{N_i}y^{-\nu_i})(a_{j,\zeta})}\right)\leq -(r-\ell)\frac{\nu_j}{N_j}$$
for all $0\leq \ell\leq r$.
By using induction on $r$, we have 
$$\deg_y(g_{\mathsf{I}_1}^{(r)}(y,a_{j,\zeta}))\leq -r\frac{\nu_j}{N_j}-\sum_{i\in \mathsf{I}_2} \left(\frac{N_i\nu_j}{N_j}-\nu_i\right)$$
for all $0\leq r\leq m_{i,\zeta}-1$.
\end{proof}
\begin{lem}\label{estimatedegree}We keep the notations of Lemma \ref{separate}. Suppose that $l_1l_2>0$ and $\min\mathfrak{E}_1>\max\mathfrak{E}_2$.  We set $g_{\mathsf{I}_1}(y,t)=\sum_{0\leq l\leq l_1-1}b_{l}(y)t^l$. Then we have
$$\deg_{y}b_l(y)\leq \max_{j\in \mathsf{I}_1}\biggl(-l\frac{\nu_j}{N_j}-\sum_{i\in \mathsf{I}_2} \left(\frac{N_i\nu_j}{N_j}-\nu_i\right)\biggr)$$
for all $0\leq l\leq l_1-1$.
\end{lem}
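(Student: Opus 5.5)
We plan to recover the coefficients $b_l(y)$ from the values of $g_{\mathsf{I}_1}$ and its $t$-derivatives at the roots of $\Omega(t):=\prod_{i\in\mathsf{I}_1}(1-t^{N_i}y^{-\nu_i})$ using the Hermite interpolation formula (\ref{Hinter}) over $\mathsf{F}=\CC(y^{\frac1N})$, and then bound degrees with the valuation $\deg_y$. Up to a nonzero constant $\prod_{i}(-y^{-\nu_i})$, $\Omega(t)$ is monic of degree $l_1$ in $t$. Its roots are the points $a_{j,\zeta}=\zeta y^{\nu_j/N_j}$, with $j\in\mathsf{I}_1$ and $\zeta$ an $N_j$-th root of unity, and the multiplicities are $m_{j,\zeta}$. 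After identifying coinciding roots, we obtain distinct roots $a_1,\dots,a_e$ with multiplicities $d_1,\dots,d_e$ such that $\sum d_i=l_1$. Since $\deg_t g_{\mathsf{I}_1}\le l_1-1$, (\ref{Hinter}) applies with $d=l_1-1$ and $\alpha_{ij}=g_{\mathsf{I}_1}^{(j)}(a_i)$. Lemma \ref{separate}(iii) then gives
$$\deg_y\alpha_{ij}\le -j\lambda_i-S_i,\qquad \lambda_i=\frac{\nu_{j_i}}{N_{j_i}},\quad S_i=\sum_{k\in\mathsf{I}_2}\Bigl(N_k\lambda_i-\nu_k\Bigr).$$
Each root $a_i$ determines its $\lambda_i$ by $\deg_y a_i=\lambda_i$.

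Next we expand each Hermite term as a polynomial in $t$ and bound the degree of its coefficient of $t^l$. The factor $\frac{\Omega(t)}{(t-a_i)^{d_i-j-\ell}}$ is, up to the normalizing constant, a product of linear factors $(t-a_k)$. Its coefficient of $t^l$ is an elementary symmetric expression in the remaining roots, so its $\deg_y$ is at most the sum of the $l'$ largest values $\deg_y a_k$ that are omitted, plus $\deg_y$ of the constant. The factor $\bigl((t-a_i)^{d_i}/\Omega(t)\bigr)^{(\ell)}(a_i)$ is $\ell$-th derivative data of $\prod_{k\ne i}(t-a_k)^{-d_k}$ at $a_i$. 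For this we use the rule that differentiating $(t-a_k)^{-1}$ and evaluating at $a_i$ contributes $\deg_y$ at most $-\max(\lambda_i,\lambda_k)\le -\lambda_i$ if $\lambda_k\ne\lambda_i$. The case $\lambda_k=\lambda_i$ with $a_k\ne a_i$ gives exactly $-\lambda_i$, because the roots differ by a root-of-unity factor of the same $y$-degree.

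We then combine these bounds. The naive estimate mixes $\lambda$'s, so we organize it as follows. Write the coefficient of $t^l$ in the $i$-th block as a sum of products. We aim to show that each such product has $\deg_y\le -l\lambda_{\min}' -S$ for some $j$, where the relevant exponent is $\max_j(-l\lambda_j-S_j)$ as in the statement. The cleanest route is a homogeneity argument. Substitute $t=y^{\lambda_i}u$, or more generally rescale and track weights. Alternatively, we argue directly that $g_{\mathsf{I}_1}(y,t)=\sum_l b_l t^l$ is determined by its local Taylor data, and that the Newton polygon in $(l,\deg_y b_l)$ is bounded by the lines $-l\lambda_j-S_j$. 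Concretely, group the roots by slope $\lambda$. The polynomial $\Omega$ factors as $\prod_\lambda\Omega_\lambda$, where $\Omega_\lambda$ is homogeneous of weight $\lambda$, meaning it is invariant under $t\mapsto y^\lambda t$ up to a scalar. We then use a partial-fraction (Chinese remainder) decomposition of $g_{\mathsf{I}_1}$ modulo the $\Omega_\lambda$, and apply Hermite interpolation within each slope class, where all roots share the same $y$-degree. That bounds each piece's coefficients by $-l\lambda-S_\lambda$.

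The main obstacle is the cross-slope step: the recombination polynomials $\prod_{\mu\ne\lambda}\Omega_\mu$ and their inverses modulo $\Omega_\lambda$ must not increase degrees beyond $\max_j$. We will handle this with the inequality $\deg_y$ of a coefficient of $t^{l}$ in $\Omega_\mu(t)/\Omega_\mu(0)$, which is at most $-l\cdot\min(\text{slopes})$. This shows that multiplying a piece bounded by the line of slope $-\lambda$ by such factors keeps it below the upper envelope $\max_j(-l\lambda_j-S_j)$. The bound is a max of linear functions, hence convex, and convexity is exactly what absorbs these products. Finally, the definition of $S_j$ telescopes correctly, since $S_\lambda$ is linear in $\lambda$.
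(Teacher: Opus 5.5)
Your set-up matches the paper's. You use Hermite interpolation (\ref{Hinter}) of $g_{\mathsf{I}_1}$ at the roots $a_{j,\zeta}$, the input $\deg_y g^{(r)}_{\mathsf{I}_1}(y,a_{j,\zeta})\le -r\lambda_j-S_j$ from Lemma \ref{separate}(iii), and $\deg_y(a-b)=\max(\deg_y a,\deg_y b)$ for distinct roots. The gap is the one step that actually needs doing: combining these bounds into a bound on $b_l$.

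You declare that the direct estimate ``mixes $\lambda$'s'' and move to a slope-class CRT decomposition. You then settle the ``cross-slope step'' by saying that convexity absorbs the products. That is not a proof. You never control the inverse of $\prod_{\mu\neq\lambda}\Omega_\mu$ modulo $\Omega_\lambda$, which is exactly where the denominators $(a-b)^{-m_b}$ come back. Convexity is also not the mechanism. Since $-l\lambda_j-S_j=\sum_{i\in\mathsf{I}_2}\nu_i-(l+l_2)\lambda_j$ is decreasing in $\lambda_j$, the right-hand side of the lemma is simply the line at $\min\mathfrak{E}_1$.

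The direct estimate does not mix slopes, and the bounds you already wrote down close it. Fix a node $a$ of slope $\lambda_a$, put $S_a=\sum_{k\in\mathsf{I}_2}(N_k\lambda_a-\nu_k)$, and let $\Theta(t)=\prod_b(t-b)^{m_b}$ be the monic polynomial with the same roots as $\prod_{i\in\mathsf{I}_1}(1-t^{N_i}y^{-\nu_i})$. Take the Hermite term indexed by $(r,\ell)$; it has three factors.

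\begin{enumerate}
\item The factor $g^{(r)}_{\mathsf{I}_1}(y,a)$ has degree at most $-r\lambda_a-S_a$.
\item By your derivative rule, $\bigl((t-a)^{m_a}/\Theta(t)\bigr)^{(\ell)}(a)$ has degree at most $-\ell\lambda_a-\sum_{b\neq a}m_b\max(\lambda_a,\lambda_b)$.
\item The coefficient of $t^l$ in $(t-a)^{r+\ell}\prod_{b\neq a}(t-b)^{m_b}$ is a sum of products of all but $l$ of these roots, counted with multiplicity. Bound the degree of each root $c$ by $\max(\lambda_a,\deg_y c)$. These maxima sum to $(r+\ell)\lambda_a+\sum_{b\neq a}m_b\max(\lambda_a,\lambda_b)$, and each omitted root accounts for at least $\lambda_a$ of that sum. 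So every such product has degree at most $(r+\ell-l)\lambda_a+\sum_{b\neq a}m_b\max(\lambda_a,\lambda_b)$.
\end{enumerate}

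Adding the three bounds, every contribution to $b_l$ from a node of slope $\lambda_a$ has degree at most $-l\lambda_a-S_a\le\max_j(-l\lambda_j-S_j)$, which is the lemma.

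The paper does the same bookkeeping on the monomials $g^{(r)}(a)\,a^{r+\ell-l_0}\prod_b b^{m_b-l_b}(a-b)^{-m_b-\ell_b}$, with $l_b\le m_b$. It checks factor by factor that $(m_b-l_b)\lambda_b+\ell_b\lambda_a-(m_b+\ell_b)\max(\lambda_a,\lambda_b)\le -l_b\lambda_a$. No slope-class decomposition is needed.
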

\begin{proof}We set $A=\{a_{j,\zeta}| j\in \mathsf{I}_1, \zeta^{N_j}=1\}$.  We use (\ref{Hinter}) to see that
\begin{equation}\label{equ001}
g_{\mathsf{I}_1}(y,t)=\sum_{a_{j,\zeta}\in A}\sum_{r=0}^{m_{j,\zeta}-1}\sum_{\ell=0}^{m_{j,\zeta}-1-r}g_{1}^{(r)}(y,a_{j,\zeta})\frac{1}{r!\ell!}\left(\frac{(t-a_{j,\zeta})^{m_{j,\zeta}}}{\Theta(t)}\right)^{(\ell)}(a_{j,\zeta})
 \times \frac{\Theta(t)}{(t-a_{j,\zeta})^{m_{j,\zeta}-r-\ell}},
\end{equation}
where $\Theta(t)=\prod_{a_{i,\xi}\in A}(t-a_{i,\xi})^{m_{i,\xi}}$.
By Lemma \ref{separate}, we have 
\begin{equation}\label{boundd}
\deg_y(g_{\mathsf{I}_1}^{(r)}(y,a_{j,\zeta}))\leq -r\frac{\nu_j}{N_j}-\sum_{i\in \mathsf{I}_2} \left(\frac{N_i\nu_j}{N_j}-\nu_i\right)
\end{equation}
for all $a_{j,\zeta}\in A$ and all $0\leq r\leq m_{i,\zeta}-1$. 

By looking at the coefficients of the polynomial $g_{\mathsf{I}_1}(y,t)\in\CC(y)[t]$, it follows from (\ref{equ001}) that $b_l(y)$ is a $\CC$-linear combination of terms of the form
$$a_{j,\zeta}^{r-l_0}g_{\mathsf{I}_1}^{(r)}(y,a_{j,\zeta})\prod_{a_{i,\xi}\in A\setminus\{a_{j,\zeta}\}}\frac{a_{i,\xi}^{m_{i,\xi}-l_{i,\xi}}a_{j,\zeta}^{\ell_{i,\xi}}}{(a_{i,\xi}-a_{j,\zeta})^{m_{i,\xi}+\ell_{i,\xi}}},$$
where $l_0, l_{i,\xi}, \ell_{i,\xi}$ are non-negative integers such that $$\displaystyle l=l_0+\sum_{a_{i,\xi}\in A\setminus\{a_{j,\zeta}\}}l_{i,\xi}$$ and $$\displaystyle\sum_{a_{i,\xi}\in A\setminus\{a_{j,\zeta}\}}\ell_{i,\xi}\leq m_{j,\zeta}-r-1.$$ Moreover, we observe that $\deg_y(a_{i,\xi}-a_{j,\zeta})=\max\{\deg_y(a_{i,\xi}),\deg_y(a_{j,\zeta})\}$ for all $a_{i,\xi}\in A\setminus\{a_{j,\zeta}\}$. Thus, by combining these facts with (\ref{boundd}), we can conclude that
$$\deg_{y}b_l(y)\leq \max_{j\in \mathsf{I}_1}\biggl(-l\frac{\nu_j}{N_j}-\sum_{i\in \mathsf{I}_2} \left(\frac{N_i\nu_j}{N_j}-\nu_i\right)\biggr).$$
\end{proof}
\begin{prop}\label{rewr}Let $\mathsf{I}_1,\mathsf{I}_2,\mathsf{I}_3$ be finite sets and $\mathfrak{N}_j=\sqcup_{i\in \mathsf{I}_j}\{(\nu_i,N_i)\}$ for $1\leq j\leq 3$ be finite multisets of elements in $\NN_{\geq 1}^2$ such that the sets $\{\frac{\nu_i}{N_i}| i\in \mathsf{I}_j\}$ are  pairwise disjoint. We set $\displaystyle \mathsf{I}=\sqcup_{i=1}^3\mathsf{I}_j$.  For each subset $\mathsf{J}$ of $\mathsf{I}$, we take a map  $c_\mathsf{J}:\CC\to \CC[t]$. 
We set 
$$F(y,t)=\sum_{\mathsf{J}\subset \mathsf{I}}\frac{c_\mathsf{J}(y)}{\prod_{i\in \mathsf{J}}(1-t^{N_i}y^{-\nu_i})}$$
with the convention that $\prod_{i\in \mathsf{J}}(1-t^{N_i}y^{-\nu_i})=1$ if $\mathsf{J}=\emptyset$. If $\mathsf{J}\subset \mathsf{I}$, then we set $\mathsf{J}_j=\mathsf{J}\cap \mathsf{I}_j$ for $1\leq j\leq 3$. We use Lemma \ref{separate} to write
$$\prod_{i\in \mathsf{J}}\frac{1}{1-t^{N_i}y^{-\nu_i}}=\dfrac{g_{\mathsf{J},1,2}(y,t)}{\prod_{i\in \mathsf{J}\setminus \mathsf{J}_3}(1-t^{N_i}y^{-\nu_i})}+\dfrac{g_{\mathsf{J},3}(y,t)}{\prod_{i\in \mathsf{J}_{3}}(1-t^{N_i}y^{-\nu_i})}$$
for each $\mathsf{J}\subset \mathsf{I}$ with $\mathsf{J}_3\neq\emptyset$.

Let $y_0\in\CC\setminus\{0\}$. Suppose that every pole $t_0$ of $F_{y_0}(t):=F(y_0,t)$  satisfies $1-t_0^{N_i}y_0^{-\nu_i}\neq 0$ for all $i\in \mathsf{I}_2$. Then there is $d_{\mathsf{J},y_0}(t)\in \CC[t]$ for each $\mathsf{J}\subset \mathsf{I}_1$  such that $\deg_t(d_{\mathsf{J},y_0}(t))<\sum_{i\in \mathsf{J}}N_i$ if $\mathsf{J}\neq\emptyset$, $\deg_t(d_{\mathsf{J},y_0}(t))\leq\max_{\mathsf{J}}\deg_t(c_\mathsf{J}(y_0))$, and 
\begin{equation}\label{rewrt}F(y_0,t)=d_{\emptyset,y_0}(t)+\sum_{\mathsf{J}\subset \mathsf{I}_1, \mathsf{J}\neq\emptyset}\frac{d_{\mathsf{J},y_0}(t)}{\prod_{i\in \mathsf{J}}(1-t^{N_i}y_0^{-\nu_i})}+\sum_{\mathsf{J}\subset \mathsf{I}, \mathsf{J}_3\neq\emptyset}\frac{c_\mathsf{J}(y_0)g_{\mathsf{J},3}(y_0,t)}{\prod_{i\in \mathsf{J}_3}(1-t^{N_i}y_0^{-\nu_i})}.
\end{equation}
In particular, if $\deg_t(c_\mathsf{J}(y_0))\leq 0$ for all $\mathsf{J}$ then $d_{\emptyset,y_0}(t)\in\CC$.
\end{prop}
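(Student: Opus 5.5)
Fix $y_0$ and treat each summand of $F(y_0,t)$ according to which of $\mathsf{J}_2,\mathsf{J}_3$ is nonempty. Use Lemma \ref{separate} twice: first to split off the $\mathsf{I}_3$ part, then to split off the $\mathsf{I}_2$ part.

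\emph{Step 1: the $\mathsf{I}_3$ part.} For $\mathsf{J}$ with $\mathsf{J}_3\neq\emptyset$, apply the given decomposition (Lemma \ref{separate} with the index sets $\mathsf{J}\setminus\mathsf{J}_3$ and $\mathsf{J}_3$) and specialize at $y=y_0$. By item (ii) of Lemma \ref{separate} this specialization is legitimate when $|y_0|\neq1$. When $|y_0|=1$, I would argue by the identity (\ref{coprime}) directly, provided the two products stay coprime at $y_0$; if they do not, this is a point needing care. The second piece is exactly the last sum in (\ref{rewrt}). The first piece, $c_\mathsf{J}(y_0)g_{\mathsf{J},1,2}(y_0,t)/\prod_{i\in\mathsf{J}_1\cup\mathsf{J}_2}(1-t^{N_i}y_0^{-\nu_i})$, is a term of the same shape as those with $\mathsf{J}_3=\emptyset$, except that its numerator is $c_\mathsf{J}(y_0)g_{\mathsf{J},1,2}(y_0,t)$. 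So after Step 1 we have
\[
F(y_0,t)=G(t)+\sum_{\mathsf{J}_3\neq\emptyset}\frac{c_\mathsf{J}(y_0)g_{\mathsf{J},3}(y_0,t)}{\prod_{i\in\mathsf{J}_3}(1-t^{N_i}y_0^{-\nu_i})},
\]
where $G$ is a sum of terms $e_{\mathsf{K}}(t)/\prod_{i\in \mathsf{K}}(1-t^{N_i}y_0^{-\nu_i})$ with $\mathsf{K}\subset\mathsf{I}_1\sqcup\mathsf{I}_2$.

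\emph{Step 2: the $\mathsf{I}_2$ part.} Split each term of $G$ again via Lemma \ref{separate}, now with $\mathsf{K}_1$ against $\mathsf{K}_2$. This produces a part with denominator over $\mathsf{K}_1$ and a part with denominator over $\mathsf{K}_2$. Collect all parts with denominator over subsets of $\mathsf{I}_2$ into a rational function $R(t)$. Then
\[
F(y_0,t)-\bigl(\text{terms with denominators over subsets of }\mathsf{I}_1\text{ or }\mathsf{I}_3\bigr)=R(t).
\]
The poles of $R$ lie among the roots of $1-t^{N_i}y_0^{-\nu_i}$, $i\in\mathsf{I}_2$. These roots are disjoint from the $\mathsf{I}_1$ and $\mathsf{I}_3$ roots, because the ratios $\nu_i/N_i$ lie in different sets and $|t_0|=|y_0|^{\nu_i/N_i}$ distinguishes them (for $|y_0|=1$ one argues via item (ii) again). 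By hypothesis $F_{y_0}$ has no poles there, and the other terms have no poles there either. Hence $R$ has no poles at all, so $R$ is a polynomial. Absorb it into $d_{\emptyset,y_0}$.

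\emph{Step 3: reduce the $\mathsf{I}_1$ terms and bound degrees.} Each remaining term over $\mathsf{K}_1\subset\mathsf{I}_1$ has a polynomial numerator. Reduce it by Euclidean division modulo $\prod_{i\in\mathsf{K}_1}(1-t^{N_i}y_0^{-\nu_i})$. The quotient goes into $d_{\emptyset,y_0}$, and the remainder has degree $<\sum_{i\in\mathsf{K}_1}N_i$; sum these remainders into $d_{\mathsf{J},y_0}$. The same division is needed for the pieces with $\mathsf{J}=\mathsf{K}_1=\emptyset$.

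For the degree bound on $d_{\emptyset,y_0}$, note that every term of the form $g/\prod$ with $\deg g<\deg\prod$ vanishes at $t=\infty$. Therefore $d_{\emptyset,y_0}(t)$ equals the polynomial part of $F(y_0,t)$ at infinity, after removing the remaining proper fractions. That polynomial part has degree at most $\max_\mathsf{J}\deg_t c_\mathsf{J}(y_0)$, because each $c_\mathsf{J}(y_0)/\prod$ contributes a polynomial part of degree at most $\deg c_\mathsf{J}(y_0)-\sum_{i\in\mathsf{J}} N_i$. In particular, if every $\deg_t c_\mathsf{J}(y_0)\le0$, then $d_{\emptyset,y_0}$ is constant.

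\emph{Main obstacle.} The delicate step is Step 2, namely showing that $R(t)$ is a polynomial. This needs the $\mathsf{I}_2$ roots to be genuinely separated from the $\mathsf{I}_1$ and $\mathsf{I}_3$ roots at the specific $y_0$, which is where item (ii) of Lemma \ref{separate} and the $|y_0|\neq1$ condition enter.
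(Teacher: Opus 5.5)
Your proposal is correct and follows the paper's proof essentially step by step. You peel off the $\mathsf{I}_3$ part with the given decomposition and split the $\mathsf{J}\setminus\mathsf{J}_3$ denominators into $\mathsf{J}_1$ and $\mathsf{J}_2$ parts via Lemma \ref{separate}. You then use the pole hypothesis and the separation of roots to see that the collected $\mathsf{I}_2$ part (the paper's $G_{y_0}(t)$) is a polynomial, Euclidean-divide the $\mathsf{I}_1$ parts, and compare $\deg_t$ of both sides of (\ref{rewrt}).

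Two small corrections:
\begin{itemize}
\item Item (ii) of Lemma \ref{separate} does not cover $|y_0|=1$, so your parenthetical in Step 2 is wrong. The statement genuinely needs something like $|y_0|\neq 1$: already at $y_0=1$, the product $\frac{1}{(1-t/y)(1-t/y^2)}$ gives $g_{3}=\frac{1}{1-y}$, which is undefined there. The paper also assumes this tacitly, since it only applies the result at $y_0=q_L$.
\item In the degree bound, the $\mathsf{I}_3$ terms $c_\mathsf{J}(y_0)g_{\mathsf{J},3}(y_0,t)/\prod_{i\in\mathsf{J}_3}(1-t^{N_i}y_0^{-\nu_i})$ are not proper fractions when $c_\mathsf{J}(y_0)$ has positive $t$-degree. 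So $d_{\emptyset,y_0}$ is not simply the polynomial part of $F(y_0,t)$. These terms still satisfy $\deg_t<\deg_t c_\mathsf{J}(y_0)$, and that is exactly what the bound on $d_{\emptyset,y_0}$ needs.
\end{itemize}
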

\begin{proof}For each $\mathsf{J}\subset \mathsf{I}$, we also use Lemma \ref{separate} to write 
$$\dfrac{1}{\prod_{i\in \mathsf{J}\setminus \mathsf{J}_3}(1-t^{N_i}y^{-\nu_i})}=\dfrac{g_{\mathsf{J},1}(y,t)}{\prod_{i\in \mathsf{J}_1}(1-t^{N_i}y^{-\nu_i})}+\dfrac{g_{\mathsf{J},2}(y,t)}{\prod_{i\in \mathsf{J}_2}(1-t^{N_i}y^{-\nu_i})}$$
if $\mathsf{J}\setminus \mathsf{J}_3\neq\emptyset$. 
Now, we have 
\begin{align*}F(y,t)=&c_{\emptyset}(y)+\sum_{\mathsf{J}\subset \mathsf{I}, \mathsf{J}_3\neq\emptyset}\frac{c_\mathsf{J}(y)}{\prod_{i\in \mathsf{J}}(1-t^{N_i}y^{-\nu_i})}+\sum_{\emptyset\neq \mathsf{J}\subset \mathsf{I}, \mathsf{J}_3=\emptyset}\frac{c_\mathsf{J}(y)}{\prod_{i\in \mathsf{J}_1\cup \mathsf{J}_2}(1-t^{N_i}y^{-\nu_i})}\nonumber\\
=&c_{\emptyset}(y)+\sum_{\mathsf{J}\subset \mathsf{I}, \mathsf{J}_3\neq\emptyset}\left(\frac{c_\mathsf{J}(y)g_{\mathsf{J},3}(y,t)}{\prod_{i\in \mathsf{J}_3}(1-t^{N_i}y^{-\nu_i})}+\frac{c_\mathsf{J}(y)g_{\mathsf{J},1,2}(y,t)}{\prod_{i\in \mathsf{J}\setminus \mathsf{J}_3}(1-t^{N_i}y^{-\nu_i})}\right)\nonumber\\&\hspace{0.8cm}+\sum_{\emptyset\neq \mathsf{J}\subset \mathsf{I}, \mathsf{J}_3=\emptyset}\left(\frac{c_\mathsf{J}(y)g_{\mathsf{J},1}(y,t)}{\prod_{i\in \mathsf{J}_1}(1-t^{N_i}y^{-\nu_i})}+\frac{c_\mathsf{J}(y)g_{\mathsf{J},2}(y,t)}{\prod_{i\in \mathsf{J}_2}(1-t^{N_i}y^{-\nu_i})}\right)\nonumber\\
=&c_{\emptyset}(y)+\sum_{\mathsf{J}\subset \mathsf{I}, \mathsf{J}_3\neq\emptyset}\left(\frac{c_\mathsf{J}(y)g_{\mathsf{J},3}(y,t)}{\prod_{i\in \mathsf{J}_3}(1-t^{N_i}y^{-\nu_i})}\right)\nonumber\\
&\hspace{0.8cm}+\sum_{\mathsf{J}\subset \mathsf{I}, \mathsf{J}_3\neq\emptyset}\left(\frac{c_\mathsf{J}(y)g_{\mathsf{J},1,2}(y,t)g_{\mathsf{J},1}(y,t)}{\prod_{i\in \mathsf{J}_1}(1-t^{N_i}y^{-\nu_i})}+\frac{c_\mathsf{J}(y)g_{\mathsf{J},1,2}(y,t)g_{\mathsf{J},2}(y,t)}{\prod_{i\in \mathsf{J}_2}(1-t^{N_i}y^{-\nu_i})}\right)\nonumber\\&\hspace{0.8cm}+\sum_{\emptyset\neq \mathsf{J}\subset \mathsf{I}, \mathsf{J}_3=\emptyset}\left(\frac{c_\mathsf{J}(y)g_{\mathsf{J},1}(y,t)}{\prod_{i\in \mathsf{J}_1}(1-t^{N_i}y^{-\nu_i})}+\frac{c_\mathsf{J}(y)g_{\mathsf{J},2}(y,t)}{\prod_{i\in \mathsf{J}_2}(1-t^{N_i}y^{-\nu_i})}\right).\nonumber  \\
\end{align*}
We set 
\begin{equation*}G_y(t)=\left(\sum_{\mathsf{J}\subset \mathsf{I}, \mathsf{J}_3\neq\emptyset}\frac{c_\mathsf{J}(y)g_{\mathsf{J},1,2}(y,t)g_{\mathsf{J},2}(y,t)}{\prod_{i\in \mathsf{J}_2}(1-t^{N_i}y^{-\nu_i})}\right)+ \left(\sum_{\emptyset\neq \mathsf{J}\subset \mathsf{I}, \mathsf{J}_3=\emptyset}\frac{c_\mathsf{J}(y)g_{\mathsf{J},2}(y,t)}{\prod_{i\in \mathsf{J}_2}(1-t^{N_i}y^{-\nu_i})}\right).
\end{equation*}
By the assumption, if $1-t_0^{N_i}y_0^{-\nu_i}\neq 0$ for some $i\in \mathsf{I}_2$ then $t_0$ is not a pole of $F_{y_0}(t)$. This and the assumption that the sets $\{\frac{\nu_i}{N_i}| i\in \mathsf{I}_j\}$ are  pairwise disjoint imply that $G_{y_0}(t)$ is a polynomial in $t$. Thus, the first assertion follows by writing 
$$\frac{c_\mathsf{J}(y_0)g_{\mathsf{J},1,2}(y_0,t)g_{\mathsf{J},1}(y_0,t)}{\prod_{i\in \mathsf{J}_1}(1-t^{N_i}y_0^{-\nu_i})}=H_{\mathsf{J},y_0}(t)+\frac{Q_{\mathsf{J},y_0}(t)}{\prod_{i\in \mathsf{J}_1}(1-t^{N_i}y_0^{-\nu_i})}$$
if $\mathsf{J}_3\neq \emptyset$, and
$$\frac{c_\mathsf{J}(y_0)g_{\mathsf{J}_{1}}(y_0,t)}{\prod_{i\in \mathsf{J}_1}(1-t^{N_i}y_0^{-\nu_i})}=H_{\mathsf{J},y_0}(t)+\frac{Q_{\mathsf{J},y_0}(t)}{\prod_{i\in \mathsf{J}_1}(1-t^{N_i}y_0^{-\nu_i})}$$
if $\mathsf{J}_3=\emptyset\neq \mathsf{J}$ such that $H_{\mathsf{J},y_0}(t),Q_{\mathsf{J},y_0}(t)\in \CC[t]$ and $\deg_t(Q_{\mathsf{J},y_0}(t))<\sum_{i\in \mathsf{J}_1}N_i$ for all $\mathsf{J}\neq\emptyset$.

By Lemma \ref{separate}, $\deg_t(g_{\mathsf{J},3}(y,t))<\sum_{i\in \mathsf{J}_3}N_i$ if $\mathsf{J}_3\neq \emptyset$. Therefore, we can evaluate the degree function $\deg_t$ on both sides of (\ref{rewrt}) to see that $\deg_t(d_{\emptyset,y_0}(t))\leq\max_\mathsf{J}\deg_t(c_\mathsf{J}(y_0))$. In particular, if $\deg_t(c_\mathsf{J}(y_0))\leq 0$ for all $\mathsf{J}$, then $\deg_t(d_{\emptyset,y_0}(t))\leq 0$ thus $d_{\emptyset,y_0}(t)\in\CC$.
\end{proof}
\section{Estimate of Lang-Weil type for counting points of schemes over finite rings}\label{Opes}
In this section, we fix a number field $K$ and $\cO_K$-schemes $Z,X$ of finite type with $Z\subset X$. We use the convention that $a^{-\infty}=0$ if $a>1$. With the notations of Lemma \ref{moexpo} and Definition \ref{multi}, for each $L\in\cL_{K,1}, r\in \RR$ and $\lambda\in\Lambda_r(L,Z,X)$, we set $$\mathfrak{e}_{r,L}(\lambda)=\max\{\mathfrak{e}_{r,L,Z,X}(s_0)\mid \cZ_{r,L,Z,X}(s_0)=\infty,\Re(s_0)=-\lambda\}.$$ First of all, we give a lower bound and an upper bound of $|\cE_{r,L,Z,X}(m)|$ for  $L\in \cL_{K,1}$ and $m\geq 2$  as follows. 
\begin{prop}\label{lowe}For each $L\in \cL_{K,1}$ and each $r\in\RR$, there exist  positive constants $C_{r,L,Z,X}$ and $c_{r,L,Z,X}$  such that the inequality
$$|\cE_{r,L,Z,X}(m)|\leq C_{r,L,Z,X}m^{\mathfrak{e}_r^{\textnormal{abs}}(L,Z,X)-1}q_L^{-m\sigma_r^{\textnormal{abs}}(L,Z,X)}$$
holds for all $m\geq 2$, and the inequality
$$|\cE_{r,L,Z,X}(m)|\geq c_{r,L,Z,X}m^{\mathfrak{e}_r^{\textnormal{abs}}(L,Z,X)-1}q_L^{-m\sigma_r^{\textnormal{abs}}(L,Z,X)}$$
holds for infinitely many $m\geq 2$. 
\end{prop}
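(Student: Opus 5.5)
This is a statement about the coefficients of a fixed rational function in $T=q_L^{-s}$, with $L$ and $r$ fixed. By Lemma \ref{moexpo}, $\cZ_{r,L,Z,X}(s)=\sum_{m\geq 2}\cE_{r,L,Z,X}(m)T^m$ is a rational function of $T$. Its poles lie among finitely many values $T_0=q_L^{-s_0}$. I would decompose it into partial fractions over $\CC$:
$$\cZ_{r,L,Z,X}(s)=P(T)+\sum_{T_0}\sum_{k=1}^{\mathfrak{e}(T_0)}\frac{a_{T_0,k}}{(1-T/T_0)^k},$$
where $P$ is a polynomial and $a_{T_0,\mathfrak{e}(T_0)}\neq 0$. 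Expanding $(1-T/T_0)^{-k}=\sum_m\binom{m+k-1}{k-1}T_0^{-m}T^m$, I get for all $m>\deg P$
$$\cE_{r,L,Z,X}(m)=\sum_{T_0}Q_{T_0}(m)\,T_0^{-m},$$
where $Q_{T_0}$ is a polynomial in $m$ of degree exactly $\mathfrak{e}(T_0)-1$. Since $|T_0^{-1}|=q_L^{\Re(s_0)}$, the dominant terms are those with $\Re(s_0)=-\sigma_r^{\textnormal{abs}}(L,Z,X)$ maximal, and among them those of maximal multiplicity $\mathfrak{e}_r^{\textnormal{abs}}(L,Z,X)$.

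\textbf{Upper bound.} Each term is bounded by $C m^{\mathfrak{e}(T_0)-1}q_L^{m\Re(s_0)}$, and this is at most $C m^{\mathfrak{e}_r^{\textnormal{abs}}-1}q_L^{-m\sigma_r^{\textnormal{abs}}}$ up to a constant. Terms with strictly smaller real part are absorbed because the exponential gap beats any polynomial factor. The finitely many $m\leq\deg P$ are absorbed by enlarging $C_{r,L,Z,X}$. If there are no poles, then $\cE$ vanishes for large $m$ and the conventions ($a^{-\infty}=0$, $\mathfrak{e}=1$) make the bound trivial.

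\textbf{Lower bound (main obstacle).} I normalize by dividing by $m^{e-1}q_L^{-m\sigma}$, where $e=\mathfrak{e}_r^{\textnormal{abs}}$ and $\sigma=\sigma_r^{\textnormal{abs}}$. Then
$$\cE(m)\,m^{1-e}q_L^{m\sigma}=\sum_{j=1}^k c_j\omega_j^m+o(1),$$
where the $\omega_j$ are distinct points on the unit circle (the arguments of the dominant poles), $k\geq 1$, and the $c_j\neq 0$ are the leading coefficients of the corresponding $Q_{T_0}$. It remains to show that $u_m=\sum_j c_j\omega_j^m$ satisfies $\limsup|u_m|>0$. Because the poles come from Lemma \ref{moexpo}, $T_0$ lies in a finite set determined by $(a,b)$ with $q_L^{-sa}=q_L^{b}$. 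The $\omega_j$ are therefore roots of unity, so $u_m$ is periodic with some period $P$.

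A periodic sequence that vanishes identically must have all $c_j=0$, by the Vandermonde determinant on $m=m_0,\dots,m_0+k-1$ with the $\omega_j$ distinct. Hence there is a residue class $m\equiv m_1 \pmod P$ on which $|u_m|=\delta>0$. Then $|\cE(m)|\geq(\delta/2)m^{e-1}q_L^{-m\sigma}$ for all large $m$ in that class, which gives infinitely many $m\geq 2$ with $c_{r,L,Z,X}=\delta/2$. If the $\omega_j$ were not roots of unity, I would instead average $|u_m|^2$ over $m$: its Cesàro mean is $\sum_j|c_j|^2>0$. Either argument works.

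Finally, I must check the case $\sigma_r^{\textnormal{abs}}=+\infty$ for the lower bound. With the convention $a^{-\infty}=0$, the right side is $0$ and the inequality is trivial.
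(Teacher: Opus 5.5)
Your proposal is correct and follows essentially the paper's route: partial fractions of the rational function from Lemma \ref{moexpo}, domination by the poles on the line $\Re(s)=-\sigma_r^{\textnormal{abs}}(L,Z,X)$ of maximal multiplicity, and non-vanishing of the periodic leading coefficient on an infinite residue class. That periodic coefficient is the paper's $c_{L,\lambda,i}(m)$ on Presburger sets and your $\sum_j c_j\omega_j^m$ with roots of unity $\omega_j$; your Vandermonde argument justifies explicitly the non-vanishing that the paper only asserts.

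One caveat concerns your closing remark on the pole-free case. When $\cZ_{r,L,Z,X}(s)$ has no poles but is a non-zero polynomial in $q_L^{-s}$, the upper bound with the convention $a^{-\infty}=0$ is not trivial; it is false at small $m$. For example, take $X=Z=\Spec(\ZZ[x]/(x^2-5))$ and $L=\QQ_5$: then $\cE_{r,L,Z,X}(2)=-5^r$ and $\cE_{r,L,Z,X}(m)=0$ for $m\geq 3$. So this degenerate case has to be excluded from the statement. The paper's proof has the same blind spot, because its expansion of $\cE_{r,L,Z,X}(m)$ as a sum over $\Lambda_r(L,Z,X)$ for all $m\geq 2$ drops the polynomial part.
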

\begin{proof}
To simplify the notation, we set $(\lambda_{L0},\mathfrak{e}_{L0}):=(\sigma_r^{\textnormal{abs}}(L,Z,X),\mathfrak{e}_r^{\textnormal{abs}}(L,Z,X))$. 

Let $L\in\cL_{K,1}$. Recall that a Presburger subset of $\NN$ is a set which can be written as a finite Boolean combination of sets of the form $\{ m\in \NN\mid a_1\leq m, m\equiv a_2 \mod a_3\}$ for integers $a_1,a_2,a_3$. By Lemma \ref{moexpo}, using partial fractions and expanding $\cZ_{r,L,Z,X}(s)$ as a power series in $q_L^{-s}$, for each $\lambda\in \Lambda_r(L,Z,X)$ and $0\leq i\leq \mathfrak{e}_{r,L}(\lambda)-1$, there  is a positive integer $\ell_{\lambda,i}$, constants $c_{r,L,Z,X,\lambda,i,\ell}$ and Presburger subsets $\cB_{\lambda,i,\ell}$ of $\NN$ with $1\leq \ell\leq \ell_{\lambda,i}$ such that
$$\cE_{r,L,Z,X}(m)=\sum_{\lambda\in \Lambda_r(L,Z,X)}\sum_{0\leq i\leq \mathfrak{e}_{r,L}(\lambda)-1}\sum_{1\leq \ell\leq\ell_{\lambda,i}}\11_{\cB_{\lambda,i,\ell}}(m)c_{r,L,Z,X,\lambda,i,\ell}m^iq_L^{-m\lambda}$$
for all $m\geq 2$. Moreover,  if $\lambda\in \Lambda_r(L,Z,X)$ and  $0\leq i\leq \mathfrak{e}_{r,L}(\lambda)-1$ then either 
\begin{equation}\label{Lnzero}c_{L,\lambda,i}(m):=\sum_{1\leq \ell\leq\ell_{\lambda,i}}\11_{\cB_{\lambda,i,\ell}}(m)c_{r,L,Z,X,\lambda,i,\ell}\neq 0
\end{equation} for infinitely many $m\geq 2$ or $c_{L,\lambda,i}(m)=0$ for all $m\geq 2$. In particular, (\ref{Lnzero}) holds for $\lambda\in \Lambda_r(L,Z,X)$, $i=\mathfrak{e}_{r,L}(\lambda)-1$ and infinitely many $m\geq 2$. Thus, it follows from the definition of $\sigma_r^{\textnormal{abs}}(L,Z,X)$ and $\mathfrak{e}_r^{\textnormal{abs}}(L,Z,X)$ that 
$$\left|\cE_{r,L,Z,X}(m)\right|\geq \frac{1}{2}\bigg|\sum_{1\leq \ell\leq\ell_{\lambda_{L0},\mathfrak{e}_{L0}-1}}\11_{\cB_{\lambda_{L0},\mathfrak{e}_{L0}-1,\ell}}(m)c_{r,L,Z,X,\lambda_{L0},\mathfrak{e}_{L0}-1,\ell}\bigg|m^{\mathfrak{e}_{L0}-1}q_L^{-m\lambda_0}$$
hold if $m$ is large enough. Therefore, we can take 
$$c_{r,L,Z,X}:=\limsup_{m\to +\infty}\frac{1}{2}\bigg|\sum_{1\leq \ell\leq\ell_{\lambda_{L0},\mathfrak{e}_{L0}-1}}\11_{\cB_{\lambda_{L0},\mathfrak{e}_{L0}-1,\ell}}(m)c_{r,L,Z,X,\lambda_{L0},\mathfrak{e}_{L0}-1,\ell}\bigg|>0.$$
Moreover, by the definition of $\mathfrak{e}_{L0}$ and the fact that $\lambda_0=\min\Lambda_r(L,Z,X)$, it is easy to have a positive constant $\tilde{C}_{r,L,Z,X}$ depending only on $r,L,Z,X$ such that
$$m^iq_L^{-m\lambda}\leq \tilde{C}_{r,L,Z,X}m^{\mathfrak{e}_{L0}-1}q_L^{-m\lambda_0}$$
for all $\lambda\in \Lambda_r(L,Z,X)$, all $0\leq i\leq \mathfrak{e}_{r,L}(\lambda)-1$ and all $m\geq 2$. Thus, we can take 
$$C_{r,L,Z,X}:=\tilde{C}_{r,L,Z,X}\sum_{\lambda\in \Lambda_r(L,Z,X)}\sum_{0\leq i\leq \mathfrak{e}_{r,L}(\lambda)-1}\sum_{1\leq \ell\leq\ell_{\lambda,i}}\left|c_{r,L,Z,X,\lambda,i,\ell}\right|>0.$$
\end{proof}

Moreover, we give the following uniform lower bound of  $|\cE_{r,L,Z,X}(m)|$ for  local fields $L$ over $K$ of large enough residue field characteristic and $m\geq 2$ that improves the last part of \cite[Corollary 3.13]{MoiIde}. 
 \begin{prop}\label{ulowbou}Let $r\in\RR$. There is an integer $M$ depending only on $X$ and an integer $e_{X,r}$ depending only on $X,r$ such that for each $L\in\tilde{\cL}_{K,M}$, the inequality
$$|\cE_{r,L,Z,X}(m)|\geq q_L^{e_{X,r}}m^{\mathfrak{e}_r^{\textnormal{abs}}(L,Z,X)-1}q_L^{-m\sigma_r^{\textnormal{abs}}(L,Z,X)}$$
holds for infinitely many $m\geq 2$.
 \end{prop}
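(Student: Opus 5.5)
The plan is to make the proof of Proposition \ref{lowe} uniform in $L$. We track how the leading coefficient $c_{L,\lambda_{L0},\mathfrak{e}_{L0}-1}(m)$ depends on $q_L$, and bound it from below by a fixed power of $q_L$.

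\textbf{Step 1: reduce to a single formula.} By Corollary \ref{transfer2} and Remark \ref{tild}, it suffices to treat $L\in\cL_{K,M}$. We use the notation of Proposition \ref{prepol} with the covering fixed independently of $Z$. Fix strong log resolutions $(Y_i,\pi_i)$ of the $\cI_{iK}$ whose complexity is bounded in terms of $X$, and enlarge $M$ so that all of them have good reduction for $L\in\cL_{K,M}$. By (\ref{eq-1}) and Proposition \ref{Defm}, we get
\[(1-q_L^s)\cZ_{r,L,Z,X}(s+n)=\Big(1-q_L^{-(s+n-r)}\Big)q_L^{-n}\sum_{i}\sum_{I\subset T_i}\cC_{I,L,\tilde Z_i}\prod_{j\in I}\frac{(q_L-1)q_L^{-N_js-\nu_j}}{1-q_L^{-N_js-\nu_j}}-q_L^{-(n+s)}(1-q_L^{-n+r})\#Z(k_L).\]
Here the numerical data $(\nu_j,N_j)$ range over a finite set depending only on $X$. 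By (\ref{ulw}), the nonzero coefficients satisfy $q_L^{-n}\le q_L^{-n}(q_L-1)^{\#I}\cC_{I,L,\tilde Z_i}\le C$.

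\textbf{Step 2: extract the leading coefficient.} Set $t=q_L^{-s}$ and $y=q_L$. Lemma \ref{upc} identifies the pole $s_{L0}$ of maximal real part and its multiplicity $\kappa_L$. It also expresses the leading Laurent coefficient at $s_{L0}$ as a positive sum of terms $\cC_{J}(q_L-1)^{\#J}\prod 1/d_i\prod(\cdots)$. Each summand is bounded below by $q_L^{-n}\cdot q_L^{-a}$, where $a$ depends only on the bounded numerical data. The positivity rules out cancellation among these terms.

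The factor $(1-q_L^{-(s+n-r)})$ may cancel a pole. By Corollary \ref{po1}, this occurs for at most one exceptional $r$, namely when $\nu/N$ equals the relevant shift. In that case we pass to the next pole. For those poles we isolate the relevant data using Lemma \ref{separate} and Proposition \ref{rewr}, splitting the numerical data into $\mathsf{I}_1$ (poles with real part $-\sigma_r^{\rm abs}$), $\mathsf{I}_2$ (cancelled poles) and $\mathsf{I}_3$ (the rest). Lemma \ref{estimatedegree} then shows that the coefficients of the numerators $d_{\mathsf J,y_0}(t)$ are Laurent-type in $y=q_L$ with degree bounded by constants depending only on $X,r$. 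Hence the coefficient of $m^{\mathfrak e-1}q_L^{-m\lambda}$ in the expansion of $\cE_{r,L,Z,X}(m)$ is a nonzero quantity bounded below by $q_L^{-A}$ for infinitely many $m$, with $A$ depending only on $X,r$. This lower bound must come from the positivity as in Lemma \ref{upc}, combined with the bounded degrees.

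\textbf{Step 3: handle lower-order terms.} The poles with real part $-\sigma$ are of the form $\zeta q_L^{\nu/N}$ with roots of unity $\zeta$ of bounded order. The coefficient $c_{L,\lambda_{L0},\mathfrak e_{L0}-1}(m)$ is therefore periodic in $m$ with period dividing $\prod N_j$. Its average over a period detects the positive leading term, so some residue class mod the period has $|c(m)|\ge q_L^{-A}/P$.

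Along that class, the lower-order terms (smaller powers of $m$, or smaller real parts) have coefficients bounded by $q_L^{B}$. They are dominated once $m\ge m_0(q_L)$, which is allowed since we only need infinitely many $m$. Taking $e_{X,r}=-A-\lceil\log_2 P\rceil-1$ completes the argument.

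The main obstacle is Step 2 in the exceptional case. There the pole at real part $r-n$ is cancelled by $(1-q_L^{-(s+n-r)})$, so one must show the surviving leading coefficient is still bounded below polynomially in $q_L$. I would try the Hermite-interpolation degree bounds of Lemma \ref{estimatedegree}, together with the nonvanishing from Lemma \ref{upc}, applied as a function of $y=q_L$.
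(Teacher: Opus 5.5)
\textbf{There is a genuine gap: your positivity argument only covers the case $n-r\neq\sigma^{\textnormal{abs}}(L,Z,X)+n$.} Your route differs from the paper's. When $n-r\neq\sigma^{\textnormal{abs}}(L,Z,X)+n$, it works: the real pole of maximal order of $\sum_i\cZ_{L,\tilde Z_i,\cI_i}$ survives multiplication by $(1-q_L^{-(s+n-r)})/(1-q_L^{s})$, Lemma \ref{upc} makes its coefficient $\geq q_L^{-A}$, and averaging over a period picks a good residue class.

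The remaining case $r=-\sigma^{\textnormal{abs}}(L,Z,X)$ is not marginal. It contains $r=\dim X_K$ for locally complete intersections with rational singularities (Proposition \ref{rati}, Lemma \ref{polr}). Since $\sigma^{\textnormal{abs}}(L,Z,X)$ depends on $L$, a fixed $r$ cannot avoid it. Once the pole at $r-n$ is cancelled, the leading coefficient at the next candidate pole $s_1=-\nu'/N'$ is a signed sum. In the Lemma \ref{upc}-type expansion, every $J$ containing a divisor with $\nu_i/N_i=n-r<\nu'/N'$ carries the factor $q_L^{-N_is_1-\nu_i}/(1-q_L^{-N_is_1-\nu_i})<0$. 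For example, take the cone $X=Z=\{xy=z^2\}\subset\AA^3$ with $r=2$. The coefficient at $s=-3/2$ is $q_L^{-3}(q_L-1)\bigl(q_L^{2}(1-q_L^{1/2})+(q_L^{2}-1)q_L^{1/2}\bigr)$, and its top terms cancel. The candidate pole may even be fake, so the actual leading pole may lie deeper. Lemma \ref{estimatedegree} only bounds $\deg_y$ from above, so it controls sizes from above and cannot stop a nonzero coefficient from being tiny.

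Your Step 3 has the same problem whenever $n-r$ equals the leading ratio but that line of poles survives. The factor lowers only the order of the real pole, so $\mathfrak{e}^{\textnormal{abs}}_r$ may be attained only at non-real poles, whose period-average is $0$. For $X=Z=\Spec\ZZ[x]/(x^2)$ and $r=1/2$, the only remaining pole is $t=-q_L^{1/2}$, and $\cE_{1/2,L,X,X}(m)$ alternates in sign.

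\textbf{The missing ingredient is arithmetic, and the paper uses no sign information at all.} Using Lemma \ref{separate} and quasi-polynomial counting (\cite{CaMa}), it writes $\cE_{r,L,Z,X}(m)$ as in (\ref{expressione}). Every coefficient there has the form $\frac{c_{\lambda,i,\ell}+c'_{\lambda,i,\ell}q_L^r}{H_{\lambda,i,\ell}(q_L)}\#Y_{\lambda,i,\ell,L,Z}(k_L)q_L^{b_{\lambda,i,\ell}}$. The rational constants, the polynomials $H_{\lambda,i,\ell}$ and the schemes $Y_{\lambda,i,\ell}$ all come from a finite list depending only on $X,r$.

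Instead of averaging, the paper picks a Presburger cell $\cB_{I,L}$ on which the leading coefficient is nonzero. There the coefficient equals $q_L^{b_{I,L}}\bigl(P_{I,L}(q_L)+q_L^r\tilde P_{I,L}(q_L)\bigr)/Q_I(q_L)$, with integer polynomials and $Q_I$ from a finite set. Nonvanishing of a numerator built from integer point counts then gives a lower bound by a fixed power of $q_L$, which is (\ref{lbo}). In the exceptional case $r\in\ZZ$ by Lemma \ref{polr}, so this is essentially the fact that a nonzero integer has absolute value at least $1$. This treats cancelled poles, deeper poles and non-real poles uniformly.

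You can keep your positivity argument for $r\neq-\sigma^{\textnormal{abs}}(L,Z,X)$. For the remaining case you need this integrality input or an equivalent; Hermite-interpolation degree bounds cannot supply it.
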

\begin{proof}Let us fix $r\in\RR$. By Corollary \ref{transfer2} and Remark \ref{tild}, it is sufficient to work with $L\in\cL_{K,1}$.
We use the idea of \cite[Corollary 3.13]{MoiIde}. In fact, the proof of \cite[Corollary 3.13]{MoiIde} has a minor mistake on using partial fractions. Here, we correct it by using Lemma \ref{separate}. Let $\cA(X),\Lambda(X),n$ be as in Lemma \ref{moexpo}.     By Lemma \ref{separate}, for each function $\ell:\cA(X)\to \NN$, we can write 
$$\frac{1}{\prod_{(a,b,c)\in \cA(X)}(1-q_L^{-sa}q_L^{-b})^{\ell(a,b,c)}}=\sum_{\lambda\in \Lambda(X)}\frac{g_{\lambda,\ell}(q_L,q_L^{-s})}{\prod_{(a,b,c)\in \cA_\lambda(X)}(1-q_L^{-sa}q_L^{-b})^{\ell(a,b,c)}}$$
for all $L\in \cL_{K,1}$, where $g_{\lambda,\ell}(y,t)\in \QQ(y)[t]$ for all $\lambda\in \Lambda(X)$ and $$\cA_\lambda(X):=\{(a,b,c)\in\cA(X)\mid \frac{b}{a}=\lambda+n\}.$$

To expand $\displaystyle \frac{1}{\prod_{(a,b,c)\in \cA_\lambda(X)}(1-t^{a}y^{-b})^{\ell(a,b,c)}}$ as a power series in $t$, we need to count the number of functions $\alpha:\cA_\lambda(X)\to\NN$ satisfying 
$$\sum_{(a,b,c)\in \cA_\lambda(X)}\alpha(a,b,c)a\ell(a,b,c)=m$$
for each $m\in \NN$. It is reduced to computing the Hilbert function of the graded polynomial ring over $\QQ$ generated by variables  $x_{a,b,c}$ of degree  $a\ell(a,b,c)$ for $(a,b,c)\in\cA_{\lambda}(X)$. Therefore, it follows from Propositions \ref{Defm} and \ref{prepol} together with \cite[Proposition 3 and Remark 5]{CaMa} that there is an integer $M$, a positive integer $\ell_{\lambda,i}$, Presburger subsets $\cB_{\lambda,i,\ell}$ of $\NN$, rational numbers $c_{\lambda,i,\ell},c'_{\lambda,i,\ell},b_{\lambda,i,\ell}$ together with $\AA_{\cO_K}^n$-schemes  $Y_{\lambda,i,\ell}$ of finite type and non-zero polynomials $H_{\lambda,i,\ell}(y)\in \QQ[y]$ for $\lambda\in \Lambda(X), 0\leq i\leq \kappa_r(\lambda):=\max\{\mathfrak{e}_{r,L}(\lambda)\mid L\in\cL_{K,1}\}-1$ and $1\leq \ell\leq \ell_{\lambda,i}$, all of which depends only on $X,r$ such that
\begin{equation}\label{expressione}
\cE_{r,L,Z,X}(m)=\sum_{\lambda\in \Lambda_r(X)}\sum_{0\leq i\leq \kappa_r(\lambda)}\sum_{1\leq \ell\leq \ell_{\lambda,i}}\11_{\cB_{\lambda,i,\ell}}(m)\frac{c_{\lambda,i,\ell}+c'_{\lambda,i,\ell}q_L^r}{H_{\lambda,i,\ell}(q_L)}\#Y_{\lambda,i,\ell,L,Z}(k_L)m^{i}q_L^{b_{\lambda,i,\ell}-\lambda m}
\end{equation}for all $L\in \cL_{K,M}$ and all $m\geq 2$,  where $Y_{\lambda,i,\ell,L,Z}= Y_{\lambda,i,\ell}\times_{\AA_{\cO_{K}}^n}Z_{k_L}$ and 
\begin{equation}\label{intlam}b_{\lambda,i,\ell}-\lambda m\in \ZZ
\end{equation} whenever $\11_{\cB_{\lambda,i,\ell}}(m)=1$. Moreover, for $L\in\cL_{K,M}$, $\lambda\in \Lambda(X)$ and $0\leq i\leq \mathfrak{e}_{r,L}(\lambda)-1$, either
\begin{equation}\label{nzero}c_{L,\lambda,i}(m):=\sum_{1\leq \ell\leq \ell_{\lambda,i}}\11_{\cB_{\lambda,i,\ell}}(m)\frac{c_{\lambda,i,\ell}+c'_{\lambda,i,\ell}q_L^r}{H_{\lambda,i,\ell}(q_L)}\#Y_{\lambda,i,\ell,L,Z}(k_L)q_L^{b_{\lambda,i,\ell}}\neq 0
\end{equation}
for infinitely many $m\geq 2$ or $c_{L,\lambda,i}(m)=0$ for all $m\geq 2$. In particular, (\ref{nzero}) holds for  $\lambda\in \Lambda_r(L,Z,X)$, $i=\mathfrak{e}_{r,L}(\lambda)-1$ and infinitely many $m\geq 2$.  Note that (\ref{expressione}) corrects \cite[Equality (13)]{MoiIde}.

Let $L\in \cL_{K,M}$. We set $(\lambda_{L0},\kappa_{L0}):=(\sigma_r^{\textnormal{abs}}(L,Z,X),\mathfrak{e}_r^{\textnormal{abs}}(L,Z,X)-1)$. Since (\ref{nzero}) holds for $\lambda=\lambda_{L0},i=\kappa_{L0}$ and infinitely many $m\geq 2$, there is a non-empty subset $I$ of $\{1,...,\ell_{\lambda_{L0},\kappa_{L0}}\}$ such that $\cB_{I,L}:=\cap_{\ell\in I}\cB_{\lambda_{L0},\kappa_{L0},\ell}\setminus\cup_{\ell\notin I}\cB_{\lambda_{L0},\kappa_{L0},\ell}$ is an infinite set and 
\begin{equation}\label{nze}\sum_{\ell\in I}\frac{c_{\lambda_{L0},\kappa_{L0},\ell}+c'_{\lambda_{L0},\kappa_{L0},\ell}q_L^r}{H_{\lambda_{L0},\kappa_{L0},\ell}(q_L)}\#Y_{\lambda_{L0},\kappa_{L0},\ell,L,Z}(k_L)q_L^{b_{\lambda_{L0},\kappa_{L0},\ell}}\neq 0.
\end{equation}
Let $b_{I,L}$ be the smallest positive rational number in the set  $\cap_{m\in \cB_{I,L}}\left(\lambda_{L0}m+\ZZ\right)$. It follows from (\ref{intlam}) that 
\begin{equation}\label{int}b_{\lambda_{L0},\kappa_{L0},\ell}-b_{I,L}=\beta_\ell\in\ZZ
\end{equation} for all $\ell\in I$. It is easy to see that we can write
\begin{equation}\label{rewpo}\sum_{\ell\in I}\frac{c_{\lambda_{L0},\kappa_{L0},\ell}+c'_{\lambda_{L0},\kappa_{L0},\ell}q_L^r}{H_{\lambda_{L0},\kappa_{L0},\ell}(y)}\#Y_{\lambda_{L0},\kappa_{L0},\ell,L, Z}(k_L)y^{\beta_\ell}=\frac{P_{I,L}(y)+q_L^r\tilde{P}_{I,L}(y)}{Q_{I}(y)}\in \QQ(y)
\end{equation}
such that the following conditions hold:
\begin{itemize}
\item $P_{I,L}(y), \tilde{P}_{I,L}(y), Q_{I}(y)\in\ZZ[y]$, $\left(P_{I,L}(q_L)+q_L^r\tilde{P}_{I,L}(q_L)\right)Q_{I}(q_L)\neq 0$.
\item $\deg(Q_{I}(y))\leq D$ for a constant $D$ depending only on $\beta_{\ell}$ and the degree of $H_{\lambda_{L0},\kappa_{L0},\ell}(y)$ for  $\ell\in I$.
\item  There is a positive constant $C$ depending only on $c_{\lambda_{L0},\kappa_{L0},\ell},c'_{\lambda_{L0},\kappa_{L0},\ell}$ and coefficients of  $H_{\lambda_{L0},\kappa_{L0},\ell}(y)$ for $\ell\in I$ such that the absolute value of any coefficient of $Q_{I}(y)$ is less than $C$.
\end{itemize}
By our construction, $\Xi:=\{(Q_{I}(y),b_{I,L})\in\ZZ[y]\times\QQ\mid L\in\cL_{K,M}, \#\cB_{I,L}=+\infty\}$ is a finite set depending only on $X,r$. Therefore, there is an integer $e$ depending only on $X,r$ such that 
\begin{equation}\label{lbo}\min\Biggl\{\left|P_{I,L}(q_L)+q_L^r\tilde{P}_{I,L}(q_L)\right|,\left|\frac{q_L^{b_{I,L}}}{2Q_{I}(q_L)}\right|\Biggr\}\geq q_L^{\frac{e}{2}}
\end{equation}
for all $L\in\cL_{K,M}$ and all $I$ with $\#\cB_{I,L}=+\infty$. By combining (\ref{expressione}), (\ref{nze}), (\ref{int}), (\ref{rewpo}) and (\ref{lbo}), if $L\in\cL_{K,M}$, $m$ is large enough and $m\in \cB_{I,L}$ with $\#\cB_{I,L}=+\infty$ then 
\begin{align*}\left|\cE_{r,L,Z,X}(m)\right|&\geq \left|\sum_{\ell\in I}\frac{c_{\lambda_{L0},\kappa_{L0},\ell}+q_L^rc'_{\lambda_{L0},\kappa_{L0},\ell}}{2H_{\lambda_{L0},\kappa_{L0},\ell}(q_L)}\#Y_{\lambda_{L0},\kappa_{L0},\ell,L, Z}(k_L)\right|m^{\kappa_{L0}}q_L^{b_{\lambda_{L0},\kappa_{L0},\ell}-\lambda_{L0}m}\\
&=\left|\frac{q_L^{b_{I,L}}\left(P_{I,L}(q_L)+q_L^r\tilde{P}_{I,L}(q_L)\right)}{2Q_{I}(q_L)}\right|m^{\kappa_{L0}}q_L^{-\lambda_{L0}m}\\
&\geq q_L^em^{\kappa_{L0}}q_L^{-\lambda_{L0}m}.
\end{align*}
This completes our proof.
\end{proof} 
Now, we give a uniform upper bound of $|\cE_{r,L,Z,X}(m)|$ for $m\geq 2$. 
\begin{thm}\label{uupp} There is an integer $M$ depending only on $X$ and a constant $C_X$ depending  only on the complexity of $X$,  such that 
$$|\cE_{r,L,Z,X}(m)|\leq C_{X}m^{\mathfrak{e}_r^{\textnormal{abs}}(L,Z,X)-1}(1+q_L^{\sigma_r^{\textnormal{abs}}(L,Z,X)+r})q_L^{-m\sigma_r^{\textnormal{abs}}(L,Z,X)}$$
for all $r\in\RR$, all $L\in\tilde{\cL}_{K,M}$ and all $m\geq 2$. 
\end{thm}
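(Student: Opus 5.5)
By the transfer principle (Corollary \ref{transfer2}) and Remark \ref{tild}, it suffices to treat $L\in\cL_{K,M}$. Apply Proposition \ref{prepol} to reduce to finitely many affine pieces $(\cI_i,\tilde Z_i)$ in a common $\AA_{\cO_K}^n$. Then (\ref{eq-1}) gives
$$\cZ_{r,L,Z,X}(s+n)=\frac{1-q_L^{-(s+n-r)}}{1-q_L^{s}}\sum_i\cZ_{L,\tilde Z_i,\cI_i}(s)-\frac{q_L^{-(n+s)}(1-q_L^{-n+r})\#Z(k_L)}{1-q_L^s}.$$
Fix strong log resolutions of the $\cI_{iK}$ whose complexity is bounded as in Lemma \ref{upc}. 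Take $M$ so that they all have good reduction. Proposition \ref{Defm} then writes $\sum_i\cZ_{L,\tilde Z_i,\cI_i}(s)$ as $\sum_{J}c_{J,L}\prod_{j\in J}(1-t^{N_j}y^{-\nu_j})^{-1}$, with $t=q_L^{-s}$, $y=q_L$, and coefficients satisfying $0\le c_{J,L}\le C$ uniformly by (\ref{ulw}). The pole at $q_L^{s}=1$ cancels by Lemma \ref{znpo}. So $\cE_{r,L,Z,X}(m)$ is, up to the factor $q_L^{mn}$, the coefficient of $t^m$ in
$$F(y,t)=\frac{1-t\,y^{r-n}}{1-t^{-1}}\sum_J\frac{c_{J,L}\,t}{\prod_{j\in J}(1-t^{N_j}y^{-\nu_j})},$$
after the polar part at $t=1$ has been removed.

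Next I would split the index set of divisors by ratio. Put $\lambda_0=\sigma_r^{\textnormal{abs}}(L,Z,X)+n$. Let $\mathsf I_3$ be the divisors with $\nu_j/N_j>\lambda_0$; these give the dominant poles $|t|=y^{\nu_j/N_j}$ of smallest modulus. Let $\mathsf I_1$ be those with $\nu_j/N_j=\lambda_0$. The remaining ratios form $\mathsf I_2$, together with the possible exceptional ratio $n-r$ attached to the factor $1-ty^{r-n}$. The ratios in $\mathsf I_2$ are, by the definition of $\sigma_r^{\textnormal{abs}}$, not genuine poles of $F_{y_0}$ at $y_0=q_L$: they either cancel, or lie strictly beyond $\lambda_0$ in the wrong direction. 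This is exactly the situation of Proposition \ref{rewr}, which rewrites $F(q_L,t)$ as a polynomial plus terms $d_{J,q_L}(t)/\prod_{J\subset\mathsf I_1}$ plus terms with denominators supported on $\mathsf I_3$. The $\mathsf I_3$ terms contribute $O(m^{n-1}q_L^{-m\nu/N})$ with $\nu/N>\lambda_0$, which Lemma \ref{upnu} bounds by $q_L^{-m\lambda_0}$ times $m^{\mathfrak e-1}$. Their numerators $g_{J,3}$ are controlled by Lemma \ref{estimatedegree}, which gives $\deg_y$ bounds on the coefficients and hence, uniformly in $L$, absolute values at most $C\,y^{(\text{exponent})}$. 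The polynomial part only affects small $m$, since $\deg_t c_J\le 1$; it is absorbed in the constant.

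The main obstacle is bounding the $\mathsf I_1$ numerators $d_{J,q_L}(t)$ uniformly in $L$. These arise from cancellation against the removed $\mathsf I_2$ poles, and in particular against the factor $1-ty^{r-n}$, so they cannot be bounded simply by the size of the $c_{J,L}$. My first attempt is to compute each $d_{J,q_L}$ as $F$ minus the other terms, evaluated through Hermite interpolation (\ref{Hinter}) at the roots $\zeta y^{\lambda_0/N}$, using Lemma \ref{separate}(iii) to control $\deg_y$ of the derivatives. The only non-uniform factor that remains comes from the numerator $1-ty^{r-n}$, evaluated at $|t|=y^{\lambda_0}$. Its size is at most $1+q_L^{\lambda_0+r-n}=1+q_L^{\sigma+r}$, which explains the factor in the theorem. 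The $\mathsf I_2$ separators have $\deg_y\le 0$ because those ratios are on the correct side, so they contribute only constants depending on the numerical data, and hence on the complexity. Finally, expanding $d_{J}(t)/\prod(1-t^{N_j}y^{-\nu_j})$ with $\#J\le\mathfrak e$ into coefficients gives $O(m^{\mathfrak e-1}y^{-m\lambda_0})$. Multiplying by $q_L^{mn}$ yields the claimed bound with $C_X$ depending only on the complexity of $X$.
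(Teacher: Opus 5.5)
\textbf{There is a genuine gap in your proposal, in two places.}

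\emph{First gap: the step you call the main obstacle.} You put the dominant ratio $\lambda_0=\sigma_r^{\textnormal{abs}}(L,Z,X)+n$ into the slot $\mathsf I_1$ of Proposition \ref{rewr}. That is the one slot whose numerators $d_{J,q_L}$ the proposition does not describe explicitly. In your assignment the $d_J$ are built from $g_{\mathsf J,1,2}$, which separates the \emph{smaller}-ratio block $\mathsf J_1\cup\mathsf J_2$ from $\mathsf J_3$. Lemma \ref{separate}(iii) and Lemma \ref{estimatedegree} do not apply to that block, since they need $\min\mathfrak E_1>\max\mathfrak E_2$ for the block being controlled. A reduction modulo $\prod_{j\in\mathsf J_1}(1-t^{N_j}y^{-\nu_j})$ follows, and it can a priori inflate coefficients. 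Your ``first attempt'' via Hermite interpolation of $F$ minus the other terms is not carried out. The claim that $1-ty^{r-n}$ is the only non-uniform factor is asserted, not shown. (Also, divisors with $\nu_j/N_j>\lambda_0$ give poles of \emph{larger} modulus, so their contributions are subdominant, not dominant.)

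The paper makes the opposite assignment, so this problem never arises:
\begin{itemize}
\item If $\sigma^{\textnormal{abs}}=\sigma_r^{\textnormal{abs}}$, no partial fractions are used. Proposition \ref{equpol} bounds the tail sums of Lemma \ref{coff1} term by term with Lemma \ref{upnu}. The factor $1+q_L^{\sigma+r}$ comes from the shifted tail $q_L^{-n+r}\sum_{e\ge m-1}$.
\item If $\sigma^{\textnormal{abs}}\neq\sigma_r^{\textnormal{abs}}$, Lemma \ref{polr} and Corollary \ref{exrewr} show $\sigma^{\textnormal{abs}}=-r\in\ZZ$. Everything at ratio $n-r$ collapses to $b_L+a_L/(1-q_L^{-(s+n-r)})$, and this exceptional ratio is the $\mathsf I_1$ slot. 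The fake intermediate ratios form $\mathsf I_2$. All ratios $\ge\lambda_0$ go into $\mathsf I_3$, whose numerators are explicit separators controlled by Lemma \ref{estimatedegree}, which gives Lemma \ref{upbou}. The $\mathsf I_1$ part contributes exactly zero by Lemma \ref{estimation2}.
\end{itemize}

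\emph{Second gap: the power of $m$.} Your final step, expanding $d_J/\prod_{j\in J}(1-t^{N_j}y^{-\nu_j})$ ``with $\#J\le\mathfrak e$'', is unjustified and false in general. The subset-indexed output of Proposition \ref{rewr} is not the partial-fraction decomposition. The true pole order on the circle $|t|=q_L^{\lambda_0}$ can be smaller than $\#J$, through cancellation among different $J$ or against $1-ty^{r-n}$. For example, take $\lambda_0=n-r$ with the relevant divisors having $N_j=1$. Then term-by-term expansion only yields $m^{n-1}$.

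The paper also gets only $m^{n-1}$ from the interpolation argument. It then upgrades to $m^{\mathfrak e_r^{\textnormal{abs}}(L,Z,X)-1}$ by a separate argument. It uses the quasi-polynomial expression (\ref{expressione1}), whose coefficient functions $c_{L,\lambda,i}(m)$ are periodic with period bounded by the complexity. It then adapts \cite[Claim 3.2.7]{Saskia-Kien} to turn the uniform $m^{n-1}$ bound into uniform bounds on these coefficients. Your proposal has no counterpart to this step.

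\emph{A possible repair.} Drop the $d_J$ entirely. Compute the canonical principal part of $F$ at each root $a$ with $|a|\ge q_L^{\lambda_0}$ directly from local Laurent expansions of the finitely many Denef terms:
\begin{itemize}
\item factors of smaller ratio contribute $O(q_L^{-\gamma})$;
\item factors of larger ratio contribute $O(1)$;
\item $1-ty^{r-n}$ contributes at most $1+q_L^{\sigma_r^{\textnormal{abs}}+r}$.
\end{itemize}
On the critical circle these principal parts have order at most $\mathfrak e_r^{\textnormal{abs}}$. That would give uniformity and the correct power of $m$ at once, but as written your proposal does not do this.
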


Before proving Theorem \ref{uupp}, we need some preparation. Let $\sigma^{\textnormal{abs}}(L,Z,X)$ be as in Corollary \ref{po1}. Then, there is at most one number $r\in\RR$ such that $\sigma^{\textnormal{abs}}(L,Z,X)\neq \sigma_r^{\textnormal{abs}}(L,Z,X)$. By  Proposition \ref{prepol}, there is a  positive integer $n$ and non-zero ideals $\cI_1,...,\cI_\ell$ of $\cO_K[x_1,...,x_n]$, all of them depend only on $X$, such that the following conditions hold:
\begin{itemize}
\item $\cI_{iK}\neq (1)$ for all $1\leq i\leq \ell$.
\item  For each $1\leq i\leq \ell$, there exists an $\cO_K$-subscheme $Z_i$ of $\Spec(\cO_K[x_1,...,x_n]/\cI_i)$ of finite type such that
\begin{equation}\label{eq-20}\left(1-q_L^{-(s+n-r)}\right)\sum_{1\leq i\leq \ell}\cZ_{L,Z_i,\cI_i}(s)=q_L^{-(n+s)}\left(1-q_L^{-n+r}\right)\#Z(k_L)+\left(1-q_L^s\right)\cZ_{r,L,Z,X}(s+n)
\end{equation}
and 
\begin{equation}\label{eq-3}\#Z(k_L)=\sum_{1\leq i\leq \ell}\#Z_i(k_L)
\end{equation}
for all $L\in\cL_{K,1}$ and all $r\in\ZZ$.
 \end{itemize}
From now on, suppose that the complexity of $X$ is bounded by $e$. Then we can choose $n,\ell,\cI_i$ for $1\leq i\leq\ell$ such that $1\leq n,\ell\leq e$ and  the ideals $\cI_i$ can be generated by at most $e$ polynomials of degree at most $e$. As mentioned in Section \ref{JSL},  for each $1\leq i\leq \ell$, there is a strong log resolution $(Y_i,\pi_i)$ of $\cI_{iK}$ associated to prime divisors $(E_{ij})_{j\in T_i}$ of $Y_i$ and numerical data $((\nu_{ij},N_{ij}))_{j\in T_i}$ such that the complexity of $(Y_i,\pi_i)$ is bounded by a constant $D(e)$ depending only on $e$.
Let $M$ be an  integer such that $(Y_i,\pi_i)$ has good reduction modulo $\cM_L$ for all $1\leq i\leq \ell$ and all $L\in\cL_{K,M}$. By Propositions \ref{Defm} and \ref{prepol}, we have 
\begin{equation}\label{sin}\sigma^{\textnormal{abs}}(L,Z,X)+n=\min\{\frac{\nu_{ij}}{N_{ij}}\mid 1\leq i\leq \ell, j\in T_i, \cC_{\{j\},L,Z_i}\neq 0\}
\end{equation} for all $L\in\cL_{K,M}$. For $1\leq i\leq \ell$ and $L\in\cL_{K,M}$, we set 
\begin{equation}\label{deT}
T_{iL}:=\{j\in T_i\mid \cC_{\{j\},L,Z_i}\neq 0\}.
\end{equation}
Then 
\begin{equation}\label{lesp}\sigma^{\textnormal{abs}}(L,Z,X)+n\leq \frac{\nu_{ij}}{N_{ij}}
\end{equation}
for all $1\leq i\leq \ell$ and all $j\in T_{iL}$. For $L\in\cL_{K,M}$, $1\leq i\leq \ell$ and $I\subset T_{iL}$, we set $$I_L=\{j\in I\mid \frac{\nu_{ij}}{N_{ij}}\geq \sigma_r^{\textnormal{abs}}(L,Z,X)+n\}.$$
As in Section \ref{flaw},  we also use Lemma \ref{separate} to write 
\begin{equation}\label{decomposition00}
\prod_{j\in I}\dfrac{1}{1-t^{N_{ij}}y^{-\nu_{ij}}}=\dfrac{g_{I,L}(y,t)}{\prod_{j\in I_L}(1-t^{N_{ij}}y^{-\nu_{ij}})}+\dfrac{h_{I,L}(y,t)}{\prod_{j\in I\setminus I_L}(1-t^{N_{ij}}y^{-\nu_{ij}})}
\end{equation}
for  $L\in \cL_{K,M}$, $1\leq i\leq \ell$ and $I\subset T_{iL}$ such that
\begin{equation*}
\deg_t\left(\dfrac{h_{I,L}(y,t)}{\prod_{j\in I\setminus I_L}(1-t^{N_{ij}}y^{-\nu_{ij}})}\right)<0 \textnormal{ and }\deg_t\left(\dfrac{g_{I,L}(y,t)}{\prod_{j\in I_L}(1-t^{N_{ij}}y^{-\nu_{ij}})}\right)<0,
\end{equation*} with the convention that 
\begin{itemize}
\item $\prod_{j\in I}(1-t^{N_{ij}}y^{-\nu_{ij}})=1 \textnormal{ if } I=\emptyset,$ 
\item $g_{I,L}(y,t)=1 \textnormal{ and } h_{I,L}(y,t)=0 \textnormal{ if } I=I_L,$
 \item $g_{I,L}(y,t)=0 \textnormal{ and } h_{I,L}(y,t)=1 \textnormal{ if } I_L=\emptyset \textnormal{ and } I\neq\emptyset.$
\end{itemize}

 We have the following lemmas.
\begin{lem}\label{coff}Let $L\in\cL_{K,1}$. We set $t=q_L^{-s}$ and view $\cZ_{L,Z_i,\cI_i}(s)$ as a power series in $t$ for all $1\leq i\leq \ell$. Then we have 
$$q_L^{-mn}\cE_{r,L,Z,X}(m)=\textnormal{Coeff}_{t^{m-1}}\left(\frac{(1-tq_L^{-n+r})\sum_{1\leq i\leq \ell}\cZ_{L,Z_i,\cI_i}(s)}{t-1}\right)+(1-q_L^{-n+r})\sum_{1\leq i\leq\ell}\cZ_{L,Z_i,\cI_i}(0)$$
for all $m\geq 2$.
\end{lem}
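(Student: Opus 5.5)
The plan is to read the identity (\ref{eq-20}) as an identity of formal power series in $t=q_L^{-s}$ and then compare the coefficients of $t^{m-1}$ on both sides. Fix $L\in\cL_{K,1}$ and $r$. Both sides of (\ref{eq-20}) are holomorphic for $\Re(s)>0$, that is, for $|t|<1$. There the zeta functions $\cZ_{L,Z_i,\cI_i}(s)=\int\Phi_{L,Z_i}t^{\ord_L(\cI_i(x))}|dx|$ are power series in $t$ that converge on the open unit disc. Likewise $\cZ_{r,L,Z,X}(s+n)=\sum_{m\geq2}\cE_{r,L,Z,X}(m)q_L^{-mn}t^m$ converges there, because $|\cE_{r,L,Z,X}(m)|\leq 2\max(1,q_L^r)q_L^{mn'}$ for some $n'$ bounded in terms of $X$ (this bound should be checked against the definition of $\cE$). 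Hence the identity between these analytic functions of $t$ is an identity in $\CC[[t]]$.

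In terms of $t$ we have $q_L^{-(s+n-r)}=tq_L^{-n+r}$ and $q_L^{-(n+s)}=tq_L^{-n}$. Also $1-q_L^{s}=1-t^{-1}=(t-1)/t$. So (\ref{eq-20}) becomes
\[(1-tq_L^{-n+r})\sum_{i}\cZ_{L,Z_i,\cI_i}(s)=tq_L^{-n}(1-q_L^{-n+r})\#Z(k_L)+(t-1)\sum_{m\geq2}q_L^{-mn}\cE_{r,L,Z,X}(m)t^{m-1}.\]
Next divide by $t-1$, which is a unit in $\CC[[t]]$. This gives
\[\frac{(1-tq_L^{-n+r})\sum_i\cZ_{L,Z_i,\cI_i}(s)}{t-1}=q_L^{-n}(1-q_L^{-n+r})\#Z(k_L)\frac{t}{t-1}+\sum_{m\geq2}q_L^{-mn}\cE_{r,L,Z,X}(m)t^{m-1}.\]
Since $t/(t-1)=-\sum_{k\geq1}t^k$, the coefficient of $t^{m-1}$ with $m\geq2$ (so $m-1\geq1$) on the right-hand side equals $-q_L^{-n}(1-q_L^{-n+r})\#Z(k_L)+q_L^{-mn}\cE_{r,L,Z,X}(m)$.

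It then remains to identify the constant $q_L^{-n}(1-q_L^{-n+r})\#Z(k_L)$ with $(1-q_L^{-n+r})\sum_i\cZ_{L,Z_i,\cI_i}(0)$. For this I apply Lemma \ref{znpo} to each pair $Z_i\subset\Spec(\cO_K[x]/\cI_i)$, which gives $\cZ_{L,Z_i,\cI_i}(0)=q_L^{-n}\#Z_i(k_L)$. Summing over $i$ and using (\ref{eq-3}) yields $\sum_i\cZ_{L,Z_i,\cI_i}(0)=q_L^{-n}\#Z(k_L)$. Rearranging the coefficient identity then gives exactly the formula in the statement.

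The only step I expect to need care is justifying the passage from a meromorphic identity in $s$ to an identity of coefficients. The convergence remark above handles this: both sides are power series in $t$ with positive radius of convergence that agree on an open set, so their coefficients agree. Everything else is bookkeeping.
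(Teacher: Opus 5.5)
Your proof is correct and is essentially the paper's argument: rewrite (\ref{eq-20}) in the variable $t=q_L^{-s}$, replace $q_L^{-n}\#Z(k_L)$ by $\sum_i\cZ_{L,Z_i,\cI_i}(0)$ using Lemma \ref{znpo} and (\ref{eq-3}), and compare coefficients of $t^{m-1}$. The paper multiplies through by $t/(t-1)$ and expands $t/(1-t)$, whereas you divide by $t-1$ and expand $t/(t-1)$, which is the same bookkeeping. Your convergence remark is a harmless addition, and you may take $n'=n$: every point of $X(\cO_L/\varpi_L^m)$ factors through one of the $\ell$ affine pieces embedded in $\AA^n$ in Proposition \ref{prepol}, so the count is at most $\ell q_L^{mn}$.
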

\begin{proof}
 By Definition \ref{gzeta}, we have 
 $$\cZ_{r,L,Z,X}(s+n)=\sum_{m\geq 2}q_L^{-mn}\cE_{r,L,Z,X}(m)t^{m}.$$
This together with the equalities  (\ref{eq-20}), (\ref{eq-3}) and Lemma \ref{znpo} yield
\begin{align*}&\left(1-tq_L^{-n+r}\right)\bigg(\sum_{1\leq i\leq \ell}\cZ_{L,Z_i,\cI_i}(s)\bigg)-\left(1-q_L^{-n+r}\right)\bigg(\sum_{1\leq i\leq \ell}\cZ_{L,Z_i,\cI_i}(0)\bigg)\\=&(t-1)\left(1-q_L^{-n+r}\right)\bigg(\sum_{1\leq i\leq \ell}\cZ_{L,Z_i,\cI_i}(0)\bigg)+(1-t^{-1})\bigg(\sum_{m\geq 2}q_L^{-mn}\cE_{r,L,Z,X}(m)t^{m}\bigg).
\end{align*}
Therefore, we have 
\begin{align*}&t\left(1-q_L^{-n+r}\right)\bigg(\sum_{1\leq i\leq \ell}\cZ_{L,Z_i,\cI_i}(0)\bigg)+\bigg(\sum_{m\geq 2}q_L^{-mn}\cE_{r,L,Z,X}(m)t^{m}\bigg)\\=&\frac{t\left(1-tq_L^{-n+r}\right)\sum_{1\leq i\leq \ell}\cZ_{L,Z_i,\cI_i}(s)}{t-1}+\frac{t}{1-t}\left(1-q_L^{-n+r}\right)\bigg(\sum_{1\leq i\leq \ell}\cZ_{L,Z_i,\cI_i}(0)\bigg)\\
=&\frac{t\left(1-tq_L^{-n+r}\right)\sum_{1\leq i\leq \ell}\cZ_{L,Z_i,\cI_i}(s)}{t-1}+\bigg(\sum_{m\geq 1}\left(1-q_L^{-n+r}\right)\sum_{1\leq i\leq \ell}\cZ_{L,Z_i,\cI_i}(0)t^m\bigg).
\end{align*}
This implies our assertion.
 \end{proof}
 \begin{lem}\label{coff1}Let $R(t)=\sum_{i\geq 0}a_it^i$ be a power series converging absolutely at $t=1$ and $q$ be a complex number. Then we have 
 $$\textnormal{Coeff}_{t^m}\left(\frac{(1-qt)R(t)}{t-1}\right)+(1-q)R(1)=\sum_{i\geq m+1}a_i-q\sum_{i\geq m}a_i$$
 for all $m\geq 0$.
 \end{lem}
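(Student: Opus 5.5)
The identity is a direct manipulation of formal power series. Since $R(t)$ converges absolutely at $t=1$, the tail sums $S_k:=\sum_{i\geq k}a_i$ exist for every $k\geq 0$, and I plan to express the coefficients of $\frac{R(t)}{t-1}$ through these tails. The first step is to write $\frac{1}{t-1}=-\sum_{j\geq 0}t^j$ in $\CC[[t]]$, so that
$$\frac{R(t)}{t-1}=-\sum_{k\geq 0}\Big(\sum_{i=0}^{k}a_i\Big)t^k.$$
Using $\sum_{i=0}^k a_i=R(1)-S_{k+1}$, the coefficient of $t^k$ in $\frac{R(t)}{t-1}$ is therefore $S_{k+1}-R(1)$.

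Next I multiply by $(1-qt)$. For $m\geq 1$, the coefficient of $t^m$ in $\frac{(1-qt)R(t)}{t-1}$ equals
$$\big(S_{m+1}-R(1)\big)-q\big(S_m-R(1)\big)=S_{m+1}-qS_m-(1-q)R(1).$$
For $m=0$ the second term is absent, and the coefficient is $S_1-R(1)=-a_0$. This agrees with the formula above after the convention $S_0=R(1)$, because $S_1-qS_0-(1-q)R(1)=S_1-R(1)$.

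Adding $(1-q)R(1)$ to this coefficient gives $S_{m+1}-qS_m=\sum_{i\geq m+1}a_i-q\sum_{i\geq m}a_i$, which is exactly the claimed identity for every $m\geq 0$.

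There is no real obstacle. The only points needing care are that absolute convergence at $t=1$ makes the tails $S_k$ well defined, so the rearrangement $\sum_{i\le k}a_i=R(1)-S_{k+1}$ is legitimate, and that the edge case $m=0$ is checked separately as above.
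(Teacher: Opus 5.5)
Your proof is correct and is exactly the ``basic computation'' the paper invokes without giving details: you expand $\frac{1}{t-1}=-\sum_{j\geq 0}t^j$, write the partial sums as $R(1)-S_{k+1}$, and then multiply by $(1-qt)$. One small remark: in the case $m=0$, the equality $S_0=R(1)$ holds by definition of $S_0$ rather than by convention, so that case needs no special treatment beyond setting the coefficient of $t^{-1}$ to zero.
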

 \begin{proof}It follows from a basic computation.
 \end{proof}
\begin{prop}\label{equpol}There is a constant $C_{0e}$ depending only on $e$  
such that 
$$|\cE_{r,L,Z,X}(m)|\leq C_{0e}m^{n-1}(1+q_L^{\sigma^{\textnormal{abs}}(L,Z,X)+r})q_L^{-m\sigma^{\textnormal{abs}}(L,Z,X)}$$
for all $r\in\RR$, all $L\in\cL_{K,M}$ and all $m\geq 2$. 
\end{prop}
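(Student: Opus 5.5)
Set $t=q_L^{-s}$ and start from Lemma \ref{coff}. Combined with Lemma \ref{coff1}, applied with $R(t)=\sum_{1\leq i\leq\ell}\cZ_{L,Z_i,\cI_i}(s)=\sum_k a_kt^k$ and $q=q_L^{-n+r}$, it gives
\[q_L^{-mn}\cE_{r,L,Z,X}(m)=\sum_{k\geq m}a_k-q_L^{-n+r}\sum_{k\geq m-1}a_k .\]
Here $a_k$ is the measure of the set of points of $\sqcup_i\{\overline{x}\in Z_i(k_L)\}$ with $\ord_L(\cI_i(x))=k$. Hence $\sum_{k\geq m}a_k=\sum_i\int_{\{\overline{x}\in Z_i(k_L),\,\ord_L(\cI_i(x))\geq m\}}|dx|$. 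By Proposition \ref{Defm} and the formula above, this is a tail of the Denef-type sums. The plan is to bound the two tails separately.

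Put $\sigma:=\sigma^{\textnormal{abs}}(L,Z,X)$. By (\ref{sin}) and (\ref{lesp}), every $j\in T_{iL}$ satisfies $\nu_{ij}/N_{ij}\geq\sigma+n$. Only subsets $I\subset T_{iL}$ contribute, since $\cC_{I,L,Z_i}\neq0$ forces every $j\in I$ to have $\cC_{\{j\},L,Z_i}\neq0$. Expanding the Denef formula of Proposition \ref{Defm} as in the proof of Proposition \ref{genlogc} then gives
\[\sum_{k\geq m}a_k=\sum_i\sum_{I\subset T_{iL}}q_L^{-n}(q_L-1)^{\#I}\cC_{I,L,Z_i}\sum_{\ell\in\NN_{\geq1}^{I},\ \sum\ell_jN_{ij}\geq m}q_L^{-\sum\ell_j\nu_{ij}}.\]
The coefficients $q_L^{-n}(q_L-1)^{\#I}\cC_{I,L,Z_i}$ are bounded by $C_{e,n}$ by (\ref{ulw}). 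Lemma \ref{upnu}, with $a_j=\nu_{ij}$, $b_j=N_{ij}$ and exponent $\sigma+n$, bounds the inner sum by $C\,m^{\#I-1}q_L^{-m(\sigma+n)}$, with $\#I\leq n$. The constant $C$ depends only on the finitely many numerical data, which are bounded in terms of $e$. This gives
\[\sum_{k\geq m}a_k\leq C'_e\,m^{n-1}q_L^{-m(\sigma+n)}.\]
Applying the same bound with $m-1$ gives
\[q_L^{-n+r}\sum_{k\geq m-1}a_k\leq C'_e m^{n-1}q_L^{-n+r}q_L^{-(m-1)(\sigma+n)}=C'_em^{n-1}q_L^{\sigma+r}q_L^{-m(\sigma+n)}.\]
Multiplying by $q_L^{mn}$ and adding the two pieces yields
\[|\cE_{r,L,Z,X}(m)|\leq 2C'_e\,m^{n-1}(1+q_L^{\sigma+r})q_L^{-m\sigma},\]
which is the claim with $C_{0e}=2C'_e$.

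The main obstacle is the uniformity of the constant. The resolutions $(Y_i,\pi_i)$ depend on $X$, so $C_{0e}$ must be extracted only from the complexity bound $D(e)$. To handle this, use that there are only finitely many possible numerical data with $\nu_{ij},N_{ij}\leq D(e)$, finitely many $i$, and at most $2^{D(e)}$ subsets $I$, so the constants from Lemma \ref{upnu} can be maximized over a finite list. A second point needing care is the case $\sigma=+\infty$ (all $T_{iL}$ empty). Then only $I=\emptyset$ contributes, and it contributes to $a_0$ only. So for $m\geq2$ both tails vanish, $\cE=0$, and the bound holds with the convention $a^{-\infty}=0$.
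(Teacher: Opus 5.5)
Your argument follows the paper's proof. Lemmas \ref{coff} and \ref{coff1} turn $q_L^{-mn}\cE_{r,L,Z,X}(m)$ into a difference of two tails, Proposition \ref{Defm} expands these tails over nonempty $I\subset T_{iL}$, and Lemma \ref{upnu} together with (\ref{lesp}) and (\ref{ulw}) bounds them uniformly in terms of $e$.

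\textbf{The false step.} You claim that $\cC_{I,L,Z_i}\neq0$ forces $\cC_{\{j\},L,Z_i}\neq0$ for every $j\in I$. This is false in general; the paper makes the same assertion when passing to (\ref{eq01}).

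A counterexample: take $\cI=(xy)\subset\ZZ[x,y]$ with $\pi=\mathrm{id}$, $E_1=\{x=0\}$, $E_2=\{y=0\}$, and $Z$ the origin. Then $\cC_{\{1\},L,Z}=\cC_{\{2\},L,Z}=0$ but $\cC_{\{1,2\},L,Z}=1$. So $T_L=\emptyset$, yet $\cZ_{L,Z,\cI}(s)=\bigl((1-q_L^{-1})q_L^{-1-s}/(1-q_L^{-1-s})\bigr)^2$ has a double pole at $s=-1$. For the same reason, your treatment of $\sigma=+\infty$ via ``all $T_{iL}$ empty'' is not valid.

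\textbf{The repair.} What Lemma \ref{upnu} actually needs is $\nu_{ij}/N_{ij}\geq\sigma+n$ for every $j$ lying in some $I$ with $\cC_{I,L,Z_i}\neq0$. This follows from positivity.
\begin{itemize}
\item Let $\lambda_0$ be the minimum of $\nu_{ij}/N_{ij}$ over all such pairs $(i,j)$.
\item Every $\cC_{I,L,Z_i}\geq0$, so all terms of the Denef formula are nonnegative for real $s>-\lambda_0$.
\item At least one term tends to $+\infty$ as $s\downarrow-\lambda_0$. Hence, as in Lemma \ref{upc}, $-\lambda_0$ is a pole of $\sum_i\cZ_{L,Z_i,\cI_i}(s)$, and no pole has larger real part.
\item Proposition \ref{prepol} then gives $\sigma^{\textnormal{abs}}(L,Z,X)+n=\lambda_0$.
\end{itemize}

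Equivalently, redefine $T_{iL}$ as the set of $j$ belonging to some $I$ with $\cC_{I,L,Z_i}\neq0$. With that definition:
\begin{itemize}
\item (\ref{sin}) holds.
\item The restriction to $I\subset T_{iL}$ is tautological.
\item $\sigma=+\infty$ means $\cC_{I,L,Z_i}=0$ for all $I\neq\emptyset$. Also $\cC_{\emptyset,L,Z_i}=0$ because $Z_i\subset V(\cI_i)$. So $\cE_{r,L,Z,X}(m)=0$.
\end{itemize}
With this change, the rest of your computation goes through unchanged.
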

\begin{proof}Since $(Y_i,\pi_i)$ has good reduction modulo $\cM_L$ for all $1\leq i\leq \ell$ and all $L\in\cL_{K,M}$, we can use Lemma \ref{Defm} to have 
\begin{equation}\label{eq00}
\sum_{1\leq i\leq \ell}\cZ_{L,Z_i,\cI_i}(s)=\sum_{i=1}^\ell\sum_{I\subset T_i}q_L^{-n}\cC_{I,L,Z_i}\prod_{j\in I}\frac{(q_L-1)q_L^{-N_{ij}s-\nu_{ij}}}{1-q_L^{-N_{ij}s-\nu_{ij}}}
\end{equation}
for all $L\in\cL_{K,M}$.  

By (\ref{deT}) together with the definition of $\cC_{I,L,Z_i}$  and the condition $Z_i\subset \Spec(\cO_K[x_1,...,x_n]/\cI_i)$, we have $\cC_{I,L,Z_i}=0$ if either $I\nsubseteq T_{iL}$ or $I=\emptyset$. Thus we have 
\begin{equation}\label{eq01}
\sum_{1\leq i\leq \ell}\cZ_{L,Z_i,\cI_i}(s)=\sum_{i=1}^\ell\sum_{\emptyset\neq I\subset T_{iL}}q_L^{-n}\cC_{I,L,Z_i}\prod_{j\in I}\frac{(q_L-1)q_L^{-N_{ij}s-\nu_{ij}}}{1-q_L^{-N_{ij}s-\nu_{ij}}}
\end{equation}
for all $L\in\cL_{K,M}$. If $1\leq i\leq \ell$ and $\emptyset\neq I\subset T_{iL}$, then it follows from Lemma  \ref{upnu} and (\ref{lesp}) that for all $L\in\cL_{K,M}$ and all $m\geq 2$ we have 
\begin{align}
&\bigg|\sum_{e\geq m}\textnormal{Coeff}_{t^{e}}\left(\prod_{j\in I}\frac{t^{N_{ij}}q_L^{-\nu_{ij}}}{1-t^{N_{ij}}q_L^{-\nu_{ij}}}\right)-q_L^{-n+r}\sum_{e\geq m-1}\textnormal{Coeff}_{t^{e}}\left(\prod_{j\in I}\frac{t^{N_{ij}}q_L^{-\nu_{ij}}}{1-t^{N_{ij}}q_L^{-\nu_{ij}}}\right)\bigg|\nonumber\\
=&\bigg|\sum_{(m_j)_{j\in I}\in \NN_{>0}^{\#I}, \sum_{j\in I}m_jN_{ij}\geq m}q_L^{-\sum_{j\in I}m_j\nu_{ij}}-q_L^{-n+r}\sum_{(m_j)_{j\in I}\in \NN_{>0}^{\#I}, \sum_{j\in I}m_jN_{ij}\geq m-1}q_L^{-\sum_{j\in I}m_j\nu_{ij}}\bigg|\nonumber\\
\leq&\sum_{(m_j)_{j\in I}\in \NN_{>0}^{\#I}, \sum_{j\in I}m_jN_{ij}\geq m}q_L^{-\sum_{j\in I}m_j\nu_{ij}}+q_L^{-n+r}\sum_{(m_j)_{j\in I}\in \NN_{>0}^{\#I}, \sum_{j\in I}m_jN_{ij}\geq m-1}q_L^{-\sum_{j\in I}m_j\nu_{ij}}\nonumber\\
\leq&C_Im^{\#I-1}\left(q_L^{-m(\sigma^{\textnormal{abs}}(L,Z,X)+n)}+q_L^{-n+r}q_L^{-(m-1)(\sigma^{\textnormal{abs}}(L,Z,X)+n)}\right)\nonumber
\end{align}
for a positive constant $C_I$ depending only on $((\nu_{ij},N_{ij}))_{j\in I}$. This together with Lemmas \ref{upc}, \ref{coff}, \ref{coff1} and (\ref{eq01}) imply our assertion.
\end{proof}
\begin{lem}\label{polr}Let $L\in\cL_{K,M}$ and $r\in\RR$ such that $\sigma^{\textnormal{abs}}(L,Z,X)\neq \sigma_r^{\textnormal{abs}}(L,Z,X)$. Then $\sigma^{\textnormal{abs}}(L,Z,X)=-r\in\ZZ$. Moreover,  with the notations of Proposition \ref{Defm}, if $1\leq i\leq \ell$ and $j_1\neq j_2\in T_{iL}$ such that $\frac{\nu_{ij_1}}{N_{ij_1}}=\frac{\nu_{ij_2}}{N_{ij_2}}=n-r$, then  $\cC_{I,L,Z_i}=0$ whenever $\{j_1,j_2\}\subset I$. 

In addition, there is $a\in \QQ^{\times}$ and a polynomial $G_L(t)\in \QQ[t]$ such that 
$$\sum_{1\leq i\leq \ell}\sum_{1\leq i\leq \ell}\cZ_{L,Z_i,\cI_i}(s)=\frac{a}{1-q_L^{-(s+n-r)}}+\frac{G_L(q_L^{-s})}{\prod_{1\leq i\leq \ell, j\in T_{iL}, \frac{\nu_{ij}}{N_{ij}}>n-r}\left(1-q_L^{-N_{ij}s-\nu_{ij}}\right)}.$$

\end{lem}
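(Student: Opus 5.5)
The whole argument runs through identity (\ref{eq-20}), read as an identity of rational functions in $t=q_L^{-s}$. Write $\mathcal{S}(s):=\sum_{1\leq i\leq\ell}\cZ_{L,Z_i,\cI_i}(s)$. By Lemma \ref{znpo} (equivalently Proposition \ref{prepol}), the right-hand side of (\ref{eq-20}) is $q_L^{-(n+s)}(1-q_L^{-n+r})\#Z(k_L)+(1-q_L^s)\cZ_{r,L,Z,X}(s+n)$. Its poles in $s$ are exactly the poles of $\cZ_{r,L,Z,X}(s+n)$, i.e. points with real part $-\sigma_r^{\textnormal{abs}}(L,Z,X)-n$, apart from possible cancellation at $q_L^{s}=1$, which is excluded. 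On the left-hand side, by (\ref{sin}) and Lemma \ref{upc}, $\mathcal{S}(s)$ has a pole at $s_{L0}$ with $\Re(s_{L0})=-(\sigma^{\textnormal{abs}}(L,Z,X)+n)$, and this is its pole of maximal real part.

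\textbf{Step 1: $\sigma^{\textnormal{abs}}(L,Z,X)=-r\in\ZZ$.} Suppose $\sigma_r^{\textnormal{abs}}(L,Z,X)\neq\sigma^{\textnormal{abs}}(L,Z,X)$. Then multiplying by $1-q_L^{-(s+n-r)}$ must kill every pole of $\mathcal{S}$ on the line $\Re(s)=-(\sigma^{\textnormal{abs}}+n)$, since $\sigma_r^{\textnormal{abs}}\geq\sigma^{\textnormal{abs}}$ by Corollary \ref{po1}. The factor $1-q_L^{-(s+n-r)}$ has simple zeros only on the line $\Re(s)=r-n$. Hence $r-n=-(\sigma^{\textnormal{abs}}+n)$, i.e. $\sigma^{\textnormal{abs}}=-r$. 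Moreover every pole of $\mathcal{S}$ on that line is simple and lies in $r-n+\frac{2\pi i}{\log q_L}\ZZ$.

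In particular $s_{L0}=-\nu_L/N_L$ is real and has multiplicity $\kappa_L=1$, and $\nu_L/N_L=n-r$. The nonreal poles $-\frac{\nu_L}{N_L}+\frac{2\pi i k}{N_L\log q_L}$ produced by a factor $1-q_L^{-N_{ij}s-\nu_{ij}}$ with $N_{ij}>1$ must cancel. Positivity excludes cancellation: the residue contributions from Proposition \ref{Defm} at the real point are positive sums, as in the proof of Lemma \ref{upc}. To handle the nonreal points, I would compute the residue at $s=-\frac{\nu_L}{N_L}+\frac{2\pi i k}{N_L\log q_L}$ directly. Since $\gcd(\nu_L,N_L)=1$, the singular data $E_{ij}$ with $\nu_{ij}/N_{ij}=n-r$ have $N_{ij}=dN_L$ and $\nu_{ij}=d\nu_L$. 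The quantity $n-r=\nu_L/N_L$ must have only denominator-one poles. So I would argue that $N_L=1$, which gives $n-r\in\ZZ$ and hence $r\in\ZZ$.

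This integrality step is the main obstacle. The plan is to show that the pole at $s_{L0}+\frac{2\pi i}{N_{ij}\log q_L}$ has residue equal to a positive multiple of a sum of nonnegative terms $\cC_{\{j\},L,Z_i}$, mirroring the computation in Lemma \ref{upc} but evaluated at a root of unity. Such a residue cannot vanish. Therefore no $N_{ij}>1$ can occur among divisors in $T_{iL}$ achieving the minimum.

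\textbf{Step 2: the vanishing statement.} If $\{j_1,j_2\}\subset I$ with $\cC_{I,L,Z_i}\neq0$ and both ratios equal to $n-r$, then the Lemma \ref{upc} formula gives multiplicity $\kappa_L\geq2$ at $s_{L0}$. That contradicts the simplicity established in Step 1.

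\textbf{Step 3: the decomposition.} Write $\mathcal{S}$ via (\ref{eq01}). Split each product with Lemma \ref{separate}, taking $\mathsf{I}_1=\{j:\nu_{ij}/N_{ij}=n-r\}$ and $\mathsf{I}_2$ the rest; by Step 2, $\#(I\cap\mathsf{I}_1)\leq1$. The $\mathsf{I}_1$-parts have at most simple poles at the points $N_{ij}s+\nu_{ij}\in\frac{2\pi i}{\log q_L}\ZZ$. Since $N_{ij}=1$ in effect, and since the nonreal poles cancel by Step 1, these parts combine into $\frac{a}{1-q_L^{-(s+n-r)}}$ plus terms with no pole there. The constant $a$ is the residue, which is rational and nonzero; rationality comes from Lemma \ref{upc}, as a rational expression in $q_L$ and point counts.

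The remaining terms have denominators $\prod_{\nu_{ij}/N_{ij}>n-r}(1-q_L^{-N_{ij}s-\nu_{ij}})$ and numerators in $\QQ[t]$, by Lemma \ref{separate}(ii) with $y=q_L$. Summing them over a common denominator gives $G_L$.
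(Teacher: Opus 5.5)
The gap is in Step 1, at the integrality claim $N_L=1$. Your Steps 2 and 3 otherwise match the paper: simplicity of the pole at $r-n$ plus Lemma \ref{upc} gives the vanishing of $\cC_{I,L,Z_i}$, and Lemma \ref{separate} produces $G_L$. The first part of Step 1 is also correct: $\sigma^{\textnormal{abs}}(L,Z,X)=-r$, and every pole of $\mathcal{S}$ on $\Re(s)=r-n$ is simple and lies in $r-n+\frac{2\pi i}{\log q_L}\ZZ$.

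The positivity plan for $N_L=1$ cannot work. At a nonreal point $s_1=-\frac{\nu_L}{N_L}+\frac{2\pi i k}{N_L\log q_L}$, the residue of $\mathcal{S}$ is not a nonnegative combination of the $\cC_{\{j\},L,Z_i}$. After Step 2, each $I$ with $\cC_{I,L,Z_i}\neq 0$ contains at most one divisor $j_0$ of ratio $n-r$. But $I$ may also contain divisors $j$ of strictly larger ratio, which contribute the factor $\frac{(q_L-1)w_j}{1-w_j}$ with $w_j=q_L^{-N_{ij}s_1-\nu_{ij}}=e^{-2\pi i kN_{ij}/N_L}\,q_L^{N_{ij}\nu_L/N_L-\nu_{ij}}$. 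At the real point these factors are positive, which is why Lemma \ref{upc} works. At $s_1$ they carry a root of unity and can even be negative: for $N_L=2$, $k=1$ and $N_{ij}$ odd, $w_j=-\rho$ with $0<\rho<1$, so $\frac{w_j}{1-w_j}=-\frac{\rho}{1+\rho}$. Only the terms with $I=\{j_0\}$ are positive. The contributions of $I=\{j_0\}$ and $I=\{j_0,j\}$ can have opposite signs, so nothing excludes cancellation at $s_1$.

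The missing idea is arithmetic, and it is what the paper uses. Since $\cC_{I,L,Z_i}\in\ZZ$ and $q_L\in\ZZ$, $\mathcal{S}$ is a rational function of $t=q_L^{-s}$ with coefficients in $\QQ$. Write the ratio-$(n-r)$ part as $\sum_j h_j/\phi_j$, where:
\begin{itemize}
\item the $\phi_j$ are the distinct irreducible factors in $\QQ[t]$ of $\prod(1-t^{N_{ij}}q_L^{-\nu_{ij}})$, the product taken over divisors of ratio $n-r$;
\item $h_j\in\QQ[t]$ with $\deg h_j<\deg\phi_j$.
\end{itemize}
A nonzero $h_j$ is then coprime to $\phi_j$, so $h_j/\phi_j$ has a pole at every root of $\phi_j$, and all these roots lie on $|t|=q_L^{n-r}$. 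Since $1-q_L^{-(s+n-r)}$ vanishes only at $t=q_L^{n-r}$, any nonzero $h_j$ whose $\phi_j$ has another root leaves a pole of $(1-q_L^{-(s+n-r)})\mathcal{S}$ on $\Re(s)=r-n$. That contradicts $\sigma_r^{\textnormal{abs}}(L,Z,X)>\sigma^{\textnormal{abs}}(L,Z,X)$. Hence $h_j=0$ for every factor not vanishing at $q_L^{n-r}$. Because the pole at $q_L^{n-r}$ does exist, the factor through it must be linear, with numerator $a\in\QQ^{\times}$. Your Step 3 then goes through.

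A caution on this route: concluding $N_L=1$ requires $t^{N_L}-q_L^{\nu_L}$ to be irreducible over $\QQ$, as the paper asserts. By Capelli's theorem this holds exactly when $\gcd(N_L,f)=1$, where $q_L=p_L^{f}$ (for example when $q_L$ is prime). In general the conjugation argument only gives $q_L^{n-r}\in\ZZ$, that is, $N_L\mid f$.
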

\begin{proof}It follows from the condition  $\sigma^{\textnormal{abs}}(L,Z,X)\neq \sigma_r^{\textnormal{abs}}(L,Z,X)$ and the definition of $\sigma^{\textnormal{abs}}(L,Z,X)$, $\sigma_r^{\textnormal{abs}}(L,Z,X)$ that  $\sigma^{\textnormal{abs}}(L,Z,X)=-r<\sigma_r^{\textnormal{abs}}(L,Z,X)$ and $r-n$  is a pole of multiplicity $1$ of $$H(s):=\sum_{1\leq i\leq \ell}\sum_{1\leq i\leq \ell}\cZ_{L,Z_i,\cI_i}(s).$$
This together with (\ref{lesp}), (\ref{eq01}) and Lemma \ref{upc} imply that if $1\leq i\leq \ell$ and $\{j_1,j_2\}\subset I\subset T_{iL}$ such that $j_1\neq j_2$ and $\frac{\nu_{ij_1}}{N_{ij_1}}=\frac{\nu_{ij_2}}{N_{ij_2}}=n-r$, then  $\cC_{I,L,Z_i}=0$. Therefore, we can use (\ref{eq01}) and Lemma \ref{separate} to write 
$$H(s)=\frac{G_L(q_L^{-s})}{\prod_{1\leq i\leq \ell, j\in T_{iL}, \frac{\nu_{ij}}{N_{ij}}>n-r}\left(1-q_L^{-N_{ij}s-\nu_{ij}}\right)}+\sum_{1\leq i\leq\ell, j\in T_{iL}, \frac{\nu_{ij}}{N_{ij}}=n-r}\frac{a_{ij}(q_L^{-s})}{1-q_L^{-N_{ij}s-\nu_{ij}}}$$
for polynomials $a_{ij}(t),G_L(t)\in \QQ[t]$ with $\deg(a_{ij}(t))<N_{ij}$ for all $i,j$. 

We write $n-r=\frac{\nu}{N}$ for positive integers $\nu,N$ such that $\gcd(\nu,N)=1$. Let $\phi_1(t),...,\phi_{d}(t)$ be the irreducible factors in $\QQ[t]$ of $$\prod_{1\leq i\leq \ell, j\in T_{iL}, \frac{\nu_{ij}}{N_{ij}}=n-r}(1-t^{N_{ij}}q_L^{-\nu_{ij}})$$
with $\phi_1(t)=1-t^{N}q_L^{-\nu}$. Then the real part of any zero of $\phi_j(q_L^{-s})$ is $r-n$ for all $1\leq j\leq d$. Moreover, if $j\neq i$ then $\phi_{j}(q_L^{-s})$ and $\phi_i(q_L^{-s})$ have no common zero.  
On the other hand, we can find polynomials $h_1(t),...,h_d(t)\in \QQ[t]$ such that 
$$\sum_{1\leq i\leq\ell, j\in T_i, \frac{\nu_{ij}}{N_{ij}}=n-r}\frac{a_{ij}(q_L^{-s})}{1-q_L^{-N_{ij}s-\nu_{ij}}}=\frac{h_1(q_L^{-s})}{1-q_L^{-Ns}q_L^{-\nu}}+\sum_{j=2}^d\frac{h_j(q_L^{-s})}{\phi_i(q_L^{-s})}$$
and $\deg(h_i(t))\leq \deg(\phi_i(t))-1$ for all $1\leq i\leq d$. 

Suppose that $h_j(t)\neq 0$ for some $2\leq j\leq d$. It follows from the above conditions on $\phi_i(t)$ and $h_i(t)$ that $s_0$ is a pole of $H(s)$  if $\phi_j(q_L^{-s_0})=0$. Since $(1-tq_L^{r-n})$ is a factor of $\phi_1(t)$ in $\CC[t]$ and $\gcd(\phi_1(t),\phi_j(t))=1$ in $\QQ[t]$, we have $\gcd(1-tq_L^{r-n},\phi_j(t))=1$ in $\CC[t]$. Thus, $s_0$ is still a pole of
$(1-q_L^{-(s+n-r)})H(s)$
for all zeros $s_0$ of $\phi_j(q_L^{-s})$. So $\sigma^{\textnormal{abs}}(L,Z,X)=-r=-\Re(s_0)-n\geq\sigma_r^{\textnormal{abs}}(L,Z,X)$. We have a contradiction. Therefore, $h_j(t)=0$ for all $2\leq j\leq d$.

Since $r-n$ is a pole of multiplicity $1$ of $H(s)$, we must have $h_1(t)\neq 0$. By the above argument, we also have that $s_0$ is a pole of $H(s)$ for all zeros $s_0$ of $\phi_1(q_L^{-s})$. If $N>1$, then there is a zero $s_1$ of $\phi_1(q_L^{-s})$ such that $s_1$ is not a zero of $1-q_L^{-(s+n-r)}$. Then, $s_1$ is still a pole of $(1-q_L^{-(s+n-r)})H(s)$. Thus, $\sigma^{\textnormal{abs}}(L,Z,X)=-r=-\Re(s_1)-n\geq \sigma_r^{\textnormal{abs}}(L,Z,X)$ and we also get a contradiction.     So we must have $N=1, r=n-\frac{\nu}{N}\in\ZZ$ and $h_1(t)=a$ for a rational number $a\neq 0$ as desired.
\end{proof}
\begin{cor}\label{exrewr}With the assumptions of Lemma \ref{polr}, there is $a_L\in \QQ^{\times}$ and $b_L\in\QQ$ such that 
\begin{align*}\sum_{1\leq i\leq \ell}\sum_{1\leq i\leq \ell}\cZ_{L,Z_i,\cI_i}(s)=&b_L+\frac{a_L}{1-q_L^{-(s+n-r)}}\\&+\sum_{i=1}^\ell\sum_{\emptyset\neq I\subset T_{iL}}\frac{(q_L-1)^{\#I}}{q_L^{n}}\cC_{I,L,Z_i} \sum_{J\subset I, J_L\neq\emptyset}(-1)^{\#I-\#J}\frac{g_{J,L}(q_L,q_L^{-s})}{\prod_{j\in J_L}\left(1-q_L^{-N_{ij}s-\nu_{ij}}\right)},
\end{align*}
where $g_{J,L}$ is given by (\ref{decomposition00}). 
\end{cor}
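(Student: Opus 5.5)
We start from the explicit expansion (\ref{eq01}) of $H(s):=\sum_{1\leq i\leq \ell}\cZ_{L,Z_i,\cI_i}(s)$, valid for $L\in\cL_{K,M}$. In each summand indexed by $\emptyset\neq I\subset T_{iL}$, the product $\prod_{j\in I}\frac{(q_L-1)q_L^{-N_{ij}s-\nu_{ij}}}{1-q_L^{-N_{ij}s-\nu_{ij}}}$ is rewritten with the identity $\frac{x}{1-x}=\frac{1}{1-x}-1$. Setting $t=q_L^{-s}$, this gives
$$\prod_{j\in I}\frac{t^{N_{ij}}q_L^{-\nu_{ij}}}{1-t^{N_{ij}}q_L^{-\nu_{ij}}}=\sum_{J\subset I}(-1)^{\#I-\#J}\prod_{j\in J}\frac{1}{1-t^{N_{ij}}q_L^{-\nu_{ij}}}.$$
This is the origin of the signs $(-1)^{\#I-\#J}$ and of the factor $q_L^{-n}(q_L-1)^{\#I}\cC_{I,L,Z_i}$. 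The term $J=\emptyset$ contributes a constant $(-1)^{\#I}$. Collecting these constants over all $i,I$ gives a first contribution to $b_L\in\QQ$.

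For $J\neq\emptyset$, we apply the decomposition (\ref{decomposition00}) to $J$, with $y=q_L$. This is legitimate by Lemma \ref{separate}(ii), since $|q_L|\neq 1$. The result is
$$\prod_{j\in J}\frac{1}{1-t^{N_{ij}}y^{-\nu_{ij}}}=\frac{g_{J,L}(y,t)}{\prod_{j\in J_L}(1-t^{N_{ij}}y^{-\nu_{ij}})}+\frac{h_{J,L}(y,t)}{\prod_{j\in J\setminus J_L}(1-t^{N_{ij}}y^{-\nu_{ij}})}.$$
If $J_L\neq\emptyset$, the $g$-part is exactly the term displayed in the corollary. If $J_L=\emptyset$, the convention sets $g_{J,L}=0$, so the whole product goes into the $h$-part. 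It then remains to show that the sum of all $h$-parts equals $b'+\frac{a_L}{1-q_L^{-(s+n-r)}}$ with $b'\in\QQ$, $a_L\in\QQ^\times$.

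By Lemma \ref{polr}, $\sigma^{\textnormal{abs}}(L,Z,X)=-r\in\ZZ$. Combined with (\ref{lesp}), every $j\in T_{iL}$ satisfies $\frac{\nu_{ij}}{N_{ij}}\geq n-r$. Also $\sigma_r^{\textnormal{abs}}(L,Z,X)>-r$, so $J\setminus J_L$ consists precisely of the indices with $\frac{\nu_{ij}}{N_{ij}}=n-r$. Lemma \ref{polr} also says that two distinct such indices never occur together in an $I$ with $\cC_{I,L,Z_i}\neq 0$. Hence in each contributing term $J\setminus J_L$ has at most one element.

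Consider the total sum $R(t)$ of the $h$-parts. Each summand is a proper rational function in $t$, by the degree conventions of (\ref{decomposition00}), and its only poles lie at zeros of some $1-t^{N_{ij}}q_L^{-\nu_{ij}}$ with $\frac{\nu_{ij}}{N_{ij}}=n-r$. Moreover, $H(s)$ minus the $g$-terms minus the constants equals $R(q_L^{-s})$. The key input is the uniqueness of partial fraction decompositions. The $g$-terms have poles only at ratios $>n-r$, so the polar part of $H$ at real part $r-n$ comes entirely from $R$. Lemma \ref{polr} identifies this polar part as $\frac{a}{1-q_L^{-(s+n-r)}}$ with $a\in\QQ^\times$, and we set $a_L:=a$. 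Therefore $R(t)-\frac{a_L}{1-tq_L^{r-n}}$ has no poles and is a polynomial in $t$. Since it is a proper rational function plus an $O(1)$ term at $t=\infty$, it is a constant, which we absorb into $b_L$. Rationality of $b_L$ follows because all data ($\cC_{I,L,Z_i}$, $q_L$, and the coefficients of $g,h\in\QQ(y)[t]$) are rational.

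The main obstacle is this last comparison. We must make sure that regrouping Lemma \ref{polr}'s decomposition $\frac{a}{1-q_L^{-(s+n-r)}}+\frac{G_L}{\prod(\cdots)}$ against our own decomposition does not hide cancellations. Concretely, we must check that the $g$-terms carry no pole at real part $r-n$. This holds because $J_L$ contains only ratios strictly larger than $n-r$. Given that, uniqueness of partial fractions in $\CC(t)$, with poles separated by absolute value, finishes the argument.
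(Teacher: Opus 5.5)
Your partial-fraction strategy can work, but the step asserting that $J\setminus J_L$ consists precisely of the indices with $\nu_{ij}/N_{ij}=n-r$ does not follow, and the rest of your argument relies on it.

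From (\ref{lesp}) and $\sigma^{\textnormal{abs}}(L,Z,X)=-r$ you only get $\nu_{ij}/N_{ij}\geq n-r$ for $j\in T_{iL}$. So $J\setminus J_L$ is the set of $j\in J$ with $n-r\leq \nu_{ij}/N_{ij}<\sigma_r^{\textnormal{abs}}(L,Z,X)+n$. Since $\sigma_r^{\textnormal{abs}}(L,Z,X)+n>n-r$, this range is nondegenerate. Nothing keeps $T_{iL}$ from containing indices with ratio strictly inside it: membership in $T_{iL}$ only means $\cC_{\{j\},L,Z_i}\neq 0$, and such candidate poles can cancel. This is exactly the set $\mathsf{I}_2$ in the paper's proof.

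Consequently, two of your claims are unjustified:
\begin{itemize}
\item that every $h$-part, and hence $R(t)$, has poles only at zeros of factors with ratio exactly $n-r$;
\item that $R(t)-\frac{a_L}{1-tq_L^{r-n}}$ has no poles.
\end{itemize}
Lemma \ref{polr} does not fill this in. Its denominator $\prod_{\nu_{ij}/N_{ij}>n-r}$ still contains the middle-range factors, so it only controls the poles of $H$ with real part $r-n$. Your claim that $J\setminus J_L$ has at most one element is unfounded for the same reason, though you never actually use it.

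The missing input is that $H$ has no pole $s_0$ with $-\sigma_r^{\textnormal{abs}}(L,Z,X)-n<\Re(s_0)<r-n$. This follows from (\ref{eq-20}): any pole of $H$ with $q_L^{s_0+n-r}\neq 1$ is a pole of $(1-q_L^{s})\cZ_{r,L,Z,X}(s+n)$. By the defining property of $\sigma_r^{\textnormal{abs}}(L,Z,X)$, such a pole has real part at most $-\sigma_r^{\textnormal{abs}}(L,Z,X)-n$.

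The $g$-terms are regular at every $t_0$ with $|t_0|<q_L^{\sigma_r^{\textnormal{abs}}(L,Z,X)+n}$. So in that region the polar parts of $R$ and $H$ coincide, and $R$ has no poles at the zeros of the middle-range factors either. With this added, your uniqueness argument goes through. In fact $R(t)-\frac{a_L}{1-tq_L^{r-n}}$ is then a polynomial tending to $0$ as $t\to\infty$, hence zero.

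This pole-free strip is precisely the hypothesis the paper checks for $\mathsf{I}_2$ when it applies Proposition \ref{rewr}. After that, Lemma \ref{polr} collapses the $\mathsf{I}_1$-part to $\frac{a}{1-q_L^{-(s+n-r)}}$.
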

\begin{proof}We have 
\begin{equation}\label{recom}
\prod_{j\in I}\left(\frac{q_L^{-N_{ij}s-\nu_{ij}}}{1-q_L^{-N_{ij}s-\nu_{ij}}}\right)\\
=\prod_{j\in I}\left(\frac{1}{1-q_L^{-N_{ij}s-\nu_{ij}}}-1\right)
=\sum_{J\subset I}(-1)^{\#I-\#J}\frac{1}{\prod_{j\in J}\left(1-q_L^{-N_{ij}s-\nu_{ij}}\right)}
\end{equation}
for all $1\leq i\leq \ell$ and all $\emptyset\neq I\subset T_{iL}$. Now, we set
$$\mathsf{I}_1=\{(i,j)\mid  1\leq i\leq \ell, j\in T_{iL}, \nu_{ij}=(n-r)N_{ij}\},$$ $$\mathsf{I}_2=\{(i,j)\mid 1\leq i\leq \ell, j\in T_{iL}, \nu_{ij}-(\sigma_r^{\textnormal{abs}}(L,Z,X)+n)N_{ij}<0\}\setminus \mathsf{I}_1,$$ $$\mathsf{I}_3=\{(i,j)\mid 1\leq i\leq \ell, j\in T_{iL}, \nu_{ij}-(\sigma_r^{\textnormal{abs}}(L,Z,X)+n)N_{ij}\geq 0\},$$
$$\mathfrak{N}_1=\sqcup_{(i,j)\in \mathsf{I}_1}\{(\nu_{ij},N_{ij})\},$$ 
$$\mathfrak{N}_2=\sqcup_{(i,j)\in \mathsf{I}_2}\{(\nu_{ij}, N_{ij})\}$$ 
and
$$\mathfrak{N}_3=\sqcup_{(i,j)\in \mathsf{I}_3}\{(\nu_{ij}, N_{ij})\}.$$ Moreover, we choose a map $c_\mathsf{J}:\CC\to \CC\subset\CC[t]$ for each $\mathsf{J}\subset \mathsf{I}_1\sqcup \mathsf{I}_2\sqcup \mathsf{I}_3$ such that the following conditions hold:
\begin{itemize}
\item $c_\mathsf{J}\equiv 0$ if $\emptyset\neq \mathsf{J}\nsubseteq \{(i,j)\mid j\in T_{iL}\}$ for all $1\leq i\leq \ell$.
\item If $\emptyset\neq \mathsf{J}\subset \{(i,j)\mid j\in T_{iL}\}$ for some $1\leq i\leq\ell$ then   $$c_\mathsf{J}(q_L)=\sum_{J=\{j\mid (i,j)\in \mathsf{J}\}\subset I\subset T_{iL}}(-1)^{\#I-\#J}\frac{(q_L-1)^{\#I}}{q^n}\cC_{I,L,Z_i}$$ for  all $L\in \cL_{K,M}$.
\item $$c_\emptyset(q_L)=\sum_{1\leq i\leq \ell}\sum_{\emptyset\neq I\subset T_{iL}}\frac{(1-q_L)^{\#I}}{q_L^{n}}\cC_{I,L,Z_i}$$ for all $L\in\cL_{K,M}$.
\end{itemize} Then, we can use (\ref{decomposition00}), (\ref{eq01}), (\ref{recom}) together with  Proposition \ref{rewr} and Lemma \ref{polr} to complete the proof. 
\end{proof}

\begin{lem}\label{upbou}Let $L\in\cL_{K,1}$. We set $t=q_L^{-s}$. For each $1\leq i\leq \ell$ and $ J\subset T_{iL}$ such that $\emptyset\neq J_L$, we set $$\sigma_{JL}:=\min_{j\in J_L}\frac{\nu_{ij}}{N_{ij}}$$ and $$\gamma_{JL}:=\sum_{j\in J\setminus J_L}\left(N_{ij}\sigma_{JL}-\nu_{ij}\right).$$  Then $\gamma_{J,L}\geq 0$ for all $1\leq i\leq \ell$ and  all $ J\subset T_{iL}$with $\emptyset\neq J_L$. Moreover, there is a constant $C_{1e}$ depending only on $e$ such that 
$$\left|\sum_{e\geq m}\textnormal{Coeff}_{t^e}\left(\frac{g_{J,L}(q_L,q_L^{-s})}{\prod_{j\in J_L}(1-q_L^{-N_{ij}s-\nu_{ij}})}\right)\right|\leq C_{1e}m^{\#J_L-1}q_L^{-m\sigma_{JL}}q_L^{-\gamma_{JL}}$$
for all $m\geq 1$, all $1\leq i\leq \ell$ and all $ J\subset T_{iL}$ with $\emptyset\neq J_L$.
\end{lem}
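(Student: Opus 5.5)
The first claim, $\gamma_{JL}\geq 0$, comes straight from the definition of $J_L$. Every $j\in J\setminus J_L$ satisfies $\frac{\nu_{ij}}{N_{ij}}<\sigma_r^{\textnormal{abs}}(L,Z,X)+n$. Every $j\in J_L$ satisfies $\frac{\nu_{ij}}{N_{ij}}\geq\sigma_r^{\textnormal{abs}}(L,Z,X)+n$, so $\sigma_{JL}\geq \sigma_r^{\textnormal{abs}}(L,Z,X)+n$. Hence $N_{ij}\sigma_{JL}-\nu_{ij}>0$ for each $j\in J\setminus J_L$, and summing gives $\gamma_{JL}\geq 0$. This also puts us in the situation of Lemma \ref{separate}(iii) and Lemma \ref{estimatedegree}, with $\mathsf{I}_1=J_L$ and $\mathsf{I}_2=J\setminus J_L$: the condition $\min\mathfrak{E}_1>\max\mathfrak{E}_2$ holds. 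If $J=J_L$ then $g_{J,L}=1$ and $\gamma_{JL}=0$.

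For the bound, I write $g_{J,L}(y,t)=\sum_{0\leq l<l_1}b_l(y)t^l$ with $l_1=\sum_{j\in J_L}N_{ij}$. By Lemma \ref{estimatedegree},
\[\deg_y b_l\leq \max_{j\in J_L}\Bigl(-l\tfrac{\nu_{ij}}{N_{ij}}-\textstyle\sum_{k\in J\setminus J_L}(N_{ik}\tfrac{\nu_{ij}}{N_{ij}}-\nu_{ik})\Bigr)\leq -l\sigma_{JL}-\gamma_{JL},\]
where the second inequality uses $\frac{\nu_{ij}}{N_{ij}}\geq\sigma_{JL}$ and $N_{ik}>0$. The tail sum then splits as
\[\sum_{e\geq m}\textnormal{Coeff}_{t^e}=\sum_l b_l(q_L)\sum_{e\geq m-l}\textnormal{Coeff}_{t^e}\prod_{j\in J_L}(1-t^{N_{ij}}q_L^{-\nu_{ij}})^{-1}.\]
The inner tail is a sum over $(m_j)\in\NN^{\#J_L}$ with $\sum m_jN_{ij}\geq m-l$ of $q_L^{-\sum m_j\nu_{ij}}$. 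Lemma \ref{upnu}, with $\sigma=\sigma_{JL}$, bounds it by $C\,m^{\#J_L-1}q_L^{-(m-l)\sigma_{JL}}$; for $m\leq l$ I use instead the trivial bound by the full sum, which is $O(1)$ uniformly since $q_L^{-\nu}\leq 1/2$. Multiplying, each term is at most $|b_l(q_L)|\,C m^{\#J_L-1}q_L^{l\sigma_{JL}}q_L^{-m\sigma_{JL}}$, and this becomes the desired bound provided $|b_l(q_L)|\leq C' q_L^{-l\sigma_{JL}-\gamma_{JL}}$.

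The main obstacle is converting $\deg_y b_l\leq -l\sigma_{JL}-\gamma_{JL}$ into such a numerical estimate at $y=q_L$, uniformly in $L$ and with a constant depending only on $e$. Each $b_l$ is an element of $\QQ(y)$ determined only by the finitely many data $(\nu_{ij},N_{ij})$, which are bounded by $D(e)$, so only finitely many rational functions $b_l$ occur. For each of them, $|b_l(y)|\leq C'' y^{\deg_y b_l}$ holds for all real $y\geq y_0$, where $y_0$ exceeds all real positive zeros and poles. By Lemma \ref{separate}(ii), $b_l$ has no pole at $|y|\neq1$, so the only issue is small $q_L$. I handle it by enlarging $M$ (equivalently, requiring $q_L\geq y_0$), with the finitely many remaining $q_L$ absorbed into the constant; this is legitimate because the lemma is applied for $L\in\cL_{K,M}$. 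Taking the maximum of the constants over the finitely many possible configurations gives $C_{1e}$. Note that $\deg_y$ here is in $\frac1N\ZZ$, and the Puiseux bound applies to $b_l\in\QQ(y)$ directly.
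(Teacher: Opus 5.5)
Your proposal is correct and follows essentially the same route as the paper: expand $g_{J,L}(y,t)$ in $t$, bound $\deg_y$ of its coefficients via Lemma \ref{estimatedegree} (using $\nu_{ij}/N_{ij}\geq\sigma_{JL}$ on $J_L$ and $N_{ik}>0$), and control the tails with Lemma \ref{upnu}. You also spell out the uniformity step that the paper leaves implicit, namely that only finitely many rational functions occur and that none has a pole on $[2,\infty)$ by Lemma \ref{separate}(ii); this pole-freeness together with compactness of $[2,y_0]$ already gives the bound for all $q_L\geq 2$, so enlarging $M$ is unnecessary.
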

\begin{proof}Let $1\leq i\leq \ell$ and $J\subset T_{iL}$ with $\emptyset\neq J_L$. By the definition of $J_L$, we have  $\gamma_{JL}\geq 0$. 

Now, we  write $g_{J,L}(y,t)=\sum_{0\leq l\leq d_{JL}}a_{Jl}(y)t^l$ for elements $a_{Jl}(y)\in \QQ(y)$ and $d_{JL}=-1+\sum_{j\in J_L}N_{ij}$. In particular, we have $g_{J,L}(y,t)=1$ and $\gamma_{JL}=0$ if $J=J_L$.   By Lemmas \ref{separate} and \ref{estimatedegree} together with the fact that $1\leq \nu_{ij},N_{ij}\leq D(e)$ for all $i,j$, there is a constant $C_{1e}'$ depending only on $e$ such that 
\begin{align}\label{majoa}
\left|a_{Jl}(q_L)\right|&\leq C_{1e}'q_L^{\max_{j\in J_L}\left(-l\frac{\nu_{ij}}{N_{ij}}-\sum_{j'\in J\setminus J_L} \left(\frac{N_{ij'}\nu_{ij}}{N_{ij}}-\nu_{ij'}\right)\right)}\nonumber\\&\leq C_{1e}'q_L^{-l\sigma_{JL}}q_L^{-\gamma_{JL}}
\end{align}
for all $0\leq l\leq d_{JL}$. Moreover, by a simple calculation, we have 
\begin{align}\label{uppid}
\left|\sum_{e\geq m}\textnormal{Coeff}_{t^e}\left(\frac{g_{J,L}(q_L,q_L^{-s})}{\prod_{j\in J_L}(1-q_L^{-N_{ij}s-\nu_{ij}})}\right)\right|&=\left|\sum_{l=0}^{d_{JL}}a_{Jl}(q_L)\sum_{(m_j)_{j\in J_L}\in B_{L,J,l}}q_L^{-\sum_{j\in J_L}m_j\nu_{ij}}\right|\nonumber\\
&\leq \sum_{l=0}^{d_{JL}}\left|a_{Jl}(q_L)\right|\sum_{(m_j)_{j\in J_L}\in B_{L,J,l}}q_L^{-\sum_{j\in J_L}m_j\nu_{ij}}
\end{align}
where
$B_{L,J,l}=\{(m_j)_{j\in J_L}\in \NN^{\#J_L}\mid l+\sum_{j\in I_L}m_jN_{ij}\geq m\}$.
Thus, the second assertion  follows from Lemma \ref{upnu}, (\ref{majoa}) and (\ref{uppid}).
\end{proof}
\begin{lem}\label{estimation2} Let $ a,b,c\in \CC$ with $b\neq 1$ and $H(t)=c+\frac{a}{1-tb^{-1}}$ then 
$$H(1)+\text{Coeff}_{t^{m-1}}\left(\frac{(t-b)H(t)}{(b-1)(1-t)}\right)=0$$
for all $m\geq 2$.
\end{lem}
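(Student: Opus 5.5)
This is a direct computation: by linearity in $H$ it suffices to treat the two pieces $H=c$ and $H=\frac{a}{1-tb^{-1}}$ separately. For each piece we compute the coefficient of $t^{m-1}$ in $\frac{(t-b)H(t)}{(b-1)(1-t)}$ for $m\geq 2$ and check that it equals $-H(1)$. Throughout we work in the ring of formal power series in $t$, expanding $\frac{1}{1-t}=\sum_{k\geq 0}t^k$ and $\frac{1}{1-tb^{-1}}=\sum_{k\geq 0}b^{-k}t^k$. No convergence is needed, since only coefficients are compared.

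For the constant piece $H=c$, write $\frac{t-b}{1-t}=-1+\frac{1-b}{1-t}$. For every $k\geq 1$ the coefficient of $t^k$ in this series is $1-b$. Since $m\geq 2$, we have $m-1\geq 1$, so
\[
\textnormal{Coeff}_{t^{m-1}}\left(\frac{(t-b)c}{(b-1)(1-t)}\right)=\frac{c(1-b)}{b-1}=-c=-H(1).
\]

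For the pole piece, observe that $1-tb^{-1}=-\frac{t-b}{b}$. Hence
\[
\frac{(t-b)}{(b-1)(1-t)}\cdot\frac{a}{1-tb^{-1}}=\frac{-ab}{(b-1)(1-t)}.
\]
The zero of $t-b$ cancels the pole, leaving a geometric series whose every coefficient equals $\frac{-ab}{b-1}$. On the other hand, the value of this piece at $t=1$ is $\frac{a}{1-b^{-1}}=\frac{ab}{b-1}$. The two contributions therefore cancel for every $m\geq 1$.

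Adding the two pieces gives the identity for all $m\geq 2$. The only place the hypothesis $m\geq 2$ is used is the constant piece: there the $t^0$ coefficient is $-b/(b-1)$ instead of $1$, so the case $m=1$ genuinely fails for that term. The hypothesis $b\neq 1$ is needed for $H(1)$ and the factor $\frac{1}{b-1}$ to make sense. There is no real obstacle; the one point requiring care is the cancellation of $t-b$ against $1-tb^{-1}$.
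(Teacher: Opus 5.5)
Your proof is correct and follows essentially the same route as the paper, which also rewrites $\frac{(t-b)H(t)}{(b-1)(1-t)}=\frac{-ab}{(b-1)(1-t)}-\frac{c}{b-1}-\frac{c}{1-t}$ and then reads off the coefficient of $t^{m-1}$ for $m\geq 2$; this rewriting cancels $t-b$ against $1-tb^{-1}$ and splits off the constant part, exactly as you do. One trivial slip in your closing remark on $m=1$: for $k\geq 1$ the coefficient of $t^k$ in $\frac{t-b}{(b-1)(1-t)}$ is $-1$, not $1$, and this does not affect the argument.
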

\begin{proof}
For all $m\geq 2$, we have 
\begin{align*}
H(1)+\text{Coeff}_{t^{m-1}}\left(\frac{(t-b)H(t)}{(b-1)(1-t)}\right)&=\frac{ab}{b-1}+c+ \text{Coeff}_{t^{m-1}}\left(\frac{(t-b)\left(a+c(1-tb^{-1})\right)}{(b-1)(1-t)(1-tb^{-1})}\right)\\
&=\frac{ab}{b-1}+c+\text{Coeff}_{t^{m-1}}\left(\frac{-ab}{(b-1)(1-t)}-\frac{c}{b-1}-\frac{c}{1-t}\right)\\
&=0.
\end{align*}
\end{proof}
\begin{prop}\label{bigg}Let $L\in\cL_{K,M}$ and $r\in\RR$. Suppose that  $\sigma^{\textnormal{abs}}(L,Z,X)\neq \sigma_r^{\textnormal{abs}}(L,Z,X)$. Then there is a constant $C_{2e}$ depending only on $e$  
such that 
$$|\cE_{r,L,Z,X}(m)|\leq C_{2e}m^{n-1}(1+q_L^{\sigma_r^{\textnormal{abs}}(L,Z,X)+r})q_L^{-m\sigma_r^{\textnormal{abs}}(L,Z,X)}$$
for all $L\in\cL_{K,M}$ and all $m\geq 2$. 
\end{prop}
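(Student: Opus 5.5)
By Lemma \ref{polr}, the hypothesis $\sigma^{\textnormal{abs}}(L,Z,X)\neq\sigma_r^{\textnormal{abs}}(L,Z,X)$ forces $\sigma^{\textnormal{abs}}(L,Z,X)=-r\in\ZZ$ with $-r<\sigma_r^{\textnormal{abs}}(L,Z,X)$. It also tells us that the only pole of $H(s):=\sum_{i}\cZ_{L,Z_i,\cI_i}(s)$ with real part $r-n$ is the simple pole $s=r-n$, coming from the term $a_L/(1-q_L^{-(s+n-r)})$. The plan is to start from Lemma \ref{coff}, which expresses $q_L^{-mn}\cE_{r,L,Z,X}(m)$ as
$$\textnormal{Coeff}_{t^{m-1}}\Big(\frac{(1-tq_L^{-n+r})H}{t-1}\Big)+(1-q_L^{-n+r})H(0),$$
and then substitute the decomposition of $H$ given by Corollary \ref{exrewr}. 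This splits $H$ into three pieces: a constant $b_L$, the pole term $a_L/(1-q_L^{-n+r}t)$, and a finite sum of terms $\frac{g_{J,L}(q_L,t)}{\prod_{j\in J_L}(1-t^{N_{ij}}q_L^{-\nu_{ij}})}$ with rational coefficients $\pm(q_L-1)^{\#I}q_L^{-n}\cC_{I,L,Z_i}$. The whole expression is linear in $H$, so each piece can be treated separately.

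For the first two pieces, apply Lemma \ref{estimation2} with $b=q_L^{n-r}$. Up to the normalisation $(t-1)$ versus $(b-1)(1-t)$, which is a constant factor, the combination of Lemma \ref{coff} applied to $c+\frac{a}{1-tb^{-1}}$ is exactly the quantity shown to vanish for all $m\geq 2$. Hence the constant $b_L$ and the pole at $r-n$ contribute nothing to $\cE_{r,L,Z,X}(m)$, and this is precisely how the small index $-r$ gets removed. One small check is needed here: the term at $t=1$ should be evaluated as $H(1)$, i.e. $s=0$, which matches $H(0)$ in Lemma \ref{coff}.

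For each remaining piece $R_J(t)$, Lemma \ref{coff1} turns the contribution into $\sum_{e\geq m}\textnormal{Coeff}_{t^e}R_J-q_L^{-n+r}\sum_{e\geq m-1}\textnormal{Coeff}_{t^e}R_J$. Lemma \ref{upbou} bounds each tail by $C_{1e}m^{\#J_L-1}q_L^{-m\sigma_{JL}}q_L^{-\gamma_{JL}}$ with $\gamma_{JL}\geq0$. By the definition of $J_L$ we have $\sigma_{JL}\geq\sigma_r^{\textnormal{abs}}(L,Z,X)+n$, so after multiplying by $q_L^{mn}$ the two tails give
$$m^{n-1}q_L^{-m\sigma_r^{\textnormal{abs}}}\quad\text{and}\quad q_L^{-n+r}q_L^{n}q_L^{\sigma_r^{\textnormal{abs}}}\,m^{n-1}q_L^{-m\sigma_r^{\textnormal{abs}}}=m^{n-1}q_L^{\sigma_r^{\textnormal{abs}}+r}q_L^{-m\sigma_r^{\textnormal{abs}}}$$
respectively. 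Here $\#J_L\leq n$, since simple normal crossings give $\#I\leq n$ whenever $\cC_{I,L,Z_i}\neq 0$. The coefficients $(q_L-1)^{\#I}q_L^{-n}\cC_{I,L,Z_i}$ are bounded by $C_{e,n}$ by (\ref{ulw}), and the number of pairs $(I,J)$ is bounded in terms of $e$. Summing these bounds yields the claimed estimate with $C_{2e}$ depending only on $e$.

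The main obstacle is to confirm that after Corollary \ref{exrewr} no term with $J_L=\emptyset$ survives except inside $b_L$ and the $a_L$-term. This is the content of Proposition \ref{rewr}: the poles with real part strictly between $r-n$ and $-\sigma_r^{\textnormal{abs}}-n$ (the set $\mathsf{I}_2$) cancel, and the $\mathsf{I}_1$ part collapses to $a_L/(1-q_L^{-(s+n-r)})$ plus a constant. I will double-check that $\deg_t c_{\mathsf J}\leq0$, so that $d_{\emptyset}$ really is a constant, and that the uniformity of the coefficients of $g_{J,L}$ comes from Lemma \ref{estimatedegree} via Lemma \ref{upbou}.
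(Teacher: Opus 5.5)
Your proposal is correct and is essentially the paper's proof. The paper likewise decomposes $\sum_i\cZ_{L,Z_i,\cI_i}$ via Corollary \ref{exrewr} and removes the $b_L$ and $a_L$ pieces exactly with Lemma \ref{estimation2}, taking $b=q_L^{n-r}$; this value satisfies $b>1$ because $n-r=\sigma^{\textnormal{abs}}(L,Z,X)+n>0$. It then bounds the remaining tails through Lemmas \ref{coff}, \ref{coff1}, \ref{upbou} and the coefficient bound (\ref{ulw}) of Lemma \ref{upc}. Your extra bookkeeping just makes explicit what the paper leaves implicit: the factor $(b-1)/b$, the bound $\#J_L\leq n$, and the inequality $\sigma_{JL}\geq\sigma_r^{\textnormal{abs}}(L,Z,X)+n$, which produces the factor $q_L^{\sigma_r^{\textnormal{abs}}(L,Z,X)+r}$.
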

\begin{proof}We set $t=q_L^{-s}$ as usual. Let $a_L,b_L$ be as in Corollary \ref{exrewr}. It follows from Lemma \ref{estimation2} that
\begin{equation}\label{coeq1}\textnormal{Coeff}_{t^{m-1}}\left(\frac{1-tq_L^{-n+r}}{t-1}\left(b_L+\frac{a_L}{1-tq_L^{-n+r}}\right)\right)+\left(1-q_L^{-n+r}\right)\left(b_L+\frac{a_L}{1-q_L^{-n+r}}\right)=0
\end{equation}
for all $m\geq 2$.
Thus, the assertion follows from (\ref{coeq1}), Corollary \ref{exrewr}, Lemmas \ref{upc}, \ref{coff}, \ref{coff1}, \ref{upbou} together with the fact that the  complexity of the strong  log resolution $(Y_i,\pi_i)$ is bounded by $D(e)$ for all $1\leq i\leq \ell$.
\end{proof}
\begin{proof}[Proof of Theorem \ref{uupp}]It follows from Corollary \ref{po1}, Propositions \ref{equpol} and \ref{bigg} that there is a constant $C_{2e}$ depending only on $e$ such that 
\begin{equation}\label{inen-1}
|\cE_{r,L,Z,X}(m)|\leq C_{2e}m^{n-1}(1+q_L^{\sigma_r^{\textnormal{abs}}(L,Z,X)+r})q_L^{-m\sigma_r^{\textnormal{abs}}(L,Z,X)}
\end{equation}
for all $L\in\cL_{K,M}$ and all $m\geq 2$. 

Let us use the notations in the proof of Proposition \ref{ulowbou}. In particular, recall that we have 
\begin{equation}\label{expressione1}
\cE_{r,L,Z,X}(m)=\sum_{\lambda\in \Lambda_r(X)}\sum_{0\leq i\leq \kappa_r(\lambda)}\sum_{1\leq \ell\leq \ell_{\lambda,i}}\11_{\cB_{\lambda,i,\ell}}(m)\frac{c_{\lambda,i,\ell}}{H_{\lambda,i,\ell}(q_L)}\#Y_{\lambda,i,\ell,L,Z}(k_L)m^{i}q_L^{b_{\lambda,i,\ell}-\lambda m}
\end{equation}for all $L\in \cL_{K,M}$ and all $m\geq 2$. Moreover, we set
\begin{equation}\label{nzero1}c_{L,\lambda,i}(m)=\sum_{1\leq \ell\leq \ell_{\lambda,i}}\11_{\cB_{\lambda,i,\ell}}(m)\frac{c_{\lambda,i,\ell}}{H_{\lambda,i,\ell}(q_L)}\#Y_{\lambda,i,\ell,L, Z}(k_L)q_L^{b_{\lambda,i,\ell}}.
\end{equation}
Note that the set $\cB_{\lambda,i,\ell}$ depends only on counting the number of functions $\alpha: I_l\to\NN$ satisfying 
$$\sum_{j\in I_l}\alpha(j)N_{lj}=m$$
for $1\leq l\leq \ell$ and non-empty subsets $I_l$ of $\{j\in T_l\mid \frac{\nu_{lj}}{N_{lj}}=\lambda+n\}$. Thus, by \cite[Proposition 3]{CaMa}, there is a constant $C_{3e}$ depending only on $e$ such that $c_{L,\lambda,i}(m)|_{\NN_{\geq C_{3e}}} $ is a periodic function of period at most $C_{3e}$ for all $L\in\cL_{K,M}$, all $\lambda\in\Lambda_r(X)$ and all $0\leq i\leq \kappa_r(\lambda)$. Now, we can use (\ref{inen-1}) and adapt the argument in \cite[Claim 3.2.7]{Saskia-Kien} to obtain a constant $C_{4e}$ depending only on $e$ such that for all $L\in\cL_{K,M}$, all  $\lambda\in\Lambda_r(X)$, all $0\leq i\leq \kappa_r(\lambda)$ and all $m\geq C_{3e}$, we have 
\begin{equation}\label{inloc1}
\left|c_{L,\sigma_r^{\textnormal{abs}}(L,Z,X),i}(m)q_L^{-m\sigma_r^{\textnormal{abs}}(L,Z,X)}\right|\leq C_{4e}(1+q_L^{\sigma_r^{\textnormal{abs}}(L,Z,X)+r})q_L^{-m\sigma_r^{\textnormal{abs}}(L,Z,X)}
\end{equation} and
\begin{equation}\label{inloc2}
\left|c_{L,\lambda,i}(m)m^{\kappa_r(\lambda)}q_L^{-\lambda m}\right|\leq C_{4e}(1+q_L^{\sigma_r^{\textnormal{abs}}(L,Z,X)+r})q_L^{-m\sigma_r^{\textnormal{abs}}(L,Z,X)}
\end{equation} if $\lambda>\sigma_r^{\textnormal{abs}}(L,Z,X)$. 
By combining (\ref{inen-1}), (\ref{expressione1}), (\ref{nzero1}), (\ref{inloc1}), (\ref{inloc2}) and using the fact that  the complexity of $(Y_l,\pi_l)$ is bounded by $D(e)$ for all $1\leq l\leq\ell$,  there is a constant $C_e$ depending only on $e$ such that
$$|\cE_{r,L,Z,X}(m)|\leq C_em^{\mathfrak{e}_r^{\textnormal{abs}}(L,Z,X)-1}(1+q_L^{\sigma_r^{\textnormal{abs}}(L,Z,X)+r})q_L^{-m\sigma_r^{\textnormal{abs}}(L,Z,X)}$$
for all $L\in\cL_{K,M}$  and all $m\geq 2$. This together with Corollary \ref{transfer2} and Remark \ref{tild} imply our assertion.
\end{proof}
\begin{cor}\label{coutn}Let $L\in\tilde{\cL}_{K,M}$. Suppose that  $\sigma^{\textnormal{abs}}(L,Z,X)\neq \sigma_r^{\textnormal{abs}}(L,Z,X)$. Then $0<\sigma_r^{\textnormal{abs}}(L,Z,X)+r$. Let $0<\epsilon<\sigma_r^{\textnormal{abs}}(L,Z,X)+r$. There is a constant $C_{\epsilon,X}$ depending only on $\epsilon$ and the complexity of $X$ such that 
$$\left|\frac{\#\pi_{m1}^{-1}\left((X\cap Z)(k_L)\right)}{q_L^{mr}}-\frac{\#(X\cap Z)(k_L)}{q_L^r}\right|\leq C_{\epsilon,X}q_L^{\epsilon-\sigma_r^{\textnormal{abs}}(L,Z,X)-r}$$
for all $m\geq 2$. In particular, if $X_K$ is equi-dimensional and locally a complete intersection having only rational singularities in a neighbourhood of $Z_K$ in $X_K$ then there is a constant $C_X$ depending only on the complexity of $X$ such that 
$$\left|\frac{\#\pi_{m1}^{-1}\left((X\cap Z)(k_L)\right)}{q_L^{m\dim(X_K)}}-\frac{\#(X\cap Z)(k_L)}{q_L^{\dim(X_K)}}\right|\leq C_{X}q_L^{-\lceil\moi_{\dim(X_K)}^{\textnormal{abs}}(K,Z,X)+\dim(X_K)\rceil}$$
for all $L\in\tilde{\cL}_{K,M}$ and all $m\geq 2$. Here, if $a$ is a real number then we denote by $\lceil a\rceil$ the smallest integer $\alpha$ such that $\alpha\geq a$. 
\end{cor}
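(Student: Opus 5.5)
The plan is to telescope the bound of Theorem \ref{uupp}. Write $\sigma:=\sigma_r^{\textnormal{abs}}(L,Z,X)$ and $A_m:=\#\pi_{m1}^{-1}((X\cap Z)(k_L))$, so that $\cE_{r,L,Z,X}(m)=A_m-q_L^rA_{m-1}$ for $m\geq 2$, with $A_1=\#(X\cap Z)(k_L)$. Dividing by $q_L^{mr}$ gives
\[
\frac{A_m}{q_L^{mr}}-\frac{A_1}{q_L^{r}}=\sum_{k=2}^m q_L^{-kr}\cE_{r,L,Z,X}(k).
\]
By Theorem \ref{uupp}, the $k$-th term is at most $C_Xk^{\mathfrak{e}-1}(1+q_L^{\sigma+r})q_L^{-k(\sigma+r)}$, where $\mathfrak{e}=\mathfrak{e}_r^{\textnormal{abs}}(L,Z,X)\leq n$ is bounded in terms of the complexity of $X$ (the multiplicity is at most $n$ by the resolution description). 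We may assume $L\in\cL_{K,M}$ by Corollary \ref{transfer2}.

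The first step is positivity, $\sigma+r>0$. By Lemma \ref{polr}, the hypothesis $\sigma^{\textnormal{abs}}\neq\sigma_r^{\textnormal{abs}}$ forces $\sigma^{\textnormal{abs}}(L,Z,X)=-r<\sigma_r^{\textnormal{abs}}(L,Z,X)$, which gives $\sigma+r>0$ directly.

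The second step is summation. Set $\delta=\sigma+r>0$. Then
\[
\sum_{k\geq 2}k^{\mathfrak{e}-1}(1+q_L^{\delta})q_L^{-k\delta}\leq 2\sum_{k\geq2}k^{\mathfrak{e}-1}q_L^{-(k-1)\delta}.
\]
The task is to bound this by $C_{\epsilon,X}q_L^{\epsilon-\delta}$. Write $k^{\mathfrak{e}-1}q_L^{-(k-1)\delta}=q_L^{-\delta}\cdot k^{\mathfrak{e}-1}q_L^{-(k-2)\delta}$. The difficulty is uniformity when $\delta$ is small relative to $\epsilon$. Since $\delta$ lies in the finite set $\Lambda(X)+r$ (Lemma \ref{moexpo}), after $r$ is fixed, or more generally since $\delta\geq\epsilon$ by hypothesis, we have $q_L^{-(k-2)\delta}\leq 2^{-(k-2)\epsilon}$ for $q_L\geq 2$. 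This gives a convergent sum $\sum_k k^{n-1}2^{-(k-2)\epsilon}$ depending only on $\epsilon,n$, hence the bound $C q_L^{-\delta}\leq Cq_L^{\epsilon-\delta}$. Any slack can be absorbed into the factor $q_L^{\epsilon}$ by using $k^{n-1}q_L^{-(k-2)(\delta-\epsilon)}q_L^{-(k-2)\epsilon}$. This is the main point to check, and it is routine.

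The third step is the lci rational case. Take $r=\dim(X_K)$. By Proposition \ref{rati}, $-\dim(X_K)<\sigma_r^{\textnormal{abs}}$ for large residue characteristic. If $\sigma^{\textnormal{abs}}=\sigma_r^{\textnormal{abs}}$, then $\sigma^{\textnormal{abs}}\leq\sigma_{r'}$ for $r'\neq r$, so we use Theorem \ref{uupp} directly with $\delta>0$. In either case $\delta=\sigma_r^{\textnormal{abs}}+\dim X_K>0$, and $\delta$ ranges over the finite set $\Lambda(X)+\dim X_K$ depending only on $X$, so $\delta\geq\delta_0>0$ uniformly. The telescoping then yields a bound $C_Xq_L^{-\delta}$ up to polynomial factors, absorbed as before.

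To reach the integer exponent $\lceil\cdot\rceil$, note that the left side times $q_L^{\dim X_K}$ is an element of $q_L^{-(m-1)\dim}\ZZ$. More precisely, the differences $q_L^{-k\dim}\cE(k)$ are rationals whose denominators are powers of $q_L$. Since $\cE(k)\in\ZZ$, the quantity $q_L^{-k\dim}\cE(k)$ is a multiple of $q_L^{-k\dim}$. Combining this integrality with the bound $\leq Cq_L^{-\delta}$ applied to each term, each nonzero term is at least $q_L^{-k\dim}$ in size. Rounding the exponent on the term $k$ to the nearest admissible integer power then gives $q_L^{-\lceil\delta\rceil}$ once $q_L$ is large; small $q_L$ are absorbed into $C_X$. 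Finally, the liminf defining $\moi^{\textnormal{abs}}$ satisfies $\moi\leq\sigma_L$ for large $p$, so replacing $\delta$ by $\moi^{\textnormal{abs}}+\dim$ only weakens the bound.
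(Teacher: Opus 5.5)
Your proof of the first assertion is correct and follows the paper. Lemma \ref{polr} gives $\sigma^{\textnormal{abs}}(L,Z,X)=-r<\sigma_r^{\textnormal{abs}}(L,Z,X)$, and you then telescope $q_L^{-kr}\cE_{r,L,Z,X}(k)$ against Theorem \ref{uupp}. Your summation (pull out $q_L^{-\delta}$, then use $q_L^{-(k-2)\delta}\le 2^{-(k-2)\epsilon}$) is slightly cleaner than the paper's geometric series in $q_L^{\epsilon-\delta}$: it gives $C(\epsilon,n)q_L^{-\delta}$ with no $\epsilon$-loss. In the lci/rational case, Proposition \ref{rati} plus the same telescoping also correctly gives the bound $C_Xq_L^{-(\moi^{\textnormal{abs}}_{\dim(X_K)}(K,Z,X)+\dim(X_K))}$.

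The gap is the final upgrade of the exponent to $\lceil\moi^{\textnormal{abs}}_{\dim(X_K)}(K,Z,X)+\dim(X_K)\rceil$. Write $d=\dim(X_K)$. The facts $\cE(k)\in\ZZ$ and $|q_L^{-kd}\cE(k)|\le Cq_L^{-\delta}$ do not imply $|q_L^{-kd}\cE(k)|\le C'q_L^{-\lceil\delta\rceil}$; for instance, an integer of size $\lfloor q_L^{kd-\delta}\rfloor$ satisfies both. The lower bound $q_L^{-kd}$ for nonzero terms lies far below $q_L^{-\lceil\delta\rceil}$ once $kd>\lceil\delta\rceil$, so there is no ``admissible power'' to round to.

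The improvement genuinely goes beyond Theorem \ref{uupp}. For $X=\{x_1^2+x_2^2+x_3^2=0\}$ and odd residue characteristic one has $\sigma_2^{\textnormal{abs}}(L,X,X)+2=\tfrac12$. A direct recursion gives $q_L^{-2k}\cE(k)=(1-q_L^{-1})q_L^{-k/2}$ for even $k$ and $0$ for odd $k$, so the difference equals $q_L^{-1}-q_L^{-\lfloor m/2\rfloor-1}$. The decay rate $q_L^{-k\delta}$ is attained only at those $k$ where the exponent is an integer.

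That is the kind of input you need: integrality of the \emph{exponents} of $q_L$ in an explicit expansion of the counts. Examples are Denef's formula (Proposition \ref{Defm}, with coefficients uniformly bounded by Lemma \ref{upc}) or the condition $b_{\lambda,i,\ell}-\lambda m\in\ZZ$ in (\ref{intlam}). This must be combined with the exact cancellation, in the difference, of the $m$-independent contribution of the simple pole at $s=d-n$ supplied by Proposition \ref{rati}. The paper obtains the second assertion by combining the first assertion and Proposition \ref{rati} with the argument of \cite[Theorem B]{MoiIde}; your rounding step has to be replaced by a structural argument of this type.
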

\begin{proof}Let $L\in\tilde{\cL}_{K,M}$. By Lemma \ref{polr} and the fact that $\sigma^{\textnormal{abs}}(L,Z,X)\neq \sigma_r^{\textnormal{abs}}(L,Z,X)$, we have $$\sigma_r^{\textnormal{abs}}(L,Z,X)+r>0$$ if $\sigma^{\textnormal{abs}}(L,Z,X)\neq \sigma_r^{\textnormal{abs}}(L,Z,X)$.

By the definition, we have 
$$\cE_{r,L,Z,X}(m)=\#\pi_{m1}^{-1}\left((X\cap Z)(k_L)\right)-q_L^{r}\#\pi_{(m-1)1}^{-1}((X\cap Z)(k_L))$$
for all $m\geq 2$. 
Thus, it follows from Theorem \ref{uupp} that 
\begin{align}\label{inqm}&\left|\frac{\#\pi_{m1}^{-1}\left((X\cap Z)(k_L)\right)}{q_L^{mr}}-\frac{\#\pi_{(m-1)1}^{-1}((X\cap Z)(k_L))}{q_L^{r(m-1)}}\right|\nonumber\\ \leq& C_Xm^{\mathfrak{e}_r^{\textnormal{abs}}(L,Z,X)-1}q_L^{-(m-1)(\sigma_r^{\textnormal{abs}}(L,Z,X)+r)}
\end{align}
for all $m\geq 2$ and  a positive constant $C_X$ depending only on the complexity of $X$. Let $0<\epsilon <\sigma_r^{\textnormal{abs}}(L,Z,X)+r$ then there is a positive constant $c_{X,\epsilon}$ depending only on  $\epsilon$ and the complexity of $X$ such that 
\begin{equation}\label{eqma}C_Xm^{\mathfrak{e}_r^{\textnormal{abs}}(L,Z,X)-1}\leq c_{\epsilon,X}q^{\epsilon (m-1)}
\end{equation}
for all $q\geq 2$ and all $m\geq 2$. By combining (\ref{inqm}) and (\ref{eqma}), we have 
\begin{align*}
\left|\frac{\#\pi_{m1}^{-1}\left((X\cap Z)(k_L)\right)}{q_L^{mr}}-\frac{\#(X\cap Z)(k_L)}{q_L^r}\right|&\leq \sum_{i=2}^m\left|\frac{\#\pi_{i1}^{-1}\left((X\cap Z)(k_L)\right)}{q_L^{ir}}-\frac{\#\pi_{(i-1)1}^{-1}((X\cap Z)(k_L))}{q_L^{r(i-1)}}\right|\\
&\leq \sum_{i\geq 1}c_{\epsilon,X}q_L^{i(\epsilon-\sigma_r^{\textnormal{abs}}(L,Z,X)-r)}\\
&=\frac{c_{\epsilon,X}q_L^{\epsilon-\sigma_r^{\textnormal{abs}}(L,Z,X)-r}}{1-q_L^{\epsilon-\sigma_r^{\textnormal{abs}}(L,Z,X)-r}}.
\end{align*}
Thus, it is sufficient to take $$C_{\epsilon,X}=\frac{c_{\epsilon,X}}{1-2^{\epsilon-\sigma_r^{\textnormal{abs}}(L,Z,X)-r}}.$$

Now, we can use the first assertion and Proposition \ref{rati}, and proceed as in \cite[Theorem B]{MoiIde}  to prove the remaining assertion.
\end{proof}

\begin{cor}\label{AvIgnrati}
Let $\cI$ be a non-zero ideal of $\cO_K[x_1,...,x_n]$ such that $\cI_K\neq (1)$. We set $Y=\Spec(\cO_K[x_1,...,x_n]/\cI)$. Let $W$ be an $\cO_K$-subscheme  of $\AA_{\cO_K}^n$ of finite type. Suppose that $\cI$ can be generated by $e$ elements of degree at most $e$. Then there is an integer $M$ depending only on $\cI,W$ and a constant $C_{e,n}$ depending only on $n,e$ such that the following assertions hold for all $\ell\in\ZZ_{>0}$:
\begin{itemize}
\item We have $$\left|E_{L,W,\cI}^{(\ell)}(m)\right|\leq C_{e,n}m^{n-1}q_L^{\moi_{K,W}^{(\ell)}(\cI)-\ell}q_L^{-m\moi_{K,W}^{(\ell)}(\cI)}$$
for all $L\in\tilde{\cL}_{K,M}$ and all $m\geq 2$.
\item If  either $\ell\neq n-\dim_{W_{\overline{K}}\cap Y_{\overline{K}}}(Y_{\overline{K}})$ or $Y_{\overline{K}}\cap U$ has non-rational singularity at some point $P\in (W\cap Y)(\overline{K})$ for any small enough open neighbourhood $U\subset \AA_{\overline{K}}^n$  of $W_{\overline{K}}\cap Y_{\overline{K}}$ satisfying $\dim(U\cap Y_{\overline{K}})=\dim_{W_{\overline{K}}\cap Y_{\overline{K}}}(Y_{\overline{K}})$, then we have 
$$\left|E_{L,W,\cI}^{(\ell)}(m)\right|\leq C_{e,n}m^{n-1}q_L^{-m\moi_{K,W}^{(\ell)}(\cI)}$$
for all $L\in\tilde{\cL}_{K,M}$ and all $m\geq 2$. In other words, Conjecture \ref{avelocIgu} holds in this case.

\end{itemize}
\end{cor}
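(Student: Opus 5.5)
We reduce Corollary \ref{AvIgnrati} to Theorem \ref{uupp} applied to $X=Y$ and $Z=W\cap Y$, with $r=n-\ell$. By (\ref{intersec}) and (\ref{exsum1}), for $m\geq 2$ we have
$$E_{L,W,\cI}^{(\ell)}(m)=q_L^{-mn}\cE_{n-\ell,L,W\cap Y,Y}(m).$$
By Remark \ref{remoi} and Proposition \ref{eqzeta}, $\sigma_{n-\ell}^{\textnormal{abs}}(L,W\cap Y,Y)=\sigma_{L,W}^{\ell,\textnormal{naive}}-n$. The naive indices have liminf equal to $\moi_{K,W}^{(\ell)}(\cI)$ when $\cI_K$ is generated by $\ell$ elements. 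When it is not, $\moi^{(\ell)}=0$ by convention; in that case we bound the sum directly by the trivial bound coming from Proposition \ref{equpol}, and I expect this to be consistent.

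The first step is to replace $\moi$ by $\sigma_L$ uniformly. Since $\cZ_{L,W,\cI}$ depends only on $k_L$ for large residue characteristic (Proposition \ref{Defm}, via good reduction), the numerical data that actually occur form a finite set determined by the resolution. Hence after enlarging $M$ we get $\sigma_{L,W}^{\ell,\textnormal{naive}}\geq \moi^{(\ell)}_{K,W}(\cI)$ for all $L\in\cL_{K,M}$: the values appearing for infinitely many $p$ are bounded below by the liminf, and the finitely many exceptional strata can be discarded by enlarging $M$. We also have $\mathfrak{e}^{\textnormal{abs}}\leq n$.

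Next, Theorem \ref{uupp}, multiplied by $q_L^{-mn}$, gives
$$|E^{(\ell)}_{L,W,\cI}(m)|\leq C\,m^{n-1}\bigl(1+q_L^{\sigma-\ell}\bigr)q_L^{-m\sigma},\qquad \sigma=\sigma_{L,W}^{\ell,\textnormal{naive}}.$$
When $\sigma\geq\moi$ and $m\geq 2$, the quantity $(1+q_L^{\sigma-\ell})q_L^{-m\sigma}$ is at most $2\max(1,q_L^{\moi-\ell})q_L^{-m\moi}$, since increasing $\sigma$ only decreases it for $m\geq 1$. Because $\moi\leq \ell$ in the relevant cases (by $\lct\leq$ codimension together with Theorem \ref{rCMN}), this yields the first bullet. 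The constant depends only on $n,e$, since the complexity of $Y$ is controlled by $n,e$.

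For the second bullet, the factor $q_L^{\moi-\ell}$ must be removed, i.e.\ we need $\sigma-\ell\leq 0$, or the improved bound of Proposition \ref{equpol} with $\sigma^{\textnormal{abs}}=\sigma_r^{\textnormal{abs}}$. By Corollary \ref{po1} and Lemma \ref{polr}, the exceptional case $\sigma^{\textnormal{abs}}\neq\sigma_r^{\textnormal{abs}}$ forces a simple pole exactly at $-r=\ell-n$ that dominates. Via Proposition \ref{rati} and \cite[Theorem F]{MoiIde}, this happens only when $\ell$ equals the local codimension and $Y$ is lci with rational singularities near $W$. So under either hypothesis we are in the case $\sigma^{\textnormal{abs}}_r=\sigma^{\textnormal{abs}}$, and $\sigma^{\textnormal{abs}}+r\leq 0$ by the definition of $\sigma^{\textnormal{abs}}$ in (\ref{sin}) together with $\lct\leq$ codimension. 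Then $1+q_L^{\sigma-\ell}\leq 2$, which gives the claimed bound.

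The main obstacle is this last step: verifying, uniformly in $L$, that the hypotheses of the second bullet rule out the exceptional index $r$ of Lemma \ref{polr}. This requires translating the dimension/rationality conditions at $W\cap Y$ into statements about poles, using the amended \cite[Theorem F]{MoiIde}.
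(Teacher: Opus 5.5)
Your reduction to Theorem \ref{uupp} (with $X=Y$, $Z=W\cap Y$, $r=n-\ell$) is the paper's argument. So are the uniform inequality $\sigma^{\ell,\textnormal{naive}}_{L,W}(\cI)\ge\moi:=\moi^{(\ell)}_{K,W}(\cI)$ for large residue characteristic and the monotonicity step giving $|E^{(\ell)}_{L,W,\cI}(m)|\le 2Cm^{n-1}\max\{1,q_L^{\moi-\ell}\}q_L^{-m\moi}$. But the sentence ``because $\moi\le\ell$ \ldots\ this yields the first bullet'' runs the wrong way. For $\moi\le\ell$ your bound is $2Cm^{n-1}q_L^{-m\moi}$, which does not give $q_L^{\moi-\ell}q_L^{-m\moi}$; the first bullet as stated follows only when $\moi\ge\ell$. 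For $\moi<\ell$ it can actually fail: take $n=\ell=1$, $\cI=(x^2)$, $W=\AA^1_{\cO_K}$, which gives $E^{(1)}_{L,W,\cI}(m)=(1-q_L^{-1})q_L^{-m/2}$ for even $m$ while $\moi=\frac12$. The paper's one-line deduction has the same limitation, so your $\max$-form is the correct content. In the other direction, your bound gives the second bullet as soon as $\moi\le\ell$ is known under its hypotheses, and that is exactly what you have not proved. The inequality $\lct_{W_{\overline{K}}}(\cI_{\overline{K}})\le\codim:=n-\dim_{W_{\overline{K}}\cap Y_{\overline{K}}}(Y_{\overline{K}})$ bounds $\lct$, not $\moi$; in fact $\sigma^{\ell,\textnormal{naive}}_{L,W}(\cI)\ge\lct_{W_K}(\cI_K)$, cf.\ (\ref{lowerlctmoi}). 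Theorem \ref{rCMN} concerns the hypersurface $g-g(x_0)$ with $g=\sum y_if_i$, not $Y$.

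The pointwise route you propose instead cannot work. You want to show, for each $L$, that $\sigma^{\textnormal{abs}}_{n-\ell}(L,W\cap Y,Y)=\sigma^{\textnormal{abs}}(L,W\cap Y,Y)$ and $\sigma^{\textnormal{abs}}(L,W\cap Y,Y)+n-\ell\le 0$. The exceptional case of Lemma \ref{polr} is a property of a single $L$. Proposition \ref{rati}, by contrast, detects complete intersections with rational singularities only through a condition on all $L$ of large residue characteristic. For instance, let $K=\QQ$, $\ell=1$, and let the hypersurface $Y$ have a non-rational singular point $P$ defined only over $\QQ(i)$. For $L=\QQ_p$ with $p\equiv 3\bmod 4$, neither $P$ nor the exceptional divisors over it have $k_L$-points. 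The only poles of $\cZ_{L,W\cap Y,\cI}$ are then the simple ones on $\Re(s)=-1$ coming from the strict transform, so you are in the exceptional case although the hypothesis holds. Also, (\ref{sin}) gives $\sigma^{\textnormal{abs}}(L,W\cap Y,Y)+n\ge\lct_{W_K}(\cI_K)$, since it is a minimum over the divisors with $\cC_{\{j\},L,Z_i}\neq0$; this is the opposite of what you need.

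The missing step is the paper's global argument. Suppose $\moi>\ell$. Then $\cI_K$ is generated by $\ell$ elements, so $\codim\le\ell$ and $\lct\le\codim\le\ell<\moi$. By \cite[Item (v), Corollary 3.14]{MoiIde}, $\ell\le\lct$, hence $\ell=\codim=\lct$, and near $W$ the scheme $Y$ is a complete intersection of pure codimension $\ell$. Since $\moi>\ell$ means $\sigma^{\textnormal{abs}}_{n-\ell}(L,W,Y)>-(n-\ell)$ for all $L$ of large residue characteristic, Proposition \ref{rati} forces only rational singularities along $W$, contradicting either hypothesis. Thus $\moi\le\ell$, and your bound then gives the second bullet.
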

\begin{proof} By (\ref{intersec}), we can suppose that $W\subset Y$. Let $\ell\in\ZZ_{>0}$. By the definition, we have 
$$q_L^{-mn}E_{L,W,\cI}^{(\ell)}(m)=\cE_{n-\ell,L,W,Y}(m)$$
for all $L\in\tilde{\cL}_{K,1}$ and all $m\geq 2$. By Definition \ref{defmoi0}, Proposition \ref{eqzeta} and Remark \ref{remoi}, we have  $$\moi_{n-\ell}^{\textnormal{abs}}(K,W,Y)=\moi_{K,W}^{\ell,\textnormal{naive}}(\cI)-n\geq \moi_{K,W}^{(\ell)}(\cI)-n.$$ On the other hand, by Lemma \ref{moexpo}, Definition \ref{defmoi} and Remark \ref{tild}, there is an integer $M$ depending only on $\cI,W$ such that 
$$\moi_{n-\ell}^{\textnormal{abs}}(K,W,Y)=\min_{L\in\cL_{K,M}}\sigma_{n-\ell}^{\textnormal{abs}}(L,W,Y)=\min_{L\in\tilde{\cL}_{K,M}}\sigma_{n-\ell}^{\textnormal{abs}}(L,W,Y).$$
Thus, we have \begin{equation}\label{moupb}\moi_{K,W}^{(\ell)}(\cI)\leq n+\sigma_{n-\ell}^{\textnormal{abs}}(L,W,Y)
\end{equation} for all $L\in\tilde{\cL}_{K,M}$. This together with Theorem \ref{uupp}  imply the first assertion. 

The second assertion is trivial if $\moi_{K,W}^{(\ell)}(\cI)=0$. Thus, we can suppose that  $\moi_{K,W}^{(\ell)}(\cI)>0$. By Definition \ref{defmoi0}, $\cI_K$ can be generated by $\ell$ elements. Suppose that either $\ell\neq n-\dim_{W_{\overline{K}}}(Y_{\overline{K}})$ or $Y_{\overline{K}}\cap U$ has non-rational singularities at some point $P\in W(\overline{K})$ for any small enough open neighbourhood $U\subset \AA_{\overline{K}}^n$  of $W_{\overline{K}}$ satisfying $\dim(U\cap W_{\overline{K}})=\dim_{W_{\overline{K}}}(Y_{\overline{K}})$. We will prove that $\moi_{K,W}^{(\ell)}(\cI)\leq \ell$. Indeed, assume that $\moi_{K,W}^{(\ell)}(\cI)>\ell$. Since $\cI_K$ can be generated by $\ell$ polynomials, we have $\dim(V)\geq n-\ell$ for any irreducible component $V$ of $Y_{\overline{K}}$. In particular, $\dim_{W_{\overline{K}}}(Y_{\overline{K}})\geq n-\ell$. So $\ell\geq n-\dim_{W_{\overline{K}}}(Y_{\overline{K}})$. As mentioned in Section \ref{JSL}, we have $\lct_{W_{\overline{K}}}(\cI_{\overline{K}})\leq n-\dim_{W_{\overline{K}}}(Y_{\overline{K}})\leq \ell<\moi_{K,W}^{(\ell)}(\cI)$. It follows from \cite[Item (v), Corollary 3.14]{MoiIde} that $\ell\leq\lct_{W_{\overline{K}}}(\cI_{\overline{K}})$. Thus, we must have $\ell=n-\dim_{W_{\overline{K}}}(Y_{\overline{K}})=\lct_{W_{\overline{K}}}(\cI_{\overline{K}})$. Therefore, there is an open neighbourhood $U$ of $W_{\overline{K}}$ in $\AA_{\overline{K}}^n$ such that $U\cap Y_{\overline{K}}$ is of pure dimension $n-\ell$. Thus, we can use (\ref{moupb}) and Proposition \ref{rati} to see that  $Y_{\overline{K}}\cap U$ has only rational singularity at every point of $W(\overline{K})$. We have a contradiction since $\ell=n-\dim_{W_{\overline{K}}}(Y_{\overline{K}})$. Thus, $\moi_{K,W}^{(\ell)}(\cI)\leq \ell$. This fact and the first assertion imply  the second assertion.
\end{proof}

\begin{cor}\label{Igpol}
Let $\ell\geq 1$ and $f_1,...,f_{\ell}\in\cO_K[x_1,...,x_n]$ be non-constant polynomials of degree at most $d$. We set $\cI=(f_1,...,f_{\ell})$, $g=\sum_{i=1}^{\ell}y_if_i$ and $Y=\Spec(\cO_K[x_1,...,x_n]/\cI)$. Let $W$ be an $\cO_K$-subscheme  of $\AA_{\cO_K}^n$ of finite type. We set $$W^{(\ell)}:=W\times_{\Spec(\cO_K)}(\AA_{\cO_K}^{\ell}\setminus \Spec(\cO_K[y_1,...,y_{\ell}]/(y_1,...,y_{\ell})))$$ as in Section \ref{exp-zetafunction}.  Then there is an integer $M$ depending only on $\cI, W$ and a constant $C_{n,d,\ell}$ depending only on $n,d,\ell$ such that the following assertions hold:
\begin{itemize}
\item We have $$\left|E_{g,L,W^{(\ell)},\psi}\right|\leq C_{n,d,\ell}m^{n-1}q_L^{\moi_{K,W^{(\ell)}}(g)-\ell}q_L^{-m\moi_{K,W^{(\ell)}}(g)}$$
for all $L\in\tilde{\cL}_{K,M}$ and all additive characters $\psi$ of $L$ of conductor at least $2$.
\item If  either  $\ell\neq n-\dim_{W_{\overline{K}}\cap Y_{\overline{K}}}(Y_{\overline{K}})$ or $Y_{\overline{K}}\cap U$ has non-rational singularity at some point $P\in (W\cap Y)(\overline{K})$ for any small enough open neighbourhood $U\subset \AA_{\overline{K}}^n$  of $W_{\overline{K}}\cap Y_{\overline{K}}$ satisfying $\dim(U\cap Y_{\overline{K}})=\dim_{W_{\overline{K}}\cap Y_{\overline{K}}}(Y_{\overline{K}})$, then we have 
$$\left|E_{g,L,W^{(\ell)},\psi}\right|\leq C_{n,d,\ell}m^{n-1}q_L^{-m\moi_{K,W^{(\ell)}}(g)}$$
for all $L\in\tilde{\cL}_{K,M}$ and all additive characters $\psi$ of $L$ of conductor at least $2$. Thus,  Conjecture \ref{Iguconj} holds for $g$ and $W^{(\ell)}$ in this case.

\end{itemize}
\end{cor}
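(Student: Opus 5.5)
This will be reduced directly to Corollary \ref{AvIgnrati} through the dictionary between exponential sums of $g=\sum_{i=1}^\ell y_if_i$ and exponential sums of the ideal $\cI$.

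First, by Proposition \ref{ridepol}, applied with $U=\AA_{\cO_K}^n$ (so that $\cI\cO_U=(f_1,\dots,f_\ell)$ with $r=\ell$), we have
$$E_{g,L,W^{(\ell)},\psi}=E_{L,W,\cI}^{(\ell)}(m)$$
for every $L\in\tilde{\cL}_{K,1}$ and every additive character $\psi$ of $L$ of conductor $m\geq 2$. In particular the left side depends only on the conductor.

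Second, Definition \ref{defmoi0} identifies the indices: since $\cI_K=(f_1,\dots,f_\ell)$ is generated by $\ell$ elements, $\moi_{K,W}^{(\ell)}(\cI)=\moi_K(g,W^{(\ell)})$. This is precisely the quantity written $\moi_{K,W^{(\ell)}}(g)$ in the statement.

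Third, the hypothesis of Corollary \ref{AvIgnrati} needs $\cI$ to be generated by $e$ elements of degree at most $e$. Taking $e=\max\{\ell,d\}$ works, so the constant $C_{e,n}$ supplied there depends only on $n,d,\ell$; call it $C_{n,d,\ell}$. We must also have $\cI_K\neq(1)$. If $\cI_K=(1)$, then $Y_K=\emptyset$, and for $L$ of large residue characteristic $Y(k_L)=\emptyset$. Hence $E_{L,W,\cI}^{(\ell)}(m)=0$ for $m\geq 2$, and both bounds hold trivially after enlarging $M$. With these choices, the first bullet of Corollary \ref{AvIgnrati} gives the first assertion verbatim, since $q_L^{\moi-\ell}$ matches.

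Fourth, the second bullet of Corollary \ref{AvIgnrati} has the same geometric hypothesis on $Y$, $W$ and $\ell$ as the second assertion here, so it yields the bound without the factor $q_L^{\moi-\ell}$. This is Conjecture \ref{Iguconj} for $g$ and $W^{(\ell)}$.

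The only point needing care is the bookkeeping: checking that the integer $M$ from Corollary \ref{AvIgnrati} depends only on $\cI,W$, and that the conductor range $m\geq 2$ matches on both sides. Both follow immediately from the cited statements, so I expect no real obstacle; the substance was already done in Theorem \ref{uupp}.
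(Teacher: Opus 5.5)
Your proposal is correct and is essentially the paper's argument: the paper deduces the corollary immediately from Corollary \ref{AvIgnrati}, using the identification $E_{g,L,W^{(\ell)},\psi}=E^{(\ell)}_{L,W,\cI}(m)$ from Proposition \ref{ridepol} and $\moi_{K,W}^{(\ell)}(\cI)=\moi_K(g,W^{(\ell)})$ from Definition \ref{defmoi0}. Your choice $e=\max\{\ell,d\}$ and your separate treatment of the degenerate case $\cI_K=(1)$, where the sums vanish for large residue characteristic, are harmless details that the paper leaves implicit.
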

\begin{proof}
It follows immediately from Corollary \ref{AvIgnrati}, Proposition \ref{ridepol} and Definition \ref{defmoi0}. 
\end{proof}
\begin{proof}[Proof of \ref{uupp00} and Corollary \ref{coutp}]
With the notations of \ref{uupp00}, Corollary \ref{coutp} together with  Definitions \ref{defmoi} and \ref{multi}, we have $$\beta_{L,X,l}=\max_{\alpha\in B_{L,X,l}}|\alpha|=q_L^{-\sigma_l^{\textnormal{abs}}(L,X,X)}$$ and
$$\max_{\alpha\in B_{L,X,l}}m^{\mathfrak{e}_{\alpha,l}-1}\left(1+q_L^{l}|\alpha|^{-1}\right)|\alpha|^{m}= m^{\mathfrak{e}_l^{\textnormal{abs}}(L,X,X)-1}\left(1+q_L^{\sigma_l^{\textnormal{abs}}(L,X,X)+l}\right)q_L^{-m\sigma_l^{\textnormal{abs}}(L,X,X)}$$
for all $L\in\cL_{\QQ,1}$. Therefore \ref{uupp00} follows from Propositions \ref{lowe}, \ref{ulowbou} and Theorem \ref{uupp}. On the other hand,  Corollary \ref{coutp} is a consequence of Corollary \ref{coutn}. 
\end{proof}
To finish this section, we prove a weaker version of Conjecture \ref{avelocIgu} in the case that the ideal $\cI$ of $\cO_K[x_1,...,x_n]$ generated by polynomials $f_1,...,f_\ell$ of degree $d$ such that $f_1,...,f_\ell$ define a complete intersection in $\AA_K^n$ of dimension $n-\ell$ as follows. 
\begin{defn}\label{singudim}Let $k$ be a field and $f_1,...,f_\ell\in k[x_1,...,x_n]$  be non-constant polynomials. Suppose that $\deg(f_i)=d$ for all $1\leq i\leq \ell$. For each $1\leq i\leq \ell$, we denote by $f_{id}$ the homogeneous part of highest degree of $f_i$. We set
$$S(f_{1d},...,f_{rd}):=\Bigg\{x\in\overline{k}^n\mid \rk\left(\left(\frac{\partial f_{id}}{\partial x_j}(x)\right)_{1\leq i\leq \ell, 1\leq j\leq n}\right)<\ell\Bigg\}$$
and $s(f_1,...,f_\ell):=\dim(S(f_{1d},...,f_{\ell d}))$.

\end{defn}
\begin{defn}\label{alphaI}
Let $\ell\geq 1$ and $f_1,...,f_{\ell}\in\cO_K[x_1,...,x_n]$ be non-constant polynomials of degree $d$. We set $\cI=(f_1,...,f_{\ell})$ and $Y=\Spec(\cO_K[x_1,...,x_n]/\cI)$. Suppose that $Y_K$ is a complete intersection in $\AA_K^n$ of dimension $n-\ell$. Let $W$ be an $\cO_K$-subscheme  of $\AA_{\cO_K}^n$ of finite type. We set 
$$\alpha_K(\cI,W):=\min\biggl\{\frac{d+1}{d+2}\moi_{K,W}^{(\ell)}(\cI)+\frac{\ell}{d+2},\frac{n-s(f_1,...,f_\ell)}{d}\biggr\}$$
and 
$$\alpha_L(\cI,W):=\min\biggl\{\frac{d+1}{d+2}\sigma_{L,W}^{\ell,\textnormal{naive}}(\cI)+\frac{\ell}{d+2},\frac{n-s(f_1,...,f_\ell)}{d}\biggr\}$$
for each $L\in\cL_{K,1}$. 
\end{defn}
We have the following result.
\begin{prop}\label{uppallf} Let $n,d\geq 1$. Then there is an integer $M$ and a constant $c_{n,d}$, both of them depends only on $n,d$ such that
$$\left|E_{f,L,W,\psi}\right|\leq c_{n,d}q_L^{\frac{-m_\psi n}{d}}$$
for all $\cO_K$-subschemes $W$ of $\AA_{\cO_K}^n$ of finite type, all $L\in\tilde{\cL}_{K,M}$, all additive characters $\psi$ of $L$ of conductor $m_\psi\in[1,d+1]$ and all $f\in \cO_L[x_1,...,x_n]$ of degree $d$  such that the homogeneous part of degree $d$  of $(f\mod \cM_L)$ defines a smooth hypersuface in $\PP_{k_L}^{n-1}$. 
\end{prop}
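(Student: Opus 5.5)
The plan is to reduce to counting, for each residue class $a\in k_L^n$, the contribution of the ball $a+\varpi_L\cO_L^n$. I use the homogeneity of the top-degree form $f_d$ to get a bound depending only on $n,d$, and the smoothness of $\overline{f_d}$ to control the degenerate balls. Write $m=m_\psi$ and $\psi=\psi'_z$ with $\ord_L(z)=-m$. Since $\Phi_{L,W}$ depends only on $\overline{x}$,
\[
E_{f,L,W,\psi}=\sum_{\overline{a}\in W(k_L)} q_L^{-n}\int_{\cO_L^n}\psi\bigl(f(a+\varpi_L y)\bigr)|dy| .
\]
So it suffices to bound each ball integral $I_a$ and then bound $\sum_a|I_a|$ independently of $W$. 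An absolute bound over balls is the only way to make $W$ harmless, so every estimate below is set up that way.

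For $2\le m\le d+1$, expand $f(a+\varpi_L y)=f(a)+\varpi_L\nabla f(a)\cdot y+\varpi_L^2 Q_a(y)+\dots$. By the usual stationary phase argument (integrate over translates $y\mapsto y+\varpi_L^{m-2}u$, in the spirit of \cite[Proposition 6.4]{CKMu}), $I_a=0$ unless $\ord_L\nabla f(a)$ is large. Iterating, $I_a$ vanishes unless $a$ lies in the set where $\nabla f$ vanishes to order roughly $m-1$. Up to lower-order terms this set is governed by $\nabla f_d$, since $f-f_d$ has degree $<d$ and its gradient only perturbs $\nabla f_d$ at lower order.

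Because $\overline{f_d}$ defines a smooth hypersurface in $\PP_{k_L}^{n-1}$, the only common zero of $\partial\overline{f_d}/\partial x_j$ is the origin. A Hensel/Łojasiewicz-type estimate with constants depending only on $n,d$ (effective Nullstellensatz for the homogeneous system $\nabla f_d$, which holds uniformly once $p_L>M(n,d)$) then gives
\[
\ord_L\nabla f(x)\geq k\ \Longrightarrow\ \ord_L(x)\geq \lceil k/(d-1)\rceil \quad\text{(up to bounded shifts)}.
\]
Hence the volume of $\{x:\ord_L\nabla f(x)\ge k\}$ is at most $c\,q_L^{-nk/(d-1)}$. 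Combining this with the trivial bound on the surviving balls, for $m\le d+1$ I expect
\[
\sum_a |I_a|\,q_L^{-n}\le c_{n,d}\,q_L^{-n\lceil (m-1)/(d-1)\rceil}\cdot(\text{correction}),
\]
which dominates $q_L^{-mn/d}$ in this range. This is exactly why the range of $m$ is restricted to $[1,d+1]$. The transfer principle (Proposition \ref{transfer1}) reduces the positive-characteristic $L$ to characteristic zero.

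The main obstacle is $m=1$. There $I_a=\overline{\psi}(\overline{f}(\overline{a}))$ has absolute value $1$ for every $a$, so no absolute-value argument over balls works. For $W=\AA^n$ one would instead use Deligne's bound $|\sum_{k_L^n}\overline\psi(\overline f)|\le (d-1)^n q_L^{n/2}$, valid since $\overline{f_d}$ is smooth, giving $q_L^{-n/2}\le q_L^{-n/d}$ for $d\ge2$ (and $0$ for $d=1$). For general $W$ this seems to fail: for instance $W=\{\overline f=0\}$ gives roughly $q_L^{-1}$. So I would first check whether the statement implicitly intends $W$ to be a union of residue balls compatible with $f$, or $m\ge2$. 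Failing that, I would try to restrict to $W$ for which $\sum_{W(k_L)}\overline\psi(\overline f)$ admits a Deligne–Katz bound $O(q_L^{\dim W/2})$. This is the step I expect to require correcting or reinterpreting the hypotheses.
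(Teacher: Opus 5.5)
Your objection at $m_\psi=1$ is correct. It is a defect of the statement, not of your method. Take $K=\QQ$, $f=x_1^2+x_2^2+x_3^2$, $W=\Spec \ZZ[x_1,x_2,x_3]/(f)$ and $L=\QQ_p$ with $p$ odd. If $\overline{x}\in W(\FF_p)$ then $f(x)\in p\ZZ_p$, where a conductor-one $\psi$ is trivial. Hence $E_{f,L,W,\psi}=p^{-3}\#W(\FF_p)=p^{-1}$, which is not $O(p^{-3/2})$. More generally, $f=x_1^d+\dots+x_n^d$ with $W=\{f=0\}$ gives $|E_{f,L,W,\psi}|\asymp q_L^{-1}$, contradicting the claim whenever $n>d$. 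Since $W$ is arbitrary while $M$ depends only on $n,d$, uniformity fails even for $n\le d$: take $W=\bigcup_{0\le c\le N}\{f=c\}$ and $p_L\in(2N,4N)$, which keeps $|E_{f,L,W,\psi}|$ bounded below. The paper's proof is only a pointer to the case $s=0$ of \cite[Proposition 3.7]{CluckerNguyen}, and no argument can give the bound at conductor one with an arbitrary cutoff $\Phi_{L,W}$. The paper does apply the proposition at conductor one for general $W$ (fibre integration in Proposition \ref{uppallide}, and Corollary \ref{bounnsd}), so this is not a harmless implicit restriction. At $m_\psi=1$ the statement has to be limited to $W=\AA^n_{\cO_K}$, where Deligne's bound gives $(d-1)^nq_L^{-n/2}\le (d-1)^nq_L^{-n/d}$ once $p_L>d$.

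For $m_\psi\ge2$, only boundedly many residue balls contribute, so the cutoff costs nothing once each ball is estimated. Here your sketch has two problems.

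For $2\le m_\psi\le d$, trivial bounds on the surviving balls do suffice. However, your count of them rests on the implication $\ord_L\nabla f(x)\ge k\Rightarrow\ord_L(x)\ge\lceil k/(d-1)\rceil$, which is false for non-homogeneous $f$: for $f=(x_1-1)^d+x_2^d+\dots+x_n^d$ the gradient vanishes at $(1,0,\dots,0)$. The correct count is simpler. Translating by $\varpi_L^{m_\psi-1}u$ kills every ball $a+\varpi_L\cO_L^n$ with $\nabla\overline{f}(\overline{a})\neq 0$. Since $\overline{f_d}$ is smooth and $p_L>d$, the system $\nabla\overline{f}=0$ has no zeros at infinity, hence at most $(d-1)^n$ solutions by B\'ezout. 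Each surviving ball has volume $q_L^{-n}\le q_L^{-m_\psi n/d}$, uniformly in $W$.

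At $m_\psi=d+1$, no trivial bound can work, and your predicted $q_L^{-n\lceil(m_\psi-1)/(d-1)\rceil}$ is false. For $d=2$, $f=x_1^2+\dots+x_n^2$, $m_\psi=3$, only $\varpi_L\cO_L^n$ survives, and $E_{f,L,\AA^n_{\cO_K},\psi}=q_L^{-2n}\sum_{y\in k_L^n}\theta(\overline{f}(y))$ for a nontrivial additive character $\theta$ of $k_L$. This has absolute value exactly $q_L^{-3n/2}$: equal to the target, larger than your $q_L^{-2n}$, and far below the trivial $q_L^{-n}$. The missing idea is square-root cancellation inside each critical ball. Rescale $x=a+\varpi_L y$, divide the phase by its content, and apply Deligne's bound when the content is $\varpi_L^{d}$ (the top form is then $\overline{f_d}$); smaller content needs a further argument. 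You invoke Deligne only at $m_\psi=1$, so the case $m_\psi=d+1$ is left open.

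Finally, Proposition \ref{transfer1} concerns a fixed $f$ over $\cO_K$, so it cannot transfer a statement about arbitrary $f\in\cO_L[x_1,\dots,x_n]$. It is also not needed, since all the steps above work over any $k_L$ with $p_L>d$.

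In sum: with the B\'ezout count your approach settles $2\le m_\psi\le d$ for every $W$. Your counterexample correctly refutes $m_\psi=1$ for general $W$. The case $m_\psi=d+1$ remains unproved.
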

\begin{proof}
The argument is the same as in the case $s=0$ of \cite[Proposition 3.7]{CluckerNguyen}.
\end{proof}
\begin{prop}\label{uppallide}
Let $\cI=(f_1,...,f_\ell)\subset \cO_K[x_1,...,x_n]$ such that $1\leq \ell\leq n$ and $\cI_K$ defines a complete intersection in $\AA_K^n$ of dimension $n-\ell$. We set $g(x,y)=\sum_{i=1}^\ell y_if_i(x)$. Suppose that $\deg(f_i)=d$ for all $1\leq i\leq n$. Then there is an integer $M$ depending only of $f_1,...,f_\ell$ and a positive constant $c_{n,d}$ depending only on $n,d$ such that 
$$\left|E_{L,W,\cI}^{(\ell)}(m)\right|=\left|E_{g,L,W^{(\ell)},\psi}\right|\leq c_{n,d}q_L^{-\frac{m(n-s(f_1,...,f_\ell))}{d}}$$
for all $L\in\tilde{\cL}_{K,M}$, all additive characters $\psi$ of $L$ of conductor $m_\psi=m\in [1,d+1]$ and  all $\cO_K$-subschemes $W$ of $\AA_{\cO_K}^n$ of finite type. 
\end{prop}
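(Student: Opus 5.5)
The first equality is Proposition \ref{ridepol} with $r=\ell$, since $\cI$ is generated by $f_1,\dots,f_\ell$ on all of $\AA_{\cO_K}^n$. So only the bound on $E_{g,L,W^{(\ell)},\psi}$ for conductor $m\in[1,d+1]$ needs proof. I would write $\psi=\psi_z$ with $\ord_L(z)=-m$ and split the $y$-integral over $\cO_L^\ell\setminus\cM_L\cO_L^\ell$ according to $y$. Set $y=(y_1,\dots,y_\ell)$ with $\overline y\neq 0$, and note that $\Phi_{L,W^{(\ell)}}(x,y)=\Phi_{L,W}(x)\11_{\overline y\neq0}$. Fubini then gives
\[
E_{g,L,W^{(\ell)},\psi}=\int_{y\in\cO_L^\ell,\ \overline y\neq 0}E_{f_y,L,W,\psi}\,|dy|,\qquad f_y:=\sum_{i=1}^\ell y_if_i .
\]
It therefore suffices to bound $|E_{f_y,L,W,\psi}|$ uniformly in $y$ with $\overline y\neq 0$, and then to bound the measure of the bad set of $y$.

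The key geometric input is the following. The degree-$d$ part of $f_y$ is $\sum y_if_{id}$. Its singular locus in $\PP^{n-1}$ consists of the points $x$ where $\sum y_i\nabla f_{id}(x)=0$. All of these lie in the cone $S(f_{1d},\dots,f_{\ell d})$, because at such a point the gradients are linearly dependent. I would use a Bertini/Bezout-type argument together with a hyperplane slicing. After a generic linear change of coordinates, defined over $\cO_K$ and good modulo $\cM_L$ for $p_L\geq M$, restrict $x$ to a generic linear subspace of codimension $s=s(f_1,\dots,f_\ell)$. The restricted homogeneous forms then have rank-deficiency locus equal to $\{0\}$. For each $\overline y\neq0$, the form $\sum\overline y_if_{id}$ restricted to such a subspace $V$ defines a smooth hypersurface in $\PP(V)$. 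This holds over $k_L$ once $p_L$ is large, by constructibility and spreading out, with $M$ depending on $f_1,\dots,f_\ell$.

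Then fibre the $x$-integral. Write $x=(u,v)$ with $v\in\cO_L^s$ the complementary coordinates and $u$ in an $(n-s)$-dimensional subspace. For fixed $v$, the polynomial $u\mapsto f_y(u,v)$ has degree $d$ with leading form $\sum y_if_{id}|_V$, and the reduction of that form is smooth. The characteristic function $\Phi_{L,W}$ restricted to the fibre is again a Schwartz–Bruhat function of the required type, namely $\11$ of a subscheme mod $\cM_L$. So Proposition \ref{uppallf}, applied in $n-s$ variables, gives
\[
\left|\int\Phi_{L,W}(u,v)\psi(f_y(u,v))|du|\right|\leq c_{n-s,d}\,q_L^{-m(n-s)/d}
\]
uniformly in $v$ and $y$. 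Its constant and its $M$ depend only on $n,d$, with one caveat: the $W$ there is arbitrary, so I need $\Phi_{L,W}(\cdot,v)$ to arise from some $\cO_K$-scheme. I would handle this by noting that the proof of Proposition \ref{uppallf} only uses that the cut-off depends on $x\bmod\cM_L$. Integrating over $v$ and $y$, which is a set of measure at most $1$, gives the claim with $c_{n,d}=\max_s c_{n-s,d}$.

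The main obstacle is making the generic slicing uniform: I must choose one linear change of coordinates that works for all $\overline y\in k_L^\ell\setminus\{0\}$ simultaneously, and for all large $p_L$. I would first establish it over $\overline K$, using that $\{(\overline y,V)\}$ with the restricted form singular is a proper closed subset of $\PP^{\ell-1}\times\mathrm{Gr}$ whose fibres over $\mathrm{Gr}$ are empty for generic $V$. The projection to $\mathrm{Gr}$ is proper, so its image is a proper closed subset, and a $V$ defined over $\cO_K$ outside it keeps this property after reduction for large $p_L$.
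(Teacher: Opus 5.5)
You correctly identified uniformity in $\overline{y}$ as the crux, and your route is the paper's: the identity of Proposition \ref{ridepol}, Fubini in $y$, linear slicing in $x$, then Proposition \ref{uppallf} on the fibres. The problem is the slicing step.

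\textbf{The uniform slice does not exist.} Whenever $\ell\geq 2$, $d\geq 2$ and $s<n$, there is \emph{no} subspace $V\neq 0$ on which $\sum_i\overline{y}_if_{id}|_V$ is smooth in $\PP(V)$ for every $\overline{y}\neq 0$. There are two cases.
\begin{itemize}
\item If $f_{1d}|_V$ and $f_{2d}|_V$ are linearly dependent (always so when $\dim V=1$), some nontrivial combination vanishes identically.
\item Otherwise $y_1f_{1d}|_V+y_2f_{2d}|_V$ is a line in the projective space of degree-$d$ forms on $V$. A line meets every hypersurface, in particular the discriminant. For $d=2$ this just says the binary form $\det(y_1A_1+y_2A_2)$, of degree $\dim V$, has a zero.
\end{itemize}
Hence $s(f_1|_V,\dots,f_\ell|_V)\geq 1$ for every $V\neq 0$. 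Your incidence variety in $\PP^{\ell-1}\times\mathrm{Gr}$ therefore surjects onto $\mathrm{Gr}$, and the emptiness of its fibre over a generic $V$ is exactly what fails.

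The heuristic breaks because a singular point of the restricted form is a point $x\in V\setminus\{0\}$ with $\sum_iy_i\nabla f_{id}(x)\in V^{\perp}$. Such a point need not lie in $S\cap V$. Bertini controls these points only for $V$ generic \emph{depending on} $y$.

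A concrete case: take $n=\ell=2$, $f_1=x_1^2$, $f_2=x_2^2$, so $s=1$. For $V$ spanned by $(a,b)$, the member $y=(b^2,-a^2)$ restricts to $0$. For that $\overline{y}$ your fibre polynomials in $u$ have degree $<d$, and Proposition \ref{uppallf} does not apply. The paper's proof makes the same assertion (Bertini producing $H$ of dimension $n-s$ with $s(f_1|_H,\dots,f_\ell|_H)=0$). That assertion fails for the same reason, so it cannot supply the missing step.

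\textbf{Repair: let the slice depend on $y$.} This costs nothing, because the $y$-integral is already outside.
\begin{itemize}
\item For $p_L$ large, the reduction $\overline{S}$ of the rank-deficiency cone still has dimension $s$.
\item For $\overline{y}\neq0$, the singular cone of $\sum_i\overline{y}_i\overline{f}_{id}$ lies in $\overline{S}$: $\{x:\nabla(\sum_i\overline{y}_i\overline{f}_{id})(x)=0\}\subset\overline{S}$. If $\sum_i\overline{y}_i\overline{f}_{id}=0$, then $\overline{S}$ is everything and $s=n$.
\item So $s(f_y\bmod\cM_L)\leq s$, and it suffices to prove $|E_{f_y,L,W,\psi}|\leq c_{n,d}\,q_L^{-m(n-s(f_y\bmod\cM_L))/d}$ for each single $y$.
\item To get this, slice along a subspace adapted to the one form $\sum_i\overline{y}_i\overline{f}_{id}$. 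For a single form Bertini does apply, and a $k_L$-rational good subspace exists once $q_L$ is large in terms of $n,d$. Then apply Proposition \ref{uppallf} on the fibres.
\end{itemize}

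\textbf{The cut-off remark fails at $m=1$.} You say the proof of Proposition \ref{uppallf} only uses that the cut-off depends on $x\bmod\cM_L$. At $m=1$ one has $E=q_L^{-n}\sum_{\overline{x}}\overline{\Phi}(\overline{x})\overline{\psi}(\overline{f}(\overline{x}))$, and the cut-off $\mathbf{1}_{\{\overline{f}=0\}}$ removes all cancellation. In fact the statement itself fails there. Take $\ell=1$, $f_1=\sum_{i=1}^nx_i^d$ with $n>d$ (so $s=0$), and $W=X$. Then
\[
E^{(1)}_{L,X,\cI}(1)=q_L^{-n}(1-q_L^{-1})\#X(k_L)\asymp q_L^{-1},
\]
which is not $O(q_L^{-n/d})$. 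So at $m=1$ a restriction on $W$ is needed, e.g.\ $W=\AA^n_{\cO_K}$.
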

\begin{proof}Let $W$ be an $\cO_K$-subscheme of $\AA_{\cO_K}^n$ of finite type. Let $f_{1d},...,f_{\ell d}$ be the homogeneous part of degree $d$ of $f_1,...,f_\ell$ respectively. We set $s=s(f_1,...,f_\ell)$.  

Suppose that $s=0$. There is an integer $M$ depending only on $f_1,...,f_\ell$ such that  $$s(f_1\mod \cM_L,..., f_\ell\mod \cM_L)=0$$ and $\deg(f_i\mod \cM_L)=d$ for all $L\in\tilde{\cL}_{K,M}$ and all $1\leq i\leq \ell$. Therefore, we can verify that the homogeneous part of degree $d$ of $\overline{g}_{y}(x_1,...,x_n):=g(x,y)\mod \cM_L$ defines a smooth hypersuface in $\PP_{k_L}^{n-1}$ for all $L\in\tilde{\cL}_{K,M}$ and all $y\in\cO_L^\ell\setminus \varpi_L\cO_L^\ell$. Therefore, by using Proposition \ref{uppallf} and fiber integration, there is a constant $c_{n,d}$ depending only on $n,d$ such that 
\begin{align*}
\left|E_{g,L,W^{(\ell)},\psi}\right|=&\left|\int_{(x,y)\in\cO_L^n\times(\cO_L^\ell\setminus\varpi_L\cO_L^\ell)}\Phi_{L,W^{(\ell)}}(x,y)\psi(g(x,y))|dx\wedge dy|\right|\\
=&\left|\int_{\cO_L^\ell\setminus\varpi_L\cO_L^\ell}\left(\int_{\cO_L^n}\Phi_{L,W}(x)\psi(g(x,y))|dx|\right)|dy|\right|\leq \int_{\cO_L^\ell\setminus\varpi_L\cO_L^\ell}c_{n,d}q_L^{-\frac{m_\psi n}{d}}|dy|\leq c_{n,d}q_L^{-\frac{m_\psi n}{d}}
\end{align*}
for all $L\in\tilde{\cL}_{K,M}$ and all additive character $\psi$ of $L$ of conductor $m_\psi\in[1,d+1]$. 

Now, we suppose that $s>0$. By using Bertini's theorem (see, e.g., \cite[Chapter II, Theorem 8.18]{harts}), we can find $s$ linearly independent  forms $l_1,...,l_s\in (K^n)^*$ (depending only on $f_1,...,f_\ell$)  such that they define a linear subspace $H$ of $\AA_K^n$ of dimension $n-s$ satisfying $s(f_1|_{H},...,f_\ell|_{H})=0$. We can suppose that the coefficients of $l_i$  belong to $\cO_K$ for all $1\leq i\leq s$. By choosing a $K$-linear change of variables $z=\theta(x)$ (depending only on $f_1,...,f_\ell$) with coefficients in $\cO_K$, we can suppose that $l_i=z_i$ for all $1\leq i\leq s$. There is an integer $M$ depending only on $f_\ell,...,f_\ell$ such that the matrix of $\theta^{-1}$ can be defined over $\cO_K[\frac{1}{M}]$. We set $\tilde{f}_i=f_i(\theta^{-1}(z))\in \cO_K[\frac{1}{M},z_1,...,z_n]$ and $\tilde{g}=\sum_{1\leq i\leq \ell}y_i\tilde{f}_i$. Let  $\tilde{W}$ be  the subscheme of $\AA_{\cO_K}^n$ obtained from $W$ by this change of variables. Then, it is easy to verify that $$E_{g,L,W^{(\ell)},\psi}=E_{\tilde{g},L,\tilde{W}^{(\ell)},\psi}$$ for all $L\in\tilde{\cL}_{K,M}$ and all additive characters $\psi$ of $L$. Therefore, the assertion holds for $\tilde{f}_1,...,\tilde{f}_\ell$ and $\tilde{W}$ if and only if it holds for $f_1,...,f_\ell$ and $W$. After renaming, we can suppose that $H=\{x_1=...,x_s=0\}$ and $s(f_1|_{H},...,f_\ell|_{H})=0$. Then, there is an integer $M$ depending only on $f_1,...,f_\ell$ such that  $$s(f_1(a,x')\mod \cM_L,..., f_\ell(a,x')\mod \cM_L)=0$$ and $\deg(f_i(a,x')\mod \cM_L)=d$ for all $L\in\tilde{\cL}_{K,M}$, all $1\leq i\leq \ell$ and all $a\in\cO_L^s$, where $x'=(x_{s+1},...,x_n)$. As in the case $s=0$, we can verify that the homogeneous part of degree $d$ of $\overline{g}_{y,a}(x'):=g(a,x',y)\mod \cM_L$ defines a smooth hypersuface in $\PP_{k_L}^{n-s-1}$ for all $L\in\tilde{\cL}_{K,M}$, all $y\in\cO_L^\ell\setminus \varpi_L\cO_L^\ell$ and all $a\in\cO_L^s$. The assertion in this case follows by using Proposition \ref{uppallf} and fiber integration as above.
\end{proof}
\begin{thm}\label{bounalph}
Let $\cI=(f_1,...,f_\ell)\subset \cO_K[x_1,...,x_n]$ such that $1\leq \ell\leq n$ and $\cI_K$ defines a complete intersection in $\AA_K^n$ of dimension $n-\ell$. Suppose that $\deg(f_i)=d$ for all $1\leq i\leq n$. Let $W$ be an $\cO_K$-subscheme of $\AA_{\cO_K}^n$ of finite type. Then there is an integer $M$ depending only of $W,f_1,...,f_\ell$ and a positive constant $c_{n,d}$ depending only on $n,d$ such that 
$$\left|E_{L,W,\cI}^{(\ell)}(m)\right|\leq c_{n,d}m^{n-1}q_L^{-m\alpha_L(\cI,W)}$$
for all $L\in\cL_{K,M}$ and all integers $m\geq 1$. In particular, we have $$\left|E_{L,W,\cI}^{(\ell)}(m)\right|\leq c_{n,d}m^{n-1}q_L^{-m\alpha_K(\cI,W)}$$
for all $L\in\tilde{\cL}_{K,M}$ and all integers $m\geq 1$
\end{thm}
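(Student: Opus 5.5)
We combine two bounds: the pole-based bound of Corollary \ref{AvIgnrati} (in its $L$-dependent form, via Theorem \ref{uupp}), and the small-conductor bound of Proposition \ref{uppallide}. We interpolate between them by splitting $m$ according to whether $m\leq d+1$ or $m\geq d+2$. Write $\sigma_L:=\sigma_{L,W}^{\ell,\textnormal{naive}}(\cI)$ and $\beta:=\frac{n-s(f_1,\dots,f_\ell)}{d}$, so that $\alpha_L(\cI,W)=\min\{\frac{d+1}{d+2}\sigma_L+\frac{\ell}{d+2},\beta\}$.

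\emph{Step 1 (range $1\leq m\leq d+1$).} Proposition \ref{uppallide} gives $|E_{L,W,\cI}^{(\ell)}(m)|\leq c_{n,d}q_L^{-m\beta}\leq c_{n,d}q_L^{-m\alpha_L(\cI,W)}$ directly, uniformly in $W$ and for $L$ of large residue characteristic.

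\emph{Step 2 (range $m\geq 2$, pole bound).} Use $q_L^{-mn}E^{(\ell)}_{L,W,\cI}(m)=\cE_{n-\ell,L,W,Y}(m)$, Remark \ref{remoi} ($\sigma^{\textnormal{abs}}_{n-\ell}(L,W,Y)=\sigma_L-n$), and Theorem \ref{uupp} with $r=n-\ell$. Since $\mathfrak{e}\leq n$, this gives
\[
|E^{(\ell)}_{L,W,\cI}(m)|\leq C\,m^{n-1}\bigl(1+q_L^{\sigma_L-\ell}\bigr)q_L^{-m\sigma_L}
\]
for all $L\in\cL_{K,M}$, with $C$ depending only on $n,d$ (the complexity of $Y$ is bounded in terms of $n,d$, and $\ell\leq n$). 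If $\sigma_L\leq\ell$, the bracket is at most $2$, and since $\frac{d+1}{d+2}\sigma_L+\frac{\ell}{d+2}\geq\sigma_L$ would fail only when $\sigma_L>\ell$, we get $|E|\leq 2Cm^{n-1}q_L^{-m\sigma_L}$. In the case $\sigma_L\leq \ell$ we then have $\alpha_L\leq \frac{d+1}{d+2}\sigma_L+\frac{\ell}{d+2}$, which is $\geq\sigma_L$. That is the wrong direction, so this needs a separate treatment. Note, however, that when $\sigma_L\leq\ell$ the min in $\alpha_L$ is at least $\sigma_L$ only if $\beta\geq\sigma_L$. I would handle this case using the trivial fact that $E^{(\ell)}$ has size at most $q_L^{-m\sigma_L}$ up to constants, which already bounds it by $q_L^{-m\min(\sigma_L,\beta)}$. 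I expect, though, that $\sigma_L\geq\ell$ holds in the complete intersection setting: by \cite[Corollary 3.14]{MoiIde} one has $\sigma_L\geq\lct\geq$ something, and in general $E^{(\ell)}$ has the pole $-\ell$ structure. So I will check first that $\sigma_L\leq\ell$ forces $\alpha_L\leq\sigma_L$ in the relevant regime. Otherwise, for $\sigma_L>\ell$, the bound reads $C m^{n-1}q_L^{-(m-1)\sigma_L-\ell}$ up to a factor of $2$.

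\emph{Step 3 (interpolation for $m\geq d+2$).} We must show $(m-1)\sigma_L+\ell\geq m\cdot\frac{(d+1)\sigma_L+\ell}{d+2}$. This is equivalent to $\bigl(m-(d+2)\bigr)\sigma_L\geq(m-d-2)\ell$, i.e.\ to $(m-d-2)(\sigma_L-\ell)\geq 0$, which holds exactly when $m\geq d+2$ and $\sigma_L\geq\ell$. This is precisely where the weights $\frac{d+1}{d+2}$ and $\frac{\ell}{d+2}$ come from. Together with Step 1, this gives the first inequality for all $m\geq1$.

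The second statement then follows from $\alpha_L(\cI,W)\geq\alpha_K(\cI,W)$ for $L\in\cL_{K,M}$, which holds since $\sigma_L\geq\moi^{(\ell),\textnormal{naive}}=\moi^{(\ell)}_{K,W}(\cI)$ after enlarging $M$ (Proposition \ref{eqzeta} and Lemma \ref{moexpo}). Positive characteristic is handled by the transfer principle, Corollary \ref{transfer2}. The main obstacle is the case $\sigma_L<\ell$ in Step 2. There I would aim to show that the $(1+q_L^{\sigma_L-\ell})$ factor is harmless, or that this case forces $\sigma_L\leq\beta$ so that the direct bound $m^{n-1}q_L^{-m\sigma_L}$ already suffices.
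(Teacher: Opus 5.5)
\textbf{Gap: the case $\sigma_L<\ell$ is left open, and it cannot be closed without a geometric input.}

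Your Steps 1 and 3, the reduction of the second assertion to the first, and the case $\sigma_L\geq\ell$ are the paper's argument. The case $\sigma_L<\ell$ is flagged but not closed, and the fixes you suggest do not work.

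\begin{itemize}
\item The factor $(1+q_L^{\sigma_L-\ell})$ is not the problem there, since it is at most $2$. The problem is the exponent. Because $\frac{d+1}{d+2}\sigma_L+\frac{\ell}{d+2}=\sigma_L+\frac{\ell-\sigma_L}{d+2}>\sigma_L$, your Step 2 bound $2Cm^{n-1}q_L^{-m\sigma_L}$ implies the claim only if $\alpha_L(\cI,W)\leq\sigma_L$, that is, only if $\beta\leq\sigma_L$.
\item Your bound by $q_L^{-m\min(\sigma_L,\beta)}$ is too weak, because here $\min(\sigma_L,\beta)\leq\alpha_L(\cI,W)$.
\item Your final alternative ``$\sigma_L\leq\beta$'' has the inequality reversed.
\item Nor is $\sigma_L\geq\ell$ automatic for complete intersections. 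By Proposition \ref{rati} and Remark \ref{remoi}, having $\sigma_L>\ell$ for all large residue characteristics characterizes rational singularities along $W$. For example, $n=3$, $\ell=1$, $f_1=x_1^4+x_2^4+x_3^4$ gives $\sigma_L=\frac34<\ell$.
\end{itemize}

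No manipulation of upper bounds can settle this case. By Proposition \ref{lowe}, translated via Remark \ref{remoi}, for fixed $L$ one has $|E^{(\ell)}_{L,W,\cI}(m)|\geq c_L\, m^{\mathfrak{e}-1}q_L^{-m\sigma_L}$ for infinitely many $m$. So the theorem itself forces $\sigma_L\geq\beta$ whenever $\sigma_L<\ell$, and this must come from geometry.

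\textbf{The missing input.} The paper uses the chain
\[
\ell>\sigma_{L,W}^{\ell,\textnormal{naive}}(\cI)\geq\lct_{W_K}(\cI_K)\geq\frac{n-s(f_1,\dots,f_\ell)}{d}=\beta .
\]
The middle inequality is \cite[Corollary 3.14, Item (vi)]{MoiIde}. The last one is a lower bound for log canonical thresholds of complete intersections in terms of the singular locus of the top-degree forms, \cite[Theorem 1.1 and Remark 4.5]{CDMO} combined with \cite[Proposition 3.19]{MoiIde}. In the example above $\beta=\frac34=\sigma_L$, so the chain is sharp there.

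With this chain, $\alpha_L(\cI,W)=\beta\leq\sigma_L$ whenever $\sigma_L<\ell$. Your Step 2 bound then gives the claim for $m\geq 2$, and Step 1 covers $m=1$. This is where the complete-intersection and equal-degree hypotheses enter, beyond their use in Proposition \ref{uppallide}.
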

\begin{proof}

By Proposition \ref{eqzeta} and Corollary \ref{transfer2}, the first assertion implies the second assertion. 

Now, we prove the first assertion. It follows from \cite[Corollary 3.14, Item (vi)]{MoiIde} that \begin{equation}\label{lowerlctmoi}
\sigma_{L,W}^{\ell,\textnormal{naive}}(\cI)\geq \lct_{W_K}(\cI_K).
\end{equation}

If $\sigma_{L,W}^{\ell,\textnormal{naive}}(\cI)=+\infty$ then the assertion follows immediately from Definition \ref{alphaI}, Corollary \ref{AvIgnrati} and Proposition \ref{uppallide}.

If $\sigma_{L,W}^{\ell,\textnormal{naive}}(\cI)<\ell$, then we can use (\ref{lowerlctmoi}), \cite[Theorem 1.1 and Remark 4.5]{CDMO} and \cite[Proposition 3.19]{MoiIde} to have 
$$\ell>\sigma_{L,W}^{\ell,\textnormal{naive}}(\cI)\geq \lct_{W_K}(\cI_K)\geq \frac{n-s(f_1,...,f_\ell)}{d}.$$
This together with Definition \ref{alphaI}, Corollary \ref{AvIgnrati} and Proposition \ref{uppallide} imply the assertion in this case. 

If $+\infty>\sigma_{L,W}^{\ell,\textnormal{naive}}(\cI)\geq \ell$, we have 
$$m\left(\frac{d+1}{d+2}\sigma_{L,W}^{\ell,\textnormal{naive}}(\cI)+\frac{\ell}{d+2}\right)\leq (m-1)\sigma_{L,W}^{\ell,\textnormal{naive}}(\cI)+\ell$$
for all $m\geq d+2$. Thus, we can use Definition \ref{alphaI}, Corollary \ref{AvIgnrati} and Proposition \ref{uppallide}  again to deduce the assertion in this case. 
\end{proof}
\begin{proof}[Proof of Corollary \ref{nsdI}] 
By Proposition \ref{eqzeta}, we have 
$$q_L^{n-\sigma_{L,X}^{r,\textnormal{naive}}(\cI)}=\max_{\alpha\in B_{L,X,n-r}}|\alpha|.$$
It implies that 
$$\left(q_L^{\frac{n-r}{d+2}}\max_{\alpha\in B_{L,X,n-r}}|\alpha|^{\frac{d+1}{d+2}}\right)^m=q_L^{m(n-\alpha_L(\cI,X))}.$$
Thus Corollary \ref{nsdI} follows from Definition \ref{alphaI} and Theorem \ref{bounalph}.
\end{proof}

\section{Approach Igusa's conjecture for exponential sums in the case of rational singularities}\label{flaw}
In this section, we fix a number field $K$, a non constant polynomial $f$ in $\cO_K[x_1,...,x_n]$ and an $\cO_K$-subscheme $Z$ of $\AA_{\cO_K}^n$ of finite type such that $f(Z(\overline{K}))=0$. In this section, we assume that $f$ defines a hypersuface in $\AA_K^n$ 
having only rational singularity at $x_0$ for all $x_0\in Z(\overline{K})$.

\begin{lem}\label{min}
There is an integer $M$ depending on $f,Z$ such that $$\min_{L\in\cL_{K,M}}\sigma_L(f,Z)>1,$$ where $\sigma_L(f,Z)$ is defined by (\ref{npol}).
\end{lem}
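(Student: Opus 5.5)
Combine Theorem~\ref{rCMN} with the description of $\moi_K(f,Z)$ in (\ref{exmo}), and use the finiteness of the set $\textnormal{Pol}_f$ from (\ref{pode}) to pass from a $\liminf$ to a uniform bound.

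First, by the standing assumption of this section, $f-f(x_0)=f$ defines a hypersurface having only rational singularity at $x_0$ for every $x_0\in Z(\overline{K})$, since $f(Z(\overline{K}))=0$. Theorem~\ref{rCMN} then gives $\moi_K(f,Z)>1$.

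Second, by (\ref{pode}) the set $\textnormal{Pol}_f$ is finite. By (\ref{npol}), each $\sigma_L(f,Z)$ with $L\in\cL_{K,1}$ lies in the finite set $\textnormal{Pol}_f\cup\{+\infty\}$. By the second description in (\ref{exmo}), $\moi_K(f,Z)$ is the minimum of those values $\sigma\in\textnormal{Pol}_f\cup\{+\infty\}$ attained as $\sigma_{L_i}(f,Z)$ along some sequence $(L_i)_{i\geq 1}$ with strictly increasing residue characteristics $p_{L_i}$.

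Hence every value $\sigma\in\textnormal{Pol}_f$ with $\sigma\leq 1$ is attained only for local fields $L$ with $p_L$ bounded by some $M_\sigma$. Otherwise we could extract a sequence with strictly increasing residue characteristics realizing $\sigma$, which would give $\moi_K(f,Z)\leq\sigma\leq 1$, a contradiction. Set $M:=1+\max\{M_\sigma\mid \sigma\in\textnormal{Pol}_f,\ \sigma\leq 1\}$, which is a finite maximum. Then $\sigma_L(f,Z)>1$ for every $L\in\cL_{K,M}$.

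Since $\sigma_L(f,Z)$ takes only finitely many values, the infimum over $\cL_{K,M}$ is a minimum, and it is $>1$. If all these values equal $+\infty$, the statement holds trivially with the paper's conventions.

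The only potentially delicate point is justifying that the $\liminf$ characterization in (\ref{exmo}) really refers to sequences of strictly increasing residue characteristic. This is exactly how (\ref{exmo}) is stated, so the finiteness argument above goes through.
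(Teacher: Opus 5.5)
Your argument is correct, but it follows a different route from the paper's proof.

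\textbf{The paper's route.} The paper fixes a strong log resolution $(Y,\pi)$ of $(f)$ over $K$ and argues at the level of numerical data. As in the proof of \cite[Proposition 3.10]{CMN}, rationality of the singularities along $Z$ gives two facts:
\begin{itemize}
\item $\nu_i/N_i>1$ for every $E_i$ with $(\nu_i,N_i)\neq(1,1)$ and $\pi(E_i)\cap Z_K\neq\emptyset$;
\item the $(1,1)$-components (the strict transform) do not meet one another above $Z_K$.
\end{itemize}
Igusa's description of the candidate poles via $(Y_L,\pi_L)$ then shows that the only poles with real part $\geq -1$ are simple poles coming from the factor $1-q_L^{-s-1}$, and only for the trivial character. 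These are exactly what $\epsilon_{\chi_{\textnormal{triv}}}$ removes in the definition of non-trivial poles.

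\textbf{Your route.} You take the asymptotic statement $\moi_K(f,Z)>1$ from Theorem~\ref{rCMN} as a black box. You then upgrade it to a uniform statement using only that $\sigma_L(f,Z)$ takes values in the finite set $\textnormal{Pol}_f\cup\{+\infty\}$. Equivalently, the nondecreasing function $M\mapsto\inf_{L\in\cL_{K,M}}\sigma_L(f,Z)$ must stabilize at its limit $\moi_K(f,Z)$.

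\textbf{Comparison.} Your argument is shorter and purely formal. However, it produces $M$ only non-constructively, and it hides the resolution-theoretic content inside Theorem~\ref{rCMN}, which \cite{CMN} proves by essentially the paper's argument. The paper's route gives $M$ explicitly, in terms of good reduction of a fixed resolution. It also makes visible the structure behind Lemmas~\ref{zetanontrivial} and~\ref{zetatrivial}: poles near $-1$ come only from $T_0(\pi)$ and are simple. Later the paper only enlarges $M$ so that Lemma~\ref{min} holds, so your non-constructive $M$ is fully sufficient for how the lemma is used.
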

\begin{proof}
Let $(Y,\pi)$ be a strong log resolution of $(f)$ over $K$ associated to divisors $(E_{i})_{i\in T}$ and numerical data $((\nu_i,N_i))_{i\in T}$. Let $T_0=\{i\in T\mid (\nu_i,N_i)=(1,1)\}$. As in the proof of \cite[Proposition 3.10]{CMN}, we have $\frac{\nu_i}{N_i}>1$ if $\pi(E_i)\cap Z_K\neq\emptyset$ and $i\notin T_0$. Moreover, we also have $\pi(E_i\cap E_j)\cap Z_K=\emptyset$ if $i,j\in T_0$. Note that $(Y_L,\pi_L)$ is a strong log resolution of $(f)$ over $L$ for all $L\in \cL_{K,1}$. Thus, the assertion follows from \cite{Igusa3} (see also \cite[Theorem 1.3.2]{DenefBour}) and (\ref{npol}).
\end{proof}

As mentioned in Section \ref{JSL},  there exists an integer $D$ depending only on $n,\deg(f)$ such that for each $c\in\overline{K}$, there is a strong log resolution of $(f-c)$ over $K[c]$ whose complexity is bounded by $D$. 
We fix such a strong log resolution $(Y,\pi)$ of $(f)$ over $K$. 
Let $(E_{i})_{i\in T(\pi)}$ be the divisors and $((\nu_i,N_i))_{i\in T(\pi)}$ be the numerical data associated to $(Y,\pi)$. Then we can suppose that $1\leq \nu_i,N_i\leq D$ for all $i$ and the complexity of $Y\sqcup(\sqcup_{i\in T(\pi)}E_i)$ is bounded by $D$.  Let $M$ be as in Corollary \ref{zeta} with respect to $(Y,\pi)$. By enlarging $M$ if needed, we can suppose that Lemma \ref{min} holds for such $M$. We set $T_0(\pi)=\{i\in T(\pi)\mid (\nu_i,N_i)=(1,1)\}$ and $T_Z(\pi)=\{i\in T(\pi)\mid \pi(E_i)\cap Z_K\neq \emptyset\}\setminus T_0(\pi)$. Then the strict transform of $(f^{-1}(0))_{\red}$ with respect to $\pi$ is $\cup_{i\in T_0(\pi)}E_i$. For each set $J\subset T(\pi)$ and $L\in \cL_{K,1}$, we set $J_L=\{i\in J\mid \nu_i\geq N_i\sigma_L(f,Z)\}$. By Lemma \ref{min},  we have $(T(\pi))_L\cap T_0(\pi)=\emptyset$  for all $L\in\cL_{K,M}$. As in Section \ref{Opes}, we also use Lemma \ref{separate} to write 
\begin{equation}\label{decomposition0}
\prod_{i\in J}\dfrac{1}{1-t^{N_i}y^{-\nu_i}}=\dfrac{g_{J,L}(y,t)}{\prod_{i\in J_L}(1-t^{N_i}y^{-\nu_i})}+\dfrac{h_{J,L}(y,t)}{\prod_{i\in J\setminus J_L}(1-t^{N_i}y^{-\nu_i})}
\end{equation}
for each set $J\subset T(\pi)$ and each $L\in\cL_{K,1}$ such that
\begin{equation*}
\deg_t\left(\dfrac{h_{J,L}(y,t)}{\prod_{i\in J\setminus J_L}(1-t^{N_i}y^{-\nu_i})}\right)<0 \textnormal{ and }\deg_t\left(\dfrac{g_{J,L}(y,t)}{\prod_{i\in J_L}(1-t^{N_i}y^{-\nu_i})}\right)<0,
\end{equation*} where we use the convention that 
\begin{itemize}
\item $\prod_{i\in J}(1-t^{N_i}y^{-\nu_i})=1 \textnormal{ if } J=\emptyset,$ 
\item $g_{J,L}(y,t)=1 \textnormal{ and } h_{J,L}(y,t)=0 \textnormal{ if } J=J_L,$
 \item $g_{J,L}(y,t)=0 \textnormal{ and } h_{J,L}(y,t)=1 \textnormal{ if } J_L=\emptyset \textnormal{ and } J\neq\emptyset.$
\end{itemize}

\begin{lem}\label{zetanontrivial}
Let $L\in\cL_{K,M}$ and $\chi$ be a non-trivial character of $\cO_L^{\times}$ such that $c(\chi)=1$. Let $c_{I,L,Z,\chi}$ be as in Proposition \ref{zeta} with respect to $(Y,\pi)$. Then we have
\begin{equation}\label{zeta1}
\cZ(f,L,Z,\chi,s)=
\sum_{\substack{\emptyset\neq I\subseteq T_{Z}(\pi)}, \#I\leq n, d(\chi)|N_{i} \forall i\in I}
c_{I,L,Z,\chi} \sum_{J\subset I}(-1)^{\#I-\#J}\frac{g_{J,L}(q_L,q_L^{-s})}{\prod_{i\in J_L}(1-q_L^{-N_{i}s-\nu_{i}})}.
\end{equation}
\end{lem}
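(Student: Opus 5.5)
Start from the formula of Proposition \ref{zeta} for $(Y,\pi)$ and $L\in\cL_{K,M}$:
$$\cZ(f,L,Z,\chi,s)=\sum_{I\subseteq T(\pi)}c_{I,L,Z,\chi}\prod_{i\in I}\frac{q_L^{-N_is-\nu_i}}{1-q_L^{-N_is-\nu_i}}.$$
The first step is to restrict the index set. By (\ref{eq:cI2}), $c_{I,L,Z,\chi}=0$ unless three conditions hold: $d(\chi)\mid N_i$ for all $i\in I$, $c(\chi)=1$ (which is our assumption), and $\pi(\overset{\circ}{E}_I(\overline{K}))\cap Z(\overline{K})\neq\emptyset$. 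The last condition forces every $i\in I$ to satisfy $\pi(E_i)\cap Z_K\neq\emptyset$. If moreover $\#I\geq n+1$, then $E_I=\emptyset$ by the normal crossing condition in dimension $n$, so $\#I\leq n$.

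It remains to exclude $i\in T_0(\pi)$ and $I=\emptyset$. For $I=\emptyset$, the term is $c_{\emptyset,L,Z,\chi}$, which is the integral of $\chi(\acc(f))$ over the part where $f$ is a unit together with the fiber structure. Since $f(Z(\overline{K}))=0$, every $x$ with $\overline{x}\in Z(k_L)$ has $\ord_L f(x)\geq 1$ for $L$ of large residue characteristic. Hence $\overset{\circ}{E}_\emptyset$ does not meet $\pi^{-1}(Z)$ in the relevant sense, and $c_{\emptyset}=0$.

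For $i\in T_0(\pi)$ we have $N_i=1$, so $d(\chi)\mid N_i$ forces $\chi$ to be trivial, which contradicts our assumption. Thus any $I$ containing an element of $T_0(\pi)$ contributes zero, and the sum runs over $\emptyset\neq I\subseteq T_Z(\pi)$ with $\#I\leq n$ and $d(\chi)\mid N_i$ for all $i\in I$.

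Next, expand each product exactly as in (\ref{recom}):
$$\prod_{i\in I}\frac{q_L^{-N_is-\nu_i}}{1-q_L^{-N_is-\nu_i}}=\sum_{J\subset I}(-1)^{\#I-\#J}\prod_{i\in J}\frac{1}{1-q_L^{-N_is-\nu_i}},$$
and apply the decomposition (\ref{decomposition0}) at $y=q_L$, $t=q_L^{-s}$. This is legitimate by Lemma \ref{separate}(ii), since $|q_L|\neq1$. The result expresses each term as a $g_{J,L}$ part over $J_L$ plus an $h_{J,L}$ part over $J\setminus J_L$. The claimed formula is exactly the identity obtained after showing that the total contribution of all $h_{J,L}$ parts vanishes. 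This is the main obstacle.

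To handle it, I would argue by poles. The sum $H(s)$ of the $h$-parts is a proper rational function in $t$ (by the degree conventions in (\ref{decomposition0})). Its poles lie only at zeros of $1-t^{N_i}q_L^{-\nu_i}$ with $\nu_i/N_i<\sigma_L(f,Z)$.

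The sum $G(s)$ of the $g$-parts (together with constants) has poles only where $\nu_i/N_i\geq\sigma_L(f,Z)$. Because these two sets of ratios are disjoint, $H$ and $G$ have no common poles.

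Since $\chi$ is non-trivial, every pole of $\cZ(f,L,Z,\chi,s)$ is non-trivial. By the definition (\ref{npol}) of $\sigma_L(f,Z)$ as the minimum of $\textnormal{Pol}_{f,L,Z}$, every such pole has real part $\leq-\sigma_L(f,Z)$. Each zero of $1-t^{N_i}q_L^{-\nu_i}$ with $\nu_i/N_i<\sigma_L(f,Z)$ has real part $-\nu_i/N_i>-\sigma_L(f,Z)$, so $H(s)=\cZ-G$ has no poles at all. A proper rational function of $t$ with no poles is zero, since it is a polynomial with negative $t$-degree.

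The same degree argument also shows that no constant term survives on the $g$ side. Each $g$-part already has negative $t$-degree, and the two sides agree as rational functions, so there is no leftover constant and no extra polynomial. This yields (\ref{zeta1}).
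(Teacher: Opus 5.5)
Your proposal is correct and is essentially the paper's argument: you restrict the index set via (\ref{eq:cI2}) and $N_i=1$ on $T_0(\pi)$, expand with (\ref{recom}) and (\ref{decomposition0}), and kill the $h$-parts by the pole-plus-degree argument. The paper packages that last step as Proposition \ref{rewr} with $\mathsf{I}_3=\emptyset$; you simply carry it out inline.

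Your final paragraph is unnecessary and slightly inaccurate. The $J=\emptyset$ terms are the constants $(-1)^{\#I}c_{I,L,Z,\chi}$, which are part of (\ref{zeta1}) through the convention $g_{\emptyset,L}=1$; but once $H=0$ the identity follows exactly from the expansion, so nothing depends on that paragraph.
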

\begin{proof}
We do similarly to Corollary \ref{exrewr}. 
By the definition of $T_Z(\pi)$, the definition of strong log resolutions, the facts that $L\in\cL_{K,M}$, $d(\chi)>1$ and $f(Z(\overline{K}))=0$, we have $c_{I,L,Z,\chi}=0$ if one of the following conditions holds:
\begin{itemize}
\item $I=\emptyset$,
\item $\#I>n$,
\item $I\nsubseteq T_Z(\pi)$, 
\item $\emptyset\neq I\subset T_Z(\pi)$ but $d(\chi)\nmid N_i$ for some $i\in I$.
\end{itemize}
Moreover, $\sigma_i=-\frac{\nu_i}{N_i}$ is not the real part of any pole of $\cZ(f,L,Z,\chi,s)$ if $i\notin (T_{Z}(\pi))_L$. Thus, the assertion follows by using Proposition \ref{zeta}, (\ref{recom}), (\ref{decomposition0}) and applying Proposition \ref{rewr} to the sets $\mathsf{I}_1=(T(\pi))_L$, $\mathsf{I}_2=T(\pi)\setminus (T(\pi))_L$, $\mathsf{I}_3=\emptyset$, $\mathfrak{N}_1=\sqcup_{i\in \mathsf{I}_1}\{(\nu_i,N_i)\}$, $\mathfrak{N}_2=\sqcup_{i\in \mathsf{I}_2}\{(\nu_i, N_i)\}$, $\mathfrak{N}_3=\emptyset$ and functions $c_J:\CC\to \CC$ for $J\subset T(\pi)$ satisfying $$c_J(q_L)=\sum_{J\subset I\subset T(\pi)}(-1)^{\#I-\#J}c_{I,L,Z,\chi}$$ for all $J\subset T(\pi)$ and all $L\in \cL_{K,M}$.
 
\end{proof}
\begin{lem}\label{zetatrivial}
Let $L\in\cL_{K,M}$ and  $c_{I,L,Z,\chi_{\textnormal{triv}}}$ be as in Corollary \ref{zeta} with respect to $(Y,\pi)$. Then there are complex numbers $a_L,b_L$ such that 
\begin{equation*}\label{zeta2}
\cZ(f,L,Z,\chi_{\textnormal{triv}},s)=a_L+\frac{b_L}{1-q_L^{-s-1}}+
\sum_{\substack{\emptyset\neq I\subseteq T_Z(\pi)},\#I\leq n}
c_{I,L,Z,\chi_{\textnormal{triv}}} \sum_{J\subset I}(-1)^{\#I-\#J}\frac{g_{J,L}(q_L,q_L^{-s})}{\prod_{i\in J_L}(1-q_L^{-N_{i}s-\nu_{i}})}.
\end{equation*}
\end{lem}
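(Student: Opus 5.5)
We follow the proof of Lemma \ref{zetanontrivial} and of Corollary \ref{exrewr}. We first apply Proposition \ref{zeta} with $\chi=\chi_{\textnormal{triv}}$, which writes $\cZ(f,L,Z,\chi_{\textnormal{triv}},s)$ as a sum over $I\subseteq T(\pi)$ of $c_{I,L,Z,\chi_{\textnormal{triv}}}\prod_{i\in I}\frac{q_L^{-N_is-\nu_i}}{1-q_L^{-N_is-\nu_i}}$.

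Next we sort out which $I$ contribute. The term $I=\emptyset$ is a constant, and we absorb it into $a_L$. Terms with $\#I>n$ vanish, because $\overset{\circ}{E}_I=\emptyset$. Terms with $\pi(E_I^\circ)\cap Z=\emptyset$ vanish by (\ref{eq:cI2}). Now suppose $I\neq\emptyset$ contributes and $I\cap T_0(\pi)\neq\emptyset$. As recalled in the proof of Lemma \ref{min}, $\pi(E_i\cap E_j)\cap Z_K=\emptyset$ for $i\neq j\in T_0(\pi)$, so $I$ contains exactly one index $i_0\in T_0(\pi)$. Its factor is $\frac{q_L^{-s-1}}{1-q_L^{-s-1}}$, and the remaining indices lie in $T_Z(\pi)$.

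The main obstacle is this mixed case $I=\{i_0\}\sqcup I'$ with $\emptyset\neq I'\subseteq T_Z(\pi)$. Here the pole at $s=-1$ is coupled with poles of real part $<-1$ (recall $\nu_i/N_i>1$ for $i\in T_Z(\pi)$). To handle it, we expand with (\ref{recom}) and apply Proposition \ref{rewr} with the following choice of index sets.
\begin{itemize}
\item $\mathsf{I}_3=T_0(\pi)$, where all ratios equal $1$.
\item $\mathsf{I}_1=(T(\pi))_L$, which is disjoint from $T_0(\pi)$ by Lemma \ref{min}.
\item $\mathsf{I}_2=T_Z(\pi)\setminus(T(\pi))_L$.
\end{itemize}
With this choice the ratio sets are pairwise disjoint. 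Every summand carrying a $T_0$-factor, after partial fractions via Lemma \ref{separate}, has a $\mathsf{I}_3$-part of the form $\frac{c\,g_{\mathsf J,3}(q_L,t)}{1-q_L^{-1}t}$, where $\deg_t g_{\mathsf J,3}<1$. So $g_{\mathsf J,3}$ is a constant, and all these pieces sum to a single term $\frac{b_L}{1-q_L^{-s-1}}$.

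To apply Proposition \ref{rewr} we must check its hypothesis that no pole of $\cZ(f,L,Z,\chi_{\textnormal{triv}},s)$ in $t$ comes from $\mathsf{I}_2$. This follows from the definition of $\sigma_L(f,Z)$: poles of real part $>-\sigma_L(f,Z)$, other than the trivial pole $s=-1$, do not occur. The factor $1-q_L^{-s-1}$ is exactly the trivial part removed by $\epsilon_{\chi_{\textnormal{triv}}}$. The proposition then also shows that the polynomial part is constant, since every $c_\mathsf{J}$ has $t$-degree at most $0$. This constant goes into $a_L$.

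What remains is the part attached to $\mathsf{I}_1$. It comes from the sets $I\subseteq T_Z(\pi)$, together with the $\mathsf{I}_1$-components produced from the mixed $I$. Using (\ref{decomposition0}), this part is rewritten as
\[
\sum_{\emptyset\neq I\subseteq T_Z(\pi),\ \#I\le n}c_{I,L,Z,\chi_{\textnormal{triv}}}\sum_{J\subset I}(-1)^{\#I-\#J}\frac{g_{J,L}(q_L,q_L^{-s})}{\prod_{i\in J_L}(1-q_L^{-N_is-\nu_i})}.
\]
For the mixed sets, we check that their $\mathsf{I}_1$-contributions either reorganize into the same shape or are absorbed into $a_L$ and $b_L$. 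We expect this by uniqueness of partial fractions: the total $\mathsf{I}_1$-principal part of a rational function is determined by the function itself, and the displayed sum already produces all poles with $J_L\neq\emptyset$. Making this identification precise is the delicate bookkeeping step. We would first try to compare principal parts at each zero of $\prod_{i\in\mathsf{I}_1}(1-t^{N_i}q_L^{-\nu_i})$.
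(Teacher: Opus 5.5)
Your route is the same as the paper's: Proposition \ref{zeta} with $\chi=\chi_{\textnormal{triv}}$, the expansion (\ref{recom}), and Proposition \ref{rewr} with $\mathsf{I}_1=(T(\pi))_L$, $\mathsf{I}_3=T_0(\pi)$ and the remaining indices in $\mathsf{I}_2$. The paper's proof consists of exactly this one-line appeal. Everything before your last paragraph is sound:
\begin{itemize}
\item the $\mathsf{I}_3$-pieces have constant numerators and add up to $b_L/(1-q_L^{-s-1})$;
\item the hypothesis of Proposition \ref{rewr} holds, since no pole has real part in $(-\sigma_L(f,Z),-1)$;
\item the polynomial part is constant.
\end{itemize}

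The gap is the step you postpone, and it cannot be closed. On $\mathsf{I}_1$, Proposition \ref{rewr} outputs the polynomials $d_{\mathsf J,q_L}$. From each $\mathsf J$ with $\mathsf J_3\neq\emptyset$, the polynomial $d_{\mathsf J_1,q_L}$ receives the extra contribution $c_{\mathsf J}(q_L)\,g_{\mathsf J,1,2}\,g_{\mathsf J,1}/\prod_{i\in\mathsf J_1}(1-t^{N_i}q_L^{-\nu_i})$, reduced modulo polynomials. This contribution is not of the shape $c_I\sum_{J\subset I}(-1)^{\#I-\#J}g_{J,L}/\prod_{J_L}$ with $I\subseteq T_Z(\pi)$. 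It also cannot be absorbed into $a_L+b_L/(1-q_L^{-s-1})$, which has no poles away from $q_L^{-s}=q_L$. Uniqueness of partial fractions works against you here. The $\mathsf{I}_1$-principal part of $\cZ$ is determined by $\cZ$, and the displayed sum already fixes one, so the identity would force the mixed principal parts to cancel. In general they do not. Since the paper's proof does not treat these terms either, the defect lies in the displayed identity itself, not in your choice of route.

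Here is a concrete counterexample. Take $f=x_1^2+x_2^2+x_3^2$ (an $A_1$ singularity, hence rational), let $Z$ be the origin, and let $\pi$ be the blow-up of the origin. Then $E_0$ is the strict transform with $(\nu_0,N_0)=(1,1)$, and $E_1$ is the exceptional divisor with $(\nu_1,N_1)=(3,2)$. This gives $T_0(\pi)=\{0\}$, $T_Z(\pi)=\{1\}$, $\sigma_L(f,Z)=3/2$ and $(T(\pi))_L=\{1\}$. For odd residue characteristic, Denef's formula gives
\[
c_\emptyset=c_{\{0\}}=0,\qquad c_{\{1\}}=1-q_L^{-1},\qquad c_{\{0,1\}}=(1-q_L^{-1})(1-q_L^{-2}),
\]
which you can confirm by computing $\int_{\varpi_L\cO_L^3}|f|^s|dx|$ directly. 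With $t=q_L^{-s}$, the right-hand side of the lemma reduces to $a_L+\frac{b_L}{1-q_L^{-1}t}+c_{\{1\}}\frac{q_L^{-3}t^2}{1-q_L^{-3}t^2}$. Its principal part at $t=\pm q_L^{3/2}$ has the form $C/(1-q_L^{-3}t^2)$. The mixed term $c_{\{0,1\}}\frac{q_L^{-1}t}{1-q_L^{-1}t}\cdot\frac{q_L^{-3}t^2}{1-q_L^{-3}t^2}$, however, has principal part $\frac{\beta+\gamma t}{1-q_L^{-3}t^2}$ there, with $\gamma=-c_{\{0,1\}}q_L^{-1}/(q_L-1)\neq 0$. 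So no choice of $a_L,b_L$ makes the identity hold.

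What the argument really yields is the formula with these additional $\mathsf{I}_1$-terms. To use it in Proposition \ref{auxiliary} you would have to keep them and estimate their coefficients separately. A natural starting point is Lemma \ref{estimatedegree} applied to the separation of $\mathsf J\setminus\mathsf J_3$ from $\mathsf J_3$, which is permitted because every ratio in $\mathsf J\setminus\mathsf J_3$ exceeds $1=\nu_{i_0}/N_{i_0}$.
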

\begin{proof}
As in Lemma \ref{zetanontrivial},  we can use Proposition \ref{zeta}, (\ref{recom}), (\ref{decomposition0}) and apply Proposition \ref{rewr} with the sets $\mathsf{I}_1=(T(\pi))_L, \mathsf{I}_2=T(\pi)\setminus (T_0(\pi)\cup(T(\pi))_L), \mathsf{I}_3=T_0(\pi)$, the multisets  $\mathfrak{N}_1=\sqcup_{i\in \mathsf{I}_1}\{(\nu_i,N_i)\}$, $\mathfrak{N}_2=\sqcup_{i\in \mathsf{I}_2}\{(\nu_i, N_i)\}$, $\mathfrak{N}_3=\sqcup_{i\in \mathsf{I}_3}\{(\nu_i,N_i)\}$, and functions $c_J:\CC\to \CC\subset \CC[t]$ for $J\subset T(\pi)$ satisfying $$c_J(q_L)=\sum_{J\subset I\subset T(\pi)}(-1)^{\#I-\#J}c_{I,L,Z,\chi_{\textnormal{triv}}}$$ for all $J\subset T(\pi)$ and all $L\in \cL_{K,M}$.

\end{proof}
\begin{lem}\label{estimation1}For each set $J\subset T_Z(\pi)$, we set 
$$F_J(t)=\frac{g_{J,L}(q_L,t)}{\prod_{i\in J_L}(1-t^{N_{i}}q_L^{-\nu_{i}})}\in\CC[[t]].$$
There exists a constant $C_0>0$ depending only on $((\nu_i,N_i))_{i\in T(\pi)}$ such that
$$|\textnormal{Coeff}_{t^{m-1}}(F_J(t))|\leq C_0m^{\#J_L-1}q_L^{-(m-1)\sigma_L(f,Z)}$$
for all $L\in \cL_{K,1}$, all $J\subseteq T_Z(\pi)$ and all $m\geq 1$.
\end{lem}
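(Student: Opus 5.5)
The bound is a direct analogue of Lemma \ref{upbou}, and I would follow that proof almost verbatim, replacing the index $i$ of the ideal by the single resolution $(Y,\pi)$ and the threshold $\sigma^{\textnormal{abs}}+n$ by $\sigma_L(f,Z)$.

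Fix $L\in\cL_{K,1}$ and $J\subset T_Z(\pi)$. If $J_L=\emptyset$, then by convention $g_{J,L}=0$ (or $F_J=1$ when $J=\emptyset$, which only contributes at $m=1$ and is trivially bounded), so we may assume $J_L\neq\emptyset$. Set $\sigma_{JL}:=\min_{j\in J_L}\nu_j/N_j$ and $\gamma_{JL}:=\sum_{j\in J\setminus J_L}(N_j\sigma_{JL}-\nu_j)$. By definition of $J_L$ we have $\sigma_{JL}\geq\sigma_L(f,Z)$, and every $j\in J\setminus J_L$ has $\nu_j/N_j<\sigma_L(f,Z)\leq\sigma_{JL}$. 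Hence $\gamma_{JL}\geq 0$, and the sets $\{\nu_j/N_j: j\in J_L\}$ and $\{\nu_j/N_j: j\in J\setminus J_L\}$ are disjoint with $\min$ of the first exceeding $\max$ of the second. So Lemma \ref{separate} and Lemma \ref{estimatedegree} apply to $\mathsf{I}_1=J_L$, $\mathsf{I}_2=J\setminus J_L$.

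Write $g_{J,L}(y,t)=\sum_{l=0}^{d_J}a_{Jl}(y)t^l$ with $d_J=\sum_{j\in J_L}N_j-1$. Lemma \ref{estimatedegree} gives $\deg_y a_{Jl}\leq -l\sigma_{JL}-\gamma_{JL}$. Since the $a_{Jl}$ are rational functions in $y$ depending only on the finitely many data $((\nu_i,N_i))_{i\in T(\pi)}$ (with $\nu_i,N_i\leq D$), there is a constant $C'$ such that
\[
|a_{Jl}(q_L)|\leq C'q_L^{-l\sigma_{JL}-\gamma_{JL}}\leq C'q_L^{-l\sigma_L(f,Z)}
\]
for all $L$. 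The main point to check here is uniformity in $q_L\geq 2$. Lemma \ref{separate}(ii) says $g_{J,L}(y_0,t)$ is defined for $|y_0|\neq 1$, but a degree bound only controls $|a_{Jl}(q)|$ for large $q$. I would handle small $q$ by noting that there are only finitely many primes powers $q$ below any threshold and that the denominator $h(y)$ of Lemma \ref{separate} has no root at $y=q\geq2$. Taking a maximum over finitely many $q$ then gives a constant $C'$ depending only on the numerical data.

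Expanding, we get
\[
\textnormal{Coeff}_{t^{m-1}}F_J(t)=\sum_{l=0}^{d_J}a_{Jl}(q_L)\sum_{(m_j)\in\NN^{J_L},\ l+\sum_j m_jN_j=m-1}q_L^{-\sum_j m_j\nu_j}.
\]
For each $l$, the inner sum is at most the tail sum over $\sum_j m_jN_j\geq m-1-l$. By Lemma \ref{upnu} with $a_j=\nu_j$, $b_j=N_j$ and $\sigma=\sigma_L(f,Z)$ (valid since $\nu_j\geq N_j\sigma_L(f,Z)$ for $j\in J_L$), this is at most $C(m-1-l+1)^{\#J_L-1}q_L^{-(m-1-l)\sigma_L(f,Z)}$. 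The constant in Lemma \ref{upnu} depends on the $(a_i,b_i)$, but only finitely many values of $\sigma_L(f,Z)$ occur (it lies in $\textnormal{Pol}_f$), so we may take a uniform constant. Multiplying by $|a_{Jl}(q_L)|\leq C'q_L^{-l\sigma_L(f,Z)}$, the $q_L^{\pm l\sigma}$ factors cancel. Summing over the at most $D\cdot n$ values of $l$ yields $C_0m^{\#J_L-1}q_L^{-(m-1)\sigma_L(f,Z)}$, with $C_0$ depending only on the numerical data.
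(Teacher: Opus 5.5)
Your proposal is correct and is essentially the paper's proof. The paper simply says to repeat the argument of Lemma~\ref{upbou}: bound the coefficients $a_{Jl}(q_L)$ of $g_{J,L}(q_L,t)$ by $C'q_L^{-l\sigma_L(f,Z)}$ via Lemmas~\ref{separate} and \ref{estimatedegree}, then bound the remaining coefficient sums with Lemma~\ref{upnu}, so that the $q_L^{\pm l\sigma_L(f,Z)}$ factors cancel. Your remark on uniformity for small $q_L$ (the $a_{Jl}$ have no poles on $y\geq 2$) makes explicit a point the paper leaves implicit.

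Two trivial corrections, neither of which affects the conclusion. Since $J$ ranges over all subsets of $T_Z(\pi)$, the number of values of $l$ is bounded by $D\cdot\#T(\pi)$ rather than $D\cdot n$. The terms with $l\geq m-1$ should be handled directly, since there the inner sum is $0$ or $1$.
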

\begin{proof}It follows by repeating the argument of Lemma \ref{upbou}.

\end{proof}

\begin{lem}\label{estimation3}With the notation of Lemma \ref{estimation1}, there exists a constant $C$ depending only on $((\nu_i,N_i))_{i\in T(\pi)}$ such that
$$\left|F_J(1)+\text{Coeff}_{t^{m-1}}\left(\frac{(t-q_L)F_J(t)}{(q_L-1)(1-t)}\right)\right|\leq Cm^{\#J_L-1}q_L^{-(m-1)\sigma_L(f,Z)-1}$$
for all $m\geq 1$, all $L\in\cL_{K,1}$ and all $J\subset T_Z(\pi)$.
\end{lem}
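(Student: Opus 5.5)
The plan is to make the expression explicit in terms of the power series coefficients of $F_J(t)$, and then bound two pieces separately: one using Lemma \ref{estimation1}, the other using the tail estimate in the proof of Lemma \ref{upbou}. The uniformity in $L\in\cL_{K,1}$ will come from a geometric inequality on the numerical data of $T_Z(\pi)$ that does not depend on $L$.

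\textbf{Step 1: an exact formula for the expression.} Write $F_J(t)=\sum_{i\geq 0}a_it^i$. The poles of $F_J$ are roots of some $1-t^{N_i}q_L^{-\nu_i}$, so they have absolute value $q_L^{\nu_i/N_i}>1$. Hence the series converges absolutely at $t=1$ and $F_J(1)=\sum_{i\geq0}a_i$. From the partial fraction identity
$$\frac{t-q_L}{(q_L-1)(1-t)}=-\frac{1}{q_L-1}-\frac{1}{1-t},$$
the coefficient of $t^{m-1}$ in $\frac{(t-q_L)F_J(t)}{(q_L-1)(1-t)}$ equals $-\frac{a_{m-1}}{q_L-1}-\sum_{i\leq m-1}a_i$. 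This is the computation of Lemma \ref{coff1}. Therefore the quantity to bound equals
$$\sum_{i\geq m}a_i-\frac{a_{m-1}}{q_L-1}.$$

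\textbf{Step 2: the degenerate cases.} If $J_L=\emptyset$ and $J\neq\emptyset$, then $g_{J,L}=0$ by the convention, so $F_J=0$ and there is nothing to prove. If $J=\emptyset$, then $F_J=1$ and the expression vanishes for $m\geq 2$. For $m=1$ it equals $-1/(q_L-1)$, of absolute value at most $2q_L^{-1}$. Since $m=1$, this has the required form with the exponent $\#J_L-1=-1$ and $(m-1)\sigma_L(f,Z)=0$.

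\textbf{Step 3: the main case $J_L\neq\emptyset$.} The term $a_{m-1}/(q_L-1)$ is handled by Lemma \ref{estimation1} together with $q_L-1\geq q_L/2$. This gives the bound $2C_0m^{\#J_L-1}q_L^{-(m-1)\sigma_L(f,Z)-1}$.

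For the tail $\sum_{i\geq m}a_i$, I repeat the argument of Lemma \ref{upbou}, that is, (\ref{majoa}) and (\ref{uppid}) with Lemmas \ref{separate}, \ref{estimatedegree} and \ref{upnu}. This gives
$$\Bigl|\sum_{i\geq m}a_i\Bigr|\leq C_1m^{\#J_L-1}q_L^{-m\sigma_{JL}},\qquad \sigma_{JL}:=\min_{i\in J_L}\frac{\nu_i}{N_i},$$
where I discard the harmless factor $q_L^{-\gamma_{JL}}\leq 1$. The constant $C_1$ depends only on the numerical data, which are bounded by $D$.

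The point that makes the bound uniform over all $L\in\cL_{K,1}$, not only over $L\in\cL_{K,M}$, is a fact about the resolution $(Y,\pi)$ itself. As recalled in the proof of Lemma \ref{min} (via \cite[Proposition 3.10]{CMN}), the rational singularity hypothesis gives $\nu_i/N_i>1$ for every $i\in T_Z(\pi)$. This depends only on $(Y,\pi)$ and not on $L$. Since the $\nu_i,N_i$ are integers bounded by $D$, there is then a fixed $\delta>0$ with $\nu_i/N_i\geq 1+\delta$ for all $i\in T_Z(\pi)$; in particular $\sigma_{JL}\geq 1$. Also, by definition of $J_L$, $\sigma_{JL}\geq\sigma_L(f,Z)$. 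Combining the two inequalities,
$$m\sigma_{JL}=(m-1)\sigma_{JL}+\sigma_{JL}\geq (m-1)\sigma_L(f,Z)+1.$$
So the tail is at most $C_1m^{\#J_L-1}q_L^{-(m-1)\sigma_L(f,Z)-1}$. Adding the two bounds proves the lemma with $C=C_1+2C_0+2$, which depends only on $((\nu_i,N_i))_{i\in T(\pi)}$.

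\textbf{Expected obstacle.} The main step to check carefully is that the tail estimate from Lemma \ref{upbou} really gives decay $q_L^{-m\sigma_{JL}}$ with a constant independent of $L$, and not merely $q_L^{-m\sigma_L(f,Z)}$. This relies on the degree bounds of Lemma \ref{estimatedegree} for the coefficients of $g_{J,L}$, namely $|a_{Jl}(q_L)|\lesssim q_L^{-l\sigma_{JL}}$. Once that is in place, the extra factor $q_L^{-1}$ comes for free from $\sigma_{JL}\geq 1$.
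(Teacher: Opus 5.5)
Your proof is correct and follows the paper's route: the identity $\frac{t-q_L}{(q_L-1)(1-t)}=-\frac{1}{q_L-1}-\frac{1}{1-t}$, Lemma \ref{estimation1} for the term $a_{m-1}/(q_L-1)$, and the tail estimate from the proof of Lemma \ref{upbou} combined with Lemma \ref{upnu}. The only difference is in the last step: the paper bounds the tail by $q_L^{-m\sigma_L(f,Z)}$, which needs $\sigma_L(f,Z)\geq 1$ to reach the stated exponent and is guaranteed by Lemma \ref{min} only for $L\in\cL_{K,M}$, whereas you keep $\sigma_{JL}$ and use $\nu_i/N_i>1$ on $T_Z(\pi)$, so your argument covers every $L\in\cL_{K,1}$, as the statement asserts.
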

\begin{proof}By (\ref{decomposition0}), if $J_L=\emptyset$ then $F_J(t)\equiv 0$ or $F_J(t)\equiv 1$. 
It is easy to check that the assertion holds in this case whenever $C\geq 2$.

Suppose that $J_L\neq\emptyset$. We firstly observe that
\begin{equation}\label{q0}\frac{t-q_L}{(q_L-1)(1-t)}=-\left(\frac{1}{q_L-1}+\frac{1}{1-t}\right).
\end{equation}
By a basis computation we have
$$F_J(1)-\text{Coeff}_{t^{m-1}}\left(\frac{F_J(t)}{1-t}\right)=\sum_{(\ell,m_i)_{i\in J_L}\in \cB_{m,J,L}}a_{\ell,L}(q_L)q_L^{-\sum_{i\in J_L}m_i\nu_i}$$
where 
$$\cB_{m,J,L}=\{(\ell,m_i)_{i\in J_L}\in\NN^{\#J_L+1}|0\leq \ell\leq\ell_{J,L}-1, \ell+\sum_{i\in J_L}m_iN_i\geq m\},$$
and $g_{J,L}(y,t)=\sum_{\ell=0}^{\ell_{J,L}-1}a_{\ell,L}(y)t^\ell$ with $\ell_{J,L}=\sum_{i\in J_L}N_i$. As in the proof of Lemma \ref{upbou}, there is a constant $C_1$ depending only on $((\nu_i,N_i))_{i\in T(\pi)}$ such that $$a_{\ell,J}(q_L)\leq C_1q_L^{-\ell\sigma_L(f,Z)}$$ for all $0\leq \ell\leq \ell_{J,L}-1$. This and Lemma \ref{upnu} imply that there exists a constant $C_{2}$ depending only on $((\nu_i,N_i))_{i\in T(\pi)}$ such that 
\begin{equation}\label{iq1}\left|F_J(1)-\text{Coeff}_{t^{m-1}}\left(\frac{F_J(t)}{1-t}\right)\right|\leq C_{2}m^{\#J_L-1}q_L^{-m\sigma_L(f,Z)}
\end{equation}
for all $L\in\cL_{K,1}$, all $J\subset T(\pi)$ with $J_L\neq\emptyset$ and all $m\geq 1$.
On the other hand, let $C_0$ be as in Lemma \ref{estimation1} then
\begin{equation}\label{iq2}\left|\text{Coeff}_{t^{m-1}}\left(\frac{F_J(t)}{q_L-1}\right)\right|\leq 2C_{0}m^{\#J_L-1}q_L^{-(m-1)\sigma_L(f,Z)-1}
\end{equation}
for all $L\in\cL_{K,1}$, all $J\subset T(\pi)$ with $J_L\neq\emptyset$ and all $m\geq 1$.
Therefore, by combining (\ref{q0}), (\ref{iq1}), (\ref{iq2}) and putting $C=2C_0+C_2$, we have 
$$\left|F_J(1)+\text{Coeff}_{t^{m-1}}\left(\frac{(t-q_L)F_J(t)}{(q_L-1)(1-t)}\right)\right|\leq Cm^{\#J_L-1}q_L^{-(m-1)\sigma_L(f,Z)-1}$$
for all $L\in\cL_{K,1}$, all $J\subset T(\pi)$ and all $m\geq 1$.
\end{proof}

\begin{prop}\label{auxiliary}
 There are constants $C_L$ for $L\in \cL_{K,1}$, and a constant $C$ depending only on $n,\deg(f)$ such that $C=C_L$ 
if $L\in\cL_{K,M}$ and 
$$|E_{f,L,Z,\psi}|\leq C_L m_\psi^{n-1}q_L^{-(m_\psi-1)\sigma_{L}(f,Z)-\frac{1}{2}}$$
for all $L\in\cL_{K,1}$ and all additive characters $\psi$ of $L$ of conductor $m_\psi\geq 2$. 
\end{prop}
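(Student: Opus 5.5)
We will start from Proposition \ref{zetaex}, which expresses $E_{f,L,Z,\psi_z}$ (with $\ord_L(z)=-m$, $m=m_\psi\geq 2$) as the sum of three pieces: the trivial-character piece $\cZ(f,L,Z,\chi_{\rm triv},0)+\textnormal{Coeff}_{t^{m-1}}\big(\frac{(t-q_L)\cZ(f,L,Z,\chi_{\rm triv},s)}{(q_L-1)(1-t)}\big)$, and the sum over non-trivial $\chi$ with $c(\chi)=1$ of $g_{\chi^{-1}}\chi(\acc(z))\textnormal{Coeff}_{t^{m-1}}\cZ(f,L,Z,\chi,s)$. The whole argument is carried out for $L\in\cL_{K,M}$ with a constant depending only on the complexity bound $D$ of $(Y,\pi)$, hence only on $n,\deg(f)$. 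For the finitely many residue characteristics below $M$, we simply take $C_L$ from (\ref{rboun}) combined with Lemma \ref{min}: there $\sigma_L(f,Z)$ is fixed, and $q_L^{\sigma_L(f,Z)-1/2}$ can be absorbed into $C_L$.

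\emph{Non-trivial characters.} By Lemma \ref{zetanontrivial}, $\cZ(f,L,Z,\chi,s)$ is a combination, with coefficients $c_{I,L,Z,\chi}$ bounded by $C$ (Proposition \ref{zeta}), of the series $F_J(t)$ for $J\subset I\subset T_Z(\pi)$, $\#I\leq n$. Lemma \ref{estimation1} gives $|\textnormal{Coeff}_{t^{m-1}}F_J|\leq C_0m^{n-1}q_L^{-(m-1)\sigma_L(f,Z)}$. The Gauss sum satisfies $|g_{\chi^{-1}}|=q_L^{1/2}/(q_L-1)$. Summing over at most $q_L-2$ characters $\chi$ loses a factor $q_L$, so this piece contributes at most $C'm^{n-1}q_L^{1/2}q_L^{-(m-1)\sigma_L}$, which is worse than the target by a factor $q_L$.

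To recover this factor, we restrict the sum: only characters whose order $d(\chi)$ divides all $N_i$ for $i\in I$, with $I\neq\emptyset$, contribute. Since $N_i\leq D$, the number of such $\chi$ is at most $\sum_{d\le D}d\leq D^2$, a number independent of $q_L$. The contribution is therefore at most $C''m^{n-1}q_L^{-1/2}q_L^{-(m-1)\sigma_L(f,Z)}$. This counting of characters is the key point and is the step I expect to need care: one must check that the relevant $\chi$ are indeed limited to those of order dividing some $N_i$, which is exactly the content of (\ref{eq:cI2}).

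\emph{Trivial character.} By Lemma \ref{zetatrivial}, $\cZ(f,L,Z,\chi_{\rm triv},s)=a_L+\frac{b_L}{1-q_L^{-s-1}}+\sum c_{I}\sum_J\pm F_J$. Apply Lemma \ref{estimation2} with $b=q_L$ and $t=q_L^{-s}$: since $\frac{b_L}{1-tq_L^{-1}}$ has the form $\frac{a}{1-tb^{-1}}$, the part $a_L+\frac{b_L}{1-q_L^{-s-1}}$ contributes exactly zero to the trivial-character piece for $m\ge2$. For the remaining $F_J$ terms, Lemma \ref{estimation3} bounds each by $Cm^{\#J_L-1}q_L^{-(m-1)\sigma_L-1}\leq Cm^{n-1}q_L^{-(m-1)\sigma_L-1/2}$, with boundedly many terms and bounded coefficients.

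Adding the two estimates gives the claimed bound with a constant depending only on $D$, that is, only on $n$ and $\deg(f)$, valid for all $L\in\cL_{K,M}$. Via the identification $\psi=\psi_z$, this covers every additive character of conductor $m\ge2$.
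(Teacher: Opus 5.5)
Your proposal is correct and is essentially the paper's proof, which simply lists the same ingredients: Proposition \ref{zetaex}, Lemmas \ref{zetanontrivial}, \ref{zetatrivial}, \ref{estimation1}, \ref{estimation2}, \ref{estimation3}, and (\ref{rboun}) for fields of small residue characteristic. Your explicit count of at most $D^2$ non-trivial characters of order dividing some $N_i\leq D$ is the step the paper leaves implicit when it notes $N_i\leq D$; it turns the Gauss sum factor $q_L^{1/2}/(q_L-1)$ into the needed saving $q_L^{-1/2}$. The appeal to Lemma \ref{min} outside $\cL_{K,M}$ is unnecessary, since for each fixed $L$ the factor $q_L^{1/2-\sigma_L(f,Z)}$ is simply absorbed into $C_L$.
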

\begin{proof}Note that we chose $(Y,\pi)$ and a constant $D$ depending only on $n,\deg(f)$ such that $1\leq \nu_i,N_i\leq D$ for all $i\in T(\pi)$. Thus the assertion follows by combining (\ref{rboun}), Proposition \ref{zetaex}, Corollary \ref{zeta} and Lemmas \ref{estimation2}, \ref{zetanontrivial}, \ref{zetatrivial},  \ref{estimation1}, \ref{estimation3}. 
\end{proof}
From the above auxiliary results, we can state the main theorem of this section. 
\begin{thm}\label{main}Let $g$ be a non-constant polynomial in $\cO_K[x_1,...,x_n]$ and $W$ be an $\cO_K$-subscheme of $\AA_{\cO_K}^n$ of finite type such that $g-g(x_0)$ defines a hypersurface in $\AA_{\overline{K}}^n$ having only rational singularity at $x_0$ for all $x_0\in W(\overline{K})$. Then, 
there is an integer $M$ depending only on $g,W$, a constant $C$ depending only on $n,\deg(g)$, and constants $C_L$ for each $L\in\cL_{K,1}$ such that $C=C_L$  for all $L\in\cL_{K,M}$ and 
$$|E_{g,L,W,\psi}|\leq C_Lm_\psi^{n-1}q_L^{-(m_\psi-1)\sigma_L(g,W)-\frac{1}{2}}$$
for all $L\in \cL_{K,1}$ and all additive characters $\psi$ of $L$ of conductor $m_\psi\geq 2$.
\end{thm}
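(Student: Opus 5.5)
The plan is to reduce Theorem \ref{main} to Proposition \ref{auxiliary}, which already gives the desired bound in the special situation fixed at the start of this section: a single polynomial $f$ with $f(Z(\overline{K}))=0$ and only rational singularities along $Z$. The reduction proceeds through the decomposition (\ref{decomsum}) over critical values. The general $g$ may take several critical values on $W(\overline{K})$, and these are not assumed to be $0$.

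\emph{Step 1: splitting into critical values.} Let $c_1,\dots,c_r\in g(W(\overline{K}))$ be the critical values of $g$ on $W$, and set $W_{c_i}=(W\otimes_{\cO_K}\cO_K[c_i])\cap\{g-c_i=0\}$. Points of $W$ that are not critical contribute nothing for conductor $\geq 2$. Working over the number field $K_i=K[c_i]$ and enlarging $M$, (\ref{decomsum}) gives
\[
E_{g,L,W,\psi}=\sum_{i:\ \varphi(c_i)\in L}\psi(\varphi(c_i))\,E_{g-c_i,L,W_{c_i},\psi}.
\]
For each $i$, the polynomial $f_i=g-c_i$ over $\cO_{K_i}$ (after clearing a denominator of $c_i$, which is harmless for large residue characteristic) vanishes on $W_{c_i}(\overline{K})$. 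By hypothesis $f_i$ defines a hypersurface with only rational singularities at every point of $W_{c_i}(\overline{K})$. So $(f_i,W_{c_i})$ satisfies the standing assumptions of this section, and Proposition \ref{auxiliary} yields
\[
|E_{f_i,L,W_{c_i},\psi}|\leq C\, m_\psi^{n-1}q_L^{-(m_\psi-1)\sigma_L(f_i,W_{c_i})-\frac12}
\]
for $L\in\cL_{K_i,M}$, with $C$ depending only on $n,\deg(g)$. Here I use that the strong log resolution of $g-c_i$ can be chosen with complexity bounded by $D(n,\deg g)$, as noted after Lemma \ref{min}.

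\emph{Step 2: reassembly.} By (\ref{mimoi}), $\sigma_L(g,W)=\min_{c_i\in L}\sigma_L(g-c_i,W_{c_i})$, so each summand is bounded by $C m^{n-1}q_L^{-(m-1)\sigma_L(g,W)-1/2}$. The number $r$ of critical values is at most the number bounded by $\deg(g)^n$ (Bezout on the critical locus), so summing gives a constant depending only on $n,\deg g$.

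\emph{Step 3: small residue characteristic.} For the finitely many residue characteristics below $M$ (each $L\in\cL_{K,1}$ separately), I would take $C_L$ from (\ref{rboun}), which gives $|E|\leq C'_L m^{n-1}q_L^{-m\sigma_L}$, and absorb the factor $q_L^{\sigma_L-1/2}$ into $C_L$. One point to check here is that $\sigma_L(g,W)<\infty$. If $\sigma_L=+\infty$, then the exponential sum vanishes for large $m$, and finitely many $m$ can be absorbed into $C_L$.

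\emph{Expected obstacle.} The main difficulty is the base change to $K_i$. Local fields over $\cO_K$ with $c_i\in L$ must be viewed as local fields over $\cO_{K_i}$, and the integer $M$ and the constant $C$ from Proposition \ref{auxiliary} must be uniform across these finitely many extensions. I would handle this by taking the maximum over $i$ of the $M$'s, and by noting that $C$ depends only on $n$ and the degree, not on $K_i$.
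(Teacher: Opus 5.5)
Your proposal is the paper's proof. There, Theorem \ref{main} is obtained in one line from (\ref{rboun}), (\ref{decomsum}), (\ref{mimoi}) and Proposition \ref{auxiliary}, used exactly as in your Steps 1--3. The bound on the number of critical values and the passage to $K[c_i]$ are details the paper leaves implicit, and your treatment of them is fine.

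One sentence of Step 3 is wrong, however. If $\sigma_L(g,W)=+\infty$, the right-hand side is $0$ (by the convention $a^{-\infty}=0$), so nothing can be absorbed into $C_L$.

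For $L\in\cL_{K,M}$ this does no harm. Via (\ref{decomsum}) and Proposition \ref{zetaex}, each piece is then built from zeta functions of the form $a+b/(1-q_L^{-s-1})$, with $b=0$ for $\chi\neq\chi_{\textnormal{triv}}$. By Lemma \ref{estimation2} these contribute $0$ for $m_\psi\geq 2$.

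For small residue characteristic, the bound can actually fail. Take $K=\QQ$, $g=x^3-3x$, $W=\Spec(\ZZ[x]/(x))$ and $L=\QQ_3$.
\begin{itemize}
\item The hypothesis holds: $W(\overline{\QQ})=\{0\}$ and $x^3-3x$ is smooth at $0$.
\item With $x=3y$ one has $g(x)=9(3y^3-y)$, and $y\mapsto 3y^3-y$ is an isometric bijection of $\ZZ_3$.
\item Hence each $\cZ(g-c,\QQ_3,W,\chi,s)$ is either entire in $s$, or a constant times $3^{-2s}\int_{\ZZ_3}\chi(\acc(u))|u|^s|du|$. The latter vanishes for $\chi\neq\chi_{\textnormal{triv}}$ and has only trivial poles for $\chi=\chi_{\textnormal{triv}}$.
\item Thus $\sigma_{\QQ_3}(g,W)=+\infty$, yet $E_{g,\QQ_3,W,\psi}=\frac{1}{3}$ for every $\psi$ of conductor $2$, since $g(3\ZZ_3)\subset 9\ZZ_3$.
\end{itemize}
So this case must be excluded from the statement, or the bound asserted only for large $m_\psi$. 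The paper's appeal to (\ref{rboun}) has exactly the same defect, so apart from this point your argument is as complete as the published one.
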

\begin{proof}
The assertion follows from (\ref{rboun}), (\ref{decomsum}), (\ref{mimoi}) and Proposition \ref{auxiliary}.
\end{proof}
\begin{cor}\label{upbconja}
Let $g\in\cO_K[x_1,...,x_n]$ be a non-constant polynomial and $W$ be an $\cO_K$-subscheme of $\AA_{\cO_K}^n$ of finite type. Then there are positive constants $M,C_1$ depending only on $g, W$ and a positive constant $C_2$ depending only on $n,\deg(g)$ such that
$$|E_{g,L,W,\psi}|\leq m_\psi^{n-1}\min\biggl\{C_1q_L^{-m_\psi\min_{a\in \overline{K}}\lct_{W_{\overline{K}}}(g-a)},C_2q_L^{-(m_\psi-1)\moi_K(g,W)-\frac{1}{2}}\biggr\}$$
for all $L\in \tilde{\cL}_{K,M}$ and all additive characters $\psi$ of $L$ of conductor $m_\psi\geq 2$. 
\end{cor}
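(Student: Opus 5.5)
The plan is to split according to the dichotomy of Theorem \ref{rCMN}: either $g-g(x_0)$ has a non-rational singularity at some $x_0\in W(\overline{K})$, or $g-g(x_0)$ has only rational singularities at $x_0$ for every $x_0\in W(\overline{K})$. In both cases the first term of the minimum comes from the general bound $\sigma_L(g,W)\geq\min_{a\in W(\overline{K})}\lct_a(g-g(a))\geq \min_{a\in\overline{K}}\lct_{W_{\overline{K}}}(g-a)$ (from (\ref{pode1}) and Subsection \ref{JSL}). Concretely, decompose $E_{g,L,W,\psi}$ via (\ref{decomsum}) into the sums $E_{g-c_i,L,W_{c_i},\psi}$ over critical values. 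Then apply Proposition \ref{zetaex} and Proposition \ref{zeta} to a fixed strong log resolution of each $g-c_i$; the numerical data are fixed, and the coefficients $c_{I,L,Z,\chi}$ are bounded uniformly for $L$ of large residue characteristic. Lemma \ref{upnu} then gives $|E_{g,L,W,\psi}|\leq C_1 m_\psi^{n-1}q_L^{-m_\psi \lambda}$ with $\lambda=\min_a\lct_{W_{\overline{K}}}(g-a)$. This is essentially the argument of \cite{Igusa3} and \cite{CMN}. The constant $C_1$ depends on $g,W$ through the resolution, which is allowed, and we extend to $\tilde{\cL}_{K,M}$ by Proposition \ref{transfer1}, after splitting $W$ so that each piece meets at most one critical value.

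For the second term, first take the rational-singularity case. By Theorem \ref{main}, for $L\in\cL_{K,M}$ we get $|E_{g,L,W,\psi}|\leq C m_\psi^{n-1}q_L^{-(m_\psi-1)\sigma_L(g,W)-\frac12}$ with $C$ depending only on $n,\deg(g)$. We would then use (\ref{exmo}): after enlarging $M$, $\sigma_L(g,W)\geq\moi_K(g,W)$ for all $L\in\cL_{K,M}$, since the set $\textnormal{Pol}_g$ is finite and the liminf is attained. Because $q_L^{-(m-1)\sigma}$ decreases in $\sigma$ for $m\geq 2$, we may replace $\sigma_L$ by $\moi_K$. The passage to positive characteristic again uses Proposition \ref{transfer1} together with (\ref{decomsum}).

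In the non-rational case, Theorem \ref{rCMN} gives $\moi_K(g,W)=\min_a\lct_{W_{\overline{K}}}(g-a)=:\lambda\leq 1$. We need $C_1q_L^{-m\lambda}\leq C_2q_L^{-(m-1)\lambda-\frac12}$, i.e.\ $\lambda\geq\frac12$, or else to adjust the constants. The first bound is not of that shape when $\lambda<\frac12$, so the minimum should be read as: the first bound always holds, and the second holds whenever it is applicable. I expect this bookkeeping to be the main obstacle, namely guaranteeing the second expression when $\lambda<\frac12$ with $C_2$ depending only on $n,\deg g$. To handle it I would first try to show $|E|\leq C_2 m^{n-1}q_L^{-(m-1)\lambda-\frac12}$ directly, by running the proof of Proposition \ref{auxiliary} without Lemma \ref{min}. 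The poles with $\nu_i/N_i=1$ then contribute exactly through Lemma \ref{estimation2}, and Lemmas \ref{estimation1} and \ref{estimation3} give the $q_L^{-1/2}$ factor from the Gauss sums and the $1/(q_L-1)$ term. This route needs only uniform bounds on the numerical data, which depend on $n,\deg g$.
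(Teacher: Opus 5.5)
Your rational-case argument is the paper's: Theorem \ref{main}, then $\sigma_L(g,W)\geq\moi_K(g,W)$ for large residue characteristic by finiteness of $\textnormal{Pol}_g$, then (\ref{decomsum}) and Proposition \ref{transfer1}. Your fallback for the non-rational case is also what the paper does: rerun Proposition \ref{auxiliary} without Lemma \ref{min}, which the paper justifies by citing \cite[Section 4]{Saskia-Kien}. But the obstacle you flagged for $\lambda<\frac12$ is not bookkeeping. There the second bound is false, so neither your plan nor the paper's one-line justification can work.

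Take $K=\QQ$, $n=1$, $g=x^3$, $W=\AA^1_{\ZZ}$, so that $\moi_{\QQ}(g,W)=\lambda=\frac13$. Let $\psi$ have conductor $3$ and $p_L>3$.
\begin{itemize}
\item On $\varpi_L\cO_L$ one has $x^3\in\varpi_L^3\cO_L$, so this ball contributes $q_L^{-1}$.
\item On a ball $u+\varpi_L^2\cO_L$ with $u$ a unit, write $x=u+\varpi_L^2y$. Then $\psi(x^3)=\psi(u^3)\psi(3u^2\varpi_L^2y)$, which integrates to $0$ over $y\in\cO_L$.
\end{itemize}
Hence $E_{g,L,W,\psi}=q_L^{-1}$, while the second term of the minimum is $C_2q_L^{-7/6}$. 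The same happens for $g=x^d$ with $d\geq3$ and $m_\psi=d$. Reading the minimum as ``the second bound whenever applicable'' changes the statement rather than proving it.

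Concretely, the rerun breaks at Lemma \ref{estimation3}. The tail $F_J(1)-\textnormal{Coeff}_{t^{m-1}}\bigl(F_J(t)/(1-t)\bigr)$ in (\ref{iq1}) has size $m^{\#J_L-1}q_L^{-m\sigma_L(g,W)}$. It is absorbed into $q_L^{-(m-1)\sigma_L(g,W)-1}$ only because $\sigma_L(g,W)\geq 1$, which is exactly what Lemma \ref{min} supplies. Without it, the trivial-character part is only $O(m^{n-1}q_L^{-m\sigma_L(g,W)})$. This is below $q_L^{-(m-1)\sigma_L(g,W)-\frac12}$ if and only if $\sigma_L(g,W)\geq\frac12$. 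The Gauss sums produce the factor $q_L^{-1/2}$ only for nontrivial characters; for $x^3$ with $m=3$, all of $E$ comes from that tail.

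So your method gives the second bound in the rational case, and plausibly in the non-rational case when $\lambda\geq\frac12$, but not in general.

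A smaller point on the first bound: Proposition \ref{zeta} together with Lemma \ref{upnu} does not give a $C_1$ uniform in $q_L$. The nontrivial-character terms are only bounded by $q_L^{\lambda-\frac12}m^{n-1}q_L^{-m\lambda}$, which is not uniform once $\lambda>\frac12$.
\begin{itemize}
\item For $\frac12<\lambda\leq1$ in the non-rational case, this uniformity is the content of \cite{CMN} (Theorem \ref{rCMN}).
\item In the rational case ($\lambda=1$), you need the integrality $\nu_i\geq N_i+1$ of the exceptional divisors meeting $W$.
\end{itemize}
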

\begin{proof}Note that by the calculation in \cite[Section 4]{Saskia-Kien}, Theorem  \ref{main} also holds for $g,W$  if $g-g(x_0)$ defines a hypersurface in $\AA_{\overline{K}}^n$ having non-rational singularity at $x_0$ for some $x_0\in W(\overline{K})$.
Thus, the assertion follows from (\ref{decomsum}), (\ref{mimoi}), (\ref{moif}), Theorems \ref{rCMN}, \ref{main} and  Proposition \ref{transfer1}.
\end{proof}
The following corollary generalizes Corollary \ref{Igcoexp}.
\begin{cor}\label{bounnsd}Let $g$ be as in Corollary \ref{upbconja}. We suppose that $\deg(g)=d$. 
Let $W$ be an $\cO_K$-subscheme of $\AA_{\cO_K}^n$ of finite type.  We set 
$$\beta_K(g,W):=\min\biggl\{\frac{d+1}{d+2}\moi_K(g,W)+\frac{1}{2(d+2)},\frac{n-s(g)}{d}\biggr\},$$
where $s(g)$ is defined in Definition \ref{singudim}. 
 Then there is an integer $M$ depending only on $f,W$ and a positive constant $C_{n,d}$ depending only on $n,d$ such that
$$|E_{g,L,W,\psi}|\leq C_{n,d}m_\psi^{n-1}q_L^{-\beta_K(g,W)m_\psi}$$
for all $L\in\tilde{\cL}_{K,M}$ and all additive characters $\psi$ of $L$ of conductor at least $1$.
\end{cor}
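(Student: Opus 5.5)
We will mimic the proof of Theorem \ref{bounalph}, with Corollary \ref{upbconja} in the role of Corollary \ref{AvIgnrati} and a direct small-conductor estimate in the role of Proposition \ref{uppallide}. The argument splits according to the size of the conductor $m=m_\psi$ relative to $d+2$.

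\emph{Small conductors} ($1\leq m\leq d+1$). We aim for $|E_{g,L,W,\psi}|\leq c_{n,d}q_L^{-m(n-s(g))/d}$, arguing exactly as in Proposition \ref{uppallide} with $\ell=1$ and no $y$-variable.
\begin{itemize}
\item If $s(g)=0$, the top-degree form of $g$ reduced mod $\cM_L$ defines a smooth projective hypersurface for $p_L$ large. Proposition \ref{uppallf} then applies directly; bounding $\Phi_{L,W}$ by $1$ is harmless.
\item If $s(g)>0$, use Bertini to choose an $(n-s(g))$-dimensional linear subspace $H$ on which the restricted top form is nonsingular. Change coordinates so that $H=\{x_1=\dots=x_{s}=0\}$. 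Then apply Proposition \ref{uppallf} fibrewise in $x'=(x_{s+1},\dots,x_n)$ for each fixed $a\in\cO_L^{s}$, and integrate over $a$.
\end{itemize}
Since $\beta_K(g,W)\leq (n-s(g))/d$, this gives the claim for these $m$.

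\emph{Large conductors} ($m\geq d+2$). Corollary \ref{upbconja} gives
\[|E_{g,L,W,\psi}|\leq C_2m^{n-1}q_L^{-(m-1)\moi_K(g,W)-\frac12}.\]
It remains to show that $(m-1)\moi_K(g,W)+\frac12\geq m\bigl(\frac{d+1}{d+2}\moi_K(g,W)+\frac{1}{2(d+2)}\bigr)$. This inequality is equivalent to
\[\frac{m-d-2}{d+2}\,\moi_K(g,W)+\frac{d+2-m}{2(d+2)}\geq0,\]
that is, $(m-d-2)\bigl(\moi_K(g,W)-\tfrac12\bigr)\geq0$. So it holds for all $m\geq d+2$ as soon as $\moi_K(g,W)\geq \frac12$.

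\emph{The case $\moi_K(g,W)<\frac12$.} This is the main obstacle. In this case the hypersurface is non-rational at some point, and by Theorem \ref{rCMN} we have $\moi_K(g,W)=\min_a\lct_{W_{\overline K}}(g-a)$. We then use the first term of Corollary \ref{upbconja}, namely $C_1m^{n-1}q_L^{-m\lct}$, which already beats $q_L^{-m\beta_K}$ because $\beta_K\leq\frac{d+1}{d+2}\moi+\frac{1}{2(d+2)}\leq \moi$ when $\moi\geq\frac12$ fails only in the harmless direction. More precisely, for $\moi<\frac12$ we have $\frac{d+1}{d+2}\moi+\frac{1}{2(d+2)}>\moi$, so a different route is needed. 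Here we invoke the known lower bound $\lct\geq (n-s(g))/d$ (\cite{CDMO}, as used in Theorem \ref{bounalph}), which gives $\beta_K\leq (n-s(g))/d\leq \lct$. The delicate points are the following:
\begin{itemize}
\item $C_1$ must depend only on $n,d$. For this we use Proposition \ref{genlogc}/the resolution-complexity bounds, as in \cite[Section 4]{Saskia-Kien}.
\item The passage from $\cL_{K,M}$ to $\tilde{\cL}_{K,M}$ is handled by the transfer principle, Proposition \ref{transfer1}, after reducing to one critical value via (\ref{decomsum}).
\end{itemize}
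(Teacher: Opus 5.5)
Your treatment of small conductors matches the paper. Your Bertini fibration for $s(g)>0$ makes explicit what the paper leaves implicit when it cites Proposition \ref{uppallf}. Your treatment of the case $\moi_K(g,W)\geq\frac12$, $m_\psi\geq d+2$, via the second term of Corollary \ref{upbconja} is also exactly the paper's argument.

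The gap is in the case $\moi_K(g,W)<\frac12$. There you switch to the first term $C_1m^{n-1}q_L^{-m\lct}$ of Corollary \ref{upbconja}. But in that corollary $C_1$ depends on $g,W$, whereas the statement requires a constant depending only on $n,d$.

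Your suggested repair via Proposition \ref{genlogc} does not deliver uniformity. Combined with Lemma \ref{sak}, it only gives $|E_{g,L,W,\psi}|\leq C\,m^{n-1}q_L^{-(m-1)\lct}$. This carries an extra factor $q_L^{\lct}$ that is not uniform in $L$, and it cannot be absorbed in general, for instance when $\beta_K(g,W)=\lct$. In the paper, the calculation of \cite[Section 4]{Saskia-Kien} is what yields the uniform $C_2$ term, not a uniform $C_1$.

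The fix is to stay with the $C_2$ term in this case too. If $\moi_K(g,W)\leq\frac12$, then $(m-1)\moi_K(g,W)+\frac12\geq m\,\moi_K(g,W)$. By Theorem \ref{rCMN} and the lower bound $\lct\geq\min\{1,\frac{n-s(g)}{d}\}$, you also have $\moi_K(g,W)=\min_{a}\lct_{W_{\overline{K}}}(g-a)\geq\frac{n-s(g)}{d}\geq\beta_K(g,W)$. Hence $C_2m^{n-1}q_L^{-(m-1)\moi_K(g,W)-\frac12}\leq C_2m^{n-1}q_L^{-m\beta_K(g,W)}$ for all $m\geq 2$, with $C_2$ depending only on $n,d$. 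Conductor $1$ is covered by your small-conductor bound. In this regime the uniform $C_2$ bound already implies the uniform version of the $C_1$ bound you were after. This is precisely the paper's argument.

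Your appeal to the transfer principle is unnecessary, since Corollary \ref{upbconja} is already stated over $\tilde{\cL}_{K,M}$.
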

 \begin{proof}
 By Proposition \ref{uppallf}, there are positive constants $c_{n,d},M$ depending only on $n,d$ such that 
 \begin{equation}\label{ineqd1}|E_{g,L,W,\psi}|\leq c_{n,d}m_\psi^{n-1}q_L^{-\frac{n-s(g)}{d}m_\psi}\leq c_{n,d}m_\psi^{n-1}q_L^{-\beta_K(g,W)m_\psi}
 \end{equation}
for all $L\in\tilde{\cL}_{K,M}$ and all additive characters $\psi$ of $L$ of conductor $m_\psi\in [1,d+1]$.

Suppose firstly that $\moi_K(g,W)\leq\frac{1}{2}$. Then it follows from Theorem \ref{rCMN} that $$\moi_K(g,W)=\min_{a\in\overline{K}^n}\lct_{W_{\overline{K}}}(g-a).$$
 Thus, we can use the argument of \cite[Proposition 2.4]{CluckerNguyen} to have 
 $$\frac{n-s(g)}{d}\leq \min_{a\in\overline{K}^n}\lct_{W_{\overline{K}}}(g-a)=\moi_K(g,W)\leq \frac{1}{2}.$$
This and  Corollary  \ref{upbconja} imply that there is an integer $M$ depending only on $g, W$ and a positive constant  $c'_{n,d}$ depending only on $n,d$ such that 
\begin{equation}\label{ineqd12}|E_{g,L,W,\psi}|\leq c'_{n,d}m_\psi^{n-1}q_L^{-\moi_K(g,W)(m_\psi-1)-\frac{1}{2}}\leq c'_{n,d}m_\psi^{n-1}q_L^{-\frac{n-s(g)}{d}m_\psi}
\end{equation}
for all $L\in\tilde{\cL}_{K,M}$ and all additive characters $\psi$ of $L$ of conductor at least $2$.  Thus, we can deduce the assertion in this case by using  (\ref{ineqd1}) and (\ref{ineqd12}).
 
 Now, we suppose that $\moi_K(g,W)>\frac{1}{2}$. Then  it is easy to verify that 
 $$m\beta_K(g,W)\leq m\left(\frac{d+1}{d+2}\moi_K(g,W)+\frac{1}{2(d+2)}\right)\leq (m-1)\moi_K(g,W)+\frac{1}{2}$$
 for all $m\geq d+2$. This fact together with Corollary \ref{upbconja} and (\ref{ineqd1}) imply the assertion in this case. 
 \end{proof}

\section{Lower bound of the motivic oscillation indexes}\label{lobmoti}In this section, we fix  positive integers $n,d$ together with a number field  $K$ and an $\cO_K$-subscheme $Z$ of $\AA_{\cO_K}^{n}$ of finite type.
\subsection{A general bound on exponential sums over non-Archimedean local fields}
We fix  a local field  $L\in \tilde{\cL}_{K,d+1}$. We view elements of $L^n$ as row vectors. Let $h$ be a polynomial of degree $d$ in variables $x_1,...,x_{n}$ over $\cO_L$.   In this subsection, we will give an upper bound of the integrals 
\begin{equation}\label{exph}\tilde{E}_{L,h,Z,m,\psi}=\int_{\{x\in \cO_L^{n}\mid \ord_L(h(x))\geq m-1\}}\Phi_{L,Z}(x)\psi(h(x))|dx|
\end{equation}
for additive characters $\psi$ of $L$ of conductor $m\geq 2$.

First of all, we need to prepare some notations and notions related to matrices over local rings. 
\begin{defn} \label{orZZ}Let $r$ be a positive integer. We denote by $\ZZ_{\ord}^r$ the set of all strictly ordered sets $\{i_1<i_2<...<i_r\}$ for integers $i_1,...,i_r$. We set $|\mathsf{S}|=r$ if $\mathsf{S}\in\ZZ_{\ord}^r$.  If $\mathsf{S}=\{i_1<...<i_r\}\in \ZZ_{\ord}^r$ then we set $\Upsilon(\mathsf{S})=\{i_1,...,i_r\}\subset\ZZ$. Conversely, each subset $S$ of $\ZZ$ with $r$ elements is associated  to a unique element $\Xi(S)\in\ZZ_{\ord}^r$ satisfying $\Upsilon(\Xi(S))=S$.  We endow 	$\ZZ_{\ord}^r$ with the totally order given by $\mathsf{I}=\{i_1<...<i_r\}<\mathsf{J}=\{j_1<...<j_r\}$ if there is $1\leq \ell\leq r$ such that $i_l=j_l$ if $l<\ell$ and $i_\ell<j_\ell$. If $r'$ is a positive integer with $r'<r$, $\mathsf{S}=\{i_1<i_2<...<i_r\}\in \ZZ_{\ord}^r$, $\mathsf{S}'=\{j_1<j_2<...<j_{r'}\}\in \ZZ_{\ord}^{r'}$, then we say that $\mathsf{S}'$ is a subset of $\mathsf{S}$ and write $\mathsf{S}'\subset \mathsf{S}$ if $\{j_1,...,j_{r'}\}\subset\{i_1,...,i_r\}$. If this is the case,  we define $\mathsf{S}\setminus\mathsf{S}'=\Xi(\{i_1,...,i_r\}\setminus\{j_1,...,j_{r'}\})\in\ZZ_{\ord}^{r-r'}$ and write $\mathsf{S}=\left(\mathsf{S}\setminus\mathsf{S}'\right)\sqcup\mathsf{S}'$.


\end{defn}  
\begin{defn}\label{matrS}Let $\mathsf{S}\in\ZZ_{\ord}^n$. If $R$ is a ring, we denote by $\MM_{\mathsf{S}}(R)$ the ring of $n\times n$ matrices with entries in $R$ whose rows and columns are indexed by $\mathsf{S}$. We also denote by $\Sm_\mathsf{S}(R)$ the set of symmetric matrices in  $\MM_{\mathsf{S}}(R)$. If $\mathsf{S}=\{1<2<...<n\}$ then we write $\MM_{n}(R)$ and  $\Sm_n(R)$ instead of $\MM_{\mathsf{S}}(R)$ and $\Sm_\mathsf{S}(R)$ respectively. If $\mathsf{N}\in\MM_{\mathsf{S}}(R)$ and $\mathsf{I}$ is a non-empty subset  of $\mathsf{S}$, we denote by $\mathsf{N}_\mathsf{I}$  the principal submatrix of $\mathsf{N}$ given by deleting its $i^{\textnormal{th}}$-row and $i^{\textnormal{th}}$-column if $i\notin \mathsf{I}$. Let $\mathsf{N}=(z_{ij})_{i,j\in \mathsf{S}}\in \MM_\mathsf{S}(\cO_L)$ be an $n\times n$ matrix. We set $\ord_L(\mathsf{N}):=\min_{i,j\in\mathsf{S}}\ord_L(z_{ij})\leq +\infty$ with the convention that $\ord_L(\mathsf{N})=+\infty$ if $\mathsf{N}=0$. If $\mathsf{N}\neq 0$,  we set $\mathsf{N}^{\ac}:=\varpi_L^{-\ord_L(\mathsf{N})}\mathsf{N}\mod \varpi_L\cO_L\in \MM_{\mathsf{S}}(k_L)$ and $\rank_{\ac}(\mathsf{N}):=\rank(\mathsf{N}^{\ac})$. If $\mathsf{N}=0$ we also set $\mathsf{N}^{\ac}=0$ and $\rank_{\ac}(\mathsf{N})=0$.


\end{defn} 

\begin{defn}\label{inste}Let $\mathsf{S}\in\ZZ_{\ord}^n$ and $0\neq\mathsf{N}=(z_{ij})_{i,j\in\mathsf{S}}\in\Sm_\mathsf{S}(\cO_L)$. Let $r=\rank_{\ac}(\mathsf{N})$ and  $\mathsf{I}=\mathsf{I}(\mathsf{N})=\{i_1<...<i_r\}$ be the smallest element of $\ZZ_{\ord}^r$  such that  $\mathsf{I}\subset \mathsf{S}$ and $r=\rank_{\ac}(\mathsf{N}_\mathsf{I})$.  
By the definition of $r$ and $\mathsf{I}$, the rows of $\mathsf{N}_\mathsf{I}$ form a basis of the free $\cO_L$-module $\varpi_L^{\ord_L(\mathsf{N})}\cO_L^r$. Thus, for each $j\in \mathsf{S}\setminus \mathsf{I}$, there exists a unique element $(a_{jj'}(\mathsf{N}))_{j'\in \mathsf{I}}\in\cO_L^r$ such that 
\begin{equation}\label{linedep}(z_{ji_{1}},...,z_{ji_{r}})=\sum_{j'\in \mathsf{I}}a_{jj'}(\mathsf{N})(z_{j'i_{1}},...,z_{j'i_{r}}).
\end{equation} 
We also put $a_{jj'}(\mathsf{N})=0$ if $j,j'\in \mathsf{I}$. We set $\tilde{\mathsf{N}}=(\tilde{z}_{j\ell})_{\ell,j\in\mathsf{S}}$ with 
\begin{equation}\label{zij}\tilde{z}_{j\ell}=\begin{cases}z_{j\ell}-\sum_{j'\in \mathsf{I}}a_{jj'}(\mathsf{N})z_{j'\ell} \textnormal{ if either } j\notin\mathsf{I} \textnormal{ or } \ell\in\mathsf{I},\\
z_{j\ell}-\sum_{j'\in \mathsf{I}}a_{\ell j'}(\mathsf{N})z_{jj'} \textnormal{ if } \ell\notin\mathsf{I} \textnormal{ and } j\in\mathsf{I}.
\end{cases}
\end{equation}

Moreover, if $r<n$ then we set $\rho(\mathsf{N})=\tilde{\mathsf{N}}_{\mathsf{S}\setminus \mathsf{I}}$.

\end{defn}
\begin{lem}\label{symat}Let $\mathsf{S}\in \ZZ_{\ord}^n$ and $0\neq\mathsf{N}=(z_{ij})_{i,j\in\mathsf{S}}\in\Sm_\mathsf{S}(\cO_L)$ such that $\rank_{\ac}(\mathsf{N})<n$. We use the notations of Definition \ref{inste}. Then we have $\mathsf{B}\mathsf{N}\mathsf{B}^T=\tilde{\mathsf{N}}$, where $\mathsf{B}=(b_{ij})_{i,j\in \mathsf{S}}$ with $$b_{ij}=\begin{cases} 1 \textnormal{ if } i=j,\\
0 \textnormal{ if } i\neq j \textnormal{ and }  j\notin \mathsf{I}(\mathsf{N}),\\
-a_{ij}(\mathsf{N}) \textnormal{ if } i\neq j \textnormal{ and } j\in \mathsf{I}(\mathsf{N}).\end{cases}$$
In particular, $\mathsf{N}_{\mathsf{I}(\mathsf{N})}=\tilde{\mathsf{N}}_{\mathsf{I}(\mathsf{N})}$, $\rho(\mathsf{N})$ is a symmetric matrix, $\ord_L(\rho(\mathsf{N}))>\ord_L(\mathsf{N})$ and $$\det(\mathsf{N})=\det(\rho(\mathsf{N}))\times\det(\mathsf{N}_{\mathsf{I}(\mathsf{N})}).$$
Moreover, if $\mathsf{S}=\{1<2<...<n\}$ and $\mathsf{I}(\mathsf{N})=\{1<2<...<r\}$  then $\tilde{\mathsf{N}}$ is the diagonal block matrix $(\mathsf{N}_{\mathsf{I}(\mathsf{N})},\rho(\mathsf{N}))$.
\end{lem}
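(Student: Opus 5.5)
The plan is to verify $\mathsf{B}\mathsf{N}\mathsf{B}^T=\tilde{\mathsf{N}}$ entry by entry. Write $\mathsf{I}=\mathsf{I}(\mathsf{N})$ and set $a_{jj'}=a_{jj'}(\mathsf{N})$. Note that $a_{jj'}=0$ whenever $j\in\mathsf{I}$, and that $b_{ij}=\delta_{ij}-a_{ij}\mathbf{1}_{j\in\mathsf{I}}$.

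First compute $\mathsf{B}\mathsf{N}$. Its $(j,\ell)$ entry is
$$(\mathsf{B}\mathsf{N})_{j\ell}=z_{j\ell}-\sum_{j'\in\mathsf{I}}a_{jj'}z_{j'\ell}.$$
Next multiply on the right by $\mathsf{B}^T$:
$$(\mathsf{B}\mathsf{N}\mathsf{B}^T)_{j\ell}=(\mathsf{B}\mathsf{N})_{j\ell}-\sum_{\ell'\in\mathsf{I}}a_{\ell\ell'}(\mathsf{B}\mathsf{N})_{j\ell'}.$$
Split into cases according to (\ref{zij}).

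\begin{itemize}
\item If $\ell\in\mathsf{I}$, then $a_{\ell\ell'}=0$, so the entry is $z_{j\ell}-\sum_{j'}a_{jj'}z_{j'\ell}$, which matches $\tilde z_{j\ell}$.
\item If $\ell\notin\mathsf{I}$ and $j\in\mathsf{I}$, then $a_{jj'}=0$, so the entry is $z_{j\ell}-\sum_{\ell'\in\mathsf{I}}a_{\ell\ell'}z_{j\ell'}$, which matches the second line of (\ref{zij}).
\item If $j,\ell\notin\mathsf{I}$, the extra term is $\sum_{\ell'\in\mathsf{I}}a_{\ell\ell'}(\mathsf{B}\mathsf{N})_{j\ell'}$. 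By (\ref{linedep}), $(\mathsf{B}\mathsf{N})_{j\ell'}=0$ for all $\ell'\in\mathsf{I}$, since this is exactly the row relation restricted to the columns in $\mathsf{I}$. So the extra term vanishes and the entry again equals $\tilde z_{j\ell}$.
\end{itemize}

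The consequences then follow quickly.

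Symmetry: $\mathsf{B}\mathsf{N}\mathsf{B}^T$ is symmetric, so $\tilde{\mathsf{N}}$ is, and hence so is $\rho(\mathsf{N})$.

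The block $\mathsf{N}_\mathsf{I}$: for $j,\ell\in\mathsf{I}$ all coefficients $a_{jj'}$ vanish, so $\tilde z_{j\ell}=z_{j\ell}$ and $\tilde{\mathsf{N}}_\mathsf{I}=\mathsf{N}_\mathsf{I}$.

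Vanishing off-diagonal block: for $j\notin\mathsf{I}$ and $\ell\in\mathsf{I}$ we have $\tilde z_{j\ell}=0$ by (\ref{linedep}). By symmetry, $\tilde z_{\ell j}=0$ as well.

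Block form: $\tilde{\mathsf{N}}$ is block diagonal $(\mathsf{N}_\mathsf{I},\rho(\mathsf{N}))$ after permuting $\mathsf{S}$ so that $\mathsf{I}$ comes first. If $\mathsf{I}=\{1<\dots<r\}$, no permutation is needed, which gives the last claim.

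Determinant: $\mathsf{B}$ is unipotent. Its nonzero off-diagonal entries $b_{ij}$ have $j\in\mathsf{I}$ and $i\notin\mathsf{I}$, so ordering $\mathsf{I}$ first makes it lower triangular with ones on the diagonal, and $\det\mathsf{B}=1$. Simultaneously permuting rows and columns does not change the determinant, so $\det\mathsf{N}=\det\tilde{\mathsf{N}}=\det(\mathsf{N}_\mathsf{I})\det(\rho(\mathsf{N}))$.

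Valuation, which I expect to be the main subtle step: show $\ord_L(\rho(\mathsf{N}))>\ord_L(\mathsf{N})$. Set $o=\ord_L(\mathsf{N})$. Since $\mathsf{B}\in\MM_\mathsf{S}(\cO_L)$ has $\det\mathsf{B}=1$, it is invertible over $\cO_L$. Hence $\ord_L(\tilde{\mathsf{N}})=o$, and reduction commutes:
$$\tilde{\mathsf{N}}^{\ac}=\bar{\mathsf{B}}\,\mathsf{N}^{\ac}\,\bar{\mathsf{B}}^T,$$
so $\rank(\tilde{\mathsf{N}}^{\ac})=r$. Now $\tilde{\mathsf{N}}^{\ac}$ is block diagonal $(\mathsf{N}_\mathsf{I}^{\ac'},\rho(\mathsf{N})^{\ac'})$, where $\ac'$ means reduction of $\varpi_L^{-o}(\cdot)$. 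The first block has rank $r$ by the choice of $\mathsf{I}$. Since the total rank is $r$, the second block must be zero modulo $\varpi_L$. Therefore all entries of $\rho(\mathsf{N})$ have valuation greater than $o$.
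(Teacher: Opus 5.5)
Your proof is correct and follows the same route as the paper. You verify $\mathsf{B}\mathsf{N}\mathsf{B}^T=\tilde{\mathsf{N}}$ entry by entry using (\ref{linedep}), get the determinant identity from $\det\mathsf{B}=1$, and obtain $\ord_L(\rho(\mathsf{N}))>\ord_L(\mathsf{N})$ from the rank-$r$ argument on the block-diagonal $\tilde{\mathsf{N}}^{\ac}$; computing $\mathsf{B}\mathsf{N}$ first is just slightly tidier bookkeeping of the double sum the paper expands directly.
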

\begin{proof} We set $\mathsf{B}\mathsf{N}\mathsf{B}^T=(y_{ij})_{i,j\in\mathsf{S}}$. We will show that $y_{ij}=\tilde{z}_{ij}$ for all $i,j\in\mathsf{S}$. 

Note that $a_{l\ell}(\mathsf{N})=0$ if $l,\ell\in\mathsf{I}(\mathsf{N})$. Thus 
\begin{equation}\label{bdelta}
b_{l\ell}=\delta_{l\ell}
\end{equation} if $l,\ell\in\mathsf{I}(\mathsf{N})$, where $\delta_{l\ell}$ is the Kronecker delta. These facts and the definition of $\mathsf{B}$ imply 
\begin{equation}
y_{ij}=\sum_{l\in\mathsf{S}}\sum_{\ell\in\mathsf{S}}b_{il}z_{l\ell}b_{j\ell}=\sum_{l\in\mathsf{I}(\mathsf{N})}\sum_{\ell\in\mathsf{I}(\mathsf{N})}b_{il}z_{l\ell}b_{j\ell}=z_{ij}=z_{ij}-\sum_{\ell\in \mathsf{I}(\mathsf{N})}a_{i\ell}(\mathsf{N})z_{\ell j}=\tilde{z}_{ij}.
\end{equation}
for all $i,j\in\mathsf{I}(\mathsf{N})$.

Since $\mathsf{N}$ is symmetric, we have $y_{ij}=y_{ji}$ for all $i,j\in \mathsf{S}$. Moreover, we also have $\tilde{z}_{ij}=\tilde{z}_{ji}$ for all $j\in\mathsf{I}(\mathsf{N}), i\notin\mathsf{I}(\mathsf{N})$. Thus, it follows from the definition of $\mathsf{B}$,  (\ref{linedep}), (\ref{zij}) and (\ref{bdelta}) that 
\begin{align*}
y_{ji}=y_{ij}=\sum_{l\in\mathsf{S}}\sum_{\ell\in\mathsf{S}}b_{il}z_{l\ell}b_{j\ell}&=\bigg(\sum_{l\in\mathsf{I}(\mathsf{N})}\sum_{\ell\in\mathsf{I}(\mathsf{N})}b_{il}z_{l\ell}b_{j\ell}\bigg)+\bigg(\sum_{\ell\in\mathsf{I}(\mathsf{N})}b_{ii}z_{i\ell}b_{j\ell}\bigg)\\
&=\bigg(\sum_{l\in\mathsf{I}(\mathsf{N})}b_{il}z_{lj}\bigg)+z_{ij}=\bigg(\sum_{l\in\mathsf{I}(\mathsf{N})}-a_{il}(\mathsf{N})z_{lj}\bigg)+z_{ij}=\tilde{z}_{ij}=\tilde{z}_{ji}=0
\end{align*}
if $j\in\mathsf{I}(\mathsf{N})$ and $i\notin\mathsf{I}(\mathsf{N})$.

If $i,j\notin\mathsf{I}(\mathsf{N})$ then we use (\ref{linedep}), (\ref{zij}) and the definition of $\mathsf{B}$ to have 
\begin{align*}
y_{ij}=\sum_{l\in\mathsf{S}}\sum_{\ell\in\mathsf{S}}b_{il}z_{l\ell}b_{j\ell}&=\bigg(\sum_{l\in\mathsf{I}(\mathsf{N})}\sum_{\ell\in\mathsf{I}(\mathsf{N})}b_{il}z_{l\ell}b_{j\ell}\bigg)+\bigg(\sum_{\ell\in\mathsf{I}(\mathsf{N})}b_{ii}z_{i\ell}b_{j\ell}\bigg)+ \bigg(\sum_{l\in\mathsf{I}(\mathsf{N})}b_{il}z_{lj}b_{jj}\bigg)+b_{ii}z_{ij}b_{jj}
\\&=\bigg(\sum_{l\in\mathsf{I}(\mathsf{N})}\sum_{\ell\in\mathsf{I}(\mathsf{N})}a_{il}(\mathsf{N})z_{l\ell}a_{j\ell}(\mathsf{N})\bigg)-\bigg(\sum_{\ell\in\mathsf{I}(\mathsf{N})}z_{i\ell}a_{j\ell}(\mathsf{N})\bigg)- \bigg(\sum_{l\in\mathsf{I}(\mathsf{N})}a_{il}(\mathsf{N})z_{lj}\bigg)+z_{ij}\\
&=\bigg(\sum_{\ell\in\mathsf{I}(\mathsf{N})}z_{i\ell}a_{j\ell}(\mathsf{N})\bigg)-\bigg(\sum_{\ell\in\mathsf{I}(\mathsf{N})}z_{i\ell}a_{j\ell}(\mathsf{N})\bigg)- \bigg(\sum_{l\in\mathsf{I}(\mathsf{N})}a_{il}(\mathsf{N})z_{lj}\bigg)+z_{ij}\\
&=-\bigg(\sum_{l\in\mathsf{I}(\mathsf{N})}a_{il}(\mathsf{N})z_{lj}\bigg)+z_{ij}=\tilde{z}_{ij}.
\end{align*}
In summary, $\tilde{\mathsf{N}}$ equals  the symmetric matrix $\mathsf{B}\mathsf{N}\mathsf{B}^T$. Thus, $\rho(\mathsf{N})=\tilde{\mathsf{N}}_{\mathsf{S}\setminus\mathsf{I}(\mathsf{N})}$ is also a symmetric matrix. By the definition and the above calculation, it is easy to verify that $\det(\mathsf{B})=1$, $\left(\mathsf{B}\mathsf{N}\mathsf{B}^T\right)^{\ac}=\tilde{\mathsf{N}}^{\ac}$, $\mathsf{N}_{\mathsf{I}(\mathsf{N})}=\tilde{\mathsf{N}}_{\mathsf{I}(\mathsf{N})}$, $\rk_{\ac}(\mathsf{N})=\rk_{\ac}(\tilde{\mathsf{N}})=\rk_{\ac}\left(\tilde{\mathsf{N}}_{\mathsf{I}(\mathsf{N})}\right)$. These and the fact that $\tilde{z}_{ij}=\tilde{z}_{ji}=0$ if $j\in\mathsf{I}(\mathsf{N})$ and $i\notin\mathsf{I}(\mathsf{N})$ imply  the last assertion and  $$\det(\mathsf{N})=\det(\mathsf{B})^2\det(\mathsf{N})=\det(\tilde{\mathsf{N}})=\det(\rho(\mathsf{N}))\det(\tilde{\mathsf{N}}_{\mathsf{I}(\mathsf{N})})=\det(\rho(\mathsf{N}))\det(\mathsf{N}_{\mathsf{I}(\mathsf{N})}).$$
Moreover, we also have
$$\left(\tilde{\mathsf{N}}^{\ac}\right)_{\mathsf{S}\setminus\mathsf{I}(\mathsf{N})}=0.$$
Therefore, $\ord_L(\mathsf{N})<\ord_L(\rho(\mathsf{N}))$. 
\end{proof}
\begin{defn}\label{trnmatri}Let $\mathsf{S}=\{1<2<...<n\}$ and $\mathsf{N}=(z_{ij})_{1\leq i,j\leq n}\in\Sm_n(\cO_L)$. By using Definitions \ref{orZZ}, \ref{matrS}, \ref{inste} together with Lemma \ref{symat} and induction, we construct a finite sequence of tuples $$(\mathsf{S}_i,\mathsf{N}^{(i)},r_i(\mathsf{N}),\mu_i(\mathsf{N}))\in \ZZ_{\ord}^{n-\sum_{1\leq l\leq i-1}r_l(\mathsf{N})}\times \Sm_{\mathsf{S}_i}(\cO_L)\times \NN_{\geq 1}\times \left(\NN\cup\{+\infty\}\right)$$ for $i\geq 1$ and a matrix $(a_{ij}(\mathsf{N}))_{i,j\in\mathsf{S}}\in \MM_{\mathsf{S}}(\cO_L)$ as follows. Firstly, we set $\mathsf{S}_1=\mathsf{S}$, $\mathsf{N}^{(1)}=\mathsf{N}$. For each $i\geq 1$, if we have already constructed   $\mathsf{S}_i\in \ZZ_{\ord}^{n-\sum_{1\leq l\leq i-1}r_l(\mathsf{N})}$ with $\mathsf{S}_i\subset \mathsf{S}$ and  a matrix $\mathsf{N}^{(i)}\in\Sm_{\mathsf{S}_i}(\cO_L)$ then we set $\mu_i(\mathsf{N})=\ord_L\left(\mathsf{N}^{(i)}\right)$ and $$r_i(\mathsf{N})=\begin{cases}\rank_{\ac}\left(\mathsf{N}^{(i)}\right) \textnormal{ if } \mathsf{N}^{(i)}\neq 0,\\ |\mathsf{S}_i| \textnormal{ if } \mathsf{N}^{(i)}=0.\end{cases}$$ If $r_i(\mathsf{N})=|\mathsf{S}_i|$ then we end the construction and define $e(\mathsf{N}):=i$, $a_{jl}(\mathsf{N}):=0$ for all $j\in\mathsf{S}$ and all $l\in \mathsf{S}_i$. If $r_i(\mathsf{N})<|\mathsf{S}_i|$ then we set \begin{itemize}
\item $\mathsf{I}_i(\mathsf{N})=\mathsf{I}(\mathsf{N}^{(i)})$, 
\item $\mathsf{S}_{i+1}=\mathsf{S}_i\setminus \mathsf{I}_i(\mathsf{N})$, 
\item $\mathsf{N}^{(i+1)}=\rho(\mathsf{N}^{(i)})$, 
\item $a_{jl}(\mathsf{N})=a_{jl}(\mathsf{N}^{(i)})$ if $l\in \mathsf{I}_i(\mathsf{N}), j\in \mathsf{S}_i$,
\item  and $a_{jl}(\mathsf{N})=0$ if $l\in \mathsf{I}_i(\mathsf{N}), j\in \mathsf{S}\setminus \mathsf{S}_i$.
\end{itemize}

We set $\mathsf{I}_{e(\mathsf{N})}(\mathsf{N})=\mathsf{S}_{e(\mathsf{N})}$ and $I_i(\mathsf{N})=\Upsilon(\mathsf{I}_i(\mathsf{N}))$ if $1\leq i\leq e(\mathsf{N})$. Then $\Sigma_\mathsf{N}=\{I_1(\mathsf{N}),...,I_{e(\mathsf{N})}(\mathsf{N})\}$ forms a partition of $\{1,2,...,n\}$.

\end{defn}
\begin{lem}\label{detr}Let $\mathsf{N}=(z_{ij})_{1\leq i,j\leq n}\in\Sm_n(\cO_L)$ be an $n\times n$ symmetric matrix. For each $1\leq r\leq n$, we denote by $\mathfrak{M}_r(\mathsf{N})$ the set of all principal minors of order $r$ of $\mathsf{N}$. We set 
$$\mathcal{D}_r(\mathsf{N})=\min_{\mathsf{M}\in \mathfrak{M}_r(\mathsf{N})}\ord_L(\det(\mathsf{M})).$$
With the notations of Definition \ref{trnmatri}, we have 
$$\left|\det(\mathsf{N})\right|=q_L^{-\sum_{i=1}^{e(\mathsf{N})}\mu_i(\mathsf{N})r_i(\mathsf{N})}$$
and 
$$\mathcal{D}_r(\mathsf{N})=\bigg(r-\sum_{i\leq \ell_r-1}r_i(\mathsf{N})\bigg)\mu_{\ell_r}(\mathsf{N})+\sum_{i\leq\ell_r-1}\mu_i(\mathsf{N})r_i(\mathsf{N}),$$
where $\ell_r$ is the smallest integer number such that $r-\sum_{i\leq\ell_r}r_i(\mathsf{N})\leq 0$.

\end{lem}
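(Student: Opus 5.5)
Both formulas come from iterating the congruence $\mathsf{N}^{(i)}\mapsto\mathsf{B}_i\mathsf{N}^{(i)}\mathsf{B}_i^T$ of Lemma \ref{symat}, combined with one valuation bound for $\det(\mathsf{N}_{\mathsf{I}_i(\mathsf{N})})$ at each step. Each $\mathsf{B}_i$ is unipotent with entries in $\cO_L$.

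\emph{Determinant formula.} I would induct on $e(\mathsf{N})$. Lemma \ref{symat} gives
\[\det(\mathsf{N}^{(i)})=\det(\mathsf{N}^{(i+1)})\det(\mathsf{N}^{(i)}_{\mathsf{I}_i(\mathsf{N})}).\]
The matrix $\mathsf{N}^{(i)}_{\mathsf{I}_i(\mathsf{N})}$ has size $r_i(\mathsf{N})$. By the choice of $\mathsf{I}_i(\mathsf{N})$, the reduction $\varpi_L^{-\mu_i}\mathsf{N}^{(i)}_{\mathsf{I}_i}\bmod\varpi_L$ has full rank $r_i$. Hence $\ord_L\det(\mathsf{N}^{(i)}_{\mathsf{I}_i})=\mu_ir_i$ exactly.

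At the last step $i=e(\mathsf{N})$ there are two cases. If $r_e=\rank_{\ac}=|\mathsf{S}_e|$, the same argument gives $\ord_L\det=\mu_er_e$. If $\mathsf{N}^{(e)}=0$, then $\mu_e=+\infty$ and both sides vanish, using the convention $q_L^{-\infty}=0$. Multiplying the factors over $i$ gives $|\det\mathsf{N}|=q_L^{-\sum_i\mu_ir_i}$.

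\emph{Lower bound for $\mathcal{D}_r$.} The $\mu_i$ are strictly increasing by Lemma \ref{symat}. Conjugating $\mathsf{N}$ by $\mathsf{P}=\mathsf{B}_{e-1}\cdots\mathsf{B}_1$, each $\mathsf{B}_i$ extended by the identity, turns $\mathsf{N}$ into a block-diagonal matrix $\mathsf{D}$ with blocks $\mathsf{N}^{(i)}_{\mathsf{I}_i}$ on the index sets $I_i(\mathsf{N})$; the final block is $\mathsf{N}^{(e)}$, indexed by $\mathsf{S}_e=\mathsf{I}_e$. After scaling by $\varpi_L^{-\mu_i}$, each block is $\ac$-invertible, except possibly the final block.

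Any principal $r\times r$ minor is a $\ZZ$-combination of products of entries $z_{j\ell}$. I would use invariance of the ``elementary divisor'' type quantity $\min_{|A|=|B|=r}\ord\det\mathsf{N}_{A,B}$ under $\mathrm{GL}_n(\cO_L)$-equivalence. For $\mathsf{D}$, this minimum is the sum of the $r$ smallest diagonal valuations, counted with multiplicity. Those are exactly $\mu_1$ taken $r_1$ times, then $\mu_2$ taken $r_2$ times, and so on. The sum of the first $r$ of them is precisely the claimed expression with $\ell_r$. So every $r\times r$ minor of $\mathsf{N}$, principal or not, has valuation at least this value.

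\emph{Upper bound (attainment) for $\mathcal{D}_r$.} Here I take the principal minor on the index set
\[J=I_1\cup\dots\cup I_{\ell_r-1}\cup J',\]
where $J'\subset I_{\ell_r}$ is chosen so that $|J|=r$. Restrict the conjugation to the rows and columns in $I_1\cup\dots\cup I_{\ell_r}$. The matrix $\mathsf{B}_i$ only adds multiples of rows in $\mathsf{I}_i$ to rows outside $\mathsf{I}_i$, so the principal submatrix on an index set closed under the earlier $\mathsf{I}_j$ is transformed by the restricted unipotent matrices. Hence $\det\mathsf{N}_J$ equals $\prod_{i<\ell_r}\det\mathsf{N}^{(i)}_{\mathsf{I}_i}$ times $\det(\mathsf{N}^{(\ell_r)})_{J'}$.

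Choose $J'$ inside $\mathsf{I}_{\ell_r}$ so that $(\mathsf{N}^{(\ell_r)}_{\mathsf{I}_{\ell_r}})^{\ac}$ has a nonsingular principal $|J'|$-minor. Such a $J'$ exists because a nonsingular symmetric matrix over a field of characteristic $\neq2$ has nonsingular principal minors of every size. When $\mathsf{N}^{(\ell_r)}$ is zero or the last block, the valuation is $+\infty$ on both sides. The valuation of the chosen minor then matches the lower bound.

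The main obstacle is this last step: checking carefully that principal minors transform correctly under the partial conjugation, i.e. that the $\mathsf{B}_i$ preserve the index sets in the needed triangular way, and proving the principal-minor existence fact.
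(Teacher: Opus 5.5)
You prove the determinant formula and the lower bound for $\mathcal{D}_r(\mathsf{N})$ correctly, and you follow the paper's route. The paper likewise reduces $\mathsf{N}$ to the block matrix $\mathsf{T}_n(\mathsf{N})$ by elementary row and column operations over $\cO_L$. It then uses that the ideal generated by all $r\times r$ minors, principal or not, is unchanged.

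The gap is in the attainment step. You invoke the claim that a nonsingular symmetric matrix over a field of characteristic $\neq 2$ has nonsingular principal minors of every size. This is false: $\begin{pmatrix}0&1\\1&0\end{pmatrix}$ is nonsingular, yet both of its $1\times 1$ principal minors vanish. Only the top size is guaranteed: a symmetric matrix of rank $s$ has a nonsingular principal $s\times s$ submatrix, which is what makes $\mathsf{I}(\mathsf{N})$ well defined. Your remark that the case ``$\mathsf{N}^{(\ell_r)}$ is the last block'' gives $+\infty$ on both sides is also only right when that block is $0$. Otherwise the same problem of choosing $J'$ arises there.

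No argument can close this gap, because the second identity of the lemma is itself false. Take $\mathsf{N}=\begin{pmatrix}0&1\\1&0\end{pmatrix}\in\Sm_2(\cO_L)$. Then $\mu_1(\mathsf{N})=0$, $r_1(\mathsf{N})=2$ and $e(\mathsf{N})=1$, so for $r=1$ the right-hand side equals $0$, whereas $\mathcal{D}_1(\mathsf{N})=+\infty$. The paper's proof has the same defect. It picks an arbitrary $\mathsf{I}_{r,\ell_r}\subset\mathsf{I}_{\ell_r}$, and when evaluating $\ord_L\det\bigl((\mathsf{T}_n(\mathsf{N}))_{\mathsf{J}_r}\bigr)$ it tacitly assumes that the corresponding principal submatrix of $\bigl(\mathsf{N}^{(\ell_r)}_{\mathsf{I}_{\ell_r}(\mathsf{N})}\bigr)^{\ac}$ is nonsingular.

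What your argument does establish is the correct weaker statement. $\mathcal{D}_r(\mathsf{N})$ is at least the stated value for every $r$. Equality holds whenever $r=r_1(\mathsf{N})+\dots+r_k(\mathsf{N})$ for some $k$: take $J=I_1(\mathsf{N})\cup\dots\cup I_k(\mathsf{N})$. Your factorization $\det\mathsf{N}_J=\prod_{i\le k}\det\mathsf{N}^{(i)}_{\mathsf{I}_i(\mathsf{N})}$ is valid there, because each $\mathsf{B}_i$ only adds multiples of rows and columns indexed by $\mathsf{I}_i(\mathsf{N})$. More generally, equality holds whenever $\bigl(\mathsf{N}^{(\ell_r)}_{\mathsf{I}_{\ell_r}(\mathsf{N})}\bigr)^{\ac}$ has a nonzero principal minor of size $r-\sum_{i<\ell_r}r_i(\mathsf{N})$. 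This weaker form is all that Lemma \ref{maxmulp} needs. There, $\ell\mapsto\frac{\ell\kappa_m}{2}$ minus half the stated value is maximized at the block boundary $\ell=r(\cS)$, where equality holds, and the lower bound controls every other $\ell$.
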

\begin{proof}
For each matrix $\mathsf{A}\in \MM_n(\cO_L)$ and $1\leq r\leq n$, we denote by $\tilde{\mathfrak{M}}_r(\mathsf{A})$ the set of all minors of order $r$ of $\mathsf{A}$. We define  $\tilde{\cI}_r(\mathsf{A})$ to be the ideal of $\cO_L$ generated by  $\det(\mathsf{M})$ for $\mathsf{M}\in \tilde{\mathfrak{M}}_r(\mathsf{A})$. Then 	$$\tilde{\cI}_r(\mathsf{A})=\varpi_L^{\min_{\mathsf{M}\in \tilde{\mathfrak{M}}_r(\mathsf{A})}\ord_L(\det(\mathsf{M}))}\cO_L.$$

With the notations of Definition \ref{trnmatri}, we define $\mathsf{T}_n: \Sm_n(\cO_L)\to \Sm_n(\cO_L)$ to be the map that sends each matrix
$\mathsf{N}\in \Sm_n(\cO_L)$  to  the matrix $\mathsf{T}_n(\mathsf{N})=(u_{ij})_{1\leq i,j\leq n}\in \Sm_n(\cO_L)$ satisfying  
\begin{itemize}
\item $(\mathsf{T}_n(\mathsf{N}))_{\mathsf{I}_l(\mathsf{N})}=\mathsf{N}^{(l)}$ for each $1\leq l\leq e(\mathsf{N})$.
\item $u_{ij}=0$ if $i\in \mathsf{I}_l(\mathsf{N}), j\in\mathsf{I}_\ell(\mathsf{N})$ with $l\neq \ell$. 
\end{itemize}
By Lemma \ref{symat} and Definition \ref{trnmatri}, $\mathsf{T}_n(\mathsf{N})$ is obtained from $\mathsf{N}$ by the following fundamental operations:
\begin{itemize}
\item  Add a scalar multiple over $\cO_L$ of one row to another row.
\item  Add a scalar multiple over $\cO_L$ of one column to another column.
\end{itemize}
Therefore, we have $\tilde{\cI}_r(\mathsf{N})=\tilde{\cI}_r(\mathsf{\mathsf{T}_n(\mathsf{N})})$ for all $1\leq r\leq n$. On the other hand, by the definition of $\mathsf{T}_n(\mathsf{N})$ and Definition \ref{trnmatri}, it is easy to see that 
$$\tilde{\cI}_r(\mathsf{\mathsf{T}_n(\mathsf{N})})=\varpi_L^{\big(r-\sum_{i\leq \ell_r-1}r_i(\mathsf{N})\big)\mu_{\ell_r}(\mathsf{N})+\sum_{i\leq\ell_r-1}\mu_i(\mathsf{N})r_i(\mathsf{N})}\cO_L$$
for all $1\leq r\leq n$. Therefore, we have \begin{equation}\label{lomadet}\min_{\mathsf{M}\in \tilde{\mathfrak{M}}_r(\mathsf{N})}\ord_L(\det(\mathsf{M}))=\big(r-\sum_{i\leq \ell_r-1}r_i(\mathsf{N})\big)\mu_{\ell_r}(\mathsf{N})+\sum_{i\leq\ell_r-1}\mu_i(\mathsf{N})r_i(\mathsf{N})
\end{equation} for all $1\leq r\leq n$. This implies the first assertion by taking $r=n$. Moreover, since $\mathfrak{M}_r(\mathsf{N})\subset\tilde{\mathfrak{M}}_r(\mathsf{N})$ and (\ref{lomadet}), we have 
\begin{equation}\label{eqdetma1}\mathcal{D}_r(\mathsf{N})=\min_{\mathsf{M}\in \mathfrak{M}_r(\mathsf{N})}\ord_L(\det(\mathsf{M}))\geq \big(r-\sum_{i\leq \ell_r-1}r_i(\mathsf{N})\big)\mu_{\ell_r}(\mathsf{N})+\sum_{i\leq\ell_r-1}\mu_i(\mathsf{N})r_i(\mathsf{N})
\end{equation}
for all $1\leq r\leq n$.

On the other hand, for each $1\leq r\leq n$, let $\mathsf{I}_{r,\ell_r}$ be a subset of $\mathsf{I}_{\ell_r}$ satisfying $|\mathsf{I}_{r,\ell_r}|=r-\sum_{i\leq \ell_r-1}r_i(\mathsf{N})$. We set $$\mathsf{J}_r=\mathsf{I}_{r,\ell_r}\sqcup(\sqcup_{1\leq i\leq j_r-1}\mathsf{I}_{i})\subset\{1<...<n\}.$$
It follows from Definition \ref{trnmatri} that $\mathsf{T}_{|\mathsf{J}_r|}(\mathsf{N}_{\mathsf{J}_r})=(\mathsf{T}_n(\mathsf{N}))_{\mathsf{J}_r}$ for all $1\leq r\leq n$. Therefore, by the above argument, we have \begin{align}\label{eqdetma2}\ord_L(\det(\mathsf{N}_{\mathsf{J}_r}))=\ord_L(\det(\mathsf{T}_{|\mathsf{J}_r|}(\mathsf{N}_{\mathsf{J}_r})))=&\ord_L(\det((\mathsf{T}_n(\mathsf{N}))_{\mathsf{J}_r}))\nonumber\\=&\big(r-\sum_{i\leq \ell_r-1}r_i(\mathsf{N})\big)\mu_{\ell_r}(\mathsf{N})+\sum_{i\leq\ell_r-1}\mu_i(\mathsf{N})r_i(\mathsf{N})
\end{align}
for all $1\leq r\leq n$. The remaining assertion follows from (\ref{eqdetma1}), (\ref{eqdetma2}) and the fact that $\mathsf{N}_{\mathsf{J}_r}\in \mathfrak{M}_r(\mathsf{N})$ for all $1\leq r\leq n$.
\end{proof}

Now, we use the above definitions to construct a linear change of coordinates $\phi_X$ of $L^{n}$ for each $X\in \cO_L^n$ as follows.
\begin{defn}\label{typeh}Let $X\in\cO_L^{n}$ and $\cH_h(X)$ be the Hessian matrix  $\left(\frac{\partial^2 h}{\partial x_i\partial x_j}(X)\right)_{1\leq i,j\leq n}$. With the notations of Definition \ref{trnmatri}, we have the partition of $\{1,...,n\}$  into subsets $I_j(h,X):=I_j(\cH_h(X))$ for $1\leq j\leq e(h,X):=e(\cH_h(X))$ with respect to $\mathsf{N}=\cH_h(X)$. We say that $X$ is of $h$-Hessian-type $(r_1(h,X),...,r_{e(h,X)}(h,X))$ with $h$-multiplicity $(\mu_1(h,X),...,\mu_{e(h,X)}(h,X))$, where $$r_j(h,X):=r_j(\cH_h(X))=\#I_j(h,X)$$ and $$\mu_j(h,X):=\mu_j(\cH_h(X))=\ord_L(\cH_h(X)^{(j)})$$ for all $1\leq j\leq e(h,X)$.   Lastly,  for each $1\leq i\leq n$, we set $\mathfrak{j}_X(i)=j$ if $i\in I_j(h,X)$. We will write  $\mathfrak{j}$ instead of $\mathfrak{j}_X$ if $X$ is understood clearly in the context. 
\end{defn} 
\begin{defn}\label{changvar}Let $X\in\cO_L^n$. Let $(a_{ji}(\cH_h(X)))_{1\leq i,j\leq n}$ be the matrix defined in Definition \ref{trnmatri} associated to $\mathsf{N}=\cH_h(X)$. For each $1\leq e\leq e(h,X)$, we consider the linear change of coordinates $\phi_X^{(e)}:L^n\to L^n, (x_1,...,x_n)\mapsto (y_1,...,y_n)$ satisfying $x_i=y_i-\sum_{j=1}^{n} a_{ji}y_j$ if $i\in I_e(h,X)$ and $x_i=y_i$ if $i\notin I_e(h,X)$, where $a_{ji}=a_{ji}(\cH_h(X))$. We set $\phi_X=\phi_X^{(e(h,X))}\circ\dots\circ\phi_X^{(1)}$. We denote by $\mathsf{A}(X)=(\tilde{a}_{ij}(X))_{1\leq i,j\leq n}$ the matrix of $\phi_X^{-1}$ and set $g_X(y):=h(y\mathsf{A}(X))$. 
\end{defn}
We list here some important properties of $\phi_X$ for $X\in\cO_L^n$.
\begin{lem} \label{propAX}Let $X\in\cO_L^n$. We have $\mu_j(h,X)<\mu_{j'}(h,X)$ if $1\leq j<j'\leq e(h,X)$. If $\mu_j(h,X)<+\infty$ then $$\rk_{\ac}(\cH_h(X)^{(j)})=r_j(h,X).$$
If $\mu_j(h,X)=+\infty$ then $j=e(h,X)$. 

\end{lem}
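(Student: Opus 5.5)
The plan is to read all three assertions off the recursive construction in Definition \ref{trnmatri}, applied to $\mathsf{N}=\cH_h(X)$, using Lemma \ref{symat} at each step. Write $\mathsf{N}^{(j)}=\cH_h(X)^{(j)}$ and $\mu_j=\mu_j(h,X)=\ord_L(\mathsf{N}^{(j)})$.

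\emph{Strict increase of the $\mu_j$.} If $j<e(h,X)$, the construction continues past step $j$, so $r_j<|\mathsf{S}_j|$. In particular $\mathsf{N}^{(j)}\neq 0$: otherwise $r_j=|\mathsf{S}_j|$ by definition and the construction would stop. Hence $\mathsf{N}^{(j+1)}=\rho(\mathsf{N}^{(j)})$ with $\mathsf{N}^{(j)}$ a nonzero symmetric matrix satisfying $\rank_{\ac}(\mathsf{N}^{(j)})<|\mathsf{S}_j|$. Lemma \ref{symat} then gives $\ord_L(\rho(\mathsf{N}^{(j)}))>\ord_L(\mathsf{N}^{(j)})$, that is, $\mu_{j+1}>\mu_j$. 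Induction on $j'-j$ gives $\mu_j<\mu_{j'}$ for all $1\le j<j'\le e(h,X)$, with the convention $+\infty>$ any integer.

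\emph{The rank statement.} If $\mu_j<+\infty$, then $\mathsf{N}^{(j)}\ne0$, so by definition $r_j(\cH_h(X))=\rank_{\ac}(\mathsf{N}^{(j)})$. It remains to identify this with $\#I_j(h,X)$. For $j<e(h,X)$ we have $I_j=\Upsilon(\mathsf{I}(\mathsf{N}^{(j)}))$, which by Definition \ref{inste} has exactly $\rank_{\ac}(\mathsf{N}^{(j)})$ elements. For $j=e(h,X)$ the construction stopped because $r_j=|\mathsf{S}_j|$, and $I_j=\Upsilon(\mathsf{S}_j)$, so again $\#I_j=r_j=\rank_{\ac}(\mathsf{N}^{(j)})$.

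\emph{Position of an infinite $\mu_j$.} If $\mu_j=+\infty$, then $\mathsf{N}^{(j)}=0$, so by definition $r_j=|\mathsf{S}_j|$ and the construction terminates at step $j$. Thus $j=e(h,X)$.

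The only step needing care is the first: one must check that Lemma \ref{symat}'s hypotheses (nonzero, and $\rank_{\ac}<$ size) hold at every non-terminal step. This follows directly from the stopping rule, so I expect no real obstacle.
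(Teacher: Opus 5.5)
Your proof is correct and is the paper's own argument: the paper simply says the assertions follow from Lemma \ref{symat} and Definition \ref{trnmatri}. Concretely, that is the stopping rule plus the inequality $\ord_L(\rho(\mathsf{N}))>\ord_L(\mathsf{N})$, which is exactly what you use. Your checks that Lemma \ref{symat} applies at every non-terminal step, and that $\#I_j=r_j$ also holds at the terminal step, just make explicit what the paper leaves implicit.
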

\begin{proof}
The assertions follow from Lemma \ref{symat} and Definition \ref{trnmatri}.
\end{proof}

\begin{lem}\label{Hess}Let $X\in\cO_L^n$. We have
 $Y\mathsf{A}(X)=X$ with $Y=\phi_X(X)$ and $$g_X(y_1,...,y_n)=h\bigg(\sum_{j=1}^{n}\tilde{a}_{j1}(X)y_j,...,\sum_{j=1}^{n}\tilde{a}_{jn}(X)y_j\bigg).$$
Moreover, after a suitable change of the order of the coordinates $x_1,...,x_n$, the Hessian matrix \begin{equation*}\cH_{g_X}(Y)=\left(\frac{\partial^2 g_X}{\partial	 y_i\partial y_j}(Y)\right)_{1\leq i,j\leq n}
\end{equation*} equals the diagonal block matrix $\bigg(\cH_h(X)^{(1)},...,\cH_h(X)^{(e(h,X))}\bigg)$. 
\end{lem}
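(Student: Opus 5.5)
The plan is to write $\phi_X$ as a composition of unipotent linear changes of variables and track its effect on the Hessian by the chain rule. Since $g_X(y)=h(y\mathsf{A}(X))$ and $\mathsf{A}(X)$ is the matrix of $\phi_X^{-1}$, the identity $Y\mathsf{A}(X)=X$ for $Y=\phi_X(X)$ is immediate. The explicit formula for $g_X$ is just $h$ evaluated at the row vector $y\mathsf{A}(X)$, whose $i$-th coordinate is $\sum_j \tilde a_{ji}(X)y_j$. For the Hessian, the chain rule applied to a linear substitution gives
\[
\cH_{g_X}(Y)=\mathsf{A}(X)\,\cH_h(X)\,\mathsf{A}(X)^T,
\]
because $y\mathsf{A}(X)$ equals $X$ at $y=Y$. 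So everything reduces to a matrix identity: $\mathsf{A}(X)\cH_h(X)\mathsf{A}(X)^T$ equals the block diagonal matrix $\bigl(\cH_h(X)^{(1)},\dots,\cH_h(X)^{(e(h,X))}\bigr)$, with blocks indexed by $I_1(h,X),\dots,I_{e(h,X)}(h,X)$.

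Next I would identify the matrix of each step. Write $\mathsf{N}=\cH_h(X)$. The map $\phi^{(e)}_X$ has $x=y\,\mathsf{P}_e$, where $\mathsf{P}_e$ is the identity except that, for $i\in I_e$ and $j\neq i$, its $(j,i)$ entry is $-a_{ji}(\mathsf{N})$. By Definition \ref{trnmatri}, $a_{ji}(\mathsf{N})=a_{ji}(\mathsf{N}^{(e)})$ when $j\in\mathsf{S}_e\setminus\mathsf{I}_e$, and $a_{ji}(\mathsf{N})=0$ when $j\notin\mathsf{S}_e$ or $j\in\mathsf{I}_e$. Hence $\mathsf{P}_e$ is exactly the matrix $\mathsf{B}$ of Lemma \ref{symat} for $\mathsf{N}^{(e)}$, extended by the identity outside $\mathsf{S}_e$. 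Since $\phi_X=\phi^{(e(h,X))}_X\circ\cdots\circ\phi^{(1)}_X$, the matrix $\mathsf{A}(X)$ is the ordered product $\mathsf{P}_{e(h,X)}\cdots\mathsf{P}_1$. The main thing to check here is the row-vector convention. In the formula $x_i=y_i-\sum_j a_{ji}y_j$, the index sums over the first subscript, which matches $b_{ij}=-a_{ij}$ in Lemma \ref{symat} after a transpose. I expect this bookkeeping of transposes and of the order of composition to be the main obstacle, and I would settle it by checking the case $e(h,X)=2$ in coordinates.

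Then I would argue by induction on the stage $e$. The claim is that after the first $e$ substitutions the congruent matrix $\mathsf{M}_e=\mathsf{P}_e\cdots\mathsf{P}_1\,\mathsf{N}\,\mathsf{P}_1^T\cdots\mathsf{P}_e^T$ has the following form: its diagonal blocks on $I_1,\dots,I_e$ are $\mathsf{N}^{(1)},\dots,\mathsf{N}^{(e)}$, its block on $\mathsf{S}_{e+1}$ is $\mathsf{N}^{(e+1)}$, and all off-diagonal blocks between these pieces vanish. For the inductive step, assume $\mathsf{M}_{e-1}$ is block diagonal with the block $\mathsf{N}^{(e)}$ on $\mathsf{S}_e$. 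The matrix $\mathsf{P}_e$ only mixes coordinates inside $\mathsf{S}_e$, since its nontrivial entries have both indices in $\mathsf{S}_e$. Applying Lemma \ref{symat} to $\mathsf{N}^{(e)}$ splits that block into $\mathsf{N}^{(e)}_{\mathsf{I}_e}=\mathsf{N}^{(e)}_{I_e}$ and $\rho(\mathsf{N}^{(e)})=\mathsf{N}^{(e+1)}$, with zero cross terms, and leaves the other blocks unchanged. At the last stage $e(h,X)$ the matrix $\mathsf{P}_{e(h,X)}$ is the identity, because all the relevant $a$'s are zero. Finally, reordering the coordinates so that $I_1,I_2,\dots$ appear consecutively turns the result into the stated block diagonal matrix, using the final assertion of Lemma \ref{symat}.
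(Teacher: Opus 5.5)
Your proposal is correct and follows the same route as the paper: the chain rule gives $\cH_{g_X}(Y)=\mathsf{A}(X)\cH_h(X)\mathsf{A}(X)^{T}$, $\mathsf{A}(X)$ factors into the unipotent matrices of Lemma \ref{symat} (your $\mathsf{P}_e$ is exactly the paper's $\mathsf{B}_e(X)$), and Lemma \ref{symat} is applied stage by stage after reordering coordinates. Your factorization $\mathsf{A}(X)=\mathsf{P}_{e(h,X)}\cdots\mathsf{P}_1$ is the correct one for the convention $x=y\mathsf{A}(X)$ with $\phi_X=\phi_X^{(e(h,X))}\circ\cdots\circ\phi_X^{(1)}$, whereas the paper writes $\mathsf{A}(X)=\mathsf{B}_1(X)\cdots\mathsf{B}_{e(h,X)}(X)$, which is the reversed order and in general (already when $e(h,X)=3$) does not block-diagonalize $\cH_h(X)$; your explicit induction is what actually justifies the paper's ``apply Lemma \ref{symat} several times''.
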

\begin{proof}
The first assertion follows from the definition of $\mathsf{A}(X)$. 

 With the notations of Definition \ref{trnmatri},  for each $1\leq e\leq e(h,X)$, we set 
$$b_{eij}(X):=\begin{cases} 1 \textnormal{ if } i=j,\\
0 \textnormal{ if } i\neq j \textnormal{ and }  j\notin \mathsf{I}_e(\cH_h(X)),\\
-a_{ij}(\cH_h(X)) \textnormal{ if } i\neq j \textnormal{ and } j\in \mathsf{I}_e(\cH_h(X)).
\end{cases}$$
Moreover, we also set $\mathsf{B}_e(X)=(b_{eij}(X))_{1\leq i,j\leq n}$. Then $\mathsf{B}_e(X)$ is the matrix of $\big(\phi_X^{(e)}\big)^{-1}$ for all $1\leq e\leq e(h,X)$. Moreover, we have $$\mathsf{A}(X)=\mathsf{B}_1(X)\mathsf{B}_2(X)\cdots\mathsf{B}_{e(h,X)}(X).$$
On the other hand, since $Y\mathsf{A}(X)=X$,  by a basic calculation, we have 
$$\cH_{g_X}(Y)=\mathsf{A}(X)\cH_h(X)\mathsf{A}(X)^T=\mathsf{B}_1(X)\cdots\mathsf{B}_{e(h,X)}(X)\cH_h(X)\mathsf{B}_{e(h,X)}(X)^T\cdots\mathsf{B}_1(X)^T.$$
By a suitable change of the order of the coordinates $x_1,...,x_n$, we can suppose that $$I_{e}(h,X)=\biggl\{1+\sum_{j=1}^{e-1}r_j(h,X),...,\sum_{j=1}^{e}r_j(h,X)\biggr\}$$ for all $1\leq e\leq e(h,X)$. Now, the second assertion follows by applying the last assertion of Lemma \ref{symat} several times.
\end{proof}

From the construction of $\phi_X$, we have the following properties of $\mathsf{A}(X)$. 
\begin{lem}\label{propt}Let $X\in\cO_L^n$. Then
$\mathsf{A}(X)\in \SL_n(\cO_L)$  and $\mathsf{A}(X)$ becomes a lower triangular matrix  whose all entries in the diagonal are  $1$ after a suitable change of the order of the coordinates  $x_1,...,x_{n}$.  Moreover, let $\vartheta_i$ for $1\leq i\leq n$ be integers or $+\infty$ such that $\vartheta_i=\vartheta_{i'}$ if $\mathfrak{j}(i)=\mathfrak{j}(i')$, $\vartheta_i\leq\vartheta_{i'}$ if $\mathfrak{j}(i)\leq\mathfrak{j}(i')$. Then $$(a+(\varpi_L^{\vartheta_1}\cO_L,...,\varpi_L^{\vartheta_{n}}\cO_L))\mathsf{A}(X)=a\mathsf{A}(X)+(\varpi_L^{\vartheta_1}\cO_L,...,\varpi_L^{\vartheta_{n}}\cO_L)$$
for all $a\in L^{n}$. In addition, if $u=(u_1,...,u_{n}), v=(v_1,...,v_{n})\in L^{n}$ such that $v=u\mathsf{A}(X)$ then $$\left(\frac{\partial g_X}{\partial y_1}(u),...,\frac{\partial g_X}{\partial y_n}(u)\right)=\left(\frac{\partial h}{\partial x_1}(v),...,\frac{\partial h}{\partial x_n}(v)\right)\mathsf{A}(X)^{\textnormal{T}}.$$
Moreover, if $\tau_i$ for $1\leq i\leq n$ are integers or $+\infty$ such that $\tau_i=\tau_{i'}$ if $\mathfrak{j}(i)=\mathfrak{j}(i')$, and $\tau_i\geq\tau_{i'}$ if $\mathfrak{j}(i)\leq \mathfrak{j}(i')$, then 
$$(a+(\varpi_L^{\tau_1}\cO_L,...,\varpi_L^{\tau_{n}}\cO_L))\mathsf{A}(X)^{\textnormal{T}}=a\mathsf{A}(X)^{\textnormal{T}}+(\varpi_L^{\tau_1}\cO_L,...,\varpi_L^{\tau_{n}}\cO_L)$$
for all $a\in L^{n}$.
\end{lem}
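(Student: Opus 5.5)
We will read everything off the factorization $\mathsf{A}(X)=\mathsf{B}_1(X)\mathsf{B}_2(X)\cdots\mathsf{B}_{e(h,X)}(X)$ from the proof of Lemma \ref{Hess}. First reorder the coordinates as there, so that $I_e(h,X)$ is the $e$-th consecutive block. In $\mathsf{B}_e(X)$, an off-diagonal entry $b_{eij}$ is nonzero only if $j\in I_e(h,X)$ and $i\in\mathsf{S}_e$. Here $\mathsf{S}_e=\sqcup_{e'\geq e}I_{e'}(h,X)$, since $a_{ij}(\mathsf{N})=0$ for $i\notin\mathsf{S}_e$. Also $a_{ij}=0$ for $i,j$ both in $I_e$. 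So every nonzero off-diagonal entry sits in a row of a strictly later block than its column. Each $\mathsf{B}_e(X)$ is therefore unipotent lower block-triangular with identity diagonal blocks and entries in $\cO_L$. The same then holds for the product $\mathsf{A}(X)$, which gives $\mathsf{A}(X)\in\SL_n(\cO_L)$, lower triangular with unit diagonal. The integrality of the $a_{ij}$ is exactly what Definition \ref{inste} provides.

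For the shift property, write $\mathsf{A}(X)=\mathrm{Id}+\mathsf{U}$, where $\mathsf{U}$ has entries in $\cO_L$ and its $(i,j)$ entry is nonzero only if $\mathfrak{j}(i)>\mathfrak{j}(j)$. For $w\in(\varpi_L^{\vartheta_1}\cO_L,\dots,\varpi_L^{\vartheta_n}\cO_L)$, the $j$-th coordinate of $w\mathsf{U}$ is $\sum_i w_iu_{ij}$, summed over $\mathfrak{j}(i)>\mathfrak{j}(j)$. Each term then has valuation at least $\vartheta_i\geq\vartheta_j$ by the monotonicity hypothesis. Hence $w\mathsf{A}(X)$ lies again in the same box, so the box is mapped into itself by $\mathsf{A}(X)$. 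The same argument applies to $\mathsf{A}(X)^{-1}$, which is also unipotent lower block-triangular over $\cO_L$, so the box is mapped onto itself. Linearity, $(a+w)\mathsf{A}(X)=a\mathsf{A}(X)+w\mathsf{A}(X)$, then gives the first identity.

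The gradient formula is the chain rule applied to $g_X(y)=h(y\mathsf{A}(X))$. We have $\partial g_X/\partial y_k(u)=\sum_l\tilde a_{kl}(X)\,\partial h/\partial x_l(u\mathsf{A}(X))$, which is the $k$-th entry of $\nabla h(v)\mathsf{A}(X)^{\mathrm T}$ with $v=u\mathsf{A}(X)$.

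For the transpose statement, $\mathsf{A}(X)^{\mathrm T}=\mathrm{Id}+\mathsf{U}^{\mathrm T}$. The $(i,j)$ entry of $\mathsf{U}^{\mathrm T}$ is nonzero only if $\mathfrak{j}(j)>\mathfrak{j}(i)$. Take $w$ in the $\tau$-box. The $j$-th coordinate of $w\mathsf{U}^{\mathrm T}$ involves only $w_i$ with $\mathfrak{j}(i)<\mathfrak{j}(j)$, and for these $\tau_i\geq\tau_j$ by the reversed monotonicity hypothesis. So the box is preserved by $\mathsf{A}(X)^{\mathrm T}$, and likewise by its inverse, and the result follows as before.

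The only real care point is the bookkeeping that $a_{ij}(\cH_h(X))\neq0$ forces $\mathfrak{j}(i)>\mathfrak{j}(j)$. We check this directly from the inductive construction in Definition \ref{trnmatri}. There $a_{jl}$ is set for $l\in\mathsf{I}_i$ only when $j\in\mathsf{S}_i\setminus\mathsf{I}_i$, and it is zero otherwise, including on the last block.
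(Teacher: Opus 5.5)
Your proof is correct and follows the paper's argument: reorder so that the blocks $I_e(h,X)$ are consecutive, observe that each $\mathsf{B}_e(X)$ is unipotent lower (block-)triangular with entries in $\cO_L$, and read off the box-preservation and chain-rule identities. You also supply the verifications the paper leaves as ``easy'', in particular that $a_{ij}(\cH_h(X))\neq 0$ forces $\mathfrak{j}(i)>\mathfrak{j}(j)$ and that $\mathsf{A}(X)^{-1}$ has the same pattern. Your conclusion does not depend on the order of the factors, which helps: in the row-vector convention of Definition \ref{changvar}, the composition defining $\phi_X$ gives $\mathsf{A}(X)=\mathsf{B}_{e(h,X)}(X)\cdots\mathsf{B}_1(X)$, not the order $\mathsf{B}_1(X)\cdots\mathsf{B}_{e(h,X)}(X)$ written in the proof of Lemma \ref{Hess}.
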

\begin{proof}
By a suitable change of the order of the coordinates $x_1,...,x_n$, we can suppose that $$I_{e}(h,X)=\biggl\{1+\sum_{j=1}^{e-1}r_j(h,X),...,\sum_{j=1}^{e}r_j(h,X)\biggr\}$$ for all $1\leq e\leq e(h,X)$. Let $\mathsf{B}_e(X)$ be as in the proof of Lemma \ref{Hess} for $1\leq e\leq e(h,X)$. Then it is easy to verify that $\mathsf{B}_e(X)$ is a lower triangular matrix whose all entries in the diagonal are  $1$ for all $1\leq e\leq e(h,X)$. This and the equality 
$$\mathsf{A}(X)=\mathsf{B}_1(X)\mathsf{B}_2(X)\cdots\mathsf{B}_{e(h,X)}(X)$$
imply the first assertion. The remaining assertions follow from the first assertion and the definition of $\mathsf{A}(X)$.
\end{proof}

\begin{def-lem}\label{kaet} Let $m\in\ZZ$, we set 
$\eta_{m}=\lceil\frac{m}{3}\rceil$ and $\kappa_m=\lfloor\frac{m+1}{3}\rfloor$, where if $\alpha$ is a real number then $\lceil\alpha\rceil$ (resp. $\lfloor\alpha\rfloor$) is the smallest (resp. biggest) integer $\eta$ (resp. $\kappa$) such that $\eta\geq\alpha$ (resp. $\kappa\leq\alpha$). Then we have  $\kappa_m\leq\eta_m$  and $\kappa_m-2\leq m-2\eta_m-1\leq\kappa_m-1$ for all $m\in\ZZ$.
\end{def-lem}
\begin{proof}
It follows from a basic argument.
\end{proof}
We can relate $\phi_X$ and $\phi_{X'}$ if $X$ and $X'$ are enough close as follows. 
\begin{lem}\label{samtype}Let $X\in\cO_L^n$, $m\in\ZZ_{\geq 2}$ and $Y=X\mathsf{A}(X)^{-1}$. Then we have
$$X+\varpi_L^{\eta_{m}}\cO_L^{n}=(Y+\varpi_L^{\eta_{m}}\cO_L^{n})\mathsf{A}(X).$$
Moreover, if $\ord_L(X'-X)\geq \eta_m$ then 
\begin{align*}
&\ord_L(\mathsf{A}(X)-\mathsf{A}(X'))\geq \eta_m, \ord_L(\mathsf{A}(X)^{-1}-\mathsf{A}(X')^{-1})\geq \eta_m,\\ &\phi_X(X)-\phi_{X'}(X')\geq \eta_m, \min\{\mu_j(h,X),\eta_m\}=\min\{\mu_j(h,X'),\eta_m\}
\end{align*}
and  $I_j(h,X)=I_j(h,X')$ if $\mu_j(h,X)<\eta_m$. In particular, for each $1\leq e\leq e(h,X)$, the ideal $\cI_{h,X,e}$ of $\cO_L[y_1,...,y_{n}]$ generated by $\frac{\partial g_X}{\partial y_i}$ for $i\in \cup_{1\leq j\leq e}I_j(h,X)$ is obtained from the ideal $\cJ_{h,I_1(h,X),...,I_e(h,X)}$ of $\cO_L[x_1,...,x_{n}]$ generated by $\frac{\partial h}{\partial x_i}$ for $i\in \cup_{1\leq j\leq e}I_j(h,X)$ via the change of variables $x=y\mathsf{A}(X)$.
\end{lem}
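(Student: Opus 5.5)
The first identity is immediate from Lemma \ref{propt}. Take $\vartheta_i=\eta_m$ for all $i$; the hypotheses on the $\vartheta_i$ hold trivially. The lemma then gives $(Y+\varpi_L^{\eta_m}\cO_L^n)\mathsf{A}(X)=Y\mathsf{A}(X)+\varpi_L^{\eta_m}\cO_L^n$. Since $Y\mathsf{A}(X)=X$ by Lemma \ref{Hess}, this is $X+\varpi_L^{\eta_m}\cO_L^n$.

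For the comparison between $X$ and $X'$ with $\ord_L(X'-X)\geq\eta_m$, I would argue inductively along the construction of Definition \ref{trnmatri}. Since $h$ has coefficients in $\cO_L$, every entry of $\cH_h$ is a polynomial over $\cO_L$, so $\ord_L(\cH_h(X)-\cH_h(X'))\geq\eta_m$. Put $\mathsf{N}=\cH_h(X)$ and $\mathsf{N}'=\cH_h(X')$. The claim to prove by induction on $i$ is as follows:
\begin{itemize}
\item $\mathsf{S}_i(\mathsf{N})=\mathsf{S}_i(\mathsf{N}')$, and $\ord_L(\mathsf{N}^{(i)}-\mathsf{N}'^{(i)})\geq\eta_m$, as long as $\mu_{i-1}(\mathsf{N})<\eta_m$;
\item if moreover $\mu_i(\mathsf{N})<\eta_m$, then $\mu_i(\mathsf{N})=\mu_i(\mathsf{N}')$, $\mathsf{N}^{(i)\ac}=\mathsf{N}'^{(i)\ac}$ (hence $r_i$ and $\mathsf{I}_i$ agree), and $\ord_L(a_{jl}(\mathsf{N}^{(i)})-a_{jl}(\mathsf{N}'^{(i)}))\geq\eta_m-\mu_i$.
\end{itemize}

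The key step is the perturbation estimate for the coefficients $a_{jj'}$ in (\ref{linedep}). Write $\mu=\mu_i$. The matrix $\varpi_L^{-\mu}\mathsf{N}_{\mathsf{I}}$ lies in $\mathrm{GL}_r(\cO_L)$, and the $a$'s are obtained by multiplying $\varpi_L^{-\mu}$-scaled rows by its inverse. A change of order $\geq\eta_m$ in the entries therefore changes the $a$'s by order $\geq\eta_m-\mu$. This loss of $\mu$ is exactly the worry. However, the quantities actually passed forward, namely $\tilde z_{j\ell}$ in (\ref{zij}), are of the form $a\cdot z$ with $\ord_L(z)\geq\mu$. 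So the entries of $\rho(\mathsf{N})$ and $\rho(\mathsf{N}')$ still differ by order $\geq\eta_m$, and the induction closes.

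Once $\mu_i\geq\eta_m$, both $\mu$'s are $\geq\eta_m$ (the matrices agree modulo $\varpi_L^{\eta_m}$), which gives $\min\{\mu_j(h,X),\eta_m\}=\min\{\mu_j(h,X'),\eta_m\}$. From that stage on, the later blocks may differ, so the statement only asserts $I_j(h,X)=I_j(h,X')$ when $\mu_j<\eta_m$.

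The main obstacle is the claim $\ord_L(\mathsf{A}(X)-\mathsf{A}(X'))\geq\eta_m$. This is problematic because, by the above, the $a_{jl}$ are only controlled to order $\eta_m-\mu_i$, not $\eta_m$. I would try to recover the full order in two ways. First, on the indices with $\mu_i\geq\eta_m$, I would show the relevant $a$'s enter $\mathsf{A}$ only through blocks where the construction terminates identically (set $a=0$) for both points, or, failing that, handle them by symmetry of the roles. Second, on blocks with $\mu_i<\eta_m$, I would use that the $a$'s are solutions of a linear system whose matrix is $\mathsf{N}_{\mathsf{I}}$ with unit-scaled determinant, with right-hand side the entries $z_{ji}$ of order $\geq\mu$. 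Dividing through by $\varpi_L^{\mu}$ gives a system over $\cO_L$ whose data differ by order $\geq\eta_m-\mu$.

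If this really only yields $\eta_m-\mu$, I would reexamine the choice $\eta_m=\lceil m/3\rceil$ via Lemma \ref{kaet}. The relevant blocks should in any case satisfy the needed inequality after combining with the entries of $Y$. Given the bound on $\mathsf{A}$, the rest follows formally:
\begin{itemize}
\item $\mathsf{A}(X)^{-1}$ is unipotent triangular, so the same bound holds for the inverses;
\item $\phi_X(X)-\phi_{X'}(X')=X\mathsf{A}(X)^{-1}-X'\mathsf{A}(X')^{-1}$ is then of order $\geq\eta_m$;
\item the statement about $\cI_{h,X,e}$ is the chain rule from Lemma \ref{propt}: $\nabla g_X(y)=\nabla h(y\mathsf{A}(X))\mathsf{A}(X)^T$, with $\mathsf{A}(X)^T$ block triangular, so the span of the partials indexed by $\cup_{j\leq e}I_j$ is preserved.
\end{itemize}
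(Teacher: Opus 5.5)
\textbf{Verdict: genuine gap, and the flagged estimate is false.} Your proposal cannot be completed as written, because the estimate you identify as the main obstacle does not hold.

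\textbf{What is sound.} Your derivation of the first identity from Lemma \ref{propt} (with all $\vartheta_i=\eta_m$) is fine, and so is your chain-rule argument for the ideals $\cI_{h,X,e}$ using block lower unitriangularity of $\mathsf{A}(X)$. Your induction along Definition \ref{trnmatri} is also correct. The key point there is that $a-a'$ is only of order $\geq\eta_m-\mu_i$, but it is multiplied by entries of order $\geq\mu_i$, so $\ord_L(\mathsf{N}^{(i+1)}-\mathsf{N}'^{(i+1)})\geq\eta_m$. This is exactly what proves $\min\{\mu_j(h,X),\eta_m\}=\min\{\mu_j(h,X'),\eta_m\}$ and $I_j(h,X)=I_j(h,X')$ for $\mu_j<\eta_m$. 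The paper's one-line proof (``follows easily from Lemma \ref{propt}'') does not cover this comparison, since Lemma \ref{propt} concerns a single point. Your induction is the argument actually needed.

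\textbf{The gap.} None of your suggested repairs can close it, because $\ord_L(\mathsf{A}(X)-\mathsf{A}(X'))\geq\eta_m$ is false. So are the consequences you derive from it for $\mathsf{A}^{-1}$ and for $\phi_X(X)-\phi_{X'}(X')$.

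Take $n=2$, $h=(x_1+x_2-1)^3$ (so $p_L\geq 5$), $X=(0,1)$ and $X'=(\varpi_L^{\eta_m},1)$.
At $X$ we have $\cH_h(X)=0$, so $e(h,X)=1$, all $a_{jl}=0$, and $\mathsf{A}(X)$ is the identity.
At $X'$ we have $\cH_h(X')=6\varpi_L^{\eta_m}\begin{pmatrix}1&1\\1&1\end{pmatrix}$. This gives $\mathsf{I}_1=\{1\}$, $a_{21}=1$ and $\rho=0$, hence $\mathsf{A}(X')=\begin{pmatrix}1&0\\-1&1\end{pmatrix}$.
Therefore $\mathsf{A}(X)-\mathsf{A}(X')$, $\mathsf{A}(X)^{-1}-\mathsf{A}(X')^{-1}$ and $\phi_X(X)-\phi_{X'}(X')=(0,1)-(1+\varpi_L^{\eta_m},1)$ all have order $0$. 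Meanwhile the true parts of the lemma hold here: $\mu_1(h,X)=+\infty$ and $\mu_1(h,X')=\eta_m$. In particular the construction does not ``terminate identically'' at both points.

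Restricting to blocks with $\mu_i<\eta_m$ does not recover the full order either. Take $h=\frac{\varpi_L}{2}(x_1+x_2)^2+x_1^2x_2$, $X=0$ and $X'=(0,\varpi_L^{\eta_m})$ with $\eta_m\geq 2$. Both points have $\mu_1=1$ and $\mathsf{I}_1=\{1\}$, but $a_{21}$ moves from $1$ to $(1+2\varpi_L^{\eta_m-1})^{-1}$, a change of order exactly $\eta_m-1$. So your loss of $\mu_i$ is sharp, and re-choosing $\eta_m$ does not help.

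\textbf{What survives.} The lemma should drop these three estimates. What remains true is $\ord_L(\phi_X(X')-\phi_X(X))\geq\eta_m$, which follows from the first identity, together with the bound $\eta_m-\mu_i$ on the $a_{jl}$ of blocks with $\mu_i<\eta_m$. This does not harm the paper. The only later use, Lemma-Definition \ref{rh-type1}, needs only the invariance of $\mu_j$ and $I_j$ that your induction establishes.
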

\begin{proof}The assertions follow easily from Lemma \ref{propt}. 
\end{proof}

\begin{def-lem}\label{rh-type1}Let $X\in\cO_L^n$ and $m\in\ZZ_{\geq 2}$. We set $$e(h,X,m)=\max\{j| \mu_j(h,X)<\kappa_m\}, I(h,X,m)=\cup_{j\leq e(h,X,m)}I_j(h,X)$$ and $r(h,X,m)=\#(I(h,X,m))$. Here, we use the convention that $$e(h,X,m)=0 \textnormal{ if }\{j| \mu_j(h,X)<\kappa_m\}=\emptyset.$$ Then $e(h,X,m)$, $I(h,X,m), r(h,X,m)$ and $\mu_j(h,X)$ for $j\leq e(h,X,m)$ depend only on $(X \mod \varpi_L^{\eta_m}\cO_L)$. 
\end{def-lem}
\begin{proof}
It follows from Lemma-Definition \ref{kaet} and Lemma \ref{samtype}.
\end{proof}
Now we introduce the notions of  type and multiplicity of a point $X\in\cO_L^n$ related to $h$ and each integer $m\geq 2$. 
\begin{defn}\label{rh-type2}Let $X\in\cO_L^n$ and $m\in\ZZ_{\geq 2}$. We say that $X$ is of $(m,h)$-type $$(I_1(h,X),...,I_{e(h,X,m)}(h,X))$$ with $(m,h)$-multiplicity $$(\mu_1(h,X),...,\mu_{e(h,X,m)}(h,X))$$ if $e(h,X,m)>0$. If $e(h,X,m)=0$, we say that $X$ is of $(m,h)$-type $(\emptyset)$ with $(m,h)$-multiplicity $0$.
\end{defn}

In the case that $X\in\cO_L^n$ is of $(m,h)$-type $(\emptyset)$ with $(m,h)$-multiplicity $0$,  we will construct the following set and show that the restriction of the integral in (\ref{exph}) to its complement in $X+\varpi_L^{\eta_m}\cO_L^n$ is zero. 
 
 \begin{defn}\label{rho0}Let $X\in\cO_L^n$ and $m\in\ZZ_{\geq 2}$. Suppose that $e(h,X,m)=0$. We set 
 $$\tilde{\cC}_{h,X,m,-\eta_m}=\bigg\{x\in X+\varpi_L^{\eta_m}\cO_L^n\mid \ord_L(h(x))\geq m-1, \ord_L\left(\frac{\partial h}{\partial x_i}(x)\right)\geq m-\eta_m \forall\hspace{0.1cm} 1\leq i\leq n\bigg\}.$$
 
 \end{defn}
\begin{lem}\label{rho00}Let $X=(X_1,...,X_n)\in\cO_L^n$ and $m\in\ZZ_{\geq 2}$. Suppose that $e(h,X,m)=0$. Then either $\tilde{\cC}_{h,X,m,-\eta_m}=\emptyset$ or $\tilde{\cC}_{h,X,m,-\eta_m}=\{x\in X+\varpi_L^{\eta_m}\cO_L^n\mid \ord_L(h(x))\geq m-1\}$. Moreover,  we have 
$$\int_{\{x\in X+\varpi_L^{\eta_m}\cO_L^n\mid\ord_L(h(x))\geq m-1\}}\Phi_{L,Z}(x)\psi(h(x))|dx|=\int_{\tilde{\cC}_{h,X,m,-\eta_m}}\Phi_{L,Z}(x)\psi(h(x))|dx|$$
for all additive characters $\psi$ of $L$ of conductor $m$.
\end{lem}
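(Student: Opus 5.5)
The proof rests on one Taylor-expansion estimate on the ball $X+\varpi_L^{\eta_m}\cO_L^n$, where $e(h,X,m)=0$ means $\mu_1(h,X)=\ord_L(\cH_h(X))\geq \kappa_m$. Since $L\in\tilde{\cL}_{K,d+1}$, the Taylor coefficients $\frac{1}{\alpha!}\partial^\alpha h$ with $|\alpha|\leq d$ have integral coefficients. For $x\in X+\varpi_L^{\eta_m}\cO_L^n$ and $z\in \varpi_L^{\eta_m}\cO_L^n$ we write
$$h(x+z)=h(x)+\sum_i \frac{\partial h}{\partial x_i}(x)z_i+\frac12 zH_h(x)z^T+(\text{terms of order}\geq 3 \text{ in } z).$$
The cubic and higher terms have valuation at least $3\eta_m\geq m$. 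The Hessian satisfies $\cH_h(x)\equiv \cH_h(X)$ modulo $\varpi_L^{\eta_m}$. Hence $\ord_L(\cH_h(x))\geq \min\{\kappa_m,\eta_m\}=\kappa_m$, by Lemma \ref{samtype} or directly. Using $2\eta_m+\kappa_m\geq m$, which comes from $m-2\eta_m-1\leq\kappa_m-1$ in Lemma-Definition \ref{kaet}, the quadratic term has valuation at least $m$, because $2$ is a unit.

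\textbf{Dichotomy.} Applying the same expansion to the first derivatives gives $\frac{\partial h}{\partial x_i}(x+z)\equiv \frac{\partial h}{\partial x_i}(x)$ modulo $\varpi_L^{\eta_m+\kappa_m}$, with quadratic remainder of valuation $\geq 2\eta_m\geq m-\eta_m$ since $3\eta_m\geq m$. Because $\eta_m+\kappa_m\geq m-\eta_m$, the residue of $\frac{\partial h}{\partial x_i}$ modulo $\varpi_L^{m-\eta_m}$ is constant on the ball. So either every point of the ball satisfies the gradient condition, or none does. This yields the first assertion: $\tilde{\cC}_{h,X,m,-\eta_m}$ is either empty or equal to $\{x\in X+\varpi_L^{\eta_m}\cO_L^n\mid \ord_L(h(x))\geq m-1\}$.

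\textbf{Vanishing of the integral.} We must show the integral vanishes when $\tilde{\cC}=\emptyset$; in the other case the two integrals coincide trivially. Suppose some $i$ has $\ord_L\left(\frac{\partial h}{\partial x_i}\right)=:\gamma<m-\eta_m$ on the whole ball. $\Phi_{L,Z}$ is constant on the ball, since $\eta_m\geq1$. Write $A=\{x\in X+\varpi_L^{\eta_m}\cO_L^n\mid \ord_L(h(x))\geq m-1\}$.

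First check that $A$ is invariant under translation by $z\in\varpi_L^{\eta_m}\cO_L^n$. We have $h(x+z)-h(x)\equiv \nabla h(x)\cdot z$ modulo $\varpi_L^m$, and this difference has valuation at least $\eta_m+\gamma$. That bound can be below $m-1$, so invariance fails for arbitrary $z$. Instead, fiber over cosets of the smaller lattice $\varpi_L^{m-1-\gamma}\cO_L e_i$, where $m-1-\gamma\geq\eta_m$. On each coset $x+\varpi_L^{m-1-\gamma}t\,e_i$, $t\in\cO_L$, the condition $\ord_L(h)\geq m-1$ is constant. The character value is then
$$\psi\bigl(h(x)+\partial_i h(x)\,\varpi_L^{m-1-\gamma}t\bigr),$$
since the quadratic and higher terms are $\equiv0$ modulo $\varpi_L^m$: there $2(m-1-\gamma)\geq 2\eta_m$ and the Hessian contributes $\kappa_m$. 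The coefficient $\partial_i h(x)\varpi_L^{m-1-\gamma}$ has valuation exactly $m-1$. So $t\mapsto\psi(\cdot)$ is a nontrivial character of $\cO_L$, because $\psi$ has conductor $m$, and its integral over $t$ vanishes.

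\textbf{Main obstacle.} The delicate step is keeping the valuations consistent. The fibration by lines in direction $e_i$ must preserve both the ball and the set $A$, and the remainder terms must vanish modulo $\varpi_L^m$. These checks rest only on the inequalities $3\eta_m\geq m$ and $2\eta_m+\kappa_m\geq m$ from Lemma-Definition \ref{kaet}, together with $p_L>d$.
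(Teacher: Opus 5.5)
Your proof is correct and follows the paper's argument. You use the same Taylor estimates on the ball (resting on $3\eta_m\geq m$ and $2\eta_m+\kappa_m\geq m$) to get the dichotomy, and the same orthogonality-of-characters vanishing on sub-cylinders where $h$ is affine modulo $\varpi_L^m$ with linear coefficient of valuation exactly $m-1$. The only cosmetic difference is that the paper partitions the set into boxes $x^{(0)}+\prod_i\varpi_L^{\beta_i}\cO_L$ using every index $i$ with $\ord_L(\partial h/\partial x_i)<m-\eta_m$, whereas you use a single such index and integrate along the $e_i$-direction by Fubini, which works equally well.
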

\begin{proof}Since 
$e(h,X,m)=0$, we must have 
\begin{equation}\label{Hbig}\ord_L\left(\frac{\partial^2 h}{\partial x_i\partial x_j}(x)\right)\geq \kappa_m
\end{equation} for all $1\leq i,j\leq n$ and all $x\in X+\varpi_L^{\eta_m}\cO_L^n$. Firstly, we will show that 
\begin{equation}\label{leJ}
\min\{\ord_L\left(\frac{\partial h}{\partial x_i}(x)\right),m-\eta_m\}=\min\{\ord_L\left(\frac{\partial h}{\partial x_i}(X)\right),m-\eta_m\}
\end{equation} for all $1\leq i\leq n$ and all $x\in X+\varpi_L^{\eta_m}\cO_L^n$. Indeed, let $x=(x_1,...,x_n)\in X+\varpi_L^{\eta_m}\cO_L^n$. Then, we can write 
$$\frac{\partial h}{\partial x_i}(x)=\frac{\partial h}{\partial x_i}(X)+\sum_{1\leq j\leq n}\frac{\partial^2 h}{\partial x_i\partial x_j}(X)(x_j-X_j)+\sum_{I\in \NN^n,|I|\geq 2}\frac{1}{I!}\frac{\partial^{|I|+1} h}{\partial x_i\partial x^I}(X)(x-X)^I,$$
where if $I=(\alpha_1,...,\alpha_n)\in\NN^n$ then $|I|=\sum_{j=1}^n\alpha_j$, $I!=\prod_{j=1}^n \alpha_j!$, $(x-X)^I=\prod_{j=1}^n (x_j-X_j)^{\alpha_j}$, $\partial x^I=\partial x_{1}^{\alpha_1}\cdots\partial x_n^{\alpha_n}$ as usual. By (\ref{Hbig}), Lemma-Definition \ref{kaet} and the fact that $\ord_L(x_j-X_j)\geq \eta_m$ for all $1\leq j\leq n$, we have 
$$\ord\left(\frac{\partial h}{\partial x_i}(x)-\frac{\partial h}{\partial x_i}(X)\right)\geq m-\eta_m.$$
This implies (\ref{leJ}). Thus, $\tilde{\cC}_{h,X,m,-\eta_m}=\emptyset$ or $\tilde{\cC}_{h,X,m,-\eta_m}=\{x\in X+\varpi_L^{\eta_m}\cO_L^n\mid \ord_L(h(x))\geq m-1\}$. The first assertion is proved.

Since $m\geq 2$, we have  $\eta_m\geq 1$. Thus, either $\Phi_{L,Z}(x)=0$ for all $x\in X+\varpi_L^{\eta_m}\cO_L^n$ or $\Phi_{L,Z}(x)=1$ for all $x\in X+\varpi_L^{\eta_m}\cO_L^n$. The second assertion is trivial if $\Phi_{L,Z}(x)=0$ for all $x\in X+\varpi_L^{\eta_m}\cO_L^n$. Thus, we can suppose that $\Phi_{L,Z}(x)=1$ for all $x\in X+\varpi_L^{\eta_m}\cO_L^n$.  By the first assertion, it remains to show that if $\tilde{\cC}_{h,X,m,-\eta_m}\neq \{x\in X+\varpi_L^{\eta_m}\cO_L^n\mid \ord_L(h(x))\geq m-1\}$ then 
\begin{equation}\label{zerooo}
\int_{\{x\in X+\varpi_L^{\eta_m}\cO_L^n\mid\ord_L(h(x))\geq m-1\}}\psi(h(x))|dx|=0.
\end{equation}
Indeed, if $\tilde{\cC}_{h,X,m,-\eta_m}\neq \{x\in X+\varpi_L^{\eta_m}\cO_L^n\mid \ord_L(h(x))\geq m-1\}$ then (\ref{leJ}) implies that the set $J:=\bigg\{1\leq i\leq n\mid \ord_L\left(\frac{\partial h}{\partial x_i}(X)\right)<m-\eta_m\bigg\}\neq\emptyset$. For each $1\leq i\leq n$, we set \begin{equation}\label{debet}\beta_i=\begin{cases}m-1-\ord_L\left(\frac{\partial h}{\partial x_i}(X)\right)\geq \eta_m \textnormal{ if } i\in J,\\ \eta_m \textnormal{ otherwise}.\end{cases}
\end{equation} We show that if $x^{(0)}=\left(x_1^{(0)},...,x_n^{(0)}\right)\in \{x\in X+\varpi_L^{\eta_m}\cO_L^n\mid \ord_L(h(x))\geq m-1\}$ then 
\begin{equation}\label{cupm-1} x^{(0)}+\prod_{i=1}^n\varpi_L^{\beta_i}\cO_L\subset \{x\in X+\varpi_L^{\eta_m}\cO_L^n\mid \ord_L(h(x))\geq m-1\}.
\end{equation} 
Let $x^{(1)}=\left(x_1^{(1)},...,x_n^{(1)}\right)\in x^{(0)}+\prod_{i=1}^n\varpi_L^{\beta_i}\cO_L$ then  we have 
\begin{align}\label{taydeve}
h\left(x^{(1)}\right)=&h\left(x^{(0)}\right)+\sum_{1\leq i\leq n}\frac{\partial h}{\partial x_i}\left(x^{(0)}\right)\left(x_i^{(1)}-x_i^{(0)}\right)+\sum_{I\in \NN^n,|I|>2}\frac{1}{I!}\frac{\partial^{|I|} h}{\partial x^I}\left(x^{(0)}\right)\left(x^{(1)}-x^{(0)}\right)^I\nonumber\\&+\sum_{1\leq i,j\leq n}\frac{\partial^2 h}{\partial x_i\partial x_j}\left(x^{(0)}\right)\left(x_j^{(1)}-x_j^{(0)}\right)\left(x_i^{(1)}-x_i^{(0)}\right).
\end{align}
It follows from (\ref{Hbig}), (\ref{leJ}), (\ref{debet}), (\ref{taydeve}) and Lemma-Definition \ref{kaet} that $$\ord_L\left(h\left(x^{(1)}\right)-h\left(x^{(0)}\right)\right)\geq m-1.$$ Thus (\ref{cupm-1}) is proved. Moreover, by using (\ref{Hbig}), (\ref{leJ}), (\ref{debet}), (\ref{taydeve}) and Lemma-Definition \ref{kaet} again, we have 
\begin{equation}\label{eqred}\ord_L\bigg(h\left(x^{(1)}\right)-h\left(x^{(0)}\right)-\sum_{i\in J}\frac{\partial h}{\partial x_i}\left(x^{(0)}\right)\left(x_i^{(1)}-x_i^{(0)}\right)\bigg)\geq m
\end{equation}
for all $x^{(1)}\in x^{(0)}+\prod_{i=1}^n\varpi_L^{\beta_i}\cO_L$. By using (\ref{leJ}), (\ref{debet}), (\ref{eqred}), the orthogonality of characters and the fact that $J\neq\emptyset$, we have 
\begin{align}\label{zerro}
\int_{x^{(0)}+\prod_{i=1}^n\varpi_L^{\beta_i}\cO_L}\psi(h(x))|dx|&=q_L^{-\sum_{1\leq i\leq n}\beta_i}\psi\left(h\left(x^{(0)}\right)\right)\int_{\cO_L^n}\psi_{\varpi_L^{m-1}}\bigg(\sum_{i\in J}\acc\left(\frac{\partial h}{\partial x_i}\left(x^{(0)}\right)\right)t_i\bigg)|dt|\nonumber\\&=0
\end{align}
for all additive characters $\psi$ of $L$ of conductor $m$. By (\ref{cupm-1}), the set $$\{x\in X+\varpi_L^{\eta_m}\cO_L^n\mid \ord_L(h(x))\geq m-1\}$$ can be written as a disjoint union of cylinders of the form $x^{(0)}+\prod_{i=1}^n\varpi_L^{\beta_i}\cO_L$. This and  (\ref{zerro}) imply (\ref{zerooo}) and the second assertion.
\end{proof}
If $X\in\cO_L^n$ is not of $(m,h)$-type $(\emptyset)$, then we also try to construct a similar set to the one in Definition \ref{rho0}. In order to do this, it is sufficient to construct such a set after using the change of coordinates  $\phi_X$. We first need to prepare some more notations. 
\begin{defn} \label{j-func} Let $X\in\cO_L^n$, $Y=X\mathsf{A}(X)^{-1}$ and $m\in\ZZ_{\geq 2}$. Suppose that $e(h,X,m)>0$.  Let $z\in Y+\varpi_L^{\eta_m}\cO_L^{n}$. 

We set $j_{m,z}=+\infty$ if for all  $j$ we have  $\displaystyle \min_{\ell\in I_{j}(h,X)}\ord_{L}\left(\frac{\partial g_X}{\partial y_\ell}(z)\right)>\frac{m+\tilde{\mu}_{j,m}(h,X)-1}{2}$, where $\tilde{\mu}_{j,m}(h,X)=\mu_j(h,X)-1$ if $\mu_j(h,X)<\kappa_m$ and $\tilde{\mu}_{j,m}(h,X)=m-1-2\eta_{m}$ if $\mu_j(h,X)\geq\kappa_m$. 

If there exists $e(h,X,m)<j\leq e(h,X)$ such that  $\displaystyle\min_{\ell\in I_{j}(h,X)}\ord_{L}\left(\frac{\partial g_X}{\partial y_\ell}(z)\right)\leq\frac{m+\tilde{\mu}_{j,m}(h,X)-1}{2}$, we define $j_{m,z}$ to be the smallest natural number $j$ with this property. 

If $\displaystyle\min_{\ell\in I_{i}(h,X)}\ord_{L}\left(\frac{\partial g_X}{\partial y_\ell}(z)\right)>\frac{m+\tilde{\mu}_{i,m}(h,X)-1}{2}$ for all $i>e(h,X,m)$ and there exists $1\leq j\leq e(h,X,m)$ such that  $\displaystyle\min_{\ell\in I_{j}(h,X)}\ord_{L}\left(\frac{\partial g_X}{\partial y_\ell}(z)\right)\leq\frac{m+\tilde{\mu}_{j,m}(h,X)-1}{2}$, we define $j_{m,z}$ to be the smallest natural number $j$ with this property. 
\end{defn}

\begin{lem}\label{bs1}Let $X\in\cO_L^n$ and $m\in\ZZ_{\geq 2}$. Suppose that $e(h,X,m)>0$.  We have
\[\tilde{\mu}_{j,m}(h,X)\leq \min\{\mu_{j}(h,X)-1,\max\{\kappa_m-2,m-2\eta_m-1\}\}=\min\{\mu_{j}(h,X)-1,m-2\eta_m-1\}\]
for all $m\geq 2$ and all $1\leq j\leq e(h,X)$.
\end{lem}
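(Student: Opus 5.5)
The plan is to unpack the definition of $\tilde{\mu}_{j,m}(h,X)$ from Definition \ref{j-func} and compare it with the right-hand side through Lemma-Definition \ref{kaet}. The proof splits into two independent parts: first the equality on the right, then the inequality, handled by cases according to whether $\mu_j(h,X)<\kappa_m$.

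For the equality, Lemma-Definition \ref{kaet} gives $\kappa_m-2\leq m-2\eta_m-1$, so $\max\{\kappa_m-2,m-2\eta_m-1\}=m-2\eta_m-1$. Taking the minimum with $\mu_j(h,X)-1$ on both sides yields the stated equality; this holds for every $j$, including the case $\mu_j(h,X)=+\infty$ with the obvious conventions.

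For the inequality, I would use the equality just proved and show $\tilde{\mu}_{j,m}(h,X)\leq\min\{\mu_j(h,X)-1,m-2\eta_m-1\}$.
\begin{itemize}
\item If $\mu_j(h,X)<\kappa_m$, then $\tilde{\mu}_{j,m}(h,X)=\mu_j(h,X)-1$. Since $\mu_j(h,X)$ is an integer, $\mu_j(h,X)\leq\kappa_m-1$, so $\mu_j(h,X)-1\leq\kappa_m-2\leq m-2\eta_m-1$ by Lemma-Definition \ref{kaet}. Hence $\tilde{\mu}_{j,m}(h,X)$ equals the minimum.
\item If $\mu_j(h,X)\geq\kappa_m$ (this includes $+\infty$), then $\tilde{\mu}_{j,m}(h,X)=m-2\eta_m-1$. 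Again by Lemma-Definition \ref{kaet}, $m-2\eta_m-1\leq\kappa_m-1\leq\mu_j(h,X)-1$, so $\tilde{\mu}_{j,m}(h,X)$ is once more the minimum.
\end{itemize}
So in fact equality holds throughout, which is stronger than the claimed inequality.

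I expect no real obstacle. The only point needing care is the integrality step $\mu_j<\kappa_m\Rightarrow\mu_j\leq\kappa_m-1$ in the first case, together with the convention for $\mu_j=+\infty$, which falls under the second case. The hypothesis $e(h,X,m)>0$ plays no role beyond ensuring that $\tilde{\mu}_{j,m}$ was defined.
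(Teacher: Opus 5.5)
Your proof is correct and follows essentially the paper's route: the paper proves this in one line, saying it follows from the definition of $\tilde{\mu}_{j,m}(h,X)$ and Lemma-Definition \ref{kaet}. Your case split on whether $\mu_j(h,X)<\kappa_m$, using integrality and the bounds $\kappa_m-2\leq m-2\eta_m-1\leq\kappa_m-1$, is exactly that argument written out, and your observation that equality actually holds is also correct.
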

\begin{proof}The assertion follows immediately from the definition of $\tilde{\mu}_{j,m}(h,X)$ and Lemma-Definition \ref{kaet}. 
\end{proof}

For each $X\in\cO_L^n$ and $m\geq 2$ such that $e(h,X,m)>0$, after using $\phi_X$, we will construct the following subset $\cA_{h,X,m}$ of $\phi_X(X+\varpi_L^{\eta_m}\cO_L^n)$ and show that the restriction of the integral in (\ref{exph}) (after using $\phi_X$) to it equals zero. 
\begin{defn}\label{Aset}Let $X\in\cO_L^n$, $Y=X\mathsf{A}(X)^{-1}$ and $m\in\ZZ_{\geq 2}$. Suppose that $e(h,X,m)>0$. 
If $e(h,X)\geq j_{m,Y}>e(h,X,m)$, we set 
$$\cA_{h,X,m}:=\{z\in Y+\varpi_L^{\eta_{m}}\cO_L^{n}\mid \ord_L(g_X(z))\geq m-1\}.$$ If $1\leq j_{m,Y}\leq e(h,X,m)$ or $j_{m,Y}=+\infty$,  we set
\begin{align*}
\cA_{h,X,m}:=&\{z\in Y+\varpi_L^{\eta_{m}}\cO_L^{n}\mid j_{m,z}\leq e(h,X,m),\ord_L(g_X(z))\geq m-1\}.
\end{align*}
\end{defn}
\begin{defn}\label{Azset}Let $X\in\cO_L^n$ and $m\in\ZZ_{\geq 2}$. Suppose that $e(h,X,m)>0$. 
For each  $z\in \cA_{h,X,m}$, we set 
$$\cA_{h,X,m}(z):=z+\prod_{1\leq i\leq n} \varpi_L^{\delta(z,m,i)}\cO_L,$$
where 
$$\delta(z,m,i):=\begin{cases} &m \textnormal{ if } \mathfrak{j}(i)\neq j_{m,z},\\
&m-1-\min_{\ell\in I_{j_{m,z}}(h,X)}\ord_{L}\left(\frac{\partial g_X}{\partial y_\ell}(z)\right) \textnormal{ if } \mathfrak{j}(i)=j_{m,z}.
\end{cases}$$
\end{defn}
In fact, we will show that $\cA_{h,X,m}$ can be written as a disjoint union of sets of the form  $\cA_{h,X,m}(z)$ and the restriction of the integral in (\ref{exph}) (after using $\phi_X$) to each $\cA_{h,X,m}(z)$ is equal to zero. To do this, we need some lemmas. From now on until the end of Lemma \ref{count1}, we fix $m\geq 2$ and $X\in \cO_L^n$ such that $e(h,X,m)>0$.  We set $Y=X\mathsf{A}(X)^{-1}$ as usual. 

\begin{lem}\label{delta0}Let $z\in\cA_{h,X,m}$ then $\delta(z,m,i)\geq \eta_m$ for all $i$.
\end{lem}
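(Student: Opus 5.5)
The claim follows directly from the definition of $\delta(z,m,i)$, split according to whether $\mathfrak{j}(i)=j_{m,z}$. The only real input is Lemma \ref{bs1}, which turns the threshold in Definition \ref{j-func} into the bound $m-\eta_m-1$ on the order of the relevant partial derivatives.

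\emph{Case $\mathfrak{j}(i)\neq j_{m,z}$.} Here $\delta(z,m,i)=m$. By Lemma-Definition \ref{kaet}, $\eta_m=\lceil \frac{m}{3}\rceil\leq m$ for $m\geq 2$, so $\delta(z,m,i)\geq\eta_m$. This case includes $j_{m,z}=+\infty$, because $\mathfrak{j}(i)$ is always a finite index.

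\emph{Case $\mathfrak{j}(i)=j_{m,z}=:j$, with $j$ finite.} By the construction of $j_{m,z}$ in Definition \ref{j-func}, whichever of its clauses produced $j$, we have
$$\min_{\ell\in I_{j}(h,X)}\ord_{L}\left(\frac{\partial g_X}{\partial y_\ell}(z)\right)\leq\frac{m+\tilde{\mu}_{j,m}(h,X)-1}{2}.$$
In particular this minimum is a finite integer, so $\delta(z,m,i)$ is a well-defined integer. Lemma \ref{bs1} gives $\tilde{\mu}_{j,m}(h,X)\leq m-2\eta_m-1$. Substituting,
$$\min_{\ell\in I_{j}(h,X)}\ord_{L}\left(\frac{\partial g_X}{\partial y_\ell}(z)\right)\leq \frac{2m-2\eta_m-2}{2}=m-\eta_m-1.$$
Hence
$$\delta(z,m,i)=m-1-\min_{\ell\in I_{j}(h,X)}\ord_{L}\left(\frac{\partial g_X}{\partial y_\ell}(z)\right)\geq m-1-(m-\eta_m-1)=\eta_m.$$

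The argument does not need the hypothesis $\ord_L(g_X(z))\geq m-1$ from the definition of $\cA_{h,X,m}$. It only needs the defining inequality for $j_{m,z}$ and the uniform bound of Lemma \ref{bs1}, which holds for every $1\leq j\leq e(h,X)$, whether $\mu_j(h,X)<\kappa_m$ or not. So there is no real obstacle. The one point to check is that Lemma \ref{bs1} covers both alternatives in the definition of $\tilde{\mu}_{j,m}(h,X)$; its statement is for all $j$, so it does.
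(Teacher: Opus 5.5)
Your proof is correct and is essentially the paper's argument: the same split according to whether $\mathfrak{j}(i)=j_{m,z}$, with the defining inequality of $j_{m,z}$ from Definition \ref{j-func} doing the work. The paper treats $j_{m,z}\leq e(h,X,m)$ and $j_{m,z}>e(h,X,m)$ separately, the first via $\mu_{j}(h,X)\leq \kappa_m-1$, which gives $\delta(z,m,i)\geq \eta_m-\frac{1}{2}$ and then needs integrality; you instead merge the two subcases through the uniform bound $\tilde{\mu}_{j,m}(h,X)\leq m-2\eta_m-1$ of Lemma \ref{bs1}, which gives $\delta(z,m,i)\geq\eta_m$ directly.
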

\begin{proof}
Let $z\in\cA_{h,X,m}$. If $\mathfrak{j}(i)\neq j_{m,z}$  then it is clear that $\delta(z,m,i)=m\geq\eta_m$. If $\mathfrak{j}(i)=j_{m,z}\leq e(h,X,m)$ then it follows from Lemma-Definition \ref{kaet} and Definition \ref{j-func} that
\begin{equation*}\NN\ni\delta(z,m,i)=m-1-\min_{\ell\in I_j(h,X)}\ord_{L}\left(\frac{\partial g_X}{\partial y_\ell}(z)\right)\geq \frac{m-\mu_j(h,X)}{2}\geq \frac{m-\kappa_m+1}{2}\geq\eta_m-\frac{1}{2}.
\end{equation*} Thus $\delta(z,m,i)\geq \eta_m$. If  $\mathfrak{j}(i)=j_{m,z}>e(h,X,m)$, then Definition \ref{j-func} implies $$\delta(z,m,i)=m-1-\min_{\ell\in I_j(h,X)}\ord_{L}\left(\frac{\partial g_X}{\partial y_\ell}(z)\right)\geq m-1-\frac{m+\tilde{\mu}_{j,m}(h,X)-1}{2}=\eta_m.$$  Therefore, we have $\delta(z,m,i)\geq \eta_m$ for all $i$.
\end{proof}
\begin{lem}\label{valu}Let $z\in Y+\varpi_L^{\eta_m}\cO_L^n$ then $$\ord_L\left(\frac{\partial^2 g_X}{\partial y_\ell\partial y_i}(z)\right)\geq \mu_j(h,X)$$ if $\mathfrak{j}(i)=j=\mathfrak{j}(\ell)$, and 
$$\ord_L\left(\frac{\partial^2 g_X}{\partial y_\ell\partial y_i}(z)\right)\geq \eta_m$$
if $\mathfrak{j}(i)\neq\mathfrak{j}(\ell)$,
\end{lem}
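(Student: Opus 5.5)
The plan is to compare the Hessian of $g_X$ at an arbitrary point $z\in Y+\varpi_L^{\eta_m}\cO_L^n$ with the Hessian at the base point $Y=\phi_X(X)$, where its structure is known exactly. By Lemma \ref{Hess}, after reordering coordinates, $\cH_{g_X}(Y)$ is the block diagonal matrix $\bigl(\cH_h(X)^{(1)},\dots,\cH_h(X)^{(e(h,X))}\bigr)$, with blocks indexed by $I_1(h,X),\dots,I_{e(h,X)}(h,X)$. Hence
\[
\frac{\partial^2 g_X}{\partial y_\ell\partial y_i}(Y)=0 \quad\text{if } \mathfrak{j}(i)\neq\mathfrak{j}(\ell),
\]
while for $\mathfrak{j}(i)=\mathfrak{j}(\ell)=j$ this entry is an entry of $\cH_h(X)^{(j)}$, so its order is at least $\ord_L(\cH_h(X)^{(j)})=\mu_j(h,X)$.

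Next I Taylor expand each second derivative around $Y$. Since $g_X(y)=h(y\mathsf{A}(X))$ and $\mathsf{A}(X)\in\SL_n(\cO_L)$ by Lemma \ref{propt}, $g_X$ is a polynomial of degree $d$ with coefficients in $\cO_L$. Write $z=Y+w$ with $\ord_L(w)\geq \eta_m$. Then
\[
\frac{\partial^2 g_X}{\partial y_\ell\partial y_i}(z)=\frac{\partial^2 g_X}{\partial y_\ell\partial y_i}(Y)+\sum_{|I|\geq 1}\frac{1}{I!}\frac{\partial^{|I|+2}g_X}{\partial y_\ell\partial y_i\partial y^I}(Y)\,w^I .
\]
Because $L\in\tilde{\cL}_{K,d+1}$, we have $p_L>d\geq |I|+2$, so the factorials $I!$ and the derivative coefficients are units or lie in $\cO_L$. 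Therefore every correction term has order at least $|I|\eta_m\geq\eta_m$.

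For the off-diagonal case $\mathfrak{j}(i)\neq\mathfrak{j}(\ell)$, the constant term vanishes, so the whole entry has order at least $\eta_m$, which is the second claim.

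For the diagonal case $\mathfrak{j}(i)=\mathfrak{j}(\ell)=j$ we get the order of the entry at least $\min\{\mu_j(h,X),\eta_m\}$. This settles the claim at once when $\mu_j(h,X)\leq\eta_m$, and I expect the remaining case to be the main obstacle. When $\mu_j(h,X)>\eta_m$, a naive first-order Taylor bound only gives $\eta_m$, and $\mu_j(h,X)$ may be $+\infty$. Here I would not expand around the fixed $Y$. Instead I would use Lemma \ref{samtype}: for $x=z\mathsf{A}(X)\in X+\varpi_L^{\eta_m}\cO_L^n$ it gives $\min\{\mu_j(h,x),\eta_m\}=\min\{\mu_j(h,X),\eta_m\}$, together with the fact that $\cH_{g_X}(z)=\mathsf{A}(X)\cH_h(x)\mathsf{A}(X)^T$, and I would try to track the block structure through this conjugation. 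If that only recovers $\min\{\mu_j(h,X),\eta_m\}$, I would treat that as the true content of the statement, read the bound as $\min\{\mu_j,\eta_m\}$ for $\mu_j\geq\eta_m$, and check that later uses only require $\mu_j<\kappa_m\leq\eta_m$, in which case the stated bound holds exactly.
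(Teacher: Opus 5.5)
Your Taylor expansion around $Y$, using the block-diagonal form of $\cH_{g_X}(Y)$ from Lemma \ref{Hess}, is the argument the paper's one-line proof (``follows immediately from the construction of $g_X$ and $Y$'') implicitly relies on. It correctly proves the off-diagonal bound, and the diagonal bound whenever $\mu_j(h,X)\le\eta_m$.

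The case you flagged is not a gap in your reasoning but an error in the statement. For $\mu_j(h,X)>\eta_m$ the diagonal claim is false, so the attempt via Lemma \ref{samtype} cannot succeed; that lemma itself only controls $\min\{\mu_j,\eta_m\}$. Here is a counterexample:
\begin{itemize}
\item Take $n=2$, $h=x_1^2+x_2^3$ (so $d=3$ and $p_L\ge 5$), $X=0$ and $m=2$, so $\eta_2=\kappa_2=1$.
\item Then $\cH_h(X)=\mathrm{diag}(2,0)$, which gives $\mu_1(h,X)=0$, $\mu_2(h,X)=+\infty$, $I_1(h,X)=\{1\}$, $I_2(h,X)=\{2\}$ and $e(h,X,2)=1>0$.
\item All $a_{ji}$ vanish, so $\mathsf{A}(X)$ is the identity, $g_X=h$ and $Y=0$.
\item At $z=(0,\varpi_L)\in Y+\varpi_L^{\eta_2}\cO_L^2$ one gets $\frac{\partial^2 g_X}{\partial y_2^2}(z)=6\varpi_L$, whose order is $1<\mu_2(h,X)$.
\end{itemize}
The true statement is the bound $\min\{\mu_j(h,X),\eta_m\}$ for $\mathfrak{j}(i)=\mathfrak{j}(\ell)=j$, which is exactly what your argument proves.

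Your justification that this weaker bound suffices later is inaccurate, however. The lemma is not only invoked for $j\le e(h,X,m)$:
\begin{itemize}
\item Lemma \ref{Cond1}(iii) ranges over all $1\le j\le e(h,X)$.
\item Lemmas \ref{partition}, \ref{equj}, \ref{part1} and \ref{eq11} use the diagonal bound for blocks $j>e(h,X,m)$, e.g.\ when $j_{m,z}>e(h,X,m)$. There $\mu_j(h,X)\ge\kappa_m$ can exceed $\eta_m$.
\end{itemize}
These uses survive for a different reason. In all of them the two points compared lie in $Y+\varpi_L^{\eta_m}\cO_L^n$; in Lemma \ref{eq11} the shifts are $\varpi_L^{\delta(z,m,i)}$ with $\delta(z,m,i)\ge\eta_m$ by Lemma \ref{delta0}. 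So the displacements always have order at least $\eta_m$. In the new case $\mu_j>\eta_m$, the bound $\eta_m$ then still yields every required inequality via $3\eta_m\ge m$. The exact value $\mu_j$ is only needed for $j\le e(h,X,m)$, where $\mu_j<\kappa_m\le\eta_m$ and the stated bound is correct.
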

\begin{proof}
The assertion follows immediately from the construction of $g_X$ and $Y$.
\end{proof}
\begin{lem}\label{Cond1}Let $z=(z_1,....,z_{n}),z'=(z'_1,...,z'_{n})\in Y+\varpi_L^{\eta_m}\cO_L^n$. Suppose that $\ord_L(g_X(z))\geq m-1$. Then $\ord_L(g_X(z'))\geq m-1$ if the following conditions holds:
\begin{itemize}
\item[i,]$\ord_L(z_i-z'_i)+\ord_L(\frac{\partial g_X}{\partial y_i}(z))\geq m-1$.
\item[ii,]$\sum_{i=1}^{n}j_i\ord_L(z_i-z'_i)\geq m-1$ for all $J=(j_1,...,j_{n})\in\NN^{n}$ such that $|J|\geq 3$.
\item[iii,]For all $1\leq j\leq e(h,X)$, we have $\mu_j(h,X)+\ord_L(z_i-z'_i)+\ord_L(z_\ell-z'_\ell)\geq m-1$ if $\mathfrak{j}(i)=j=\mathfrak{j}(\ell)$.
\item[iv,]We have $\eta_m+\ord_L(z_i-z'_i)+\ord_L(z_\ell-z'_\ell)\geq m-1$ if $\mathfrak{j}(i)\neq \mathfrak{j}(\ell)$.

\end{itemize}
\end{lem}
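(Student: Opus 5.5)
The plan is to Taylor expand $g_X$ at $z$. Since $h$ has degree $d$ and $L\in\tilde{\cL}_{K,d+1}$, the same holds for $g_X$, and all factorials $I!$ with $|I|\leq d$ are units in $\cO_L$. Set $w=z'-z$. Then
$$g_X(z')=g_X(z)+\sum_{i}\frac{\partial g_X}{\partial y_i}(z)w_i+\frac{1}{2}\sum_{i,\ell}\frac{\partial^2 g_X}{\partial y_i\partial y_\ell}(z)w_iw_\ell+\sum_{|J|\geq 3}\frac{1}{J!}\frac{\partial^{|J|}g_X}{\partial y^J}(z)w^J .$$
The goal is to show that each term on the right-hand side has valuation at least $m-1$; the ultrametric inequality then gives $\ord_L(g_X(z'))\geq m-1$.

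The terms are handled in order. The constant term is covered by hypothesis. The linear terms have valuation $\ord_L(\frac{\partial g_X}{\partial y_i}(z))+\ord_L(w_i)\geq m-1$ by condition (i). For $|J|\geq 3$, the coefficient $\frac{1}{J!}\frac{\partial^{|J|}g_X}{\partial y^J}(z)$ lies in $\cO_L$: $g_X$ has coefficients in $\cO_L$ because $\mathsf{A}(X)\in\SL_n(\cO_L)$ by Lemma \ref{propt}, $z\in\cO_L^n$, and the divided derivatives of a polynomial over $\cO_L$ have coefficients in $\cO_L$. So these terms have valuation at least $\sum_i j_i\ord_L(w_i)\geq m-1$ by condition (ii).

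For the quadratic terms, $\frac12$ is a unit since the residue characteristic exceeds $d+1\geq 2$. Lemma \ref{valu} gives $\ord_L(\frac{\partial^2 g_X}{\partial y_i\partial y_\ell}(z))\geq\mu_j(h,X)$ when $\mathfrak{j}(i)=j=\mathfrak{j}(\ell)$, and $\geq\eta_m$ otherwise. Combined with conditions (iii) and (iv), each quadratic term then has valuation at least $m-1$. If $\mu_j(h,X)=+\infty$, the bound is trivial.

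The one delicate point is confirming that the hypothesis $z\in Y+\varpi_L^{\eta_m}\cO_L^n$ is exactly what Lemma \ref{valu} needs. It is, since $z$ lies in that set. So there is no real obstacle, and the proof is a routine application of the ultrametric inequality.
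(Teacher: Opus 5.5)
This is essentially the paper's proof: Taylor-expand $g_X$ at $z$, then bound the linear terms by (i), the quadratic terms by (iii)--(iv) together with Lemma \ref{valu}, and the higher-order terms by (ii). One correction to your aside that the case $\mu_j(h,X)=+\infty$ is ``trivial''. Lemma \ref{valu} is overstated when $\mu_j(h,X)>\eta_m$: for $h=x_1^2+x_2^3$ and $X=0$ one has $\mu_2(h,X)=+\infty$, yet $\frac{\partial^2 g_X}{\partial y_2^2}(z)=6z_2\neq 0$. In that regime the honest bound on the second derivative is only $\min\{\mu_j(h,X),\eta_m\}$. Your estimate still goes through, because $\ord_L(z_i-z'_i)\geq\eta_m$ for all $i$, so any such quadratic term has valuation at least $3\eta_m\geq m$.
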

\begin{proof}
We write $z'-z=(u_1\varpi_L^{\beta_1},...,u_{n}\varpi_L^{\beta_{n}})$ for $u_i\in \cO_L^{\times}$ and $\eta_m\leq \beta_i\leq +\infty$ for all $1\leq i\leq n$ with the convention that $\varpi_L^{+\infty}=0$. Then we have
\begin{equation}\label{taylo1}
g_{X}(z')-g_X(z)=\sum_{1\leq i\leq n}\frac{\partial g_X}{\partial y_i}(z)\varpi_L^{\beta_i}u_i+\sum_{|J|\geq 3}\frac{1}{J!}\frac{\partial^{|J|}g_X}{\partial y^J}(z)\varpi_L^{\langle J,\beta\rangle}u^J+\frac{1}{2}\sum_{i,\ell=1}^{n}\frac{\partial^2 g_X}{\partial y_\ell\partial y_i}(z)\varpi_L^{\beta_\ell+\beta_i}u_\ell u_i,
\end{equation}
where $\beta=(\beta_1,...,\beta_{n})$, $u=(u_1,...,u_{n})$, $\langle \cdot ,\cdot\rangle$ is the usual scalar product on $\RR^{n}$ and we use the usual notations $y^J=y_1^{j_1}...y_{n}^{j_{n}}$, $|J|=j_1+...+j_{n}$ and $J!=j_1!...j_{n}!$ if $J=(j_1,...,j_{n})\in\NN^{n}$.
 This equality together with the assumptions and Lemma \ref{valu} imply the assertion.
\end{proof}

\begin{lem}\label{Cond2}
Let $z=(z_1,....,z_{n}),z'=(z'_1,...,z'_{n})\in Y+\varpi_L^{\eta_m}\cO_L^n$. Let $\theta, \ell$ be integers such that  $1\leq \ell\leq n$. Then $\ord_L\left(\frac{\partial g_X}{\partial y_\ell}(z')-\frac{\partial g_X}{\partial y_\ell}(z)\right)\geq \theta$ if the following conditions holds:
\begin{itemize}
\item[i,]$\ord_L(z_i-z'_i)+\mu_{\mathfrak{j}(\ell)}(h,X)\geq \theta$ if $\mathfrak{j}(i)=\mathfrak{j}(\ell)$.
\item[i,]$\ord_L(z_i-z'_i)+\eta_m\geq \theta$ if $\mathfrak{j}(i)\neq \mathfrak{j}(\ell)$.
\item[ii,]$\sum_{i=1}^{n}j_i\ord_L(z_i-z'_i)\geq \theta$ for all $J=(j_1,...,j_{n})\in\NN^{n}$ such that $|J|\geq 2$.
\end{itemize}
\end{lem}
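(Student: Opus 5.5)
The plan is to mirror the proof of Lemma \ref{Cond1}, replacing the Taylor expansion of $g_X$ by the Taylor expansion of the partial derivative $\frac{\partial g_X}{\partial y_\ell}$ around $z$. As there, we write
\[
z'-z=(u_1\varpi_L^{\beta_1},\dots,u_n\varpi_L^{\beta_n}),
\]
with $u_i\in\cO_L^{\times}$ and $\eta_m\leq\beta_i\leq+\infty$; the lower bound holds because $z,z'\in Y+\varpi_L^{\eta_m}\cO_L^n$. Since $\frac{\partial g_X}{\partial y_\ell}$ is a polynomial over $\cO_L$ (we have $\mathsf{A}(X)\in\SL_n(\cO_L)$ by Lemma \ref{propt}, so $g_X\in\cO_L[y]$), the expansion reads
\[
\frac{\partial g_X}{\partial y_\ell}(z')-\frac{\partial g_X}{\partial y_\ell}(z)=\sum_{i=1}^n\frac{\partial^2 g_X}{\partial y_i\partial y_\ell}(z)\varpi_L^{\beta_i}u_i+\sum_{|J|\geq 2}\frac{1}{J!}\frac{\partial^{|J|+1}g_X}{\partial y_\ell\partial y^J}(z)\varpi_L^{\langle J,\beta\rangle}u^J .
\]

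The linear terms are handled with Lemma \ref{valu}. If $\mathfrak{j}(i)=\mathfrak{j}(\ell)$, the Hessian entry has valuation at least $\mu_{\mathfrak{j}(\ell)}(h,X)$, so condition (i) gives that the term has valuation $\geq\theta$. If $\mathfrak{j}(i)\neq\mathfrak{j}(\ell)$, the entry has valuation at least $\eta_m$, and the second condition gives $\geq\theta$.

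The higher-order terms are where a small point arises: the factor $\frac{1}{J!}$ must not lower valuations. The coefficient $\frac{1}{J!}\frac{\partial^{|J|}}{\partial y^J}\big(\frac{\partial g_X}{\partial y_\ell}\big)$ is the Taylor coefficient of a polynomial with coefficients in $\cO_L$, namely the coefficient of $(y-z)^J$ after the substitution $y=z+(y-z)$. It therefore lies in $\cO_L$ with no factorial division at all, and this holds regardless of the residue characteristic (even though $L\in\tilde{\cL}_{K,d+1}$ would also suffice). Each such term thus has valuation at least $\langle J,\beta\rangle=\sum_i j_i\ord_L(z_i-z'_i)\geq\theta$ by condition (ii), since $|J|\geq2$. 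Summing and using the ultrametric inequality gives the claim.

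The main (minor) obstacle is only this integrality check of the Taylor coefficients; everything else is a direct valuation count, identical in structure to Lemma \ref{Cond1}.
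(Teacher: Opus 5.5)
Your proposal is correct and follows essentially the same proof as the paper. Both Taylor-expand $\frac{\partial g_X}{\partial y_\ell}$ at $z$, bound the linear terms by Lemma \ref{valu} together with the two conditions labelled (i), and bound the terms with $|J|\geq 2$ by condition (ii). Your explicit remark that the Taylor coefficients lie in $\cO_L$ is a detail the paper leaves implicit.
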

\begin{proof}
We write $z'-z=(u_1\varpi_L^{\beta_1},...,u_{n}\varpi_L^{\beta_{n}})$ for $u_i\in \cO_L^{\times}$ and $\eta_m\leq \beta_i\leq +\infty$ for all $1\leq i\leq n$. Then we have
\begin{equation}
\frac{\partial g_X}{\partial y_\ell}(z')=\frac{\partial g_X}{\partial y_\ell}(z)+\sum_{i=1}^{n}\frac{\partial^2 g_X}{\partial y_\ell\partial y_i}(z)\varpi_L^{\beta_i}u_i+ \sum_{|J|\geq 2}\frac{1}{J!}\frac{\partial^{|J|+1}g_X}{\partial y_\ell\partial y^J}(z)\varpi_L^{\langle J,\beta\rangle}u^J.
\end{equation}
Thus, the assertion follows from this equality, the assumptions and Lemma \ref{valu}.
\end{proof}
\begin{defn}\label{sequ}
Let $\beta\in\NN^{n}$ and $a\in\NN$ such that $\beta_i=m-\eta_m$ for all $i\notin I(h,X,m)$. We say that a sequence $(z^{(\ell)}=(z_1^{(\ell)},...,z_n^{(\ell)}))_{0\leq \ell\leq a}$ in $Y+\varpi_L^{\eta_m}\cO_L^{n}$ is of type $(m,\beta)$ if the following conditions hold:
\begin{itemize}
\item $z_i^{(\ell)}=z_i^{(0)}$ for all $i\notin I(h,X,m)$ and $0\leq \ell\leq a$.
\item $\ord_L\left(\frac{\partial g_X}{\partial y_i}\left(z^{(\ell)}\right)\right)\geq \beta_i+\ell$ for all $i\in I(h,X,m)$ and all $0\leq \ell\leq a$.
\item $\ord_L\left(\frac{\partial g_X}{\partial y_i}\left(z^{(\ell)}\right)\right)\geq m-\eta_m$ for all $i\notin I(h,X,m)$ and all $0\leq \ell\leq a$.
\item $\ord_L\left(g_X\left(z^{(\ell)}\right)\right)\geq m-1$ for all $a\geq \ell\geq 0$.
\item $\ord_L\left(z_{i}^{(\ell)}-z_{i}^{(\ell-1)}\right)\geq \beta_i-\mu_{\mathfrak{j}(i)}(h,X)+\ell-1$ for all $i\in I(h,X,m)$ and all $1\leq \ell\leq a$.
 \end{itemize}
We also say that a sequence $\left(z^{(\ell)}\right)_{\ell\geq 0}$ in $Y+\varpi_L^{\eta_m}\cO_L^{n}$ is of type $(m,\beta)$ if $\left(z^{(\ell)}\right)_{0\leq \ell\leq a}$ is of type $(m,\beta)$ for all $a\geq 0$.
\end{defn}
\begin{lem} \label{detsolu}Let $n_0$ be a positive integer and $\mathsf{N}=(\xi_{jl})_{1\leq j,l\leq n_0}\in\MM_{n_0}(\cO_L)$ be a matrix. Suppose that there are non-empty subsets $\mathsf{I}_i$ of $\{1<...<n_0\}$ for $1\leq i\leq e$ such that $\{1<...<n_0\}=\sqcup_{1\leq i\leq e}\mathsf{I}_i$, $\left(\mathsf{N}_{\mathsf{I}_i}\right)^{\ac}$ is invertible for all $1\leq i\leq e$, and 
\begin{equation}\label{eqcond}\xi_{jl}=0
\end{equation} if $j\in \mathsf{I}_{i_1}, l\in \mathsf{I}_{i_2}$ with $i_1\neq i_2$. Let $\mathsf{a}=(a_1,...,a_{n_0})\in\cO_L^{n_0}$ such that 
\begin{equation}\label{coni1}\ord_L(a_j)\geq \ord_L(\mathsf{N}_{\mathsf{I}_i})
\end{equation} for all $1\leq i\leq e$ and all $j\in \mathsf{I}_i$. Let $\mathsf{M}=(\varsigma_{jl})_{1\leq j,l\leq n_0}\in \MM_{n_0}(\cO_L)$ such that 
\begin{equation}\label{coin2}\ord_L(\xi_{lj}-\varsigma_{lj})>\ord_L(\mathsf{N}_{\mathsf{I}_i})
\end{equation} for all $1\leq i\leq e$, all $l\in\mathsf{I}_i$ and all $1\leq j\leq n_0$. Then the equation $\mathsf{M}x^T+\mathsf{a}^T=0$ has a unique solution $\mathsf{x}_{(0)}\in L^{n_0}$. Moreover, we have $\mathsf{x}_{(0)}\in \cO_L^{n_0}$. 

In addition, let $\mathsf{b}=(b_1,...,b_{n_0})\in \cO_L^{n_0}$ and  $\mathfrak{m}\in\NN$ such that $\ord_L(b_j-a_j)\geq \ord_L(\mathsf{N}_{\mathsf{I}_i})+\mathfrak{m}$ for all $1\leq i\leq e$ and all $j\in \mathsf{I}_i$. Then  $\ord_L(\mathsf{x}_{(0)}-\mathsf{x}_{(1)})\geq \mathfrak{m}$ if $\mathsf{x}_{(1)}\in L^{n_0}$ is a solution of $\mathsf{M}x^T+\mathsf{b}^T=0$.

\end{lem}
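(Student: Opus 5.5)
We reduce to the block-diagonal matrix $\mathsf{N}$, show it is invertible over $L$ with an integral solution, and then absorb the perturbation $\mathsf{M}-\mathsf{N}$ by a Neumann-series argument. We order coordinates so that $\mathsf{N}$ is the block-diagonal matrix $(\mathsf{N}_{\mathsf{I}_1},\dots,\mathsf{N}_{\mathsf{I}_e})$; this is possible by (\ref{eqcond}). For each block set $\mu_i=\ord_L(\mathsf{N}_{\mathsf{I}_i})$. Since $(\mathsf{N}_{\mathsf{I}_i})^{\ac}$ is invertible, we can write $\mathsf{N}_{\mathsf{I}_i}=\varpi_L^{\mu_i}\mathsf{U}_i$ with $\mathsf{U}_i\in \MM_{\mathsf{I}_i}(\cO_L)$ and $\mathsf{U}_i\bmod \varpi_L$ invertible. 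Hence $\mathsf{U}_i\in \mathrm{GL}(\cO_L)$, because its determinant is a unit. Let $\mathsf{D}$ be the diagonal matrix with entry $\varpi_L^{\mu_i}$ on the rows indexed by $\mathsf{I}_i$. Then $\mathsf{N}=\mathsf{D}\mathsf{U}$, where $\mathsf{U}=(\mathsf{U}_1,\dots,\mathsf{U}_e)$ is block diagonal and lies in $\mathrm{GL}_{n_0}(\cO_L)$.

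Next we handle the perturbation $\mathsf{M}$. Condition (\ref{coin2}) says that row $l\in\mathsf{I}_i$ of $\mathsf{M}-\mathsf{N}$ is divisible by $\varpi_L^{\mu_i+1}$. Therefore $\mathsf{M}=\mathsf{D}(\mathsf{U}+\varpi_L\mathsf{E})$ for some $\mathsf{E}\in\MM_{n_0}(\cO_L)$. The matrix $\mathsf{V}:=\mathsf{U}+\varpi_L\mathsf{E}$ is congruent to $\mathsf{U}$ modulo $\varpi_L$, so $\mathsf{V}\in\mathrm{GL}_{n_0}(\cO_L)$. In particular $\det\mathsf{M}=\det\mathsf{D}\det\mathsf{V}\neq 0$, provided each $\mu_i<+\infty$. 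Finiteness of $\mu_i$ holds because $\mathsf{N}_{\mathsf{I}_i}^{\ac}$ is invertible on a nonempty index set, and by Definition \ref{matrS} this forces $\mathsf{N}_{\mathsf{I}_i}\neq0$. Hence $\mathsf{M}x^T+\mathsf{a}^T=0$ has the unique solution $\mathsf{x}_{(0)}^T=-\mathsf{V}^{-1}\mathsf{D}^{-1}\mathsf{a}^T$. Condition (\ref{coni1}) says exactly that $\mathsf{D}^{-1}\mathsf{a}^T\in\cO_L^{n_0}$. Since $\mathsf{V}^{-1}$ has entries in $\cO_L$, we get $\mathsf{x}_{(0)}\in\cO_L^{n_0}$.

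For the second part, uniqueness gives $\mathsf{x}_{(1)}^T=-\mathsf{V}^{-1}\mathsf{D}^{-1}\mathsf{b}^T$. Subtracting, $\mathsf{x}_{(0)}^T-\mathsf{x}_{(1)}^T=\mathsf{V}^{-1}\mathsf{D}^{-1}(\mathsf{b}-\mathsf{a})^T$. The hypothesis on $\mathsf{b}-\mathsf{a}$ means that $\mathsf{D}^{-1}(\mathsf{b}-\mathsf{a})^T\in\varpi_L^{\mathfrak{m}}\cO_L^{n_0}$. Applying the integral matrix $\mathsf{V}^{-1}$ preserves this divisibility, so $\ord_L(\mathsf{x}_{(0)}-\mathsf{x}_{(1)})\geq\mathfrak{m}$.

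There is no real obstacle here; the only care needed is in the factorization $\mathsf{M}=\mathsf{D}\mathsf{V}$. Its correctness depends on (\ref{coin2}) being a condition on rows $l\in\mathsf{I}_i$ with the scaling $\mu_i$ of that same row block, which is precisely what allows the left factor $\mathsf{D}$ to be pulled out. It also requires writing the system as $\mathsf{M}x^T$, so that $\mathsf{D}$ multiplies on the left.
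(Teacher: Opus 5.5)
Your proof is correct, and it reaches the result by a somewhat different mechanism than the paper.

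The paper argues entirely with determinants:
\begin{itemize}
\item It computes $\ord_L(\det\mathsf{N})=\sum_i|\mathsf{I}_i|\ord_L(\mathsf{N}_{\mathsf{I}_i})$.
\item It uses the Leibniz expansion together with (\ref{eqcond}) and (\ref{coin2}) to get $\ord_L(\det\mathsf{M})=\ord_L(\det\mathsf{N})$.
\item It applies Cramer's rule, bounding $\ord_L(\det\mathsf{M}_j)$ from below by the same Leibniz count (using (\ref{coni1}) for the replaced column), which gives integrality of $\mathsf{x}_{(0)}$.
\item It proves the second part by a separate rescaling trick: solve $\mathsf{M}x^T+\varpi_L^{-\mathfrak{m}}(\mathsf{b}-\mathsf{a})^T=0$ integrally by the first part, then use uniqueness to write $\mathsf{x}_{(1)}=\mathsf{x}_{(0)}+\varpi_L^{\mathfrak{m}}\mathsf{x}_{(2)}$.
\end{itemize}

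Your factorization $\mathsf{M}=\mathsf{D}\mathsf{V}$ with $\mathsf{V}\in\mathrm{GL}_{n_0}(\cO_L)$ packages the same row-by-row valuation bookkeeping that the paper's Leibniz estimates carry out term by term. What it gains is the explicit formula $\mathsf{x}^T=-\mathsf{V}^{-1}\mathsf{D}^{-1}\mathsf{a}^T$. From this, existence, uniqueness, integrality and the estimate $\ord_L(\mathsf{x}_{(0)}-\mathsf{x}_{(1)})\geq\mathfrak{m}$ all follow at once, with no rescaling step. It also shows clearly that the hypotheses are used only to ensure that $\mathsf{D}^{-1}\mathsf{M}$ is an integral matrix with unit determinant.

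Two harmless remarks. No Neumann series is actually needed: invertibility of $\mathsf{V}$ over $\cO_L$ follows directly, as you note, from $\det\mathsf{V}\equiv\det\mathsf{U}\not\equiv 0\pmod{\varpi_L}$. The preliminary reordering of coordinates is also unnecessary, since $\mathsf{N}=\mathsf{D}\mathsf{U}$ holds row by row in the original indexing.
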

\begin{proof}
By the assumption that $\left(\mathsf{N}_{\mathsf{I}_i}\right)^{\ac}$ is invertible for all $1\leq i\leq e$, we have $\det(\mathsf{N}_{\mathsf{I}_i})\neq 0$ and $\ord_L(\det(\mathsf{N}_{\mathsf{I}_i}))=|\mathsf{I}_i|\ord_L(\mathsf{N}_{\mathsf{I}_i})$ for all $1\leq i\leq e$. This and (\ref{eqcond}) imply
$$\det(\mathsf{N})=\prod_{1\leq i\leq e}\det(\mathsf{N}_{\mathsf{I}_i})\neq 0.$$ On the other hand, by using the Leibniz formula for determinants together with (\ref{eqcond}), (\ref{coin2}) and the assumption that  $\left(\mathsf{N}_{\mathsf{I}_i}\right)^{\ac}$ is invertible for all $1\leq i\leq e$, we have 
$\ord_L(\det(\mathsf{M}))=\ord_L(\det(\mathsf{N}))\neq 0$. Therefore, the equation $\mathsf{M}x^T+\mathsf{a}^T=0$ has a unique solution $\mathsf{x}_{(0)}\in L^{n_0}$. For each $1\leq j\leq n_0$, we denote by $\mathsf{M}_j$ the matrix obtained by replacing the $j^{\textnormal{th}}$-column of $\mathsf{M}$ by $-\mathsf{a}^T$. By using the Leibniz formula for determinants again together with (\ref{eqcond}), (\ref{coni1})  and (\ref{coin2}), we have 
$$\ord_L(\det(\mathsf{M}_{j}))\geq\sum_{1\leq i\leq e}|\mathsf{I}_i|\ord_L(\mathsf{N}_{\mathsf{I}_i})= \ord_L(\det(\mathsf{N}))=\ord_L(\det(\mathsf{M}))$$
for all $1\leq j\leq n_0$. Now, we can use Cramer's rule to deduce that $\mathsf{x}_{(0)}\in \cO_L^{n_0}$. 

Since $\ord_L(b_j-a_j)\geq \ord_L(\mathsf{N}_{\mathsf{I}_i})+\mathfrak{m}\geq \ord_L(\mathsf{N}_{\mathsf{I}_i}) $ and $\ord_L(a_j)\geq \ord_L(\mathsf{N}_{\mathsf{I}_i})$ for all $1\leq i\leq e$ and all $j\in \mathsf{I}_i$, we have $\ord_L(\varpi_L^{-\mathfrak{m}}(b_j-a_j))\geq \ord_L(\mathsf{N}_{\mathsf{I}_i})$ and $\ord_L(b_j)\geq \ord_L(\mathsf{N}_{\mathsf{I}_i})$ for all $1\leq i\leq e$ and all $j\in \mathsf{I}_i$. So the equation $\mathsf{M}x^T+\varpi_L^{-\mathfrak{m}}(\mathsf{b}^T-\mathsf{a}^T)=0$ has a solution $\mathsf{x}_{(2)}\in\cO_L^{n_0}$ as shown above. Moreover, we also have that $\mathsf{x}_{(1)}$ is the unique solution of the equation $\mathsf{M}x^T+\mathsf{b}^T=0$ in $L^{n_0}$. On the other hand, by a simple calculation, $\mathsf{x}_{(0)}+\varpi_L^\mathfrak{m}\mathsf{x}_{(2)}$ is a solution of $\mathsf{M}x^T+\mathsf{b}^T=0$ in $L^{n_0}$.  Thus, we must have $\mathsf{x}_{(1)}=\mathsf{x}_{(0)}+\varpi_L^\mathfrak{m}\mathsf{x}_{(2)}$ and  $\ord_L(\mathsf{x}_{(1)}-\mathsf{x}_{(0)})=\ord_L(\varpi_L^\mathfrak{m}\mathsf{x}_{(2)})\geq \mathfrak{m}$. This completes the proof. 
\end{proof}
\begin{lem}\label{Cond3}Let $z=(z_1,...,z_{n})\in Y+\varpi_L^{\eta_m}\cO_L^n$ such that $\ord_L(g_X(z))\geq m-1$, $\ord_L\left(\frac{\partial g_X}{\partial y_i}(z)\right)\geq m-\eta_m$ for all $i$. Let $\beta=(\beta_1,...,\beta_{n})\in \NN^{n}$ such that 
\begin{itemize}
\item[$(1)$,]$\beta_i=m-\eta_m$ if $i\notin I(h,X,m)$.
\item[$(2)$,]We have $\beta_i=\beta_\ell$ if $\mathfrak{j}(i)=\mathfrak{j}(\ell)$.
\item[$(3)$,]$\ord_L\left(\frac{\partial g_X}{\partial y_i}(z)\right)\geq\beta_i\geq m-\eta_m$ if $i\in I(h,X,m)$.
\item[$(4)$,]$\beta_{i_1}+\beta_{i_2}-\mu_{\mathfrak{j}(i_1)}(h,X)-\mu_{\mathfrak{j}(i_2)}(h,X)\geq\beta_{i_3}+1$ if $i_1,i_2,i_3\in I(h,X,m)$.
\item[$(5)$,]$\max_{i\in I(h,X,m)}\beta_i<\eta_m+\min_{i\in I(h,X,m)}(\beta_i-\mu_{\mathfrak{j}(i)}(h,X)).$
\end{itemize}
Then there exists a sequence $\left(z^{(\ell)}=\left(z_1^{(\ell)},...,z_{n}^{(\ell)}\right)\right)_{\ell\geq 0}$ in $Y+\varpi_L^{\eta_m}\cO_L^{n}$ of type $(m,\beta)$ such that $z^{(0)}=z$.  
\end{lem}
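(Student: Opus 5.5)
I will build the sequence inductively by a Newton/Hensel-type iteration acting only on the coordinates indexed by $I(h,X,m)$, keeping all other coordinates frozen. Set $z^{(0)}=z$; the five conditions of Definition \ref{sequ} hold for $\ell=0$ by the hypotheses on $z$ together with (1) and (3). For the inductive step, suppose $z^{(\ell)}$ has been constructed with $\ord_L\bigl(\frac{\partial g_X}{\partial y_i}(z^{(\ell)})\bigr)\geq \beta_i+\ell$ for $i\in I(h,X,m)$. I would look for $z^{(\ell+1)}=z^{(\ell)}+w$, where $w_i=0$ for $i\notin I(h,X,m)$ and $w_{I}:=(w_i)_{i\in I(h,X,m)}$ solves the linearized system
$$\mathsf{M}\,w_I^T+\mathsf{a}^T=0,\qquad \mathsf{M}=\Bigl(\frac{\partial^2 g_X}{\partial y_i\partial y_j}(z^{(\ell)})\Bigr)_{i,j\in I(h,X,m)},\quad \mathsf{a}=\Bigl(\frac{\partial g_X}{\partial y_i}(z^{(\ell)})\Bigr)_{i\in I(h,X,m)}.$$
Its solvability and the size of the correction come from Lemma \ref{detsolu}, applied with $\mathsf{N}$ the block diagonal matrix $\bigl(\cH_h(X)^{(1)},\dots,\cH_h(X)^{(e(h,X,m))}\bigr)$, which by Lemma \ref{Hess} is $\cH_{g_X}(Y)$ restricted to $I(h,X,m)$, with blocks $\mathsf{I}_j=I_j(h,X)$ having invertible $\ac$-part by Lemma \ref{propAX}. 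Condition (\ref{coin2}) holds since $z^{(\ell)}-Y\in\varpi_L^{\eta_m}\cO_L^n$ and $\mu_j(h,X)<\kappa_m\leq\eta_m$ for $j\leq e(h,X,m)$, via a Taylor expansion of the second derivatives. To get the right scale, I will apply Lemma \ref{detsolu} to the rescaled vector $\varpi_L^{-(\beta_{\cdot}+\ell-\mu_{\cdot})}$ in each block; more cleanly, I use its second part with $\mathsf{b}=0$ and $\mathfrak{m}$ chosen blockwise. This gives $\ord_L(w_i)\geq \beta_i+\ell-\mu_{\mathfrak{j}(i)}(h,X)$, which is exactly the fifth condition of Definition \ref{sequ}. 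Some care is needed because the $\beta_i$ differ across blocks, so I would first reduce to a common exponent using (4)/(5) to bound the cross terms.

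Next I verify the remaining conditions for $z^{(\ell+1)}$. For the gradient on $I(h,X,m)$, I expand $\frac{\partial g_X}{\partial y_i}(z^{(\ell)}+w)$ as in Lemma \ref{Cond2}. The linear term cancels by construction, and the quadratic and higher terms have order at least $\min_{i_1,i_2}(\beta_{i_1}+\beta_{i_2}-\mu_{\mathfrak{j}(i_1)}-\mu_{\mathfrak{j}(i_2)})+2\ell\geq \beta_{i_3}+\ell+1$ by (4). For $i\notin I(h,X,m)$, the change in the partial derivative consists of the cross Hessian entries, of order $\geq\eta_m$ by Lemma \ref{valu}, times $w$, plus higher-order terms. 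Condition (5) gives $\eta_m+\beta_j-\mu_j>\max\beta\geq m-\eta_m$, so the bound $\geq m-\eta_m$ is preserved. For the value $g_X$, I apply Lemma \ref{Cond1} with $z'=z^{(\ell+1)}$. Item (i) follows from $\ord(w_i)+\beta_i+\ell\geq 2(m-\eta_m)-\kappa_m\geq m-1$. Items (iii) and (iv) follow from the same inequalities, and (ii) is immediate since $\ord(w_i)\geq(m-1)/3$ roughly.

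The main obstacle I expect is the bookkeeping in the first step. Lemma \ref{detsolu} needs the right-hand side scaled blockwise by $\ord_L(\mathsf{N}_{\mathsf{I}_j})=\mu_j(h,X)$, while $\mathsf{a}$ has block orders $\beta_j+\ell$ that are not uniform. The off-block entries of $\mathsf{M}$ are only known to be small of order $\eta_m$, not zero, so the cross-block contamination must be shown negligible. That is precisely what (5) is designed for, and I would check it first by writing $\mathsf{M}=\mathsf{N}'+\mathsf{E}$ with $\mathsf{N}'$ the block-diagonal part and solving by a Neumann series, or equivalently by conjugating by $\mathrm{diag}(\varpi_L^{\beta_j})$ before invoking Lemma \ref{detsolu}.
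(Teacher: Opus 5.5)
Your approach is the paper's: a Newton iteration on the coordinates in $I(h,X,m)$ only, with the linear step solved by Lemma \ref{detsolu} (using the block-diagonal $\cH_{g_X}(Y)$ from Lemma \ref{Hess}). The quadratic remainder is controlled by (4), the cross-block Hessian entries by (5), and the value of $g_X$ by Lemma \ref{Cond1}. Your checks of the gradient conditions and of items (i)--(iv) of Lemma \ref{Cond1} are fine once the blockwise bound $\ord_L(w_i)\geq \beta_i-\mu_{\mathfrak{j}(i)}(h,X)+\ell$ is available.

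The one step that does not work as written is how you get that blockwise bound from Lemma \ref{detsolu}.
\begin{itemize}
\item The lemma has a single $\mathfrak{m}$, not a blockwise one. Applied to the unscaled Hessian with $\mathsf{b}=0$, it only gives the uniform bound $\ord_L(w_i)\geq\min_j(\beta_j-\mu_j(h,X))+\ell$. That misses the fifth condition of Definition \ref{sequ} on every block where $\beta_j-\mu_j(h,X)$ is larger than the minimum.
\item Rescaling the right-hand side (row scaling) can destroy integrality of the blocks of $\mathsf{N}$.
\item Conjugating by $\mathrm{diag}(\varpi_L^{\beta_j})$ leaves a right-hand side of order only $\ell$. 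This violates (\ref{coni1}) whenever $\ell<\mu_j(h,X)$.
\end{itemize}

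The fix, which is what the paper does, is to scale the unknowns, i.e.\ the columns. Write $w_i=\varpi_L^{\upsilon_{i,\ell}}u_i$ with $\upsilon_{i,\ell}=\beta_i-\mu_{\mathfrak{j}(i)}(h,X)+\ell$, and take $\mathsf{N}$ and $\mathsf{M}$ to be the Hessians at $Y$ and at $z^{(\ell)}$ with column $i$ multiplied by $\varpi_L^{\upsilon_{i,\ell}}$. Then the hypotheses of Lemma \ref{detsolu} hold as follows.
\begin{itemize}
\item By (2) the scaling is constant on each block. So $(\mathsf{N}_{\mathsf{I}_j})^{\ac}$ stays invertible and $\ord_L(\mathsf{N}_{\mathsf{I}_j})=\beta_j+\ell$.
\item Hypothesis (\ref{coni1}) is exactly your inductive hypothesis on the gradient.
\item Hypothesis (\ref{coin2}) requires, for a row in block $j$ and a column in block $k$, that $\eta_m+\beta_k-\mu_k(h,X)+\ell>\beta_j+\ell$. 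This is precisely condition (5).
\end{itemize}
Lemma \ref{detsolu} then directly gives $u\in\cO_L^{r(h,X,m)}$, which is the blockwise bound. Your Neumann-series alternative also works; it is the same weighted estimate in disguise. The two concrete rescalings you proposed do not.
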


\begin{proof}Firstly, we set $$\upsilon_{i,\ell}=\begin{cases} \beta_i-\mu_{\mathfrak{j}(i)}(h,X)+\ell \textnormal{ if } i\in I(h,X,m),\\ +\infty \textnormal{ if } i\notin I(h,X,m). 
\end{cases}
$$
 We prove the assertion by induction on $\ell$. If $\ell=0$, we just need to take $z^{(0)}=z$.   
Suppose that there exists a sequence $\left(z^{(\ell)}\right)_{0\leq \ell\leq \ell_0}$ in $Y+\varpi_L^{\eta_m}\cO_L^{n}$ of type $(m,\beta)$ such that $z^{(0)}=z$.  
  Let $u=(u_1,...,u_{n})\in \cO_L^{n}$ such that $u_i=0$ for all $i\notin I(h,X,m)$, we set $\tilde{z}=z^{(\ell_0)}+(\varpi_L^{\upsilon_{1,\ell_0}}u_1,...,\varpi_L^{\upsilon_{n,\ell_0}}u_{n})$. We prove that $\ord_L(g_X(\tilde{z}))\geq m-1$. By the inductive hypothesis, we have $z^{(\ell_0)}\in Y+\varpi_L^{\eta_m}\cO_L$ and $\ord_L\left(g_X\left(z^{(\ell_0)}\right)\right)\geq m-1$. The assumptions on $\upsilon_{i,\ell_0},\beta_i$ and the fact that $\mu_j(h,X)\leq \kappa_m-1$ if $j\leq e(h,X,m)$ imply $\upsilon_{i,\ell_0}\geq m-\eta_m-\kappa_m+1$ for all $i$. Thus, the inductive hypothesis  and Lemma-Definition \ref{kaet} imply \begin{equation}\label{eqi1}\ord_L\left(\tilde{z}_i-z_i^{(\ell_0)}\right)+\ord_L\left(\frac{\partial g_X}{\partial y_i}\left(z^{(\ell_0)}\right)\right)\geq \upsilon_{i,\ell_0}+\beta_i+\ell_0\geq 2(m-\eta_m)-\kappa_m+1\geq m-1
\end{equation}
for all $i$. If $J=(j_1,...,j_{n})\in \NN^{n}$ such that $|J|=\sum_{i=1}^{n}j_i\geq 3$, then we can use Lemma-Definition \ref{kaet} to have 
\begin{equation}\label{eqi2}\sum_{i=0}^{n}j_i\ord_L\left(\tilde{z}_i-z_i^{(\ell_0)}\right)\geq \sum_{i=0}^{n}j_i\upsilon_{i,\ell_0}\geq 3(m-\eta_m-\kappa_m+1)\geq m-1.
\end{equation}
If $\mathfrak{j}(i)=\mathfrak{j}(l)=j$, by Lemma-Definition \ref{kaet} together with the definition of $\upsilon_{i,\ell_0}$ and the condition that $\mu_j(h,X)\leq \kappa_m-1$ if $j\leq e(h,X,m)$, we have
\begin{equation}\label{eqi3}\mu_j(h,X)+\ord_L\left(\tilde{z}_i-z_i^{(\ell_0)}\right)+\ord_L\left(\tilde{z}_l-z_l^{(\ell_0)}\right)=\mu_j(h,X)+2\upsilon_{i,\ell_0}
\geq 2\beta_i-\kappa_m+1\geq 2(m-\eta_m)-\kappa_m+1\geq m-1.
\end{equation}
Similarly, if $\mathfrak{j}(i)\neq\mathfrak{j}(l)$, we have 
\begin{align}\label{eqi4}
\eta_m+\ord_L\left(\tilde{z}_i-z_i^{(\ell_0)}\right)+\ord_L\left(\tilde{z}_l-z_l^{(\ell_0)}\right)\geq \eta_m+2(m-\eta_m-\kappa_m+1)\geq m-1.
\end{align} 
 Thus, we can use Lemma \ref{Cond1} to have $\ord_L(g_X(\tilde{z}))\geq m-1$.

We have 
\begin{align}\label{deriv}
\frac{\partial g_X}{\partial y_l}(\tilde{z})=&\frac{\partial g_X}{\partial y_l}\left(z^{(\ell_0)}\right)+\sum_{i=1}^{n}\frac{\partial^2 g_X}{\partial y_l\partial y_i}\left(z^{(\ell_0)}\right)\varpi_L^{\upsilon_{i,\ell_0}}u_i\\&+ \sum_{J=(j_1,...,j_{n})\in\NN^{n}, |J|\geq 2}\frac{1}{J!}\frac{\partial^{|J|+1}g_X}{\partial y_l\partial y^J}\left(z^{(\ell_0)}\right)\nonumber\varpi_L^{\sum_{1\leq i\leq n}j_i\upsilon_{i,\ell_0}}u^J\nonumber
\end{align}
for all $l$.

 Since $\upsilon_{i,\ell_0}\geq m-\eta_m-\kappa_m+1$ for all $i$, we have 
 \begin{equation}\label{eqi5}\sum_{1\leq i\leq n}j_i\upsilon_{i,\ell_0}\geq 2(m-\eta_m-\kappa_m+1)\geq m-\eta_m
 \end{equation}
 if $J\in \NN^{n}$ such that $|J|\geq 2$. By Lemma-Definition \ref{kaet}, Lemma \ref{valu}, the facts that $\upsilon_{i,\ell_0}=+\infty$ if $i\notin I(h,X,m)$ and $\upsilon_{i,\ell_0}\geq m-\eta_m-\kappa_m+1$ for all $i$, we have 
 \begin{equation}\label{eqi6}\ord_L\left(\frac{\partial^2 g_X}{\partial y_l\partial y_i}\left(z^{(\ell_0)}\right)\right)+\upsilon_{i,\ell_0}\geq \eta_m+m-\eta_m-\kappa_m+1\geq m-\eta_m
 \end{equation} for all  $l\notin I(h,X,m)$ and all $1\leq i\leq n$. Thus, since $\ord_L\left(\frac{\partial g_X}{\partial y_l}\left(z^{(\ell_0)}\right)\right)\geq m-\eta_m$ for all  $l\notin I(h,X,m)$, we have $\ord_L\left(\frac{\partial g_X}{\partial y_l}(\tilde{z})\right)\geq m-\eta_m$ for all $l\notin I(h,X,m)$.

We set $\mathsf{M}(t)=\left(\frac{\partial^2 g_X}{\partial y_l\partial y_i}(t)\varpi_L^{\upsilon_{i,\ell_0}}\right)_{l,i\in I(h,X,m)}$ for $t\in Y+\varpi_L^{\eta_m}\cO_L^{n}$. By the inductive hypothesis, we have $$\ord_L\left(\frac{\partial g_X}{\partial y_i}\left(z^{(\ell_0)}\right)\right)\geq \beta_i+\ell_0=\upsilon_{i,\ell_0}+\mu_{\mathfrak{j}(i)}(h,X)$$ if $i\in I(h,X,m)$. 
By this together with  Lemmas \ref{propAX}, \ref{Hess}, \ref{valu}, Definition \ref{typeh}, Lemma-Definition \ref{rh-type1}, the condition $(5)$ and the fact that $z^{(\ell_0)}\in Y+\varpi_L^{\eta_m}\cO_L^{n}$, we can verify that $\mathsf{N}=\mathsf{M}(Y), 
 \mathsf{M}=\mathsf{M}\left(z^{(\ell_0)}\right)$ and $\mathsf{a}=\left(\frac{\partial g_X}{\partial y_i}\left(z^{(\ell_0)}\right)\right)_{i\in I(h,X,m)}$ satisfy the assumptions of Lemma \ref{detsolu}. Therefore, we can use Lemma \ref{detsolu} to see that 
the equation 
\begin{equation}\label{eq0}
\mathsf{M}\left(z^{(\ell_0)}\right)(u_i)_{i\in I(h,X,m)}^{\textnormal{T}}+\left(\frac{\partial g_X}{\partial y_i}\left(z^{(\ell_0)}\right)\right)_{i\in I(h,X,m)}^{\textnormal{T}}=0
\end{equation} has a  solution $\left(u_{i}^{(\ell_0+1)}\right)_{i\in I(h,X,m)}\in \cO_L^{r(h,X,m)}$. 
Moreover, by the condition $(4)$, if $i_1,i_2,i_3\in I(h,X,m)$, we have 
\begin{equation}\label{lbou}\upsilon_{i_1,\ell_0}+\upsilon_{i_2,\ell_0}=\beta_{i_1}+\beta_{i_2}-\mu_{\mathfrak{j}(i_1)}(h,X)-\mu_{\mathfrak{j}(i_2)}(h,X)+2\ell_0\geq\beta_{i_3}+\ell_0+1.
\end{equation} By (\ref{deriv}), (\ref{eq0}) and (\ref{lbou}), we have  $\ord_L\left(\frac{\partial g_X}{\partial y_l}\left(z^{(\ell_0+1)}\right)\right)\geq \beta_l+\ell_0+1$ for all $l\in I(h,X,m)$, where $z^{(\ell_0+1)}=z^{(\ell_0)}+(\varpi_L^{\upsilon_{1,\ell_0}}u_1^{(\ell_0+1)},...,\varpi_L^{\upsilon_{n,\ell_0}}u_{n}^{(\ell_0+1)})$ with $u_i^{(\ell_0+1)}=0$ if $i\notin I(h,X,m)$. On the other hand, since $u_i^{(\ell_0+1)}\in\cO_L$ for all $i\in I(h,X,m)$ and $u_i^{(\ell_0+1)}=0$ for all $i\notin I(h,X,m)$, it follows from the above argument that $\ord_L\left(g_X\left(z^{(\ell_0+1)}\right)\right)\geq m-1$ and  $\ord_L\left(\frac{\partial g_X}{\partial y_l}\left(z^{(\ell_0+1)}\right)\right)\geq m-\eta_m$ for all $l\notin I(h,X,m)$. Thus, $\left(z^{(\ell)}\right)_{0\leq \ell\leq \ell_0+1}$ is a sequence in $Y+\varpi_L^{\eta_m}\cO_L^{n}$ of type $(m,\beta)$ with $z^{(0)}=z$. This completes the inductive argument. 

\end{proof}
\begin{lem}\label{uniqne}With the notations and the assumptions of Lemma \ref{Cond3}, let $\left(z^{(\ell)}=\left(z_1^{(\ell)},...,z_{n}^{(\ell)}\right)\right)_{\ell\geq 0}$ be the sequence in $Y+\varpi_L^{\eta_m}\cO_L^{n}$ of type $(m,\beta)$ constructed in Lemma \ref{Cond3}. Let $\nu\in \NN\cup\{+\infty\}$ and  $\left(z'^{(\ell)}=\left(z_1'^{(\ell)},...,z_{n}'^{(\ell)}\right)\right)_{\ell\geq 0}$ be a sequence in $Y+\varpi_L^{\eta_m}\cO_L^{n}$ of type $(m,\beta)$. Then $\ord_L\left(z_{i}'^{(0)}-z_i^{(0)}\right)\geq \min\{\nu,\upsilon_{i,0}\}$ for all  $1\leq i\leq n$ 
if and only if $\ord_L\left(z_{i}^{(\ell)}-z_{i}'^{(\ell)}\right)\geq \min\{\nu,\upsilon_{i,\ell}\}$ for all $1\leq i\leq n$ and all $\ell\geq 0$, where 
$$\upsilon_{i,\ell}=\begin{cases} \beta_i-\mu_{\mathfrak{j}(i)}(h,X)+\ell \textnormal{ if } i\in I(h,X,m),\\ +\infty \textnormal{ if } i\notin I(h,X,m). 
\end{cases}
$$




\end{lem}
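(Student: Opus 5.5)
The plan is to prove both directions by induction on $\ell$. The engine is the uniqueness and Lipschitz part of Lemma \ref{detsolu}, applied at each step of the Newton-type iteration from Lemma \ref{Cond3}. The ``if'' direction is immediate: take $\ell=0$ in the conclusion. For ``only if'', assume $\ord_L(z_i'^{(0)}-z_i^{(0)})\geq\min\{\nu,\upsilon_{i,0}\}$ for all $i$. Since both sequences are of type $(m,\beta)$, their coordinates outside $I(h,X,m)$ are frozen. So for $i\notin I(h,X,m)$ we have $z_i^{(\ell)}-z_i'^{(\ell)}=z_i^{(0)}-z_i'^{(0)}$. Because $\upsilon_{i,\ell}=+\infty$ there, the hypothesis at $\ell=0$ already forces $z_i^{(0)}=z_i'^{(0)}$ when $\nu=+\infty$, and gives the bound $\geq\nu$ otherwise. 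Hence only the indices $i\in I(h,X,m)$ need work.

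For $i\in I(h,X,m)$, write the increments as $z^{(\ell+1)}-z^{(\ell)}=(\varpi_L^{\upsilon_{i,\ell}}u_i^{(\ell+1)})_i$ and $z'^{(\ell+1)}-z'^{(\ell)}=(\varpi_L^{\upsilon_{i,\ell}}u_i'^{(\ell+1)})_i$. The last condition of Definition \ref{sequ} gives $u_i'^{(\ell+1)}\in\cO_L$. Taylor-expanding $\partial g_X/\partial y_l$ as in (\ref{deriv}), with the higher-order estimates (\ref{lbou}) and (\ref{eqi5}), the type-$(m,\beta)$ condition $\ord_L(\partial g_X/\partial y_l(z'^{(\ell+1)}))\geq\beta_l+\ell+1$ becomes
$$\mathsf{M}(z'^{(\ell)})\,u'^{T}+\Big(\tfrac{\partial g_X}{\partial y_i}(z'^{(\ell)})\Big)^T\equiv 0,$$
with the congruence in row $l$ holding modulo $\varpi_L^{\beta_l+\ell+1}$. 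Dividing each row block $I_j$ by $\varpi_L^{\ord_L(\mathsf{N}_{I_j})}=\varpi_L^{\beta+\ell}$ reduces this to a congruence modulo $\varpi_L$. Combined with the invertibility of $\mathsf{M}^{\ac}$ from Lemma \ref{detsolu}, this pins down $u'^{(\ell+1)}$ modulo $\varpi_L$ as the solution of the exact linear system. The same holds for $u^{(\ell+1)}$. So the increments of both sequences agree, up to an error of order $\varpi_L^{\upsilon_{i,\ell}+1}=\varpi_L^{\upsilon_{i,\ell+1}}$, with the exact solutions of their respective linear systems.

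The induction step assumes $\ord_L(z_i^{(\ell)}-z_i'^{(\ell)})\geq\min\{\nu,\upsilon_{i,\ell}\}$ and compares the two linear systems. First, by Lemma \ref{Cond2} (with $\theta=\beta_l+\ell+\min\{\nu-\upsilon,\cdot\}$ adjusted to the $\mu_j$-weights), the right-hand sides $\partial g_X/\partial y_i(z^{(\ell)})$ and $\partial g_X/\partial y_i(z'^{(\ell)})$ differ by terms of order $\geq\ord_L(\mathsf{N}_{I_j})+\mathfrak{m}$, where $\mathfrak{m}=\min\{\nu-\min_i\upsilon_{i,\ell},1\}$ roughly. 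Second, the matrices $\mathsf{M}(z^{(\ell)})$ and $\mathsf{M}(z'^{(\ell)})$ differ by the same relative order. The second half of Lemma \ref{detsolu} then bounds the difference of the solutions. Adding this to the induction hypothesis gives $\ord_L(z_i^{(\ell+1)}-z_i'^{(\ell+1)})\geq\min\{\nu,\upsilon_{i,\ell+1}\}$. Note that when $\nu\leq\upsilon_{i,\ell}$ the increments are already of order $\geq\upsilon_{i,\ell}\geq\nu$, so this case is trivial. The real case is $\nu>\upsilon_{i,\ell}$, where we need the gain of one unit of $\varpi_L$.

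The main obstacle is the bookkeeping of the differing block weights. The blocks $I_j$ carry different $\mu_j$, so the matrix perturbation from $\mathsf{M}(z^{(\ell)})$ to $\mathsf{M}(z'^{(\ell)})$ does not satisfy hypothesis (\ref{coin2}) uniformly. The cross-block entries are only controlled by $\eta_m$, via Lemma \ref{valu}. My first approach is to use condition (5) of Lemma \ref{Cond3}, namely $\max\beta_i<\eta_m+\min(\beta_i-\mu_{\mathfrak{j}(i)})$, to absorb these cross terms into the error. If that fails, I would instead split the matrix difference into a block-diagonal part handled by Lemma \ref{detsolu}, plus a cross part moved to the right-hand side and bounded using condition (4).
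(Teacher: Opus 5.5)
There is a genuine gap in how your induction step gets the gain from $\upsilon_{i,\ell}$ to $\upsilon_{i,\ell+1}=\upsilon_{i,\ell}+1$ for the indices with $\upsilon_{i,\ell}<\nu$. Your ``if'' direction and your handling of the frozen coordinates $i\notin I(h,X,m)$ are fine. The cross-block bookkeeping you flag is not the real obstacle: for $l\in I_j(h,X)$ and $i$ in another block, the entry of $\mathsf{M}$ has order $\geq\eta_m+\upsilon_{i,\ell}>\beta_l+\ell$ by condition (5), which is exactly hypothesis (\ref{coin2}) of Lemma \ref{detsolu}.

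\textbf{Where the argument fails.} You base the two Newton steps at different points, so
$z^{(\ell+1)}-z'^{(\ell+1)}=(z^{(\ell)}-z'^{(\ell)})+\varpi_L^{\upsilon_{i,\ell}}(u-u')$.
The induction hypothesis gives the first summand only order $\geq\upsilon_{i,\ell}$. Hence no bound on $u-u'$, ``added to the induction hypothesis'', can produce order $\upsilon_{i,\ell}+1$. What is actually needed is a cancellation, $\varpi_L^{\upsilon_{i,\ell}}(u_i-u_i')\equiv z_i'^{(\ell)}-z_i^{(\ell)}\pmod{\varpi_L^{\upsilon_{i,\ell}+1}}$, and in particular $u-u'$ is typically a unit.

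Consistently with this, your claim that the right-hand sides $\frac{\partial g_X}{\partial y_l}(z^{(\ell)})$ and $\frac{\partial g_X}{\partial y_l}(z'^{(\ell)})$ differ by relative order $\mathfrak{m}=1$ is false. Condition (i) of Lemma \ref{Cond2} needs $\ord_L(z_i-z_i')+\mu_j(h,X)\geq\theta$ for $i$ in the block of $l$. With $\ord_L(z_i-z_i')\geq\upsilon_{i,\ell}$ this only yields $\theta=\beta_l+\ell$. Indeed, the linear term $\sum_{i\in I_j(h,X)}\frac{\partial^2 g_X}{\partial y_l\partial y_i}(z^{(\ell)})(z_i'^{(\ell)}-z_i^{(\ell)})$ has order exactly $\beta_l+\ell$ whenever the induction hypothesis is sharp, because $\cH_h(X)^{(j)}$ has full $\ac$-rank. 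With $\mathfrak{m}=0$, Lemma \ref{detsolu} gives nothing.

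\textbf{The missing idea.} This is how the paper argues: measure both next iterates from the same point $z^{(\ell)}$.
\begin{itemize}
\item Since $\ord_L(z_i'^{(\ell+1)}-z_i'^{(\ell)})\geq\upsilon_{i,\ell}$, the induction hypothesis lets you write $z_i'^{(\ell+1)}=z_i^{(\ell)}+\varpi_L^{\min\{\nu,\upsilon_{i,\ell}\}}u_i'$ with $u_i'\in\cO_L$.
\item Put $\mathfrak{I}=\{i\mid \upsilon_{i,\ell}<\nu\}$. It is a union of blocks because $\beta$ is constant on blocks, and for $i\notin\mathfrak{I}$ the bound $\geq\nu$ is immediate.
\item For $l\in\mathfrak{I}$, Taylor-expand $\frac{\partial g_X}{\partial y_l}(z'^{(\ell+1)})$ around $z^{(\ell)}$. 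Use three estimates: $\ord_L\bigl(\frac{\partial g_X}{\partial y_l}(z'^{(\ell+1)})\bigr)\geq\beta_l+\ell+1$; (\ref{lbou}) for the terms of degree $\geq 2$; and $\eta_m+\nu\geq\beta_l+\ell+1$ for the columns $i\notin\mathfrak{I}$.
\item You find that $(u_i')_{i\in\mathfrak{I}}$ solves a system with the \emph{same} matrix $\mathsf{M}(z^{(\ell)})_{\mathfrak{I}}$ as the exact equation (\ref{eq0}) for $(u_i)_{i\in\mathfrak{I}}$. The right-hand sides differ by order $\geq\beta_l+\ell+1$.
\item The second half of Lemma \ref{detsolu} with $\mathfrak{m}=1$ then gives $u_i\equiv u_i'$ modulo $\varpi_L$. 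Hence $z_i^{(\ell+1)}-z_i'^{(\ell+1)}=\varpi_L^{\upsilon_{i,\ell}}(u_i-u_i')$ has order $\geq\upsilon_{i,\ell+1}$.
\end{itemize}
You could instead keep your two base points. But then you must carry the displacement term $\mathsf{M}(z^{(\ell)})\delta$, where $\varpi_L^{\upsilon_{i,\ell}}\delta_i=z_i'^{(\ell)}-z_i^{(\ell)}$, on the right-hand side, and conclude $\hat u'-\hat u+\delta\equiv 0$ modulo $\varpi_L$. Your proposal does not do this.
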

\begin{proof}We also prove the assertion by induction on $\ell$. The assertion is trivial if $\ell=0$. Now, we suppose 
$\ord_L\left(z_{i}^{(\ell)}-z_{i}'^{(\ell)}\right)\geq \min\{\nu,\upsilon_{i,\ell}\}$ for all $1\leq i\leq n$ and all $\ell_0\geq \ell\geq 0$.  Then we need to show that $\ord_L\left(z_{i}^{(\ell_0+1)}-z_{i}'^{(\ell_0+1)}\right)\geq \min\{\nu,\upsilon_{i,\ell_0+1}\}$ for all $1\leq i\leq n$.
By Definition \ref{sequ} 
and the fact that $\left(z'^{(\ell)}=\left(z_1'^{(\ell)},...,z_{n}'^{(\ell)}\right)\right)_{\ell\geq 0}$ is a sequence of type $(m,\beta)$, we have $\ord_L\left(z_{i}'^{(\ell_0+1)}-z_{i}'^{(\ell_0)}\right)\geq\upsilon_{i,\ell_0}$ for all $1\leq i\leq n$. 
Thus, we have $\ord_L\left(z_i'^{(\ell_0+1)}-z_i^{(\ell_0)}\right)\geq \min\{\nu,\upsilon_{i,\ell_0}\}$ for all $1\leq i\leq n$. So we can write $z_i'^{(\ell_0+1)}=z_i^{(\ell_0)}+\varpi_L^{\min\{\nu,\upsilon_{i,\ell_0}\}}u_i'^{(\ell_0+1)}$ for all $1\leq i\leq n$. Therefore, we have 
\begin{align}\label{deriv1}
\frac{\partial g_X}{\partial y_l}\left(z'^{(\ell_0+1)}\right)=&\frac{\partial g_X}{\partial y_l}\left(z^{(\ell_0)}\right)+\sum_{i=1}^{n}\frac{\partial^2 g_X}{\partial y_l\partial y_i}\left(z^{(\ell_0)}\right)\varpi_L^{\min\{\nu,\upsilon_{i,\ell_0}\}}u_i'^{(\ell_0+1)}\nonumber\\&+ \sum_{J=(j_1,...,j_{n})\in\NN^{n}, |J|\geq 2}\frac{1}{J!}\frac{\partial^{|J|+1}g_X}{\partial y_l\partial y^J}\left(z^{(\ell_0)}\right)\varpi_L^{\sum_{1\leq i\leq n}j_i\min\{\nu,\upsilon_{i,\ell_0}\}}\left(u'^{(\ell_0+1)}\right)^J
\end{align}
for all $l$.
We set $\mathfrak{I}=\{1\leq i\leq n\mid \upsilon_{i,\ell_0}<\nu\}$. Then $\mathfrak{I}\subset I(h,X,m)$.  Let $i_0$ such that $\upsilon_{i_0,\ell_0}\geq \nu$. Since $\ord_L\left(z_{i_0}^{(\ell_0+1)}-z_{i_0}^{(\ell_0)}\right)\geq\upsilon_{i_0,\ell_0}$, we have $$\ord_L\left(z_{i_0}'^{(\ell_0+1)}-z_{i_0}^{(\ell_0+1)}\right)=\ord_L\left(z_{i_0}'^{(\ell_0+1)}-z_{i_0}^{(\ell_0)}+z_{i_0}^{(\ell_0)}-z_{i_0}^{(\ell_0+1)}\right)\geq\min\{\nu,\upsilon_{i_0,\ell_0}\}=\nu=\min\{\nu,\upsilon_{i_0,\ell_0+1}\}.$$ Thus, if $\mathfrak{J}=\emptyset$ then  $\ord_L\left(z_{i}'^{(\ell_0+1)}-z_{i}^{(\ell_0+1)}\right)\geq \min\{\nu,\upsilon_{i,\ell_0+1}\}$ for all $i$ and the inductive argument is completed. Suppose that $\mathfrak{J}\neq\emptyset$. It remains to show that $\ord_L\left(z_{i}'^{(\ell_0+1)}-z_{i}^{(\ell_0+1)}\right)\geq \min\{\nu,\upsilon_{i,\ell_0+1}\}$ for all $i\in\mathfrak{I}$.  
  If $l\in\mathfrak{I}$, we have $\nu\geq \upsilon_{l,\ell_0}+1$. This and (\ref{lbou}) together with the fact that $\min\{\nu,\upsilon_{i,\ell_0}\}=\nu$ if $i\notin I(h,X,m)$ imply that \begin{equation}\label{binu}\sum_{1\leq i\leq n}j_i\min\{\nu,\upsilon_{i,\ell_0}\}\geq \beta_{l}+\ell_0+1
  \end{equation} for all $l\in \mathfrak{I}$ and all $(j_1,...,j_{n})\in\NN^{n}$ with $\sum_{i=1}^{n}j_i\geq 2$. On the other hand, by Lemma \ref{valu}, we have 
\begin{equation}\label{eq-2}\ord_L\left(\frac{\partial^2 g_X}{\partial y_l\partial y_i}\left(z^{(\ell_0)}\right)\varpi_L^{\min\{\nu,\upsilon_{i,\ell_0}\}}\right)\geq \eta_m+ \nu\geq \mu_{\mathfrak{j}(l)}(h,X)+\upsilon_{l,\ell_0}+1=\beta_l+\ell_0+1
\end{equation}
for all $l\in\mathfrak{I}$ and all $i\notin \mathfrak{I}$. 
Now, we set 
$$\mathsf{a}_{\mathfrak{I}}=\left(\frac{\partial g_X}{\partial y_l}\left(z^{(\ell_0)}\right)+\sum_{i\notin\mathfrak{I}}\frac{\partial^2 g_X}{\partial y_l\partial y_i}\left(z^{(\ell_0)}\right)\varpi_L^{\min\{\nu,\upsilon_{i,\ell_0}\}}u_i^{(\ell_0+1)}\right)_{l\in\mathfrak{I}}$$
and
\begin{align*}\mathsf{b}_{\mathfrak{I}}=&\bigg(\frac{\partial g_X}{\partial y_l}\left(z^{(\ell_0)}\right)-\frac{\partial g_X}{\partial y_l}\left(z'^{(\ell_0+1)}\right)+\sum_{i\notin\mathfrak{I}}\frac{\partial^2 g_X}{\partial y_l\partial y_i}\left(z^{(\ell_0)}\right)\varpi_L^{\min\{\nu,\upsilon_{i,\ell_0}\}}u_i'^{(\ell_0+1)}\\
&+\sum_{J=(j_1,...,j_{n})\in\NN^{n}, |J|\geq 2}\frac{1}{J!}\frac{\partial^{|J|+1}g_X}{\partial y_l\partial y^J}\left(z^{(\ell_0)}\right)\varpi_L^{\sum_{1\leq i\leq n}j_i\min\{\nu,\upsilon_{i,\ell_0}\}}\left(u'^{(\ell_0+1)}\right)^J\bigg)_{l\in\mathfrak{I}}.
\end{align*}
Let $\left(u_i^{(\ell_0+1)}\right)_{1\leq i\leq n}$ and $\mathsf{M}(t)=\left(\frac{\partial^2 g_X}{\partial y_l\partial y_i}(t)\varpi_L^{\upsilon_{i,\ell_0}}\right)_{l,i\in I(h,X,m)}$ for $t\in Y+\varpi_L^{\eta_m}\cO_L^{n}$ be as in the proof of Lemma \ref{Cond3}. In particular, we have $z_i^{(\ell_0+1)}=z_i^{(\ell_0)}+\varpi_L^{\upsilon_{i,\ell_0}}u_i^{(\ell_0+1)}$ for all $i\in I(h,X,m)$. Moreover, we have $$\left(\mathsf{M}\left(z^{(\ell_0)}\right)\right)_\mathfrak{I}(u_i^{(\ell_0+1)})_{i\in\mathfrak{I}}^T+a_\mathfrak{I}^T=0=\left(\mathsf{M}\left(z^{(\ell_0)}\right)\right)_\mathfrak{I}(u_i'^{(\ell_0+1)})_{i\in\mathfrak{I}}^T+b_\mathfrak{I}^T.$$
As in the proof of Lemma \ref{Cond3}, we can show that $\mathsf{N}=\left(\mathsf{M}\left(Y\right)\right)_\mathfrak{I}$ and $\mathsf{M}=\left(\mathsf{M}\left(z^{(\ell_0)}\right)\right)_\mathfrak{I}$ satisfy the conditions of Lemma \ref{detsolu}. Moreover, by using (\ref{binu}), (\ref{eq-2}) and the fact that $\ord_L\left(\frac{\partial g_X}{\partial y_l}\left(z'^{(\ell_0+1)}\right)\right)\geq\beta_l+\ell_0+1$ for all $l\in I(h,X,m)$, we can also verify that $\mathsf{a}=\mathsf{a}_\mathfrak{I}$ and $\mathsf{b}=\mathsf{b}_\mathfrak{I}$ satisfy the conditions of Lemma \ref{detsolu} with respect to $\mathfrak{m}=1$. Therefore, by using Lemma \ref{detsolu}, we have $\ord_L\left(u_i^{(\ell_0+1)}-u_i'^{(\ell_0+1)}\right)\geq 1$ for all $i\in\mathfrak{I}$. 

Hence, 
\begin{equation*}\ord_L\left(z_i^{(\ell_0+1)}-z_i'^{(\ell_0+1)}\right)=\ord_L\left(z_i^{(\ell_0)}+\varpi_L^{\upsilon_{i,\ell_0}}u_i^{(\ell_0+1)}-z_i^{(\ell_0)}-\varpi_L^{\upsilon_{i,\ell_0}}u_i'^{(\ell_0+1)}\right)\geq\upsilon_{i,\ell_0+1}=\min\{\nu, \upsilon_{i,\ell_0+1}\}
\end{equation*}
for all $i\in \mathfrak{I}$. This completes our proof.
\end{proof}
\begin{cor}\label{exuni}
With the notations an assumptions of Lemmas \ref{Cond3} and \ref{uniqne},  there is a unique point $z'\in z+(\varpi_L^{\upsilon_{1,0}}\cO_L,...,\varpi_L^{\upsilon_{n,0}}\cO_L)$ such that $\ord_L(g_X(z))\geq m-1, \ord_L\left(\frac{\partial g_X}{\partial y_i}(z)\right)\geq m-\eta_m$ if $i\notin I(h,X,m)$ and $\frac{\partial g_X}{\partial y_i}(z)=0$ if $i\in I(h,X,m)$.
\end{cor}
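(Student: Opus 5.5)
The plan is to obtain $z'$ as the limit of the sequence $(z^{(\ell)})_{\ell\geq 0}$ of type $(m,\beta)$ with $z^{(0)}=z$ given by Lemma \ref{Cond3}, and to get uniqueness from Lemma \ref{uniqne} with $\nu=+\infty$. (In the statement, the conditions on $g_X$ and its partials are of course meant at $z'$.)

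\emph{Existence.} By Definition \ref{sequ}, for $i\in I(h,X,m)$ we have $\ord_L(z_i^{(\ell)}-z_i^{(\ell-1)})\geq \upsilon_{i,\ell-1}=\beta_i-\mu_{\mathfrak{j}(i)}(h,X)+\ell-1$. For $i\notin I(h,X,m)$ we have $z_i^{(\ell)}=z_i^{(0)}$. Hence each coordinate sequence is Cauchy in the complete ring $\cO_L$, so $z^{(\ell)}\to z'$ for some $z'$. Summing the increments, and using that $\upsilon_{i,\ell}$ is increasing in $\ell$, gives $\ord_L(z'_i-z_i)\geq\upsilon_{i,0}$, i.e. $z'\in z+(\varpi_L^{\upsilon_{1,0}}\cO_L,\dots,\varpi_L^{\upsilon_{n,0}}\cO_L)$. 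In particular $z'\in Y+\varpi_L^{\eta_m}\cO_L^n$: condition (3) gives $\beta_i\geq m-\eta_m$ and $\mu_{\mathfrak{j}(i)}(h,X)\leq\kappa_m-1$, so $\upsilon_{i,0}\geq m-\eta_m-\kappa_m+1\geq\eta_m$ by Lemma-Definition \ref{kaet}.

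Since $g_X$ and its partial derivatives are polynomials, they are continuous. The conditions $\ord_L(g_X(z^{(\ell)}))\geq m-1$ and $\ord_L(\partial g_X/\partial y_i(z^{(\ell)}))\geq m-\eta_m$ for $i\notin I(h,X,m)$ define closed sets, so they pass to the limit $z'$. For $i\in I(h,X,m)$ we have $\ord_L(\partial g_X/\partial y_i(z^{(\ell)}))\geq\beta_i+\ell\to\infty$, hence $\partial g_X/\partial y_i(z')=0$.

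\emph{Uniqueness.} Let $z''$ be another point of the same coset satisfying the three conditions. Take the constant sequence $z'^{(\ell)}=z''$ for all $\ell$ and check that it is of type $(m,\beta)$:
\begin{itemize}
\item coordinates outside $I(h,X,m)$ never change;
\item the partials indexed by $I(h,X,m)$ vanish, so they have valuation $\geq\beta_i+\ell$ for every $\ell$;
\item the remaining partials and $g_X$ satisfy their bounds by assumption;
\item the increments are $0$.
\end{itemize}
The point needing care is that $z''_i=z_i$ for $i\notin I(h,X,m)$; this holds because $\upsilon_{i,0}=+\infty$ there, so the coset fixes those coordinates.

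Lemma \ref{uniqne} with $\nu=+\infty$ applies, since $\ord_L(z''_i-z_i)\geq\upsilon_{i,0}$. It yields $\ord_L(z_i^{(\ell)}-z''_i)\geq\upsilon_{i,\ell}\to\infty$, so $z''=\lim_\ell z^{(\ell)}=z'$.

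The main (mild) obstacle is this bookkeeping: checking that the constant sequence meets every clause of Definition \ref{sequ} and that the coset condition forces agreement off $I(h,X,m)$. Everything else is continuity and completeness.
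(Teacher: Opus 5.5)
Your proof is correct and follows the paper's argument: existence comes from the limit of the sequence of type $(m,\beta)$ given by Lemma \ref{Cond3}, and uniqueness comes from Lemma \ref{uniqne} with $\nu=+\infty$. The one cosmetic difference is the comparison sequence. The paper uses $z,\tilde{z},\tilde{z},\dots$ and spends the coset condition on the first increment, whereas you use the constant sequence at $z''$ and spend the coset condition on the hypothesis $\ord_L(z''_i-z_i)\geq\upsilon_{i,0}$ of Lemma \ref{uniqne}; both choices work.
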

\begin{proof}
By Lemma \ref{Cond3}, there is a sequence  $(z^{(\ell)})_{\ell\geq 0}$ in $Y+\varpi_L^{\eta_m}\cO_L^{n}$ of type $(m,\beta)$ such that $z^{(0)}=z$. This together with the completeness of $L$ imply that the limit  $\displaystyle z'=\lim_{\ell\to+\infty}z^{(\ell)}$ exists. Moreover, it is easy to verify that $z^{(\ell)}\in z+(\varpi_L^{\upsilon_{1,0}}\cO_L,...,\varpi_L^{\upsilon_{n,0}}\cO_L)$ for all $\ell\geq 0$. Therefore $z'\in z+(\varpi_L^{\upsilon_{1,0}}\cO_L,...,\varpi_L^{\upsilon_{n,0}}\cO_L)$. Let $\tilde{z}'\in z+(\varpi_L^{\upsilon_{1,0}}\cO_L,...,\varpi_L^{\upsilon_{n,0}}\cO_L)$ such that $\ord_L(g_X(\tilde{z}'))\geq m-1, \ord_L\left(\frac{\partial g_X}{\partial y_i}(\tilde{z}')\right)\geq m-\eta_m$ if $i\notin I(h,X,m)$ and $\frac{\partial g_X}{\partial y_i}(\tilde{z}')=0$ if $i\in I(h,X,m)$. We need to show that $\tilde{z}=z'$. Indeed, we consider the  sequence $(\tilde{z}^{(\ell)})_{\ell\geq 0}$ in $Y+\varpi_L^{\eta_m}\cO_L^{n}$ given by  $\tilde{z}^{(0)}=z$ and $\tilde{z}^{(\ell)}=\tilde{z}$ for all  $\ell\geq 1$. Then $(\tilde{z}^{(\ell)})_{\ell\geq 0}$ is of type $(m,\beta)$ with $\tilde{z}^{(0)}=z$. Thus, it follows from Lemma \ref{uniqne}  that $\ord_L\left(\tilde{z}_i-z_i^{(\ell)}\right)\geq \upsilon_{i,\ell}$ for all $\ell\geq 0$ and all $1\leq i\leq n$. Hence, $\displaystyle\tilde{z}=\lim_{\ell\to+\infty}z^{(\ell)}=z'$.
\end{proof}
\begin{lem}\label{partition}If $z\in\cA_{h,X,m}$ then $\cA_{h,X,m}(z)\subset\cA_{h,X,m}$.  If $z,z'\in\cA_{h,X,m}$ then either $\cA_{h,X,m}(z)=\cA_{h,X,m}(z')$ or $\cA_{h,X,m}(z)\cap\cA_{h,X,m}(z')=\emptyset$.

\end{lem}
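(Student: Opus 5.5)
The plan is to compare the two exponents defining $\cA_{h,X,m}(z)$ and $\cA_{h,X,m}(z')$: we show that the data $j_{m,z}$ and $\min_{\ell\in I_{j_{m,z}}(h,X)}\ord_L\bigl(\frac{\partial g_X}{\partial y_\ell}(z)\bigr)$ are constant on the box $\cA_{h,X,m}(z)$. Write $j=j_{m,z}$ and $\theta=\min_{\ell\in I_j(h,X)}\ord_L\bigl(\frac{\partial g_X}{\partial y_\ell}(z)\bigr)$, so $\delta(z,m,i)=m-1-\theta$ if $\mathfrak{j}(i)=j$ and $\delta(z,m,i)=m$ otherwise. Note that $\delta(z,m,i)\geq\eta_m$ by Lemma \ref{delta0}, hence $\cA_{h,X,m}(z)\subset z+\varpi_L^{\eta_m}\cO_L^n\subset Y+\varpi_L^{\eta_m}\cO_L^n$.

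First we take $z'\in\cA_{h,X,m}(z)$ and prove $\ord_L(g_X(z'))\geq m-1$ via Lemma \ref{Cond1}. Condition (i) holds with equality-type bound for $\mathfrak{j}(i)=j$ (by the definition of $\delta$) and trivially otherwise since $\delta=m$. Conditions (ii)–(iv) use $\delta\geq\eta_m$ and, for $\mathfrak{j}(i)=\mathfrak{j}(\ell)=j$, the inequality $\mu_j(h,X)+2(m-1-\theta)\geq m-1$, which follows from $\theta\leq\frac{m+\tilde\mu_{j,m}(h,X)-1}{2}$ and Lemma \ref{bs1}; cubic terms are handled by $3\eta_m\geq m$. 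Next, using Lemma \ref{Cond2}, we show that for every $\ell$ the derivative $\frac{\partial g_X}{\partial y_\ell}$ changes by an element of order strictly greater than the relevant thresholds $\frac{m+\tilde\mu_{i,m}-1}{2}$ whenever its value at $z$ is above them, and that for $\ell\in I_j(h,X)$ the change has order $>\theta$ (since $\mu_j+m-1-\theta>\theta$, again by the threshold bound, and $\eta_m+m-1-\theta>\theta$, and quadratic terms $\geq 2\eta_m$ exceed $\theta$). Hence $\min_{\ell\in I_i}\ord_L(\partial_\ell g_X(z'))$ equals that at $z$ whenever either is $\leq$ the threshold, giving $j_{m,z'}=j_{m,z}$ and equal $\theta$, so $z'\in\cA_{h,X,m}$ (the defining condition $j_{m,z'}\leq e(h,X,m)$ in the relevant case is preserved) and $\delta(z',m,\cdot)=\delta(z,m,\cdot)$.

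The second claim then follows formally: $\cA_{h,X,m}(z')$ is a box of the same shape as $\cA_{h,X,m}(z)$ centered at $z'$; if $w$ lies in both, then by the first part applied to $z$ and to $z'$, $\delta(w,m,\cdot)=\delta(z,m,\cdot)=\delta(z',m,\cdot)$, so $\cA_{h,X,m}(z)=w+\prod_i\varpi_L^{\delta(w,m,i)}\cO_L=\cA_{h,X,m}(z')$, as boxes of equal radii around a common point coincide ultrametrically.

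The main obstacle is the derivative-stability step: one must check carefully, block by block, that perturbations in block $j$ of size $m-1-\theta$ do not push some other block $i<j$ (or $i>e(h,X,m)$) across its threshold $\frac{m+\tilde\mu_{i,m}-1}{2}$, which would change $j_{m,\cdot}$. We would first try to bound the change by $\eta_m+(m-1-\theta)\geq m-\eta_m+\eta_m-1$-type estimates using Lemma \ref{valu} off-diagonal and Lemma-Definition \ref{kaet}, showing it exceeds every threshold since thresholds are $\leq m-\eta_m-1$ by Lemma \ref{bs1}.
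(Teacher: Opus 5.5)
Your proposal is correct and follows essentially the paper's proof. You use Lemma \ref{Cond1} to get $\ord_L(g_X(z'))\ge m-1$, and Lemma \ref{Cond2} to show that the changes in $\frac{\partial g_X}{\partial y_\ell}$ exceed the thresholds $\frac{m+\tilde{\mu}_{\mathfrak{j}(\ell),m}(h,X)-1}{2}$, so that $j_{m,z'}=j_{m,z}$ and the block minimum $\theta$ are preserved; hence $\delta(z',m,\cdot)=\delta(z,m,\cdot)$ and the boxes coincide.

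One slip in your last paragraph: the estimate $\eta_m+(m-1-\theta)\ge m-1$ would require $\theta\le\eta_m$, which can fail (e.g.\ $m=9$, $\mu_j=2$, $\theta=4$). The correct estimate, which is the one the paper uses, is $m-1-\theta\ge\eta_m$ (from $\theta\le m-\eta_m-1$ via Lemma \ref{bs1}). This gives $\eta_m+(m-1-\theta)\ge 2\eta_m>m-\eta_m-1$, which exceeds every threshold as you intended.
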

\begin{proof}
 Let $z\in\cA_{h,X,m}$. By Lemma \ref{delta0}, we have $\delta(z,m,i)\geq \eta_m$ for all $i$. So $\cA_{h,X,m}(z)\subset Y+\varpi_L^{\eta_m}\cO_L^{n}$. Let $u=(u_1,...,u_{n})\in\cO_L^{n}$ and $z'=z+(\varpi_L^{\delta(z,m,1)}u_1,...,\varpi_L^{\delta(z,m,n)}u_{n})\in \cA_{h,X,m}(z)$. We will show that $z'\in\cA_{h,X,m}$ and $\cA_{h,X,m}(z)=\cA_{h,X,m}(z')$. By the definition of $j_{m,z}$ and $\cA_{h,X,m}(z)$,  it suffices to prove that $\ord_L(g_X(z'))\geq m-1$ and \begin{equation}\label{inq0}\ord_L\left(\frac{\partial g_X}{\partial y_\ell}(z)-\frac{\partial g_X}{\partial y_\ell}(z')\right)>\frac{m+\tilde{\mu}_{\mathfrak{j}(\ell),m}(h,X)-1}{2}
 \end{equation}
 for all $\ell$.
 
We have 
\begin{align*}&\ord_L(z_i-z'_i)+\ord_L\left(\frac{\partial g_X}{\partial y_i}(z)\right)
\geq\delta(z,m,i)+\ord_L\left(\frac{\partial g_X}{\partial y_i}(z)\right)\\ \geq&\min\biggl\{m, m-1-\delta_{\mathfrak{j}(i),j_{m,z}}\min_{\ell\in I_{j_{m,z}}(h,X)}\ord_{L}\left(\frac{\partial g_X}{\partial y_\ell}(z)\right)+\ord_L\left(\frac{\partial g_X}{\partial y_i}(z)\right)\biggr\} \geq m-1,
\end{align*}
where $\delta_{a,b}$ is the Kronecker delta function. Since $\delta(z,m,i)\geq \eta_m$ for all $i$, we have 
$$\sum_{i=1}^{n}j_i\ord_L(z_i-z'_i)\geq \sum_{i=1}^{n}j_i\delta(z,m,i)\geq 3\eta_m\geq m-1$$ for all $J=(j_1,...,j_{n})\in\NN^{n}$ such that $|J|\geq 3$.
If $j_{m,z}\leq e(h,X,m)$, then it follows from Definition \ref{j-func} 
that
\begin{align*}
\mu_{j_{m,z}}(h,X)+2\left(m-1-\min_{\ell\in I_{j_{m,z}}(h,X)}\ord_{L}\left(\frac{\partial g_X}{\partial y_\ell}(z)\right)\right)
\geq& 2(m-1)+\mu_{j_{m,z}}(h,X)-(m+\mu_{j_{m,z}}(h,X)-2)\\
=&m.
\end{align*}
If $e(h,X)\geq j_{m,z}>e(h,X,m)$, then by Definition \ref{j-func} 
 and Lemma-Definition \ref{kaet}, we have 
\begin{align*}
\mu_{j_{m,z}}(h,X)+2\left(m-1-\min_{\ell\in I_{j_{m,z}}(h,X)}\ord_{L}\left(\frac{\partial g_X}{\partial y_\ell}(z)\right)\right)
\geq& \mu_{j_{m,z}}(h,X)+2(m-1)-2(m-1-\eta_m)\\
\geq& \kappa_m+2\eta_m
\geq m-1.
\end{align*}
Thus, since $\mu_{+\infty}(h,X)=+\infty$ and $\delta(z,m,i)=m$ if $\mathfrak{j}(i)\neq j_{m,z}$,  we have 
\begin{align*}&\mu_j(h,X)+\ord_L(z_i-z'_i)+\ord_L(z_\ell-z'_\ell)\\ \geq& \min\Biggl\{2m, \mu_{j_{m,z}}(h,X)+2\left(m-1-\min_{\ell\in I_{j_{m,z}}(h,X)}\ord_{L}\left(\frac{\partial g_X}{\partial y_\ell}(z)\right)\right)\Biggr\}\geq m-1
\end{align*} for all $i,j,\ell$ such that  $\mathfrak{j}(i)=j=\mathfrak{j}(\ell)$. For all $i,\ell$ with $\mathfrak{j}(i)\neq\mathfrak{j}(\ell)$, we have $\delta(z,m,i)=m$ or $\delta(z,m,\ell)=m$, thus
$$\eta_m+\ord_L(z_i-z'_i)+\ord_L(z_\ell-z'_\ell)\geq  \eta_m+ \delta(z,m,i)+\delta(z,m,\ell)\\
\geq m.$$
Therefore, we can use Lemma \ref{Cond1} to have $\ord_L(g_X(z'))\geq m-1$.

Let $1\leq\ell\leq n$. By Lemma \ref{bs1} and the fact that $\delta(z,m,i)=m$ if $\mathfrak{j}(i)\neq j_{m,z}$, we have 
$$\ord_L(z_i-z'_i)+\mu_{\mathfrak{j}(\ell)}(h,X)\geq m>\frac{m+\tilde{\mu}_{\mathfrak{j}(\ell),m}(h,X)-1}{2}$$
if $\mathfrak{j}(i)=\mathfrak{j}(\ell)\neq j_{m,z}$, and
\begin{align*}\ord_L(z_i-z'_i)+\mu_{\mathfrak{j}(\ell)}(h,X)\geq& m-1-\min_{\ell\in I_{j_{m,z}}(h,X)}\ord_{L}\left(\frac{\partial g_X}{\partial y_\ell}(z)\right)+ \mu_{j_{m,z}}(h,X)\\ 
\geq& m-1-\frac{m+\tilde{\mu}_{j_{m,z},m}(h,X)-1}{2}+\mu_{j_{m,z}}(h,X)
>\frac{m+\tilde{\mu}_{\mathfrak{j}(\ell),m}(h,X)-1}{2}
\end{align*}
if $\mathfrak{j}(i)=\mathfrak{j}(\ell)=j_{m,z}$. Similarly, by Lemma \ref{bs1}, Lemma-Definition \ref{kaet} and the fact that $\delta(z,m,i)=m$ if $\mathfrak{j}(i)\neq j_{m,z}$, we also have
$$\ord_L(z_i-z'_i)+\eta_m\geq m>\frac{m+\tilde{\mu}_{\mathfrak{j}(\ell),m}(h,X)-1}{2}$$
 if $\mathfrak{j}(i)\neq \mathfrak{j}(\ell)$ and $\mathfrak{j}(i)\neq j_{m,z}$, and 
 \begin{align*}
\ord_L(z_i-z'_i)+\eta_m&\geq m-1-\min_{\ell\in I_{j_{m,z}}(h,X)}\ord_{L}\left(\frac{\partial g_X}{\partial y_\ell}(z)\right)+\eta_m\\
&\geq m-1-\frac{m+\tilde{\mu}_{j_{m,z},m}(h,X)-1}{2}+\eta_m
>\frac{m+\tilde{\mu}_{\mathfrak{j}(\ell),m}(h,X)-1}{2}
 \end{align*}
 if $\mathfrak{j}(i)=j_{m,z}\neq \mathfrak{j}(\ell)$. Lastly, it follows from Lemma-Definition \ref{kaet} and Lemmas \ref{bs1}, \ref{delta0} that
$$\sum_{i=1}^{n}j_i\ord_L(z_i-z'_i)\geq 2\eta_m>\frac{m+\tilde{\mu}_{\mathfrak{j}(\ell),m}(h,X)-1}{2}$$ for all $1\leq \ell\leq n$ and all $J=(j_1,...,j_{n})\in\NN^{n}$ such that $|J|\geq 2$. Therefore, we can use Lemma \ref{Cond2} to deduce (\ref{inq0}) as desired.  
\end{proof}
\begin{lem}\label{equj}Suppose that $e(h,X,m)<j_{m,\tilde{z}} \leq e(h,X)$ for some $\tilde{z}\in Y+\varpi_L^{\eta_m}\cO_L^n$. Then $j_{m,z}=j_{m,\tilde{z}}$ for all $z\in Y+\varpi_L^{\eta_m}\cO_L^{n}$.
\end{lem}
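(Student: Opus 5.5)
The plan is to show that, for every $z\in Y+\varpi_L^{\eta_m}\cO_L^n$ and every index $j>e(h,X,m)$, the quantity $\min\{\min_{\ell\in I_j(h,X)}\ord_L(\frac{\partial g_X}{\partial y_\ell}(z)),\,m-1-\eta_m\}$ does not depend on $z$. Granting this, the lemma follows from the definition of $j_{m,z}$. For $j>e(h,X,m)$ we have $\mu_j(h,X)\geq\kappa_m$, hence $\tilde{\mu}_{j,m}(h,X)=m-1-2\eta_m$. The threshold in Definition \ref{j-func} is then $\frac{m+\tilde{\mu}_{j,m}(h,X)-1}{2}=m-1-\eta_m$, which is an integer. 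So for $j>e(h,X,m)$, the condition ``$\min_\ell\ord_L(\frac{\partial g_X}{\partial y_\ell}(z))\leq m-1-\eta_m$'' depends only on the truncated minimum above.

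Consequently the set of indices $j>e(h,X,m)$ satisfying the condition is the same for $z$ and for $\tilde{z}$. It is nonempty, because $e(h,X,m)<j_{m,\tilde{z}}\leq e(h,X)$. By Definition \ref{j-func}, $j_{m,z}$ is then the smallest element of this set. That is, the first clause of the definition applies to both points and gives $j_{m,z}=j_{m,\tilde{z}}$; the later clauses, which look at indices $j\leq e(h,X,m)$, are never reached.

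To prove the invariance, I take $z,z'\in Y+\varpi_L^{\eta_m}\cO_L^n$, so that $\ord_L(z_i-z'_i)\geq\eta_m$ for all $i$, and apply Lemma \ref{Cond2} with $\theta=m-1-\eta_m$ and $\ell\in I_j(h,X)$, $j>e(h,X,m)$. The conditions to verify are as follows.
\begin{itemize}
\item For $\mathfrak{j}(i)=\mathfrak{j}(\ell)=j$: $\eta_m+\mu_j(h,X)\geq\eta_m+\kappa_m\geq m-1-\eta_m$, using $\kappa_m-2\leq m-2\eta_m-1\leq\kappa_m-1$ from Lemma-Definition \ref{kaet}.
\item For $\mathfrak{j}(i)\neq\mathfrak{j}(\ell)$: $2\eta_m\geq m-1-\eta_m$, since $3\eta_m\geq m$.
\item For the terms with $|J|\geq2$: the total order is at least $2\eta_m\geq m-1-\eta_m$.
\end{itemize}
Lemma \ref{Cond2} then gives $\ord_L(\frac{\partial g_X}{\partial y_\ell}(z)-\frac{\partial g_X}{\partial y_\ell}(z'))\geq m-1-\eta_m$. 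This inequality by itself only shows that membership in ``$\leq m-2-\eta_m$'' is invariant; I also need equality at exactly $m-1-\eta_m$ to be invariant, so I want strict inequality, i.e.\ a difference of order at least $m-\eta_m$.

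The main obstacle is upgrading to $\theta=m-\eta_m$. The cross-block and higher-order estimates must be sharpened: use the Taylor expansion as in the proof of Lemma \ref{rho00}, together with Lemma \ref{valu}. For the $|J|\geq2$ terms, the bound $\kappa_m+\eta_m\geq m-\eta_m$ is expected, because the relevant second derivatives have order at least $\mu_j\geq\kappa_m$; this has to be checked case by case. If strictness cannot be obtained directly, I would instead compare $z$ with the reference point $Y$ and exploit the block-diagonal structure from Lemma \ref{Hess} to control the linear term, where $\mu_j(h,X)\geq\kappa_m$ gives an extra margin.
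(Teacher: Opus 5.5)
Your overall route matches the paper's. You compare $\frac{\partial g_X}{\partial y_\ell}$ at $z$ and $\tilde z$ via Lemma \ref{Cond2} for $\ell$ in blocks $j>e(h,X,m)$, note that the threshold there is the integer $m-1-\eta_m$, and conclude that the clause of Definition \ref{j-func} for indices $j>e(h,X,m)$ selects the same index for both points. As written, however, the proof is not finished: you stop exactly at the step that decides the lemma and call it the main obstacle.

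Your first paragraph claims that invariance of $\min\{\cdot,m-1-\eta_m\}$ suffices. That is what $\theta=m-1-\eta_m$ gives, but it cannot decide the boundary case $\ord_L=m-1-\eta_m$, as you observe yourself later. What is needed is a difference of order at least $m-\eta_m$, i.e.\ invariance of $\min\{\cdot,m-\eta_m\}$. Your proposed remedies (case-by-case checks, a comparison with $Y$ via Lemma \ref{Hess}) are never carried out. Your remark that the $|J|\ge 2$ terms are controlled by second derivatives of order $\ge\mu_j$ is also misplaced. In Lemma \ref{Cond2} the second derivatives enter only the linear term, while the $|J|\ge 2$ terms are bounded purely by $2\eta_m$.

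The missing step follows at once from inequalities you already quoted; no sharpening of estimates is needed.
\begin{itemize}
\item $3\eta_m\ge m$ is exactly $2\eta_m\ge m-\eta_m$. This covers both the cross-block condition and the $|J|\ge 2$ condition of Lemma \ref{Cond2}.
\item The right half of $\kappa_m-2\le m-2\eta_m-1\le\kappa_m-1$ is exactly $\eta_m+\kappa_m\ge m-\eta_m$. This covers the same-block condition, because $\mu_j(h,X)\ge\kappa_m$ for $j>e(h,X,m)$.
\end{itemize}
Hence Lemma \ref{Cond2} applies directly with $\theta=m-\eta_m$. It gives $\ord_L\bigl(\frac{\partial g_X}{\partial y_\ell}(z)-\frac{\partial g_X}{\partial y_\ell}(\tilde z)\bigr)\ge m-\eta_m>m-1-\eta_m$ for every $\ell$ with $\mathfrak{j}(\ell)>e(h,X,m)$. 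This is precisely the paper's proof, and with it your reduction to Definition \ref{j-func} goes through.
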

\begin{proof}Let $z\in Y+\varpi_L^{\eta_m}\cO_L^{n}$.
By Definition \ref{j-func}, it is sufficient to prove that \begin{equation}\label{inq1}\ord_L\left(\frac{\partial g_X}{\partial y_\ell}(z)-\frac{\partial g_X}{\partial y_\ell}(\tilde{z})\right)>\frac{m+\tilde{\mu}_{\mathfrak{j}(\ell),m}(h,X)-1}{2}
\end{equation}
 for all $\ell$ such that $\mathfrak{j}(\ell)>e(h,X,m)$.
 
Let $1\leq\ell\leq n$ such that $\mathfrak{j}(\ell)>e(h,X,m)$. Then we have $\mu_{\mathfrak{j}(\ell)}(h,X)\geq \kappa_m$ and $\tilde{\mu}_{\mathfrak{j}(\ell),m}(h,X)=m-2\eta_m-1$. Thus, by Lemma-Definition \ref{kaet} and Definition \ref{j-func},  we have  $$\ord_L(z_i-\tilde{z}_i)+\mu_{\mathfrak{j}(\ell)}(h,X)\geq \eta_m+ \kappa_m>m-\eta_m-1=\frac{m+\tilde{\mu}_{\mathfrak{j}(\ell),m}(h,X)-1}{2}$$ if $\mathfrak{j}(i)=\mathfrak{j}(\ell)$, and 
$$\ord_L(z_i-\tilde{z}_i)+\eta_m\geq 2\eta_m>m-\eta_m-1=\frac{m+\tilde{\mu}_{\mathfrak{j}(\ell),m}(h,X)-1}{2}$$ if $\mathfrak{j}(i)\neq \mathfrak{j}(\ell)$. Moreover, we also have 
$$\sum_{i=1}^{n}j_i\ord_L(z_i-\tilde{z}_i)\geq 2\eta_m>m-\eta_m-1=\frac{m+\tilde{\mu}_{\mathfrak{j}(\ell),m}(h,X)-1}{2}$$ for all $J=(j_1,...,j_{n})\in\NN^{n}$ such that $|J|\geq 2$. Therefore, we can use Lemma \ref{Cond2} to obtain (\ref{inq1}) and complete our proof. 
\end{proof}
\begin{lem}\label{eq11}Let $\psi$ be an additive character of $L$ of conductor $m$.
 We have 
$$\int_{\cA_{h,X,m}(z)}\psi(g_X(y))|dy|=\int_{\cA_{h,X,m}}\psi(g_X(y))|dy|=0$$
for all $X\in\cO_L^{n}$ and $z\in\cA_{h,X,m}$.
\end{lem}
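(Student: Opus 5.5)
The plan is to prove the first equality, that the integral of $\psi(g_X)$ over each cylinder $\cA_{h,X,m}(z)$ vanishes. The second equality then follows from Lemma \ref{partition}. That lemma shows the sets $\cA_{h,X,m}(z)$, $z\in\cA_{h,X,m}$, are contained in $\cA_{h,X,m}$ and pairwise equal or disjoint, and they are open compact cylinders. So $\cA_{h,X,m}$ is a finite disjoint union of such cylinders, and the integral over it is a sum of zeros.

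Fix $z\in\cA_{h,X,m}$. Write $j:=j_{m,z}$ and $\theta:=\min_{\ell\in I_j(h,X)}\ord_L\bigl(\frac{\partial g_X}{\partial y_\ell}(z)\bigr)$. I first check that $j$ is finite. If $e(h,X)\geq j_{m,Y}>e(h,X,m)$, then by Lemma \ref{equj} we have $j_{m,z}=j_{m,Y}$ for every $z$. Otherwise the definition of $\cA_{h,X,m}$ imposes $j_{m,z}\leq e(h,X,m)$. In both cases $j<+\infty$, so $\theta\leq\frac{m+\tilde{\mu}_{j,m}(h,X)-1}{2}<m-1$. Hence $\delta:=m-1-\theta\geq 1$, and by Lemma \ref{delta0} also $\delta\geq\eta_m$.

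For $u\in\cO_L^n$ put $y=z+(\varpi_L^{\delta(z,m,i)}u_i)_i$. I expand $g_X(y)-g_X(z)$ by Taylor's formula, as in (\ref{taylo1}). The coordinates with $\mathfrak{j}(i)\neq j$ move by $\varpi_L^m$, so they contribute nothing modulo $\varpi_L^m$. The linear terms with $\mathfrak{j}(i)=j$ contribute $\sum_{i\in I_j(h,X)}\frac{\partial g_X}{\partial y_i}(z)\varpi_L^{\delta}u_i$. The main estimate is that all quadratic and higher-order terms have valuation $\geq m$. This is essentially the computation in the proof of Lemma \ref{partition}, sharpened from $m-1$ to $m$.
\begin{itemize}
\item Quadratic terms inside block $j$: these have valuation $\geq\mu_j(h,X)+2\delta$. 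When $j\leq e(h,X,m)$ this is $\geq m$, using $\theta\leq\frac{m+\mu_j-2}{2}$.
\item Quadratic terms when $j>e(h,X,m)$: here I use $\mu_j\geq\kappa_m$ and $\delta\geq\eta_m$, and need $\kappa_m+2\eta_m\geq m$. This follows from Lemma-Definition \ref{kaet}, because $\kappa_m+2\eta_m\geq m$ always holds; e.g. $m=3k+1$ gives $\kappa_m=k$, $\eta_m=k+1$.
\item Cubic and higher terms: these have valuation $\geq 3\eta_m\geq m$.
\end{itemize}
The edge cases of these inequalities are where care is needed. I expect this to be the main obstacle, because the bounds used in Lemma \ref{partition} were only $\geq m-1$, and here $\geq m$ is required. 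If some case yields only $m-1$, the fallback is to note that such terms are a polynomial in $u$ of degree $\geq 2$ times $\varpi_L^{m-1}$. One then shows the resulting character sum still vanishes by freezing all but one coordinate $u_{\ell_0}$ attaining $\theta$.

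Granting the estimate, $\psi(g_X(y))=\psi(g_X(z))\,\psi\bigl(\varpi_L^{m-1}\sum_{i\in I_j}\acc'_iu_i\bigr)$. Here $\acc'_i=\varpi_L^{-\theta}\frac{\partial g_X}{\partial y_i}(z)\in\cO_L$, and $\acc'_{\ell_0}$ is a unit for some $\ell_0$. Since $\psi$ has conductor $m$, the map $u\mapsto\psi(\varpi_L^{m-1}\acc'_{\ell_0}u)$ is a nontrivial character of $\cO_L$ trivial on $\varpi_L\cO_L$. Integrating over $u_{\ell_0}\in\cO_L$ gives $0$ by orthogonality, exactly as in (\ref{zerro}). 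So the integral over $\cA_{h,X,m}(z)$ vanishes, and the second equality follows as described at the start.
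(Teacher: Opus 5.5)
Your proof is correct and follows essentially the same route as the paper. You Taylor-expand $g_X$ on $\cA_{h,X,m}(z)$ to show that every term except the linear ones in the block $I_{j_{m,z}}(h,X)$ vanishes modulo $\varpi_L^m$, apply orthogonality of characters in a coordinate attaining the minimal derivative valuation, and then use Lemma \ref{partition} to pass to $\cA_{h,X,m}$. Your explicit checks (finiteness of $j_{m,z}$ on $\cA_{h,X,m}$, $\mu_j+2\delta\geq m$ within the block, and $\kappa_m+2\eta_m\geq m$) give valuation $\geq m$ in every case, so the fallback you mention is never needed.
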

\begin{proof}
Let $u=(u_1,...,u_{n})\in\cO_L^{n}$, we have 
\begin{align*}
&g_X(z+(\varpi_L^{\delta(z,m,1)}u_1,...,\varpi_L^{\delta(z,m,n)}u_{n}))\\=&g_X(z)+\sum_{1\leq i\leq n}\frac{\partial g_X}{\partial y_i}(z)\varpi_L^{\delta(z,m,i)}u_i+\frac{1}{2}\sum_{i,\ell=1}^{n}\frac{\partial^2 g_X}{\partial y_\ell\partial y_i}(z)\varpi_L^{\delta(z,m,\ell)+\delta(z,m,i)}u_\ell u_i\\&+ \sum_{|J|\geq 3}\frac{1}{J!}\frac{\partial^{|J|}g_X}{\partial y^J}(z)\varpi_L^{\langle J,\delta(z,m)\rangle}u^J.
\end{align*}
By  Lemma \ref{delta0}, we have $\delta(z,m,i)\geq \eta_m$ for all $i$. Thus, $\langle J,\delta(z,m)\rangle\geq m$ if $|J|\geq 3$. Moreover, we also have $$\ord_L\left(\frac{\partial^2 g_X}{\partial y_\ell\partial y_i}(z)\varpi_L^{\delta(z,m,\ell)+\delta(z,m,i)}\right)\geq m$$ for all $i,\ell$, and $\ord_L\left(\frac{\partial g_X}{\partial y_i}(z)\varpi_L^{\delta(z,m,i)}\right)\geq  m$ if $i\notin I_{j_{m,z}}(h,X)$. On the other hand, since $\delta(z,m,\ell)=m-1-\min_{i\in I_{j_{m,z}}(h,X)} \ord_L\left(\frac{\partial g_X}{\partial y_i}(z)\right)$ for all $\ell\in I_{j_{m,z}}(h,X)$, we have 
\begin{align*}
\min_{\ell\in I_{j_{m,z}}(h,X)} \ord_L\left(\frac{\partial g_X}{\partial y_\ell}(z)\varpi_L^{\delta(z,m,\ell)}\right)=&\min_{\ell\in I_{j_{m,z}}(h,X)} \ord_L\left(\frac{\partial g_X}{\partial y_\ell}(z)\right)+m-1-\min_{i\in I_{j_{m,z}}(h,X)} \ord_L\left(\frac{\partial g_X}{\partial y_i}(z)\right)\\=& m-1.
\end{align*}
By combining the above calculation, the orthogonality of characters and the fact that $\psi$ is of conductor $m\geq 2$, we have that $\psi_{\varpi_L^{m-1}}$ is of conductor $1$ and
\begin{align*}
\int_{\cA_{h,X,m}(z)}\psi(g_X(y))|dy|=& \psi(g_X(z))q_L^{-\sum_{1\leq i\leq n}\delta(z,m,i)}\int_{\cO_L^{n}}\psi_{\varpi_L^{m-1}}\bigg(\sum_{i\in I_{j_{m,z}}(h,X)}\frac{\partial g_X}{\partial y_i}(z)\varpi_L^{\delta(z,m,i)-m+1}u_i\bigg)|du|\\
\\=& 0.
\end{align*}
for all $X\in\cO_L^{n}$ and $z\in\cA_{h,X,m}$.
Lemma \ref{partition} implies that we can write $\cA_{h,X,m}$ as a disjoint union of open sets of the form $\cA_{h,X,m}(z)$ for $z\in\cA_{h,X,m}$. Thus we also have 
$$\int_{\cA_{h,X,m}(z)}\psi(g_X(y))|dy|=0$$
for all $X\in\cO_L^{n}$.
\end{proof}
Now, we try to understand the complement of $\cA_{h,X,m}$ in $\phi_X(X+\varpi_L^{\eta_m}\cO_L^n)$. To do this, we introduce some new sets. 
\begin{defn}\label{Bset}
We set 
\begin{align*}\cB_{h,X,m}:=&\biggl\{z\in Y+\varpi_L^{\eta_{m}}\cO_L^{n}\mid \frac{\partial g_X}{\partial y_i}(z)=0 \textnormal{ if } i\in I(h,X,m), j_{m,z}=+\infty, \ord_L(g_X(z))\geq m-1\biggr\}\\
=&\biggl\{z\in Y+\varpi_L^{\eta_{m}}\cO_L^{n}\mid \frac{\partial g_X}{\partial y_i}(z)=0 \textnormal{ if } i\in I(h,X,m), \\ &\ord_L\left(\frac{\partial g_X}{\partial y_i}(z)\right)\geq m-\eta_m \textnormal{ if } i\notin I(h,X,m),\ord_L(g_X(z))\geq m-1\biggr\}.
\end{align*}
and 
\begin{align*}\tilde{\cB}_{h,X,m}:=&\biggl\{z\in X+\varpi_L^{\eta_{m}}\cO_L^{n}\mid \frac{\partial h}{\partial x_i}(z)=0 \textnormal{ if } i\in I(h,X,m),\\ & \ord_L\left(\frac{\partial h}{\partial x_i}(z)\right)\geq m-\eta_m \textnormal{ if } i\notin I(h,X,m),\ord_L(h(z))\geq m-1\biggr\}.
\end{align*}
\end{defn}
\begin{defn}\label{Bzset}
Let $z\in\cB_{h,X,m}$.  We set
$$\cB_{h,X,m}(z):= z+\prod_{1\leq i\leq n} \varpi_L^{\gamma(h,X,m,i)}\cO_L$$
if $z\in \cB_{h,X,m}$, where $$\gamma(h,X,m,i):=m \textnormal{ if } \mathfrak{j}(i)>e(h,X,m)$$
and
$$\gamma(h,X,m,i):=\biggl\lfloor\frac{m+\tilde{\mu}_{\mathfrak{j}(i),m}(h,X)-1}{2}\biggr\rfloor+1-\mu_{\mathfrak{j}(i)}(h,X)= \biggl\lfloor\frac{m-\mu_{\mathfrak{j}(i)}(h,X)}{2}\biggr\rfloor$$ if $\mathfrak{j}(i)\leq e(h,X,m)$.
\end{defn}

The following lemma shows that the complement of $\cA_{h,X,m}$ in $\phi_X(X+\varpi_L^{\eta_m}\cO_L^n)\setminus \{y\mid \ord_L(g_X(y))<m-1\}$ can be written as a disjoint union of sets of the form $\cB_{h,X,m}(z)$. 
\begin{lem}\label{part1}If $z,z'\in\cB_{h,X,m}$ then either $\cB_{h,X,m}(z)\cap\cB_{h,X,m}(z')=\emptyset$ or $\cB_{h,X,m}(z)=\cB_{h,X,m}(z')$. Moreover, we have 
$$\{z\in Y+\varpi_L^{\eta_m}\cO_L^{n}| \ord_L(g_X(z))\geq m-1\}=\cA_{h,X,m}\sqcup(\cup_{z\in \cB_{h,X,m}}\cB_{h,X,m}(z)).$$

\end{lem}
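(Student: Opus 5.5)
The lemma has two parts: a disjoint-or-equal property for the boxes $\cB_{h,X,m}(z)$, and the decomposition of $\{z\in Y+\varpi_L^{\eta_m}\cO_L^{n}\mid \ord_L(g_X(z))\geq m-1\}$. I will prove them in that order.

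\emph{Disjoint or equal.} The box $\cB_{h,X,m}(z)$ is a product of balls whose radii $\gamma(h,X,m,i)$ do not depend on $z$. So two such boxes are either disjoint or equal, as for any family of translates of one product of balls: if $w\in\cB_{h,X,m}(z)\cap\cB_{h,X,m}(z')$, then $z-z'\in\prod_i\varpi_L^{\gamma(h,X,m,i)}\cO_L$, and the two boxes coincide. For the union in the second part to make sense I will also check $\gamma(h,X,m,i)\geq\eta_m$, so that every box lies in $Y+\varpi_L^{\eta_m}\cO_L^n$. For $\mathfrak{j}(i)\leq e(h,X,m)$ we have $\mu_{\mathfrak{j}(i)}(h,X)\leq\kappa_m-1$, hence $\lfloor (m-\mu)/2\rfloor\geq\lfloor (m-\kappa_m+1)/2\rfloor\geq\eta_m$ by Lemma-Definition~\ref{kaet}. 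For the other indices $\gamma=m$.

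\emph{Decomposition.} Let $W$ denote the left-hand set. I split into the two cases of Definition~\ref{Aset}.

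If $e(h,X)\geq j_{m,Y}>e(h,X,m)$, then $\cA_{h,X,m}=W$ by definition. By Lemma~\ref{equj}, $j_{m,z}=j_{m,Y}<+\infty$ for every $z\in Y+\varpi_L^{\eta_m}\cO_L^n$. Since points of $\cB_{h,X,m}$ have $j_{m,z}=+\infty$, this gives $\cB_{h,X,m}=\emptyset$, and the claim holds.

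Otherwise, Lemma~\ref{equj} forces $j_{m,z}\notin(e(h,X,m),e(h,X)]$ for all $z$. Thus for every $z\in W$, either $j_{m,z}\leq e(h,X,m)$, in which case $z\in\cA_{h,X,m}$, or $j_{m,z}=+\infty$. I will show that the set $W'$ of $z\in W$ with $j_{m,z}=+\infty$ equals $\bigcup_{z\in\cB_{h,X,m}}\cB_{h,X,m}(z)$. The union is then disjoint from $\cA_{h,X,m}$ automatically.

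\emph{Inclusion $W'\subset\bigcup\cB_{h,X,m}(z)$.} Take $w\in W'$. I apply Corollary~\ref{exuni} with $\beta_i=\lfloor (m+\tilde{\mu}_{\mathfrak{j}(i),m}(h,X)-1)/2\rfloor+1=\lfloor (m+\mu_{\mathfrak{j}(i)}(h,X))/2\rfloor$ for $i\in I(h,X,m)$ and $\beta_i=m-\eta_m$ otherwise. Then $\upsilon_{i,0}=\beta_i-\mu_{\mathfrak{j}(i)}(h,X)=\gamma(h,X,m,i)$. I must verify hypotheses (1)--(5) of Lemma~\ref{Cond3}.

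\begin{itemize}
\item Condition (3): $j_{m,w}=+\infty$ gives $\ord_L(\partial g_X/\partial y_i(w))>(m+\tilde\mu-1)/2$, hence $\geq\beta_i$. It also gives $\ord_L(\partial g_X/\partial y_i(w))\geq m-\eta_m$ for $i\notin I(h,X,m)$, and for $i\in I(h,X,m)$ it gives $\beta_i\geq m-\eta_m$ via $\mu_{\mathfrak{j}(i)}(h,X)\geq 0$ and Lemma-Definition~\ref{kaet}.
\item Condition (4) reduces to $\beta_{i_1}+\beta_{i_2}-\mu_1-\mu_2\geq\beta_{i_3}+1$, that is, $\lfloor(m-\mu_1)/2\rfloor+\lfloor(m-\mu_2)/2\rfloor\geq\lfloor(m+\mu_3)/2\rfloor+1$ with all $\mu\leq\kappa_m-1$.
\item Condition (5) likewise reduces to an inequality between $\lfloor(m+\mu)/2\rfloor$, $\eta_m$ and $\lfloor(m-\mu')/2\rfloor$.
\end{itemize}

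Both (4) and (5) follow from $\mu\leq\kappa_m-1\approx m/3-1$ by floor arithmetic. This bookkeeping, including checking the small values of $m$, is the step I expect to be the main obstacle. Given the hypotheses, Corollary~\ref{exuni} yields $z'\in w+\prod\varpi_L^{\gamma(h,X,m,i)}\cO_L$ with $z'\in\cB_{h,X,m}$, so $w\in\cB_{h,X,m}(z')$.

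\emph{Inclusion $\bigcup\cB_{h,X,m}(z)\subset W'$.} Let $z\in\cB_{h,X,m}$ and $z'\in\cB_{h,X,m}(z)$. Lemma~\ref{Cond1} shows $\ord_L(g_X(z'))\geq m-1$: the linear terms are controlled because $\partial g_X/\partial y_i(z)=0$ on $I(h,X,m)$ and has order $\geq m-\eta_m$ elsewhere, and the quadratic terms are controlled by $\mu+2\gamma\geq m-1$. Lemma~\ref{Cond2}, with $\theta=\lfloor(m+\tilde\mu_{\mathfrak{j}(\ell),m}(h,X)-1)/2\rfloor+1$, shows that the derivatives stay above the thresholds of Definition~\ref{j-func}, so $j_{m,z'}=+\infty$. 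Both verifications follow the pattern of the proof of Lemma~\ref{partition}.
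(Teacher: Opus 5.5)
Your outline is the paper's own proof. The first claim follows from the $z$-independence of the radii $\gamma(h,X,m,i)$. The case $e(h,X,m)<j_{m,Y}\le e(h,X)$ is settled by Lemma~\ref{equj}, and $j_{m,z'}=+\infty$ on each box follows from Lemma~\ref{Cond2}. The reverse inclusion comes from Corollary~\ref{exuni} with $\beta_i=\lfloor (m+\mu_{\mathfrak{j}(i)}(h,X))/2\rfloor$ on $I(h,X,m)$, and your conditions (4) and (5) are the paper's inequalities (\ref{ieqgam}) and (\ref{lowbeta}). Your use of Lemma~\ref{Cond1} to get $\ord_L(g_X(z'))\ge m-1$ on $\cB_{h,X,m}(z)$ supplies a step the paper leaves implicit, and it is correct.

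There is, however, a false step in your check of hypothesis (3) of Lemma~\ref{Cond3}. The bound $\beta_i\ge m-\eta_m$ does not follow from $\mu_{\mathfrak{j}(i)}(h,X)\ge 0$. For $m=3$ one has $\eta_3=\kappa_3=1$, hence $\mu=0$ and $\beta_i=1<2=m-\eta_m$. In fact $\lfloor (m+\mu)/2\rfloor\ge m-\eta_m$ with $\mu\le\kappa_m-1$ holds only when $3\nmid m$ and $\mu=\kappa_m-1$. For the same reason, a point $w$ with $j_{m,w}=+\infty$ need not satisfy the standing hypothesis $\ord_L(\partial g_X/\partial y_i(w))\ge m-\eta_m$ of Lemma~\ref{Cond3} for $i\in I(h,X,m)$; you only know $\ord_L(\partial g_X/\partial y_i(w))\ge\beta_i$. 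So Lemma~\ref{Cond3}, and with it Corollary~\ref{exuni}, cannot be cited as stated. The paper's proof invokes Corollary~\ref{exuni} with the same $\beta$ and does not check these two requirements either, so you inherited the defect rather than introduced it, but it still has to be repaired.

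The repair is to reopen the proof of Lemma~\ref{Cond3}. There the lower bound $\beta_i\ge m-\eta_m$ is used only to derive the numerical estimates (\ref{eqi1})--(\ref{eqi6}). For your $\beta$, all six follow directly from three facts: $\upsilon_{i,0}=\gamma(h,X,m,i)\ge\eta_m$ (which you proved), $\beta_i+\upsilon_{i,0}=\mu_{\mathfrak{j}(i)}(h,X)+2\upsilon_{i,0}\ge m-1$, and $3\eta_m\ge m$. The gradient hypothesis is only ever used in the form $\ord_L\ge m-\eta_m$ for $i\notin I(h,X,m)$ and $\ord_L\ge\beta_i$ for $i\in I(h,X,m)$. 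The Lemma~\ref{detsolu} step and (\ref{lbou}) use only (4), (5) and the type-$(m,\beta)$ bounds. Once Lemma~\ref{Cond3} is restated under these weaker hypotheses (only the existence half of Corollary~\ref{exuni} is needed), your argument is complete.
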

\begin{proof}
The first claim is trivial by the definition.

For the second claim, we note that if $e(h,X,m)<j_{m,Y}\leq e(h,X)$ then $\{z\in Y+\varpi_L^{\eta_m}\cO_L^{n}| \ord_L(g_X(z))\geq m-1\}=\cA_{h,X,m}$ by the definition. Moreover, $\cB_{h,X,m}=\emptyset$ since $j_{m,z}=j_{m,Y}$ for all $z\in Y+\varpi_L^{\eta_m}\cO_L^{n}$ as shown in Lemma \ref{equj}. So our claim is trivial in this case.

Now, we suppose that $j_{m,Y}\leq e(h,X,m)$ or $j_{m,Y}=+\infty$.  We firstly prove that $\cB_{h,X,m}(z)\subset Y+\varpi_L^{\eta_m}\cO_L^{n}$. By Definition  \ref{Bzset} and Lemma-Definition \ref{kaet}, we have \begin{equation}\label{eq-m}\gamma(h,X,m,i)=\biggl\lfloor\frac{m-\mu_{\mathfrak{j}(i)}(h,X)}{2}\biggr\rfloor\geq\frac{m-\mu_{\mathfrak{j}(i)}(h,X)-1}{2}\geq \frac{m-\kappa_m}{2}>\eta_m-1
\end{equation} if  $\mathfrak{j}(i)\leq e(h,X,m)$ and $\gamma(h,X,m,i)=m$ if $\mathfrak{j}(i)>e(h,X,m)$. Since $z\in \cB_{h,X,m}\subset Y+\varpi_L^{\eta_m}\cO_L^{n}$, we have  $$\cB_{h,X,m}(z)=z+(\varpi_L^{\gamma(h,X,m,i)}\cO_L,...,\varpi_L^{\gamma(h,X,m,n)}\cO_L)\subset Y+\varpi_L^{\eta_m}\cO_L^{n}.$$ Let $z'\in \cB_{h,X,m}(z)$, we will prove that $z'\notin\cA_{h,X,m}$ by showing that $j_{m,z'}=+\infty$. Since $j_{m,z}=+\infty$ and $z\in \cB_{h,X,m}$, to show that $j_{m,z'}=+\infty$, it is sufficient to prove that 
\begin{equation}\label{eqm1}\ord_L\left(\frac{\partial g_X}{\partial y_\ell}(z)-\frac{\partial g_X}{\partial y_\ell}(z')\right)>\frac{m+\tilde{\mu}_{\mathfrak{j}(\ell),m}(h,X)-1}{2}
\end{equation} for all $\ell$. 

By Lemma \ref{bs1} and the fact that $\gamma(h,X,m,i)=m$ if $\mathfrak{j}(i)>e(h,X,m)$, we have 
\begin{align*}
\ord_L(z_i-z'_i)+\mu_{\mathfrak{j}(\ell)}(h,X)&\geq \gamma(h,X,m,i)+ \mu_{\mathfrak{j}(\ell)}(h,X)=\biggl\lfloor\frac{m-\mu_{\mathfrak{j}(i)}(h,X)}{2}\biggr\rfloor+\mu_{\mathfrak{j}(\ell)}(h,X)\\
&>\frac{m+\tilde{\mu}_{\mathfrak{j}(\ell),m}(h,X)-1}{2}
\end{align*} if $\mathfrak{j}(i)=\mathfrak{j}(\ell)\leq e(h,X,m)$, and $$\ord_L(z_i-z'_i)+\mu_{\mathfrak{j}(\ell)}(h,X)\geq m>\frac{m+\tilde{\mu}_{\mathfrak{j}(\ell),m}(h,X)-1}{2}$$ if $\mathfrak{j}(i)=\mathfrak{j}(\ell)>e(h,X,m)$. By Lemma \ref{bs1} and (\ref{eq-m}), we have 
\begin{align*}
\ord_L(z_i-z'_i)+\eta_m\geq \gamma(h,X,m,i)+\eta_m\geq 2\eta_m>\frac{m+\tilde{\mu}_{\mathfrak{j}(\ell),m}(h,X)-1}{2}
\end{align*} 
if $\mathfrak{j}(i)\neq \mathfrak{j}(\ell)$, and
$$\sum_{i=1}^{n}j_i\ord_L(z_i-z'_i)\geq \sum_{i=1}^{n}j_i\gamma(h,X,m,i)\geq 2\eta_m>\frac{m+\tilde{\mu}_{\mathfrak{j}(\ell),m}(h,X)-1}{2}$$ for all $J=(j_1,...,j_{n})\in\NN^{n}$ such that $|J|\geq 2$. Thus, we can use Lemma \ref{Cond2} to obtain (\ref{eqm1}). Therefore, we claim that $\cB_{h,X,m}(z)\cap \cA_{h,X,m}=\emptyset$ for all $z\in \cB_{h,X,m}$ and $$\{z\in Y+\varpi_L^{\eta_m}\cO_L^{n}| \ord_L(g_X(z))\geq m-1\}\supseteq\cA_{h,X,m}\sqcup(\cup_{z\in \cB_{h,X,m}}\cB_{h,X,m}(z)).$$


By Lemma \ref{equj}, to complete the proof, it remains to show that if $z'\in Y+\varpi_L^{\eta_m}\cO_L^{n}$ such that $\ord_L(g_X(z'))\geq m-1$ and $j_{m,z'}=+\infty$ then there is $z\in \cB_{h,X,m}$ such that $z'\in \cB_{h,X,m}(z)$. Indeed, let $z'$ such that $\ord_L(g_X(z'))\geq m-1$ and $j_{m,z'}=+\infty$.  By Definitions \ref{j-func} and \ref{Bzset} 
we have 
\begin{align*}\gamma(h,X,m,i)+\mu_{\mathfrak{j}(i)}(h,X)&=\biggl\lfloor\frac{m-\mu_{\mathfrak{j}(i)}(h,X)}{2}\biggr\rfloor+\mu_{\mathfrak{j}(i)}(h,X)=\biggl\lfloor\frac{m+\mu_{\mathfrak{j}(i)}(h,X)}{2}\biggr\rfloor\\
&=\biggl\lfloor\frac{m+\tilde{\mu}_{\mathfrak{j}(i),m}(h,X)+1}{2}\biggr\rfloor\leq \min_{\ell\in I_{\mathfrak{j}(i)}(h,X)}\ord_L\left(\frac{\partial g_X}{\partial y_\ell
}(z')\right)
\end{align*}
if $i\in I(h,X,m)$. On the other hand, by Lemma-Definition \ref{kaet} and the fact that $\mu_{\mathfrak{j}(i)}(h,X)\leq \kappa_m-1$ if $i\in I(h,X,m)$, we have  
\begin{align}\label{ieqgam}
\gamma(h,X,m,i_1)+\gamma(h,X,m,i_2)&=\biggl\lfloor\frac{m-\mu_{\mathfrak{j}(i_1)}(h,X)}{2}\biggr\rfloor+\biggl\lfloor\frac{m-\mu_{\mathfrak{j}(i_1)}(h,X)}{2}\biggr\rfloor\geq 2\biggl\lfloor\frac{m-\kappa_m+1}{2}\biggr\rfloor\nonumber\\
&\geq \biggl\lfloor\frac{m+\kappa_m-1}{2}\biggr\rfloor+1\geq \biggl\lfloor\frac{m+\mu_{\mathfrak{j}(i_3)}(h,X)}{2}\biggr\rfloor+1.
\end{align}
for all $i_1,i_2,i_3\in I(h,X,m)$. Moreover, since $\eta_m\geq \kappa_m\geq 1+\mu_{\mathfrak{j}(i)}(h,X)$ if $i\in I(h,X,m)$, we also have 
$$\left(\frac{2\eta_m+m-\mu_{\mathfrak{j}(i_1)}(h,X)}{2}\right)-\left(\frac{m+\mu_{\mathfrak{j}(i_2)}(h,X)}{2}\right)=\frac{2\eta_m-\mu_{\mathfrak{j}(i_1)(h,X)}-\mu_{\mathfrak{j}(i_2)}(h,X)}{2}\geq 1$$
for all $i_1,i_2\in I(h,X,m)$. Thus, 
\begin{equation}\label{lowbeta}\biggl\lfloor\frac{m+\mu_{\mathfrak{j}(i_1)}(h,X)}{2}\biggr\rfloor-\mu_{\mathfrak{j}(i_1)}(h,X)+\eta_m=\biggl\lfloor \frac{2\eta_m+m-\mu_{\mathfrak{j}(i_1)}(h,X)}{2}\biggr\rfloor>\biggl\lfloor\frac{m+\mu_{\mathfrak{j}(i_2)}(h,X)}{2}\biggr\rfloor
\end{equation}
for all $i_1,i_2\in I(h,X,m)$. Therefore, we can use Corollary  \ref{exuni} for $\beta_i=\bigl\lfloor\frac{m+\mu_{\mathfrak{j}(i)}(h,X)}{2}\bigr\rfloor=\gamma(h,X,m,i)+\mu_{\mathfrak{j}(i)}(h,X)$ if $i\in I(h,X,m)$ to have $z\in \cB_{h,X,m}$ such that $z'\in \cB_{h,X,m}(z)$.
\end{proof}
Since the volume of $\cB_{h,X,m}(z)$ is independent of $z$,  to compute the volume of  the set $$\phi_X(X+\varpi_L^{\eta_m}\cO_L^n)\setminus \left(\cA_{h,X,m}\cup\{y\mid \ord_L(g_X(y))<m-1\}\right),$$ we need to know the number of  cylinders  of the form $\cB_{h,X,m}(z)$. The following lemma shows that this number is equal to $\#\left(\cB_{h,X,m}\mod \varpi_L^m\cO_L\right)$.

\begin{lem}\label{ident} If $z,z'\in\cB_{h,X,m}$ then $\cB_{h,X,m}(z)=\cB_{h,X,m}(z')$ if and only if $z-z'\in\varpi_L^m\cO_L^{n}$. More generally, let $z,z'\in\cB_{h,X,m}$, $\beta=(\beta_{1},...,\beta_{n})\in\NN^{n}$ and $e\in\NN$ such that the following conditions hold:
\begin{itemize}
\item $e\geq \beta_i-\mu_{\mathfrak{j}(i)}(h,X,m)$ for all $i\in I(h,X,m)$. 
\item $\beta_i=m-\eta_m$ for all $i\notin I(h,X,m)$.
\item $\beta_i\geq m-\eta_m$ for all $i\in I(h,X,m)$.
\item $\beta_{i_1}+\beta_{i_2}-\mu_{\mathfrak{j}(i_1)}(h,X,m)-\mu_{\mathfrak{j}(i_2)}(h,X,m)\geq \beta_{i_3}+1$ for all $i_1,i_2,i_3\in I(h,X,m)$.
\item $\max_{i\in I(h,X,m)}\beta_i<\eta_m+\min_{i\in I(h,X,m)}(\beta_i-\mu_{\mathfrak{j}(i)}(h,X))$.
\item $\beta_i=\beta_{i'}$ for all $i,i'$ with $\mathfrak{j}(i)=\mathfrak{j}(i')$.
\end{itemize}  Then  $z-z'\in (\varpi_L^{\alpha_1(\ell,m)}\cO_L,...,\varpi_L^{\alpha_{n}(\ell,m)}\cO_L)$ if and only if $z-z'\in \varpi_L^{e}\cO_L^{n}$, where $\alpha_i(\ell,m)=\beta_i-\mu_{\mathfrak{j}(i)}(h,X,m)$ for all $i\in I(h,X,m)$ and $\alpha_i(\ell,m)=e$ for all $i\notin I(h,X,m)$.
\end{lem}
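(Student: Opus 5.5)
The plan is to prove the general statement first and then read off the first claim as a special case. For the special case we take $\beta_i=\lfloor\frac{m+\mu_{\mathfrak{j}(i)}(h,X)}{2}\rfloor$ for $i\in I(h,X,m)$, $\beta_i=m-\eta_m$ otherwise, and $e=m$. Then $\alpha_i=\gamma(h,X,m,i)$ for $i\in I(h,X,m)$ and $\alpha_i=m=\gamma(h,X,m,i)$ otherwise. The conditions on $\beta$ were already checked in the proof of Lemma \ref{part1} (see (\ref{ieqgam}) and (\ref{lowbeta})). Moreover $e=m\geq\beta_i-\mu_{\mathfrak{j}(i)}(h,X)$ holds trivially. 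Since $\cB_{h,X,m}(z)=\cB_{h,X,m}(z')$ if and only if $z-z'\in\prod_i\varpi_L^{\gamma(h,X,m,i)}\cO_L$, the first claim follows.

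For the general statement, the direction ``$z-z'\in\varpi_L^e\cO_L^n\Rightarrow z-z'\in\prod_i\varpi_L^{\alpha_i}\cO_L$'' is immediate, because $\alpha_i\leq e$ for every $i$. For the converse, assume $\ord_L(z_i-z'_i)\geq\alpha_i$ for all $i$.

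First, consider $i\notin I(h,X,m)$. Here $\alpha_i=e$, so $\ord_L(z_i-z'_i)\geq e$ directly.

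Next, consider $i\in I(h,X,m)$. Both $z$ and $z'$ lie in $\cB_{h,X,m}$. Hence the constant sequences $z^{(\ell)}=z$ and $z'^{(\ell)}=z'$ are sequences of type $(m,\beta)$ in the sense of Definition \ref{sequ}, for the following reasons. The derivatives $\frac{\partial g_X}{\partial y_i}$ vanish for $i\in I(h,X,m)$, so the bound $\beta_i+\ell$ holds for every $\ell$. The bounds $\ord_L\geq m-\eta_m$ for the other derivatives and $\ord_L(g_X)\geq m-1$ come from the definition of $\cB_{h,X,m}$. The increments are zero. The sequence $z^{(\ell)}=z$ also satisfies the hypotheses of Lemma \ref{Cond3}, and by Corollary \ref{exuni} it coincides with the sequence constructed there.

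We then apply Lemma \ref{uniqne} with $\nu=e$, noting that $\upsilon_{i,0}=\alpha_i\leq e$ for $i\in I(h,X,m)$ and $\upsilon_{i,0}=+\infty$ otherwise. The hypothesis $\ord_L(z'_i-z_i)\geq\min\{e,\upsilon_{i,0}\}$ holds by assumption. Lemma \ref{uniqne} then gives $\ord_L(z_i-z'_i)\geq\min\{e,\upsilon_{i,\ell}\}$ for all $\ell$. Taking $\ell$ large, so that $\upsilon_{i,\ell}\geq e$, yields $\ord_L(z_i-z'_i)\geq e$ for every $i$, which is the claim.

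The main point to check carefully is that Lemma \ref{uniqne} is legitimately applicable. It was stated for the sequence built in Lemma \ref{Cond3} starting from $z$. We must confirm that the constant sequence at $z\in\cB_{h,X,m}$ is exactly that construction, or else rerun the inductive argument of Lemma \ref{uniqne} directly for two arbitrary type-$(m,\beta)$ sequences. The inductive step of Lemma \ref{uniqne} only uses the type conditions and Lemma \ref{detsolu}, and both are available here.
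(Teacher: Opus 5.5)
You follow the paper's proof step for step: the choice $\beta_i=\lfloor\frac{m+\mu_{\mathfrak{j}(i)}(h,X)}{2}\rfloor$, $e=m$ for the first claim, constant sequences in $\cB_{h,X,m}$, Lemma \ref{uniqne} with $\nu=e$, and then $\ell\to+\infty$. You also inherit the paper's one real slip. In the reduction of the first claim, (\ref{ieqgam}) and (\ref{lowbeta}) verify only the fourth and fifth bullets. The third bullet, $\beta_i\geq m-\eta_m$ for $i\in I(h,X,m)$, generally fails for this $\beta$. Indeed $m-\eta_m=\lfloor\frac{2m}{3}\rfloor$, and $\mu_{\mathfrak{j}(i)}(h,X)=0$ is possible as soon as some Hessian entry at $X$ is a unit. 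In that case $\beta_i=\lfloor\frac m2\rfloor<\lfloor\frac{2m}{3}\rfloor$ for $m=3$ and for all $m\geq 5$; for example $m=3$ gives $\beta_i=1<2$. So the general statement does not apply literally, and neither do Lemma \ref{Cond3} or Lemma \ref{uniqne}, which is stated under the assumptions of Lemma \ref{Cond3}. In particular, your sentence that the constant sequence satisfies the hypotheses of Lemma \ref{Cond3} is false in exactly the case you need.

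The conclusion is still true, because the third bullet is never used when the induction of Lemma \ref{uniqne} is run on two constant sequences $z^{(\ell)}=z$, $z'^{(\ell)}=z'$ with $z,z'\in\cB_{h,X,m}$. At each step the data are trivial: $\frac{\partial g_X}{\partial y_l}$ vanishes at $z$ and $z'$ for $l\in I(h,X,m)$, and the increments of the first sequence are $0$, so $0$ solves the linear system. Lemma \ref{detsolu} is applied to the rescaled Hessian blocks $\bigl(\frac{\partial^2 g_X}{\partial y_l\partial y_i}\varpi_L^{\upsilon_{i,\ell_0}}\bigr)$ at $Y$ and at $z$. This needs only the fifth and sixth bullets, the bound $\mu_j(h,X)<\kappa_m\leq\eta_m$, and $z\equiv Y$ modulo $\varpi_L^{\eta_m}$. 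The Taylor remainders are controlled by the fourth bullet via (\ref{lbou}) and by Lemma \ref{valu}. So either delete the third bullet from the general statement, or prove the first claim by this direct induction.

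There are two smaller points. First, Corollary \ref{exuni} only identifies the limit of the constructed sequence. What makes the construction of Lemma \ref{Cond3} constant at $z\in\cB_{h,X,m}$ is that the right-hand side of (\ref{eq0}) vanishes there, so Lemma \ref{detsolu} forces $u=0$. Second, your fallback for two arbitrary sequences of type $(m,\beta)$ is not covered by the proof of Lemma \ref{uniqne}. That proof uses that the first sequence's increments solve (\ref{eq0}) exactly, which is automatic only for constant sequences.
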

\begin{proof}By the inequalities (\ref{ieqgam}) and (\ref{lowbeta}) in the proof of Lemma \ref{part1}, the first assertion follows from the second statement by taking $\beta_i=\bigl\lfloor\frac{m+\mu_{\mathfrak{j}(i)}(h,X)}{2}\bigr\rfloor=\gamma(h,X,m,i)+\mu_{\mathfrak{j}(i)}(h,X)$ for all $i\in I(h,X,m)$, $\beta_i=m-\eta_m$ for all $i\notin I(h,X,m)$ and $e=m$.  

Now, it is clear that if $z-z'\in \varpi_L^e\cO_L^{n}$ then $z-z'\in (\varpi_L^{\alpha_1(\ell,m)}\cO_L,...,\varpi_L^{\alpha_{n}(\ell,m)}\cO_L).$ Conversely, suppose that $z-z'\in (\varpi_L^{\alpha_1(\ell,m)}\cO_L,...,\varpi_L^{\alpha_{n}(\ell,m)}\cO_L)$.  Since $z,z'\in\cB_{h,X,m}$, the two sequences $(z^{(\ell)})_{\ell\geq 0}$ and $(z'^{(\ell)})_{\ell\geq 0}$ with $z^{(\ell)}=z$ and $z'^{(\ell)}=z'$ for all $\ell\geq 0$ are of type $(\beta,m)$. Thus, by Lemma \ref{uniqne}, we have $\ord_L(z_i-z_i')\geq \min\{e,\alpha_i(\ell,m)+\ell\}$ for all $1\leq i\leq n$ and all $\ell\geq 0$. Let $\ell$ be a large enough integer, we have $z-z'\in\varpi_L^e\cO_L^{n}$. The second assertion is proved.
\end{proof}

Now, we introduce the following sets and relate them  to  $\#\left(\cB_{h,X,m}\mod \varpi_L^{m}\cO_L\right)$.
\begin{defn}\label{Cset}
For each $\ell\geq -\eta_m$, we set
\begin{align*}\cC_{h,X,m,\ell}:=&\biggl\{z\in Y+\varpi_L^{\eta_{m}}\cO_L^{n}\mid \ord_L\left(\frac{\partial g_X}{\partial x_i}(z)\right)\geq m+\ell\textnormal{ if } i\in I(h,X,m),\\&\ord_L\left(\frac{\partial g_X}{\partial x_i}(z)\right)\geq m-\eta_m\textnormal{ if } i\notin I(h,X,m),\ord_L(g_X(z))\geq m-1\biggr\}.
\end{align*}
and
\begin{align*}\tilde{\cC}_{h,X,m,\ell}:=&\biggl\{z\in X+\varpi_L^{\eta_{m}}\cO_L^{n}\mid \ord_L\left(\frac{\partial h}{\partial x_i}(z)\right)\geq m+\ell\textnormal{ if } i\in I(h,X,m),\\ &\ord_L\left(\frac{\partial h}{\partial x_i}(z)\right)\geq m-\eta_m \textnormal{ if } i\notin I(h,X,m),\ord_L(h(z))\geq m-1\biggr\}.
\end{align*}
\end{defn}
\begin{lem}\label{reduct0}Let $\ell\in\ZZ_{\geq -\eta_m}$. We have $\cC_{h,X,m,\ell}=\cB_{h,X,m}+(\varpi_L^{\lambda_1(\ell,m)}\cO_L,...,\varpi_L^{\lambda_{n}(\ell,m)}\cO_L)$, where $\lambda_i(\ell,m)=\ell+m-\mu_{\mathfrak{j}(i)}(h,X)$ if $i\in I(h,X,m)$ and $\lambda_i(\ell,m)=\ell+m$ if $i\notin I(h,X,m)$. In addition, we have 
$$\tilde{\cB}(h,X,m)+\varpi_L^{m+\ell}\cO_L^{n}\subset\tilde{\cC}_{h,X,m,\ell}\subset\tilde{\cB}(h,X,m)+\varpi_L^{\min_{i}\lambda_i(\ell,m)}\cO_L^{n} $$ and 
$$\#(\tilde{\cB}_{h,X,m}\mod\varpi_L^{m+\ell}\cO_L)q_L^{-(m+\ell)n+\sum_{1\leq j\leq e(h,X,m)}\mu_j(h,X)r_j(h,X)}=\int_{\tilde{\cC}_{h,X,m,\ell}}|dx|$$
for all $\ell\in \ZZ_{\geq -\eta_m}$.
\end{lem}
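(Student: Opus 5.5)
The plan is to prove the first equality by a Hensel-type lifting argument, and then transport everything to the $x$-coordinates via $\mathsf{A}(X)$ to get the statements about $\tilde{\cC}_{h,X,m,\ell}$ and $\tilde{\cB}_{h,X,m}$.

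For the inclusion $\cB_{h,X,m}+\prod_i\varpi_L^{\lambda_i(\ell,m)}\cO_L\subset\cC_{h,X,m,\ell}$, I take $z\in\cB_{h,X,m}$ and $z'=z+w$ with $\ord_L(w_i)\ge\lambda_i(\ell,m)$, and apply Lemma \ref{Cond1} and Lemma \ref{Cond2} with $\theta=m+\ell$ for $i\in I(h,X,m)$ and $\theta=m-\eta_m$ otherwise. The checks are of the following kind.
\begin{itemize}
\item For $\mathfrak{j}(i)=\mathfrak{j}(l)$ one has $\lambda_i+\mu_{\mathfrak{j}(l)}\ge m+\ell$.
\item For $\mathfrak{j}(i)\ne\mathfrak{j}(l)$ one has $\lambda_i+\eta_m\ge m+\ell$, since $\mu_j<\kappa_m\le\eta_m$ for $j\le e(h,X,m)$.
\item The quadratic terms satisfy $2\min_i\lambda_i\ge m+\ell$ whenever $\ell\ge -\eta_m$. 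This uses $\lambda_i\ge \ell+m-\kappa_m+1$, together with Lemma-Definition \ref{kaet} and the fact that $\ell+m\ge m-\eta_m$.
\end{itemize}
The same estimates keep $\ord_L(g_X)\ge m-1$, because $\partial g_X/\partial y_i(z)=0$ on $I(h,X,m)$ for $z\in\cB_{h,X,m}$. The constraint $\ell\ge-\eta_m$ is exactly what makes these estimates work at the lower end.

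For the converse inclusion, I take $z'\in\cC_{h,X,m,\ell}$ and apply Corollary \ref{exuni} with $\beta_i=m+\ell$ for $i\in I(h,X,m)$ and $\beta_i=m-\eta_m$ otherwise. I must verify conditions (1)–(5) of Lemma \ref{Cond3}.
\begin{itemize}
\item Condition (3) is the hypothesis on $z'$, using $m+\ell\ge m-\eta_m$.
\item Condition (4) reads $2(m+\ell)-\mu_{j_1}-\mu_{j_2}\ge m+\ell+1$, i.e. $m+\ell\ge \mu_{j_1}+\mu_{j_2}+1$. This needs $m+\ell\ge 2\kappa_m-1$, which is where $\ell\ge-\eta_m$ and the relation between $\kappa_m,\eta_m$ are used.
\item Condition (5) reads $m+\ell<\eta_m+m+\ell-\max\mu_j$, true since $\mu_j\le\kappa_m-1<\eta_m$.
\end{itemize}
Corollary \ref{exuni} then produces $z\in\cB_{h,X,m}$ with $z-z'\in\prod\varpi_L^{\upsilon_{i,0}}\cO_L$, where $\upsilon_{i,0}=\beta_i-\mu_{\mathfrak{j}(i)}=\lambda_i(\ell,m)$ on $I(h,X,m)$ and $\upsilon_{i,0}=+\infty$ elsewhere. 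This is slightly stronger than needed. I expect this verification of (4) at the boundary values of $\ell$ to be the main obstacle, possibly requiring the small cases to be handled separately.

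Finally I transport to $x$-coordinates via $x=y\mathsf{A}(X)$. Lemma \ref{propt} says that $\mathsf{A}(X)$ maps the box $\prod\varpi_L^{\lambda_i}\cO_L$ into $\prod\varpi_L^{\min\lambda}\cO_L$, and that gradients transform by $\mathsf{A}(X)^T$, which preserves the block-valuation conditions since the exponents are monotone in $\mathfrak{j}$. Together with Lemma \ref{samtype} this gives $\phi_X(\tilde{\cC}_{h,X,m,\ell})=\cC_{h,X,m,\ell}$ and $\phi_X(\tilde{\cB}_{h,X,m})=\cB_{h,X,m}$, which yields the sandwich inclusions.

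For the volume, $\mathsf{A}(X)\in\SL_n(\cO_L)$ preserves measure. By Lemma \ref{ident}, with $e=m+\ell$ and the same $\beta$, two points of $\cB_{h,X,m}$ whose translates of the box $\prod\varpi_L^{\lambda_i}\cO_L$ intersect are congruent mod $\varpi_L^{m+\ell}$. Hence $\cC_{h,X,m,\ell}$ is a disjoint union of $\#(\cB_{h,X,m}\bmod\varpi_L^{m+\ell})$ boxes. Each box has volume
\[
q_L^{-\sum_i\lambda_i}=q_L^{-(m+\ell)n+\sum_{j\le e(h,X,m)}\mu_j r_j},
\]
and counting mod $\varpi_L^{m+\ell}$ is preserved by $\mathsf{A}(X)$, which gives the volume formula.
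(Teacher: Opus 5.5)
Your proposal is correct and follows the paper's proof essentially step by step. You use Corollary \ref{exuni} with $\beta_i=m+\ell$ on $I(h,X,m)$ and $\beta_i=m-\eta_m$ elsewhere for one inclusion, Lemmas \ref{Cond1} and \ref{Cond2} for the other, Lemma \ref{propt} to transport via $\mathsf{A}(X)$, and Lemma \ref{ident} with $e=m+\ell$ to count the disjoint boxes.

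Your worry about condition (4) at the boundary is unnecessary. The inequality $m+1\geq\eta_m+2\kappa_m$ holds for every integer $m$ (check $m$ modulo $3$). Hence $m+\ell\geq m-\eta_m\geq 2\kappa_m-1$ holds uniformly for all $\ell\geq-\eta_m$, and no small cases need separate treatment.
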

\begin{proof}
First of all, we show the first assertion that $\cC_{h,X,m,\ell}=\cB_{h,X,m}+(\varpi_L^{\lambda_1(\ell,m)}\cO_L,...,\varpi_L^{\lambda_{n}(\ell,m)}\cO_L)$ for all $\ell\in \ZZ_{\geq -\eta_m}$.  Indeed, let $\ell\in \ZZ_{\geq -\eta_m}$ and $z'\in \cC_{h,X,m,\ell}$. By Lemma-Definition \ref{kaet}, we have  $m+1\geq \eta_m+2\kappa_m$. This and the fact that $\mu_{\mathfrak{j}(i)}(h,X)\leq \kappa_m-1$ for all $i\in I(h,X,m)$ imply
\begin{equation}\label{ineqm}
m+\ell-\mu_{\mathfrak{j}(i_1)}(h,X)+m+\ell- \mu_{\mathfrak{j}(i_2)}(h,X)\geq2(m+\ell)-2(\kappa_m-1)\geq m+\ell+1
\end{equation} for all $\ell\geq -\eta_m$ and all $i_1,i_2\in I(h,X,m)$. Thus, we can use Corollary \ref{exuni} with $\beta_i=\ell+m$ for all $i\in I(h,X,m)$ and $\beta_i=m-\eta_m$ for all $i\notin I(h,X,m)$ to obtain $z\in z'+(\varpi_L^{\lambda_1(\ell,m)}\cO_L,...,\varpi_L^{\lambda_{n}(\ell,m)}\cO_L)$ such that $z\in \cB_{h,X,m}$. Conversely, let $z\in\cB_{h,X,m}$ and 
$z'\in z+(\varpi_L^{\lambda_1(\ell,m)}\cO_L,...,\varpi_L^{\lambda_{n}(\ell,m)}\cO_L)$. By the inequalities (\ref{eqi1}), (\ref{eqi2}), (\ref{eqi3}), (\ref{eqi4}), (\ref{eqi5}), (\ref{eqi6}) and (\ref{lbou}) in the proof of Lemma \ref{Cond3} and the fact that $(\beta_1,...,\beta_{n})$ satisfies the conditions of Lemma \ref{Cond3}, we can use Lemmas \ref{Cond1} and \ref{Cond2} 
to have $z'\in\cC_{h,X,m,\ell}$. 

It is easy to verify from Lemma \ref{propt} that $\tilde{\cB}_{h,X,m}=\cB_{h,X,m}\mathsf{A}(X)$ and $\tilde{\cC}_{h,X,m,\ell}=\cC_{h,X,m,\ell}\mathsf{A}(X)$. These facts and Lemma \ref{propt} together with the first assertion imply that 
$$\tilde{\cB}(h,X,m)+\varpi_L^{m+\ell}\cO_L^{n}\subset\tilde{\cC}_{h,X,m,\ell}\subset\tilde{\cB}(h,X,m)+\varpi_L^{\min_{i}\lambda_i(\ell,m)}\cO_L^{n},$$
\begin{equation}\label{modmlB}\#(\tilde{\cB}_{h,X,m} \mod \varpi_L^{m+\ell}\cO_L)=\#(\cB_{h,X,m} \mod \varpi_L^{m+\ell}\cO_L)
\end{equation}
and 
\begin{equation}\label{tildntil}\int_{\tilde{\cC}_{h,X,m,\ell}}|dx|=\int_{\cC_{h,X,m,\ell}}|dy|
\end{equation}
for all $\ell\in \NN$.

For the remaining assertion, by (\ref{modmlB}), (\ref{tildntil}) and the first assertion, it suffices to show that
$$\#(\cB_{h,X,m}\mod\varpi_L^{m+\ell}\cO_L)q_L^{-(m+\ell)n+\sum_{1\leq j\leq e(h,X,m)}\mu_j(h,X)r_j(h,X)}=\int_{\cB_{h,X,m}+(\varpi_L^{\lambda_1(\ell,m)}\cO_L,...,\varpi_L^{\lambda_{n}(\ell,m)}\cO_L)}|dx|$$
for all $\ell\in \ZZ_{\geq -\eta_m}$. Thus, we need to prove that if $z,z'\in \cB_{h,X,m}$ then $z'\in z+(\varpi_L^{\lambda_1(\ell,m)}u_1,...,\varpi_L^{\lambda_{n}(\ell,m)}u_{n})$ if and only if $z'\in z+\varpi_L^{\ell+m}\cO_L^{n}.$ Indeed, let $z,z'\in \cB_{h,X,m}$. If $z'\in z+\varpi_L^{\ell+m}\cO_L^{n}$ then $z'\in z+(\varpi_L^{\lambda_1(\ell,m)}\cO_L,...,\varpi_L^{\lambda_{n}(\ell,m)}\cO_L)$. Conversely,  suppose that $z'\in z+(\varpi_L^{\lambda_1(\ell,m)}\cO_L,...,\varpi_L^{\lambda_{n}(\ell,m)}\cO_L)$.  By (\ref{ineqm}), we can use Lemma \ref{ident} with $\beta_i=m+\ell$ for all $i\in I(h,X,m)$ and $\beta_i=m-\eta_m$ for all $i\notin I(h,X,m)$ to have $z-z'\in \varpi_L^{m+\ell}\cO_L^{n}$. 
\end{proof}

\begin{lem}\label{reduct1}We have 
$$\#(\tilde{\cB}_{h,X,m} \mod \varpi_L^m\cO_L)q_L^{\ell(n-r(h,X,m))}=\#(\tilde{\cB}_{h,X,m} \mod \varpi_L^{m+\ell}\cO_L)$$
for all $\ell\in\ZZ_{\geq -\eta_m}$.
\end{lem}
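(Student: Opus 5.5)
The plan is to set up a fibration $\tilde{\cB}_{h,X,m}\bmod \varpi_L^{m+\ell}\to \tilde{\cB}_{h,X,m}\bmod\varpi_L^m$ and show every fibre has exactly $q_L^{\ell(n-r(h,X,m))}$ elements. By Lemma \ref{propt} (the map $z\mapsto z\mathsf{A}(X)$ is an $\cO_L$-linear automorphism preserving $\varpi_L^k\cO_L^n$) and the identity $\tilde{\cB}_{h,X,m}=\cB_{h,X,m}\mathsf{A}(X)$ from the proof of Lemma \ref{reduct0}, we have $\#(\tilde{\cB}_{h,X,m}\bmod\varpi_L^{k})=\#(\cB_{h,X,m}\bmod\varpi_L^{k})$ for every $k$. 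It therefore suffices to work with $\cB_{h,X,m}$ in the $y$-coordinates, where the Hessian at $Y$ is block diagonal (Lemma \ref{Hess}).

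\emph{Case $\ell\geq 0$.} Parametrize the points of $\cB_{h,X,m}$ via Corollary \ref{exuni}. Fix $z\in\cB_{h,X,m}$ and perturb the coordinates $z_i$ with $i\notin I(h,X,m)$ freely by elements of $\varpi_L^{m}\cO_L$. For the coordinates in $I(h,X,m)$, set $\beta_i=m+\ell$ there (this is admissible by (\ref{ineqm})) and $\beta_i=m-\eta_m$ elsewhere. Corollary \ref{exuni} then gives a unique point of $\cB_{h,X,m}$ in the corresponding box; the conditions $\ord_L(g_X)\geq m-1$ and $\ord_L(\partial g_X/\partial y_i)\geq m-\eta_m$ for $i\notin I$ are preserved by Lemmas \ref{Cond1} and \ref{Cond2}, exactly as in the proof of Lemma \ref{reduct0}. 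Together with Lemma \ref{ident}, this yields a bijection
\[
(\cB_{h,X,m}\bmod \varpi_L^{m+\ell})\;\cong\;(\cB_{h,X,m}\bmod\varpi_L^m)\times\bigl(\varpi_L^{m}\cO_L/\varpi_L^{m+\ell}\cO_L\bigr)^{\,n-r(h,X,m)}.
\]
Concretely, the classes modulo $\varpi_L^{m+\ell}$ over a given class modulo $\varpi_L^m$ are determined by the free coordinates outside $I(h,X,m)$, while the coordinates in $I(h,X,m)$ are forced modulo $\varpi_L^{m+\ell}$ by the uniqueness statement of Lemma \ref{ident}, applied with $e=m+\ell$.

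\emph{Case $-\eta_m\leq \ell<0$.} Here we read the formula as $\#(\cB\bmod\varpi_L^m)=q_L^{|\ell|(n-r)}\#(\cB\bmod\varpi_L^{m+\ell})$, which is the same statement applied at the level $m+\ell$ in place of $m$ with increment $|\ell|$. We run the same fibre argument between the levels $m+\ell$ and $m$. The free coordinates outside $I$ vary in $\varpi_L^{m+\ell}\cO_L/\varpi_L^m\cO_L$; this is legitimate because $m+\ell\geq m-\eta_m$, so Lemmas \ref{Cond1} and \ref{Cond2} still preserve the defining conditions. The coordinates in $I$ are again rigid, by Lemma \ref{ident} with $e=m$ and $\beta_i=m+\ell$, whose hypotheses hold by (\ref{ineqm}) for $\ell\geq-\eta_m$.

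\emph{Main obstacle.} The hard step is surjectivity of the fibre map: after a free perturbation of the non-$I$ coordinates, we must re-solve $\partial g_X/\partial y_i=0$ for $i\in I(h,X,m)$ with a correction of the $I$-coordinates that is small enough not to change the class modulo the relevant power. The first approach is to note that the perturbation changes $\partial g_X/\partial y_i$ ($i\in I$) only by terms of order at least $\eta_m+m+\ell$ (Lemma \ref{valu}). The Hensel-type iteration of Lemma \ref{Cond3}, with $\beta_i=\eta_m+m+\ell$ (or $m+\ell$ when that is smaller), then produces a correction of order at least $\beta_i-\mu_{\mathfrak{j}(i)}(h,X)$. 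This has to be checked to be at least $m+\ell$, i.e. that $\eta_m>\mu_{\mathfrak{j}(i)}$ for $i\in I$, which holds since $\mu_{\mathfrak{j}(i)}\leq\kappa_m-1<\eta_m$.
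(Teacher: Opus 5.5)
Your overall architecture is the paper's. You pass to $\cB_{h,X,m}$ via $\mathsf{A}(X)$ and compare reductions at the levels $\alpha=\min\{m,m+\ell\}$ and $\hat\alpha=\max\{m,m+\ell\}$. The coordinates outside $I(h,X,m)$ move freely, and those inside are pinned down by Corollary \ref{exuni} (existence) and Lemma \ref{ident} (uniqueness). Your case $-\eta_m\le\ell<0$ is correct. Lemma \ref{ident} with $e=m$, $\beta_i=m+\ell$ only asks that two points of the fibre agree modulo $\varpi_L^{m+\ell-\mu_{\mathfrak{j}(i)}(h,X)}$ on $I(h,X,m)$, which follows from their agreement modulo $\varpi_L^{m+\ell}$. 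The Hensel step with $\beta_i=\eta_m+m+\ell$ moves the $I$-coordinates only at order $\ge m+\ell+1$.

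The case $\ell>0$ does not go through with $\beta_i=m+\ell$, for two reasons.
\begin{itemize}
\item \emph{Uniqueness.} Lemma \ref{ident} with $e=m+\ell$ and $\beta_i=m+\ell$ requires the two points of $\cB_{h,X,m}$ to be congruent modulo $\varpi_L^{m+\ell-\mu_{\mathfrak{j}(i)}(h,X)}$ in the $I$-coordinates. Two points in the same fibre over a class modulo $\varpi_L^m$ are only known to agree modulo $\varpi_L^{m}$ there. So the lemma cannot be invoked once $\ell>\mu_{\mathfrak{j}(i)}(h,X)$, for instance when $\mu_1(h,X)=0$ and $\ell\ge1$.
\item \emph{Existence.} After moving the non-$I$ coordinates by elements of $\varpi_L^m\cO_L$, Lemma \ref{valu} only gives $\ord_L\bigl(\frac{\partial g_X}{\partial y_i}(z')\bigr)\ge m+\eta_m$ for $i\in I(h,X,m)$, not $\ge m+\ell$. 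Hence hypothesis (3) of Lemma \ref{Cond3} fails once $\ell>\eta_m$.
\end{itemize}
Your ``main obstacle'' bound $\eta_m+m+\ell$ holds only for perturbations in $\varpi_L^{m+\ell}\cO_L$, i.e.\ for $\ell<0$. For $\ell>0$, requiring the correction to have order $\ge m+\ell$ is false in general, because the $I$-coordinates of points in a fibre depend on the chosen non-$I$ coordinates. It is also unnecessary: order $\ge m$ suffices.

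The repair is the paper's uniform choice $\beta_i=\alpha+\mu_{\mathfrak{j}(i)}(h,X)$ for $i\in I(h,X,m)$.
\begin{itemize}
\item For every $z'\in z+\varpi_L^{\alpha}\cO_L^n$, Lemma \ref{Cond2} gives $\ord_L\bigl(\frac{\partial g_X}{\partial y_i}(z')\bigr)\ge\beta_i$. Same-block terms have order $\ge\mu_{\mathfrak{j}(i)}(h,X)+\alpha$, cross-block terms $\ge\eta_m+\alpha$, and higher terms $\ge2\alpha$.
\item Corollary \ref{exuni} then yields a point of $\cB_{h,X,m}$ that agrees with $z'$ exactly off $I(h,X,m)$ and modulo $\varpi_L^{\alpha}$ on it.
\item Lemma \ref{ident} with $e=\hat\alpha$ applies, because $\beta_i-\mu_{\mathfrak{j}(i)}(h,X)=\alpha$ is exactly the congruence available in the fibre.
\end{itemize}
This handles both signs of $\ell$ at once.
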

\begin{proof}Let $\ell\in\ZZ_{\geq -\eta_m}$. We set $\hat{\alpha}(\ell)=\max\{m,m+\ell\}$ and $\alpha(\ell)=\min\{m,m+\ell\}$. Moreover, we set $\beta_i=\alpha(\ell)+ \mu_{\mathfrak{j}(i)}(h,X)\geq m-\eta_m$ for all $i\in I(h,X,m)$ and $\beta_i=m-\eta_m$ for all $i\notin I(h,X,m)$. Since $m\geq 2$, we have $\beta_i<\eta_m+\alpha(\ell)$ 
for all $i\in I(h,X,m)$. Moreover, the inequalities $\ell\geq -\eta_m$ and $$2\alpha(\ell)-(\alpha(\ell)+\mu_{\mathfrak{j}(i)}(h,X)+1)\geq m-\eta_m-(\kappa_m-1)-1\geq 0$$
 imply that $\beta=(\beta_1,...,\beta_{n})$ satisfies the assumptions of Lemma \ref{Cond3}. 
By (\ref{modmlB}), it suffices to prove that 
\begin{equation}\label{cBcount}\#(\cB_{h,X,m} \mod \varpi_L^m\cO_L)q_L^{\ell(n-r(h,X,m))}=\#(\cB_{h,X,m} \mod \varpi_L^{m+\ell}\cO_L).
\end{equation}

 If $z\in\cB_{h,X,m}$ and $z'\in z+\varpi_L^{\alpha(\ell)}\cO_L^{n}$,  then  we show the fact that there exists a unique point $\overline{z}_{\ell}'$ in $$\left(\left(z'+(\varpi_L^{\alpha_1(\ell,m)}\cO_L,...,\varpi_L^{\alpha_{n}(\ell,m)}\cO_L)\right)\cap\cB_{h,X,m}\right)\mod \varpi_L^{\hat{\alpha}(\ell)}\cO_L,$$ where $\alpha_i(\ell,m)=\alpha(\ell)$ for all $i\in I(h,X,m)$ and $\alpha_i(\ell,m)=\hat{\alpha}(\ell)$ for all $i\notin I(h,X,m)$. Indeed, since $z\in \cB_{h,X,m}$,  $z'\in z+\varpi_L^{\alpha(\ell)}\cO_L^{n}$ and $\beta=(\beta_1,...,\beta_{n})$ satisfies the assumptions of Lemma \ref{Cond3},  we can use the inequalities (\ref{eqi1}), (\ref{eqi2}), (\ref{eqi3}), (\ref{eqi4}), (\ref{eqi5}), (\ref{eqi6}) and (\ref{lbou}) in the proof of Lemma \ref{Cond3} together with Lemmas \ref{Cond1} and \ref{Cond2} to have  $\ord_L\left(\frac{\partial g_X}{\partial y_i}(z')\right)\geq m-\eta_m$ for all $i\notin I(h,X,m)$, $\ord_L(g_X(z'))\geq m-1$ and $\ord_L\left(\frac{\partial g_X}{\partial y_i}(z')\right)\geq \beta_i$ for all $i\in I(h,X,m)$. Thus, the existence of $\overline{z}_{\ell}'$ follows from Corollary \ref{exuni} while the uniqueness of $\overline{z}_{\ell}'$ follows from Lemma \ref{ident}. 
 
From this fact, we have $$\#\left(\left(\left(z+(\varpi_L^{\alpha(\ell)}\cO_L,...,\varpi_L^{\alpha(\ell)}\cO_L)\right)\cap\cB_{h,X,m}\right)\mod \varpi_L^{\hat{\alpha}(\ell)}\cO_L\right)=q_L^{\sum_{1\leq i\leq n}(\alpha_i(\ell,m)-\alpha(\ell))}=q_L^{(n-r(h,X,m))|\ell|}$$
for all $z\in \cB_{h,X,m}$. This implies that
 $$q_L^{|\ell|(n-r(h,X,m))}\#(\cB_{h,X,m} \mod \varpi_L^{\alpha(\ell)}\cO_L)=\#(\cB_{h,X,m} \mod \varpi_L^{\hat{\alpha}(\ell)}\cO_L).$$
Then, we can deduce (\ref{cBcount}) by observing that $\{\alpha(\ell),\hat{\alpha}(\ell)\}=\{m,m+\ell\}$.  
\end{proof}
\begin{cor}\label{equ01}We have 
$$\int_{\tilde{\cC}_{h,X,m,\ell}}|dx|=q_L^{r(h,X,m)}\int_{\tilde{\cC}_{h,X,m,\ell+1}}|dx|$$
for all $\ell\in\ZZ_{\geq -\eta_m}$. Consequently, we have
\begin{equation*}
\int_{\tilde{\cC}_{h,X,m,\ell}}|dx|=q_L^{-mn-\ell r(h,X,m)+\sum_{1\leq j\leq e(h,X,m)}\mu_j(h,X)r_j(h,X)}\#(\tilde{\cB}_{h,X,m} \mod \varpi_L^m\cO_L)
\end{equation*}
for all $\ell\in\ZZ_{\geq -\eta_m}$.
\end{cor}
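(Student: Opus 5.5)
The plan is to derive both formulas directly from Lemmas \ref{reduct0} and \ref{reduct1}. No new $p$-adic analysis should be needed. We begin with the volume formula from Lemma \ref{reduct0}: for every $\ell\in\ZZ_{\geq -\eta_m}$,
$$\int_{\tilde{\cC}_{h,X,m,\ell}}|dx|=\#(\tilde{\cB}_{h,X,m}\mod\varpi_L^{m+\ell}\cO_L)\,q_L^{-(m+\ell)n+\sum_{1\leq j\leq e(h,X,m)}\mu_j(h,X)r_j(h,X)}.$$
Into this we substitute the counting identity of Lemma \ref{reduct1}, namely $\#(\tilde{\cB}_{h,X,m}\mod\varpi_L^{m+\ell}\cO_L)=q_L^{\ell(n-r(h,X,m))}\#(\tilde{\cB}_{h,X,m}\mod\varpi_L^{m}\cO_L)$. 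The powers of $q_L$ then combine as
$$-(m+\ell)n+\ell(n-r(h,X,m))=-mn-\ell\, r(h,X,m).$$
This yields
$$\int_{\tilde{\cC}_{h,X,m,\ell}}|dx|=q_L^{-mn-\ell r(h,X,m)+\sum_{1\leq j\leq e(h,X,m)}\mu_j(h,X)r_j(h,X)}\#(\tilde{\cB}_{h,X,m}\mod\varpi_L^m\cO_L),$$
which is the second displayed formula of the corollary.

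The first assertion then follows by comparing this closed form at $\ell$ and at $\ell+1$. Both lie in $\ZZ_{\geq-\eta_m}$ whenever $\ell$ does. The only factor that depends on $\ell$ is $q_L^{-\ell r(h,X,m)}$, so the ratio of the two volumes is $q_L^{r(h,X,m)}$, as claimed.

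The one point that needs checking is that Lemmas \ref{reduct0} and \ref{reduct1} hold for the full range $\ell\geq-\eta_m$, including negative $\ell$. There the set $\tilde{\cC}_{h,X,m,\ell}$ is still a subset of $X+\varpi_L^{\eta_m}\cO_L^n$, and reduction modulo $\varpi_L^{m+\ell}$ is still meaningful because $m+\ell\geq m-\eta_m\geq 1$. Both lemmas are stated for all $\ell\in\ZZ_{\geq-\eta_m}$, so the argument is purely algebraic and presents no real obstacle.
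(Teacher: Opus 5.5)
Your proposal is correct and is essentially the paper's proof, which likewise consists only of substituting the counting identity of Lemma \ref{reduct1} into the volume formula of Lemma \ref{reduct0}. The exponent computation $-(m+\ell)n+\ell(n-r(h,X,m))=-mn-\ell\, r(h,X,m)$, followed by comparing the values at $\ell$ and $\ell+1$, gives both assertions.
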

\begin{proof}
The assertions follow by combining Lemmas \ref{reduct0} and \ref{reduct1}.
\end{proof}
\begin{lem}\label{count1}We have
\begin{equation*}
\int_{\cup_{z\in \cB_{h,X,m}}\cB_{h,X,m}(z)}|dy|= q_L^{(m+\ell) r(h,X,m)-\theta(h,X,m)}\int_{\tilde{\cC}_{h,X,m,\ell}}|dx|
\end{equation*}
for all $\ell\geq -\eta_m$, where 
$$\theta(h,X,m)=\sum_{1\leq j\leq e(h,X,m)}\biggl\lfloor\frac{m+\mu_j(h,X)}{2}\biggr\rfloor r_j(h,X).$$
\end{lem}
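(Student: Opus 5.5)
The plan is to compute the left-hand side by counting cylinders, and then to compare the result with Corollary \ref{equ01}. The first step uses Lemma \ref{part1}: the sets $\cB_{h,X,m}(z)$ for $z\in\cB_{h,X,m}$ are either equal or disjoint. By Lemma \ref{ident}, $\cB_{h,X,m}(z)=\cB_{h,X,m}(z')$ holds exactly when $z-z'\in\varpi_L^m\cO_L^n$. Hence the union $\cup_{z\in\cB_{h,X,m}}\cB_{h,X,m}(z)$ is a disjoint union of exactly $\#(\cB_{h,X,m}\mod\varpi_L^m\cO_L)$ cylinders. By (\ref{modmlB}) with $\ell=0$, this number equals $\#(\tilde{\cB}_{h,X,m}\mod\varpi_L^m\cO_L)$.

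The second step computes the common volume of these cylinders from Definition \ref{Bzset}:
$$\mathrm{vol}(\cB_{h,X,m}(z))=q_L^{-\sum_i\gamma(h,X,m,i)}.$$
Coordinates with $\mathfrak{j}(i)>e(h,X,m)$ contribute $\gamma=m$, and there are $n-r(h,X,m)$ of them. Coordinates in $I_j(h,X)$ with $j\le e(h,X,m)$ contribute
$$\gamma=\biggl\lfloor\frac{m+\mu_j(h,X)}{2}\biggr\rfloor-\mu_j(h,X),$$
using the identity $\lfloor\frac{m-\mu}{2}\rfloor=\lfloor\frac{m+\mu}{2}\rfloor-\mu$. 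Summing over all coordinates gives
$$\sum_i\gamma(h,X,m,i)=m(n-r(h,X,m))+\theta(h,X,m)-\sum_{j\le e(h,X,m)}\mu_j(h,X)r_j(h,X).$$
Therefore the left-hand side equals
$$\#(\tilde{\cB}_{h,X,m}\mod\varpi_L^m\cO_L)\,q_L^{-mn+mr(h,X,m)-\theta(h,X,m)+\sum_{j\le e(h,X,m)}\mu_j(h,X)r_j(h,X)}.$$

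The last step inserts the formula of Corollary \ref{equ01}:
$$\int_{\tilde{\cC}_{h,X,m,\ell}}|dx|=q_L^{-mn-\ell r(h,X,m)+\sum_{j\le e(h,X,m)}\mu_j(h,X)r_j(h,X)}\#(\tilde{\cB}_{h,X,m}\mod\varpi_L^m\cO_L).$$
Multiplying by $q_L^{(m+\ell)r(h,X,m)-\theta(h,X,m)}$, the $\ell r(h,X,m)$ terms cancel and we recover the expression from the second step, for every $\ell\ge-\eta_m$.

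The only delicate point is the disjointness-and-counting step. It relies on the identification in Lemma \ref{ident}, and that lemma in turn depends on the specific choice $\beta_i=\lfloor\frac{m+\mu_{\mathfrak{j}(i)}}{2}\rfloor$ satisfying the hypotheses through (\ref{ieqgam}) and (\ref{lowbeta}). The remaining steps are bookkeeping of the exponents.
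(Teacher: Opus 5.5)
Your proposal is correct and follows the paper's proof. By Lemmas \ref{part1}, \ref{ident} and (\ref{modmlB}), the union is a disjoint union of $\#(\tilde{\cB}_{h,X,m}\mod\varpi_L^m\cO_L)$ cylinders, each of volume $q_L^{-\sum_i\gamma(h,X,m,i)}$, and comparing this with Corollary \ref{equ01} gives the identity. The only difference is that you write out the exponent bookkeeping using $\lfloor\frac{m-\mu}{2}\rfloor=\lfloor\frac{m+\mu}{2}\rfloor-\mu$, which the paper leaves implicit.
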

\begin{proof}
It follows from   Lemmas \ref{part1}, \ref{ident} and  the equality (\ref{modmlB}) that  
\begin{align*}
\int_{\cup_{z\in \cB_{h,X,m}}\cB_{h,X,m}(z)}|dy|&=\#(\cB_{h,X,m} \mod \varpi_L^m\cO_L) q_L^{-\sum_{i=1}^{n} \gamma(h,X,m,i)}\\
&=\#(\tilde{\cB}_{h,X,m} \mod \varpi_L^m\cO_L) q_L^{-\sum_{i=1}^{n} \gamma(h,X,m,i)}.
\end{align*}
This equality together with Corollary \ref{equ01} imply our assertion.
\end{proof}
Now, to give an upper bound of $\left|\tilde{E}_{L,h,Z,m,\psi}\right|$ for additive characters $\psi$ of conductor $m\geq 2$, we need to take the sum over $(X\mod \varpi_L^{\eta_m}\cO_L)\in \cO_L^n/\varpi_L^{\eta_m}\cO_L^n$ of the integral 
$$\int_{\cup_{z\in \cB_{h,X,m}}\cB_{h,X,m}(z)}|dy|.$$
We will do this in the remaining of this subsection. 
\begin{defn}\label{tupe}Let $\ell$ and $m$ be positive integers. Let $\cS=(I_1,...,I_e)$ be an order tuple, where $e\geq 1$ and $I_1,...,I_e$ are mutually disjoint non-empty subsets of $\{1,...,\ell\}$.  Let $\mu=(\mu_1,...,\mu_e)\in\NN^{e}$, we say that the tuple $(\cS,\mu)$ is $(\ell,m)$-admissible if $0\leq \mu_1<...<\mu_e<\kappa_m$.  We denote by $\cD(\ell,m)$  the set consisting of all $(\ell,m)$-admissible tuples. We also consider the tuple $(\cS_\emptyset,\mu_\emptyset)$, where $\cS_\emptyset=(\emptyset)$, $\mu_\emptyset=0$.  For a $(\ell,m)$-admissible tuple $(\cS,\mu)$ with $\cS=(I_1,...,I_e)$ and $\mu=(\mu_1,...,\mu_e)$, we set $e(\cS):=e$, $\sigma(\cS):=\cup_{1\leq j\leq e}I_j$, $r_j(\cS):=\#(I_j)$ for $1\leq j\leq e$, $r(\cS):=\sum_{1\leq j\leq e}r_j(\cS)$ and $\Theta(\cS,\mu,m)=\sum_{j=1}^e r_j(\cS)\lfloor\frac{m+\mu_j}{2}\rfloor$.  We also set $\sigma(\cS_{\emptyset})=\emptyset$ and $r(S_\emptyset)=e(S_\emptyset)=\Theta(\cS_\emptyset,\mu_\emptyset,m)=0$. 
\end{defn}
\begin{defn}\label{tupset}Let $m\geq 2$ and  $(\cS,\mu)\in\cD(n,m)\cup\{(\cS_\emptyset,\mu_\emptyset)\}$,  we set 
\begin{align*}
\tilde{\cE}(h,\Phi_{L,Z},n,m,\cS,\mu):=\bigg\{&x\in \cO_L^{n}\mid  \ord_L\left(\frac{\partial h}{\partial x_i}(x)\right)\geq m-\eta_m \forall i\in \{1,...,n\},\Phi_{L,Z}(x)=1, \\&x \textnormal{ is of } (m,h)\textnormal{-type }\cS \textnormal{ with } 
(m,h)\textnormal{-multiplicity } \mu,  \ord_L(h(x))\geq m-1\bigg\}.
\end{align*}
We also set \begin{align*}\tilde{\cE}(h,\Phi_{L,Z},n,m):&=\bigg\{x\in \cO_L^{n}\mid  \ord_L\left(\frac{\partial h}{\partial x_i}(x)\right)\geq m-\eta_m \forall i,\Phi_{L,Z}(x)=1,\ord_L(h(x))\geq m-1\bigg\}\\
&=\sqcup_{(\cS,\mu)\in\cD(n,m)\cup\{(\cS_\emptyset,\mu_\emptyset)\}}\tilde{\cE}(h,\Phi_{L,Z},n,m,\cS,\mu).
\end{align*}
 \end{defn}

\begin{prop}\label{upboundL} We have
\begin{equation*}
\left|\tilde{E}_{L,h,Z,m,\psi}\right|\leq \sum_{(\cS,\mu)\in\cD(n,m)\cup\{(\cS_\emptyset,\mu_\emptyset)\}}\int_{\tilde{\cE}(f,\Phi_{L,Z},n,m,\cS,\mu)}q_L^{r(\cS)(m-\eta_m)-\Theta(\cS,\mu,m)}|dx|
\end{equation*}
for  all additive characters $\psi$ of $L$ of conductor $m\geq 2$.
\end{prop}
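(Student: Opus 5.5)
We will decompose the domain $\{x\in\cO_L^n\mid \ord_L(h(x))\geq m-1\}$ into balls $X+\varpi_L^{\eta_m}\cO_L^n$, indexed by $X\in\cO_L^n/\varpi_L^{\eta_m}\cO_L^n$. Since $\eta_m\geq 1$, $\Phi_{L,Z}$ is constant on each ball. By Lemma-Definition \ref{rh-type1}, the $(m,h)$-type, the $(m,h)$-multiplicity and the sets $I(h,X,m)$ depend only on $X\bmod\varpi_L^{\eta_m}$. Hence it suffices to bound each ball integral and then sum, and the triangle inequality gives $|\tilde E_{L,h,Z,m,\psi}|\leq\sum_X|\int_{X+\varpi_L^{\eta_m}\cO_L^n,\ \ord_L(h)\geq m-1}\Phi_{L,Z}\psi(h)|dx||$.

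\emph{Case $e(h,X,m)=0$.} Lemma \ref{rho00} reduces the ball integral to one over $\tilde\cC_{h,X,m,-\eta_m}$. Bounding trivially, this set is exactly $\tilde\cE(h,\Phi_{L,Z},n,m)\cap(X+\varpi_L^{\eta_m}\cO_L^n)$, all of whose points have type $(\cS_\emptyset,\mu_\emptyset)$. Since $r(\cS_\emptyset)=\Theta=0$, the weight is $1$, which matches the term for $(\cS_\emptyset,\mu_\emptyset)$.

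\emph{Case $e(h,X,m)>0$.} We change variables by $\phi_X$. Since $\mathsf{A}(X)\in\SL_n(\cO_L)$ by Lemma \ref{propt}, this is measure preserving, and by Lemma \ref{samtype} it maps the ball onto $Y+\varpi_L^{\eta_m}\cO_L^n$. By Lemma \ref{part1}, the locus $\ord_L(g_X)\geq m-1$ splits as $\cA_{h,X,m}\sqcup\bigcup_{z\in\cB_{h,X,m}}\cB_{h,X,m}(z)$, and Lemma \ref{eq11} shows that the integral over $\cA_{h,X,m}$ vanishes. Thus the ball integral is bounded by $\mathrm{vol}(\bigcup_z\cB_{h,X,m}(z))$. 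We then apply Lemma \ref{count1} with $\ell=-\eta_m$:
$$\mathrm{vol}\Bigl(\bigcup_z\cB_{h,X,m}(z)\Bigr)=q_L^{(m-\eta_m)r(h,X,m)-\theta(h,X,m)}\,\mathrm{vol}(\tilde\cC_{h,X,m,-\eta_m}).$$
Here $\tilde\cC_{h,X,m,-\eta_m}$ is exactly the set of $x\in X+\varpi_L^{\eta_m}\cO_L^n$ with all $\ord_L(\partial h/\partial x_i)\geq m-\eta_m$ and $\ord_L(h)\geq m-1$. By the constancy of the type on the ball, all such $x$ (with $\Phi_{L,Z}=1$) lie in $\tilde\cE(h,\Phi_{L,Z},n,m,\cS,\mu)$ with $\cS=(I_1(h,X),\dots,I_{e(h,X,m)}(h,X))$ and $\mu$ the $(m,h)$-multiplicity. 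Moreover $r(h,X,m)=r(\cS)$ and $\theta(h,X,m)=\Theta(\cS,\mu,m)$, and the tuple is $(n,m)$-admissible by Lemma \ref{propAX} and the definition of $e(h,X,m)$. The exponent is therefore constant on the ball, and the bound equals $\int_{\tilde\cE(\dots,\cS,\mu)\cap(X+\varpi_L^{\eta_m}\cO_L^n)}q_L^{r(\cS)(m-\eta_m)-\Theta(\cS,\mu,m)}|dx|$. Balls with $\Phi_{L,Z}=0$ contribute zero.

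Finally, we sum over $X$. The balls are disjoint, and $\tilde\cE(h,\Phi_{L,Z},n,m)$ is the disjoint union over types given in Definition \ref{tupset}, so the ball contributions reassemble into the stated sum. We expect the main point of care to be the bookkeeping identification of $\tilde\cC_{h,X,m,-\eta_m}$ with the piece of $\tilde\cE$: for $\ell=-\eta_m$ the condition $m+\ell=m-\eta_m$ agrees for $i$ inside and outside $I(h,X,m)$, and we must also check that the type is constant on the whole ball, which holds by Lemma \ref{samtype}. The analytic content has already been carried out in the preceding lemmas.
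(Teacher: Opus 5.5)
Your proposal is correct and follows essentially the same route as the paper: partition into the balls $X_\xi+\varpi_L^{\eta_m}\cO_L^n$ and use Lemma \ref{rho00} when $e(h,X_\xi,m)=0$. Otherwise you change variables by $\phi_{X_\xi}$ and combine Lemmas \ref{part1}, \ref{eq11} and \ref{count1} with $\ell=-\eta_m$, then reassemble via $\tilde{\cE}(h,\Phi_{L,Z},n,m,\cS,\mu)=\sqcup_{X_\xi}\tilde{\cC}_{h,X_\xi,m,-\eta_m}$. The point you flag, constancy of the $(m,h)$-type on each ball (from $\mu_j<\kappa_m\leq\eta_m$ for $j\leq e(h,X,m)$ and Lemma \ref{samtype}), is exactly what the paper relies on implicitly.
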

\begin{proof}Let $\psi$ be an additive character of $L$ of conductor $m\geq 2$. For each $\xi\in \cO_L^n/ \varpi_L^{\eta_m}\cO_L^n$, we fix an element $X_\xi\in\cO_L^n$ such that $(X_\xi\mod \varpi_L^{\eta_m}\cO_L)=\xi$. We set 
$$\cQ_{0,L,Z,m}=\{X_\xi\mid \xi\in\cO_L^n/ \varpi_L^{\eta_m}\cO_L^n, e(h,X_\xi,m)=0, \Phi_{L,Z}(X_\xi)=1\}$$
and 
$$\cQ_{1,L,Z,m}=\{X_\xi\mid \xi\in\cO_L^n/ \varpi_L^{\eta_m}\cO_L^n, e(h,X_\xi,m)>0, \Phi_{L,Z}(X_\xi)=1\}.$$
Moreover, if $(\cS,\mu)\in \cD(n,m)$, then we set 
$$\cQ_{(S,\mu),L,Z,m}=\{X_\xi\in \cQ_{1,L,Z,m}\mid X_\xi \textnormal{ is of } (m,h)\textnormal{-type }\cS \textnormal{ with } (m,h)\textnormal{-multiplicity } \mu \}.$$
By Definitions \ref{rho0}, \ref{Cset} and \ref{tupset}, we have 
\begin{equation}\label{0sets}\tilde{\cE}(h,\Phi_{L,Z},n,m,\cS_{\emptyset},\mu_{\emptyset})=\sqcup_{X_\xi\in \cQ_{0,L,Z,m}}\tilde{\cC}_{h,X_\xi,m,-\eta_m}
\end{equation}
and 
\begin{equation}\label{1sets}\tilde{\cE}(h,\Phi_{L,Z},n,m,\cS,\mu)=\sqcup_{X_\xi\in \cQ_{(S,\mu),L,Z,m}}\tilde{\cC}_{h,X_\xi,m,-\eta_m}
\end{equation}
for all $(S,\mu)\in\cD(n,m)$.

If $X_\xi\in \cQ_{(S,\mu),L,Z,m}$ for $(\cS,\mu)\in \cD(n,m)$, then it follows from Lemma-Definition \ref{rh-type1}, Definitions \ref{rh-type2}, \ref{tupe} and Lemma  \ref{count1} that $r(h,X_\xi,m)=r(\cS)$ and $\theta(h,X_\xi,m)=\Theta(\cS,\mu,m)$. Since $m\geq 2$, if $X_\xi\in \cQ_{0,L,Z,m}\cup\cQ_{1,L,Z,m}$ then $\Phi_{L,Z}(x)=1$ for all $x\in X_\xi+\varpi_L^{\eta_m}\cO_L^n$. Thus, it follows from (\ref{0sets}), (\ref{1sets}) together with Lemmas \ref{rho00}, \ref{eq11}, \ref{part1}, \ref{count1} and Definition \ref{tupe} that
\begin{align}\label{upbL}
&\left|\tilde{E}_{L,h,Z,m,\psi}\right|\nonumber\\=&\bigg|\int_{\{x\in\cO_L^n,\ord_L(h(x))\geq m-1\}}\Phi_{L,Z}(x)\psi(h(x))|dx|\bigg|\nonumber\\
=&\bigg|\sum_{\xi\in \cO_L^n/ \varpi_L^{\eta_m}\cO_L^n}\int_{\{x\in X_\xi+\varpi_L^{\eta_m}\cO_L^n\mid \ord_L(h(x))\geq m-1\}}\Phi_{L,Z}(x)\psi(h(x))|dx|\bigg|\nonumber\\
=&\bigg|\sum_{X_\xi\in\cQ_{0,L,Z,m}}\int_{\{x\in X_\xi+\varpi_L^{\eta_m}\cO_L^n\mid \ord_L(h(x))\geq m-1\}}\psi(h(x))|dx|\nonumber\\&+\sum_{X_\xi\in\cQ_{1,L,Z,m}}\int_{\{y\in X_\xi\mathsf{A}(X_\xi)^{-1}+\varpi_L^{\eta_m}\cO_L^n\mid \ord_L(g_{X_\xi}(x))\geq m-1\}}\psi(g_{X_\xi}(y))|dy|\bigg|\nonumber\\
=&\bigg|\sum_{X_\xi\in\cQ_{0,L,Z,m}}\int_{\tilde{\cC}_{h,X_\xi,m,-\eta_m}}\psi(h(x))|dx|+\sum_{X_\xi\in\cQ_{1,L,Z,m}}\int_{\cup_{z\in \cB_{h,X_\xi,m}}\cB_{h,X_\xi,m}(z)}\psi(g_{X_\xi}(y))|dy|\bigg|\nonumber\\
\leq& \sum_{X_\xi\in\cQ_{0,L,Z,m}}\int_{\tilde{\cC}_{h,X_\xi,m,-\eta_m}}|dx|+\sum_{X_\xi\in\cQ_{1,L,Z,m}}\int_{\cup_{z\in \cB_{h,X_\xi,m}}\cB_{h,X_\xi,m}(z)}|dy|\nonumber\\
=&\int_{\sqcup_{X_\xi\in\cQ_{0,L,Z,m}}\tilde{\cC}_{h,X_\xi,m,-\eta_m}}|dx|+\sum_{X_\xi\in\cQ_{1,L,Z,m}}q_L^{(m-\eta_m) r(h,X_\xi,m)-\theta(h,X_\xi,m)}\int_{\tilde{\cC}_{h,X_\xi,m,-\eta_m}}|dx|\nonumber\\
=&\sum_{(\cS,\mu)\in\cD(n,m)\cup\{(\cS_\emptyset,\mu_\emptyset)\}}\int_{\tilde{\cE}(f,\Phi_{L,Z},n,m,\cS,\mu)}q_L^{r(\cS)(m-\eta_m)-\Theta(\cS,\mu,m)}|dx|.
\end{align}
\end{proof}
\begin{lem}\label{diff}We have
$$-n+\frac{r(\cS)\kappa_m}{2}-\sum_{j=1}^{e(\cS)} r_j(\cS)\frac{\mu_j}{2}\leq r(\cS)(m-\eta_m)-\Theta(\cS,\mu,m)\leq n+\frac{r(\cS)\kappa_m}{2}-\sum_{j=1}^{e(\cS)} r_j(\cS)\frac{\mu_j}{2}$$
for all $m\geq 2$ and all $(\cS,\mu)\in\cD(n,m)\cup\{(\cS_\emptyset,\mu_\emptyset)\}$.
\end{lem}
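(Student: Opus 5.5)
The plan is a direct computation from the definitions, handling the degenerate tuple first. For $(\cS_\emptyset,\mu_\emptyset)$ we have $r(\cS)=0$ and $\Theta=0$, so the middle term is $0$. The two bounds become $-n\le 0\le n$, which is trivial. From now on assume $(\cS,\mu)\in\cD(n,m)$, so $1\le r(\cS)\le n$ and $0\le\mu_j<\kappa_m$.

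Write $r=r(\cS)$ and $r_j=r_j(\cS)$. The key step is to rewrite $\Theta(\cS,\mu,m)=\sum_j r_j\lfloor\frac{m+\mu_j}{2}\rfloor$ using
\[
\frac{m+\mu_j}{2}-\frac{1}{2}\le\Big\lfloor\frac{m+\mu_j}{2}\Big\rfloor\le\frac{m+\mu_j}{2}.
\]
Since $\sum_j r_j=r$, this gives
\[
r\Big(\frac{m}{2}-\eta_m\Big)-\sum_j r_j\frac{\mu_j}{2}\le r(m-\eta_m)-\Theta(\cS,\mu,m)\le r\Big(\frac{m}{2}-\eta_m\Big)+\frac{r}{2}-\sum_j r_j\frac{\mu_j}{2}.
\]

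It therefore remains to compare $\frac{m}{2}-\eta_m$ with $\frac{\kappa_m}{2}$, i.e.\ to bound $m-2\eta_m-\kappa_m$. Lemma-Definition \ref{kaet} gives $\kappa_m-2\le m-2\eta_m-1\le\kappa_m-1$. Hence $m-2\eta_m-\kappa_m\in\{-1,0\}$, and so $\frac{m}{2}-\eta_m\in\big[\frac{\kappa_m}{2}-\frac12,\frac{\kappa_m}{2}\big]$.

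Substituting, the middle quantity lies between $\frac{r\kappa_m}{2}-\frac{r}{2}-\sum_j r_j\frac{\mu_j}{2}$ and $\frac{r\kappa_m}{2}+\frac{r}{2}-\sum_j r_j\frac{\mu_j}{2}$. Since $r\le n$, this is within the claimed range. There is no real obstacle; the only care needed is to use the exact relation between $\eta_m$ and $\kappa_m$ from Lemma-Definition \ref{kaet} rather than cruder estimates.
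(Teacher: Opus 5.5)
Your proof is correct and follows essentially the same route as the paper. Both compare $\lfloor\frac{m+\mu_j}{2}\rfloor$ with $\frac{m+\mu_j}{2}$ and $\frac{m}{2}-\eta_m$ with $\frac{\kappa_m}{2}$ via Lemma-Definition \ref{kaet}, then sum with weights $r_j(\cS)$ and use $r(\cS)\le n$. Your bookkeeping is slightly sharper (error $\pm r(\cS)/2$ rather than $\pm r(\cS)$), and you explicitly treat the trivial tuple $(\cS_\emptyset,\mu_\emptyset)$, which the paper leaves implicit.
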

\begin{proof}Let $(\cS,\mu)\in\cD(n,m)$.
By Lemma-Definition \ref{kaet}, we have
\begin{equation*}
\left|\frac{m-\kappa_m}{2}-\eta_m\right|\leq \frac{1}{2}
\end{equation*}
for all $m\in\ZZ$. Moreover,
\begin{equation*}
\left|\frac{a}{2}-\lfloor\frac{a}{2}\rfloor\right|\leq \frac{1}{2}
\end{equation*}
for all $a\in\ZZ$.  Thus, 
$$\left|m-\eta_m-\lfloor\frac{m+\mu_j}{2}\rfloor-\frac{\kappa_m}{2}+\frac{\mu_j}{2}\right|\leq 1$$
for all $1\leq j\leq e(\cS)$. These inequalities  and Definition \ref{tupe} imply our assertion. 
\end{proof}

\begin{lem}\label{maxmulp} If $(S,\mu)$ is a $(n,m)$-admissible tuple and $X\in \tilde{\cE}(f,\Phi_{L,Z},n,m,\cS,\mu)$ then 
$$0<\frac{r(\cS)\kappa_m}{2}-\sum_{j=1}^{e(\cS)} r_j(\cS)\frac{\mu_j}{2}=\max_{1\leq \ell\leq n}\left(\frac{\ell\kappa_m}{2}-\frac{1}{2}\ord_L\left(\mathsf{D}_{h,\ell}(X)\right)\right)$$
for all $m\geq 2$,  where $\mathsf{D}_{h,\ell}$ is the ideal of $\cO_L[x_1,...,x_{n}]$ generated by all principal minors of order $\ell$ of the matrix $\cH_{h}(x)=\left(\frac{\partial^2 h}{\partial x_i\partial x_j}(x)\right)_{1\leq i,j\leq n}$. 
\end{lem}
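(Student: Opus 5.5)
The plan is to reduce everything to Lemma \ref{detr}, applied to $\mathsf{N}=\cH_h(X)$. Since $X\in\tilde{\cE}(f,\Phi_{L,Z},n,m,\cS,\mu)$, the point $X$ has $(m,h)$-type $\cS=(I_1,\dots,I_e)$ with $(m,h)$-multiplicity $\mu$. By Definition \ref{rh-type2} and Lemma \ref{propAX}, this means the first $e=e(h,X,m)$ blocks in the reduction of Definition \ref{trnmatri} have sizes $r_j=r_j(\cS)$ and valuations $\mu_1<\dots<\mu_e<\kappa_m$. All later blocks have $\mu_j(h,X)\geq\kappa_m$, possibly $+\infty$. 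Write $R_k=\sum_{j\le k}r_j$. By Lemma \ref{detr}, $\ord_L(\mathsf{D}_{h,\ell}(X))=\mathcal{D}_\ell(\cH_h(X))$. That lemma computes $\mathcal{D}_\ell$ as the piecewise linear function
\[
\ell\mapsto \sum_{j<\ell'}\mu_j r_j+(\ell-R_{\ell'-1})\mu_{\ell'},
\]
where $\ell'$ is the block containing position $\ell$. Its slope on the $j$-th block is $\mu_j(h,X)$.

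The next step is to maximize $F(\ell):=\frac{\ell\kappa_m}{2}-\frac12\mathcal{D}_\ell$ over $1\le\ell\le n$. This function is concave in $\ell$, because its slope on block $j$ is $\frac12(\kappa_m-\mu_j(h,X))$ and the $\mu_j$ are strictly increasing. The slope is positive exactly on blocks $j\le e$, since there $\mu_j<\kappa_m$, and it is $\le 0$ on blocks $j>e$. So the maximum is attained at $\ell=R_e=r(\cS)$. Evaluating there gives
\[
F(r(\cS))=\tfrac{r(\cS)\kappa_m}{2}-\tfrac12\sum_{j\le e}\mu_j r_j,
\]
which is the claimed equality. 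The case $\mu_j=+\infty$ for later blocks needs a small check: there $\mathcal{D}_\ell=+\infty$, so $F=-\infty$, and these $\ell$ do not affect the maximum.

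For positivity, note that $e\ge1$ because $(\cS,\mu)$ is admissible, so the $\cS$ are nonempty. Then $F(r(\cS))\ge F(r_1)=\frac{r_1}{2}(\kappa_m-\mu_1)>0$, and by concavity $F$ is increasing on $[0,r(\cS)]$. An alternative is to note directly that every term $r_j(\kappa_m-\mu_j)$ is positive.

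The main obstacle is bookkeeping rather than ideas. Lemma \ref{detr} is stated for the original index ordering and with $\ell_r$ defined through the $r_i$. I need to check that its formula is exactly the concave interpolation above, including the boundary convention when $\ell$ falls at the end of a block, so that the two expressions agree there. I also need to match $\mathsf{D}_{h,\ell}(X)$, the ideal of principal minors evaluated at $X$, with $\mathcal{D}_\ell(\cH_h(X))$, which holds because $\ord_L$ of an ideal at a point is the minimum over its generators.
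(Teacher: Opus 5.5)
Your proposal is correct and takes essentially the same route as the paper. You apply Lemma \ref{detr} to $\cH_h(X)$ and write $\frac{\ell\kappa_m}{2}-\frac12\ord_L(\mathsf{D}_{h,\ell}(X))$ blockwise as a sum of increments $\frac12(\kappa_m-\mu_j(h,X))$; these are positive exactly for $j\le e(h,X,m)$, so the maximum is attained at $\ell=r(\cS)$. Your explicit handling of a possible last block with $\mu_j(h,X)=+\infty$, and of the block-boundary convention in Lemma \ref{detr}, spells out details the paper leaves implicit.
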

\begin{proof}Let $1\leq \ell\leq n$ and $X\in \tilde{\cE}(f,\Phi_{L,Z},n,m,\cS,\mu)$. With the notations of Definition \ref{typeh}, we take $i(\ell)$ to be the smallest positive integer such that $\ell\leq \sum_{1\leq j\leq i(\ell)}r_j(h,X)$. We set $r(\ell)=\sum_{1\leq j\leq i(\ell)-1}r_j(h,X)$.  It follows from Lemma \ref{detr} and Definition \ref{typeh} that 
\begin{align*}\frac{\ell\kappa_m}{2}-\frac{1}{2}\ord_L\left(\mathsf{D}_{h,\ell}(X)\right)&=\frac{\ell\kappa_m}{2}-\frac{1}{2}(\ell-r(\ell))\mu_{i(\ell)}(h,X)-\frac{1}{2}\sum_{1\leq j\leq i(\ell)-1}r_j(h,X)\mu_j(h,X)\\
&=(\ell-r(\ell))\left(\frac{\kappa_m}{2}-\frac{\mu_{i(\ell)}(h,X)}{2}\right)+\sum_{1\leq j\leq i(\ell)-1}r_j(h,X)\left(\frac{\kappa_m}{2}-\frac{\mu_{j}(h,X)}{2}\right). 
\end{align*}
Thus, by Lemma \ref{detr},  Lemma-Definition \ref{rh-type1}, Definitions \ref{rh-type2}, \ref{tupe} and \ref{tupset}, we have 
\begin{align*}\max_{1\leq \ell\leq n}\left(\frac{\ell\kappa_m}{2}-\frac{1}{2}\ord_L\left(\mathsf{D}_{h,\ell}(X)\right)\right)&=\sum_{1\leq j\leq e(h,X,m)}r_j(h,X)\left(\frac{\kappa_m}{2}-\frac{\mu_{j}(h,X)}{2}\right)\\
&=\frac{r(\cS)\kappa_m}{2}-\sum_{j=1}^{e(\cS)} r_j(\cS)\frac{\mu_j}{2}>0.
\end{align*}
\end{proof}
\begin{lem}\label{rjet}There are constants $b(n), c(n), \vartheta(n),\varsigma(n)$ depending only on $n$ such that $b(n), c(n)$ are positive and 
\begin{align*}&\sum_{(\cS,\mu)\in\cD(n,m)\cup\{(\cS_\emptyset,\mu_\emptyset)\}}\int_{\tilde{\cE}(f,\Phi_{L,Z},n,m,\cS,\mu)}q_L^{r(\cS)(m-\eta_m)-\Theta(\cS,\mu,m)}|dx|\\ \leq& c(n)q_L^{\vartheta(n)}\max_{0\leq \ell\leq n}\int_{\tilde{\cE}(h,\Phi_{L,Z},n,m)}q_L^{\frac{\ell\kappa_m}{2}-\frac{1}{2}\ord_L\left(\mathsf{D}_{h,\ell}(x)\right)}|dx|
\end{align*}
and 
\begin{align*}&\sum_{(\cS,\mu)\in\cD(n,m)\cup\{(\cS_\emptyset,\mu_\emptyset)\}}\int_{\tilde{\cE}(f,\Phi_{L,Z},n,m,\cS,\mu)}q_L^{r(\cS)(m-\eta_m)-\Theta(\cS,\mu,m)}|dx|\\ \geq& b(n)q_L^{\varsigma(n)}\max_{0\leq \ell\leq n}\int_{\tilde{\cE}(h,\Phi_{L,Z},n,m)}q_L^{\frac{\ell\kappa_m}{2}-\frac{1}{2}\ord_L\left(\mathsf{D}_{h,\ell}(x)\right)}|dx|
\end{align*}
for all $m\geq 2$, where we use the convention that $\mathsf{D}_{h,0}=(1)$.
\end{lem}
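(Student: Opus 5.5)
The plan is to compare the exponent on each stratum $\tilde{\cE}(h,\Phi_{L,Z},n,m,\cS,\mu)$ with the pointwise quantity $\max_{0\leq \ell\leq n}\bigl(\frac{\ell\kappa_m}{2}-\frac12\ord_L(\mathsf{D}_{h,\ell}(x))\bigr)$, using Lemmas \ref{diff} and \ref{maxmulp}. First, for $(\cS,\mu)\in\cD(n,m)$ and $x\in\tilde{\cE}(h,\Phi_{L,Z},n,m,\cS,\mu)$, Lemma \ref{diff} gives
\[
\Bigl|r(\cS)(m-\eta_m)-\Theta(\cS,\mu,m)-\Bigl(\tfrac{r(\cS)\kappa_m}{2}-\sum_{j}r_j(\cS)\tfrac{\mu_j}{2}\Bigr)\Bigr|\leq n .
\]
Lemma \ref{maxmulp} then identifies the bracket with $\max_{1\leq\ell\leq n}(\cdots)$, and shows it is positive. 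Hence it also equals $\max_{0\leq \ell\leq n}(\cdots)$, since the $\ell=0$ term is $0$ under the convention $\mathsf{D}_{h,0}=(1)$. For the empty tuple $(\cS_\emptyset,\mu_\emptyset)$ the exponent is $0$. For such $x$, namely $e(h,x,m)=0$, every entry of $\cH_h(x)$ has order $\geq\kappa_m$. So each principal minor of order $\ell$ has order $\geq \ell\kappa_m$, every term with $\ell\geq1$ is $\leq 0$, and the max over $0\leq\ell\leq n$ is exactly $0$. Thus on every stratum the integrand $q_L^{r(\cS)(m-\eta_m)-\Theta(\cS,\mu,m)}$ lies between $q_L^{-n}$ and $q_L^{n}$ times $q_L^{F(x)}$, where $F(x):=\max_{0\leq\ell\leq n}\bigl(\frac{\ell\kappa_m}{2}-\frac12\ord_L(\mathsf{D}_{h,\ell}(x))\bigr)$.

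Next, the strata indexed by $\cD(n,m)\cup\{(\cS_\emptyset,\mu_\emptyset)\}$ are disjoint and cover $\tilde{\cE}(h,\Phi_{L,Z},n,m)$ by Definition \ref{tupset}. Summing therefore gives
\[
q_L^{-n}\int_{\tilde{\cE}(h,\Phi_{L,Z},n,m)}q_L^{F(x)}|dx|\leq \text{LHS}\leq q_L^{n}\int_{\tilde{\cE}(h,\Phi_{L,Z},n,m)}q_L^{F(x)}|dx| .
\]
It remains to compare $\int q_L^{F}$ with $\max_\ell\int q_L^{\frac{\ell\kappa_m}{2}-\frac12\ord_L(\mathsf{D}_{h,\ell})}$. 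Because $q_L^{\max_\ell a_\ell}\leq\sum_{\ell=0}^n q_L^{a_\ell}$, the integral of $q_L^F$ is at most $(n+1)$ times that max. Conversely, $q_L^F\geq q_L^{a_\ell}$ for each $\ell$, so the integral of $q_L^F$ is at least the max. This yields the upper bound with $c(n)=n+1$ and $\vartheta(n)=n$, and the lower bound with $b(n)=1$ and $\varsigma(n)=-n$.

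The main point to check carefully is that Lemma \ref{maxmulp} applies uniformly on each nonempty stratum, and the empty-type case above. A further subtlety is that the $(m,h)$-type of $x$ is computed at $x$ itself, via Lemma-Definition \ref{rh-type1}, so that Lemma \ref{maxmulp} may be used pointwise. Measurability of $F$ is clear because it is built from orders of polynomial values.
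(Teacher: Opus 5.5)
Your proposal is correct and follows essentially the same route as the paper: compare the exponent with $F(x)$ pointwise on each stratum via Lemmas \ref{diff} and \ref{maxmulp}, then use $q_L^{\max_\ell a_\ell}\leq\sum_{\ell=0}^n q_L^{a_\ell}\leq (n+1)q_L^{\max_\ell a_\ell}$ on the disjoint union $\tilde{\cE}(h,\Phi_{L,Z},n,m)$. Your explicit treatment of the empty-type stratum is a welcome addition: there every Hessian entry has order at least $\kappa_m$, so $F(x)=0$. The paper leaves this case implicit, since Lemma \ref{maxmulp} is stated only for admissible tuples.
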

\begin{proof}
We have 
$$q_L^{\max_{0\leq \ell\leq n}\left(\frac{\ell\kappa_m}{2}-\frac{1}{2}\ord_L\left(\mathsf{D}_{h,\ell}(x)\right)\right)}\leq \sum_{0\leq \ell\leq n}q_L^{\frac{\ell\kappa_m}{2}-\frac{1}{2}\ord_L\left(\mathsf{D}_{h,\ell}(X)\right)}\leq (n+1)q_L^{\max_{0\leq \ell\leq n}\left(\frac{\ell\kappa_m}{2}-\frac{1}{2}\ord_L\left(\mathsf{D}_{h,\ell}(x)\right)\right)}$$
for all $x\in \tilde{\cE}(h,\Phi_{L,Z},n,m)=\sqcup_{(\cS,\mu)\in\cD(n,m)\cup\{(\cS_\emptyset,\mu_\emptyset)\}} \tilde{\cE}(h,\Phi_{L,Z},n,m,,\cS,\mu)$.
This and Lemmas \ref{diff}, \ref{maxmulp} imply the assertion.
\end{proof}
The following corollary helps to compare our result with the one in \cite[Theorem 6.1]{CKMu}.
\begin{cor}\label{compCMK}
There are  constants $a(n), \varrho(n)$ depending only on $n$ such that  $a(n)>0$ and
\begin{align*}&\max_{0\leq \ell\leq n}\int_{\tilde{\cE}(h,Z,n,m,L)}q_L^{\frac{\ell\kappa_m}{2}-\frac{1}{2}\ord_L\left(\mathsf{D}_{h,\ell}(x)\right)}|dx|\\ \leq& a(n)q_L^{\varrho(n)}\int_{\{x\in \cO_L^n\mid \ord_L(h(x))\geq m-1,\hspace{0.1cm} 2\ord_L\left(\frac{\partial h}{\partial x_i}(x)\right)\geq m-1 \forall i\}}\Phi_{L,Z}(x)|dx|
\end{align*}
for all $m\geq 2$. 
\end{cor}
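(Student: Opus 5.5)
The plan is to bound the integrand pointwise and then compare the two domains of integration. For $x\in\tilde{\cE}(h,\Phi_{L,Z},n,m)$ we have $\Phi_{L,Z}(x)=1$, $\ord_L(h(x))\geq m-1$ and $\ord_L(\frac{\partial h}{\partial x_i}(x))\geq m-\eta_m$ for all $i$. By Lemma-Definition \ref{kaet}, $\eta_m\leq \frac{m+2}{3}$, so
\[
2(m-\eta_m)\geq 2m-\frac{2(m+2)}{3}=\frac{4m-4}{3}\geq m-1
\]
for $m\geq 1$. Hence $\tilde{\cE}(h,\Phi_{L,Z},n,m)$ lies inside the set $\{x\in\cO_L^n\mid \ord_L(h(x))\geq m-1,\ 2\ord_L(\frac{\partial h}{\partial x_i}(x))\geq m-1\ \forall i\}\cap\{\Phi_{L,Z}=1\}$ appearing on the right-hand side. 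It therefore suffices to bound the integrand $q_L^{\frac{\ell\kappa_m}{2}-\frac12\ord_L(\mathsf{D}_{h,\ell}(x))}$ by $q_L^{\varrho(n)}$ uniformly for every $x\in\tilde{\cE}(h,\Phi_{L,Z},n,m)$ and every $0\leq \ell\leq n$. Then $a(n)=1$ works, or $a(n)=n+1$ if the maximum is handled crudely.

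The pointwise bound comes from the decomposition into $(m,h)$-types. Take $x\in\tilde{\cE}(h,\Phi_{L,Z},n,m,\cS,\mu)$. If $(\cS,\mu)=(\cS_\emptyset,\mu_\emptyset)$, then $e(h,x,m)=0$, so every Hessian entry has order $\geq\kappa_m$. Every principal minor of order $\ell$ then has order $\geq \ell\kappa_m$, and the exponent is $\leq 0$. If $(\cS,\mu)\in\cD(n,m)$, Lemma \ref{maxmulp} gives
\[
\max_{\ell}\Bigl(\frac{\ell\kappa_m}{2}-\frac12\ord_L(\mathsf{D}_{h,\ell}(x))\Bigr)=\frac{r(\cS)\kappa_m}{2}-\sum_j r_j(\cS)\frac{\mu_j}{2},
\]
so we need this quantity bounded by a constant depending only on $n$. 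In general it is not: with $\mu_j=0$ and $\kappa_m\approx m/3$ it grows linearly in $m$. So the pointwise bound must use the gradient condition. We have $\ord_L(\partial_i h(x))\geq m-\eta_m\approx 2m/3$ at $x$, which does not by itself constrain the Hessian at $x$. The gain has to come from integration instead: the Hessian-type blocks force the set of such $x$ to be thin.

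The key step is to reuse the structure from the proof of Proposition \ref{upboundL}. For each ball $X_\xi+\varpi_L^{\eta_m}\cO_L^n$ of type $(\cS,\mu)$, Corollary \ref{equ01} gives the volume of $\tilde{\cC}_{h,X_\xi,m,\ell}$ with $\ell=-\eta_m$. Applying the same corollary to the level $\ell'$ with $m+\ell'=\lceil\frac{m-1}{2}\rceil$ (note $\ell'\geq-\eta_m$), the volume at level $\ell=-\eta_m$ equals $q_L^{-(m-\eta_m-\lceil\frac{m-1}{2}\rceil)r(\cS)}$ times the volume at level $\ell'$. The set $\tilde{\cC}_{h,X_\xi,m,\ell'}$ is contained in the right-hand domain. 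Then $r(\cS)(m-\eta_m)-\Theta(\cS,\mu,m)-(m-\eta_m-\lceil\frac{m-1}{2}\rceil)r(\cS)=\sum_j r_j(\lceil\frac{m-1}{2}\rceil-\lfloor\frac{m+\mu_j}{2}\rfloor)\leq 0$. By Lemma \ref{diff} and Lemma \ref{maxmulp}, the weighted integral over $\tilde{\cE}$ restricted to type $(\cS,\mu)$ is therefore bounded, up to $q_L^{n}$, by the volume of the union over $\xi$ of the $\tilde{\cC}_{h,X_\xi,m,\ell'}$. These sets are disjoint because they lie in disjoint balls, and each is contained in the right-hand set.

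Summing over the types, whose number depends only on $n$ and $m$, could introduce an $m$-dependence. Instead, I would sum over balls $\xi$ directly, since each ball has exactly one type, and absorb the factor $n+1$ from the max. The main obstacle is checking that $\tilde{\cC}_{h,X_\xi,m,\ell'}$ lies in the right-hand set. The coordinates $i\notin I(h,X_\xi,m)$ only have $\ord_L(\partial_i h)\geq m-\eta_m\geq\frac{m-1}{2}$, and the coordinates in $I$ have $\geq m+\ell'\geq \frac{m-1}{2}$, so the containment should hold. The remaining task is to keep the bookkeeping of the floor and ceiling constants depending only on $n$.
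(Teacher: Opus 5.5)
\textbf{Gap in the key step.} You assert $\ell'\geq-\eta_m$, but this is false. Indeed $m+\ell'=\lceil\frac{m-1}{2}\rceil=\lfloor\frac m2\rfloor$ while $m-\eta_m=\lfloor\frac{2m}{3}\rfloor$, so $\ell'\leq-\eta_m$, with strict inequality for $m=3$ and every $m\geq 5$ (for $m=3$, $\ell'=-2<-1=-\eta_3$). Your own exponent $m-\eta_m-\lceil\frac{m-1}{2}\rceil\geq 0$ already encodes this.

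Corollary \ref{equ01} is proved only for $\ell\geq-\eta_m$, and it cannot simply be extrapolated downward. Its inputs, Lemmas \ref{reduct0} and \ref{reduct1}, rest on Lemma \ref{Cond3} and Corollary \ref{exuni}. These need $\beta_i\geq m-\eta_m$ on $I(h,X,m)$, and they need condition (4), which is used in the form (\ref{ineqm}), i.e.\ $m+\ell\geq 2\kappa_m-1$. At the uniform level $\lfloor\frac m2\rfloor$ both hypotheses fail once some $\mu_j$ is near $\kappa_m$; for instance at $m=12$ you get $6<7=2\kappa_{12}-1$. Heuristically, a displacement of order $\lfloor\frac m2\rfloor-\mu_j\approx\frac m6$ in block $j$ produces quadratic terms of order about $\frac m3<\frac m2$. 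So the fibres of $\tilde{\cC}_{h,X,m,\ell'}$ are no longer controlled, and the volume scaling you use is unproven. Separately, the identity $\tilde{\cC}_{h,X,m,\ell}=\cC_{h,X,m,\ell}\mathsf{A}(X)$ requires the levels to be non-increasing along the blocks (Lemma \ref{propt}). This fails as soon as $m+\ell<m-\eta_m$.

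\textbf{The missing idea} is to use block-dependent levels $\lfloor\frac{m+\mu_{\mathfrak{j}(i)}(h,X)}{2}\rfloor$ for $i\in I(h,X,m)$ instead of the uniform level $\lfloor\frac m2\rfloor$. These are exactly the levels cutting out $\bigcup_{z\in\cB_{h,X,m}}\cB_{h,X,m}(z)$, and they do satisfy the Hensel hypotheses by (\ref{ieqgam}) and (\ref{lowbeta}). Lemma \ref{count1} with $\ell=-\eta_m$ then gives an exact identity: the volume of $\bigcup_{z}\cB_{h,X,m}(z)$ equals $q_L^{(m-\eta_m)r(h,X,m)-\theta(h,X,m)}$ times the volume of $\tilde{\cC}_{h,X,m,-\eta_m}$. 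No comparison of exponents is needed.

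For the containment, take $z\in Y+\varpi_L^{\eta_m}\cO_L^n$ with $\ord_L(g_X(z))\geq m-1$ and $2\ord_L\bigl(\frac{\partial g_X}{\partial y_\ell}(z)\bigr)<m-1$ for some $\ell$. By Definitions \ref{j-func} and \ref{Aset}, together with Lemma \ref{equj}, such a $z$ has $j_{m,z}<+\infty$ and lies in $\cA_{h,X,m}$. That set is disjoint from $\bigcup_z\cB_{h,X,m}(z)$ by Lemma \ref{part1}. Since $\mathsf{A}(X)\in\SL_n(\cO_L)$ preserves $\min_i\ord_L$ of the gradient and preserves volume (Lemma \ref{propt}), $\bigcup_z\cB_{h,X,m}(z)$ corresponds to a subset of the right-hand domain intersected with $X+\varpi_L^{\eta_m}\cO_L^n$.

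The rest of your proposal is fine: the containment $\tilde{\cE}(h,\Phi_{L,Z},n,m)\subset$ right-hand domain, the case $e(h,X,m)=0$, summing over balls, and the passage to $\max_\ell$ via Lemmas \ref{diff} and \ref{maxmulp} (i.e.\ Lemma \ref{rjet}). With the replacement above, the argument becomes the paper's.
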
 
\begin{proof}Let $X\in \cO_L^n$ and $m\geq 2$. We set $Y=X\mathsf{A}(X)^{-1}$ as usual.

If $e(h,X,m)=0$, then we can use Lemma-Definition \ref{kaet} and Definition \ref{rho0} to have 
\begin{equation}\label{bimaj}\int_{\tilde{\cC}_{h,X,m,-\eta_m}}|dx|\leq \int_{\{x\in X+\varpi_L^{\eta_m}\cO_L^n\mid \ord_L(h(x))\geq m-1,\hspace{0.1cm} 2\ord_L\left(\frac{\partial h}{\partial x_i}(x)\right)\geq m-1 \forall i\}}|dx|.
\end{equation}

If $e(h,X,m)>0$, we can use Definitions \ref{j-func} and \ref{Aset} to have
\begin{equation*}\{z\in Y+\varpi_L^{\eta_m}\cO_L^n\mid \ord_L(g_X(z))\geq m-1,\hspace{0.1cm} \min_{1\leq \ell\leq n}2\ord_L\left(\frac{\partial g_X}{\partial y_\ell}(z)\right)<m-1\}\subset \cA_{h,X,m}.
\end{equation*}
This and Lemmas \ref{propt}, \ref{part1}, \ref{count1} imply
\begin{align}\label{bimaj1}
&\int_{\{x\in X+\varpi_L^{\eta_m}\cO_L^n\mid \ord_L(h(x))\geq m-1,\hspace{0.1cm}2\ord_L\left(\frac{\partial h}{\partial x_i}(x)\right)\geq m-1 \forall i\}}|dx|\nonumber\\
=&\int_{\{z\in Y+\varpi_L^{\eta_m}\cO_L^n\mid \ord_L(g_X(z))\geq m-1,\hspace{0.1cm}2\ord_L\left(\frac{\partial g_X}{\partial y_\ell}(z)\right)\geq m-1 \forall i\}}|dx|\nonumber\\
\geq& \int_{\cup_{z\in \cB_{h,X,m}}\cB_{h,X,m}(z)}|dy|= \int_{\tilde{\cC}_{h,X,m,-\eta_m}}q_L^{(m-\eta_m) r(h,X,m)-\theta(h,X,m)}|dx|.
\end{align}
By combining (\ref{upbL}), (\ref{bimaj}), (\ref{bimaj1}), we have 
\begin{align*}&\sum_{(\cS,\mu)\in\cD(n,m)\cup\{(\cS_\emptyset,\mu_\emptyset)\}}\int_{\tilde{\cE}(f,\Phi_{L,Z},n,m,\cS,\mu)}q_L^{-\Theta(\cS,\mu,m)+r(\cS)(m-\eta_m)}|dx|\\ \leq &\int_{\{x\in \cO_L^n\mid \ord_L(h(x))\geq m-1,\hspace{0.1cm}2 \ord_L\left(\frac{\partial h}{\partial x_i}(x)\right)\geq m-1 \forall i\}}\Phi_{L,Z}(x)|dx|
\end{align*}
for all $m\geq 2$. This and Lemma \ref{rjet} imply our assertion.
\end{proof}
\begin{prop}\label{upboundL1} There are constants $C_n, \iota_n$ depending only on $n$ such that $C_n>0$ and 
\begin{equation*}
\left|\tilde{E}_{L,h,Z,m,\psi}\right|\leq C_nq_L^{\iota_n}\max_{0\leq \ell\leq n}\int_{\tilde{\cE}(h,\Phi_{L,Z},n,m)}q_L^{\frac{\ell\kappa_m}{2}-\frac{1}{2}\ord_L\left(\mathsf{D}_{h,\ell}(x)\right)}|dx|
\end{equation*}
for  all additive characters $\psi$ of $L$ of conductor $m\geq 2$.
\end{prop}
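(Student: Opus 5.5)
This follows directly by chaining Proposition \ref{upboundL} with the upper half of Lemma \ref{rjet}. Fix an additive character $\psi$ of $L$ of conductor $m\geq 2$. Proposition \ref{upboundL} already bounds $\left|\tilde{E}_{L,h,Z,m,\psi}\right|$ by
\[
\sum_{(\cS,\mu)\in\cD(n,m)\cup\{(\cS_\emptyset,\mu_\emptyset)\}}\int_{\tilde{\cE}(h,\Phi_{L,Z},n,m,\cS,\mu)}q_L^{r(\cS)(m-\eta_m)-\Theta(\cS,\mu,m)}|dx|.
\]
This bound comes from three facts. The cylinders $\tilde{\cC}_{h,X_\xi,m,-\eta_m}$ tile the sets $\tilde{\cE}(\cdots,\cS,\mu)$. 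The integral over $\cA_{h,X,m}$ vanishes by Lemma \ref{eq11}, and the corresponding vanishing for type $(\emptyset)$ is Lemma \ref{rho00}. The remaining volume is computed through Lemma \ref{count1}.

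Next, the first inequality of Lemma \ref{rjet} bounds this sum by
\[
c(n)q_L^{\vartheta(n)}\max_{0\leq \ell\leq n}\int_{\tilde{\cE}(h,\Phi_{L,Z},n,m)}q_L^{\frac{\ell\kappa_m}{2}-\frac{1}{2}\ord_L(\mathsf{D}_{h,\ell}(x))}|dx|.
\]
Setting $C_n=c(n)$ and $\iota_n=\vartheta(n)$ then gives the claim.

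The only thing to verify is that $c(n)$ and $\vartheta(n)$ really depend only on $n$. This is the content of the proof of Lemma \ref{rjet}. Lemma \ref{diff} gives an exponent error of at most $n$, so one can take $\vartheta(n)=n$. Lemma \ref{maxmulp} identifies the main exponent on each stratum of $(\cS,\mu)$ with $\max_{\ell}\bigl(\frac{\ell\kappa_m}{2}-\frac12\ord_L(\mathsf{D}_{h,\ell}(x))\bigr)$. Replacing this max by a sum costs a factor $n+1$, and bounding the sum of $n+1$ integrals by $(n+1)$ times their maximum costs another factor $n+1$. Because the strata $\tilde{\cE}(\cdots,\cS,\mu)$ are disjoint with union $\tilde{\cE}(h,\Phi_{L,Z},n,m)$, the sum over $(\cS,\mu)$ collapses into one integral and contributes no further combinatorial factor. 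Hence $C_n=(n+1)^2$ works.

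There is one genuine point to check. On the stratum $(\cS_\emptyset,\mu_\emptyset)$ the exponent is $0$, and this must be matched by the $\ell=0$ term of the max, using the convention $\mathsf{D}_{h,0}=(1)$. I expect this to be the only step needing care, and it is immediate.
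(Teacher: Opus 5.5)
Your proof is correct and is exactly the paper's argument: the paper proves this proposition in a single line by combining Proposition \ref{upboundL} with the first inequality of Lemma \ref{rjet}. Your explicit constants are also valid, though this upper bound needs only one factor of $n+1$: the step $q_L^{\max_\ell a_\ell}\le\sum_\ell q_L^{a_\ell}$ holds with constant $1$, and only $\sum_\ell\int\le(n+1)\max_\ell\int$ costs a factor $n+1$.
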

\begin{proof}It follows immediately from Lemmas \ref{upboundL} and \ref{rjet}.
\end{proof}

More generally, we can give an upper bound of $\left|\tilde{E}_{L,h,Z,m,\psi}\right|$ related to any $1\leq n_0\leq n$ as follows. We fix $1\leq n_0\leq n$ and choose a subset $I$ of $\{1,...,n\}$ such that $\#I=n_0$. Without loss of generality, we may suppose that $I=\{1,...,n_0\}$. For each 
$z\in \cO_L^{n-n_0}$, let $h_z\in\cO_L[x_1,...,x_{n_0}]$ be the polynomial given by $h_z(x_1,...,x_{n_0})=h(x_1,...,x_{n_0},z)$. Let 
$(\cS,\mu)\in\cD(n_0,m)\cup\{(\cS_\emptyset,\mu_\emptyset)\}$,  we set 
\begin{align*}
\tilde{\cE}(h,\Phi_{L,Z},n_0,m,\cS,\mu):=\bigg\{&x=(x',z)\in \cO_L^{n_0}\times\cO_L^{n-n_0}\mid  \ord_L\left(\frac{\partial h}{\partial x_i}(x)\right)\geq m-\eta_m \hspace{0.1cm}\forall i\leq n_0, \Phi_{L,Z}(x)=1, \\&x' \textnormal{ is of } (m,h_z)\textnormal{-type }\cS \textnormal{ with } 
(m,h_z)\textnormal{-multiplicity } \mu, \ord_L(h(x))\geq m-1\bigg\}
\end{align*}
and 
\begin{align*}\tilde{\cE}(h,\Phi_{L,Z},n_0,m):=\bigg\{&x=(x',z)\in \cO_L^{n_0}\times\cO_L^{n-n_0}\mid  \ord_L\left(\frac{\partial h}{\partial x_i}(x)\right)\geq m-\eta_m \hspace{0.1cm}\forall i\leq n_0,\\ &\Phi_{L,Z}(x)=1,\ord_L(h(x))\geq m-1\bigg\}.
\end{align*}
By repeating the above argument with $h_z$ instead of $h$ for each $z\in\cO_L^{n-n_0}$, we have the following proposition.
\begin{prop}\label{rupbouL}
There are  constants $\mathsf{C}_{n_0}, \varepsilon_{n_0}$ depending only on $n_0$ such that 
\begin{equation*}
\left|\tilde{E}_{L,h,Z,m,\psi}\right|\leq \mathsf{C}_{n_0}q_L^{\varepsilon_{n_0}}\max_{0\leq \ell\leq n_0}\int_{\tilde{\cE}(h,\Phi_{L,Z},n_0,m)}q_L^{\frac{\ell\kappa_m}{2}-\frac{1}{2}\ord_L\left(\mathsf{D}_{h,n_0,\ell}(x)\right)}|dx|
\end{equation*}
for  all additive characters $\psi$ of $L$ of conductor $m\geq 2$, where $\mathsf{D}_{h,n_0,\ell}$ is the ideal of $\cO_L[x_1,...,x_{n}]$ generated by all principal minors of order $\ell$ of the matrix $\cH_{h,n_0}(x)=(\frac{\partial^2 h}{\partial x_i\partial x_j}(x))_{1\leq i,j\leq n_0}$. Here, we use the convention that $\mathsf{D}_{h,n_0,\ell}=(1)$.
\end{prop}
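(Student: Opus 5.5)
The plan is a fiberwise reduction to Proposition \ref{upboundL1}. We freeze the last $n-n_0$ coordinates $z\in\cO_L^{n-n_0}$ and apply the whole machinery of this subsection to the polynomial $h_z$ in $n_0$ variables. By Fubini,
\[
\tilde{E}_{L,h,Z,m,\psi}=\int_{\cO_L^{n-n_0}}\biggl(\int_{\{x'\in\cO_L^{n_0}\mid \ord_L(h_z(x'))\geq m-1\}}\Phi_{L,Z}(x',z)\psi(h_z(x'))|dx'|\biggr)|dz|.
\]
The inner integral has the same shape as (\ref{exph}), except that $\Phi_{L,Z}(\cdot,z)$ replaces $\Phi_{L,Z}$. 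The only properties of $\Phi_{L,Z}$ used in Lemmas \ref{rho00} through \ref{count1} and in Proposition \ref{upboundL} are that it is a $\{0,1\}$-valued function and that it is constant on cosets of $\varpi_L\cO_L^{n_0}$. This holds since $\eta_m\geq 1$ and $\Phi_{L,Z}(x',z)$ depends only on $\overline{x'}$ for fixed $z$. Also, $\deg h_z\leq d$, and the conditions on $L$ used (residue characteristic $>d$) are unchanged.

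First I would redo Proposition \ref{upboundL} for $h_z$ with $n$ replaced by $n_0$. For each residue class $\xi$ mod $\varpi_L^{\eta_m}$ in $\cO_L^{n_0}$, Lemmas \ref{rho00}, \ref{eq11} and \ref{part1} kill the integral on the complement of $\tilde{\cC}_{h_z,X_\xi,m,-\eta_m}$, respectively of $\bigcup_{w\in \cB_{h_z,X_\xi,m}}\cB_{h_z,X_\xi,m}(w)$. Lemma \ref{count1} then converts the latter volume into $q_L^{r(m-\eta_m)-\theta}$ times the volume of $\tilde{\cC}_{h_z,X_\xi,m,-\eta_m}$. Summing over $\xi$ gives the inner integral bounded by
\[
\sum_{(\cS,\mu)}\int_{\tilde{\cE}_z(\cS,\mu)}q_L^{r(\cS)(m-\eta_m)-\Theta(\cS,\mu,m)}|dx'|,
\]
where $\tilde{\cE}_z(\cS,\mu)$ is exactly the $z$-fiber of $\tilde{\cE}(h,\Phi_{L,Z},n_0,m,\cS,\mu)$. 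Here one uses that $\partial h_z/\partial x_i=\partial h/\partial x_i$ for $i\leq n_0$.

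Next, Lemmas \ref{diff}, \ref{maxmulp} and \ref{rjet}, applied with $n_0$ in place of $n$, bound the integrand pointwise on each fiber. They give a bound by $c(n_0)q_L^{\vartheta(n_0)}\max_\ell q_L^{\ell\kappa_m/2-\frac12\ord_L(\mathsf{D}_{h_z,\ell}(x'))}$. Since $\cH_{h_z}(x')=\cH_{h,n_0}(x',z)$, we have $\mathsf{D}_{h_z,\ell}(x')=\mathsf{D}_{h,n_0,\ell}(x',z)$. Integrating over $z$, the fibers assemble into $\tilde{\cE}(h,\Phi_{L,Z},n_0,m)$. Finally I would bound the integral of the maximum by $(n_0+1)$ times the maximum of the integrals, and this yields the claim with constants depending only on $n_0$.

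The main obstacle is the verification that no step of the subsection secretly used the global structure of $\Phi_{L,Z}$ or of $h$ in all $n$ variables. One concern is the constancy of $\Phi_{L,Z}(\cdot,z)$ on balls of radius $\eta_m$. Another is that the constructions $\mathsf{A}(X)$, $\phi_X$, and the cylinders depend on $z$ only as a parameter, so that measurability and Fubini are legitimate. The last is the identification of the fiber of $\tilde{\cE}(h,\Phi_{L,Z},n_0,m)$ with $\tilde{\cE}(h_z,\Phi_{L,Z}(\cdot,z),n_0,m)$. I expect these to be routine, since all sets involved are defined by valuative conditions on polynomial expressions in $(x',z)$.
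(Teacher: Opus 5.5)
Your proposal is correct and follows essentially the paper's own argument: integrate over the fibers $z\in\cO_L^{n-n_0}$, apply the $n_0$-variable bound of Proposition \ref{upboundL1} to $h_z$ with the fiberwise function $\Phi_{L,Z}(\cdot,z)$ (your unpacking through Proposition \ref{upboundL} and Lemma \ref{rjet} is exactly the content of that proposition), identify $\mathsf{D}_{h_z,\ell}(x')=\mathsf{D}_{h,n_0,\ell}(x',z)$, and reassemble the fibers into $\tilde{\cE}(h,\Phi_{L,Z},n_0,m)$. Two small points in your favour: your factor $n_0+1$ in the step from the integral of the maximum to the maximum of the integrals is the correct count (the paper writes $n_0$, but $\ell$ takes $n_0+1$ values), and your remark that only the invariance of $\Phi_{L,Z}(\cdot,z)$ modulo $\varpi_L$ is needed makes explicit what the paper uses silently.
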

\begin{proof}Let $m\geq 2$ and $\psi$ be an additive character of $L$ of conductor $m$. For each $z\in\cO_L^{n-n_0}$, we consider the Schwartz-Bruhat function $\Phi_{L,Z,z}:L^{n_0}\to \CC, x'\mapsto\Phi_{L,Z}(x',z)$. 

By using fiber integration, we have
\begin{align*}
\tilde{E}_{L,h,Z,m,\psi}&=\int_{\{x\in \cO_L^{n}\mid \ord_L(h(x))\geq m-1\}}\Phi_{L,Z}(x)\psi(h(x))|dx|\\
&=\int_{\cO_L^{n-n_0}}\int_{\{(x',z)\in \cO_L^{n}\mid \ord_L(h(x',z))\geq m-1\}}\Phi_{L,Z}(x',z)\psi(h(x',z))|dx'\wedge dz|.
\end{align*}

This and Proposition \ref{upboundL1} imply 
\begin{align*}
\left|\tilde{E}_{L,h,Z,m,\psi}\right|&\leq \int_{\cO_L^{n-n_0}}\left|\int_{\{(x',z)\in \cO_L^{n}\mid \ord_L(h(x',z))\geq m-1\}}\Phi_{L,Z}(x',z)\psi(h(x',z))|dx'|\right| |dz|\\
&\leq \int_{\cO_L^{n-n_0}}C_{n_0}q_L^{\iota_{n_0}}\max_{0\leq \ell\leq n_0}\left(\int_{\tilde{\cE}(h_z,\Phi_{L,Z,z},n_0,m)}q_L^{\frac{\ell\kappa_m}{2}-\frac{1}{2}\ord_L\left(\mathsf{D}_{h_z,\ell}(x')\right)}|dx'|\right)|dz|\\
&=C_{n_0}q_L^{\iota_{n_0}}\int_{\cO_L^{n-n_0}}\max_{0\leq \ell\leq n_0}\left(\int_{\tilde{\cE}(h_z,\Phi_{L,Z,z},n_0,m)}q_L^{\frac{\ell\kappa_m}{2}-\frac{1}{2}\ord_L\left(\mathsf{D}_{h,n_0,\ell}(x',z)\right)}|dx'|\right)|dz|\\
&\leq C_{n_0}q_L^{\iota_{n_0}}\int_{\cO_L^{n-n_0}}\sum_{0\leq \ell\leq n_0}\left(\int_{\tilde{\cE}(h_z,\Phi_{L,Z,z},n_0,m)}q_L^{\frac{\ell\kappa_m}{2}-\frac{1}{2}\ord_L\left(\mathsf{D}_{h,n_0,\ell}(x',z)\right)}|dx'|\right)|dz|\\
&\leq n_0C_{n_0}q_L^{\iota_{n_0}}\max_{0\leq\ell\leq n_0}\int_{\cO_L^{n-n_0}}\left(\int_{\tilde{\cE}(h_z,\Phi_{L,Z,z},n_0,m)}q_L^{\frac{\ell\kappa_m}{2}-\frac{1}{2}\ord_L\left(\mathsf{D}_{h,n_0,\ell}(x',z)\right)}|dx'|\right)|dz|\\
&=n_0C_{n_0}q_L^{\iota_{n_0}}\max_{0\leq \ell\leq n_0}\int_{\tilde{\cE}(h,\Phi_{L,Z},n_0,m)}q_L^{\frac{\ell\kappa_m}{2}-\frac{1}{2}\ord_L\left(\mathsf{D}_{h,n_0,\ell}(x)\right)}|dx|.
\end{align*}
Thus, it is sufficient to take $\mathsf{C}_{n_0}=n_0C_{n_0}$ and $\varepsilon_{n_0}=\iota_{n_0}$.
\end{proof}

\subsection{Motivic oscillation index of polynomials}\label{moipolylo}
In this subsection, we fix a non-constant polynomial $f$ in $\cO_K[x_1,...,x_n]$ of degree $d$. Let $J_f=(\frac{\partial f}{\partial x_1},...,\frac{\partial f}{\partial x_n})\subset K[x_1,...,x_n]$ be the Jacobian ideal of $f$. As in Subsection \ref{exp-zetafunction}, we denote by $V_f\subset\overline{K}$ the set of critical values of $f$ over $\overline{K}$. Suppose that $d\geq 2$ and $f=f_0+f_1+...+f_d$, where $f_i$ is the homogeneous part of degree $i$ of $f$. In Definition  \ref{singudim}, we defined $s(f)$ to be the dimension of the closed subscheme of $\AA_K^n$ associated to $J_f$.  

Let $I$ be a non-empty subset of $\{1,...,n\}$. For each $a\in V_f$, we denote by $\cI_{f-a,I}$ the ideal of $K[a,x_1,...,x_n]$ generated by $(f-a)^2$ and $(\frac{\partial f}{\partial x_i})^3$ for $i\in I$.  We define $$\cH_{f,I}(x)=\left(\dfrac{\partial^2 f}{\partial x_i\partial x_j}(x)\right)_{i,j\in I}.$$
For each $0\leq \ell\leq\#I$, we denote by $\mathsf{D}_{f,I,\ell}$ the ideal of $\cO_K[x_1,...,x_n]$ generated by all principal minors of order $\ell$ of $\cH_{f,I}(x)$ with the convention that $\mathsf{D}_{f,I,\ell}=(1)$. We set  $$\tilde{\mathsf{D}}_{f,I,\ell}=\mathsf{D}_{f,I,\ell}\otimes_{\cO_K}K, \mathsf{D}_{f,I,\ell,\cO_L}=\mathsf{D}_{f,I,\ell}\otimes_{\cO_K}\cO_L,$$ $$\sigma_{f-a,Z,I}:=\min_{0\leq \ell\leq\#I}\left(2\lct_{Z_{K[a]}}\left(\cI_{f-a,I},\tilde{\mathsf{D}}_{f,I,\ell},\frac{1}{2}\right)-\frac{\ell}{6}\right),  \sigma_{f,Z}=\max_{\emptyset\neq I\subset \{1,...,n\}}\min_{a\in V_f}\sigma_{f-a,Z,I}$$
and $$\tilde{\sigma}_{f,Z}=\min\biggl\{\frac{d+1}{d+2}\sigma_{f,Z}+\frac{1}{2(d+2)},\frac{n-s(f)}{d}\biggr\}.$$

We recall the following result. 
\begin{lem}[\cite{Saskia-Kien}, Lemma 6.2.1]\label{sak}
There exists an integer $M$ depending only on $f$ such that for all $\cO_K$-subschemes $W$  of $\AA_{\cO_K}^n$ of finite type, all local fields $L\in\tilde{\cL}_{K,M}$ and all additive characters $\psi$ of $L$ of conductor $m\geq 2$ we have 
$$\int_{\{x\in \cO_L^n\mid \ord_L(f(x))<m-1\}}\Phi_{L,W}(x)\psi(f(x))|dx|=0.$$
\end{lem}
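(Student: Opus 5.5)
The plan is to show that on the set $\{x\in\cO_L^n\mid \ord_L(f(x))<m-1\}$ the character integral cancels. The set is cut into small cylinders, and on each one $\psi(f(x))$ is a nontrivial character in one coordinate direction.

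Fix $L$ of large residue characteristic and $\psi$ of conductor $m\geq 2$. Take a point $x_0$ with $\ord_L(f(x_0))=k<m-1$. I would integrate over the ball $B=x_0+\varpi_L^{\beta}\cO_L^n$, with $\beta\geq 1$ chosen so that $\Phi_{L,W}$ is constant on $B$. This is automatic, since $\Phi_{L,W}$ depends only on $x\bmod \cM_L$. The problem then reduces to showing that the integral of $\psi(f(x))$ over the part of $B$ where $\ord_L(f)=k$ vanishes, or more cleanly that these balls can be grouped so that the total vanishes.

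The natural mechanism is the scaling trick from Denef's approach. Because $f$ is a polynomial over $\cO_K$, one can use the homogeneity-type substitution $x\mapsto$ a point where $f$ changes by a unit factor. The simplest version I would try is via the level sets of $\ord_L(f)$ and $\acc(f)$. Decompose
\[\int_{\{\ord_L f=k\}}\Phi_{L,W}\psi(f)\,|dx|=\sum_{u}\psi(\varpi_L^k u)\,\mathrm{vol}\{\ord_L f=k,\ f\in \varpi_L^k u(1+\varpi_L^{m-k}\cO_L)\}.\]
Here $u$ runs over $\cO_L^\times/(1+\varpi_L^{m-k})$. The key claim is that for $p_L$ large (larger than $d$ suffices to handle the relevant Hensel-type estimates), this volume depends on $u$ only through $u\bmod \varpi_L^{c}$ for some $c\leq m-k-1$. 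Granting the claim, summing $\psi(\varpi_L^k u)$ over the fibres of $(1+\varpi_L^{c})\to(1+\varpi_L^{m-k})$ gives zero: the character $t\mapsto\psi(\varpi_L^k t)$ has conductor $m-k\geq 2$, so it is nontrivial on $\varpi_L^{m-k-1}\cO_L$. For the claim I would use the fibre-integration/Igusa description. By Denef's formula (Proposition \ref{zeta} applied to $\cZ(f,L,W,\chi,s)$), the pushforward measure of $\Phi_{L,W}|dx|$ under $f$, restricted to $\ord_L f=k$, is a combination of characters $\chi(\acc f)$ with $c(\chi)=1$ only, for $L\in\tilde{\cL}_{K,M}$ with $M$ from that proposition (via the transfer principle, Proposition \ref{transfer1}, for positive characteristic). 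So the density depends on $\acc(f)$ only modulo $\cM_L$, that is, $c=1$. Since $k<m-1$ gives $m-k\geq 2>1$, the cancellation applies.

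Concretely, I would expand the integral as
\[\sum_{k<m-1}\sum_{\chi,\ c(\chi)=1}(\text{coefficient})\int_{\cO_L^\times}\chi(u)\psi(\varpi_L^k u)\,|du|,\]
and note that each inner integral vanishes when the conductor of $u\mapsto\psi(\varpi_L^ku)$ is at least $2$ while $\chi$ has conductor $1$, by the standard Gauss-sum vanishing. The main obstacle is justifying rigorously that only $c(\chi)=1$ characters appear, uniformly in $W$ with $M$ depending only on $f$. I would obtain this from equation (\ref{eq:cI2}): for $L\in\tilde{\cL}_{K,M}$, $c_{I,L,W,\chi}=0$ when $c(\chi)>1$, where the good-reduction $M$ depends only on the resolution of $f$. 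Because $\Phi_{L,W}$ factors through reduction mod $\cM_L$, the argument of Proposition \ref{zeta} goes through for every $W$.
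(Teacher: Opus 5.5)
Your final mechanism is the right one, and the cancellation step is correct. Suppose that for each $k\le m-2$ the measure $A\mapsto\int_{\{\ord_L f=k,\ \acc(f)\in A\}}\Phi_{L,W}|dx|$ on $\cO_L^\times$ is invariant under multiplication by $1+\cM_L$. Then its integral against $u\mapsto\psi(\varpi_L^k u)$ vanishes, because this character is nontrivial on $\cM_L$. (The opening discussion of balls $B$ and a scaling trick plays no role and can be dropped.)

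The gap is in your last sentence, where you extend (\ref{eq:cI2}) to every $W$. Proposition \ref{zeta} keeps the hypothesis $f(Z(\overline{K}))=0$ of Proposition \ref{zetaex}. That hypothesis is exactly what disposes of the term $I=\emptyset$, i.e.\ the residue polydiscs of $\overline{Y}_{k_L}$ off $\cup_i\overline{E}_{ik_L}$, on which $f$ is a unit. On such a polydisc the distribution of $\acc(f)$ is $(1+\cM_L)$-invariant only if $\overline{f}$ has no critical point there. Good reduction of a log resolution of $f^{-1}(0)$ gives no control over critical points with nonzero critical value. For example, take $f=x^2+1$ and $W=\{x=0\}$. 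Since $1+x^2\in 1+\cM_L^2$ on $\cM_L$, one gets $\cZ(f,L,W,\chi,s)=\int_{\cM_L}\chi(1+x^2)|dx|=q_L^{-1}\neq 0$ for every $\chi$ with $c(\chi)=2$.

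This is not only a defect of your proof: the statement as quoted is false. Take $K=\QQ$, $f=x^2+1$, $W=\AA^1_{\ZZ}$, $L=\QQ_p$ with $p\equiv 3 \bmod 4$, and $\psi$ of conductor $m=2$.
\begin{itemize}
\item Since $x^2+1$ is a unit on all of $\ZZ_p$, the domain $\{\ord_L f<1\}$ is all of $\ZZ_p$.
\item On $a+p\ZZ_p$ with $a\not\equiv 0$, write $x=a+t$; the term $t^2\in p^2\ZZ_p$ is invisible to $\psi$, so the integral is $\psi(a^2+1)\int_{p\ZZ_p}\psi(2at)|dt|=0$.
\item On $p\ZZ_p$ one has $\psi(1+x^2)=\psi(1)$, so this class contributes $\psi(1)p^{-1}$.
\end{itemize}
The total is $\psi(1)p^{-1}\neq 0$, for infinitely many $p$, so no $M$ works.

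What your argument does prove is the lemma under the extra hypothesis that $\overline{f}$ vanishes on $W(k_L)$. This holds, for instance, if $W\subset\{f=0\}$ scheme-theoretically, as for the $Z_{c_i}$ in (\ref{decomsum}), or if $f(W(\overline{K}))=0$ after enlarging $M$ in terms of $W$. Then $\mathrm{supp}\,\Phi_{L,W}\cap\{\ord_L f=0\}=\emptyset$, every relevant point of $\overline{Y}_{k_L}$ lies on some $\overline{E}_{ik_L}$, and Proposition \ref{zeta} applies as stated. This is the only case needed in Theorem \ref{thmlowb}, which first reduces to $f(Z(\overline{K}))=0$, so you should state the hypothesis explicitly.

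Finally, drop the appeal to Proposition \ref{transfer1} for positive characteristic. It only compares absolute values of whole exponential sums and requires a single critical value, whereas Proposition \ref{zeta} is already stated on $\tilde{\cL}_{K,M}$.
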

The following theorem generalizes and strengthens \ref{lobsig}. 
\begin{thm}\label{thmlowb}There is an integer $M$ depending only on $f,Z$ such that $\sigma_{L}(f,Z)\geq \sigma_{f,Z}$ for all $L\in\tilde{\cL}_{K,M}$. Consequently, there is an integer $M$ depending only on $f,Z$ and a positive constant $C_{n,d}$ depending only on $n,d$ such that
$$|E_{f,L,Z,\psi}|\leq C_{n,d}m_\psi^{n-1}q_L^{-\sigma_{f,Z}(m_\psi-1)-\frac{1}{2}}$$
for all $L\in\tilde{\cL}_{K,M}$ and all additive characters $\psi$ of $L$ of conductor at least $2$. \end{thm}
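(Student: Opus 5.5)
The plan is to bound $|E_{f,L,Z,\psi}|$ by $q_L^{-m\sigma_{f,Z}}$ up to a factor polynomial in $m$ and a power of $q_L$ that depends only on $n,d$. Since $\sigma_L(f,Z)$ is, by (\ref{npol}) and (\ref{lowbL}), the exact exponential decay rate of $E_{f,L,Z,\psi}$ as the conductor grows, such a bound forces $\sigma_L(f,Z)\geq\sigma_{f,Z}$. Once that inequality is known, the second assertion follows from Theorem \ref{main} (or Corollary \ref{upbconja}) together with the transfer principle, Proposition \ref{transfer1}. We need $\sigma_{f,Z}>1$, or the hypothesis of Theorem \ref{main}; otherwise we use the non-rational case via \cite[Section 4]{Saskia-Kien}, as in the proof of Corollary \ref{upbconja}. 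Concretely, $q_L^{-(m-1)\sigma_L(f,Z)-\frac12}\leq q_L^{-(m-1)\sigma_{f,Z}-\frac12}$.

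For the first assertion, we first use (\ref{decomsum}) and (\ref{mimoi}) to reduce to a single critical value. After extending $K$ to $K[a]$ we may assume $f(Z(\overline K))=a=0$, at the cost of replacing $Z$ by $Z_a$, and it suffices to work with one $a\in V_f$. By Lemma \ref{sak}, $E_{f,L,Z,\psi}=\tilde E_{L,f,Z,m,\psi}$. Fix a nonempty $I\subset\{1,\dots,n\}$ realising the maximum in $\sigma_{f,Z}$, and reorder so that $I=\{1,\dots,n_0\}$. Proposition \ref{rupbouL} then gives
$$|E_{f,L,Z,\psi}|\leq \mathsf C_{n_0}q_L^{\varepsilon_{n_0}}\max_{0\le\ell\le n_0}\int_{\tilde{\cE}(f,\Phi_{L,Z},n_0,m)}q_L^{\frac{\ell\kappa_m}{2}-\frac12\ord_L(\mathsf D_{f,I,\ell}(x))}|dx|.$$

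On $\tilde{\cE}(f,\Phi_{L,Z},n_0,m)$ we have $\ord_L(f)\ge m-1$ and $\ord_L(\partial f/\partial x_i)\ge m-\eta_m\ge (2m-1)/3$ for $i\in I$. Hence $\ord_L(\cI_{f,I}(x))\ge \min\{2(m-1),3(m-\eta_m)\}\ge 2m-c$ for a constant $c$. The integral is therefore dominated by
$$q_L^{\ell\kappa_m/2}\int_{\{x:\overline x\in Z(k_L),\ \ord_L\cI_{f,I,\cO_L}(x)\ge 2m-c\}}q_L^{-\frac12\ord_L(\mathsf D_{f,I,\ell,\cO_L}(x))}|dx|.$$
We apply Proposition \ref{genlogc} with $\cI=\cI_{f,I}$, $\cJ=\mathsf D_{f,I,\ell}$, $\beta=\frac12$ and level $2m-c$. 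This bounds the integral by $C_{e,n}m^{n-1}q_L^{-(2m-c)\lct_{Z}(\cI_{f,I},\tilde{\mathsf D}_{f,I,\ell},\frac12)}$, uniformly for $L\in\tilde{\cL}_{K,M}$. Multiplying by $q_L^{\ell\kappa_m/2}$ and using $\kappa_m\le (m+1)/3$ gives the exponent $-m\bigl(2\lct-\frac{\ell}{6}\bigr)+O(1)$. Taking the minimum over $\ell$ gives exactly $-m\sigma_{f-a,Z,I}$ plus a constant, and then taking the minimum over $a$ gives $-m\sigma_{f,Z}+O(1)$.

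The main obstacle is the bookkeeping of uniform constants and of the reduction steps. The constant shifts ($c$ and $\varepsilon_{n_0}$) contribute only powers of $q_L$ that are fixed for each $L$, which is harmless for the decay rate. A further subtlety is that Proposition \ref{genlogc} requires $\cI_K\ne(1)$ and an ideal over $\cO_K$; this needs the extension to $K[a]$ and a choice of $M$ large enough for good reduction, and the case $\cI_{f-a,I}=(1)$ near $Z$ gives a vanishing contribution. Finally, comparing with (\ref{lowbL}) for infinitely many $m$ yields $q_L^{-m\sigma_L(f,Z)}\lesssim m^{n-1}q_L^{-m\sigma_{f,Z}}$, hence $\sigma_L(f,Z)\ge\sigma_{f,Z}$.
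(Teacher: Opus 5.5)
Your argument for the first assertion is the paper's argument. It uses Lemma \ref{sak} and Proposition \ref{rupbouL}. It then places $\tilde{\cE}(f,\Phi_{L,Z},n_0,m)$ inside the set where $f^2$ and all $(\partial f/\partial x_i)^3$, $i\in I$, have order at least $2m-2$. After that come Proposition \ref{genlogc} with $\beta=\frac12$, the estimate $\kappa_m\le\frac{m+1}{3}$, and comparison with (\ref{lowbL}). Your min--max bookkeeping over $a\in V_f$ and $I$ also runs in the right direction. There is one harmless slip: $m-\eta_m\ge\frac{2m-1}{3}$ fails at $m=4$. What is true is $3(m-\eta_m)\ge 2m-2$, and that is all you use.

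The gap is in the second assertion.
\begin{itemize}
\item Theorem \ref{main} assumes rational singularities along $Z$.
\item Your fallback for the non-rational case is the \cite[Section 4]{Saskia-Kien} computation, cited as in the proof of Corollary \ref{upbconja}. It only yields decay of the form $m^{n-1}q_L^{-m\sigma}$. In Denef's formula the Gauss sums supply the factor $q_L^{-1/2}$ only for nontrivial characters. The trivial-character term gets its extra factor only because $\sigma_L(f,Z)>1$ in the rational case; compare Lemmas \ref{min} and \ref{estimation3}.
\item Since $q_L^{-m\sigma}\le q_L^{-(m-1)\sigma-\frac12}$ exactly when $\sigma\ge\frac12$, nothing covers the non-rational case with $\sigma_{f,Z}<\frac12$.
\end{itemize}

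In that range the stated inequality is in fact false. Take $n=1$, $f=x^3$, $Z=\AA^1_{\ZZ}$. Then $\cI_{f,\{1\}}=(x^6)$ and $\tilde{\mathsf{D}}_{f,\{1\},1}=(x)$, so $\sigma_{f,Z}=\min\{2\cdot\frac16,\ 2\cdot\frac14-\frac16\}=\frac13$. Now fix $p>3$ and $\psi$ of conductor $m=3k$.
\begin{itemize}
\item On $p^k\ZZ_p$ we have $\psi(x^3)=1$, which contributes $p^{-k}$.
\item On each shell $\{\ord x=j\}$ with $j<k$, the rescaled character has conductor $3(k-j)\ge 2$ and $3u^2$ is a unit, so stationary phase makes the contribution vanish.
\end{itemize}
Hence $E_{f,\QQ_p,Z,\psi}=p^{-k}$. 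This exceeds the claimed bound $C\,p^{-(m-1)/3-1/2}=C\,p^{-k-1/6}$ as soon as $p>C^6$.

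So the second assertion can only be proved under an extra hypothesis, for example rational singularities along $Z$. That case is automatic when $\sigma_{f,Z}>1$, and there your argument goes through. The paper's own one-line proof of this part also cites only Theorem \ref{main}, so it has the same hole.
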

\begin{proof}By (\ref{decomsum}), (\ref{mimoi}), (\ref{moif}) and the definition of $\sigma_{f,Z}$, it suffices to consider the case that $f(Z(\overline{K}))=0$.  Let $\emptyset\neq I\subset \{1,...,n\}$. 
By Lemma \ref{sak} and Proposition \ref{rupbouL}, there is an integer $M_1\geq d+1$ depending only on $f$ and constants $C_I,\varepsilon_I$ depending only on $\#I$ such that  $C_I>0$ and
\begin{align*}&\left|E_{f,L,Z,\psi}\right|\\ \leq& C_Iq_L^{\varepsilon_I}\max_{0\leq \ell\leq \#I}\int_{\{x\in \cO_L^n\mid  \ord_L\left(\frac{\partial f}{\partial x_i}(x)\right)\geq m-\eta_m \hspace{0.1cm}\forall i\in I,\hspace{0.1cm} \ord_L(f(x))\geq m-1\}}\Phi_{L,Z}(x)q_L^{\frac{\ell\kappa_m}{2}-\frac{1}{2}\ord_L\left(\mathsf{D}_{f,I,\ell,\cO_L}(x)\right)}|dx|\\
\leq& C_Iq_L^{\varepsilon_I}\max_{0\leq \ell\leq \#I}\int_{\{x\in \cO_L^n\mid  \ord_L\left(\left(\frac{\partial f}{\partial x_i}\right)^3(x)\right)\geq 2(m-1) \hspace{0.1cm}\forall i\in I,\hspace{0.1cm} \ord_L\left(f^2(x)\right)\geq 2(m-1)\}}\Phi_{L,Z}(x)q_L^{\frac{\ell\kappa_m}{2}-\frac{1}{2}\ord_L\left(\mathsf{D}_{f,I,\ell,\cO_L}(x)\right)}|dx|
\end{align*}
for all $L\in \tilde{\cL}_{K,M_1}$ and all non-trivial  additive characters $\psi$ of $L$ of conductor $m_\psi=m\geq 2$. Here, we used the fact that $3(m-\eta_m)=3(m-\lceil\frac{m}{3})\rceil\geq 2(m-1)$ for all $m\in\ZZ$. 
Thus, we can use Proposition \ref{genlogc} and the inequality $$\left|\kappa_m-\frac{m}{3}\right|\leq \frac{2}{3}$$ to have an integer $M\geq M_1$ depending only on $f,Z$ and a constant $c_{d,n}$ depending only on $n,d$ such that 
\begin{align*}
&\left|E_{f,L,Z,\psi}\right|\\ \leq& C_Iq_L^{\varepsilon_{I}}\max_{0\leq \ell\leq \#I}\int_{\{x\in \cO_L^n\mid  \ord_L\left(\left(\frac{\partial f}{\partial x_i}\right)^3(x)\right)\geq 2(m-1) \hspace{0.1cm}\forall i\in I,\hspace{0.1cm} \ord_L\left(f^2(x)\right)\geq 2(m-1)\}}\Phi_{L,Z}(x)q_L^{\frac{\ell\kappa_m}{2}-\frac{1}{2}\ord_L\left(\mathsf{D}_{f,I,\ell,\cO_L}(x)\right)}|dx|\\
\leq&C_Iq_L^{\varepsilon_I}c_{d,n}\max_{0\leq \ell\leq \#I}q_L^{\frac{\ell\kappa_m}{2}}q_L^{-(2m-2)\lct_{Z_K}\left(\cI_{f,I},\tilde{\mathsf{D}}_{f,I,\ell},\frac{1}{2}\right)}\\
\leq&C_Iq_L^{\varepsilon_I}c_{d,n}q_L^{2\max_{0\leq \ell\leq\#I}\lct_{Z_K}\left(\cI_{f,I},\tilde{\mathsf{D}}_{f,I,\ell},\frac{1}{2}\right)}q_L^{\frac{\#I}{3}}q_L^{-m\min_{0\leq\ell\leq\#I}\left(2\lct_{Z_K}\left(\cI_{f,I},\tilde{\mathsf{D}}_{f,I,\ell},\frac{1}{2}\right)-\frac{\ell}{6}\right)}
\end{align*}
for all $L\in \tilde{\cL}_{K,M}$ and all non-trivial  additive characters $\psi$ of $L$ of conductor $m_\psi=m\geq 2$.
Therefore, we can use (\ref{lowbL}) to deduce that 
$$\sigma_{L}(f,Z)\geq \sigma_{f,Z,I}$$
for all $\emptyset\neq I\subset\{1,...,n\}$ and all $L\in\tilde{\cL}_{K,M}$. It implies that 
$$\sigma_{L}(f,Z)\geq \sigma_{f,Z}$$ for
all $L\in\tilde{\cL}_{K,M}$. 

The remaining assertion follows from the first assertion, Proposition \ref{transfer1} and Theorem \ref{main}.
\end{proof}

\begin{cor}There is an integer $M$ depending only on $f,Z$ and a positive constant $C_{n,d}$ depending only on $n,d$ such that
$$|E_{f,L,Z,\psi}|\leq C_{n,d}m_\psi^{n-1}q_L^{-\tilde{\sigma}_{f,Z}m_\psi}$$
for all $L\in\tilde{\cL}_{K,M}$ and all additive characters $\psi$ of $L$ of conductor at least $1$.
\end{cor}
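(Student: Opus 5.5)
The plan mirrors the proof of Corollary \ref{bounnsd}, with $\moi_K(g,W)$ replaced by the lower bound $\sigma_{f,Z}$ from Theorem \ref{thmlowb}. The estimate is proved separately for small and large conductors and the two ranges are then combined.

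\emph{Small conductors.} For $m_\psi\in[1,d+1]$, Proposition \ref{uppallf}, after the linear change of variables and fiber integration used in Proposition \ref{uppallide}, reduces to a smooth leading form in $n-s(f)$ variables. This gives
\[
|E_{f,L,Z,\psi}|\leq c_{n,d}\,q_L^{-m_\psi(n-s(f))/d}\leq c_{n,d}\,q_L^{-m_\psi\tilde{\sigma}_{f,Z}}.
\]
The second inequality holds because $\tilde{\sigma}_{f,Z}\leq (n-s(f))/d$.

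\emph{Large conductors.} For $m_\psi\geq 2$, Theorem \ref{thmlowb} gives
\[
|E_{f,L,Z,\psi}|\leq C_{n,d}\,m_\psi^{n-1}q_L^{-\sigma_{f,Z}(m_\psi-1)-\frac12}.
\]
Assume first that $\sigma_{f,Z}>\frac12$. Then for every $m\geq d+2$,
\[
m\Bigl(\tfrac{d+1}{d+2}\sigma_{f,Z}+\tfrac{1}{2(d+2)}\Bigr)\leq (m-1)\sigma_{f,Z}+\tfrac12 .
\]
This is equivalent to $\frac{m}{d+2}\bigl(\sigma_{f,Z}-\frac12\bigr)\geq \sigma_{f,Z}-\frac12$, which holds since $m\geq d+2$. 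Hence the bound of Theorem \ref{thmlowb} is at most $C_{n,d}m^{n-1}q_L^{-m\tilde{\sigma}_{f,Z}}$ when $m\geq d+2$. The range $m\leq d+1$ is already covered by the small-conductor case.

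\emph{The case $\sigma_{f,Z}\leq\frac12$.} This is the main obstacle. Here we need $(m-1)\sigma_{f,Z}+\frac12\geq m\tilde{\sigma}_{f,Z}$, and since $\tilde{\sigma}_{f,Z}\leq \frac{d+1}{d+2}\sigma_{f,Z}+\frac{1}{2(d+2)}$ it suffices to have $\frac12-\sigma_{f,Z}\geq \frac{m}{d+2}\bigl(\frac12-\sigma_{f,Z}\bigr)$. That inequality fails for large $m$ once $\sigma_{f,Z}<\frac12$. So we need $\tilde{\sigma}_{f,Z}\leq\sigma_{f,Z}$ in this regime.

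The first attempt is the following. Since $\tilde{\sigma}_{f,Z}\leq\frac{d+1}{d+2}\sigma_{f,Z}+\frac{1}{2(d+2)}$ and $\sigma_{f,Z}\leq\frac12$, we have $\tilde{\sigma}_{f,Z}\leq \frac12$. More importantly, we would like $(n-s(f))/d\leq\sigma_{f,Z}$, or at least $\tilde{\sigma}_{f,Z}\leq\sigma_{f,Z}$. The plan is to bypass $\sigma_{f,Z}$ and use $\sigma_L(f,Z)\geq\sigma_{f,Z}$ together with the dichotomy of Theorem \ref{rCMN}:
\begin{itemize}
\item If some fibre has a non-rational singularity, then Corollary \ref{upbconja} together with \cite[Proposition 2.4]{CluckerNguyen} give $\moi=\min_a\lct\geq (n-s(f))/d$, and the full $m$-uniform bound $q_L^{-m\moi}$ is available, exactly as in Corollary \ref{bounnsd}.
\item Otherwise $\moi_K(f,Z)>1>\frac12$. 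Replacing $\sigma_{f,Z}$ by $\sigma'=\max\{\sigma_{f,Z},\frac12\}$ is then harmless: by Theorem \ref{main}, $\sigma_L(f,Z)\geq\moi>\frac12$, and $\tilde{\sigma}_{f,Z}$ is monotone in $\sigma_{f,Z}$, so the $\sigma>\frac12$ computation applies.
\end{itemize}

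\emph{Conclusion.} Combining the two ranges of $m_\psi$ with Proposition \ref{transfer1}, to pass to $\tilde{\cL}_{K,M}$, finishes the proof after enlarging $M$ and $C_{n,d}$.
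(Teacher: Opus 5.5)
Your argument is correct in substance, but it is the proof of Corollary \ref{bounnsd} unrolled, with a detour. The paper's proof takes two lines. Theorem \ref{thmlowb} together with (\ref{exmo}) gives $\moi_K(f,Z)\geq\sigma_{f,Z}$. Since $\beta_K(f,Z)=\min\{\frac{d+1}{d+2}\moi_K(f,Z)+\frac{1}{2(d+2)},\frac{n-s(f)}{d}\}$ is nondecreasing in $\moi_K(f,Z)$, this gives $\beta_K(f,Z)\geq\tilde\sigma_{f,Z}$, and Corollary \ref{bounnsd} then yields the claim at once.

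Your direct use of the second assertion of Theorem \ref{thmlowb} does settle the regime $\sigma_{f,Z}\geq\frac12$ without mentioning $\moi_K$. As you found, though, when $\sigma_{f,Z}<\frac12$ the geometric bound alone does not control the comparison with $\frac{n-s(f)}{d}$. You are then pushed back to $\moi_K(f,Z)$ and the dichotomy of Theorem \ref{rCMN}, which is exactly what Corollary \ref{bounnsd} packages. The monotonicity you invoke at the very end, applied once to $\moi_K(f,Z)$ rather than case by case, makes the whole case analysis unnecessary.

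Two points in your first bullet need tightening.
\begin{itemize}
\item The bound $q_L^{-m\moi_K(f,Z)}$ is available with an $(n,d)$-dependent constant only when $\moi_K(f,Z)\leq\frac12$. In Corollary \ref{upbconja} the term $C_1q_L^{-m\min_a\lct_{Z_{\overline{K}}}(f-a)}$ has $C_1$ depending on $f,Z$; the uniform term is $C_2q_L^{-(m-1)\moi_K(f,Z)-\frac12}$. When $\frac12<\moi_K(f,Z)\leq 1$, use instead $(m-1)\moi_K(f,Z)+\frac12\geq\frac m2\geq m\tilde\sigma_{f,Z}$, recalling that $\tilde\sigma_{f,Z}\leq\frac12$ in this regime.
\item As used in the paper, \cite[Proposition 2.4]{CluckerNguyen} gives $\lct\geq\min\{1,\frac{n-s(f)}{d}\}$. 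So $\moi_K(f,Z)\geq\frac{n-s(f)}{d}$ follows only when $\moi_K(f,Z)<1$. This suffices, since you only need it when $\moi_K(f,Z)\leq\frac12$.
\end{itemize}
Finally, in your second bullet, the inequality $\sigma_L(f,Z)\geq\moi_K(f,Z)$ for large residue characteristic does not come from Theorem \ref{main}. It comes from (\ref{exmo}) and the finiteness of $\textnormal{Pol}_f$. Theorem \ref{main} then supplies the bound in terms of $\sigma_L(f,Z)$.
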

\begin{proof}By  Theorem \ref{thmlowb} and (\ref{exmo}), we have $\moi_K(f,Z)\geq \sigma_{f,Z}$. Thus, we can use Corollary \ref{bounnsd} to deduce the assertion.
\end{proof}

In fact, the lower bound of $\sigma_L(f,Z)$ given by Theorem \ref{thmlowb} is better than the one in \cite[Theorem 6.1]{CKMu}. We can see it in the following proposition.

\begin{prop}\label{compaCKM} Suppose that $f(Z(\overline{K}))=0$.  Then, we have $\sigma_{f,Z,I_0}\geq \lct_{Z_K}\left((f)+ J_f^2\right)$,
where $I_0=\{1,...,n\}$. 

\end{prop}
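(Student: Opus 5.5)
The plan is to compare the two thresholds divisor by divisor on one log resolution. Fix $0\le \ell\le n$. I will take a strong log resolution $\pi\colon Y\to\AA^n_K$ that simultaneously principalizes $(f)$, $J_f$, $\cI_{f,I_0}$ and $\tilde{\mathsf{D}}_{f,I_0,\ell}$; it exists by \cite{Hir:Res}, e.g.\ as a log resolution of the product ideal. For each prime divisor $E$ of $Y$ with $\pi(E)\cap Z_K\neq\emptyset$ write
\[
k=\nu_E,\quad a=\ord_E(f),\quad b=\ord_E(J_f),\quad c=\ord_E(\tilde{\mathsf D}_{f,I_0,\ell}).
\]
Then $\ord_E(\cI_{f,I_0})=\min\{2a,3b\}$ and $\ord_E((f)+J_f^2)=\min\{a,2b\}=:\lambda$. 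By (\ref{genlogdef}) and the definition of $\lct$ in Subsection \ref{JSL}, it suffices to prove, for every such $E$ with $\min\{2a,3b\}>0$,
\begin{equation}\label{divineq}
\frac{2k+c}{\min\{2a,3b\}}-\frac{\ell}{6}\;\ge\;\frac{k}{\lambda}.
\end{equation}
Taking the minimum over $E$ then gives $2\lct_{Z_K}(\cI_{f,I_0},\tilde{\mathsf D}_{f,I_0,\ell},\tfrac12)-\tfrac{\ell}{6}\ge \lct_{Z_K}((f)+J_f^2)$ for each $\ell$, hence $\sigma_{f,Z,I_0}\ge\lct_{Z_K}((f)+J_f^2)$. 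If $\lambda=0$ the right-hand side of (\ref{divineq}) is $+\infty$, but then $a=0$ or $b=0$, so $\min\{2a,3b\}=0$ and the left-hand side is $+\infty$ too; this case is harmless.

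When $\ell=0$ we have $c=0$. Inequality (\ref{divineq}) then follows from $\min\{2a,3b\}\le 2\lambda$, which holds because $2a\le 2a$ and $3b\le 4b$. For $\ell\ge 1$ I will split into cases according to which term realizes each minimum.

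\emph{Case $\min\{2a,3b\}=2a$.} Here $2a\le 3b<4b$, so $\lambda=a$. The left side is at least $k/a+c/(2a)-\ell/6$. I need $c\ge \ell a/3$, which would follow from $c\ge \ell(b-k)$ together with $b\ge 2a/3$, provided $b-k\ge a/3$. Otherwise $k>b-a/3\ge a/3$, and I will instead use the slack
\[
\frac{2k}{\min\{2a,3b\}}-\frac{k}{\lambda}
\]
combined with $\ell\le n$.

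\emph{Case $\min\{2a,3b\}=3b<2a$.} Here $\lambda=\min\{a,2b\}\ge 3b/2$. The left side is $2k/(3b)+c/(3b)-\ell/6$, while the right side is at most $2k/(3b)$. So I need exactly $c\ge \ell b/2$.

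The key tool for both cases is a lower bound for $c$ in terms of $b$ and $k$. The Hessian entries are partial derivatives of the generators of $J_f$. I plan to use a valuative estimate of the form $\ord_E(\partial g/\partial x_i)\ge \ord_E(g)-\nu_E$; its source is the Jacobian relation defining $\cK_{Y/\AA^n}$, applied through $\pi$. This gives $\ord_E(\partial^2 f/\partial x_i\partial x_j)\ge b-k$, hence $c\ge \ell\max\{0,b-k\}$ for principal minors of order $\ell$. Where this bound is too weak, namely when $k$ is large relative to $b$, I will compensate using the $2k$ in the numerator: for divisors centered at points one has $k\ge n\ge \ell$, and I will try to reduce to that situation by localizing at a point of $Z_K\cap\pi(E)$.

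The main obstacle is making the derivative-loss estimate and this compensation precise in the second case, which needs $c\ge \ell b/2$. That case is exactly why the exponent $3$ on $\partial f/\partial x_i$ and the correction $-\ell/6$ appear in $\sigma_{f,Z}$. If the crude bound $\ord_E(\partial g)\ge \ord_E(g)-\nu_E$ turns out too lossy, my fallback is to bypass the divisorial estimate and compare integrals directly. Via Proposition \ref{genlogc}, the integral of $q_L^{-\frac12\ord_L(\mathsf D_{f,I_0,\ell}(x))}$ over the region $\{\ord_L(\cI_{f,I_0})\ge 2(m-1)\}$ is dominated by the volume of $\{\ord_L((f)+J_f^2)\ge m-1\}$, exactly as in Corollary \ref{compCMK}. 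Letting $m\to\infty$ along the exponents then yields the inequality of thresholds.
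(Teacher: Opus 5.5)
\textbf{The main approach has a genuine gap.} Your reduction is to the divisor-by-divisor inequality
\[
\frac{2k+c}{\min\{2a,3b\}}-\frac{\ell}{6}\;\ge\;\frac{k}{\min\{a,2b\}},
\]
and this inequality is false in general, even though the inequality of thresholds holds.

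Take $n=2$, $f=x_1^2+x_2^3$, $Z=\AA^2_{\cO_K}$ and $\ell=1$. Let $E$ be the exceptional divisor of the weighted blow-up with $\ord_E(x_1)=2$ and $\ord_E(x_2)=1$, so that $k=\nu_E=3$.
\begin{itemize}
\item We get $a=3$, $b=2$, $\min\{2a,3b\}=6$ and $\lambda=3$.
\item Since $\tilde{\mathsf D}_{f,I_0,1}=(2,6x_2)=(1)$, we get $c=0$.
\item The left side is therefore $1-\frac16=\frac56$, while the right side is $1$.
\end{itemize}
This $E$ falls in your first case. There $\lambda=a$, so the slack $\frac{2k}{\min\{2a,3b\}}-\frac{k}{\lambda}=\frac{k}{a}-\frac{k}{a}$ vanishes identically, and there is nothing to compensate with. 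Localizing at a point does not help either, since already $k=3\ge n$.

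You also cannot avoid $E$ by choosing a different resolution. We have $\overline{\cI_{f,I_0}}=\overline{(x_1^3,x_2^6)}$, so $E$ is the unique divisor computing $\lct(\cI_{f,I_0})=\frac12$. Hence $E$ lies on every log resolution of $\cI_{f,I_0}$ and attains the minimum on the left.

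Your second case fails as well. For $\ord_E(x_1)=7$ and $\ord_E(x_2)=4$ one gets $k=11$, $a=12$, $b=7$, $c=0$, and $\frac{22}{21}-\frac16=\frac{37}{42}<\frac{11}{12}$.

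In this example $\sigma_{f,Z,I_0}=\frac56=\lct((f)+J_f^2)$. However, the right-hand minimum is attained on a different divisor, namely weights $(3,2)$, where $k/\lambda=\frac56$. No comparison on a common $E$ can capture this. Your bound $c\ge\ell\max\{0,b-k\}$ is correct but does not help.

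\textbf{The fallback is the paper's proof, but you need the right weight.} The domination comes from Corollary \ref{compCMK}, not from Proposition \ref{genlogc}. The integrand in that corollary is $q_L^{\frac{\ell\kappa_m}{2}-\frac12\ord_L(\mathsf D_{f,I_0,\ell,\cO_L}(x))}$. Since $\kappa_m=\frac m3+O(1)$, the factor $q_L^{\ell\kappa_m/2}$ is exactly what produces the term $-\frac{\ell}{6}$. Without it, as you wrote the integrand, you only get $2\lct_{Z_K}(\cI_{f,I_0},\tilde{\mathsf D}_{f,I_0,\ell},\frac12)\ge\lct_{Z_K}((f)+J_f^2)$.

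With the weight included, the argument runs as follows.
\begin{enumerate}
\item Restrict the left integral to $\{x\mid \ord_L(f^2(x))\ge 2m,\ \ord_L((\partial f/\partial x_i)^3(x))\ge 2m\ \forall i\}$. This set lies in $\tilde{\cE}(f,\Phi_{L,Z},n,m)$ because $3(m-\eta_m)\le 2m$.
\item Bound the restricted integral from below by the second half of Proposition \ref{genlogc}. This holds only on a suitable $L_p$ and for infinitely many $m$.
\item Bound the volume of $\{\ord_L(f(x))\ge m-1,\ 2\ord_L(\partial f/\partial x_i(x))\ge m-1\ \forall i\}$ from above by the first half of Proposition \ref{genlogc}, applied to $(f)+J_f^2$.
\item Fix $L_p$ and compare the exponents of $q_{L_p}$ as $m\to\infty$.
\end{enumerate}
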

\begin{proof}
By using Proposition \ref{genlogc} and Corollary \ref{compCMK} together with the fact that $3(m-\eta_m)\leq 2m$ for all $m\in\ZZ$, there is:
\begin{itemize}
\item an integer $M$ depending only on $f$,  
\item constants $a(n)>0, \varrho(n)$ depending only on $n$,  
\item constants $\Delta_{n,d}>0,\epsilon_{n,d}$ depending only on $n,d$,
\item a finite extension $L_p\in\cL_{K,M}$ of $\QQ_p$ for each $p>M$,
\end{itemize} such that for each $p>M$, the inequalities 
\begin{align*}&a(n)q_{L_p}^{\varrho(n)}\Delta_{n,d}q_{L_p}^{-(m-1)\lct_{Z_K}\left((f)+J_f^2\right)}\\
\geq &a(n)q_{L_p}^{\varrho(n)}\int_{\{x\in \cO_{L_p}^n\mid \ord_{L_p}(f(x))\geq m-1,\hspace{0.1cm} 2\ord_{L_p}(\frac{\partial f}{\partial x_i}(x))\geq m-1 \forall i\}}\Phi_{L_p,Z}(x)|dx|\\
\geq& \max_{0\leq \ell\leq n}\int_{\{x\in \cO_{L_p}^n\mid  \ord_{L_p}\left(\frac{\partial f}{\partial x_i}(x)\right)\geq m-\eta_m \hspace{0.1cm}\forall i,\hspace{0.1cm} \ord_{L_p}(f(x))\geq m-1\}}\Phi_{L_p,Z}(x)q_{L_p}^{\frac{\ell\kappa_m}{2}-\frac{1}{2}\ord_{L_p}\left(\mathsf{D}_{f,I_0,\ell,\cO_{L_p}}(x)\right)}|dx|\\
\geq &\max_{0\leq \ell\leq n}\int_{\{x\in \cO_{L_p}^n\mid  \ord_{L_p}\left(\left(\frac{\partial f}{\partial x_i}\right)^3(x)\right)\geq 2m \hspace{0.1cm}\forall i,\hspace{0.1cm} \ord_{L_p}(f^2(x))\geq 2m\}}\Phi_{L_p,Z}(x)q_{L_p}^{\frac{\ell\kappa_m}{2}-\frac{1}{2}\ord_{L_p}\left(\mathsf{D}_{f,I_0,\ell,\cO_{L_p}}(x)\right)}|dx|\\
\geq &\max_{0\leq \ell\leq n}q_{L_p}^{\epsilon_{n,d}}q_{L_p}^{\frac{\ell\kappa_m}{2}-2m\lct_{Z_K}\left(\cI_{f,I_0},\tilde{\mathsf{D}}_{f,I_0,\ell},\frac{1}{2}\right)}
\end{align*}
hold for infinitely many $m\geq 2$. Therefore, we must have 
$$\lct_{Z_K}\left((f)+J_f^2\right)\leq \min_{0\leq \ell\leq n}\left(2\lct_{Z_K}\left(\cI_{f,I_0},\tilde{\mathsf{D}}_{f,I_0,\ell},\frac{1}{2}\right)-\frac{\ell}{6}\right)=\sigma_{f,Z,I_0}$$
as desired.
\end{proof}
\subsection{Motivic oscillation index of ideals}\label{moiidelo}
In this subsection, we fix an ideal $\cI$ of $\cO_K[x_1,...,x_n]$ generated by non-constant polynomials $f_1,...,f_r$ of degree at most $d$ such that $\cI_K\neq (1)$.  We set $g(x,y)=\sum_{1\leq i\leq r}y_if_i(x)$. For each scheme $Z\subset\AA_{\cO_K}^n$ of finite type over $\cO_K$, recall that we defined $$Z^{(r)}=Z\times_{\Spec(\cO_K)}(\AA_{\cO_K}^r\setminus\Spec(\cO_K[y_1,...,y_r]/(y_1,...,y_r)).$$
Now, we fix such a scheme $Z$. By Proposition \ref{ridepol}, we have 
$$E_{L,Z,\cI}^{(r)}(m)=E_{g,L,Z,\psi}$$
for all $L\in\tilde{\cL}_{K,1}$, all  additive characters $\psi$ of $L$ of conductor $m\geq 1$. 
Thus, we can repeat the argument of Section \ref{moipolylo} for $g$ and $Z^{(r)}$ to give a lower bounds of $\moi_K(g,Z^{(r)})=\moi_{K,Z}^{(r)}(\cI)$
with respect to each non-empty subset $I$ of $\{1,...,n+r\}$. Among of such sets $I$, there are two sets that may give more  intrinsic lower bounds of $\moi_{K,Z}^{(r)}(\cI)$ than the others. Those are $I_0=\{1,...,n+r\}$ and $I_1=\{1,...,n\}$. To be more precisely, we define the ideals 
\begin{itemize}
\item $\cJ_0:= \left(g^2,\left(\frac{\partial g}{\partial x_1}\right)^3,...,\left(\frac{\partial g}{\partial x_n}\right)^3,f_1^3,...,f_r^3\right)\subset \cO_K[x_1,...,x_n,y_1,...,y_r]$,
\item $\cJ_1:=\left(g^2,\left(\frac{\partial g}{\partial x_1}\right)^3,...,\left(\frac{\partial g}{\partial x_n}\right)^3\right)\subset \cO_K[x_1,...,x_n,y_1,...,y_r]$.
\end{itemize}
We denote by $\cH_{g,0}$  the Hessian matrix of $g$ and  define
$$\cH_{g,1}(x,y)=\left(\sum_{1\leq i\leq r}y_i\frac{\partial^2 f_i}{\partial x_j\partial x_l}(x)\right)_{1\leq j,l\leq n}.$$
For each $0\leq \ell\leq n+r$, we denote by $\tilde{\mathsf{D}}_{g,0,\ell}$ the ideal of $K[x_1,...,x_n,y_1,...,y_r]$ generated by all principal minors of order $\ell$ of $\cH_{g,0}$. For each $0\leq \ell\leq n$, we denote by $\tilde{\mathsf{D}}_{g,1,\ell}$ the ideal of $K[x_1,...,x_n,y_1,...,y_r]$ generated by all principal minors of order $\ell$ of $\cH_{g,1}$. We still use the convention that $\tilde{\mathsf{D}}_{g,0,0}=\tilde{\mathsf{D}}_{g,1,0}=(1)$.

We set $$\sigma_{\cI,Z,0}:=\min_{0\leq \ell\leq n+r}\left(2\lct_{Z^{(r)}_K}\left(\cJ_0,\tilde{\mathsf{D}}_{g,0,\ell},\frac{1}{2}\right)-\frac{\ell}{6}\right), \sigma_{\cI,Z,1}:=\min_{0\leq \ell\leq n}\left(2\lct_{Z^{(r)}_K}\left(\cJ_1,\tilde{\mathsf{D}}_{g,1,\ell},\frac{1}{2}\right)-\frac{\ell}{6}\right)$$
and
$$\sigma_{\cI,Z}=\max\{\sigma_{\cI,Z,0},\sigma_{\cI,Z,1}\}.$$

Moreover, if $f_1,...,f_r$ are of degree $d\geq 1$, then  we set
 $$\tilde{\sigma}_{\cI,Z}=\min\biggl\{\frac{d+1}{d+2}\sigma_{\cI,Z}+\frac{r}{d+2},\frac{n-s(f_1,...,f_r)}{d}\biggr\},$$
where $s(f_1,...,f_r)$ is defined in Definition \ref{singudim}.

We end this section by the following theorem.
\begin{thm}\label{thmlowbide}With the above notations and assumptions, we have $\moi_{K,Z}^{(r)}(\cI)\geq \sigma_{\cI,Z}$. Consequently, there is an integer $M$ depending only on $\cI,Z$ and a positive constant $C_{n,d}$ depending only on $n,d$ such that
$$|E_{L,Z,\cI}^{(r)}(m)|\leq C_{n,d}m^{n-1}q_L^{-\sigma_{\cI,Z}(m-1)-r}$$
for all $L\in\tilde{\cL}_{K,M}$ and all integers $m\geq 2$. 
In particular, if $f_1,...,f_r$ are of degree $d$ then 
$$|E_{L,Z,\cI}^{(r)}(m)|\leq C_{n,d}m^{n-1}q_L^{-\tilde{\sigma}_{\cI,Z}m}$$
for all $L\in\tilde{\cL}_{K,M}$ and all integers $m\geq 1$. 
\end{thm}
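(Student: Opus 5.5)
The plan is to reduce everything to the polynomial case of Subsection \ref{moipolylo}, applied to $g(x,y)=\sum_{i}y_if_i(x)$ on $Z^{(r)}$. By Proposition \ref{ridepol}, $E_{L,Z,\cI}^{(r)}(m)=E_{g,L,Z^{(r)},\psi}$ for every character $\psi$ of conductor $m$. By Definition \ref{defmoi0}, $\moi_{K,Z}^{(r)}(\cI)=\moi_K(g,Z^{(r)})$. So it suffices to show $\sigma_L(g,Z^{(r)})\geq \sigma_{\cI,Z,0}$ and $\sigma_L(g,Z^{(r)})\geq\sigma_{\cI,Z,1}$ for all $L$ of large residue characteristic; the inequality for $\moi$ then follows from (\ref{exmo}).

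\textbf{The case $I_0=\{1,\dots,n+r\}$.} Here I would run the proof of Theorem \ref{thmlowb} verbatim for $g$. Lemma \ref{sak} restricts the integral to $\ord_L(g)\geq m-1$. Proposition \ref{rupbouL} with $n_0=n+r$ bounds $|E|$ by integrals over points where all partials of $g$ have order at least $m-\eta_m$, weighted by $q_L^{\ell\kappa_m/2-\frac12\ord_L(\mathsf{D}_{g,0,\ell})}$. The partials $\partial g/\partial y_i=f_i$, which is why the $f_i^3$ appear in $\cJ_0$. Using $3(m-\eta_m)\geq 2(m-1)$, the domain is contained in $\{\ord_L(\cJ_0)\geq 2(m-1)\}$, and Proposition \ref{genlogc} gives decay $q_L^{-m\sigma_{\cI,Z,0}}$ up to factors $q_L^{O(1)}m^{n+r-1}$. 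Then (\ref{lowbL}) yields $\sigma_L(g,Z^{(r)})\geq\sigma_{\cI,Z,0}$.

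\textbf{The case $I_1=\{1,\dots,n\}$.} I would apply Proposition \ref{rupbouL} with $n_0=n$, fibering over $y$. Then only the $x$-partials of $g$ enter the domain, and the relevant Hessian is $\cH_{g,1}$. The identical chain of estimates produces $\cJ_1$ and $\tilde{\mathsf{D}}_{g,1,\ell}$, hence $\sigma_L(g,Z^{(r)})\geq\sigma_{\cI,Z,1}$. One technical point: (\ref{decomsum}) reduces to the case where the critical values of $g$ on $Z^{(r)}$ are $0$. Since $g$ is homogeneous of degree $1$ in $y$, its only critical value is $0$, so this reduction is automatic.

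\textbf{The quantitative bounds.} The bound $|E|\leq C_{n,d}m^{n-1}q_L^{-\sigma_{\cI,Z}(m-1)-r}$ should not be taken from Theorem \ref{main}, which only gives the exponent $-\frac12$. Instead I would use Corollary \ref{AvIgnrati}, first assertion: $|E^{(r)}_{L,Z,\cI}(m)|\leq C m^{n-1}q_L^{\moi-r}q_L^{-m\moi}$, with $\moi=\moi_{K,Z}^{(r)}(\cI)$. Since $\moi\geq\sigma_{\cI,Z}$, the map $\sigma\mapsto \sigma-m\sigma$ is nonincreasing for $m\geq2$, so we may replace $\moi$ by $\sigma_{\cI,Z}$. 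For the final statement, I would argue exactly as in Theorem \ref{bounalph}, combining this bound for large $m$ with Proposition \ref{uppallide} for $m\leq d+1$. The split goes as follows.
\begin{itemize}
\item If $\sigma_{\cI,Z}\geq r$, then $m\bigl(\frac{d+1}{d+2}\sigma+\frac{r}{d+2}\bigr)\leq (m-1)\sigma+r$ for $m\geq d+2$.
\item If $\sigma_{\cI,Z}<r$, I would use the lct comparison from the proof of Theorem \ref{bounalph}, which forces $\frac{n-s}{d}$ to be the minimum.
\end{itemize}

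The main obstacle is this last subcase. The inequality $\moi\geq\lct\geq\frac{n-s}{d}$ requires the complete-intersection hypothesis used in Theorem \ref{bounalph}, which is not assumed here. I would try to show directly that $\sigma_{\cI,Z}<r$ forces $\moi<r$ and then invoke \cite[Proposition 3.19]{MoiIde}. Failing that, I would verify that the $m\leq d+1$ bound from Proposition \ref{uppallide} still suffices via the trivial estimate.
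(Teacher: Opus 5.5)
Your handling of the first two assertions matches the paper. The paper applies Theorem \ref{thmlowb} to $g$ on $Z^{(r)}$, whose only critical value is $0$; the choices $I=I_0$ and $I=I_1$ give exactly $\sigma_{\cI,Z,0}$ and $\sigma_{\cI,Z,1}$. It then uses Corollary \ref{AvIgnrati}, together with $\moi_{K,Z}^{(r)}(\cI)\geq\sigma_{\cI,Z}$ and $m-1>0$.

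The last assertion is not proved in your proposal. The obstacle comes mainly from splitting on the wrong quantity. For $m\geq 2$ the estimate you actually have is $C m^{n-1}q_L^{-(m-1)\moi-r}$ with $\moi:=\moi_{K,Z}^{(r)}(\cI)$. Since $(m-1)\moi+r-m\bigl(\frac{d+1}{d+2}\moi+\frac{r}{d+2}\bigr)=\frac{(m-d-2)(\moi-r)}{d+2}$, what matters is whether $\moi\geq r$, not whether $\sigma_{\cI,Z}\geq r$.

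\begin{itemize}
\item Your proposed bridge ``$\sigma_{\cI,Z}<r\Rightarrow\moi<r$'' cannot be expected. $\sigma_{\cI,Z}$ is only a lower bound for $\moi$, so nothing prevents $\sigma_{\cI,Z}<r\leq\moi$.
\item Your fallback does not reach the problematic range $m>d+2$ with $\moi<r$. There Proposition \ref{uppallide} (which covers only $m\leq d+1$) is unavailable, and the trivial bound $|E|\leq 1$ gives no decay. What is needed is an input such as $\moi\geq\frac{n-s(f_1,\dots,f_r)}{d}$.
\end{itemize}

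The paper avoids this by applying Theorem \ref{bounalph} directly. Its proof splits on $\moi$ (per $L$, on $\sigma^{r,\mathrm{naive}}_{L,Z}(\cI)$). In the case $\moi<r$ it uses $\moi\geq\lct_{Z_K}(\cI_K)\geq\frac{n-s}{d}$, via \cite{CDMO} and \cite[Proposition 3.19]{MoiIde}, and it uses Proposition \ref{uppallide} for $m\leq d+1$. This yields the exponent $\alpha_K(\cI,Z)$. Since $\alpha_K(\cI,Z)$ is nondecreasing in $\moi$ and $\moi\geq\sigma_{\cI,Z}$ by the first assertion, $\alpha_K(\cI,Z)\geq\tilde\sigma_{\cI,Z}$, which is the claim. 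At no point is $\sigma_{\cI,Z}$ compared with $r$.

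Your concern about the hypotheses is nevertheless legitimate. Theorem \ref{bounalph} and Proposition \ref{uppallide} assume that $\cI_K$ defines a complete intersection of dimension $n-r$, and this is not a standing assumption of Subsection \ref{moiidelo}. The paper covers it with a one-line dichotomy: a case called trivial, and a case asserted to be a complete intersection. As printed, that dichotomy is too loose. For example, $f_1=x_1x_2$, $f_2=x_1x_3$ in $\AA^3$ has $s(f_1,f_2)=2>0$, yet $V(f_1,f_2)$ has a two-dimensional component, so it is not a complete intersection of dimension $1$.

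A correct sufficient criterion is $s(f_1,\dots,f_r)\leq n-r$, with all $f_i$ of degree $d$:
\begin{itemize}
\item The Jacobian criterion forces every component of $V(f_{1d},\dots,f_{rd})\subset\overline{K}^n$ to lie in $S(f_{1d},\dots,f_{rd})$ if its dimension exceeds $n-r$. Hence $\dim V(f_{1d},\dots,f_{rd})=n-r$.
\item Intersect the closure in $\PP^n$ of any component of $V(\cI_{\overline{K}})$ with the hyperplane at infinity. This shows the component has dimension $\leq n-r$, hence exactly $n-r$.
\end{itemize}
A complete write-up should make this reduction to Theorem \ref{bounalph} explicit and treat the complementary range $s>n-r$ by a separate argument.
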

\begin{proof}We observe that $0$ is the only critical value of $g(x,y)=\sum_{1\leq i\leq r}y_if_i(x)$. 
The first assertion follows from Definition \ref{defmoi0} and Theorem \ref{thmlowb}. The second assertion follows  from the first assertion and Corollary \ref{AvIgnrati}. The last assertion is trivial if $s(f_1,...,f_r)=0$. If $s(f_1,...,f_r)>0$ then $\cI_K$ defines a complete intersection in $\AA_K^n$ of dimension $n-r$.  Thus, the first assertion and Theorem \ref{bounalph} imply the last assertion. 
\end{proof}

\section{Examples and Questions}\label{EXQu}
In this section, we give some examples to illustrate the calculation in Section \ref{lobmoti}. We also give some further questions.
\begin{example}\label{exmpol}
Let $f\in\ZZ[x_1,...,x_n]$ be a homogeneous polynomial of degree $d\geq 2$. We set $Z=\AA_\ZZ^n$ and $I=\{1,...,n\}$. Suppose that $f$ defines a smooth hypersurface in $\PP_\QQ^{n-1}$. We recall the following notations in Subsection \ref{moipolylo}. We denote by $\cI_{f,I}$ the ideal of $\ZZ[x_1,...,x_n]$ generated by $f^2$ and $(\frac{\partial f}{\partial x_i})^3$ for $1\leq i\leq n$.  We define $$\cH_{f}(x)=\left(\dfrac{\partial^2 f}{\partial x_i\partial x_j}(x)\right)_{1\leq i,j\leq n}.$$
For each $0\leq \ell\leq n$, we denote by $\mathsf{D}_{f,I,\ell}$ the ideal of $\ZZ[x_1,...,x_n]$ generated by all principal minors of order $\ell$ of $\cH_{f}(x)$ with the convention that $\tilde{\mathsf{D}}_{f,I,\ell}=(1)$. We set  $$\tilde{\cI}_{f,I}=\cI_{f,I}\otimes_\ZZ\QQ, \tilde{\mathsf{D}}_{f,I,\ell}=\mathsf{D}_{f,\ell}\otimes_{\ZZ}\QQ, \cI_{f,I,p}=\cI_{f,I}\otimes_{\ZZ}\FF_p[[t]], \mathsf{D}_{f,I,\ell,p}=\mathsf{D}_{f,I,\ell}\otimes_{\ZZ}\FF_p[[t]],$$ $$\textnormal{and } \sigma_{f,Z,I}:=\min_{0\leq \ell\leq n}\left(2\lct_{Z_\QQ}\left(\tilde{\cI}_{f,I},\tilde{\mathsf{D}}_{f,I,\ell},\frac{1}{2}\right)-\frac{\ell}{6}\right).$$
Since $f$ defines a smooth hypersurface in $\PP_\QQ^{n-1}$, there is an integer $M$ such that $f_p:=(f\mod p)$ defines a smooth hypersurface in $\PP_{\FF_p}^n$ if $p>M$. Let $p>M$. If $x\in \FF_p[[t]]^n$ then we have $$\min_{1\leq i\leq n}\ord_t\left(\frac{\partial f_p}{\partial x_i}(x)\right)=(d-1)\ord_t(x):=(d-1)\min_{1\leq i\leq n}\ord_t(x_i)$$
and 
$$\ord_t(\mathsf{D}_{f,I,\ell,p}(x))=\ell(d-2)\ord_t(x)+\ord_t(\mathsf{D}_{f,I,\ell,p}(p^{-\ord_t(x)}x))\geq \ell(d-2)\ord_t(x)$$
for all $0\leq \ell\leq n$. 
Moreover, since $f_p$ defines  a smooth hypersurface in $\PP_{\FF_p}^n$, it is easy to see that   
$$\int_{\{x\in \FF_p[[t]]^n\mid \ord_t(x)=0,\hspace{0.1cm} \ord_t(f(x))\geq m\}}|dx|=\#\{x\in \FF_p^n\setminus \{0\}\mid f_p(x)=0\}p^{-n-m+1}$$
if $m\geq 1$. From these facts, if $d\geq 3$ then we have 
\begin{align*}
&\int_{\{x\in\FF_p[[t]]^n\mid \ord_t(\cI_{f,I,p})\geq m\}}p^{-\frac{1}{2}\ord_t(\mathsf{D}_{f,I,\ell,p}(x))}|dx|\nonumber\\=&\sum_{a\geq \frac{m}{3(d-1)}}\int_{\{x\in\FF_p[[t]]^n\mid 2\ord_t(f_p(x))\geq m,\hspace{0.1cm}\ord_t(x)=a\}}p^{-\frac{1}{2}\ord_t(\mathsf{D}_{f,I,\ell,p}(x))}|dx|\nonumber\\
=&\sum_{\frac{m}{2d}\geq a\geq \frac{m}{3(d-1)}}\int_{\{x\in\FF_p[[t]]^n\mid 2\ord_t(f_p(x))\geq m,\hspace{0.1cm}\ord_t(x)=a\}}p^{-\frac{1}{2}\ord_t(\mathsf{D}_{f,I,\ell,p}(x))}|dx|\nonumber\\
&+ \sum_{a>\frac{m}{2d}}\int_{\{x\in\FF_p[[t]]^n\mid 2\ord_t(f_p(x))\geq m,\hspace{0.1cm}\ord_t(x)=a\}}p^{-\frac{1}{2}\ord_t(\mathsf{D}_{f,I,\ell,p}(x))}|dx|\nonumber\\
=&\sum_{\frac{m}{2d}\geq a\geq \frac{m}{3(d-1)}}p^{-an-\frac{\ell(d-2)a}{2}}\int_{\{x\in\FF_p[[t]]^n\mid 2\ord_t(f_p(x))\geq m-2ad,\hspace{0.1cm}\ord_t(x)=0\}}p^{-\frac{1}{2}\ord_t(\mathsf{D}_{f,I,\ell,p}(x))}|dx|\nonumber\\
&+ \int_{\{x\in\FF_p[[t]]^n\mid \ord_t(x)>\frac{m}{2d}\}}p^{-\frac{1}{2}\ord_t(\mathsf{D}_{f,I,\ell,p}(x))}|dx|\nonumber\\
=&\sum_{\frac{m}{2d}>a\geq \frac{m}{3(d-1)}}p^{-an-\frac{\ell(d-2)a}{2}}\int_{\{x\in\FF_p[[t]]^n\mid 2\ord_t(f_p(x))\geq m-2ad,\hspace{0.1cm}\ord_t(x)=0\}}p^{-\frac{1}{2}\ord_t(\mathsf{D}_{f,I,\ell,p}(x))}|dx|\nonumber\\
&+ \11_{2d\ZZ}(m)p^{-\frac{mn}{2d}-\frac{\ell(d-2)m}{4d}}+ C_pp^{-n(\lfloor\frac{m}{2d}\rfloor+1)-\frac{\ell(d-2)}{2}(\lfloor\frac{m}{2d}\rfloor+1)}\nonumber\\
\end{align*}
\begin{align}\label{bouFFp}
&\hspace{-2.8cm}\leq\sum_{\frac{m}{2d}>a\geq \frac{m}{3(d-1)}}p^{-an-\frac{\ell(d-2)a}{2}}\#\{x\in \FF_p^n\setminus \{0\}\mid f_p(x)=0\}p^{-n-\lceil\frac{m}{2}\rceil+ad+1}\nonumber\\&\hspace{-2.4cm}+ \11_{2d\ZZ}(m)p^{-\frac{mn}{2d}-\frac{\ell(d-2)m}{4d}}+ C_pp^{-n(\lfloor\frac{m}{2d}\rfloor+1)-\frac{\ell(d-2)}{2}(\lfloor\frac{m}{2d}\rfloor+1)}
\end{align}
for all $0\leq \ell\leq n$ and a constant $C_p$ with $|C_p|\leq 1$. Moreover, if $\ell=0$ then the inequality in (\ref{bouFFp}) becomes an equality. Now, we can use (\ref{bouFFp}) and Lemma \ref{genlogc} to conclude that if $d\geq 3$ then
$$\lct_{Z_\QQ}\left(\tilde{\cI}_{f,I},\tilde{\mathsf{D}}_{f,I,\ell},\frac{1}{2}\right)\geq \begin{cases}\frac{n}{2d}+\frac{\ell(d-2)}{4d} \textnormal{ if } d\geq n\\
\frac{n-d}{3(d-1)}+\frac{1}{2}+\frac{\ell(d-2)}{6(d-1)} \textnormal{ if } n>d
\end{cases}$$
for all $0\leq \ell\leq n$ and the equality holds if $\ell=0$. Thus, if $d\geq 3$, then we have 
$$\sigma_{f,Z,I}=\begin{cases} \frac{n}{d} \textnormal{ if } n\leq d,\\
\frac{2n+d-3}{3(d-1)} \textnormal{ if } n>d.
\end{cases}$$
If $d=2$, we have $\mathsf{D}_{f,I,\ell,p}=(1)$ for all $0\leq \ell\leq n$. By a similar computation, we have 
$$\lct_{Z_\QQ}\left(\tilde{\cI}_{f,I},\tilde{\mathsf{D}}_{f,I,\ell},\frac{1}{2}\right)=\frac{n}{3}$$
for all $0\leq \ell\leq n$ if $d=2$.  Thus, we have $$\sigma_{f,Z,I}=\min_{0\leq\ell\leq n}\left(\frac{2n}{3}-\frac{\ell}{6}\right)=\frac{n}{2}$$
if $d=2$.

\end{example}

\begin{Qu}\label{qus}Let $k$ be a field of characteristic zero and $f\in k[x_1,...,x_n]$ be a polynomial of degree $d\geq 3$.
With the notations of Subsection \ref{moipolylo}, can we have 
$$\sigma_{f,\AA_k^n,I}\geq \min\biggl\{\frac{n-s(f)}{d},\frac{2(n-s(f))+d-3}{3(d-1)}\biggr\}$$
for $I=\{1,...,n\}$? More generally, we also ask whether this inequality holds for all non-empty subsets $I$ of $\{1,...,n\}$. 
\end{Qu}
Note that $\lct(f)=\min\{\tilde{\alpha}_f,1\}$ where	 $\tilde{\alpha}_f$ is the minimal exponent of $f$  mentioned in the introduction. 
So it follows from \cite[Proposition 2.4]{CluckerNguyen} that $\lct(f)\geq \min\bigl\{1,\frac{n-s(f)}{d}\bigr\}$. Thus,  we have a positive answer for the first part of Question \ref{qus}  if $n-s(f)\leq d$ by using Proposition \ref{compaCKM} and the fact that $\lct((f)+J_f^2)\geq \lct(f)$. 

Lastly, motivated by the strong monodromy conjecture and the work in \cite{CKMu}, we may expect that the number $\sigma_{f,\AA_k^n,I}$ in Question \ref{qus} is a lower bound of the minimal exponent  $\tilde{\alpha}_f$ of $f$. This is  the content of the following question.
\begin{Qu}
With the notations and assumptions of Question \ref{qus}, do we have
$$\sigma_{f,\AA_k^n,I}\leq \tilde{\alpha}_f$$
for all non-empty subsets $I$ of $\{1,...,n\}$?

\end{Qu}

\section*{Acknowledgements}The author is  supported by  Fund for Scientific Research-Flanders (Belgium) (F.W.O.) 1270923N. 




\bibliographystyle{amsplain}
\bibliography{anbib}
\end{document}